\newcommand{\todo}[1]{\textcolor{red}{TODO: #1}}
\newcommand{\xl}[1]{\textcolor{magenta}{XL: #1}}
\newcommand{\cy}[1]{\textcolor{orange}{CY: #1}}
\newcommand{\yx}[1]{\textcolor{blue}{YX: #1}}
\def\thickhline{\noalign{\hrule height1.2pt}}
\pgfplotsset{compat=1.18}
\definecolor{uuuuuu}{rgb}{0.27,0.27,0.27}
\definecolor{sqsqsq}{rgb}{0.1255,0.1255,0.1255}
\newtheorem{definition}{Definition} [section]
\newtheorem{theorem}[definition]{Theorem}
\newtheorem{lemma}[definition]{Lemma}
\newtheorem{proposition}[definition]{Proposition}
\newtheorem{conjecture}[definition]{Conjecture}
\newtheorem{claim}[definition]{Claim}
\newtheorem{fact}[definition]{Fact}
\newcommand{\RomanNumeralCaps}[1]{\MakeUppercase{\romannumeral #1}}
\def\qed{\hfill \rule{4pt}{7pt}}
\def\pf{\noindent {\it Proof }}
\begin{document}
\title{\bf\Large  Density Hajnal--Szemer\'{e}di theorem for cliques of size four}
\date{\today}
\author[1]{Jianfeng Hou\thanks{Research supported by National Key R\&D Program of China (Grant No. 2023YFA1010202), National Natural Science Foundation of China (Grant No. 12071077), the Central Guidance on Local Science and Technology Development Fund of Fujian Province (Grant No. 2023L3003). Email: \texttt{jfhou@fzu.edu.cn}}}
\author[1]{Caiyun Hu \thanks{Email: \texttt{hucaiyun.fzu@gmail.com}}}    
\author[2]{Xizhi Liu\thanks{Research supported by ERC Advanced Grant 101020255. Email: \texttt{xizhi.liu.ac@gmail.com}}}
\author[1]{Yixiao Zhang\thanks{Email: \texttt{fzuzyx@gmail.com}}}
\affil[1]{Center for Discrete Mathematics,
            Fuzhou University, Fujian, 350108, China}
\affil[2]{Mathematics Institute and DIMAP,
            University of Warwick, 
            Coventry, CV4 7AL, UK}
\maketitle
\begin{abstract}
    The celebrated Corr\'{a}di--Hajnal Theorem~\cite{CH63} and the Hajnal--Szemer\'{e}di Theorem~\cite{HS70} determined the exact minimum degree thresholds for a graph on $n$ vertices to contain $k$ vertex-disjoint copies of  $K_r$, for $r=3$ and general $r \ge 4$, respectively.
    The edge density version of the Corr\'{a}di--Hajnal Theorem was established by Allen--B\"ottcher--Hladk\'y--Piguet~\cite{ABHP15} for large $n$. Remarkably, they determined the four classes of extremal constructions corresponding to different intervals of $k$. 
    They further proposed the natural problem of establishing a density version of the Hajnal--Szemer\'{e}di Theorem: For $r \ge 4$, what is the edge density threshold that guarantees a graph on $n$ vertices contains $k$ vertex-disjoint copies of $K_r$ for $k \le n/r$. They also remarked, ``We are not even sure what the complete family of extremal graphs should be.''
    
    We take the first step toward this problem by determining asymptotically the five classes of extremal constructions for $r=4$. 
    Furthermore, we propose a candidate set comprising $r+1$ classes of extremal constructions for general $r \ge 5$. 
\end{abstract}
\tableofcontents
\section{Introduction}\label{SEC:Introduction}
\subsection{Motivation and main results}
We identify a graph $G$ with its edge set; in particular, $|G|$ represents the number of edges in $G$. 
The number of vertices in $G$ is denoted by $v(G)$. 
The \textbf{minimum degree}, \textbf{average degree}, and \textbf{maximum degree} of $G$ are denoted by $\delta(G)$, $d(G)$, and $\Delta(G)$, respectively. 

Given two graphs $F$ and $G$, a collection $\mathcal{C}$ of vertex-disjoint copies of $F$ in $G$ is called an \textbf{$F$-tiling} in $G$. We denote by $V(\mathcal{C})$ the set of vertices covered by the members of $\mathcal{C}$. 
The \textbf{$F$-matching number} $\nu(F, G)$ of $G$ is the maximum size of an $F$-tiling in $G$. 
The case $F = K_2$ corresponds to the well-known \textbf{matching number} of $G$, which is denoted simply as $\nu(G)$. 
We say a graph $G$ is \textbf{$F$-free} if $\nu(F, G) = 0$. 

The study of the following problem  encompasses several central topics in Extremal Graph Theory.
Given a graph $F$ and integers $n, k\in \mathbb{N}$ satisfying $n \ge k \cdot v(F) \ge 0$:
\begin{align}\label{prob:main}
    \text{What constraints on an $n$-vertex graph $G$ force it to satisfy $\nu(F, G) \ge k+1$?} \tag{$\ast$}
\end{align}
From the perspective of the minimum degree, the celebrated Corr\'{a}di--Hajnal Theorem~\cite{CH63} and Hajnal--Szemer\'{e}di Theorem~\cite{HS70} determined the precise minimum degree threshold that ensures a graph $G$ satisfies $\nu(K_{r}, G) \ge k+1$ for $r = 3$ and $r \ge 4$, respectively. 
These two foundational results have inspired numerous subsequent works (see e.g.~\cite{Sey74,FH94,FK95,KSS95,FK96,FK96b,AY96,KSS98a,KSS98b,Kom00,KSS01,Kaw02,SZ03,SZ04,KO06,CKO07,KK08,KO09,BST09,Zhao09,HS10,ABH11,KM15,Tre16,MS17,HT20,LS23}) and have been extended to general graphs.
For additional related results, we refer the reader to surveys~\cite{KS96,KSSS02,KO09Survery}.
\begin{theorem}[Corr\'{a}di--Hajnal~\cite{CH63}, Hajnal--Szemer\'{e}di~\cite{HS70}]\label{THM:HS-mindeg}
    Let $r \ge 3$ and $n \ge r (k+1) \ge 0$ be integers. 
    Suppose that 
    \begin{align*}
        \delta(G) >  k + \left\lfloor \frac{r-2}{r-1}(n-k) \right\rfloor. 
    \end{align*}
    Then $\nu(K_{r}, G) \ge k + 1$.
\end{theorem}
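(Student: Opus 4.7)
The plan is to view this theorem as an interpolation between two classical endpoints: Tur\'an's theorem for $k=0$ and the Hajnal--Szemer\'edi $K_r$-factor theorem for $n = r(k+1)$. Indeed, the hypothesis $\delta(G) > k + \lfloor \frac{r-2}{r-1}(n-k) \rfloor$ rewrites as $(r-1)\delta(G) - (r-2)n \geq k+1$; specializing $k=0$ recovers the Tur\'an minimum-degree threshold that forces a single $K_r$, while specializing $n = r(k+1)$ recovers the Hajnal--Szemer\'edi threshold $\delta(G) \geq (r-1)(k+1)$ for a $K_r$-factor on $r(k+1)$ vertices. Either specialization immediately yields the conclusion, so the task reduces to interpolating between them.

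For $n > r(k+1)$, I would induct on $n$ with $k$ held fixed, deleting a single vertex $v$ and invoking the statement on $G - v$. The right-hand threshold decreases by exactly $\tfrac{r-2}{r-1}$ as $n$ decreases by one, so the integer floor drops by either $0$ or $1$ depending on $(n-k) \bmod (r-1)$. In the generic residue class where the floor drops by $1$, any vertex deletion works since $\delta(G - v) \geq \delta(G) - 1$. In the borderline residue class $n \equiv k+1 \pmod{r-1}$ where the floor does not drop, I would choose $v$ so that removing it does not decrease the minimum degree at all, i.e.\ $v$ is not adjacent to any vertex attaining $\delta(G)$. If no such $v$ exists then the set $U$ of minimum-degree vertices must satisfy $|U|(\delta(G)+1) \geq n$, a structural constraint that can be exploited either by a direct counting contradiction or by removing two vertices simultaneously so as to skip the obstructed level.

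The main obstacle, and the genuinely deep part of the statement, is the base case $n = r(k+1)$: it coincides exactly with the Hajnal--Szemer\'edi $K_r$-factor theorem, whose proof requires substantial technology (the original absorbing/rotation argument, or more recent regularity-based proofs). By comparison, the inductive step is essentially floor-arithmetic plus a short case analysis, and the $r=3$ subcase of the base admits the simpler direct proof of Corr\'adi--Hajnal. Thus, all the real difficulty of this theorem is concentrated at the endpoint $n = r(k+1)$, while the rest of the parameter range is handled mechanically by the interpolation just sketched.
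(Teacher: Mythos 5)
The paper cites this theorem from Corr\'adi--Hajnal~\cite{CH63} and Hajnal--Szemer\'edi~\cite{HS70} and does not supply a proof, so there is no in-paper argument to compare with yours. Judged on its own: your endpoint identifications are correct (at $k=0$ the hypothesis is the Tur\'an-type minimum-degree threshold, and at $n=r(k+1)$ it becomes $\delta(G)\ge(r-1)(k+1)$, the Hajnal--Szemer\'edi $K_r$-factor threshold), and your floor arithmetic is right: $\lfloor\tfrac{r-2}{r-1}(n-k)\rfloor$ fails to drop under $n\mapsto n-1$ precisely when $n\equiv k+1\pmod{r-1}$.

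The genuine gap is the borderline residue class, which you leave unresolved. The constraint extracted when no deletable vertex exists, namely that the set $U$ of minimum-degree vertices dominates and hence $n\le|U|(\delta(G)+1)$, yields only $|U|\ge 2$ once the hypothesis on $\delta(G)$ is substituted; that is not a contradiction, and you do not say how to exploit it. The two-vertex fallback also does not close the case as stated: from $n$ to $n-2$ the floor drops by exactly one in this residue class, so you would need $\delta(G-v_1-v_2)\ge\delta(G)-1$, i.e.\ a pair $\{v_1,v_2\}$ with no minimum-degree vertex adjacent to \emph{both}. Producing such a pair, under the standing assumption that every vertex is adjacent to some minimum-degree vertex, is precisely what needs an argument, and the fringe case $n=r(k+1)+1$ (where a two-step deletion exits the domain $n\ge r(k+1)$) would still need its own treatment. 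A smaller slip: $\delta(G)>k+\bigl\lfloor\tfrac{r-2}{r-1}(n-k)\bigr\rfloor$ is not equivalent to $(r-1)\delta(G)-(r-2)n\ge k+1$; the latter is a strictly weaker consequence in every residue class except $n\equiv k+1\pmod{r-1}$, so it cannot stand in for the actual hypothesis.
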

\textbf{Remark}. The tightness of the bound $\delta(G) >  k + \left\lfloor \frac{r-2}{r-1}(n-k) \right\rfloor$ is demonstrated by the construction $E_{1}(n,k,r)$ defined in Section~\ref{SEC:Remarks}. 

From the perspective of the average degree, the well-studied Tur\'{a}n problem (see e.g.~\cite{Mantel07,TU41,ES46,FS13}), which asks for the maximum number $\mathrm{ex}(n,F)$ of edges in an $F$-free graph on $n$ vertices, is equivalent to determining the minimum edge density of $G$ that ensures $\nu(F, G) \ge 1$.
The celebrated Erd\H{o}s--Gallai Theorem~\cite{EG59} determined for all integers $n \ge 2t \ge 0$, the minimum edge density of $G$ that ensures $\nu(G) \ge t$. 
For general graphs, the answer is less complete. 
For convenience, let $\mathrm{ex}\left(n, (k+1)F\right)$ denote the maximum number of edges in an $n$-vertex graph $G$ that satisfies $\nu(F, G) < k+1$. 
An old theorem by Erd\H{o}s~\cite{Erdos62} determined $\mathrm{ex}\left(n, (k+1)K_3\right)$ for $k \le \frac{\sqrt{n}}{20}$. 
Later, Moon~\cite{Moon68} improved and extended the theorem of Erd\H{o}s to general $r \ge 4$ and determined $\mathrm{ex}\left(n, (k+1)K_{r}\right)$ for $k \le \frac{2 n-3 r^2+8 r-5}{r^3-r^2+1}$.
Akiyama--Frankl~\cite{AF85} determined $\mathrm{ex}\left(n, (k+1)K_{r}\right)$ for the case when $k = \lfloor n/r \rfloor - 1$ (see also~\cite{BE78}), which was later extended by~\cite{BKT13}.   
For bipartite graphs $F$, Grosu--Hladk{\'y}~\cite{GH12} establihsed a general upper bound for $\mathrm{ex}\left(n, (k+1)F\right)$, which is asymptotically tight for certain classes of bipartite graphs. 
Extending the work of Erd\H{o}s and Moon, we established general upper bounds, which are tight in many cases, for $\mathrm{ex}\left(n, (k+1)F\right)$ in a sequence of studies~\cite{HLLYZ23,HHLLYZ23a,HHLLYZ23b,HHLLYZ23c}, when $k$ lies in the interval $\left[0, \varepsilon_{F}\frac{\mathrm{ex}(n,F)}{n}\right]$, where $\varepsilon_F > 0$ is a small constant.

Determining $\mathrm{ex}\left(n, (k+1)F\right)$, even asymptotically, for all $k$ in the interval $\left[0, \frac{n}{v(F)}\right]$ is extremely challenging in general. 
Indeed, even in the case where $F$ is a triangle, $\mathrm{ex}\left(n, (k+1)F\right)$ was completely determined only a decade ago by Allen--B\"{o}ttcher--Hladk\'{y}--Piguet~\cite{ABHP15} for large $n$. 
Their results, which we find remarkable, show that there are four different classes of extremal constructions corresponding to four different regimes of $k$. 
Below, we present only the asymptotic version of their result and refer the reader to~\cite{ABHP15} for additional details on the extremal constructions.
\begin{theorem}[Allen--B\"{o}ttcher--Hladk\'{y}--Piguet~\cite{ABHP15}]\label{THM:ABHP-density-CH}
    Suppose that $n$ is sufficiently large. Then 
    \begin{align*}
        \mathrm{ex}(n,(k+1)K_{3})
        = O(n) + 
        \begin{cases}
            \frac{n^2}{4} + \frac{k n}{2} - \frac{k^2}{4}, &\quad\text{if}\quad k \in \left[0, \frac{2n}{9}\right], \\[0.5em]
            \frac{n^2}{4} + 2k^2, &\quad\text{if}\quad k \in \left[\frac{2n}{9}, \frac{n}{4}\right], \\[0.5em]
            2kn- 2k^2, &\quad\text{if}\quad k \in \left[\frac{n}{4}, \frac{(5+\sqrt{3})n}{22}\right], \\[0.5em]
            \frac{n^2}{2} - 3kn + 9k^2, &\quad\text{if}\quad k \in \left[\frac{(5+\sqrt{3})n}{22}, \frac{n}{3}\right].
        \end{cases}
    \end{align*}
\end{theorem}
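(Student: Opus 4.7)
The plan is to establish both directions asymptotically: for the lower bound I would exhibit four extremal constructions, one per $k$-regime; for the upper bound I would analyse a maximum $K_3$-matching, control the triangle-free ``outside'' via Mantel's theorem, and use augmenting-swap arguments to obtain structural control on the matching-vertex part and on cross-edges.

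More concretely, the four constructions (asymptotically) are: $H_1$, the join of a clique $K_k$ and a balanced complete bipartite graph on $n-k$ vertices, in which every triangle uses a vertex of $K_k$; $H_2$, a balanced complete bipartite graph on $n$ vertices with an extra clique of size $2k$ embedded in one side, where triangles use pairs from the embedded clique; $H_3$, the join of $K_{2k}$ with $n-2k$ independent vertices, where each triangle uses two clique vertices and one independent vertex; and $H_4$, a construction tailored to $k$ close to $n/3$ built around a clique of order roughly $3k$ together with a triangle-free remainder and carefully controlled cross-edges, achieving the count $\binom{3k}{2}+\binom{n-3k}{2}+O(n)$. Each construction admits a routine edge-count verification and a short argument that $\nu(K_3)\le k$; pairwise comparison of edge counts as functions of $k$ identifies the three regime-transitions at $2n/9$, $n/4$, and $(5+\sqrt{3})n/22$.

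For the upper bound, suppose $G$ is an $n$-vertex graph with $\nu(K_3,G)\le k$ and strictly more edges than the regime-dependent value (up to an $o(n^2)$ slack). Take a maximum $K_3$-matching $M$ of size at most $k$ and set $A=V(M)$, $B=V(G)\setminus A$, so $|A|\le 3k$. By the maximality of $M$, $G[B]$ is triangle-free, so Mantel's theorem gives $|G[B]|\le |B|^2/4$. The main work is to bound $|G[A]|$ and the bipartite count $e(A,B)$. For this, I would use augmenting-type swap arguments: for each triangle $T\in M$, if many $B$-vertices have two or three neighbors in $V(T)$, one can rotate $T$ with such a $B$-vertex to free a vertex of $T$ and then attempt to grow $M$ using a triangle involving the freed vertex together with $B$; by the maximality of $M$, such growth is impossible, and this forces local density constraints for each $T\in M$. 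Aggregating these constraints yields an upper bound on $|G[A]|+e(A,B)+|G[B]|$ whose optimum, as a function of structural parameters (size of $M$, the number of ``flexible'' triangles, and the balance of cross-edges), splits into four regime-dependent cases matching the four constructions.

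The main obstacle is the quantitative sharpness of the swap argument near the three transition points $k\in\{2n/9,\,n/4,\,(5+\sqrt{3})n/22\}$, where two constructions are simultaneously near-extremal. I would expect to invoke a stability argument (either directly or after passing to a reduced graph via Szemer\'edi's regularity lemma) to show that a near-extremal $G$ is structurally close to one of $H_1$--$H_4$, and then verify that the near-extremal case can be pushed to the exact bound. The fourth regime ($k$ close to $n/3$) is the most delicate: here $G[B]$ is small and the extremal edge count is dominated by the structure of $G[A]$, so one must show that a graph on at most $3k$ vertices with $\nu(K_3)\le k$ cannot substantially exceed the claimed bound unless it possesses a very specific clique-plus-remainder structure, which requires a careful clique-decomposition argument coupled with the swap analysis.
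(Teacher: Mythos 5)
The paper does not prove this statement; it is quoted from Allen--B\"ottcher--Hladk\'y--Piguet \cite{ABHP15}, so your proposal has to be measured against their argument (which the present paper imitates for $K_4$). Your overall plan is the right one, but two parts of it do not survive scrutiny. First, the construction for the fourth regime is wrong as described. A graph containing a genuine clique $C$ of order $3k$ together with a triangle-free remainder $B$ on $n-3k$ vertices cannot have $\binom{3k}{2}+\binom{n-3k}{2}+O(n)$ edges while keeping $\nu(K_3)\le k$: Mantel caps $|G[B]|$ at $\tfrac12\binom{n-3k}{2}+O(n)$, and trying to recover the deficit through cross-edges creates augmentations, because a $K_{3k}$ absorbs external triangles almost for free --- $t$ disjoint triangles each using one vertex of $C$ and an edge of $B$ yield $k+\lfloor 2t/3\rfloor$ disjoint triangles in total, and $t$ disjoint triangles each using two vertices of $C$ yield $k+\lfloor t/3\rfloor$ --- so already two or three such triangles push the matching number above $k$. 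The actual extremal graph in this regime is a clique of order $6k-n$ completely joined to one half of a balanced complete bipartite graph on $2(n-3k)$ vertices (the $r=3$ case of $E_r(n,k,r)$ in Section~\ref{SEC:Remarks}); it is $K_{3k}$ \emph{minus} the edges inside two sets of total size $n-3k$, plus a bipartite attachment, and identifying it is exactly the non-obvious part of the lower bound.

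Second, the upper-bound scheme as sketched --- maximum triangle matching $M$, Mantel on $B=V(G)\setminus V(M)$, and per-triangle rotation constraints --- is too coarse to reach the stated values. The known proof takes a \emph{lexicographically maximal} rank-$3$-packing $(\mathcal{T},\mathcal{M},\mathcal{J})$ (triangles, then a maximum matching on the rest, then singletons) and partitions $\mathcal{T}$ into several subfamilies according to how each triangle is ``seen'' by members of $\mathcal{M}$, of $\mathcal{J}$, and by other triangles; the rotation arguments only produce the correct local edge bounds relative to this finer structure, and the theorem then follows from a multi-variable optimization over the type counts (this is precisely the template Sections~\ref{SEC:Local-estimate}--\ref{SEC:Global-estimate} of the paper generalize to $K_4$). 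With only ``max matching plus Mantel'' you cannot recover regimes three and four, where $G[B]$ is far from Mantel-extremal (in the third regime $B$ is an independent set) and the dominant contribution sits inside $V(M)$ and in the cross-edges. Finally, invoking the regularity lemma near the transition points would at best give an $o(n^2)$ error term, which is weaker than the $O(n)$ claimed; the original argument needs no regularity.
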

In~\cite{ABHP15}, Allen--B\"{o}ttcher--Hladk\'{y}--Piguet raised the problem of establishing a density version of the Hajnal--Szemer\'{e}di Theorme, i.e. determining $\mathrm{ex}(n,(k+1)K_{r})$ for $r \ge 4$ and $k \in \left[0, \frac{n}{r}\right]$.
They remarked: 
\begin{displayquote}
    Some parts of such an argument can be made to work, but there are some additional difficulties for $r\ge 4$ that do not appear for $r=3$. We are not even sure what the complete family of extremal graphs should be.
\end{displayquote}
We contribute to this problem by proposing a candidate set of extremal constructions for general $r \ge 4$ (see Section~\ref{SEC:Remarks}), and determining asymptotically the value of $\mathrm{ex}(n,(k+1)K_{r})$ for all $k \in \left[0, \frac{n}{4}\right]$.
%

\begin{figure}[H]
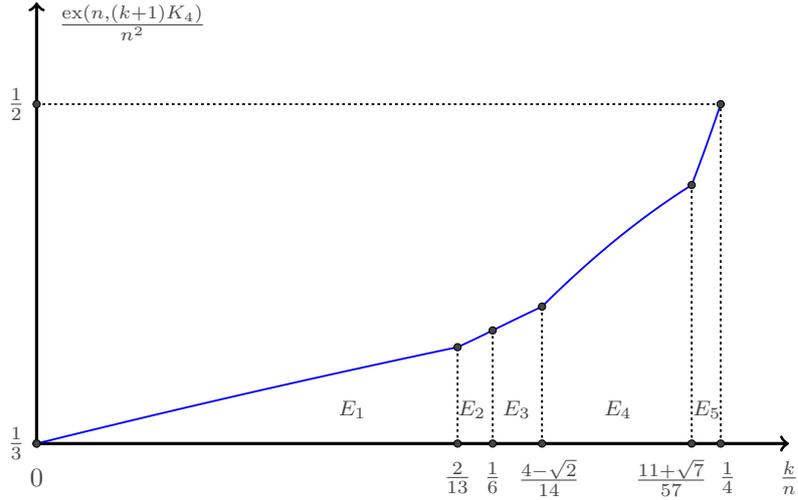

\centering

\caption{The asymptotic behavior of $\frac{\mathrm{ex}(n,(k+1)K_4)}{n^2}$ as a function of $\frac{k}{n}$.}
\label{Fig:functions}
\end{figure}

%
\begin{theorem}\label{THM:Mian-HS-density-K4}
    Suppose that $n$ and $k$ are integers satisfying $n \ge 4k \ge 0$. 
    Then 
    \begin{align*}
        \mathrm{ex}(n,(k+1)K_4)
        = \Xi(n,k) + O(n),  
    \end{align*}
    where 
    \begin{align*}
        \Xi(n,k)
        \coloneqq 
        \begin{cases}
            \frac{n^2}{3} + \frac{k n}{3} - \frac{k^2}{6}, &\quad\text{if}\quad k \in \left[0, \frac{2n}{13}\right],\\[0.5em]
            \frac{n^2}{3} + 2k^2, &\quad\text{if}\quad k \in \left[\frac{2n}{13}, \frac{n}{6}\right],\\[0.5em]
            \frac{n^2}{4} + k n - k^2, &\quad\text{if}\quad k \in \left[\frac{n}{6}, \frac{(4-\sqrt{2})n}{14}\right],\\[0.5em]
            3kn - \frac{9k^2}{2}, &\quad\text{if}\quad k \in \left[\frac{(4-\sqrt{2})n}{14}, \frac{(11+\sqrt{7})n}{57}\right],\\[0.5em]
            n^2 - 8kn +24k^2, &\quad\text{if}\quad k \in \left[\frac{(11+\sqrt{7})n}{57}, \frac{n}{4}\right].
        \end{cases}
    \end{align*}
\end{theorem}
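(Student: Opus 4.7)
The plan is to prove the matching lower and upper bounds $\mathrm{ex}(n,(k+1)K_4) = \Xi(n,k) \pm O(n)$ separately, roughly paralleling the strategy of~\cite{ABHP15} for $r=3$ while adapting it to the more complex $K_4$-free setting.

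For the lower bound, I would introduce the five constructions $E_1,\ldots,E_5$ described in Section~\ref{SEC:Remarks}, with $E_1(n,k,4)$ being the canonical join $K_k \vee T(n-k,3)$ that realises the Hajnal--Szemer\'edi minimum-degree bound, and the remaining $E_i$ encoding alternative ways to pack cliques, Tur\'an graphs, and bipartite layers. For each construction I would verify that $\nu(K_4, E_i) \le k$ and that $|E_i|$ matches one of the five expressions in $\Xi(n,k)$ up to additive $O(n)$. A direct case analysis then shows $\max_i |E_i| \ge \Xi(n,k) - O(n)$ across $k \in [0, n/4]$; the transition thresholds $\tfrac{2}{13}$, $\tfrac{1}{6}$, $\tfrac{4-\sqrt{2}}{14}$, $\tfrac{11+\sqrt{7}}{57}$ arise as crossings between consecutive pairs $|E_i| = |E_{i+1}|$.

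For the upper bound, assume $G$ is an $n$-vertex $(k+1)K_4$-free graph with $|G| \ge \Xi(n,k) - Cn$ for a large constant $C$. Fix a maximum $K_4$-tiling $\mathcal{M}$ of size $k' \le k$ and partition $V(G) = A \sqcup B$ with $A = V(\mathcal{M})$. Two pillars drive the analysis: (i) $G[B]$ is $K_4$-free, so Tur\'an's theorem gives $|G[B]| \le |B|^2/3$, and a stability version forces $G[B]$ to be close to $T(|B|, 3)$ with a canonical tripartition $B = V_1 \cup V_2 \cup V_3$; (ii) maximality of $\mathcal{M}$ imposes exchange constraints, since no rearrangement of one or two $K_4$'s of $\mathcal{M}$ together with a few vertices of $B$ can produce additional vertex-disjoint $K_4$'s. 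These swap constraints sharply bound how cross-edges between $A$ and $B$ can cluster and how edges within $A$ can be configured.

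From this structural skeleton I would then work regime by regime. For small $k$ the dominant contribution is a near-Tur\'an $G[B]$ with a small dense ``$A$-cap'' completely joined to most of $B$ (approximately $E_1$); as $k$ grows, the cap either thickens internally (approximately $E_2$, $E_3$) or splits off further cliques (approximately $E_4$, $E_5$). In each regime the exchange constraints, combined with the edge-count hypothesis, funnel $G$ into an $O(n)$-perturbation of the corresponding $E_i$, giving $|G| \le \Xi(n,k) + O(n)$. The main obstacle will be the delicate case analysis around the four transition thresholds, where two competing extremal constructions are simultaneously near-optimal and one must rule out hybrid configurations; this likely requires an induction or monotonicity argument on $k$ to propagate the stability information. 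A secondary difficulty, absent for $r=3$, is that $K_4$-free stability is inherently weaker than triangle-free stability (triangle-free near-extremal graphs are essentially bipartite by Andr\'asfai--Erd\H{o}s--S\'os, a very rigid structure), so pinning down the tripartition of $B$ within the $O(n)$ edge-count tolerance will require the Szemer\'edi Regularity Lemma or a careful iterated pseudostability argument.
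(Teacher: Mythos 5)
Your lower-bound plan matches the paper's (the five constructions $E_1,\ldots,E_5$ and the threshold crossings are exactly as in Section 1.2), but the upper-bound strategy has two genuine gaps.

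First, the coarse decomposition $V(G)=A\sqcup B$ with $A=V(\mathcal{M})$ for a maximum $K_4$-tiling, plus Tur\'an/exchange constraints on $B$, is essentially the naive transplant of the Allen--B\"ottcher--Hladk\'y--Piguet scheme, and the authors state explicitly that this only yields $|G|\le \Xi(n,k)+O(n)$ for $k\in[0,n/8]$. What the paper actually does is take a \emph{rank-$4$-packing} $(\mathcal{A},\mathcal{B},\mathcal{C},\mathcal{D})$ (a $K_4$-, $K_3$-, $K_2$-, $K_1$-tiling partitioning $V(G)$) maximized lexicographically, and then split $\mathcal{A}$ into six subfamilies $\mathcal{A}_1,\ldots,\mathcal{A}_6$ according to how each $K_4$ is ``seen'' by triangles, edges and vertices of the remainder; the class $\mathcal{A}_3$ appears in no extremal construction yet is indispensable for the counting. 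The final step is not a regime-by-regime stability argument but a collection of explicit quadratic programs in the nine variables $(a_1,\ldots,a_6,b,c,d)$, with several different upper-bound functions $\Phi_1,\Phi_2,\Phi_3$ used on different ranges of $k$. Your proposal contains no mechanism that would recover the information carried by the finer tiling layers $\mathcal{B},\mathcal{C},\mathcal{D}$, and without it the swap constraints are too weak beyond $k=n/8$.

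Second, two of your stated pillars fail as stated. The claim that near-extremality forces $G[B]$ close to $T(|B|,3)$ is false for the larger-$k$ regimes: in the extremal graphs $E_3,E_4,E_5$ the complement of the $K_4$-tiling is a triangle plus a sparse (even mostly edgeless) remainder, nowhere near tripartite, so no Tur\'an-stability statement applies to $B$. And invoking the Szemer\'edi Regularity Lemma cannot deliver the $O(n)$ additive error in the theorem --- regularity-based arguments lose $\varepsilon n^2$ edges, so at best you would prove $\mathrm{ex}(n,(k+1)K_4)=\Xi(n,k)+o(n^2)$, a strictly weaker statement. (The paper does use the ABHP stability theorem, but only locally, inside Proposition~\ref{PROP:K4-tiling-cover-three-vtx}, to bound $e(\mathcal{A}_6\cup\mathcal{B}\cup\mathcal{C}\cup\mathcal{D})$, where the loss can be absorbed.)
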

\textbf{Remarks.}
\begin{enumerate}[label=(\roman*)]
    \item The function $\Xi(n,k)$ arises from the edge densities of five classes of constructions $E_{1}(n,k), \ldots, E_{5}(n,k)$, which we will define in the next subsection.
    We conjecture that these constructions are extremal, i.e. $\mathrm{ex}(n,(k+1)K_{4}) = \max_{i\in [5]}|E_{i}(n,k)|$, when $n$ is large. 
    Our current method can be modified to prove this conjecture when $k$ lies in the intervals $\left[0, \frac{(20+\sqrt{10})n}{130}\right] \cup \left[\frac{n}{5}, \frac{n}{4}\right]$.
    However, doing so requires extensive case analysis and very technical details, which we have chosen not to present here. 
    \item Another potential approach that we believe could be useful (although it may also require significant technical work) for determining the exact value of $\mathrm{ex}(n,(k+1)K_{4})$, is the stability method, first introduced by Simonovits~\cite{SI68}.  
    Therefore, we include a stability version of Theorem~\ref{THM:Mian-HS-density-K4} below for further exploration in this direction. 
\end{enumerate}

Given two graphs $G$ and $H$ with the same number of vertices, the \textbf{edit distance} $\mathrm{edit}(G, H)$ between $G$ and $H$ is the minimum number of edge additions and deletions required to transform $G$ into a copy of $H$.
\begin{theorem}\label{THM:HS-K4-stability}
    For every $\varepsilon > 0$ there exist $\delta> 0$ and $N_0$ such that the following holds for all $n \ge N_0$. 
    Let $i \in [5]$ and $k \in I_i(n)$ (as defined later in Table~\ref{Tab:extremal-range}). Suppose that $G$ is an $n$-vertex graph satisfying $\nu(K_4, G) \le k$ and $|G| \ge (1-\delta) \cdot \Xi(n,k)$. Then $\mathrm{edit}(G, E_i(n,k)) \le \varepsilon n^2$. 
\end{theorem}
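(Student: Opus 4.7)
The plan is to derive Theorem~\ref{THM:HS-K4-stability} from the asymptotic density bound in Theorem~\ref{THM:Mian-HS-density-K4} via a compactness-plus-regularity argument. Assume for contradiction that the stability fails for some $\varepsilon > 0$: there exist $\delta_n \downarrow 0$, a fixed index $i \in [5]$, integers $k_n \in I_{i}(n)$, and $n$-vertex graphs $G_n$ with $\nu(K_4, G_n) \le k_n$ and $|G_n| \ge (1-\delta_n)\, \Xi(n, k_n)$, yet $\mathrm{edit}(G_n, E_{i}(n, k_n)) > \varepsilon n^2$. After passing to a subsequence I may assume $k_n/n \to \alpha$ for some $\alpha$ in the closure of $I_i/n$.

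Apply Szemer\'{e}di's Regularity Lemma to each $G_n$ with a small regularity parameter $\eta = \eta(\varepsilon)$ and perform the standard cleaning: delete edges within clusters, across irregular pairs, and across pairs of density below $\eta$. The cleaning removes $O(\eta n^2)$ edges, so the cleaned graph $G'_n$ still satisfies $|G'_n| \ge \Xi(n, k_n) - (\delta_n + O(\eta))\, n^2$ and $\nu(K_4, G'_n) \le k_n$. Passing to the weighted reduced graph $R_n$ on a bounded number of clusters, every copy of $K_4$ in $R_n$ lifts via the Counting Lemma to a $K_4$-matching of size $\Theta_{\eta}(n)$ in $G'_n$, so the hypothesis $\nu(K_4, G_n) \le k_n = (\alpha + o(1))\, n$ translates into a fractional $K_4$-matching constraint on $R_n$. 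The crux is then to show that, for this limiting ratio $\alpha$, any such near-extremal $R_n$ must be $o(1)$-close in the cut metric to the weighted blow-up template corresponding to $E_i$. Here I would feed the calculations inside the proof of Theorem~\ref{THM:Mian-HS-density-K4} back into a robust optimization problem: that proof implicitly identifies, for each $\alpha$, the graphon(s) attaining the extremal normalized density subject to the matching constraint, and the side-information $k_n \in I_i(n)$ singles out the template behind $E_i$. Lifting back through the regularity partition and matching clusters to parts of $E_i(n, k_n)$ of the appropriate sizes then yields $\mathrm{edit}(G_n, E_i(n,k_n)) = o(n^2)$, contradicting the choice of $G_n$.

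The main obstacle will be the four boundary ratios $\alpha \in \{2/13,\, 1/6,\, (4-\sqrt{2})/14,\, (11+\sqrt{7})/57\}$, at which two neighbouring templates attain the same extremal density $\Xi(n,k)/n^2$, so the limiting extremal graphon is not unique. At such a boundary one cannot directly conclude that $R_n$ is close to $E_i$ rather than $E_{i+1}$. To handle this I would either (i) verify that the two templates $E_i(n,k)$ and $E_{i+1}(n,k)$ are themselves within edit distance $o(n^2)$ of each other at the common endpoint---which is plausible since consecutive constructions in the list typically differ only by a lower-order rearrangement of edges---or, failing (i), (ii) carry out a finer second-order analysis of the optimization step underlying Theorem~\ref{THM:Mian-HS-density-K4} to show that $R_n$ must lie on the $E_i$ side of the transition whenever $k_n$ lies strictly inside $I_i(n)$. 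Either option yields the claimed stability with an effective $\delta = \delta(\varepsilon)$.
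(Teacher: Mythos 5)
Your proposal takes a genuinely different route from the paper. The paper's own argument is a one-sentence remark that the stability version follows by re-examining the direct combinatorial proof of Theorem~\ref{THM:Mian-HS-density-K4}: one tracks when each local edge inequality from Sections~\ref{SEC:Local-estimate}--\ref{SEC:A6BCD} and each quadratic-programming step from Section~\ref{SEC:Global-estimate} is within $\delta n^2$ of tight, and shows that near-tightness simultaneously forces the packing parameters $(a_1,\ldots,a_6,b,c,d)$ to approach the extremal vector of Table~\ref{Tab:extremal-vector} and forces the bipartite pieces $G[V(\mathcal{A}_i),V(\mathcal{A}_j)]$, $G[V(\mathcal{A}_i),V(\mathcal{B}\cup\mathcal{C}\cup\mathcal{D})]$, etc.\ to be near-complete or near-empty as in $E_i(n,k)$. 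You instead propose a regularity/compactness route through a reduced graph (graphon) optimization.

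The decisive gap in your proposal is the sentence claiming that the proof of Theorem~\ref{THM:Mian-HS-density-K4} ``implicitly identifies, for each $\alpha$, the graphon(s) attaining the extremal normalized density subject to the matching constraint.'' It does not. The paper's optimization in Section~\ref{SEC:Global-estimate} is not over graphons or reduced graphs at all: the variables $(a_1,\ldots,a_6,b,c,d)$ record the sizes of the parts of a lexicographically maximal rank-$4$-packing $(\mathcal{A},\mathcal{B},\mathcal{C},\mathcal{D})$ of $G$ together with the refined splitting $\mathcal{A}=\mathcal{A}_1\cup\cdots\cup\mathcal{A}_6$, and the functions $\Phi, \Phi_1, \Phi_2, \Phi_3$ bound $|G|$ via discrete rotation arguments (Lemmas~\ref{LEMMA:Q1Qi}--\ref{LEMMA:A6BCD-upper-bound-a}) that have no analogue at the level of a Szemer\'{e}di partition. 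Cleaning a regularity partition and passing to the reduced graph $R_n$ discards exactly the structure the paper exploits; to close your argument you would have to solve a new extremal problem on graphons with a fractional $K_4$-matching constraint and prove uniqueness of its maximizer, and nothing in the paper supplies that. There is also a secondary subtlety you elide: the hypothesis $\nu(K_4,G_n)\le k_n$ transfers to an upper bound on the \emph{fractional} $K_4$-tiling number of $R_n$ only after a nontrivial Koml\'{o}s-type transference argument, and you would then need the extremal density under the fractional constraint to match $\Xi(n,k)/n^2$, which is again a new statement. Your observation about the boundary ratios is correct and option~(i) (that $E_i$ and $E_{i+1}$ are $o(n^2)$-close in edit distance at the common endpoint) is indeed forced by the theorem's validity, but you do not verify it, and without it even a successful graphon optimization leaves the endpoint cases open.
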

Theorem~\ref{THM:HS-K4-stability} follows from the proof of Theorem~\ref{THM:Mian-HS-density-K4} with straightforward modifications, and the tedious details are omitted here. 

\textbf{Other related work}:
Note that the edge density version of Question~\eqref{prob:main} can be rephrased as follows:  
\begin{align}\label{question:edge-density-matching}
    \text{Given edge density, minimize the number of vertex-disjoint copies of $F$ in a graph.} \tag{$\star$}
\end{align}
One could replace ``the number of vertex-disjoint copies of $F$'' with other parameters, such as ``the number of copies of $F$'', which corresponds to the classical Erd{\H o}s--Rademacher Problem (see ~\cite{Erdos55,Goo59,Erd62,MM62,NS63,Bol76,LS76,Nik76,NK81,LS83,Fis89,Raz08,Nik11,Rei16,Mub10,Mub13,PR17,KLPS20,LPS20,LM22,BC23,MY23,LP25} for related results); or with ``the number of edge-disjoint copies of $F$'' (see e.g.~\cite{EGP66,Erd69,Gyo88,Gyo91} for related results). 
These results, combined with theorems from Extremal Set Theory (see the survey~\cite{FT16}), can provide some bounds for~\eqref{question:edge-density-matching}. However, these bounds are usually non-tight (even asymptotically), as the extremal constructions for these problems are usually quite different. 

\subsection{Extremal constructions}
In this subsection, we describe the structures of the five classes $E_{1}(n,k), \ldots, E_{5}(n,k)$ of extremal constructions.
For convenience, we slightly abuse notation by referring to $E_{i}(n,k)$ as a typical member in this class.

Given $t$ pairwise disjoint sets $V_1, \ldots, V_t$, we use $K_{t}[V_1, \ldots, V_t]$ to denote \textbf{the complete $t$-partite graph} with parts $V_1, \ldots, V_t$. We will omit the subscript $t$ if it is clear from the context. 

\begin{figure}[H]
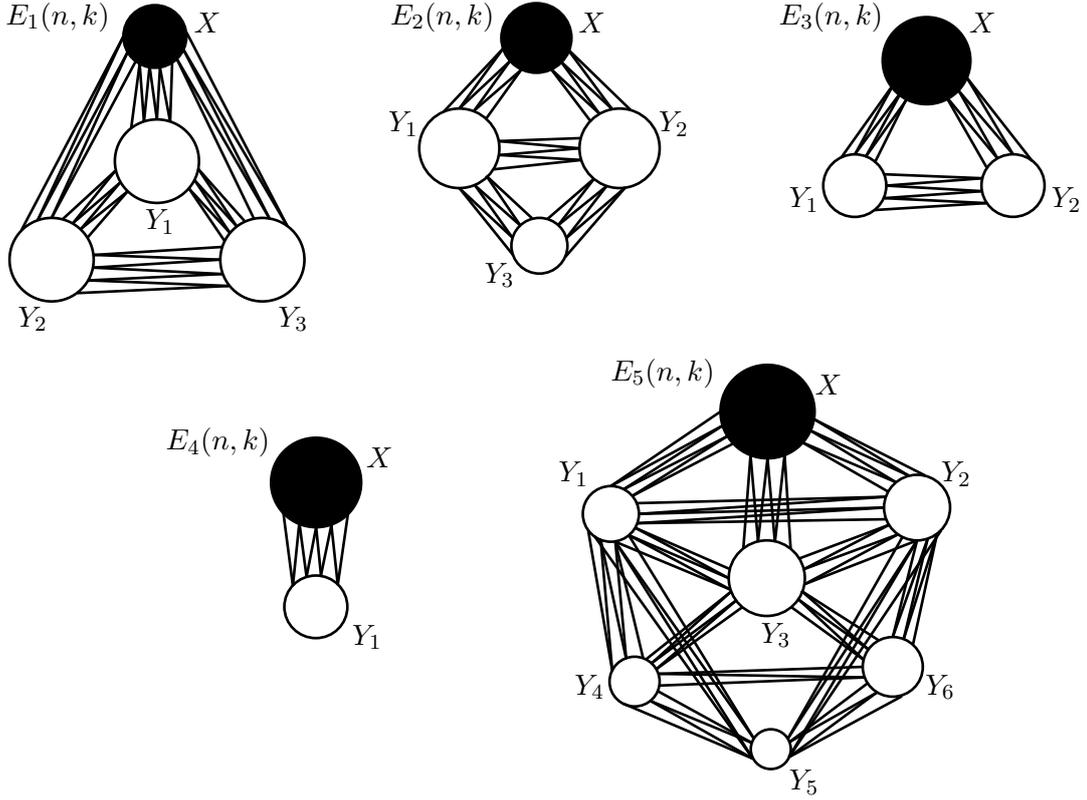

\centering
\tikzset{every picture/.style={line width=1pt}} 


\caption{Structures of $E_{1}(n,k), \ldots, E_{5}(n,k)$.}
\label{Fig:extremal-graphs}
\end{figure}

Let $n \ge 4k \ge 0$ be integers. Define the following five classes of  graphs on $n$ vertices (see Figure~\ref{Fig:extremal-graphs})$\colon$
\begin{itemize}
    \item The vertex set of $E_{1}(n,k)$ consists of four parts $X, Y_1, Y_2, Y_3$, with sizes given by 
    \begin{align*}
        |X| = k, \quad 
        |Y_1| = \left\lfloor \frac{n-k}{3} \right\rfloor, \quad 
        |Y_2| = \left\lfloor \frac{n-k+1}{3} \right\rfloor, \quad\text{and}\quad 
        |Y_3| = \left\lfloor \frac{n-k+2}{3} \right\rfloor.   
    \end{align*}
    The edge set of $E_{1}(n,k)$ is defined as
    \begin{align*}
        E_{1}(n,k)
        = \binom{X}{2} \cup K[X, Y_1, Y_2, Y_3]. 
    \end{align*}
    \item The vertex set of $E_{2}(n,k)$ consists of four parts $X, Y_1, Y_2, Y_3$, with sizes satisfying 
    \begin{align*}
        |X| = 2k+1, \quad 
        |Y_1| = \left\lfloor \frac{n+i_{1}}{3} \right\rfloor, \quad 
        |Y_2| = \left\lfloor \frac{n+i_{2}}{3} \right\rfloor, \quad\text{and}\quad 
        |Y_3| = \left\lfloor \frac{n+i_{3}}{3} \right\rfloor  - |X|,  
    \end{align*}
    where $\{i_1, i_2, i_3\} = \{0,1,2\}$.  
    The edge set of $E_{2}(n,k)$ is defined as
    \begin{align*}
        E_{2}(n,k)
        = \binom{X}{2} \cup K[X \cup Y_3, Y_1, Y_2]. 
    \end{align*}
    Note that $E_{2}(n,k)$ is defined only for $k \in \left[0, \frac{n-1}{6}\right]$. 
    \item The vertex set of $E_{3}(n,k)$ consists of three parts $X, Y_1, Y_2$, with sizes given by 
    \begin{align*}
        |X| = 2k+1, \quad 
        |Y_1| = \left\lfloor \frac{n-|X|}{2} \right\rfloor, \quad\text{and}\quad
        |Y_2| = \left\lceil \frac{n-|X|}{2} \right\rceil. 
    \end{align*}
    The edge set of $E_{3}(n,k)$ is defined as
    \begin{align*}
        E_{3}(n,k)
        = \binom{X}{2} \cup K[X, Y_1, Y_2]. 
    \end{align*}
    \item The vertex set of $E_{4}(n,k)$ consists of two parts $X, Y_1$, with sizes given by 
    \begin{align*}
        |X| = 3k+2 \quad\text{and}\quad
        |Y_1| = n-|X|.
    \end{align*}
    The edge set of $E_{4}(n,k)$ is defined as
    \begin{align*}
        E_{4}(n,k)
        = \binom{X}{2} \cup K[X, Y_1]. 
    \end{align*}
    \item The vertex set of $E_{5}(n,k)$ consists of seven parts $X, Y_1, \ldots, Y_6$, with sizes satisfying 
    \begin{align*}
        & |X| = 12k-2n+9-j, \quad
        |Y_1| + |Y_2| + |Y_3| = 4k+3 - |X|,  \quad \\[0.5em]
        & |Y_4| + |Y_5| + |Y_6| = n-4k-3,  \quad |Y_1|+|Y_6|=n-4k-3+ \left\lfloor \frac{j}{3} \right\rfloor, \\[0.5em]
        &|Y_2|+|Y_4|=n-4k-3+ \left\lfloor \frac{j+1}{3} \right\rfloor,  \quad \text{and} \quad |Y_3|+|Y_5|=n-4k-3+ \left\lfloor \frac{j+2}{3} \right\rfloor,
    \end{align*}
    where $j \in \{0,1,2,3\}$. 
    The edge set of $E_{5}(n,k)$ is defined as
    \begin{align*}
        E_{5}(n,k)
        = \binom{X}{2} \cup K[X, Y_1\cup Y_2 \cup Y_3] \cup K[Y_1 \cup Y_6, Y_2 \cup Y_4, Y_3 \cup Y_5]. 
    \end{align*}
   Note that $E_{5}(n,k)$ is defined only for $k \ge \frac{2n-9}{12}$.
\end{itemize}
Simple calculations show that 
\begin{align*}
    |E_{1}(n,k)|
    & = \binom{k}{2}+ k(n-k) + \left\lfloor\frac{(n-k)^2}{3} \right\rfloor 
    \approx \frac{n^2}{3}+ \frac{k n}{3} -\frac{k^2}{6},  \\[0.5em]
    |E_{2}(n,k)|
    & = \binom{2k+1}{2} + \left\lfloor\frac{n^2}{3} \right\rfloor 
    \approx \frac{n^2}{3} + 2k^2, \\[0.5em]
    |E_{3}(n,k)|
    & =  \binom{2k+1}{2} + (2k+1)(n-2k-1) + \left\lfloor\frac{(n-2k-1)^2}{4} \right\rfloor 
    \approx \frac{n^2}{4} + k n- k^2,\\[0.5em]
    |E_{4}(n,k)|
    & = \binom{3k+2}{2}+ (3k+2)(n-3k-2) 
    \approx 3k n-\frac{9 k^2}{2}, \\[0.5em]
    |E_{5}(n,k)|
    & = \binom{12k-2n+9}{2}+(12k-2n+9)(2n-8k-6)+3\left( n-4k-3\right)^2 \\[0.5em]
    & \approx n^2-8 k n + 24 k^2.
\end{align*}
We conjecture that $E_{1}(n,k), \ldots, E_{5}(n,k)$ are the extremal constructions for $\mathrm{ex}(n,(k+1)K_4)$ when $n$ is large. The following table outlines the extremal range for each class of constructions.

\begin{table}[H]
\centering
\linespread{2} \selectfont
\begin{tabular}{c|l}
    \thickhline
    \textbf{Construction}  & \textbf{Extremal Range} \\
    \thickhline
    $E_1(n,k)$ & $I_1(n) \coloneqq \left[0, \frac{2n-9}{13} \right]$  \\
    \hline
    $E_2(n,k)$ & $I_2(n) \coloneqq  \left[\frac{2n-9}{13}, \frac{n-1}{6} \right]$ \\
    \hline
    $E_3(n,k)$ & $I_3(n) \coloneqq  \left[ \frac{n-1}{6}, \frac{4n-11-\sqrt{2n^2-4n-5}}{14} \right]$ \\
    \hline
    $E_4(n,k)$ & $I_4(n) \coloneqq \left[ \frac{4n-11-\sqrt{2n^2-4n-5}}{14}, \frac{22n-75+\sqrt{28n^2-108n+153}}{114} \right]$ \\
    \hline
    $E_5(n,k)$ & $I_5(n) \coloneqq  \left[ \frac{22n-75+\sqrt{28n^2-108n+153}}{114},\frac{n-3}{4} \right]$  \\
    \thickhline
\end{tabular}
\caption{Extremal range for each class of constructions.} \label{Tab:extremal-range}
\end{table}

\subsection{Setup}\label{SUBSEC:setup}
In this subsection, we present the notations and assumptions that will be used throughout the remainder of this paper, unless otherwise stated.

Let $n \ge 4k \ge 0$ be integers and assume that $n$ is sufficiently large. 
Let $G$ be an $n$-vertex graph with $\nu(K_4, G) = k$. 
A $4$-tuple of families $(\mathcal{A}, \mathcal{B}, \mathcal{C}, \mathcal{D})$ is a \textbf{rank-$4$-packing} of $G$ if 
\begin{enumerate}[label=(\roman*)]
    \item $\mathcal{A}$ is a $K_4$-tiling in $G$, 
    \item $\mathcal{B}$ is a $K_3$-tiling in $G$,
    \item $\mathcal{C}$ is a $K_2$-tiling in $G$,  
    \item $\mathcal{D}$ is a $K_1$-tiling in $G$ (i.e. a collection of vertices), and
    \item $V(\mathcal{A}) \cup V(\mathcal{B}) \cup V(\mathcal{C}) \cup V(\mathcal{D}) = V(G)$ is a partition of $V(G)$. 
\end{enumerate}
As suggested in~\cite{ABHP15}, for the remainder of the paper, we additionally assume that $(|\mathcal{A}|, |\mathcal{B}|, |\mathcal{C}|, |\mathcal{D}|)$ is \textbf{maximized in lexicographic order} among all rank-$4$-packings of $G$. 
Note that, based on the assumption on $G$, we have 
\begin{align}\label{equ:assumption-ABCD}
    |\mathcal{A}| = \nu(K_4, G) = k
    \quad\text{and}\quad 
    4 |\mathcal{A}| + 3|\mathcal{B}| + 2|\mathcal{C}|+|\mathcal{D}| = |V(G)| = n.
\end{align}

Below, we partition $\mathcal{A}$ further into six subfamilies. 

Let $A, B, C,D \subseteq  V(G)$ be four disjoint sets such that $G[A] \cong K_4$, $G[B] \cong K_3$, $G[C] \cong K_2$, and $G[D] \cong K_1$. We say 
\begin{itemize}
    \item $B$ \textbf{$3$-sees} $A$ if there exists a vertex $v\in A$ such that $G[B\cup \{v\}] \cong K_4$, 
    \item $B$ \textbf{$2$-sees} $A$ if there exist two vertices $v_1, v_2\in A$ and two vertices $u_1, u_2 \in B$ such that $G[\{v_1, v_2, u_1, u_2\}] \cong K_4$,
    \item $C$ \textbf{sees} $A$ if there exist two vertices $v_1, v_2\in A$ such that $G[C\cup \{v_1, v_2\}] \cong K_4$, 
    \item $D$ \textbf{sees} $A$ if there exist three vertices $v_1, v_2, v_3\in A$ such that $G[D\cup \{v_1, v_2,v_3\}] \cong K_4$,
\end{itemize}

\begin{figure}[H]
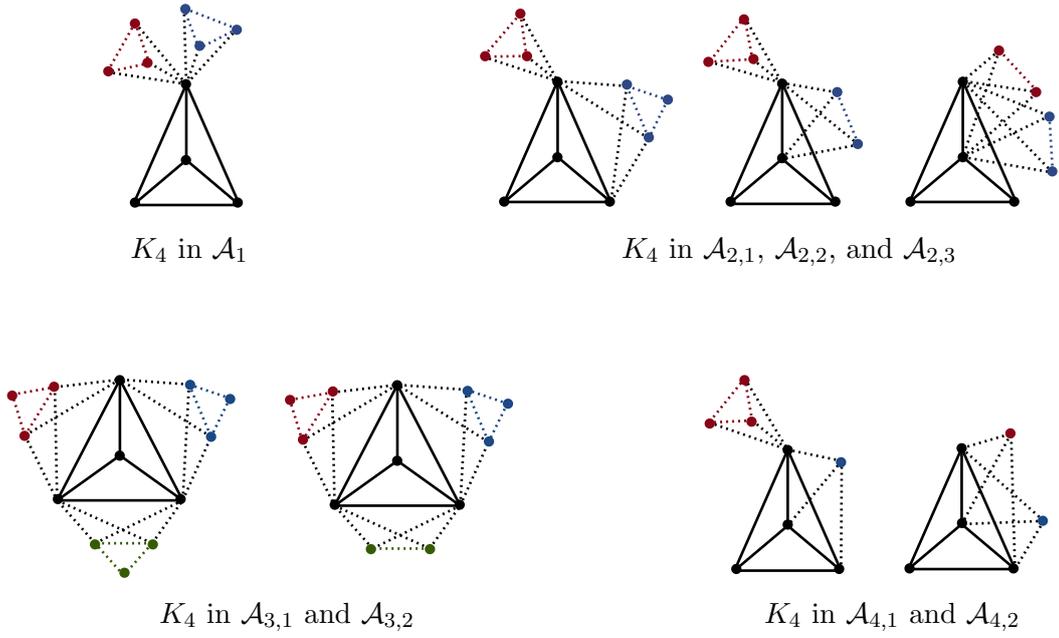

\centering

\tikzset{every picture/.style={line width=1pt}} 


\caption{Members in $\mathcal{A}_{1}, \ldots, \mathcal{A}_{4}$.}
\label{Fig:def-A1A2A3A4}
\end{figure}

The family $\mathcal{A}$ is partitioned into six subfamilies, $\mathcal{A}_1, \ldots, \mathcal{A}_6$,  as follows (see Figure~\ref{Fig:def-A1A2A3A4})$\colon$
\begin{itemize}
    \item $\mathcal{A}_1$ consists of members in $\mathcal{A}$ that are $3$-seen by at least two members in $\mathcal{B}$, 
    \item  $\mathcal{A}_2$ is the union of three families $\mathcal{A}_{2,1}$,  $\mathcal{A}_{2,2}$, and $\mathcal{A}_{2,3}$, where 
    \begin{itemize}
        \item $\mathcal{A}_{2,1}$ consists of members in $\mathcal{A}\setminus \mathcal{A}_1$ that are $3$-seen by one member in $\mathcal{B}$ and $2$-seen by at least one other member in $\mathcal{B}$,
        \item $\mathcal{A}_{2,2}$ consists of members in $\mathcal{A}\setminus \mathcal{A}_1$ that are $3$-seen by one member in $\mathcal{B}$ and seen by at least one copy of $K_2$ in $\mathcal{C}$,
        \item $\mathcal{A}_{2,3}$ consists of members in $\mathcal{A}\setminus \mathcal{A}_1$ that are seen by at least two members in $\mathcal{C}$, 
    \end{itemize}
    \item $\mathcal{A}_3$ is the union of two families $\mathcal{A}_{3,1}$ and $\mathcal{A}_{3,2}$, where
    \begin{itemize}
        \item $\mathcal{A}_{3,1}$ consists of members in $\mathcal{A}\setminus (\mathcal{A}_1 \cup \mathcal{A}_2)$ that are $2$-seen by at least three members in $\mathcal{B}$, 
        \item $\mathcal{A}_{3,2}$ consists of members in $\mathcal{A}\setminus (\mathcal{A}_1 \cup \mathcal{A}_2)$ that are $2$-seen by at least two members in $\mathcal{B}$ and seen by at least one member in $\mathcal{C}$, 
    \end{itemize}
    \item $\mathcal{A}_4$ is the union of two families $\mathcal{A}_{4,1}$ and $\mathcal{A}_{4,2}$, where
    \begin{itemize}
        \item $\mathcal{A}_{4,1}$ consists of members in $\mathcal{A}\setminus (\mathcal{A}_1 \cup \mathcal{A}_2 \cup \mathcal{A}_3)$ that are $3$-seen by one member in $\mathcal{B}$ and seen by at least one member in $\mathcal{D}$, 
        \item $\mathcal{A}_{4,2}$ consists of members in $\mathcal{A}\setminus (\mathcal{A}_1 \cup \mathcal{A}_2 \cup \mathcal{A}_3)$ that are seen by at least two members in $\mathcal{D}$, 
    \end{itemize}
    \item $\mathcal{A}_5$ and $\mathcal{A}_6$ are obtained through the following process$\colon$ 
    Initially, we set $\mathcal{A}_5 \coloneqq \emptyset$ and $\mathcal{A}_6 \coloneqq \mathcal{A}\setminus (\mathcal{A}_1 \cup \mathcal{A}_2 \cup \mathcal{A}_3 \cup \mathcal{A}_4)$. If there exists a member, say $Q$, in $\mathcal{A}_6$ that sends at most $15\left(|\mathcal{A}_{6}|-1\right)$ edges to other members in $\mathcal{A}_6$, then we update $\mathcal{A}_5$ and $\mathcal{A}_6$ by letting $\mathcal{A}_5 \coloneqq \mathcal{A}_5 \cup \{Q\}$ and $\mathcal{A}_6 \coloneqq \mathcal{A}_6 \setminus \{Q\}$. This process is repeated until every member in $\mathcal{A}_6$ sends more than $15\left(|\mathcal{A}|-1\right)$ edges to other members in $\mathcal{A}_6$.
\end{itemize}
Define 
\begin{align*}
    (a_1, \ldots, a_6, a, b, c, d) 
    \coloneqq \left(|\mathcal{A}_1|, \ldots, |\mathcal{A}_{6}|, |\mathcal{A}|,|\mathcal{B}|, |\mathcal{C}|,|\mathcal{D}|\right). 
\end{align*}
Note from~\eqref{equ:assumption-ABCD} that 
\begin{align*}
    a_1 + \cdots + a_6 = a = k
    \quad\text{and}\quad 
    4a + 3b + 2c + d = n.
\end{align*}
%

\begin{table}[H]
  \centering
    \linespread{1.4} \selectfont
    \begin{tabular}{c!{\vrule width 1.2pt}c|c|c|c|c|c|c|c|c}
    \thickhline
    \empty & $a_1$ & $a_2$ & $a_3$ & $a_4$ & $a_5$ & $a_6$ & $b$ & $c$ & $d$\\
    \thickhline
    $E_{1}(n,k)$ & $k$ & $0$ & $0$ & $0$ & $0$ & $0$ & $\frac{n -4 k}{3}$ & $0$ & $0$\\
    \hline
    $E_{2}(n,k)$ & $0$ & $k$ & $0$ & $0$ & $0$ & $0$ & $\frac{n - 6k}{3}$ & $k$ & $0$\\ 
    \hline
    $E_{3}(n,k)$ & $0$ & $k$ & $0$ & $0$ & $0$ & $0$ & $1$ & $\frac{n-4k-3}{2}$ & $0$\\
    \hline
    $E_{4}(n,k)$ & $0$ & $0$ & $0$ & $k$ & $0$ & $0$ & $1$ & $0$ & $n-4k-3$\\
    \hline
    $E_{5}(n,k)$ & $0$ & $0$ & $0$ & $0$ & $0$ & $k$ & $/$ & $/$ & $/$\\
    \thickhline
    \end{tabular}
    \caption{The value of $(a_1, \ldots, a_6, b,c,d)$ in the extremal constructions.} \label{Tab:extremal-vector}
\end{table}

\textbf{Outline of the proof}:
We begin by recalling the proof outline of Allen--B{\" o}ttcher--Hladk\'{y}--Piguet~\cite{ABHP14} for Theorem~\ref{THM:ABHP-density-CH}.
Their approach starts with taking a lexicographically maximized rank-$3$-packing $(\mathcal{T}, \mathcal{M}, \mathcal{J})$ of $G$, where $\mathcal{T}$ is a $K_3$-tiling, $\mathcal{M}$ is $K_2$-tiling, and $\mathcal{J}$ is a $K_1$-tiling.  The family $\mathcal{T}$ is further partitioned into four subfamilies $\mathcal{T}_1, \mathcal{T}_2, \mathcal{T}_3, \mathcal{T}_4$, based on the connections of its members to those in $\mathcal{M}$, $\mathcal{J}$, and other members in $\mathcal{T}$. 
Using the maximality of $(|\mathcal{T}|, |\mathcal{M}|, |\mathcal{J}|)$ and clever combinatorial arguments, they derived an upper bound $f(|\mathcal{T}_1|, \ldots, |\mathcal{T}_4|, |\mathcal{M}|, |\mathcal{J}|)$ for the number of edges in terms of six variables $(|\mathcal{T}_1|, \ldots, |\mathcal{T}_4|, |\mathcal{M}|, |\mathcal{J}|)$. 
Finally, they proved that the maximum value of $f(t_1, t_2, t_3, t_4, m,j)$ over the region 
\begin{align*}
    \left\{(t_1, t_2, t_3, t_4, m,j) \in \mathbb{N}^{6} \colon t_1+ t_2 + t_3 + t_4 = k ~\text{and}~ 4k+2m+j = n\right\}. 
\end{align*}
is given by the number of edges of members in the four classes of extremal constructions. 

Our proof builds upon the framework developed by Allen--B{\" o}ttcher--Hladk\'{y}--Piguet. However, as they noted ``there are some additional difficulties for $r\ge 4$ that do not appear for $r=3$". Indeed, our initial attempt to naively replicate their argument can only yield the desired upper bound (i.e. $\Xi(n,k)$, as defined in Theorem~\ref{THM:Mian-HS-density-K4}) on $\mathrm{ex}(n,(k+1)K_4)$ for $k \in \left[0, \frac{n}{8}\right]$. This limitation arises because the maximum value of the upper bound we obtained for $|G|$ (derived from the combination of bounds in Section~\ref{SEC:Local-estimate} and Lemma~\ref{LEMMA:A6BCD-upper-bound-a}) agree with $\Xi(n,k)$ only for $k \in \left[0, \frac{n}{8}\right]$. 

To address this issue, we introduce several key innovations:
We employ the more delicate partition $\mathcal{A} = \mathcal{A}_1 \cup \cdots \cup \mathcal{A}_6$ as defined earlier, where, notably (and somewhat unexpectedly), members of $\mathcal{A}_3$ do not appear in any extremal construction, yet their inclusion is essential for achieving the desired upper bound. 
Significant effort is also required to establish useful upper bounds for the number of edges in $\mathcal{A}_3$ (see Section~\ref{SEC:A3}), $\mathcal{A}_4 \cup \mathcal{B} \cup \mathcal{C}$ (see Section~\ref{SEC:A4BC}), and $\mathcal{A}_6 \cup \mathcal{B} \cup \mathcal{C} \cup \mathcal{D}$ (see Section~\ref{SEC:A6BCD}), respectively. 
In fact, we derive multiple upper bounds for these parts, each tailored to specific cases. 
Similarly, we employ several distinct $9$-variable functions to upper bound $|G|$ (see Section~\ref{SEC:Global-estimate}), allowing us to achieve the desired final bound in different cases.
Additionally, to make our calculations less complicated, we ignore the linear terms in the upper bounds we derived and focus only on the quadratic terms. This simplification is one of the reasons for the $O(n)$ error term in Theorem~\ref{THM:Mian-HS-density-K4}. 

Our approach for the case $r = 4$ suggests that, to determine $\mathrm{ex}(n,(k+1)K_r)$ for general $r \ge 5$, a carefully designed partition of the $K_{r}$-tiling is essential. 

\medskip

\textbf{Organization of the paper}: 
We present some preliminary results in the next section.
In Section~\ref{SEC:Local-estimate}, we provide basic upper bounds for the number of edges crossing the parts $\{\mathcal{A}_1, \ldots, \mathcal{A}_6, \mathcal{B}, \mathcal{C}, \mathcal{D}\}$ and inside each part. 
In Section~\ref{SEC:A3}, we present improved upper bounds for $e(\mathcal{A}_3)$. 
In Section~\ref{SEC:A4BC}, we present improved upper bounds for $e(\mathcal{A}_4) + e(\mathcal{A}_4, \mathcal{B} \cup \mathcal{C})$. 
In Section~\ref{SEC:A6BCD}, we present improved upper bounds for $e(\mathcal{A}_6 \cup \mathcal{B} \cup \mathcal{C} \cup \mathcal{D})$. 
In Section~\ref{SEC:Global-estimate}, we combine these upper bounds to prove Theorem~\ref{THM:HS-K4-stability}, after solving certain quadratic (or convex) optimization problems (Sections~\ref{SUBSEC:inequality-Phi1},~\ref{SUBSEC:inequality-Phi2}, and~\ref{SUBSEC:inequality-Phi3}).
Section~\ref{SEC:Remarks} contains an application of Theorem~\ref{THM:Mian-HS-density-K4} and a conjecture for general $r \ge 5$.

\medskip 

\begin{figure}[H]
\centering
\tikzset{every picture/.style={line width=1pt}} 


\caption{Structure of the proof for Theorem~\ref{THM:Mian-HS-density-K4}. Here, $\alpha \coloneqq \frac{20+\sqrt{10}}{130} = 0.178171...$.} 
\label{Fig:MindMap}
\end{figure}

\section{Preliminaries}\label{SEC:Prelim}
For every vertex set $S \subseteq V(G)$, let $G[S]$ denote the \textbf{induced subgraph} of $G$ on $S$. The number of edges in $G[S]$ is denoted by $e_{G}(S)$ for simplicity. 
We use $G - S$ to denote the induced subgraph of $G$ on $V(G) \setminus S$. 
Given another vertex $T \subseteq V(G)$ that is disjoint from $S$, let $G[S,T]$ denote the \textbf{induced bipartite subgraph} of $G$ on $S$ and $T$, i.e. $G[S,T]$ consists of all edges in $G$ that have nonempty intersection with both $S$ and $T$. The number of edges in $G[S,T]$ is denoted by $e_{G}(S,T)$ for simplicity. 
Let $\overline{G}$ denote the \textbf{complement} of $G$. 

Let $\mathcal{F}_1$ be an $F_1$-tiling in $G$ and $\mathcal{F}_2$ be an $F_2$-tiling in $G$, and assume that $V(\mathcal{F}_1) \cap V(\mathcal{F}_2) = \emptyset$. 
We use $e_{G}(\mathcal{F}_1, \mathcal{F}_2)$ to denote the number of edges in the induced bipartite subgraph $G[V(\mathcal{F}_1), V(\mathcal{F}_2)]$. 
In particular, if $F_1$ and $F_2$ are vertex-disjoint subgraphs of $G$, then $e_{G}(F_1, F_2)$ is the number of edges in the induced bipartite subgraph $G[V(F_1), V(F_2)]$. 
Similarly, we use $e_{G}(\mathcal{F}_1)$ and $e_{G}(F_1)$ to denote the number of edges in the induced subgraphs $G[V(\mathcal{F}_1)]$ and $G[V(F_1)]$, respectively. 
We will omit the subscript $G$ from the notations defined above if it is clear from the context. 

The following classical theorems by Erd\H{o}s--Gallai will be useful for us. 
\begin{theorem}[Erd\H{o}s--Gallai~\cite{EG59}]\label{THM:Erdos-Gallai-matching}
    Let $n \ge t \ge 1$ be integers satisfying $n \ge 2t$. 
    Suppose that $G$ is a graph on $n$ vertices with $\nu(G) \le t$. 
    Then 
    \begin{align*}
        |G|
        \le \max\left\{t(n-t) + \binom{t}{2},~ \binom{2t+1}{2}\right\}
        \le t n.
    \end{align*}
\end{theorem}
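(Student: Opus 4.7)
The plan is to establish the main bound $|G| \le \max\{t(n-t) + \binom{t}{2},\, \binom{2t+1}{2}\}$ by induction on $n$, with $t$ fixed. Two families of extremal configurations should guide the argument: the graph $K_t \vee \overline{K_{n-t}}$, which has $t(n-t) + \binom{t}{2}$ edges and matching number $t$, and the disjoint union of $K_{2t+1}$ with $n - 2t - 1$ isolated vertices, which has $\binom{2t+1}{2}$ edges and matching number $t$. Once the main bound is established, the secondary inequality $\max\{t(n-t) + \binom{t}{2},\, \binom{2t+1}{2}\} \le tn$ is a direct computation: $t(n-t) + \binom{t}{2} = tn - \binom{t+1}{2} \le tn$, and $\binom{2t+1}{2} = t(2t+1) \le tn$ as soon as $n \ge 2t+1$.

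For the base case $n = 2t$, the trivial estimate $|G| \le \binom{2t}{2} < \binom{2t+1}{2}$ suffices. For the inductive step $n \ge 2t+1$, if $G$ contains an isolated vertex $v$, then applying the induction hypothesis to $G - v$ yields $|G| = |G - v| \le \max\{t(n-1-t) + \binom{t}{2},\, \binom{2t+1}{2}\}$, which is dominated by the desired bound. Otherwise I fix a maximum matching $M = \{x_1 y_1, \dots, x_s y_s\}$ with $s = \nu(G) \le t$, write $V(M)$ for its vertex set, and set $U = V(G) \setminus V(M)$. By maximality of $M$, the set $U$ is independent. The crucial structural consequence of maximality is that for each $i \in [s]$ one cannot simultaneously find distinct $u, u' \in U$ with $u x_i, u' y_i \in E(G)$; otherwise the alternating path $u\, x_i\, y_i\, u'$ would augment $M$ to a matching of size $s+1$, contradicting its maximality.

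Combining this non-augmenting obstruction with the trivial bounds $|G[V(M)]| \le \binom{2s}{2}$ and $|G[U]| = 0$, one derives a parametrized upper bound on $|G|$ in terms of the number of matching edges $x_i y_i$ for which one or both endpoints send edges into $U$. Optimizing this parameter splits into two regimes corresponding to the two extremal configurations: when many matching edges carry an endpoint with a large $U$-neighborhood, the structure concentrates into $K_t \vee \overline{K_{n-t}}$; when $U$ contributes few edges, $G$ essentially lives on $V(M)$ and the bound becomes $\binom{2s+1}{2} \le \binom{2t+1}{2}$.

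The main obstacle will be the edge-counting step between $V(M)$ and $U$: the naive estimate $e(V(M), U) \le s \cdot |U|$ is far too weak, and a careful exchange argument is needed to translate the local non-augmenting constraint into a sharp global bound matching the two extremal constructions. Proving that the optimum across all allowable parameter values is attained at one of these two configurations, rather than at an intermediate mixture, is the technical heart of the Erdős--Gallai argument.
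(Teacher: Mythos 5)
The paper does not prove this theorem -- it is cited directly from Erd\H{o}s--Gallai~\cite{EG59} -- so there is no in-paper argument to compare against, and the proposal must be judged on its own completeness.

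Your outline correctly identifies the two extremal configurations, the induction on $n$ with the isolated-vertex reduction, and the first structural consequence of the maximality of $M$ (no augmenting path $u\, x_i\, y_i\, u'$ with $u \neq u'$). However, that single obstruction is too weak to produce the stated bound, and the remainder of the argument -- the part you flag as ``the technical heart'' -- is not actually carried out. Concretely: the lemma you state only constrains each pair $\{x_i, y_i\}$ \emph{separately}. It yields that for each $i$, either one of $x_i, y_i$ has no $U$-neighbor, or $x_i$ and $y_i$ share a single common $U$-neighbor; hence $e(\{x_i,y_i\},U) \le |U|$ for $|U|\ge 2$. Summing this with the crude bound $e(V(M)) \le \binom{2s}{2}$ gives $|G| \le \binom{2s}{2} + s(n-2s) = sn - s \le tn - t$. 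That establishes $|G| \le tn$, but it does \emph{not} reach the sharp intermediate $t(n-t)+\binom{t}{2} = tn - \binom{t+1}{2}$, which is strictly smaller for $t \ge 2$. Getting the sharp bound requires forbidding alternating paths that cross between distinct matching edges (e.g.\ if $u x_i$, $y_i x_j$, and $y_j u'$ are all edges with $u \neq u' \in U$, one can also augment), which constrains the edge distribution \emph{inside} $V(M)$ jointly with the edges into $U$. You never formulate this cross-pair obstruction, and you never carry out the ``optimization over parameter values'' that you gesture at; as written, the argument terminates at the weak bound $tn - t$.

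So the proposal proves the final inequality $|G| \le tn$ (after one notes the boundary case $n = 2t$, where the middle term $\binom{2t+1}{2}$ actually exceeds $tn$ but $|G| \le \binom{2t}{2} < tn$ directly), but it does not prove the displayed sharp maximum. If your only goal were the weak bound $|G| \le tn$, which is all the present paper uses, your argument is essentially complete; if you intend to prove the theorem as stated, you must supply the missing joint exchange argument.
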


For every integer $t \ge 1$, let $P_t$ denote the path on $t$ vertices. 
\begin{theorem}[Erd\H{o}s--Gallai~\cite{EG59}]\label{THM:Erdos-Gallai-path-Turan}
    Let $n \ge t \ge 2$ be integers. Then 
    \begin{align*}
        \mathrm{ex}(n,P_t) \leq \frac{(t-2)n}{2}.
    \end{align*}
\end{theorem}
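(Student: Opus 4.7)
The plan is to prove this classical bound by induction on $n$. The base case $n \le t-1$ is immediate since $|G| \le \binom{n}{2} = n(n-1)/2 \le (t-2)n/2$. For the inductive step, let $G$ be a $P_t$-free graph on $n \ge t$ vertices, and split on the minimum degree $\delta(G)$.

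If $\delta(G) \le (t-2)/2$, I would simply delete a minimum-degree vertex $v$. Since $G - v$ is still $P_t$-free and has $n-1$ vertices, the inductive hypothesis gives $|G - v| \le (t-2)(n-1)/2$, and hence $|G| \le (t-2)(n-1)/2 + (t-2)/2 = (t-2)n/2$, as required.

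Otherwise $\delta(G) \ge (t-1)/2$, and I would argue component-by-component that each component $C$ either has at most $t-1$ vertices or already contains a $P_t$. In the first sub-case, $|E(C)| \le \binom{|C|}{2} \le |C|(t-2)/2$, so if every component is of this form then summing gives $|E(G)| \le n(t-2)/2$. For a component $C$ with $|C| \ge t$, take a longest path $P = v_1 v_2 \ldots v_\ell$ in $C$ and assume for contradiction that $\ell \le t - 1$; then all neighbors of $v_1$ and $v_\ell$ must lie on $P$. Setting $I = \{i \in \{1, \ldots, \ell - 1\} : v_1 v_{i+1} \in E(G)\}$ and $J = \{i \in \{1, \ldots, \ell - 1\} : v_\ell v_i \in E(G)\}$, the degree bound yields $|I| + |J| = \deg(v_1) + \deg(v_\ell) \ge t - 1 \ge \ell$, forcing $I \cap J \neq \emptyset$. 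Any $i \in I \cap J$ produces the cycle $v_1 v_{i+1} v_{i+2} \cdots v_\ell v_i v_{i-1} \cdots v_1$ on $V(P)$, and since $|C| > \ell$ and $C$ is connected, some vertex outside this cycle is adjacent to it, extending to a path on $\ell + 1$ vertices and contradicting the maximality of $P$. Hence $\ell \ge t$ and $C$ contains $P_t$, contradicting $P_t$-freeness.

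The main obstacle I expect is packaging the high-degree case cleanly: the low-degree reduction is routine, but the high-degree case requires combining an Ore/P\'{o}sa-style ``crossing edges'' argument (to produce a cycle on the longest path) with the connectivity of the ambient component (to lift that cycle into a strictly longer path). Once both halves are in place, the induction closes and delivers the stated bound $\mathrm{ex}(n, P_t) \le (t-2)n/2$.
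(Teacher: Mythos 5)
Your proof is correct and is essentially the classical Erd\H{o}s--Gallai argument: induct on $n$, peel off a low-degree vertex when $\delta(G) \le (t-2)/2$, and otherwise use the fact that both endpoints of a longest path have all their neighbors on it, so the P\'osa-style crossing-index set $I \cap J$ is nonempty, giving a Hamiltonian cycle on $V(P)$ which (by connectivity of a component with more than $\ell$ vertices) can be lifted to a longer path, a contradiction. The only minor point worth making explicit is that when you pass from $\delta(G) > (t-2)/2$ to $\deg(v_1) + \deg(v_\ell) \ge t-1 \ge \ell$, you are using integrality of $\delta(G)$ so that $2\delta(G) \ge t-1$; your statement $\delta(G) \ge (t-1)/2$ encodes this, but it is the integrality that makes the pigeonhole $|I| + |J| \ge \ell > \ell - 1$ tight. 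Note that the paper itself does not prove this statement---it is quoted directly from the Erd\H{o}s--Gallai reference \cite{EG59}---so there is no in-paper proof to compare against; your argument correctly reconstructs the standard one.
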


%
%
For every integer $t \ge 1$, let $L_{t}$ denote the graph obtained from the complete graph $K_t$ by removing a Hamiltonian cycle. 
Observe that $L_{t}$ is contained in $K_{\lceil t/2 \rceil}[2, 2, \ldots, 2]$ (the complete $\lceil t/2 \rceil$-partite graph with ecah part has exactly two vertices), and hence, the chromatic number $\chi(L_{t})$ of $L_t$ satisfies 
\begin{align*}
    \chi(L_{t})
    = t - \lfloor t/2 \rfloor
    = \lceil t/2 \rceil.
\end{align*}
Given two graphs $G_1$ and $G_2$, the \textbf{joint} $G_1  \times G_2$ is the graph obtained from the vertex-disjoint union of $G_1$ and $G_2$) by adding all edges that have nonempty intersection with both $V(G_1)$ and $V(G_2)$, i.e.
\begin{align*}
    G_1  \times G_2
    \coloneqq G_1 \cup G_2 \cup \left\{\{u,v\} \colon (u,v) \in V(G_1) \times V(G_2) \right\}.
\end{align*}
An additional observation is that, for $t \ge 4$, the graph $L_t$ is contained in $P_4 \times K_{\lceil t/2 \rceil -2}[2, 2, \ldots, 2]$. 
So by a classical theorem of Simonovits~{\cite[Theorem~1]{Sim74}} (see also~{\cite[Theorem~1.2]{MQ20}}), we derive the following upper bound for $\mathrm{ex}(n,L_{t})$.  
\begin{theorem}[Simonovits~\cite{Sim74}]\label{THM:Simonovits74}
    Let $t \ge 5$ be an integer. Then 
    \begin{align*}
        \mathrm{ex}(n,L_{t}) 
        = \left(1 - \frac{1}{\lceil t/2 \rceil - 1}\right) \frac{n^2}{2} + O(n). 
    \end{align*}
\end{theorem}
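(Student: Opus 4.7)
The plan is to apply a classical theorem of Simonovits~\cite{Sim74} on the extremal number of graph products, leveraging the two structural observations recorded just before the theorem statement. For the lower bound, since $\chi(L_t) = \lceil t/2 \rceil$, the Tur\'an graph $T_{\lceil t/2 \rceil - 1}(n)$ is $L_t$-free and contains $\left(1 - \frac{1}{\lceil t/2 \rceil - 1}\right)\frac{n^2}{2} + O(1)$ edges, which already matches the leading term of the claimed expression.

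For the upper bound, I would use the embedding
\[
L_t \subseteq P_4 \times K_{\lceil t/2 \rceil - 2}[2, 2, \ldots, 2]
\]
noted just before the statement (valid for $t \ge 4$). Since any graph that contains this join must contain $L_t$, every $L_t$-free graph is also free of this join, giving
\[
\mathrm{ex}(n, L_t) \le \mathrm{ex}\bigl(n, P_4 \times K_{\lceil t/2 \rceil - 2}[2, 2, \ldots, 2]\bigr).
\]
I would then invoke Simonovits' theorem for extremal numbers of graph products of this shape, which asserts that if $F = P_4 \times K_{r-2}[2, \ldots, 2]$ (so that $\chi(F) = 2 + (r-2) = r$, with $r = \lceil t/2\rceil$ in our application), then
\[
\mathrm{ex}(n, F) = \left(1 - \frac{1}{r-1}\right)\frac{n^2}{2} + O(n).
\]
Combining these two steps yields exactly the claimed upper bound.

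The substantive content lies in Simonovits' theorem itself, which strengthens the generic Erd\H{o}s--Stone--Simonovits error of $o(n^2)$ to $O(n)$ for this specific family of joined products. The improvement is driven by the bipartite factor $P_4$: Simonovits' decomposition-family approach reduces the slack between an extremal $F$-free graph and the Tur\'an graph $T_{r-1}(n)$ to the extremal number of a bipartite graph embedded into a single Tur\'an class, and here that bipartite graph is $P_4$, for which $\mathrm{ex}(n, P_4) = O(n)$ by Theorem~\ref{THM:Erdos-Gallai-path-Turan}. Since the statement is essentially a direct application of a known theorem, the only ``work'' is recording the embedding and matching constants, and no real obstacle remains beyond citing \cite{Sim74}.
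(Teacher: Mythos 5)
Your proposal is correct and follows essentially the same route as the paper: the paper records the embedding $L_t \subseteq P_4 \times K_{\lceil t/2\rceil - 2}[2,\ldots,2]$ in the preceding paragraph and then simply cites Simonovits~\cite[Theorem~1]{Sim74} to obtain the bound, exactly as you do, with the lower bound being the standard Tur\'an-graph construction via $\chi(L_t)=\lceil t/2\rceil$. Nothing further is needed.
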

The following property of $L_{t}$ will be used in Section~\ref{SEC:A4BC}.
\begin{fact}\label{FACT:Lt-4-vtx}
    Let $t \ge 5$ be an integer. The induced subgraph of $L_{t}$ on every subset of size $4$ contains a copy of $P_4$. 
\end{fact}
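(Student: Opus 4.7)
The plan is to pass to complements. Let $C$ denote the Hamiltonian cycle whose edges are deleted from $K_t$ to form $L_t$, so that for any $4$-subset $S \subseteq V(L_t)$, the complement of $L_t[S]$ (taken inside the complete graph $K_4$ on $S$) is precisely $C[S]$. Consequently, $L_t[S]$ fails to contain a copy of $P_4$ if and only if $C[S]$ is isomorphic to the complement in $K_4$ of some $P_4$-subgraph-free graph on $4$ vertices, so the task reduces to ruling out these complements as possibilities for $C[S]$.

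Next, I would enumerate the $4$-vertex graphs that contain no $P_4$ subgraph (equivalently, no Hamiltonian path). Up to isomorphism there are exactly six:
\[
\overline{K_4},\quad K_2+2K_1,\quad 2K_2,\quad P_3+K_1,\quad K_3+K_1,\quad K_{1,3},
\]
whose complements inside $K_4$ are, respectively, $K_4$, $K_4-e$, $C_4$, the paw (a triangle with a pendant edge), $K_{1,3}$, and $K_3+K_1$. The key observation is that each of these six complement graphs contains a vertex of degree at least $3$, a triangle, or an induced $4$-cycle.

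Finally, I would verify that none of these three features can occur in $C[S]$ when $t \ge 5$: every vertex of $C$ has degree exactly $2$, so no induced subgraph can have a vertex of degree $\ge 3$; the cycle $C$ is triangle-free since $t \ge 5$; and an induced $4$-cycle of $C$ would force the whole cycle to have length $4$, contradicting $t \ge 5$. Hence $C[S]$ matches none of the six forbidden complements, which forces $L_t[S]$ to contain a copy of $P_4$.

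The proof is essentially a finite check once the complement reduction is in place, and the only step that requires any care is ensuring the enumeration of $P_4$-subgraph-free $4$-vertex graphs is exhaustive; this can be done by casework on the number of edges, noting that any $4$-vertex graph with at least $4$ edges is either $C_4$, the paw, $K_4-e$, or $K_4$, each of which admits a Hamiltonian path. I expect no serious obstacle.
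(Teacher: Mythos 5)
Your proof is correct. The paper states Fact~\ref{FACT:Lt-4-vtx} without proof, so there is no paper argument to compare against; I will just assess the reasoning.

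The complement reduction is sound: for any $4$-set $S$, $\overline{L_t[S]}$ taken inside the $K_4$ on $S$ is exactly $C[S]$, and your enumeration of the six $P_4$-subgraph-free graphs on four vertices (and their complements $K_4$, $K_4-e$, $C_4$, paw, $K_{1,3}$, $K_3+K_1$) is exhaustive. Ruling these out for $C[S]$ via max degree $2$, triangle-freeness, and girth $\ge 5$ is airtight. One small streamlining is available if you want it: because $C$ has girth $\ge 5$, the induced subgraph $C[S]$ on four vertices can contain no cycle at all, hence is a linear forest, and the only linear forests on four vertices are $\overline{K_4}$, $K_2+2K_1$, $2K_2$, $P_3+K_1$, and $P_4$. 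Taking complements directly gives $K_4$, $K_4-e$, $C_4$, the paw, and $P_4$, each of which visibly contains a $P_4$. This avoids the separate enumeration of $P_4$-free graphs and the degree/triangle/induced-$C_4$ casework, but arrives at the same conclusion. Either way the argument is complete.
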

Given integers $m, n \ge 0$, the \textbf{bipartite Tur\'{a}n number} $\mathrm{ex}(m,n,F)$ of a graph $F$ is defined as the maximum number of edges in an $F$-free $m$ by $n$ bipartite graph.
We will use a theorem by Gy{\'a}rf{\'a}s--Rousseau--Schelp~\cite{GRS84} on $\mathrm{ex}(m,n,P_{t})$. Below, we present a weaker version of their result.
\begin{theorem}[Gy{\'a}rf{\'a}s--Rousseau--Schelp~\cite{GRS84}]\label{THM:GRS84-bipartite-path}
    Let $n, m \ge 1$ and $t \ge 3$ be integers.
    Then 
    \begin{align*}
        \mathrm{ex}(m,n,P_{t})
        \le \frac{t-2}{2} (m+n).
    \end{align*}
\end{theorem}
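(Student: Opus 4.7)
The plan is to derive this as an immediate corollary of the Erdős–Gallai path Turán bound, Theorem~\ref{THM:Erdos-Gallai-path-Turan}. Let $G$ be a $P_t$-free bipartite graph with parts $A$ and $B$ of sizes $m$ and $n$. The property of containing no copy of $P_t$ depends only on $G$ as an abstract graph, not on the bipartition, so we may view $G$ simply as a $P_t$-free graph on $m + n$ vertices.

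Applying Theorem~\ref{THM:Erdos-Gallai-path-Turan} to $G$ then yields
\begin{align*}
    |E(G)| \le \mathrm{ex}(m + n, P_t) \le \frac{(t-2)(m+n)}{2}.
\end{align*}
Taking the maximum over all $P_t$-free bipartite graphs with parts of sizes $m$ and $n$ gives the stated inequality.

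There is no real obstacle here: the bipartite structure is simply discarded, and the bound drops out of the general Erdős–Gallai theorem. The original Gyárfás–Rousseau–Schelp result in~\cite{GRS84} is sharper (in particular, it gives constants depending on $\min(m,n)$ that reflect the asymmetric bipartite setting), but for the purposes of Section~\ref{SEC:A4BC} only the weaker linear-in-$(m+n)$ bound stated above is needed, and the one-line derivation from Theorem~\ref{THM:Erdos-Gallai-path-Turan} suffices.
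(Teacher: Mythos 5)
Your derivation is correct. The paper does not prove this theorem at all — it simply cites Gy\'arf\'as--Rousseau--Schelp and states a weakened form — so your one-line reduction to Theorem~\ref{THM:Erdos-Gallai-path-Turan} supplies a proof where the paper offers only a reference, and it is a perfectly legitimate one: a $P_t$-free bipartite graph is in particular a $P_t$-free graph on $m+n$ vertices. The only pedantic caveat is that Theorem~\ref{THM:Erdos-Gallai-path-Turan} as stated in the paper assumes the vertex count is at least $t$; when $m+n < t$ the bound is trivial anyway, since then $mn \le \binom{m+n}{2} \le \frac{(t-2)(m+n)}{2}$, so nothing is lost.
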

The \textbf{Zarankiewicz number} $Z(m,n,s,t)$ is the maximum number of edges in an $m$ by $n$ bipartite graph $G$ such that $G$ contains no copy of the complete bipartite graph $K_{s,t}$, where the $s$-vertex part of $K_{s,t}$ is contained in the $m$-vertex part of $G$ and the $t$-vertex part of $K_{s,t}$ is contained in the $n$-vertex part of $G$. 
We will use the following upper bound on $Z(m,n,a,b)$ by \v{C}ul\'{i}k~\cite{Cu56} (see also~{\cite[Theorem~3.28]{FS13}}). 
\begin{theorem}[\v{C}ul\'{i}k~\cite{Cu56}]\label{THM:Zarankiewicz-number}
    Suppose that $n,m \ge 0$ are integers satisfying $n \geq (b-1)\binom{m}{a}$. 
    Then 
    \begin{align*}
        Z(m,n,a,b) = (a-1)n + (b-1)\binom{m}{a}.
    \end{align*}
\end{theorem}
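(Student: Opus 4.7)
The plan is to combine a standard double-counting argument for the upper bound with a direct construction that matches it exactly, using the hypothesis $n \ge (b-1)\binom{m}{a}$ only in the construction step.

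First I would set up the upper bound. Let $G$ be a bipartite graph with parts $X$ of size $m$ and $Y$ of size $n$ containing no copy of $K_{a,b}$ of the prescribed orientation, and write $d(y)$ for the degree of $y \in Y$. I would double-count pairs $(y, S)$ with $y \in Y$ and $S \in \binom{X}{a}$ such that $S \subseteq N(y)$. Counting by $y$ gives $\sum_{y \in Y} \binom{d(y)}{a}$. Counting by $S$ uses the hypothesis: for each $a$-subset $S \subseteq X$, the set $\bigcap_{x \in S} N(x) \cap Y$ has size at most $b-1$, since otherwise $S$ together with $b$ of its common neighbors would form the forbidden $K_{a,b}$. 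Hence
\begin{equation*}
    \sum_{y \in Y} \binom{d(y)}{a} \;\le\; (b-1)\binom{m}{a}.
\end{equation*}

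Next I would convert this inequality on $\binom{d(y)}{a}$ into one on $d(y)$ via the elementary bound $d \le (a-1) + \binom{d}{a}$ for every integer $d \ge 0$. This is trivial when $d \le a-1$ (the binomial coefficient vanishes and $d \le a-1$ directly), and for $d \ge a$ it follows from $\binom{d}{a} \ge d - a + 1$, which can be verified by induction on $d$ using the Pascal identity $\binom{d+1}{a} = \binom{d}{a} + \binom{d}{a-1} \ge \binom{d}{a} + 1$. Summing the inequality $d(y) \le (a-1) + \binom{d(y)}{a}$ over $y \in Y$ and applying the double-counting bound yields
\begin{equation*}
    |G| \;=\; \sum_{y \in Y} d(y) \;\le\; (a-1)n + (b-1)\binom{m}{a}.
\end{equation*}

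Finally, I would exhibit an extremal construction. Using $n \ge (b-1)\binom{m}{a}$, pick $(b-1)\binom{m}{a}$ distinct vertices of $Y$, group them into $\binom{m}{a}$ blocks of size $b-1$ indexed by the $a$-subsets of $X$, and join each vertex in block $S$ to all $a$ vertices of $S$. For each of the remaining $n - (b-1)\binom{m}{a}$ vertices of $Y$, attach exactly $a-1$ arbitrary neighbors in $X$. A quick verification shows that no $K_{a,b}$ (with the required orientation) appears: any $a$-subset $S \subseteq X$ has only the $b-1$ vertices of its block as common neighbors of full multiplicity, while the low-degree vertices contribute at most $a-1 < a$ neighbors on the $X$-side. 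Counting edges gives $a(b-1)\binom{m}{a} + (a-1)\bigl(n - (b-1)\binom{m}{a}\bigr) = (a-1)n + (b-1)\binom{m}{a}$, matching the upper bound.

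There is no genuine obstacle here: both parts are classical. The only subtlety worth flagging is that the hypothesis $n \ge (b-1)\binom{m}{a}$ is invoked solely to make the construction fit inside $Y$; the upper bound holds unconditionally and becomes the sharp value of $Z(m,n,a,b)$ exactly in this range.
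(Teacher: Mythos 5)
The paper cites this result directly from \v{C}ul\'{i}k~\cite{Cu56} without reproducing a proof, so there is no internal argument to compare against. Your proof is correct and is the standard one: the double count of pairs $(y,S)$ with $S\subseteq N(y)$ bounded by the no-$K_{a,b}$ condition, converted to a bound on $\sum_y d(y)$ via the pointwise inequality $d \le (a-1) + \binom{d}{a}$, matched by the block construction. The pointwise inequality (rather than the convexity/Jensen step one sees in asymptotic K\H{o}v\'{a}ri--S\'{o}s--Tur\'{a}n arguments) is exactly what lets you recover the sharp constant, and your inductive verification of $\binom{d}{a}\ge d-a+1$ for $d\ge a$ is sound.

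One small caveat worth making explicit: your lower-bound construction assigns $a-1$ neighbors in $X$ to the leftover vertices of $Y$, which silently requires $m\ge a-1$. If $m<a-1$ the claimed formula $(a-1)n+(b-1)\binom{m}{a}=(a-1)n$ exceeds the trivial upper bound $mn$, so the statement as written is only meaningful in that range; the upper-bound half of your argument holds unconditionally, but the matching construction (and hence equality) needs $m\ge a-1$. This has no bearing on the paper's use of the theorem, where $m$ is comfortably large, but it is the one place your proposal should add a word.
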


The following simple facts are useful for finding edges with certain properties between members of $\mathcal{A} \cup \mathcal{B} \cup \mathcal{C} \cup \mathcal{D}$. 
\begin{fact}\label{FACT:3-by-4-bipartite-graph}
    Let $H = H[V_1, V_2]$ be a bipartite graph. 
    \begin{enumerate}[label=(\roman*)]
        \item\label{FACT:3-by-4-bipartite-graph-1} Suppose that $(|V_1|, |V_2|) = (2,3)$ and $|H| \ge 5$. Then there exists a vertex in $V_1$ that is adjacent to all vertices in $V_2$. There also exists a pair of vertices in $V_2$ such that both vertices in $V_1$ are adjacent to this pair. 
        \item\label{FACT:3-by-4-bipartite-graph--2} Suppose that $(|V_1|, |V_2|) = (2,4)$ and $|H| \ge 5$. Then there exists a vertex in $V_1$ that is adjacent to at least three vertices in $V_2$. 
        \item\label{FACT:3-by-4-bipartite-graph--1} Suppose that $(|V_1|, |V_2|) = (2,4)$ and $|H| \ge 6$. Then there exists a pair of vertices in $V_2$ that are adjacent to both vertices in $V_1$. 
        \item\label{FACT:3-by-4-bipartite-graph-2} Suppose that $(|V_1|, |V_2|) = (3,3)$ and $|H| \ge 7$. Then there exists a vertex in $V_1$ that is adjacent to all vertices in $V_2$.
        \item\label{FACT:3-by-4-bipartite-graph-3} Suppose that $(|V_1|, |V_2|) = (4,3)$ and $|H| \ge 9$. Then there exists a vertex in $V_1$ that is adjacent to all vertices in $V_2$.
        \item\label{FACT:3-by-4-bipartite-graph-4} Suppose that $(|V_1|, |V_2|) = (4,3)$ and $|H| \ge 8$. Then there exist $\{u_1, u_2\} \subseteq  V_1$ and $\{v_1, v_2\} \subseteq  V_2$ such that $\{u_i, v_j\} \in H$ for every $(i,j) \in [2] \times [2]$. 
        \item\label{FACT:3-by-4-bipartite-graph-5} Suppose that $(|V_1|, |V_2|) = (4,3)$ and $|H| \ge 7$. Then there exists a vertex in $V_2$ that is adjacent to at least three vertices in $V_1$. 
    \end{enumerate}
\end{fact}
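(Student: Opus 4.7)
The plan is to prove each of the seven sub-statements via elementary incidence counting, since all of them concern very small bipartite graphs where crude degree-sum arguments are already tight.

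For parts \ref{FACT:3-by-4-bipartite-graph-1} (first half), \ref{FACT:3-by-4-bipartite-graph--2}, \ref{FACT:3-by-4-bipartite-graph-2}, \ref{FACT:3-by-4-bipartite-graph-3}, and \ref{FACT:3-by-4-bipartite-graph-5}, each of which asserts the existence of a vertex on one side adjacent to every vertex (or to at least three vertices) on the other side, I would argue by contradiction: suppose every vertex of the relevant part has strictly smaller degree than claimed, then sum the degrees over that part and compare to $|H|$. For instance, in \ref{FACT:3-by-4-bipartite-graph-3} with $(|V_1|, |V_2|) = (4,3)$ and $|H| \ge 9$, if no vertex of $V_1$ were adjacent to all of $V_2$, then each $v \in V_1$ would satisfy $d(v) \le 2$, giving $|H| \le 4 \cdot 2 = 8$, a contradiction. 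The other four items are identical one-liners.

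For the ``two good vertices'' statements \ref{FACT:3-by-4-bipartite-graph-1} (second half) and \ref{FACT:3-by-4-bipartite-graph--1}, I would count from the opposite side. Writing $N_s$ for the number of vertices of $V_2$ having exactly $s$ neighbors in $V_1$, one has $\sum_{s} N_s = |V_2|$ and $\sum_{s} s N_s = |H|$, and the maximum possible $s$ is $|V_1| = 2$. A direct arithmetic check then forces $N_2 \ge 2$ in both cases (e.g.\ in \ref{FACT:3-by-4-bipartite-graph--1} with $|V_2|=4$, $|H|\ge 6$, if $N_2 \le 1$ then $|H| \le 2 + 3 = 5$).

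The only substatement requiring more than a one-line degree count is \ref{FACT:3-by-4-bipartite-graph-4}, which asks for a $K_{2,2}$ rather than a single high-degree vertex. Here the plan is a Kővári--Sós--Turán style double count (essentially a baby case of Theorem~\ref{THM:Zarankiewicz-number}). If no $K_{2,2}$ with the prescribed orientation exists, then every unordered pair in $V_2$ has at most one common neighbor in $V_1$, and so
\begin{align*}
    \sum_{v \in V_1} \binom{d(v)}{2} \le \binom{|V_2|}{2} = 3.
\end{align*}
On the other hand, $\sum_{v \in V_1} d(v) = |H| \ge 8$ with $|V_1|=4$ and $d(v) \le 3$, so by convexity of $\binom{x}{2}$ the left-hand side is minimized at the most balanced feasible degree sequence $(2,2,2,2)$, which yields $\sum_{v \in V_1}\binom{d(v)}{2} \ge 4 \cdot \binom{2}{2} = 4 > 3$, a contradiction. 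The degree sequences $(3,2,2,1)$, $(3,3,1,1)$, and $(3,3,2,0)$ give even larger values, so no case analysis is needed beyond observing that the balanced sequence is the minimizer.

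The main (mild) obstacle is item \ref{FACT:3-by-4-bipartite-graph-4}, where one must briefly verify that the degree bound $d(v) \le 3$ together with $\sum d(v) \ge 8$ genuinely forces $\sum \binom{d(v)}{2} \ge 4$; every other sub-statement is immediate from a single application of the handshake identity.
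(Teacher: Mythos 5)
Your proof is correct. The paper states Fact~\ref{FACT:3-by-4-bipartite-graph} without proof, treating it as an elementary observation; your degree-sum arguments for items~\ref{FACT:3-by-4-bipartite-graph-1},~\ref{FACT:3-by-4-bipartite-graph--2},~\ref{FACT:3-by-4-bipartite-graph--1},~\ref{FACT:3-by-4-bipartite-graph-2},~\ref{FACT:3-by-4-bipartite-graph-3},~\ref{FACT:3-by-4-bipartite-graph-5} and the K\H{o}v\'ari--S\'os--Tur\'an style double count for item~\ref{FACT:3-by-4-bipartite-graph-4} (where $\sum_{v\in V_1}\binom{d(v)}{2}\ge 4 > 3 = \binom{3}{2}$ once $\sum d(v)\ge 8$ with each $d(v)\le 3$) are exactly the expected elementary verifications, and all seven sub-statements are handled.
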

\section{Local estimation \RomanNumeralCaps{1}: basic upper bounds}\label{SEC:Local-estimate}
In this subsection, we establish basic upper bounds for the number of edges crossing the parts in $\{\mathcal{A}_1, \ldots, \mathcal{A}_6, \mathcal{B}, \mathcal{C}, \mathcal{D}\}$ and for the edges within each part. 

The following simple fact follows directly from the definitions of $\mathcal{A}, \mathcal{B}, \mathcal{C}, \mathcal{D}$ and the maximality of $(|\mathcal{A}|, |\mathcal{B}|, |\mathcal{C}|, |\mathcal{D}|)$.
\begin{fact}\label{FACT:edges-ABCD}
    The following statements hold. 
    \begin{enumerate}[label=(\roman*)]
        \item\label{FACT:edges-ABCD-1} $e(\mathcal{B}) \le 3b + 6 \binom{b}{2} =3b^2$,   
        \item\label{FACT:edges-ABCD-2} $e(\mathcal{B}, \mathcal{C}) \le 4bc$,  
        \item\label{FACT:edges-ABCD-3} $e(\mathcal{B}, \mathcal{D}) \le 2bd$,  
        \item\label{FACT:edges-ABCD-4} $e(\mathcal{C}) \le c^2$, 
        \item\label{FACT:edges-ABCD-5} $e(\mathcal{C}, \mathcal{D}) \le c d$, 
        \item\label{FACT:edges-ABCD-6} $e(\mathcal{D}) = 0$. 
    \end{enumerate}
    Consequently, $e(\mathcal{B} \cup \mathcal{C} \cup \mathcal{D}) \le \Psi(b,c,d)$, where
    \begin{align}\label{equ:def-Psi-bcd}
        \Psi(b,c,d) 
        \coloneqq  3b^2 + 4bc + 2bd + c^2 + cd. 
    \end{align}
\end{fact}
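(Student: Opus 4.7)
All six bounds follow from a single recipe: exploit the lexicographic maximality of $(|\mathcal{A}|, |\mathcal{B}|, |\mathcal{C}|, |\mathcal{D}|)$ among rank-$4$-packings of $G$. If for some tile $T_1$, or some pair of tiles $T_1, T_2$ from $\mathcal{B} \cup \mathcal{C} \cup \mathcal{D}$, the number of edges within $T_1$ or between $T_1$ and $T_2$ exceeds the claimed threshold, I plan to locate a larger clique on the $\le 6$ vertices of $V(T_1) \cup V(T_2)$ and ``promote'' it to a higher-rank tile, while demoting the leftover vertices to lower-rank tiles. The new rank-$4$-packing obtained in this way strictly increases one of the coordinates of $(|\mathcal{A}|, |\mathcal{B}|, |\mathcal{C}|, |\mathcal{D}|)$ while not decreasing any earlier coordinate; this contradicts lex-maximality (or, when we promote to $\mathcal{A}$, directly contradicts $|\mathcal{A}| = \nu(K_4, G) = k$).

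Concretely, the ``easy'' cases are handled by the following elementary observations. For \ref{FACT:edges-ABCD-6}, an edge inside $\mathcal{D}$ is itself a $K_2$ that can be promoted to $\mathcal{C}$. For \ref{FACT:edges-ABCD-5}, if $C \in \mathcal{C}$ and $d \in \mathcal{D}$ are joined by two edges, then $V(C) \cup \{d\}$ is a triangle that promotes to $\mathcal{B}$. For \ref{FACT:edges-ABCD-3}, three edges between a triangle $B \in \mathcal{B}$ and a vertex $d \in \mathcal{D}$ form a $K_4$ on $V(B) \cup \{d\}$ that promotes to $\mathcal{A}$. The bounds \ref{FACT:edges-ABCD-1} and \ref{FACT:edges-ABCD-4} on intra-rank edges are obtained by combining the trivial per-tile count with the same promotion argument applied to pairs of same-rank tiles, plus one extra ingredient described next.

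The pair-of-same/different-tile cases \ref{FACT:edges-ABCD-1}, \ref{FACT:edges-ABCD-2}, \ref{FACT:edges-ABCD-4} need a combinatorial step to locate the larger clique. For \ref{FACT:edges-ABCD-2}, $e(B,C) \ge 5$ with $B \in \mathcal{B}$ and $C \in \mathcal{C}$ gives, by Fact~\ref{FACT:3-by-4-bipartite-graph}(i), a vertex of $C$ adjacent to all of $B$, and hence a $K_4$. For \ref{FACT:edges-ABCD-1}, two triangles $B_1, B_2 \in \mathcal{B}$ with $\ge 7$ crossing edges admit, by pigeonhole, a vertex of $B_1$ adjacent to all three vertices of $B_2$, again yielding a $K_4$; thus each pair of triangles contributes at most $6$ crossing edges and $e(\mathcal{B}) \le 3b + 6\binom{b}{2} = 3b^2$. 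For \ref{FACT:edges-ABCD-4}, the naive $K_4$-avoidance bound gives only at most $3$ crossing edges between two pairs $C_1, C_2 \in \mathcal{C}$, which would leave $e(\mathcal{C}) \le c + 3\binom{c}{2}$, too weak for $c^2$; I plan to sharpen this by a short case analysis showing that any three crossing edges between $C_1$ and $C_2$ already contain a triangle on three of the four vertices (which promotes to $\mathcal{B}$ with the fourth vertex demoted to $\mathcal{D}$), so in fact at most $2$ crossing edges are present and $e(\mathcal{C}) \le c + 2\binom{c}{2} = c^2$. This sharpening is the only place where more than mechanical promotion is required and is what I expect to be the main (minor) obstacle.

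Finally, the stated consequence $e(\mathcal{B}\cup\mathcal{C}\cup\mathcal{D}) \le \Psi(b,c,d)$ is an immediate summation of the six individual bounds, since $e(\mathcal{B}\cup\mathcal{C}\cup\mathcal{D}) = e(\mathcal{B}) + e(\mathcal{C}) + e(\mathcal{D}) + e(\mathcal{B}, \mathcal{C}) + e(\mathcal{B}, \mathcal{D}) + e(\mathcal{C}, \mathcal{D})$.
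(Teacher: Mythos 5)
Your proposal is correct, and it is precisely the argument the paper has in mind: the paper states this fact without proof, asserting it "follows directly from the definitions and the maximality of $(|\mathcal{A}|,|\mathcal{B}|,|\mathcal{C}|,|\mathcal{D}|)$," and your promotion/demotion rotations (including the sharpening for \ref{FACT:edges-ABCD-4}, where any two crossing edges sharing a vertex already yield a triangle, so the crossing graph between two members of $\mathcal{C}$ is a matching) are exactly the intended justification. No gaps.
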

\subsection{Upper bounds for $e(\mathcal{A}_i, \mathcal{A}_j)$}
In this subsection, we establish an upper bound for $e(\mathcal{A}_i, \mathcal{A}_j)$ for each $(i,j) \in [6] \times [6]$. For convenience, we set $e(\mathcal{A}_i, \mathcal{A}_i) \coloneqq e(\mathcal{A}_i)$ for every $i \in [6]$. 

The following lemma provides an upper bound for $e(\mathcal{A}_1, \mathcal{A}_i)$ for every $i \in [6]$.
\begin{lemma}\label{LEMMA:Q1Qi}
    The following statements hold for every member $Q = \{p, q, s, t\} \in \mathcal{A}_1$. 
    \begin{enumerate}[label=(\roman*)]
        \item\label{LEMMA:Q1Qi-1} Suppose that $i \in [4]$. Then $e(Q, Q_i) \le 15$. 
        \item\label{LEMMA:Q1Qi-2} Let $i \in [6]$ and $Q_i \in \mathcal{A}_i$. Suppose that $e(Q, Q_i) \ge 14$. Then $Q_i$ cannot be $3$-seen by any member of $\mathcal{B}$. In particular, $e(Q,Q_1) \le 13$ for every $Q_1 \in \mathcal{A}_1 \setminus \{Q\}$, and hence, 
        \begin{align*}
            e(\mathcal{A}_1) 
            \le 13\binom{a_1}{2} + 6a_1
            \le \frac{13 a_1^2}{2}. 
        \end{align*}
        \item\label{LEMMA:Q1Qi-3} Let $i \in [2,6]$ and $Q_i \in \mathcal{A}_i$. Suppose that $e(Q, Q_i) \ge 15$. Then $Q_i$ cannot be $2$-seen by any member of $\mathcal{B}$ and cannot be seen by any member of $\mathcal{C}$. 
        \item\label{LEMMA:Q1Qi-4}  Let $i \in [2,6]$ and $Q_i \in \mathcal{A}_i$. If $|\mathcal{A}_1| \ge 2$, then $e(\mathcal{A}_1, Q_i) \le 13 a_1$. Consequently, for every $i \in [2,6]$, 
        \begin{align*}
            e(\mathcal{A}_1, \mathcal{A}_i)
            & \le 13a_1 a_i + 3 a_i.  
        \end{align*}
    \end{enumerate}
\end{lemma}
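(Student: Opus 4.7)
The plan throughout is to contradict the lex-maximality of $(|\mathcal{A}|,|\mathcal{B}|,|\mathcal{C}|,|\mathcal{D}|)$ by replacing a small sub-packing with a lex-larger one. I will prove (ii) and (iii) directly, then deduce (i) by sweeping over the subfamilies of $\mathcal{A}_1,\ldots,\mathcal{A}_4$, and finally handle (iv) by upgrading (ii) using the spare member guaranteed by $a_1\ge 2$.

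For (ii), let $B_1,B_2\in\mathcal{B}$ witness $Q\in\mathcal{A}_1$ via $v_1,v_2\in Q$, and let $B'\in\mathcal{B}$ with witness $v'\in Q_i$ be the hypothetical 3-seeing triangle of $Q_i$. In the generic subcase where $B_1,B_2,B'$ are pairwise distinct and $v_1\ne v_2$, the three copies $B_1\cup\{v_1\}$, $B_2\cup\{v_2\}$, $B'\cup\{v'\}$ are vertex-disjoint $K_4$'s; together with the leftover edge $Q\setminus\{v_1,v_2\}$ and triangle $Q_i\setminus\{v'\}$, they yield a local vector $(3,1,1,0)$ strictly beating the original $(2,3,0,0)$. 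When a witness collapses ($v_1=v_2$ or $B'\in\{B_1,B_2\}$), I would use only two of these $K_4$'s and extract the third from the leftover via the hypothesis $e(Q,Q_i)\ge 14$: the relevant $3\times 3$ bipartite subgraph between three vertices of $Q$ and three of $Q_i$ loses at most $2$ of its $9$ edges, so by averaging some vertex on one side is adjacent to all three on the other, supplying the missing $K_4$. Part (iii) uses the same template, but with the third $K_4$ now realised either by the 4-clique $\{u_1,u_2,w_1,w_2\}$ (from a 2-see of $Q_i$ by some $B''\in\mathcal{B}$) or by $C'\cup\{u_1,u_2\}$ (from a see of $Q_i$ by some $C'\in\mathcal{C}$); such a witness occupies two vertices of $Q_i$ instead of one, and the stronger hypothesis $e(Q,Q_i)\ge15$ is exactly what leaves at most one non-edge in the relevant $3\times 2$ leftover.

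Part (i) then follows by case analysis: members of $\mathcal{A}_1,\mathcal{A}_{2,1},\mathcal{A}_{2,2},\mathcal{A}_{4,1}$ are 3-seen so (ii) gives $e(Q,Q_i)\le 13$; members of $\mathcal{A}_{2,3},\mathcal{A}_3$ are 2-seen or seen by some $K_2$ so (iii) gives $e(Q,Q_i)\le 14$. The remaining family $\mathcal{A}_{4,2}$ (seen by two singletons $d_1,d_2\in\mathcal{D}$) is not covered by (ii) or (iii) and I would handle $e(Q,Q_i)=16$ by a direct swap: since $d_1$ has at least three neighbours in $Q_i$, pick $a\in Q_i$ to be a non-neighbour of $d_1$ if one exists (otherwise arbitrary); then $\{d_1\}\cup(Q_i\setminus\{a\})$ is a $K_4$, and the complete bipartite structure between $Q$ and $Q_i$ produces a second $K_4$ of the form $\{p,q,t,a\}$. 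Combining with $B_1\cup\{v_1\}$ (or the analogous two-$K_4$ replacement from (ii) in the $v_1=v_2$ subcase) gives three $K_4$'s and a triangle, strictly beating the original vector $(2,2,0,2)$.

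For (iv), I would upgrade (ii) under $a_1\ge 2$ to the pointwise statement $e(Q,Q_i)\le 13$ for every $Q\in\mathcal{A}_1$; summing gives $e(\mathcal{A}_1,Q_i)\le 13a_1$, and summing again over $Q_i$ yields $e(\mathcal{A}_1,\mathcal{A}_i)\le 13a_1a_i$, while the case $a_1\le 1$ is absorbed by the trivial bound $e(Q,Q_i)\le 16=13+3$ to produce the advertised $13a_1a_i+3a_i$. For the upgraded bound, assume $e(Q,Q_i)\ge 14$ for some $Q\in\mathcal{A}_1$: by (ii), $Q_i$ is not 3-seen, and the hypothesis $a_1\ge 2$ furnishes a second $Q''\in\mathcal{A}_1$ with its own pair of 3-see witnesses $B_1'',B_2''$ and corresponding vertices $v_1'',v_2''\in Q''$. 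I would then augment the rearrangement of (ii) with the fresh $K_4$'s $B_j''\cup\{v_j''\}$, producing a total of four disjoint $K_4$'s from the three originals $Q,Q'',Q_i\in\mathcal{A}$, contradicting $|\mathcal{A}|=k$. The main obstacle is the combinatorial bookkeeping in the degenerate subcases where 3-see witnesses coincide ($v_1=v_2$, $v_1''=v_2''$, or triangles shared among $\{B_1,B_2,B_1'',B_2''\}$): in these cases the naive count of fresh $K_4$'s drops, and one must carefully re-exploit the bipartite density guaranteed by $e(Q,Q_i)\ge 14$ (as in the $3\times 3$ averaging above) to recover the missing $K_4$. The constants $13$, $14$, $15$ are tight precisely because they match the number of vertices consumed by the respective $Q_i$-witnesses ($1$, $2$, or the scattered pattern for $\mathcal{A}_{4,2}$).
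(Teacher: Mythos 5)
Parts (i)--(iii) of your proposal are essentially sound and, in the cases that actually occur, coincide with the paper's rotations. One remark: your ``generic subcase'' for (ii), in which the two triangles $B_1,B_2$ that $3$-see $Q$ do so through distinct vertices $v_1\neq v_2$, is vacuous --- if $v_1\neq v_2$ then $B_1\cup\{v_1\}$ and $B_2\cup\{v_2\}$ are already two disjoint $K_4$'s replacing the single $K_4$ on $Q$ and the two triangles $B_1,B_2$, which increases $|\mathcal{A}|$ with no reference to $Q_i$ at all. So the ``collapsed'' case $v_1=v_2$ is the only one that can occur, and your treatment of it (two witness $K_4$'s plus a third $K_4$ extracted from the dense $3\times 3$, resp.\ $3\times 2$, bipartite leftover) is exactly the paper's argument for (ii) and (iii). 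Your derivation of (i) as a corollary of (ii), (iii) and a direct swap for $\mathcal{A}_{4,2}$ is correct and arguably a cleaner organization than the paper's separate rotation for (i).

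Part (iv) has a genuine gap. You claim the pointwise bound $e(Q,Q_i)\le 13$ for every $Q\in\mathcal{A}_1$ once $a_1\ge 2$, but your rearrangement cannot produce the fourth disjoint $K_4$ needed for a contradiction. In the forced situation $v_1=v_2$ and $v_1''=v_2''$, the members $Q$ and $Q''$ each contribute only one witness $K_4$ ($B_1\cup\{v_1\}$ and $B_1''\cup\{v_1''\}$), and $e(Q,Q_i)\ge 14$ yields only one further $K_4$ from the $3\times 4$ bipartite graph between $Q\setminus\{v_1\}$ and $Q_i$ (seven vertices cannot contain two disjoint $K_4$'s). That is three new cliques replacing the three destroyed members $Q,Q'',Q_i$, so $|\mathcal{A}|$ does not increase; since $Q''$ carries no guaranteed edges to $Q$ or to $Q_i$, there is no source for a fourth clique. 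Indeed the pointwise bound is not available: for $Q_i\in\mathcal{A}_6$, a single $Q\in\mathcal{A}_1$ with $e(Q,Q_i)=16$ is consistent with maximality. What must be proved is only the aggregate bound $e(\mathcal{A}_1,Q_i)\le 13a_1$, and the missing idea is an averaging step: if $e(\mathcal{A}_1,Q_i)\ge 13a_1+1$, the pigeonhole principle gives two distinct $Q_1,Q_1'\in\mathcal{A}_1$ with $e(Q_1,Q_i)+e(Q_1',Q_i)\ge 27$, and one then exploits the high density from $Q_i$ to \emph{both} of them simultaneously, splitting the vertices of $Q_i$ between completing the leftover triple of $Q_1$ and that of $Q_1'$, to obtain four disjoint $K_4$'s from the three members $Q_1,Q_1',Q_i$ (with a short case analysis on how the $27$ edges distribute). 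This two-member rotation is the heart of (iv) and is absent from your proposal.
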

\begin{proof}[Proof of Lemma~\ref{LEMMA:Q1Qi}]

\begin{figure}[H]
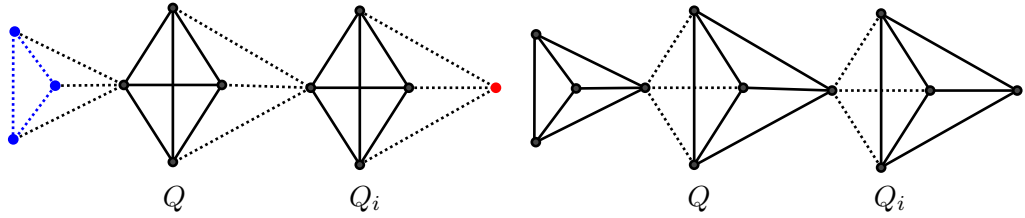

\centering

\tikzset{every picture/.style={line width=1pt}} 



\caption{Left: $Q$ is $3$-seen by a triangle (blue) in $\mathcal{B}$, and $Q_i$ is seen by a vertex (red) in $\mathcal{D}$. Right: after rotation, the number of vertex-disjoint copies of $K_4$ increases by one.} 
\label{Fig:Q1Ai-a}
\end{figure}

Lemma~\ref{LEMMA:Q1Qi}~\ref{LEMMA:Q1Qi-1} follows easily from the definitions of $\mathcal{A}_1$ and $\mathcal{A}_i$, as otherwise, a simple rotation (see e.g. Figure~\ref{Fig:Q1Ai-a} for the case $i = 4$) would increase the size of $|\mathcal{A}|$, contradicting the maximality of $(|\mathcal{A}|, |\mathcal{B}|, |\mathcal{C}|, |\mathcal{D}|)$. 

\begin{figure}[H]
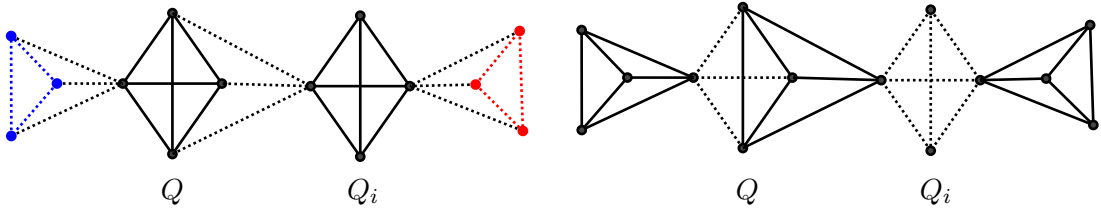

\centering

\tikzset{every picture/.style={line width=1pt}} 


\caption{Left: $Q$ is $3$-seen by a triangle (blue) in $\mathcal{B}$, and $Q_i$ is $3$-seen by a different triangle (red) in $\mathcal{B}$. Right: after rotation, the number of vertex-disjoint copies of $K_4$  increases one.} 
\label{Fig:Q1Qi-b}
\end{figure}

Next, we prove Lemma~\ref{LEMMA:Q1Qi}~\ref{LEMMA:Q1Qi-2}. 
Fix $i \in [6]$ and $Q_i = \{p_i, q_i, s_i, t_i\} \in \mathcal{A}_i$, assuming that $e(Q, Q_i) \ge 14$. Suppose to the contrary that $Q_i$ is $3$-seen by a member $T_i \in \mathcal{B}$. By symmetry, we may assume that $p_i$ is adjacent to all three vertices in $T_i$. 
By the definition of $\mathcal{A}_1$, there exists a member $T \in \mathcal{B}\setminus \{T_i\}$ that $3$-sees $Q$, and by symmetry, we may assume that $p$ is adjacent to all three vertices in $T$. 
Since $e(Q, Q_i) \ge 14 = 16-2$, there exists a vertex in $\{q_i, s_i, t_i\}$ that is adjacent to all vertices in $\{q, s, t\}$. However, the rotation shown in Figure~\ref{Fig:Q1Qi-b} would increase the size of $|\mathcal{A}|$, contradicting the maximality of $(|\mathcal{A}|, |\mathcal{B}|, |\mathcal{C}|, |\mathcal{D}|)$. 

\begin{figure}[H]
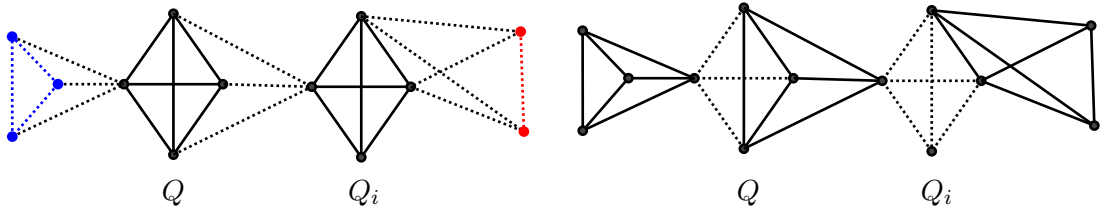

\centering

\tikzset{every picture/.style={line width=1pt}} 



\caption{Left: $Q$ is $3$-seen by a triangle (blue) in $\mathcal{B}$, and $Q_i$ is seen by an edge (red) in $\mathcal{C}$. Right: after rotation, the number of vertex-disjoint copies of $K_4$  increases one.} 
\label{Fig:Q1Qi-c}
\end{figure}

Next, we prove Lemma~\ref{LEMMA:Q1Qi}~\ref{LEMMA:Q1Qi-3}. 
Fix $i \in [2,6]$ and $Q_i = \{p_i, q_i, s_i, t_i\} \in \mathcal{A}_i$, assuming that $e(Q, Q_i) \ge  15$. Suppose to the contrary that $Q_i$ is seen by a member $\{u,v\} \in \mathcal{C}$ (the proof for the case where $Q_i$ is $2$-seen by a member in $\mathcal{B}$ is similar, so we omit it here). By symmetry, we may assume that $p_i$ and $q_i$ are adjacent to both vertices in $\{u,v\}$. 
By the definition of $\mathcal{A}_1$, there exists a member $T \in \mathcal{B}$ that $3$-sees $Q$, and by symmetry, we may assume that $p$ is adjacent to all three vertices in $T$. Since $e(Q, Q_i) \ge  15 = 16-1$, there exists a vertex in $\{s_i, t_i\}$ that is adjacent to all vertices in $\{q, s, t\}$. However, the rotation shown in Figure~\ref{Fig:Q1Qi-c} would increase the size of $|\mathcal{A}|$, contradicting the maximality of $(|\mathcal{A}|, |\mathcal{B}|, |\mathcal{C}|, |\mathcal{D}|)$. 

Next, we prove Lemma~\ref{LEMMA:Q1Qi}~\ref{LEMMA:Q1Qi-4}. Fix $i \in [2,6]$ and $Q_i = \{p_i, q_i, s_i, t_i\} \in \mathcal{A}_i$. 
Suppose to the contrary that $e(\mathcal{A}_1, Q_i) \ge 13a_1 + 1$. 
Then it follows from the Pigeonhole Principle and the assumption $|\mathcal{A}_1| \ge 2$ that there exist two distinct $Q_1 = \{p_1, q_1, s_1, t_1\} \in \mathcal{A}_1$ and $Q_1' = \{p_1', q_1', s_1', t_1'\} \in \mathcal{A}_1$ such that 
\begin{align*}
    e(Q_1, Q_i) + e(Q_1', Q_i) 
    \ge 27. 
\end{align*}
By the definition of $\mathcal{A}_1$, there exist two distinct members $T_1, T_1' \in \mathcal{B}$ such that, by symmetry, we may assume $p_1$ is adjacent to all vertices in $T_1$ and $p_1'$ is adjacent to all vertices in $T_1'$. 

\medskip 

\begin{figure}[H]
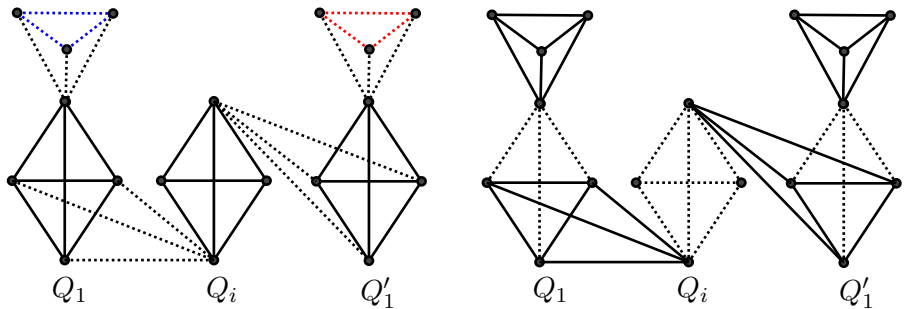

\centering

\tikzset{every picture/.style={line width=1pt}} 



\caption{Left: $Q_1$ is $3$-seen by a triangle (blue) in $\mathcal{B}$, and $Q_1'$ is seen by another triangle (red) in $\mathcal{B}$. Right: after rotation, the number of vertex-disjoint copies of $K_4$ increases by one.} 
\label{Fig:Q1Qi-d}
\end{figure}

\textbf{Case 1}$\colon$ $e(Q_1, Q_i) \ge 14$ and $e(Q_1', Q_i) \ge 13$. 

Note that the bipartite graph $G[Q_i, \{q_1', s_1', t_1'\}]$ satisfies 
\begin{align*}
    |G[Q_i, \{q_1', s_1', t_1'\}]|
    \ge 13 - 4 
    = 9.
\end{align*}
So it follows from Fact~\ref{FACT:3-by-4-bipartite-graph}~\ref{FACT:3-by-4-bipartite-graph-3} that there exists a vertex in $Q_i$ that is adjacent to all vertices in $\{q_1', s_1', t_1'\}$, and by symmetry, we may assume that $p_i$ is such a vertex in $Q_i$. 

Note that the the bipartite graph $G[\{q_i, s_i, t_i\}, \{q_1, s_1, t_1\}]$ satisfies 
\begin{align*}
    |G[\{q_i, s_i, t_i\}, \{q_1, s_1, t_1\}]|
    \ge 14 - (1+3+3)
    = 7. 
\end{align*}
So it follows from Fact~\ref{FACT:3-by-4-bipartite-graph}~\ref{FACT:3-by-4-bipartite-graph-2} that there exists a vertex in $\{q_i, s_i, t_i\}$ that is adjacent to all vertices in $\{q_1, s_1, t_1\}$. However, the rotation shown in Figure~\ref{Fig:Q1Qi-d} would increase the size of $|\mathcal{A}|$, contradicting the maximality of $(|\mathcal{A}|, |\mathcal{B}|, |\mathcal{C}|, |\mathcal{D}|)$. 

\medskip 

\begin{figure}[H]
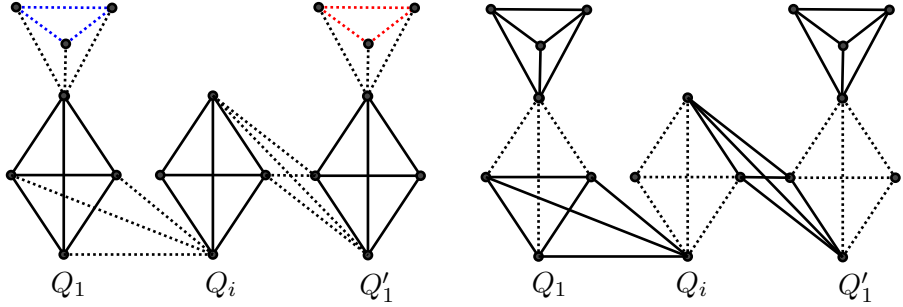

\centering

\tikzset{every picture/.style={line width=1pt}} 



\caption{Left: $Q_1$ is $3$-seen by a triangle (blue) in $\mathcal{B}$, and $Q_1'$ is seen by another triangle (red) in $\mathcal{B}$. Right: after rotation, the number of vertex-disjoint copies of $K_4$ increases by one.} 
\label{Fig:Q1Qi-e}
\end{figure}

\textbf{Case 2}$\colon$ $e(Q_1, Q_i) \ge 15$ and $e(Q_1', Q_i) \ge 12$.

Note that the bipartite graph $G[Q_i, \{q_1', s_1', t_1'\}]$ satisfies 
\begin{align*}
    |G[Q_i, \{q_1', s_1', t_1'\}]|
    \ge 12 - 4 
    = 8. 
\end{align*}
So it follows from Fact~\ref{FACT:3-by-4-bipartite-graph}~\ref{FACT:3-by-4-bipartite-graph-4} that there exist two vertices in $Q_i$ that are both adjacent to a certain pair of vertices in $\{q_1', s_1', t_1'\}$, and by symmetry, we may assume that $\{p_i, q_i\}$ are the two such vertices in $Q_i$. 

Note that the the bipartite graph $G[\{s_i, t_i\}, \{q_1, s_1, t_1\}]$ satisfies 
\begin{align*}
    |G[\{s_i, t_i\}, \{q_1, s_1, t_1\}]|
    \ge 15 - (4+6)
    = 5. 
\end{align*}
So it follows from Fact~\ref{FACT:3-by-4-bipartite-graph}~\ref{FACT:3-by-4-bipartite-graph-1} that there exists a vertex in $\{s_i, t_i\}$ that is adjacent to all vertices in $\{q_1, s_1, t_1\}$. However, the rotation shown in Figure~\ref{Fig:Q1Qi-e} would increase the size of $|\mathcal{A}|$, contradicting the maximality of $(|\mathcal{A}|, |\mathcal{B}|, |\mathcal{C}|, |\mathcal{D}|)$.

\medskip 

\begin{figure}[H]
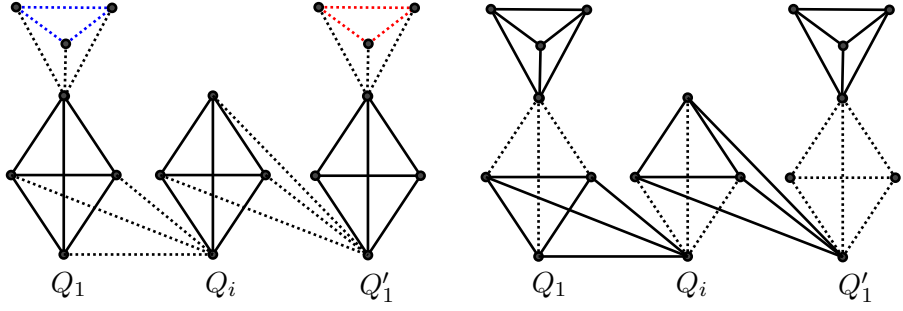

\centering

\tikzset{every picture/.style={line width=1pt}} 



\caption{Left: $Q_1$ is $3$-seen by a triangle (blue) in $\mathcal{B}$, and $Q_1'$ is seen by another triangle (red) in $\mathcal{B}$. Right: after rotation, the number of vertex-disjoint copies of $K_4$ increases by one.} 
\label{Fig:Q1Qi-f}
\end{figure}

\textbf{Case 3}$\colon$ $e(Q_1, Q_i) = 16$ and $e(Q_1', Q_i) \ge 11$.

Note that the bipartite graph $G[Q_i, \{q_1', s_1', t_1'\}]$ satisfies 
\begin{align*}
    |G[Q_i, \{q_1', s_1', t_1'\}]|
    \ge 11 - 4 
    = 7.
\end{align*}
So it follows from Fact~\ref{FACT:3-by-4-bipartite-graph}~\ref{FACT:3-by-4-bipartite-graph-5} that there exists a vertex in $\{q_1', s_1', t_1'\}$ that is adjacent to three vertices in $Q_i$, and by symmetry, we may assume that $\{p_i, q_i, s_i\}$ are the three such vertices in $Q_i$. 
Note that the vertex $t_i$ is adjacent to all vertices in $\{q_1, s_1, t_1\}$. However, the rotation shown in Figure~\ref{Fig:Q1Qi-f} would increase the size of $|\mathcal{A}|$, contradicting the maximality of $(|\mathcal{A}|, |\mathcal{B}|, |\mathcal{C}|, |\mathcal{D}|)$.
This proves Lemma~\ref{LEMMA:Q1Qi}~\ref{LEMMA:Q1Qi-4}. 

It follows from Lemma~\ref{LEMMA:Q1Qi}~\ref{LEMMA:Q1Qi-4} that, for every $i \in [2,6]$,
\begin{align*}
    e(\mathcal{A}_1, \mathcal{A}_i)
    & \le 
    \begin{cases}
        16 a_i, &\quad\text{if}\quad a_1 \le 1, \\[0.5em]
        13a_1 a_i, &\quad\text{if}\quad a_1 \ge 2, 
    \end{cases}\\[0.5em]
    & \le 13a_1 a_i + 3 a_i.  
\end{align*}
This completes the proof of Lemma~\ref{LEMMA:Q1Qi}.
\end{proof}
The following lemma provides an upper bound for $e(\mathcal{A}_2, \mathcal{A}_i)$ for every $i \in [2, 6]$.
\begin{lemma}\label{LEMMA:Q2Qi}
    The following statements hold for every $Q = \{p,q,s,t\} \in \mathcal{A}_2$. 
    \begin{enumerate}[label=(\roman*)]
        \item\label{LEMMA:Q2Qi-1} Suppose that $i \in \{3,4\}$. Then $e(Q_2, Q_i) \le 15$. 
        \item\label{LEMMA:Q2Qi-2} Let $i \in [2,6]$ and $Q_i \in \mathcal{A}_i$. Suppose that $e(Q_2, Q_i) \ge 15$. Then $Q_i$ cannot by $3$-seen by any member of $\mathcal{B}$, and cannot be seen by any member of $\mathcal{C}$. In particular, $e(Q,Q_2) \le 14$ for every $Q_2 \in \mathcal{A}_2 \setminus \{Q\}$, and hence, 
        \begin{align*}
            e(\mathcal{A}_2) 
            \le 14\binom{a_2}{2} + 6a_2
            \le 7 a_2^2. 
        \end{align*}
        \item\label{LEMMA:Q2Qi-3} Let $i \in \{5,6\}$ and $Q_i \in \mathcal{A}_i$. Suppose that $e(Q_2, Q_i) = 16$. Then $Q_i$ cannot be $2$-seen by any member of $\mathcal{B}$. 
        \item\label{LEMMA:Q2Qi-4} Let $i \in [3,6]$ and $Q_i \in \mathcal{A}_i$.  If $|\mathcal{A}_2| \ge 2$, then $e(\mathcal{A}_1, Q_i) \le 14 a_1$. Consequently, for every $i \in [3,6]$, 
        \begin{align*}
            e(\mathcal{A}_2, \mathcal{A}_i)
            & \le 14a_1 a_i + 2a_i. 
        \end{align*}
    \end{enumerate}
\end{lemma}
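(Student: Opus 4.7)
The plan is to follow the same rotation-based strategy used for Lemma~\ref{LEMMA:Q1Qi}, exploiting the structural feature that every $Q \in \mathcal{A}_2$ still possesses at least one ``strong'' witness---either a triangle $T \in \mathcal{B}$ that $3$-sees $Q$ (in the sub-families $\mathcal{A}_{2,1}$ and $\mathcal{A}_{2,2}$) or an edge $M \in \mathcal{C}$ that sees $Q$ (in $\mathcal{A}_{2,3}$)---together with a second weaker witness of a different type. The key point is that a member of $\mathcal{A}_2$ has at most one $3$-seeing triangle available (and possibly none), whereas a member of $\mathcal{A}_1$ has at least two; this is precisely why every edge threshold in Lemma~\ref{LEMMA:Q2Qi} is exactly one worse than the corresponding threshold in Lemma~\ref{LEMMA:Q1Qi}.

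For part~\ref{LEMMA:Q2Qi-1}, I assume toward contradiction that $e(Q, Q_i) = 16$ and combine the strong witness $W_Q$ of $Q$ with the structural witness of $Q_i$ (a $2$-seeing triangle from $\mathcal{B}$ when $i = 3$, a seeing vertex from $\mathcal{D}$ when $i = 4$) to rearrange the vertices of $Q \cup Q_i \cup W_Q \cup W_{Q_i}$ into one additional vertex-disjoint copy of $K_4$, closely mimicking the patterns of Figures~\ref{Fig:Q1Ai-a} and~\ref{Fig:Q1Qi-b}; this contradicts the lex-maximality of $(|\mathcal{A}|, |\mathcal{B}|, |\mathcal{C}|, |\mathcal{D}|)$.

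Parts~\ref{LEMMA:Q2Qi-2} and~\ref{LEMMA:Q2Qi-3} follow the same scheme with a smaller edge budget. For~\ref{LEMMA:Q2Qi-2}, I would suppose $e(Q, Q_i) \ge 15$ and that $Q_i$ is either $3$-seen by some $T_i \in \mathcal{B}$ or seen by some $\{u, v\} \in \mathcal{C}$. After fixing the ``anchor vertex'' $p_i \in Q_i$ used by that witness, the residual bipartite graph between $Q$ and $Q_i \setminus \{p_i\}$ contains at least $15 - 4 = 11$ edges, and Fact~\ref{FACT:3-by-4-bipartite-graph}~\ref{FACT:3-by-4-bipartite-graph-3} yields a vertex of $Q$ joined to all three vertices of $Q_i \setminus \{p_i\}$, which is exactly what is needed to execute the analogue of the rotation in Figure~\ref{Fig:Q1Qi-b} or~\ref{Fig:Q1Qi-c}, now anchored at the single witness $W_Q$ instead of a pair of $3$-seeing triangles. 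Part~\ref{LEMMA:Q2Qi-3} is the same argument applied to the $2$-seeing case with $e(Q, Q_i) = 16$, which leaves a dense residual $4 \times 2$ bipartite graph to which Fact~\ref{FACT:3-by-4-bipartite-graph}~\ref{FACT:3-by-4-bipartite-graph--1} applies and produces the edges required to extend the $2$-seeing $K_4$.

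Finally, for part~\ref{LEMMA:Q2Qi-4}, I would assume $e(\mathcal{A}_2, Q_i) \ge 14 a_2 + 1$ together with $|\mathcal{A}_2| \ge 2$; by the Pigeonhole Principle there exist distinct $Q, Q' \in \mathcal{A}_2$ with $e(Q, Q_i) + e(Q', Q_i) \ge 29$, which splits into the sub-cases $(15,14)$, $(16,13)$, and $(16,12)$-or-better, directly analogous to Cases~1--3 of the proof of Lemma~\ref{LEMMA:Q1Qi}~\ref{LEMMA:Q1Qi-4}. In each sub-case I apply the appropriate item of Fact~\ref{FACT:3-by-4-bipartite-graph} to the residual bipartite graph between $Q_i$ and the non-anchor vertices of $Q$ and $Q'$ to locate the triple of vertices needed to perform the rotations of Figures~\ref{Fig:Q1Qi-d}--\ref{Fig:Q1Qi-f}, contradicting maximality. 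The main obstacle here is the combinatorial bookkeeping: each of $Q, Q'$ may lie in any of the three sub-families $\mathcal{A}_{2,1}, \mathcal{A}_{2,2}, \mathcal{A}_{2,3}$, so the witness types (triangle $3$-seeing, $2$-seeing, or seeing edges) must be tracked carefully to ensure that the witness of $Q$ is disjoint from the vertices used to rebuild the $K_4$ replacing $Q'$, and \emph{vice versa}; the additive slack $+2 a_i$ in the stated bound is precisely what absorbs the negligible boundary cases where two witnesses of $Q$ and $Q'$ happen to collide inside $\mathcal{B}$ or $\mathcal{C}$.
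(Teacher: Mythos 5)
Your overall strategy is the paper's: exploit the witness structure guaranteed by the definition of $\mathcal{A}_2$, combine it with the witness of $Q_i$, use the edge density together with Fact~\ref{FACT:3-by-4-bipartite-graph} to locate the adjacencies needed for a rotation, and contradict the lexicographic maximality of $(|\mathcal{A}|,|\mathcal{B}|,|\mathcal{C}|,|\mathcal{D}|)$. Two points need repair, though.

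First, in part~\ref{LEMMA:Q2Qi-2} your uniform recipe --- fix a single anchor $p_i\in Q_i$, observe that $G[Q,Q_i\setminus\{p_i\}]$ has at least $11$ edges, and extract via Fact~\ref{FACT:3-by-4-bipartite-graph}~\ref{FACT:3-by-4-bipartite-graph-3} a vertex of $Q$ adjacent to all of $Q_i\setminus\{p_i\}$ --- does not execute the rotation in all witness combinations. When $Q_i$ is seen by an edge of $\mathcal{C}$ the anchor is a \emph{pair} of vertices of $Q_i$, not one; and when $Q\in\mathcal{A}_{2,3}$ its own witness consumes a pair $\{p,q\}\subseteq Q$, so the vertex produced by Fact~\ref{FACT:3-by-4-bipartite-graph}~\ref{FACT:3-by-4-bipartite-graph-3} may collide with that pair and cannot be reused. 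What the rotations actually require is, for instance, a vertex of $\{s_i,t_i\}$ adjacent to both remaining vertices $s,t$ of $Q$, obtained from $|G[\{s,t\},\{s_i,t_i\}]|\ge 15-(8+4)=3$; moreover, in the case where both witnesses are edges of $\mathcal{C}$ the rotation produces a new \emph{triangle} rather than a new $K_4$, so the contradiction is with the maximality of $|\mathcal{B}|$ (with $|\mathcal{A}|$ unchanged), and one must also note that the two seeing edges are distinct members of $\mathcal{C}$, which holds because a member of $\mathcal{A}_{2,3}$ is seen by at least two members of $\mathcal{C}$. These are all repairable, but the sketch as written does not cover the mixed-witness cases.

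Second, in part~\ref{LEMMA:Q2Qi-4} the pigeonhole bound is $e(Q,Q_i)+e(Q',Q_i)\ge 29$ with each summand at most $16$, so only the two sub-cases $(\ge 15,\ge 14)$ and $(16,\ge 13)$ occur; your third sub-case ``$(16,12)$-or-better'' cannot arise since $16+12=28<29$, and is an artefact of transplanting the three-case split of Lemma~\ref{LEMMA:Q1Qi}~\ref{LEMMA:Q1Qi-4}, where the threshold is $27$. This is harmless but worth correcting, since the choice of which item of Fact~\ref{FACT:3-by-4-bipartite-graph} to invoke is tied to the exact edge counts in each sub-case.
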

\begin{proof}[Proof of Lemma~\ref{LEMMA:Q2Qi}]
The proof of Lemma~\ref{LEMMA:Q2Qi}~\ref{LEMMA:Q2Qi-1} is similar to that of Lemma~\ref{LEMMA:Q1Qi}~\ref{LEMMA:Q1Qi-1} (see e.g. Figure~\ref{Fig:Q1Ai-a} for the case $i = 4$), so we omit it here. 

\medskip 

\begin{figure}[H]
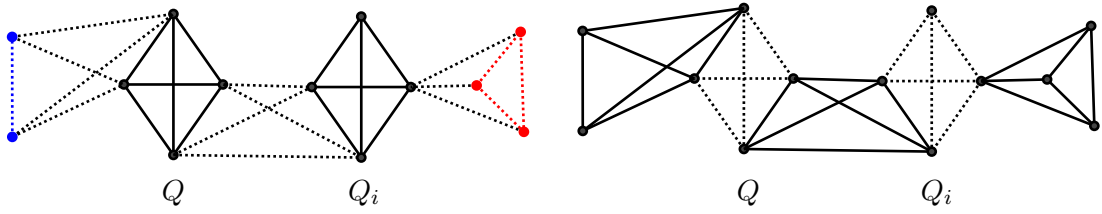

\centering

\tikzset{every picture/.style={line width=1pt}} 



\caption{Left: $Q$ is seen by an edge (blue) in $\mathcal{C}$, and $Q_i$ is $3$-seen by a triangle (red) in $\mathcal{B}$. Right: after rotation, the number of vertex-disjoint copies of $K_4$ increases one.} 
\label{Fig:Q2Qi-a}
\end{figure}

Next, we prove Lemma~\ref{LEMMA:Q2Qi}~\ref{LEMMA:Q2Qi-2}.  Fix $i \in [2,6]$ and $Q_i = \{p_i, q_i, s_i, t_i\} \in \mathcal{A}_i$, assuming that $e(Q_2, Q_i) \ge 15$.
If either $Q_1$ is $3$-seen by a member in $\mathcal{B}$ or $Q_i$ is $3$-seen by a member in $\mathcal{B}$, then the proof follows similarly to that of Lemma~\ref{LEMMA:Q1Qi}~\ref{LEMMA:Q1Qi-2} (see Figure~\ref{Fig:Q2Qi-a}). So we may assume that neither of them is $3$-seen by a member in $\mathcal{B}$. In particular, we may assume that $Q \in \mathcal{A}_{2,3}$. 

\begin{figure}[H]
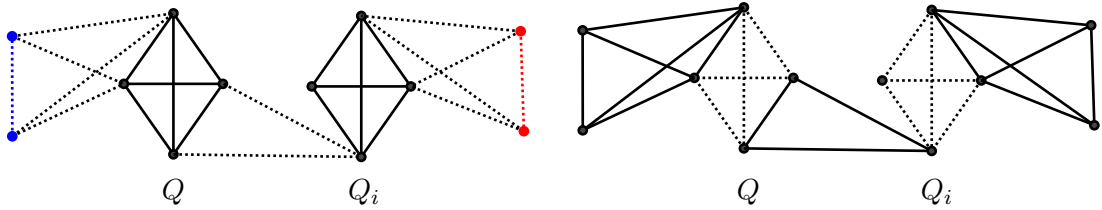

\centering

\tikzset{every picture/.style={line width=1pt}} 



\caption{Left: $Q$ is seen by an edge (blue) in $\mathcal{C}$, and $Q_i$ is seen by a different edge (red) in $\mathcal{C}$. Right: after rotation, the number of vertex-disjoint copies of $K_4$ remains the same, while the number of vertex-disjoint copies of $K_3$ increases one.} 
\label{Fig:Q2Qi-b2}
\end{figure}

Suppose to the contrary that $Q_i$ is seen by a member $\{u_i,v_i\} \in \mathcal{C}$, and by symmetry, we may assume that $u_i$ and $v_i$ are both adjacent to $p_i$ and $q_i$. By the definition of $\mathcal{A}_{2,3}$, there exists a member $\{u, v\} \in \mathcal{C} \setminus \{\{u_i, v_i\}\}$ such that $u$ and $v$ are both adjacent to the same pair of vertices in $Q$, and by symmetry, we may assume that $p$ and $q$ are two such vertices in $Q$. Since 
\begin{align*}
    |G[\{s, t\}, \{s_i, t_i\}]|
    \ge 15 - (8 + 4) = 3, 
\end{align*}
there exists a vertex in $\{s_i, t_i\}$ that is adjacent to both $s$ and $t$. However, the rotation shown in Figure~\ref{Fig:Q2Qi-b2} would increase the value of $|\mathcal{B}|$ (while keeping $|\mathcal{A}|$ unchanged), contradicting the maximality of $(|\mathcal{A}|, |\mathcal{B}|, |\mathcal{C}|, |\mathcal{D}|)$. 

\begin{figure}[H]
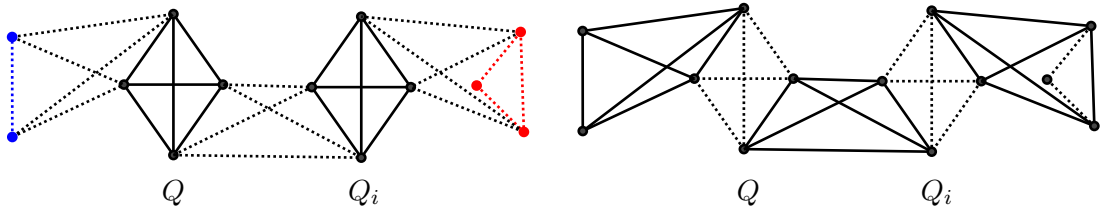

\centering

\tikzset{every picture/.style={line width=1pt}} 



\caption{Left: $Q$ is seen by an edge (blue) in $\mathcal{C}$, and $Q_i$ is $2$-seen by a triangle (red) in $\mathcal{B}$. Right: after rotation, the number of vertex-disjoint copies of $K_4$ increases one.} 
\label{Fig:Q2Qi-b3}
\end{figure}

The proof for Lemma~\ref{LEMMA:Q2Qi}~\ref{LEMMA:Q2Qi-3} is similar as above (see Figure~\ref{Fig:Q2Qi-b3}). So we omit it here. It remains to show Lemma~\ref{LEMMA:Q2Qi}~\ref{LEMMA:Q2Qi-4}. 
Fix $i \in [3,6]$ and $Q_i = \{p_i, q_i, s_i, t_i\} \in \mathcal{A}_i$.  Suppose to the contrary that $e(\mathcal{A}_1, Q_i) \ge 14 a_1 + 1$.
It follows from the Pigeonhole Principle and the assumption $|\mathcal{A}_2| \ge 2$ that there exist two distinct members $Q_2 = \{p_2, q_2, s_2, t_2\} \in \mathcal{A}_2$ and $Q_2' = \{p_2', q_2', s_2', t_2'\} \in \mathcal{A}_2$ such that 
\begin{align*}
    e(Q_2, Q_i) + e(Q_2', Q_i) 
    \ge 29. 
\end{align*}
Let us consider the case where both $Q_2$ and $Q_2'$ are contained in $\mathcal{A}_{2,3}$ (the proofs for the remaining cases are similar and are therefore omitted). 
By the definition of $\mathcal{A}_{2,3}$, there exist distinct members $\{u_2, v_2\}, \{u_2',v_2'\} \in \mathcal{C}$ that see $Q_2$ and $Q_2'$, respectively. By symmetry, we may assume that $u_2$ and $v_2$ are both adjacent to $\{p_2, q_2\}$, and $u_2'$ and $v_2'$ are both adjacent to $\{p_2', q_2'\}$. 

\medskip 

\begin{figure}[H]
\centering

\tikzset{every picture/.style={line width=1pt}} 



\caption{Left: $Q_2$ is seen by an edge (blue) in $\mathcal{B}$, and $Q_2'$ is seen by another edge (red) in $\mathcal{B}$. Right: after rotation, the number of vertex-disjoint copies of $K_4$ remains the same, while the number of vertex-disjoint copies of $K_3$ increases by one.} 
\label{Fig:Q2Qi-c}
\end{figure}

\textbf{Case 1}$\colon$ $e(Q_2, Q_i) \ge 15$ and $e(Q_2', Q_i) \ge 14$.

Note that 
\begin{align*}
    |G[\{s_2', t_2'\}, Q_i]| 
    \ge 14 - 2\cdot 4
    = 6. 
\end{align*}
So it follows from Fact~\ref{FACT:3-by-4-bipartite-graph}~\ref{FACT:3-by-4-bipartite-graph--1} that there exists a pair of vertices in $Q_i$ that are adjacent to both $s_2'$ and $t_2'$, and by symmetry, we may assume that $\{p_i, q_i\}$ are two such vertices in $Q_i$. 
Since 
\begin{align*}
    |G[\{s_2, t_2\}, \{s_i, t_i\}]|
    \ge 15 - (8 + 4)
    = 3, 
\end{align*}
there exists a vertex in $\{s_i, t_i\}$ that is adjacent to both $s_2$ and $t_2$. However, the rotation shown in Figure~\ref{Fig:Q2Qi-c} would increase the value of $|\mathcal{B}|$ (while keeping $|\mathcal{A}|$ unchanged), contradicting the maximality of $(|\mathcal{A}|, |\mathcal{B}|, |\mathcal{C}|, |\mathcal{D}|)$.

\medskip 

\begin{figure}[H]
\centering

\tikzset{every picture/.style={line width=1pt}} 



\caption{Left: $Q_2$ is seen by an edge (blue) in $\mathcal{B}$, and $Q_2'$ is seen by another edge (red) in $\mathcal{B}$. Right: after rotation, the number of vertex-disjoint copies of $K_4$ remains the same, while the number of vertex-disjoint copies of $K_3$ increases by one.} 
\label{Fig:Q2Qi-d}
\end{figure}

\textbf{Case 2}$\colon$ $e(Q_2, Q_i) = 16$ and $e(Q_2', Q_i) \ge 13$.

Note that 
\begin{align*}
    |G[\{s_2', t_2'\}, Q_i]| 
    \ge 13 - 2 \cdot 4
    = 5.
\end{align*}
So it follows from Fact~\ref{FACT:3-by-4-bipartite-graph}~\ref{FACT:3-by-4-bipartite-graph--2} that there exists a vertex in $Q_2'$ that is adjacent to at least three vertices in $Q_i$, and by symmetry, we may assume that $\{p_i, q_i, s_i\}$ are three such vertices in $Q_i$. Since $e(Q_2, Q_i) = 16$, $t_i$ is adjacent to both $s_2$ and $t_2$. However, the rotation shown in Figure~\ref{Fig:Q2Qi-d} would increase the value of $|\mathcal{B}|$ (while keeping $|\mathcal{A}|$ unchanged), contradicting the maximality of $(|\mathcal{A}|, |\mathcal{B}|, |\mathcal{C}|, |\mathcal{D}|)$. 
This proves Lemma~\ref{LEMMA:Q2Qi}~\ref{LEMMA:Q2Qi-4}. 

It follows from Lemma~\ref{LEMMA:Q2Qi}~\ref{LEMMA:Q2Qi-4} that 
\begin{align*}
    e(\mathcal{A}_2, \mathcal{A}_i)
    & \le 
    \begin{cases}
        16 a_i, &\quad\text{if}\quad a_2 \le 1, \\[0.5em]
        14a_1 a_i, &\quad\text{if}\quad a_2 \ge 2, 
    \end{cases} \\[0.5em]
    & \le 14a_1 a_i + 2a_i. 
\end{align*}
This completes the proof of Lemma~\ref{LEMMA:Q2Qi}. 
\end{proof}
The following lemma provides an upper bound for $e(\mathcal{A}_3, \mathcal{A}_i)$ for $i \in [3,6]$.
\begin{lemma}\label{LEMMA:Q3Qi}
    The following statements hold for every $Q = \{p,q,s,t\} \in \mathcal{A}_3$.
    \begin{enumerate}[label=(\roman*)]
        \item\label{LEMMA:Q3Qi-1} 
        Suppose that $e(Q, Q_i) = 16$. Then $Q_i$ cannot be $3$-seen or $2$-seen by any member of $\mathcal{B}$, and cannot be seen by any member of $\mathcal{C}$. In particular, $e(Q, Q_3) \le 15$ for every $Q_3 \in \mathcal{A}_{3}\setminus \{Q\}$, and hence, 
        \begin{align*}
            e(\mathcal{A}_3)
            \le 15 \binom{a_3}{2} + 6a_3
            \le \frac{15 a_3^2}{2}. 
        \end{align*}
        \item\label{LEMMA:Q3Qi-2} Suppose that $|\mathcal{A}_3| \ge 2$. Then $e(\mathcal{A}_3, Q_i) \le 15 a_3$ for every $i \in \{4,5,6\}$. Consequently, for every $i \in \{4,5,6\}$, 
        \begin{align*}
            e(\mathcal{A}_3, \mathcal{A}_i) 
            & \le 15 a_3 a_i + a_i. 
        \end{align*}
    \end{enumerate}
\end{lemma}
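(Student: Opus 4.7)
The plan is to carry out rotation-based contradiction arguments, exactly in the style of Lemmas~\ref{LEMMA:Q1Qi} and~\ref{LEMMA:Q2Qi}, exploiting the lexicographic maximality of the rank-$4$-packing $(|\mathcal{A}|, |\mathcal{B}|, |\mathcal{C}|, |\mathcal{D}|)$. The essential new idea is to use the fact that every $Q \in \mathcal{A}_3$ is $2$-seen by at least two distinct triangles of $\mathcal{B}$, giving enough structural flexibility to execute rotations.

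For part~\ref{LEMMA:Q3Qi-1}, suppose $e(Q, Q_i) = 16$, so $G[Q \cup Q_i] \cong K_8$, and further suppose that $Q_i$ is $3$-seen by some $T_i \in \mathcal{B}$ (the $2$-seen case and the ``seen by $\mathcal{C}$'' case are handled identically in spirit). Because $Q \in \mathcal{A}_3$, I can pick $T \in \mathcal{B}$ with $T \ne T_i$ that $2$-sees $Q$ via a pair $\{p, q\} \subseteq Q$ and a pair $\{a, b\} \subseteq T$. I would then build three $K_4$s in $V(Q \cup Q_i \cup T \cup T_i)$: namely $\{p, q, a, b\}$, the $K_4$ arising from the $3$-seen (or $2$-seen, or $\mathcal{C}$-seen) structure on the $Q_i$ side, and $\{s, t\}$ together with two vertices of $Q_i$ (a $K_4$ since $e(Q, Q_i) = 16$). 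This replaces the original two $K_4$s $Q, Q_i$ by three, strictly increasing $|\mathcal{A}|$ while the leftover vertices drop to $\mathcal{D}$, contradicting maximality. The claim $e(\mathcal{A}_3) \le 15\binom{a_3}{2} + 6 a_3$ then follows because if two distinct $Q, Q_3 \in \mathcal{A}_3$ had $e(Q, Q_3) = 16$, applying the above with $Q_i = Q_3$ would forbid $Q_3$ from being $2$-seen by any triangle, contradicting $Q_3 \in \mathcal{A}_3$.

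For part~\ref{LEMMA:Q3Qi-2}, I would assume $e(\mathcal{A}_3, Q_i) \ge 15 a_3 + 1$ for some $Q_i \in \mathcal{A}_i$ with $i \in \{4, 5, 6\}$. Ordering the values $e(Q_3, Q_i)$ in decreasing order $m_1 \ge \cdots \ge m_{a_3}$ and splitting on whether $m_1 \le 15$ or $m_1 = 16$ (in the latter case $m_2 \le 14$ forces $\sum m_j \le 16 + 14(a_3 - 1) \le 15 a_3$ for $a_3 \ge 2$), some pair must satisfy $m_i + m_j \ge 31$, so there exist distinct $Q_3, Q_3' \in \mathcal{A}_3$ with $e(Q_3, Q_i) = 16$ and $e(Q_3', Q_i) \ge 15$. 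Since both blocks are $2$-seen by at least two triangles, I can pick distinct $T, T' \in \mathcal{B}$ with $\{p_3, q_3, a, b\}$ and $\{p_3', q_3', a', b'\}$ being $K_4$s. The rotation I would perform replaces $\{Q_3, Q_3', Q_i\}$ by the four $K_4$s
\[
    \{p_3, q_3, a, b\},\ \{p_3', q_3', a', b'\},\ \{s_3, t_3, Z_1, Z_2\},\ \{s_3', t_3', Z_3, Z_4\},
\]
where $\{Z_1, Z_2\} \cup \{Z_3, Z_4\} = Q_i$ is partitioned so that all four edges of $\{s_3', t_3'\} \times \{Z_3, Z_4\}$ are present.

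The main obstacle is choosing this partition of $Q_i$ so that the at most one missing edge in $G[Q_3', Q_i]$ does not block any of the required $2 \times 2$ complete pieces. I would handle this by a short case analysis: there are three partitions of $Q_i$ into pairs, and each of the six ``dangerous'' potential missing edges (those from $\{s_3', t_3'\}$ to $\{q_i, s_i, t_i\}$, after fixing labels $Q_i = \{p_i, q_i, s_i, t_i\}$) appears as required in exactly two of the three partitions, so at least one partition is always usable; missing edges involving $\{p_3', q_3'\}$ or $p_i$ do not interfere with any partition. The rotation then increases $|\mathcal{A}|$ by one, giving the contradiction. The final bound $e(\mathcal{A}_3, \mathcal{A}_i) \le 15 a_3 a_i + a_i$ follows by summing $e(\mathcal{A}_3, Q_i) \le 15 a_3$ over $Q_i \in \mathcal{A}_i$ when $a_3 \ge 2$, using the trivial bound $16 a_i$ when $a_3 \le 1$, and noting $\max(16 a_i, 15 a_3 a_i) \le 15 a_3 a_i + a_i$.
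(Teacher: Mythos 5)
Your proposal is correct and follows essentially the same rotation-based approach that the paper defers to (the paper only says the proof is ``similar to'' those of Lemma~\ref{LEMMA:Q1Qi} and Lemma~\ref{LEMMA:Q2Qi} and omits the details, so you are supplying precisely the argument the authors had in mind). The key structural inputs you correctly use are: every $Q \in \mathcal{A}_3$ is $2$-seen by at least two members of $\mathcal{B}$; $e(Q, Q_i) = 16$ makes $G[Q \cup Q_i]$ a clique, so any split of $Q$ into two pairs combines freely with any two vertices of $Q_i$; and the Pigeonhole-type threshold ($m_1 + m_2 \ge 31$, hence $m_1 = 16$, $m_2 \ge 15$) is exactly the right analogue of the $\ge 27$ and $\ge 29$ thresholds used for $\mathcal{A}_1$ and $\mathcal{A}_2$. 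Each rotation you describe increases $|\mathcal{A}|$, which contradicts $\nu(K_4,G) = k$; this is the same style of contradiction as in Figures~\ref{Fig:Q1Qi-d}--\ref{Fig:Q1Qi-f}.

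One small imprecision worth flagging: in part~\ref{LEMMA:Q3Qi-2}, your count ``appears as required in exactly two of the three partitions'' is not quite right, but the conclusion survives. There are three unordered pair-partitions of $Q_i$, and for any fixed bad vertex $z \in Q_i$ (the $Q_i$-endpoint of the at-most-one missing edge to $\{s_3', t_3'\}$), each of the three partitions contains $z$ in one of its two pairs; since $e(Q_3, Q_i) = 16$ imposes no constraint on the $Q_3$-side piece, you can always \emph{choose} the pair containing $z$ to be $\{Z_1, Z_2\}$. So in fact every one of the three partitions is usable once you orient it appropriately; the elaborate case analysis and the ``six dangerous edges'' bookkeeping are unnecessary. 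This does not affect the validity of the argument.
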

\begin{proof}[Proof of Lemma~\ref{LEMMA:Q3Qi}]
    The proof of Lemma~\ref{LEMMA:Q3Qi}~\ref{LEMMA:Q3Qi-1} is similar to those of Lemma~\ref{LEMMA:Q1Qi}~\ref{LEMMA:Q1Qi-2} and Lemma~\ref{LEMMA:Q2Qi}~\ref{LEMMA:Q2Qi-2}; the proof of Lemma~\ref{LEMMA:Q3Qi}~\ref{LEMMA:Q3Qi-2} is similar to those of Lemma~\ref{LEMMA:Q1Qi}~\ref{LEMMA:Q1Qi-4} and Lemma~\ref{LEMMA:Q2Qi}~\ref{LEMMA:Q2Qi-4}.
    Therefore, we omit them here. 
\end{proof}

The following lemma provides an upper bound for $e(\mathcal{A}_4, \mathcal{A}_i)$ for $i \in \{5,6\}$.
\begin{lemma}\label{LEMMA:Q4Qi}
    The following statements hold. 
    \begin{enumerate}[label=(\roman*)]
        \item\label{LEMMA:Q4Qi-1} $e(Q_4, Q_4') \le 15$ for all distinct members $Q_4, Q_4' \in \mathcal{A}_4$. In particular, 
        \begin{align*}
            e(\mathcal{A}_4)
            \le 15\binom{a_4}{2} + 6a_4
            \le \frac{15 a_4^2}{2}. 
        \end{align*}
        \item\label{LEMMA:Q4Qi-2} Suppose that $|\mathcal{A}_4| \ge 2$. Then $e(\mathcal{A}_4, Q) \le 15 a_4$ for every $Q\in \mathcal{A}_5 \cup \mathcal{A}_6$. Consequently, for $i \in \{5,6\}$, 
        \begin{align*}
            e(\mathcal{A}_4, \mathcal{A}_i)
            & \le 15 a_4 a_i + a_i. 
        \end{align*}
    \end{enumerate}
\end{lemma}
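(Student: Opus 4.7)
The plan is to follow the rotation framework developed in Lemmas~\ref{LEMMA:Q1Qi},~\ref{LEMMA:Q2Qi}, and~\ref{LEMMA:Q3Qi}, exploiting the lexicographic maximality of $(|\mathcal{A}|, |\mathcal{B}|, |\mathcal{C}|, |\mathcal{D}|)$. The key observation is that every $Q \in \mathcal{A}_4$ comes with a usable external witness structure: either two distinct $\mathcal{D}$-witnesses (when $Q \in \mathcal{A}_{4,2}$), or a $\mathcal{B}$-witness $T$ that $3$-sees $Q$ via some $p \in Q$ together with a $\mathcal{D}$-witness $v$ that sees some $3$-subset of $Q$ (when $Q \in \mathcal{A}_{4,1}$).

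For part~\ref{LEMMA:Q4Qi-1}, I would assume for contradiction that $e(Q_4, Q_4') = 16$ for distinct $Q_4 = \{p,q,s,t\}$ and $Q_4' = \{p',q',s',t'\}$ in $\mathcal{A}_4$, and case-analyze on the memberships in $\mathcal{A}_{4,1}, \mathcal{A}_{4,2}$. The workhorse rotation picks distinct $\mathcal{D}$-witnesses $v \ne v'$ for $Q_4, Q_4'$, forms the $K_4$'s $\{v\} \cup S$ and $\{v'\} \cup S'$ for appropriate $3$-subsets $S \subseteq Q_4$ and $S' \subseteq Q_4'$, and uses $e(Q_4, Q_4') = 16$ to turn the two residual vertices $u \in Q_4 \setminus S$ and $u' \in Q_4' \setminus S'$ into the edge $\{u, u'\} \in \mathcal{C}$; this keeps $|\mathcal{A}|$ and $|\mathcal{B}|$ unchanged while strictly increasing $|\mathcal{C}|$, contradicting lex-maximality. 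Such distinct $v, v'$ always exist, except when both $Q_4, Q_4' \in \mathcal{A}_{4,1}$ share their unique $\mathcal{D}$-witness $v = v'$. In that exceptional subcase I would pivot to the $\mathcal{B}$-witnesses: if $T \ne T'$, replacing $Q_4, Q_4', T, T'$ by $\{p\} \cup T$, $\{p'\} \cup T'$, the $K_4$ on $\{q, s, q', s'\}$ (which exists because $e(Q_4, Q_4') = 16$), and the edge $\{t, t'\}$ yields a net gain of $+1$ in $|\mathcal{A}|$; if $T = T'$, the hybrid rotation $\{p\} \cup T$, $\{v\} \cup S'$, $(Q_4 \setminus \{p\}) \cup (Q_4' \setminus S')$ again gives a $+1$ gain in $|\mathcal{A}|$ using complete bipartiteness. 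The bound $e(\mathcal{A}_4) \le 15\binom{a_4}{2} + 6 a_4 \le 15 a_4^2/2$ then follows by summing over pairs and counting the six internal edges of each $K_4$.

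For part~\ref{LEMMA:Q4Qi-2}, I would mirror the proofs of Lemma~\ref{LEMMA:Q1Qi}~\ref{LEMMA:Q1Qi-4} and Lemma~\ref{LEMMA:Q2Qi}~\ref{LEMMA:Q2Qi-4}. Assuming $e(\mathcal{A}_4, Q) \ge 15 a_4 + 1$ for some $Q = \{p_0, q_0, s_0, t_0\} \in \mathcal{A}_5 \cup \mathcal{A}_6$, Pigeonhole together with $|\mathcal{A}_4| \ge 2$ produces distinct $Q_4, Q_4' \in \mathcal{A}_4$ with $e(Q_4, Q) + e(Q_4', Q) \ge 31$, so without loss of generality $e(Q_4, Q) = 16$ and $e(Q_4', Q) \ge 15$. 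Picking distinct $\mathcal{D}$-witnesses $v, v'$ for $Q_4, Q_4'$ whenever possible, the residual vertex $u' \in Q_4' \setminus S'$ misses at most one vertex of $Q$ (by $e(Q_4', Q) \ge 15$), and $u \in Q_4 \setminus S$ is adjacent to all of $Q$ (by $e(Q_4, Q) = 16$). Hence there exist three vertices $\{p_0, q_0, s_0\} \subseteq Q$ all adjacent to $u'$, and the rotation replacing $Q_4, Q_4', Q, v, v'$ by $\{v\} \cup S$, $\{v'\} \cup S'$, $\{u', p_0, q_0, s_0\}$, and the edge $\{u, t_0\}$ keeps $|\mathcal{A}|$ and $|\mathcal{B}|$ unchanged, increases $|\mathcal{C}|$ by one, and decreases $|\mathcal{D}|$ by two, contradicting lex-maximality. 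The exceptional subcase where $Q_4, Q_4' \in \mathcal{A}_{4,1}$ and $v = v'$ is handled by pivoting to the $\mathcal{B}$-witness $T$ of $Q_4$: the hybrid rotation $\{p\} \cup T$, $\{v\} \cup S'$, $\{u', p_0, q_0, s_0\}$, $\{t_0, q, s, t\}$ yields a net gain of $+1$ in $|\mathcal{A}|$. Finally, summing $e(\mathcal{A}_4, Q) \le 15 a_4$ over $Q \in \mathcal{A}_i$ when $a_4 \ge 2$, and using the trivial bound $16 a_i$ when $a_4 \le 1$, delivers the stated bound $e(\mathcal{A}_4, \mathcal{A}_i) \le 15 a_4 a_i + a_i$.

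The main obstacle will be managing the proliferation of subcases arising from the two sub-families $\mathcal{A}_{4,1}, \mathcal{A}_{4,2}$ and the possibility that the chosen witnesses coincide. The hardest configuration is when both members lie in $\mathcal{A}_{4,1}$ and share both their $\mathcal{B}$- and $\mathcal{D}$-witnesses, where only a carefully designed hybrid rotation mixing a $\mathcal{B}$- and a $\mathcal{D}$-witness works; verifying vertex-disjointness of the resulting cliques and that the lex-order strictly increases requires careful bookkeeping. For part~(ii), the additional difficulty is that $Q \in \mathcal{A}_5 \cup \mathcal{A}_6$ carries no exploitable external witness structure, so the rotation must rely entirely on the witnesses of $Q_4, Q_4'$ together with the near-complete $16$-and-$15$ edge pattern between $Q_4 \cup Q_4'$ and $Q$.
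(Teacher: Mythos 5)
Your argument is correct and follows the same rotation strategy that the paper's omitted proof references (Lemmas~\ref{LEMMA:Q1Qi},~\ref{LEMMA:Q2Qi} and Figures~\ref{Fig:Q4Qi-a}--\ref{Fig:Q4Qi-b}): convert the two $\mathcal{D}$-witnesses into new $K_4$'s and use the near-complete bipartite structure between $Q_4\cup Q_4'$ (and, for part~(ii), $Q$) to complete the tiling while sparing an extra $K_2$. You also correctly flag and close the shared-witness degeneracy (both members in $\mathcal{A}_{4,1}$ sharing a unique $\mathcal{D}$-vertex), a case the cited proofs never face because $\mathcal{A}_1$ and $\mathcal{A}_{2,3}$ always supply two distinct same-type witnesses whereas $\mathcal{A}_{4,1}$ guarantees only one $\mathcal{D}$-witness; your hybrid rotations that pull in the $\mathcal{B}$-witnesses and gain $|\mathcal{A}|$ outright resolve it cleanly.
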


\begin{figure}[H]
\centering

\tikzset{every picture/.style={line width=1pt}} 


\caption{Left: $Q_4$ is seen by a vertex (blue) in $\mathcal{D}$, and $Q_4'$ is seen by another vertex (red) in $\mathcal{D}$. Right: after rotation, the numbers of vertex-disjoint copies of $K_4$ and $K_3$ remain the same, while the number of vertex-disjoint copies of $K_2$ increases by one.} 
\label{Fig:Q4Qi-b}
\end{figure}

\begin{proof}[Proof of Lemma~\ref{LEMMA:Q4Qi}]
    The proof of Lemma~\ref{LEMMA:Q4Qi}~\ref{LEMMA:Q4Qi-1} is similar to those of Lemma~\ref{LEMMA:Q1Qi}~\ref{LEMMA:Q1Qi-2} and Lemma~\ref{LEMMA:Q2Qi}~\ref{LEMMA:Q2Qi-2} (see Figure~\ref{Fig:Q4Qi-a}); the proof of Lemma~\ref{LEMMA:Q4Qi}~\ref{LEMMA:Q4Qi-2} is similar to those of Lemma~\ref{LEMMA:Q1Qi}~\ref{LEMMA:Q1Qi-4} and Lemma~\ref{LEMMA:Q2Qi}~\ref{LEMMA:Q2Qi-4} (see Figure~\ref{Fig:Q4Qi-b} for an example). Therefore, we omit them here.
\end{proof}
The following lemma provides an upper bound for $e(\mathcal{A}_5 \cup \mathcal{A}_6)$.
\begin{lemma}\label{LEMMA:A5A6-edges}
    We have 
    \begin{align*}
        e(\mathcal{A}_5 \cup \mathcal{A}_6)
        \le 15\binom{a_5}{2} + 6 a_5 +  15a_5a_6 + 16 \binom{a_6}{2} + 6 a_6 
        \le \frac{15 a_5^2}{2}  + 15 a_5 a_6 + 8 a_6^2.
    \end{align*}
\end{lemma}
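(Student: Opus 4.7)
The plan is to exploit the iterative definition of $\mathcal{A}_5$ to bound $e(\mathcal{A}_5) + e(\mathcal{A}_5, \mathcal{A}_6)$, and to handle $e(\mathcal{A}_6)$ by a trivial estimate. First, I would index the members of $\mathcal{A}_5$ as $Q^{(1)}, Q^{(2)}, \ldots, Q^{(a_5)}$ in the order they were transferred from $\mathcal{A}_6$ to $\mathcal{A}_5$ by the process. At the step when $Q^{(i)}$ was moved, the current $\mathcal{A}_6$ equaled $\{Q^{(i)}, Q^{(i+1)}, \ldots, Q^{(a_5)}\}$ together with the terminal $\mathcal{A}_6$, a set of size $a_5 + a_6 - i + 1$. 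By the defining rule of the process, $Q^{(i)}$ therefore sends at most $15(a_5 + a_6 - i)$ edges to the other $a_5 + a_6 - i$ members of that set, namely $\{Q^{(i+1)}, \ldots, Q^{(a_5)}\}$ together with the terminal $\mathcal{A}_6$.

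The key step is then to sum this estimate over $i \in [a_5]$. Each edge between two distinct members of $\mathcal{A}_5$ is counted exactly once (charged to the endpoint with the smaller index), and each edge between $\mathcal{A}_5$ and $\mathcal{A}_6$ is counted exactly once. Adding the $6 a_5$ internal edges contained in the copies of $K_4$ in $\mathcal{A}_5$, one obtains
\begin{align*}
    e(\mathcal{A}_5) + e(\mathcal{A}_5, \mathcal{A}_6)
    \le 6 a_5 + \sum_{i=1}^{a_5} 15(a_5 + a_6 - i)
    = 6 a_5 + 15 \binom{a_5}{2} + 15 a_5 a_6.
\end{align*}
For the remaining piece, the trivial observation that any two vertex-disjoint copies of $K_4$ span at most $4 \cdot 4 = 16$ edges yields $e(\mathcal{A}_6) \le 16\binom{a_6}{2} + 6 a_6$. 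Summing these two bounds produces the first inequality of the lemma; the second inequality then follows from the elementary estimates $15\binom{a_5}{2} + 6 a_5 \le \frac{15 a_5^2}{2}$ and $16\binom{a_6}{2} + 6 a_6 \le 8 a_6^2$.

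No combinatorial obstacle arises here: unlike the earlier lemmas in this section, no rotation argument is required, because the definition of $\mathcal{A}_5$ is tailored precisely to produce this estimate. The only care needed is the bookkeeping -- namely, verifying that each relevant cross-edge is charged exactly once in the summation over $i$ -- which is immediate from the smaller-index charging scheme described above.
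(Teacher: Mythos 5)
Your proposal is correct and follows essentially the same approach as the paper: it uses the greedy ordering built into the definition of $\mathcal{A}_5$ to bound each $e(Q^{(i)}, Q^{(i+1)} \cup \cdots \cup Q^{(a_5)} \cup V(\mathcal{A}_6))$ by $15(a_5 + a_6 - i)$, sums these with a smaller-index charging scheme, adds $6a_5$ for internal $K_4$-edges, and pairs this with the trivial bound $e(\mathcal{A}_6) \le 16\binom{a_6}{2} + 6a_6$. The bookkeeping and the final elementary estimates are all in order.
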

\begin{proof}[Proof of Lemma~\ref{LEMMA:A5A6-edges}]
    We have the trivial upper bound $e(\mathcal{A}_6) \le 16 \binom{a_6}{2} + 6 a_6$, so it suffices to show that 
    \begin{align*}
        e(\mathcal{A}_5) + e(\mathcal{A}_5, \mathcal{A}_6) 
        \le 15\binom{a_5}{2} + 6 a_5 +  15a_5a_6.
    \end{align*}
    %
    Recall from the definition of $\mathcal{A}_5$ that there exists an ordering $Q_1, \ldots, Q_{a_5}$ of elements in $\mathcal{A}_5$ such that for every $i \in [a_5]$,  
    \begin{align*}
        e(Q_i, Q_{i+1} \cup \cdots \cup Q_{a_5} \cup V(\mathcal{A}_6))
        \le 15 \left(a_5 - i + |\mathcal{A}_6|\right)
        = 15(a_5 + a_6 - i). 
    \end{align*}
    It follows that 
    \begin{align}\label{equ:A5-A5A6}
        e(\mathcal{A}_5) + e(\mathcal{A}_5,  \mathcal{A}_6)
        & \le 6a_5 +  \sum_{i=1}^{a_5} e(Q_i, Q_{i+1} \cup \cdots \cup Q_{a_5} \cup V(\mathcal{A}_6)) \notag \\[0.5em]
        & \le 6 a_5 + \sum_{i=1}^{a_5} 15(a_5 + a_6 - i)
        = 6a_5 + 15 \binom{a_5}{2} + 15a_5 a_6, 
    \end{align}
    as desired. 
\end{proof}

\subsection{Upper bounds for $e(\mathcal{A}_i, \mathcal{B}\cup \mathcal{C} \cup \mathcal{D})$}
In this subsection, we establish an upper bound on $e(\mathcal{A}_i, \mathcal{B}\cup \mathcal{C} \cup \mathcal{D})$ for each $i \in [6]$.

The following lemma provides an upper bound for $e(\mathcal{A}_1, \mathcal{B} \cup \mathcal{C} \cup \mathcal{D})$.
\begin{lemma}\label{LEMMA:Q1BCD}
    The following statements hold for every $Q = \{p,q,s,t\} \in \mathcal{A}_1$. 
    \begin{enumerate}[label=(\roman*)]
        \item\label{LEMMA:Q1BCD-1} $e(Q, B) \le 9$ for every $B \in \mathcal{B}$. 
        \item\label{LEMMA:Q1BCD-2} $e(Q, C) \le 6$ for every $C \in \mathcal{C}$.
        \item\label{LEMMA:Q1BCD-3} $e(Q, D) \le 3$ for every $D \in \mathcal{D}$. 
    \end{enumerate}
    Consequently, 
    \begin{align*}
        e(\mathcal{A}_1, \mathcal{B} \cup \mathcal{C} \cup \mathcal{D})
        \le (9b + 6c + 3d) a_1. 
    \end{align*}
\end{lemma}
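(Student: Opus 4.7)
The plan is to establish the three point-wise inequalities (i), (ii), and (iii) uniformly by contradiction, then sum them to obtain the global estimate. In each case I would assume the bound fails and extract enough $K_4$-forming witnesses from the dense bipartite pair to combine with the defining property of $\mathcal{A}_1$---that $Q$ is $3$-seen by at least two triangles in $\mathcal{B}$---and perform a local rotation producing a rank-$4$-packing with $|\mathcal{A}|$ strictly larger, contradicting the lexicographic maximality of $(|\mathcal{A}|, |\mathcal{B}|, |\mathcal{C}|, |\mathcal{D}|)$.

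For (i), suppose $e(Q, B) \ge 10$. Since $|Q| = 4$ and the $4 \times 3$ bipartite graph $G[Q, B]$ misses at most two of its twelve potential edges, pigeonhole yields two distinct vertices $v_1, v_2 \in Q$ each adjacent to all of $B$, so both $B \cup \{v_1\}$ and $B \cup \{v_2\}$ span $K_4$'s. Using $Q \in \mathcal{A}_1$, pick a second $3$-seer $B' \in \mathcal{B} \setminus \{B\}$ of $Q$---existence is guaranteed since $\mathcal{A}_1$ supplies two $3$-seers, at most one of which is $B$---together with a witness vertex $u \in Q$ satisfying $G[B' \cup \{u\}] \cong K_4$. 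Choose $v \in \{v_1, v_2\} \setminus \{u\}$, which is nonempty because $|\{v_1, v_2\}| = 2$. Swap $\{Q, B, B'\}$ in the packing for the two $K_4$'s $B \cup \{v\}$ and $B' \cup \{u\}$ in $\mathcal{A}$ together with the edge $Q \setminus \{v, u\}$ in $\mathcal{C}$; this makes $|\mathcal{A}|$ strictly larger, a contradiction.

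Parts (ii) and (iii) follow the same template. For (ii), the assumption $e(Q, C) \ge 7$ forces at least three vertices $w_1, w_2, w_3 \in Q$ adjacent to both endpoints of $C$, so $C \cup \{w_i, w_j\}$ spans a $K_4$ for every such pair (since $Q$ is complete); because $|\{w_1, w_2, w_3\} \setminus \{u\}| \ge 2$, I would pick $w_i, w_j$ from this set and swap $\{Q, B', C\}$ for the $K_4$'s $B' \cup \{u\}$ and $C \cup \{w_i, w_j\}$, with the leftover vertex joining $\mathcal{D}$. For (iii), the assumption $e(Q, D) = 4$ makes $D$ adjacent to every vertex of $Q$, so $D \cup (Q \setminus \{u\})$ is a $K_4$, and I would swap $\{Q, B', D\}$ for $B' \cup \{u\}$ and $D \cup (Q \setminus \{u\})$. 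The sole delicate point throughout is that the extra witness ($v$, the pair $\{w_i, w_j\}$, or the triple $Q \setminus \{u\}$) can always be chosen disjoint from $u$, which is exactly what the thresholds $9, 6, 3$---rather than the trivial $12, 8, 4$---are calibrated to guarantee via pigeonhole. Once the three per-member inequalities are in hand, summing $e(Q, B)$ over all $(Q, B) \in \mathcal{A}_1 \times \mathcal{B}$ and analogously for $\mathcal{C}, \mathcal{D}$ yields $e(\mathcal{A}_1, \mathcal{B} \cup \mathcal{C} \cup \mathcal{D}) \le (9b + 6c + 3d) a_1$.
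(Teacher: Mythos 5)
Your proposal is correct and follows essentially the same argument as the paper: in each part, the failure of the bound forces (by counting missing edges) enough vertices of $Q$ fully joined to $B$, $C$, or $D$, and combining this with a second $3$-seer $B'$ of $Q$ (guaranteed by the definition of $\mathcal{A}_1$) yields a rotation that replaces $\{Q, B', \cdot\}$ by two vertex-disjoint $K_4$'s, contradicting the maximality of $|\mathcal{A}|$. The paper carries out part (i) explicitly and refers to figures for (ii) and (iii), exactly matching your template.
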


\begin{figure}[H]
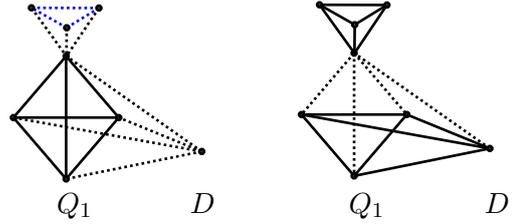

\centering

\tikzset{every picture/.style={line width=1pt}} 


\caption{Left: $Q_1$ is $3$-seen by a triangle (blue) in $\mathcal{B}$ and all four vertices are adjacent to a vertex in $\mathcal{D}$ (red). Right: after rotation, the number of vertex-disjoint copies of $K_4$ increases by one.} 
\label{Fig:Q1D-a}
\end{minipage}
\end{figure}

\begin{proof}[Proof of Lemma~\ref{LEMMA:Q1BCD}]
    Fix $B \in \mathcal{B}$. Suppose to the contrary that $e(Q, B) \ge 10 = 12-2$. Then there exist two vertices in $Q$ that are both adjacent to all three vertices in $B$, and by symmetry, we may assume that $\{p,q\}$ are two such vertices in $Q$. 
    By the definition of $\mathcal{A}_1$, there exists a member $B' \in \mathcal{B}\setminus \{B\}$ and a vertex in $Q$ that is adjacent to all vertices in $B'$. However, the rotation shown in Figure~\ref{Fig:Q1B-a} would increase the value of $|\mathcal{A}|$, contradicting the maximality of $(|\mathcal{A}|, |\mathcal{B}|, |\mathcal{C}|, |\mathcal{D}|)$.  This proves Lemma~\ref{LEMMA:Q1BCD}~\ref{LEMMA:Q1BCD-1}. The proofs of Lemma~\ref{LEMMA:Q1BCD}~\ref{LEMMA:Q1BCD-2} and~\ref{LEMMA:Q1BCD-3} are similar (see Figure~\ref{Fig:Q1C-a} and Figure~\ref{Fig:Q1D-a}), so we omit them here. 
\end{proof}
The following lemma provides an upper bound for $e(\mathcal{A}_2, \mathcal{B} \cup \mathcal{C} \cup \mathcal{D})$.
\begin{lemma}\label{LEMMA:Q2BCD}
    The following statements hold for every $Q = \{p,q,s,t\} \in \mathcal{A}_2$. 
    \begin{enumerate}[label=(\roman*)]
        \item\label{LEMMA:Q2BCD-1} $e(Q, B) \le 8$ for all but at most one $B \in \mathcal{B}$. 
        \item\label{LEMMA:Q2BCD-2} $e(Q, C) \le 6$ for every $C \in \mathcal{C}$.
        \item\label{LEMMA:Q2BCD-3} $e(Q, D) \le 3$ for every $D \in \mathcal{D}$. 
    \end{enumerate}
    Consequently, 
    \begin{align*}
        e(\mathcal{A}_2, \mathcal{B} \cup \mathcal{C} \cup \mathcal{D})
        \le (8b + 6c + 3d) a_2 + 4a_2. 
    \end{align*}
\end{lemma}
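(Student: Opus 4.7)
The plan is to adapt the rotation arguments of Lemma~\ref{LEMMA:Q1BCD} to $\mathcal{A}_2$, exploiting the fact that $Q \in \mathcal{A}_2$ can be $3$-seen by \emph{at most one} member of $\mathcal{B}$ (else $Q \in \mathcal{A}_1$). For~\ref{LEMMA:Q2BCD-1}, I first note that if $e(Q,B) \ge 9$, then since only three edges can be missing from the $4$-by-$3$ bipartite graph $G[Q,B]$, some vertex of $Q$ must be adjacent to all of $B$; that is, $B$ $3$-sees $Q$. Two distinct such triangles in $\mathcal{B}$ would place $Q$ in $\mathcal{A}_1$, contradicting $Q \in \mathcal{A}_2$, so at most one $B \in \mathcal{B}$ can have $e(Q, B) \ge 9$, which gives the ``all but at most one'' clause.

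For~\ref{LEMMA:Q2BCD-2}---the main obstacle---suppose $e(Q, C) \ge 7$. Then at most one of the eight potential edges is missing, so the set $S \subseteq Q$ of vertices adjacent to both endpoints of $C$ has size at least three. If $Q$ is $3$-seen by some $B \in \mathcal{B}$ (i.e.\ $Q \in \mathcal{A}_{2,1} \cup \mathcal{A}_{2,2}$), by symmetry assume $p$ is adjacent to all of $B$; then $|S \cap \{q,s,t\}| \ge 3 + 3 - 4 = 2$, and picking two elements $u, v \in S \cap \{q, s, t\}$ yields two disjoint $K_4$'s $\{p\} \cup B$ and $\{u, v\} \cup C$ (analogous to Figure~\ref{Fig:Q1C-a}), increasing $|\mathcal{A}|$ and contradicting maximality. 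The hard case is $Q \in \mathcal{A}_{2,3}$ with no $3$-seeing triangle available. By the definition of $\mathcal{A}_{2,3}$ there exists a second edge $C' \in \mathcal{C} \setminus \{C\}$ seeing $Q$ via some pair $\{p', q'\} \subseteq Q$; I bifurcate on the relative position of $\{p', q'\}$ and $S$. Writing $t$ for any vertex of $Q \setminus S$, if $t \in \{p', q'\}$ then the other element of $\{p', q'\}$ lies in $S$, and the two remaining elements of $S$ are adjacent to both of $C$ and disjoint from $\{p', q'\}$; pairing them with $C$ and $\{p', q'\}$ with $C'$ yields two vertex-disjoint $K_4$'s, directly promoting $|\mathcal{A}|$. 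Otherwise $\{p', q'\} \subseteq S$, and letting $s^{\ast}$ be any element of $S \setminus \{p', q'\}$, I instead form the $K_4$ $\{p', q'\} \cup C'$ together with the triangle $\{s^{\ast}\} \cup C$, while the remaining vertex is absorbed into $\mathcal{D}$. This rotation preserves $|\mathcal{A}|$ but strictly increases $|\mathcal{B}|$ (at the cost of two edges of $\mathcal{C}$ and a gain of one vertex in $\mathcal{D}$), again contradicting the lexicographic maximality of $(|\mathcal{A}|, |\mathcal{B}|, |\mathcal{C}|, |\mathcal{D}|)$.

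For~\ref{LEMMA:Q2BCD-3}, if $e(Q, D) = 4$ then the single vertex $w$ of $D$ is adjacent to all of $Q$. When $Q$ is $3$-seen by some $B$ via $p$, the triangle $\{q, s, t\}$ together with $w$ forms a $K_4$ disjoint from $\{p\} \cup B$, a direct $|\mathcal{A}|$-promotion; when $Q$ lies in $\mathcal{A}_{2,3}$ with no such $B$, using $C' \in \mathcal{C}$ seeing $Q$ via a pair, I form the $K_4$ consisting of that pair together with $C'$ and the triangle on the two remaining vertices of $Q$ and $w$, which increases $|\mathcal{B}|$ by one, again contradicting maximality. The displayed sum then follows by summing over $Q \in \mathcal{A}_2$: the exceptional $B$ allowed in~\ref{LEMMA:Q2BCD-1} contributes at most $12 - 8 = 4$ extra edges per $Q$, yielding the error term $4 a_2$ in the stated bound.
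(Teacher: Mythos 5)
Your proof is correct and follows essentially the same rotation arguments as the paper. For part~\ref{LEMMA:Q2BCD-1}, you derive that $e(Q,B)\ge 9$ forces $B$ to $3$-see $Q$ by a pigeonhole count, which is the contrapositive of the paper's direct reasoning that a non-$3$-seeing $B$ contributes at most $4\cdot 2 = 8$ edges; both give the ``at most one exceptional $B$'' conclusion. For part~\ref{LEMMA:Q2BCD-2} in the $\mathcal{A}_{2,3}$ case, the paper handles everything with a single rotation (forming $\{p',q'\}\cup C'$ as a $K_4$, $\{s^\ast\}\cup C$ as a $K_3$, and the leftover vertex as a $K_1$, which works uniformly because $|S\setminus\{p',q'\}|\ge 1$), whereas you bifurcate on whether $\{p',q'\}\subseteq S$; your sub-case where $t\in\{p',q'\}$ obtains the stronger conclusion of two disjoint $K_4$'s, but the single-rotation version already suffices. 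Part~\ref{LEMMA:Q2BCD-3} and the final summation (with the $+4a_2$ error term coming from the one exceptional $B$) also match the paper's treatment.
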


\begin{figure}[H]
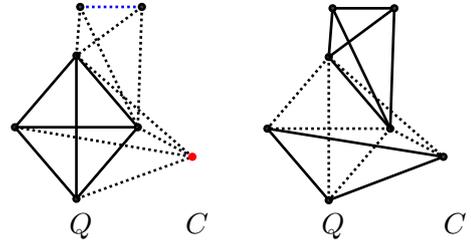

\centering
\begin{minipage}[t]{0.45\textwidth}
\centering
\tikzset{every picture/.style={line width=1pt}} 

\caption{Left: $Q$ is seen by an edge (blue) in $\mathcal{C}$ and all four vertices are adjacent to a vertex in $\mathcal{D}$ (red). Right: after rotation, the number of vertex-disjoint copies of $K_4$ remains the same, while the number of vertex-disjoint copies of $K_3$ increases by one.} 
\label{Fig:Q2D-a}
\end{minipage}
\end{figure}

\begin{proof}[Proof of Lemma~\ref{LEMMA:Q2BCD}]
    By the definition of $\mathcal{A}_2$, there is at most one member, say $B'$, in $\mathcal{B}$ that can $3$-see $Q$; otherwise, $Q$ would belong to $\mathcal{A}_1$. It follows that $e(Q,B) \le 4 \cdot 2 = 8$ for every $B\in \mathcal{B}\setminus \{B'\}$. This proves Lemma~\ref{LEMMA:Q2BCD}~\ref{LEMMA:Q2BCD-1}. In particular, it follows that 
    \begin{align*}
        e(\mathcal{A}_2, \mathcal{B})
        = \sum_{Q_2 \in \mathcal{A}_2} e(Q_2, \mathcal{B})
        \le a_2\left(8 (|\mathcal{B}|-1) + 3\cdot 4\right)
        = 8 a_2 b + 4a_2. 
    \end{align*}
    Next, we prove Lemma~\ref{LEMMA:Q2BCD}~\ref{LEMMA:Q2BCD-2}. The proof for the case where $Q\in \mathcal{A}_{2,1} \cup \mathcal{A}_{2,2}$ is similar to that of Lemma~\ref{LEMMA:Q1BCD}~\ref{LEMMA:Q1BCD-2} and is therefore omitted. 
    So it suffices to consider the case $Q\in \mathcal{A}_{2,3}$. 
    Fix $C\in \mathcal{C}$. Suppose to the contrary that $e(Q,C) \ge 7 = 8-1$. 
    Then there exist three vertices in $Q$ that are all adjacent to both vertices in $C$. By the definition of $\mathcal{A}_{2,3}$, there exists a member $C' \in \mathcal{C} \setminus \{C\}$ such that both vertices of $C$ are adjacent to the same pair of vertices in $Q$. However, the rotation shown in Figure~\ref{Fig:Q2C-a} would increase the value of $|\mathcal{B}|$ (while keeping $|\mathcal{A}|$ unchanged), contradicting the maximality of $|\mathcal{B}|$. 

    The proof for Lemma~\ref{LEMMA:Q2BCD}~\ref{LEMMA:Q2BCD-3} is similar (see Figure~\ref{Fig:Q2D-a}), so we omit it here. 
\end{proof}
The following lemma provides an upper bound for $e(\mathcal{A}_3, \mathcal{B} \cup \mathcal{C} \cup \mathcal{D})$.
\begin{lemma}\label{LEMMA:Q3BCD}
    The following statements hold for every $Q = \{p,q,s,t\} \in \mathcal{A}_3$. 
    \begin{enumerate}[label=(\roman*)]
        \item\label{LEMMA:Q3BCD-1} $e(Q, B) \le 8$ for every $B \in \mathcal{B}$. 
        \item\label{LEMMA:Q3BCD-2} $e(Q, C) \le 5$ for all but at most one $C \in \mathcal{C}$.
        \item\label{LEMMA:Q3BCD-3} $e(Q, D) \le 3$ for all but at most one $D \in \mathcal{D}$. 
    \end{enumerate}
    Consequently, 
    \begin{align*}
        e(\mathcal{A}_3, \mathcal{B} \cup \mathcal{C} \cup \mathcal{D})
        \le (8b + 5c + 3d) a_3 + 4a_3. 
    \end{align*}
\end{lemma}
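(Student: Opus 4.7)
The plan is to prove each of parts (i)--(iii) by a short argument in the spirit of Lemmas~\ref{LEMMA:Q1BCD} and~\ref{LEMMA:Q2BCD}: parts (i) and (ii) combine a bipartite Pigeonhole-type bound from Fact~\ref{FACT:3-by-4-bipartite-graph} with a structural exclusion coming from the definition of the partition $\mathcal{A}_1 \cup \mathcal{A}_2 \cup \cdots$, while (iii) is a one-shot rotation that strictly improves the lex value of $(|\mathcal{A}|, |\mathcal{B}|, |\mathcal{C}|, |\mathcal{D}|)$. Fix $Q = \{p,q,s,t\} \in \mathcal{A}_3$ throughout.

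For (i), I would first observe that no member of $\mathcal{A}_3$ is $3$-seen by any $B \in \mathcal{B}$. Indeed, since $Q \in \mathcal{A}_{3,1} \cup \mathcal{A}_{3,2}$ is $2$-seen by at least two members of $\mathcal{B}$, any $B_0$ that $3$-saw $Q$ would differ from at least one of those $2$-seeing members, placing $Q$ in $\mathcal{A}_{2,1}$ and contradicting $Q \in \mathcal{A}_3$. Then, if $e(Q, B) \ge 9$ for some $B \in \mathcal{B}$, Fact~\ref{FACT:3-by-4-bipartite-graph}~\ref{FACT:3-by-4-bipartite-graph-3} yields a vertex of $Q$ adjacent to all of $B$, so $B$ $3$-sees $Q$, a contradiction. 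For (ii), Fact~\ref{FACT:3-by-4-bipartite-graph}~\ref{FACT:3-by-4-bipartite-graph--1} gives that $e(Q, C) \ge 6$ forces a pair of vertices in $Q$ adjacent to both vertices of $C$, i.e.\ $C$ sees $Q$. Hence it suffices to show that at most one $C \in \mathcal{C}$ sees $Q$: otherwise two such $C_1, C_2$ would satisfy the defining condition of $\mathcal{A}_{2,3}$, placing $Q$ in $\mathcal{A}_2$ and contradicting $Q \in \mathcal{A}_3$.

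For (iii), suppose for contradiction that distinct $D_1 = \{u_1\}, D_2 = \{u_2\} \in \mathcal{D}$ both satisfy $e(Q, D_i) = 4$; equivalently, each of $u_1$ and $u_2$ is adjacent to every vertex of $Q = \{v_1, v_2, v_3, v_4\}$. Replace $Q, D_1, D_2$ in the current packing by the $K_4$ $\{v_1, v_2, v_3, u_1\}$ (valid because $\{v_1, v_2, v_3\}$ is a triangle in $Q$ and $u_1$ is joined to each $v_i$) together with the $K_2$ $\{v_4, u_2\}$ (valid since $u_2 v_4 \in G$). The remainder of the rank-$4$-packing is undisturbed; $|\mathcal{A}|$ and $|\mathcal{B}|$ are unchanged, $|\mathcal{C}|$ increases by one and $|\mathcal{D}|$ decreases by two, contradicting the lex-maximality of $(|\mathcal{A}|, |\mathcal{B}|, |\mathcal{C}|, |\mathcal{D}|)$.

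Combining (i)--(iii) and using the trivial bounds $e(Q, C) \le 8$ and $e(Q, D) \le 4$ for the single exceptional members of $\mathcal{C}$ and $\mathcal{D}$ respectively, I obtain for each $Q \in \mathcal{A}_3$
\[
    e(Q, \mathcal{B} \cup \mathcal{C} \cup \mathcal{D})
    \le 8b + \bigl(5(c-1) + 8\bigr) + \bigl(3(d-1) + 4\bigr)
    = 8b + 5c + 3d + 4,
\]
and summing over the $a_3$ members of $\mathcal{A}_3$ yields the claimed $(8b + 5c + 3d)a_3 + 4a_3$. I do not anticipate a serious obstacle: the rotation in (iii) is uniform and needs neither a case split on $\mathcal{A}_{3,1}$ vs.\ $\mathcal{A}_{3,2}$ nor any hypothesis on the adjacency between $u_1$ and $u_2$, since it uses only the assumption that each $u_i$ dominates $Q$.
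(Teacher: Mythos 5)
Your proof is correct and follows essentially the same route as the paper: for (i) and (ii) you invoke Fact~\ref{FACT:3-by-4-bipartite-graph} to detect a $3$-seeing $B$ or seeing $C$ and then rule it out using the definitions of $\mathcal{A}_{2,1}$, $\mathcal{A}_{2,3}$, whereas the paper simply bounds $e(Q,B)\le 4\cdot 2$ and $e(Q,C)\le 3+2$ directly by the same structural observation — an equivalent phrasing. The rotation in (iii) (form $\{v_1,v_2,v_3,u_1\}\in\mathcal{A}$ and $\{v_4,u_2\}\in\mathcal{C}$, increasing $|\mathcal{C}|$ while $|\mathcal{A}|,|\mathcal{B}|$ stay fixed) is exactly the one in Figure~\ref{Fig:Q3D}, and your final arithmetic reproduces the stated bound.
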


\begin{figure}[H]
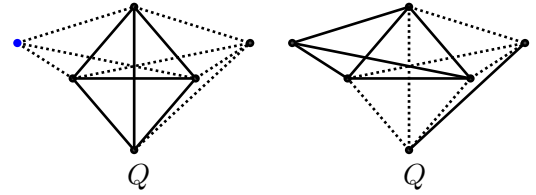

\centering
\begin{minipage}[t]{0.45\textwidth}
\centering
\tikzset{every picture/.style={line width=1pt}} 


\caption{Left: $Q$ is seen by a vertex (blue) in $\mathcal{D}$ and there is another vertex adjacent to all vertices in $Q$. Right: after rotation, the numbers of vertex-disjoint copies of $K_4$ and $K_3$ remain the same, while the number of vertex-disjoint copies of $K_2$ increases by one.} 
\label{Fig:Q4D}
\end{minipage}
\end{figure}

\begin{proof}[Proof of Lemma~\ref{LEMMA:Q3BCD}]
    Fix $B \in \mathcal{B}$. It follows from the definition of $\mathcal{A}_{3}$ that there is no vertex in $Q$ that is adjacent to all three vertices of $B$; otherwise, $Q$ would belong to $\mathcal{A}_{2}$. Therefore, $e(Q,B) \le 4\cdot 2 = 8$. This proves Lemma~\ref{LEMMA:Q3BCD}~\ref{LEMMA:Q3BCD-1}. 

    By the definition of $\mathcal{A}_{3}$, there is at most one member in $\mathcal{C}$, say $C'$, that can see $Q$; otherwise, $Q$ would belong to $\mathcal{A}_{2}$. It follows that $e(Q,C) \le 3+2=5$ for every $C \in \mathcal{C}\setminus\{C'\}$. This proves Lemma~\ref{LEMMA:Q3BCD}~\ref{LEMMA:Q3BCD-2}. In particular, we have 
    \begin{align*}
        e(\mathcal{A}_{3}, \mathcal{C})
        =\sum_{Q_3 \in \mathcal{A}_{3}}e(Q_{3}, \mathcal{C})
        \le a_3 \left(5(|\mathcal{C}| - 1) + 8\right)
        = 5 a_3 c + 3a_3. 
    \end{align*}
    Next, we prove Lemma~\ref{LEMMA:Q3BCD}~\ref{LEMMA:Q3BCD-3}. Suppose to the contrary that there exist two members $u,v \in \mathcal{D}$, each adjacent to all four vertices in $Q$. 
    Then the rotation shown in Figure~\ref{Fig:Q3D} would increase the value of $|\mathcal{C}|$ (while keeping both $|\mathcal{A}|$ and $|\mathcal{B}|$ unchanged), contradicting the maximality of $(|\mathcal{A}|, |\mathcal{B}|, |\mathcal{C}|, |\mathcal{D}|)$. 
\end{proof}
The following lemma provides an upper bound for $e(\mathcal{A}_4, \mathcal{B} \cup \mathcal{C} \cup \mathcal{D})$.
\begin{lemma}\label{LEMMA:Q4BCD}
    The following statements hold for every $Q = \{p,q,s,t\} \in \mathcal{A}_4$. 
    \begin{enumerate}[label=(\roman*)]
        \item\label{LEMMA:Q4BCD-1} $e(Q, B) \le 7$ for all but at most two $B \in \mathcal{B}$. 
        \item\label{LEMMA:Q4BCD-2} $e(Q, C) \le 5$ for all but at most one $C \in \mathcal{C}$.
        \item\label{LEMMA:Q4BCD-3} $e(Q, D) \le 3$ for every $D \in \mathcal{D}$. 
    \end{enumerate}
    Consequently, 
    \begin{align*}
        e(\mathcal{A}_4, \mathcal{B} \cup \mathcal{C} \cup \mathcal{D})
        \le (7b+5c+3d)a_4 + 18a_4. 
    \end{align*}
\end{lemma}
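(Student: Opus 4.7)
\textbf{Proof plan for Lemma~\ref{LEMMA:Q4BCD}.} The plan is to prove parts (i), (ii), (iii) separately and then combine them, following the same template used for Lemmas~\ref{LEMMA:Q1BCD},~\ref{LEMMA:Q2BCD}, and~\ref{LEMMA:Q3BCD}. Parts (i) and (ii) will be extracted from the structural definition of $\mathcal{A}_4 = \mathcal{A}_{4,1}\cup \mathcal{A}_{4,2} \subseteq \mathcal{A}\setminus(\mathcal{A}_1\cup \mathcal{A}_2\cup \mathcal{A}_3)$ combined with the bipartite edge-density facts in Fact~\ref{FACT:3-by-4-bipartite-graph}. Part (iii) is the genuinely new piece and will require a rotation argument similar in spirit to Figure~\ref{Fig:Q3D}, but now using the $K_4$ members (in $\mathcal{B}$, $\mathcal{D}$) that witness $Q \in \mathcal{A}_{4,1}$ or $Q\in \mathcal{A}_{4,2}$.

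\textbf{Parts (i) and (ii).} For (i), note that by Fact~\ref{FACT:3-by-4-bipartite-graph}~\ref{FACT:3-by-4-bipartite-graph-3} and~\ref{FACT:3-by-4-bipartite-graph-4}, any $B\in\mathcal{B}$ with $e(Q,B)\geq 9$ must $3$-see $Q$, and any $B\in\mathcal{B}$ with $e(Q,B)\geq 8$ must $2$-see $Q$. Since $Q\notin \mathcal{A}_1$, at most one $B_0\in\mathcal{B}$ $3$-sees $Q$. If such a $B_0$ exists, then $Q\notin \mathcal{A}_{2,1}$ forces that no $B\ne B_0$ can $2$-see $Q$, so at most one $B$ satisfies $e(Q,B)\geq 8$. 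If no such $B_0$ exists, then $Q\notin \mathcal{A}_{3,1}$ forces that at most two $B$'s $2$-see $Q$. Either way, at most two $B\in\mathcal{B}$ satisfy $e(Q,B)\geq 8$, giving (i). For (ii), a direct counting argument (writing $C=\{x,y\}$ and using $e(Q,C)=|N_Q(x)|+|N_Q(y)|=|N_Q(x)\cup N_Q(y)|+|N_Q(x)\cap N_Q(y)|$) shows that $e(Q,C)\geq 6$ forces $|N_Q(x)\cap N_Q(y)|\geq 2$, i.e. $C$ sees $Q$. Since $Q\notin \mathcal{A}_{2,3}$, at most one $C\in\mathcal{C}$ sees $Q$, yielding (ii).

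\textbf{Part (iii).} This is the main obstacle, and I will handle it by contradiction via a rotation argument. Suppose some $D=\{v\}\in \mathcal{D}$ has $e(Q,v)=4$, so $v$ is adjacent to every vertex of $Q=\{p,q,s,t\}$. There are two subcases according to which subfamily of $\mathcal{A}_4$ contains $Q$. If $Q\in \mathcal{A}_{4,1}$, take the triangle $T\in\mathcal{B}$ that $3$-sees $Q$ through some vertex, say $p$. Then $\{p\}\cup T$ and $\{v,q,s,t\}$ are both copies of $K_4$, vertex-disjoint from the rest of $\mathcal{A}\setminus\{Q\}$. Replacing $\{Q,T,\{v\}\}$ by these two new $K_4$'s increases $|\mathcal{A}|$ by one, contradicting the lexicographic maximality of $(|\mathcal{A}|,|\mathcal{B}|,|\mathcal{C}|,|\mathcal{D}|)$. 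If $Q\in \mathcal{A}_{4,2}\setminus\mathcal{A}_{4,1}$, pick any $u\in\mathcal{D}\setminus\{v\}$ that sees $Q$ (guaranteed since $\mathcal{A}_{4,2}$ requires at least two such witnesses, and at most one of them can be $v$). Let $x$ be the unique vertex of $Q$ not adjacent to $u$, and pick any $y\in Q\setminus\{x\}$. Then $\{v\}\cup(Q\setminus\{y\})$ is a $K_4$ and $\{y,u\}$ is an edge; replacing $\{Q,\{v\},\{u\}\}$ by the new $K_4$ together with the new $K_2$ on $\{y,u\}$ keeps $|\mathcal{A}|,|\mathcal{B}|$ the same but increases $|\mathcal{C}|$ by one (while decreasing $|\mathcal{D}|$ by two), again contradicting maximality. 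The figure for this will mirror Figure~\ref{Fig:Q3D}, with $v$ and $u$ playing the roles of the two adjacent $\mathcal{D}$-vertices.

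\textbf{Assembly.} For the final inequality, I will sum the three bounds per $Q\in\mathcal{A}_4$. Using the trivial maxima $e(Q,B)\leq 12$ and $e(Q,C)\leq 8$ on the exceptional pairs, each $Q$ contributes at most
\[
    7(b-2)+12\cdot 2 + 5(c-1)+8 + 3d = 7b+5c+3d+13
\]
edges to $\mathcal{B}\cup\mathcal{C}\cup\mathcal{D}$. Multiplying by $a_4$ and relaxing $13\le 18$ yields
\[
    e(\mathcal{A}_4,\mathcal{B}\cup\mathcal{C}\cup\mathcal{D}) \le (7b+5c+3d)a_4+18 a_4,
\]
as claimed. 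The only delicate step is the rotation in part (iii) case $\mathcal{A}_{4,2}$, where one must be careful to choose the discarded vertex $y$ of $Q$ from among the three neighbours of $u$, rather than the non-neighbour $x$; this is the only place where the structural hypothesis ``$Q$ is seen by two distinct members of $\mathcal{D}$'' is truly used.
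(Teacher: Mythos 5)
Your proof is correct and follows essentially the same route as the paper: parts (i) and (ii) come from the exclusion of $Q$ from $\mathcal{A}_1\cup\mathcal{A}_2\cup\mathcal{A}_3$ together with the counting in Fact~\ref{FACT:3-by-4-bipartite-graph}, and part (iii) uses the same two rotations (the $\mathcal{A}_{4,1}$ case mirroring Figure~\ref{Fig:Q1D-a}, the $\mathcal{A}_{4,2}$ case mirroring Figure~\ref{Fig:Q4D}, with the immaterial difference that you let $v$ anchor the new $K_4$ and $u$ the new $K_2$ rather than vice versa). The only cosmetic nit is that ``the unique vertex of $Q$ not adjacent to $u$'' need not exist when $u$ happens to be adjacent to all of $Q$, in which case any $y\in Q$ works and the rotation goes through unchanged.
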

\begin{proof}[Proof of Lemma~\ref{LEMMA:Q4BCD}]
    By the definition of $\mathcal{A}_1, \ldots, \mathcal{A}_4$, $Q$ is $3$-seen and $2$-seen by at most two members, say $B_1, B_2$, in $\mathcal{B}$. Otherwise, it would be contained in $\mathcal{A}_1 \cup \mathcal{A}_2 \cup \mathcal{A}_3$. 
    It follows that $e(Q,B) \le 2+2+2+1 = 7$ for every $B \in \mathcal{B}\setminus \{B_1, B_2\}$. This proves Lemma~\ref{LEMMA:Q4BCD}~\ref{LEMMA:Q4BCD-1}. In particular, we have 
    \begin{align*}
        e(\mathcal{A}_{4}, \mathcal{B})
        = \sum_{Q_4 \in \mathcal{A}_{4}} e(Q_{4}, \mathcal{B})
        \le a_4 \left(7(|\mathcal{B}| - 3) + 12\cdot 3\right) 
        = 7a_4 b + 15 a_4. 
    \end{align*}
    The proof of Lemma~\ref{LEMMA:Q4BCD}~\ref{LEMMA:Q4BCD-2} is similar to that of Lemma~\ref{LEMMA:Q3BCD}~\ref{LEMMA:Q3BCD-2}, so we omit it here. 

    Next, we prove Lemma~\ref{LEMMA:Q4BCD}~\ref{LEMMA:Q4BCD-3}. The proof for the case $Q\in \mathcal{A}_{4,1}$ is similar to that of Lemma~\ref{LEMMA:Q1BCD}~\ref{LEMMA:Q1BCD-3} (see Figure~\ref{Fig:Q1D-a}) and is therefore omitted. So it suffices to consider the case $Q\in \mathcal{A}_{4,2}$. Fix $D\in \mathcal{D}$.  It follows from the definition of $\mathcal{A}_{4,2}$ that there exists a member $D' \in \mathcal{D}\setminus \{D\}$ that is adjacent to at least three vertices in $Q$. However, the rotation shown in Fig~\ref{Fig:Q4D} would increase the value of $|\mathcal{C}|$ (while keeping both $|\mathcal{A}|$ and $|\mathcal{B}|$ unchanged), contradicting the maximality of $(|\mathcal{A}|, |\mathcal{B}|, |\mathcal{C}|, |\mathcal{D}|)$. 
\end{proof}
The following lemma provides an upper bound for $e(\mathcal{A}_i, \mathcal{B} \cup \mathcal{C} \cup \mathcal{D})$ for $i \in \{5,6\}$.
\begin{lemma}\label{LEMMA:Q5Q6BCD}
    The following statements hold for every $Q = \{p,q,s,t\} \in \mathcal{A}_5 \cup \mathcal{A}_6$. 
    \begin{enumerate}[label=(\roman*)]
        \item\label{LEMMA:Q5Q6BCD-1} $e(Q, B) \le 7$ for all but at most three $B \in \mathcal{B}$. 
        \item\label{LEMMA:Q5Q6BCD-2} $e(Q, C) \le 5$ for all but at most one $C \in \mathcal{C}$.
        \item\label{LEMMA:Q5Q6BCD-3} $e(Q, D) \le 2$ for all but at most one $D \in \mathcal{D}$. 
    \end{enumerate}
    Consequently, for $i \in \{5,6\}$, 
    \begin{align*}
        e(\mathcal{A}_i, \mathcal{B} \cup \mathcal{C} \cup \mathcal{D})
        \le (7b + 5c + 2d) a_i + 20a_i. 
    \end{align*}
\end{lemma}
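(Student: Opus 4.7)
\textbf{Proof proposal for Lemma~\ref{LEMMA:Q5Q6BCD}.}
The plan is to translate the assumption $Q\in\mathcal{A}_5\cup\mathcal{A}_6=\mathcal{A}\setminus(\mathcal{A}_1\cup\mathcal{A}_2\cup\mathcal{A}_3\cup\mathcal{A}_4)$ into four structural restrictions on how $Q$ interacts with the other parts of the packing, and then read off the three edge-count bounds essentially from counting. Specifically, unwinding the definitions of the six subfamilies yields:
\begin{enumerate}[label=(\alph*)]
    \item $Q\notin\mathcal{A}_1$: at most one member of $\mathcal{B}$ can $3$-see $Q$;
    \item $Q\notin\mathcal{A}_{3,1}$: at most two members of $\mathcal{B}$ can $2$-see $Q$;
    \item $Q\notin\mathcal{A}_{2,3}$: at most one member of $\mathcal{C}$ can see $Q$;
    \item $Q\notin\mathcal{A}_{4,2}$: at most one member of $\mathcal{D}$ can see $Q$.
\end{enumerate}

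For part \ref{LEMMA:Q5Q6BCD-1}, the key observation is that any $B\in\mathcal{B}$ with $e(Q,B)\ge 8$ must $3$-see or $2$-see $Q$. Indeed, if no vertex of $Q$ is adjacent to all three vertices of $B$ (so $B$ does not $3$-see $Q$), then each of the four vertices of $Q$ has at most two neighbors in $B$; combined with $e(Q,B)\ge 8$ this forces each of the four vertices of $Q$ to choose exactly two out of the three vertices of $B$ as neighbors, so by the pigeonhole principle two vertices of $Q$ select the same pair in $B$, yielding a $K_4$ witnessing that $B$ $2$-sees $Q$. Combining this dichotomy with (a) and (b), at most $1+2=3$ triangles $B\in\mathcal{B}$ can satisfy $e(Q,B)\ge 8$, proving \ref{LEMMA:Q5Q6BCD-1}.

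For \ref{LEMMA:Q5Q6BCD-2}, let $C=\{u_1,u_2\}\in\mathcal{C}$ with $e(Q,C)\ge 6$. By inclusion--exclusion,
\begin{align*}
    |N_Q(u_1)\cap N_Q(u_2)|
    \ge d_Q(u_1)+d_Q(u_2)-|Q|
    \ge 6-4=2,
\end{align*}
so two vertices of $Q$ together with $\{u_1,u_2\}$ form $K_4$; that is, $C$ sees $Q$. By (c), this holds for at most one $C\in\mathcal{C}$. For \ref{LEMMA:Q5Q6BCD-3}, if $e(Q,D)\ge 3$ for some singleton $D\in\mathcal{D}$, then the three common neighbors of $D$ in $Q$ form a $K_3$ with $D$, so $D$ sees $Q$; by (d) this occurs for at most one $D$.

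Combining the three per-member bounds with the trivial maxima $e(Q,B)\le 12$, $e(Q,C)\le 8$, $e(Q,D)\le 4$ on the exceptional members gives, for each $Q\in\mathcal{A}_i$ with $i\in\{5,6\}$,
\begin{align*}
    e(Q,\mathcal{B}\cup\mathcal{C}\cup\mathcal{D})
    \le \bigl(7(b-3)+12\cdot 3\bigr)+\bigl(5(c-1)+8\bigr)+\bigl(2(d-1)+4\bigr)
    = 7b+5c+2d+20,
\end{align*}
and summing over the $a_i$ members of $\mathcal{A}_i$ yields the desired bound. I do not expect any substantive obstacle: the only point requiring a little care is the pigeonhole step of part \ref{LEMMA:Q5Q6BCD-1} (to avoid having to treat $e(Q,B)=8$ and $e(Q,B)\ge 9$ separately), together with being careful that the structural restrictions (a)--(d) are read off from the correct subfamilies among $\mathcal{A}_{2,1},\dots,\mathcal{A}_{4,2}$. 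Since this lemma is purely local to a single $Q$, no rotation/exchange argument is needed, in contrast to the preceding lemmas of this section.
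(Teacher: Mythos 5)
Your proposal is correct and follows essentially the same approach the paper indicates: translate membership in $\mathcal{A}_5\cup\mathcal{A}_6$ into bounds on the number of triangles/edges/singletons that can $3$-see, $2$-see, or see $Q$ (via $Q\notin\mathcal{A}_1,\mathcal{A}_{3,1},\mathcal{A}_{2,3},\mathcal{A}_{4,2}$), and observe that any $B$, $C$, or $D$ exceeding the stated edge thresholds must exhibit one of those ``seeing'' structures. The paper omits the details, pointing back to Lemma~\ref{LEMMA:Q4BCD} and the definitions, and your argument (including the pigeonhole step for $e(Q,B)\ge 8$ and the final summation to $7b+5c+2d+20$) supplies exactly what was omitted.
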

\begin{proof}[Proof of Lemma~\ref{LEMMA:Q5Q6BCD}]
    Proofs of Lemma~\ref{LEMMA:Q5Q6BCD}~\ref{LEMMA:Q5Q6BCD-1} and~\ref{LEMMA:Q5Q6BCD-2} are similar to those of Lemma~\ref{LEMMA:Q4BCD}~\ref{LEMMA:Q4BCD-1} and~\ref{LEMMA:Q4BCD-2}.  Lemma~\ref{LEMMA:Q5Q6BCD}~\ref{LEMMA:Q5Q6BCD-3} follows easily from the definition of $\mathcal{A}_{5} \cup \mathcal{A}_6$. So we omit them here. 
\end{proof}
%

\section{Local estimation \RomanNumeralCaps{2}$\colon$ $e(\mathcal{A}_3)$}\label{SEC:A3}
In this section, we prove the following  improved upper bound for $e(\mathcal{A}_3)$, which refines the bound $e(\mathcal{A}_3) \le \frac{15 a_3^2}{2}$ given in Lemma~\ref{LEMMA:Q3Qi}~\ref{LEMMA:Q3Qi-1}.

\begin{lemma}\label{LEMMA:A3-upper-bound}
    We have 
    \begin{align*}
        e(\mathcal{A}_3) 
        \le 7 a_3^2. 
    \end{align*}
\end{lemma}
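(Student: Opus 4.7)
The goal is to sharpen the bound $e(Q, Q_3) \le 15$ from Lemma~\ref{LEMMA:Q3Qi}\ref{LEMMA:Q3Qi-1}. Since $7a_3^2 = 14\binom{a_3}{2} + 7a_3$ and the internal edges of members of $\mathcal{A}_3$ already contribute $6a_3$, it suffices to show that for every $Q \in \mathcal{A}_3$,
\begin{align*}
    \sum_{Q_3 \in \mathcal{A}_3 \setminus \{Q\}} e(Q, Q_3) \le 14(a_3 - 1) + 2;
\end{align*}
summing over $Q$ and dividing by two then yields the claim. Equivalently, for each fixed $Q$, I would show that the number of $Q_3 \in \mathcal{A}_3 \setminus \{Q\}$ saturating $e(Q, Q_3) = 15$ is bounded by an absolute constant.

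Next I would fix such a saturating pair. Then $G[V(Q) \cup V(Q_3)]$ is a copy of $K_8$ minus a single edge $xy$ with $x \in V(Q)$ and $y \in V(Q_3)$. By definition of $\mathcal{A}_3$, the clique $Q$ is $2$-seen by at least two triangles in $\mathcal{B}$ (three if $Q \in \mathcal{A}_{3,1}$), or by at least two triangles together with one $\mathcal{C}$-edge (if $Q \in \mathcal{A}_{3,2}$). Each such witness produces a pair $\{w, w'\} \subseteq V(Q)$ and vertices $\{u, u'\}$ from the witness such that $\{w, w', u, u'\}$ spans a $K_4$. The rotation I would perform swaps $\{w, w'\}$ with $\{u, u'\}$, replacing $Q$ by the new $K_4$ on $\{u, u'\} \cup (V(Q) \setminus \{w, w'\})$; then, exploiting the $K_8 - xy$ structure, I would combine $\{w, w'\} \cup (V(Q_3) \setminus \{y\})$ together with either $y$ or the leftover witness vertex to form either an additional $K_4$ (increasing $|\mathcal{A}|$) or a $K_3$ plus a $K_2$ (increasing $(|\mathcal{A}|, |\mathcal{B}|, |\mathcal{C}|)$ lexicographically), contradicting the maximality of $(|\mathcal{A}|, |\mathcal{B}|, |\mathcal{C}|, |\mathcal{D}|)$. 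The rotation is obstructed only when $x \in \{w, w'\}$ for every available witness pair of $Q$.

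The main obstacle is the case analysis verifying that this obstruction rarely occurs. Since $|V(Q)| = 4$, the three witness pairs guaranteed for $Q \in \mathcal{A}_{3,1}$ (or two pairs plus a $\mathcal{C}$-pair for $Q \in \mathcal{A}_{3,2}$) cover $\binom{V(Q)}{2}$ quite densely, so a single vertex $x \in V(Q)$ can simultaneously lie in every witness pair only for very restricted configurations; combined with the symmetric conditions imposed on $y \in V(Q_3)$ by the fact that $Q_3 \in \mathcal{A}_3$ also has witnesses, the set of saturating pairs $Q_3$ is forced to have size $O(1)$ per fixed $Q$, in the spirit of the case splits used in Figures~\ref{Fig:Q1Qi-d}--\ref{Fig:Q1Qi-f}. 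Carrying out this analysis carefully, especially in the heterogeneous $\mathcal{A}_{3,2}$ case where one witness is a $\mathcal{C}$-edge rather than a $\mathcal{B}$-triangle and the lexicographic gain appears in a different coordinate, is the technical heart of the proof.
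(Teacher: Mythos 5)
Your reduction is fine arithmetically: if $\sum_{Q_3 \ne Q} e(Q, Q_3) \le 14(a_3-1) + 2$ held for every $Q$, summing over $Q$ and halving, together with the $6a_3$ internal edges, would indeed give $e(\mathcal{A}_3) \le 7a_3^2$. But both steps you then sketch have real problems, and the paper's proof takes a structurally different route precisely because this per-$Q$ approach does not work.

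First, the rotation you describe does not improve the lexicographic order of $(|\mathcal{A}|, |\mathcal{B}|, |\mathcal{C}|, |\mathcal{D}|)$. Write $Q = \{p,q,w,w'\}$, the witness pair $\{u,u'\} \subseteq T \in \mathcal{B}$ with $\{u,u',w,w'\}$ a $K_4$, and $Q_3 = \{y_1,y_2,y_3,y\}$ with $xy$ the missing edge of $G[Q \cup Q_3]$. After you replace $Q$ by $\{u,u',p,q\}$, the vertices $\{w,w'\} \cup Q_3$ induce a $K_6$ (when $x \notin \{w,w'\}$), which yields one $K_4$ and one $K_2$; the third vertex of $T$ is isolated. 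The eleven vertices $V(Q) \cup V(Q_3) \cup V(T)$ went from being covered by two $K_4$'s and a $K_3$, i.e.\ $(2,1,0,0)$, to two $K_4$'s, a $K_2$, and a $K_1$, i.e.\ $(2,0,1,1)$ — strictly worse. The ``additional $K_4$'' you count does not offset the destruction of $Q_3$. This is exactly why the paper's Claims~\ref{CLAIM:A3QQ-T-M-12}--\ref{CLAIM:A3QQ-M-I} use chains of five $K_4$'s from $\mathcal{A}_3$, each bringing along its own external witness pair, so that after the rotation six disjoint $K_4$'s appear from eleven starting ones (see Figure~\ref{Fig:A3QQ-T-M}): the gain only materializes when several witness pairs are consumed simultaneously.

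Second, the target per-$Q$ bound itself is doubtful. It is equivalent to saying that the auxiliary graph $H$ on $\mathcal{A}_3$ (with $Q \sim Q'$ iff $e(Q,Q') = 15$) has maximum degree at most $2$, i.e.\ is a union of paths and cycles. The paper's analysis explicitly tolerates much denser $H$: within $\mathcal{T}$ each vertex already has $H$-degree $2$, and Claim~\ref{CLAIM:A3QQ-MM-56} only bounds the \emph{aggregate} $e_G(Q_{u_1} \cup Q_{u_2}, Q_{v_1} \cup Q_{v_2}) \le 56$ between two $\mathcal{M}$-pairs — it leaves open that $e(Q_{u_1}, Q_{v_2}) = 15$ for one pair out of each quadruple, so a vertex $u_1 \in \mathcal{M}$ can a priori have $H$-degree of order $|\mathcal{M}|$. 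The proof of Lemma~\ref{LEMMA:A3-upper-bound} never needs a degree bound: it takes a lexicographically maximum rank-$3$-packing $(\mathcal{T},\mathcal{M},\mathcal{I})$ of $H$ and bounds $e_G$ between the resulting blocks ($U_1$, $U_2$, $U_3$) via the group-aggregate estimates of Claims~\ref{CLAIM:A3QQ-T-M-12}--\ref{CLAIM:A3QQ-M-I}, which expand to exactly $7(3|\mathcal{T}| + 2|\mathcal{M}| + |\mathcal{I}|)^2$. The surplus from a high-degree vertex in $\mathcal{M}$ is absorbed by the deficit in the other three bipartite entries of the same quadruple, something a per-$Q$ accounting cannot see. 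You would need to replace your reduction by this kind of block-level bookkeeping, and the single-pair rotation by the five-$K_4$ chain rotations, to make the argument go through.
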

\begin{proof}[Proof of Lemma~\ref{LEMMA:A3-upper-bound}]
Let us define an auxiliary graph $H$ whose vertex set is $\mathcal{A}_3$, and two members $Q, Q' \in \mathcal{A}_3$ are adjacent in $H$ iff $e(Q,Q') = 15$ (recall from Lemma~\ref{LEMMA:Q3Qi}~\ref{LEMMA:Q3Qi-1} that $e(Q_3,Q_3') \le 15$ for every pair $Q_3,Q_3' \in \mathcal{A}_3$). 
For convenience, for every vertex $v\in V(H)$, we use $Q_{v}$  to denote the corresponding member in $\mathcal{A}_3$. 

Let $(\mathcal{T}, \mathcal{M}, \mathcal{I})$ be a rank-$3$-packing of $H$, defined as follows$\colon$
\begin{itemize}
    \item $\mathcal{T}$ is a $K_3$-tiling in $H$, 
    \item $\mathcal{M}$ is a $K_2$-tiling in $H$,
    \item $\mathcal{I}$ is a $K_1$-tiling in $H$, and 
    \item $V(\mathcal{T}) \cup V(\mathcal{M}) \cup V(\mathcal{I}) = V(H)$ is a partition of $V(H)$, 
\end{itemize}
with the condition that $(|\mathcal{T}|, |\mathcal{M}|, |\mathcal{I}|)$ is maximum in the lexicographical order. Note that 
\begin{align}\label{equ:A3-aux-gp-partition}
    3|\mathcal{T}| + 2|\mathcal{M}| + |\mathcal{I}| 
    = |V(H)| 
    = |\mathcal{A}_3| 
    = a_3.
\end{align}
Let 
\begin{align*}
    U_1 
    \coloneqq \bigcup_{u \in \mathcal{T}} V(Q_{u}),
    \quad 
    U_2 
    \coloneqq \bigcup_{u \in \mathcal{M}} V(Q_{u}), 
    \quad\text{and}\quad 
    U_3 
    \coloneqq \bigcup_{u \in \mathcal{I}} V(Q_{u}).
\end{align*}
Note that 
\begin{align}\label{equ:A3-U1U2U3}
    e(\mathcal{A}_{3})
    & = \sum_{i\in [3]}e_{G}(U_i) + \sum_{1\le i< j \le 3}e_{G}(U_i, U_j),  
\end{align}
and note from the definition of $H$ that 
\begin{align}\label{equ:A3-U3}
    e_{G}(U_3)
    \le 14 \binom{|\mathcal{I}|}{2} + \binom{4}{2} \cdot |\mathcal{I}|
    = 7 |\mathcal{I}|^2 - |\mathcal{I}|.
\end{align}

%
\begin{figure}[H]
\centering
\tikzset{every picture/.style={line width=1pt}} 

\caption{A rotation that increases the number of vertex-disjoint copies of $K_4$.}
\label{Fig:A3QQ-T-M}
\end{figure}

\begin{claim}\label{CLAIM:A3QQ-T-M-12}
    Let $T=\{u_1,u_2,u_3\} \in \mathcal{T}$ and $M=\{v_1,v_2\} \in \mathcal{M}$. For every $(i,j) \in [3] \times [2]$,  we have 
    \begin{align*}
        e_{G}(Q_{u_i}, Q_{v_j}) 
        \le 12.
    \end{align*}
    Consequently, 
    \begin{align*}
        e_{G}(U_1, U_2)
        \le 12 \cdot 3 |\mathcal{T}| \cdot  2|\mathcal{M}|
        = 72 |\mathcal{T}| |\mathcal{M}|. 
    \end{align*}
\end{claim}
\begin{proof}[Proof of Claim~\ref{CLAIM:A3QQ-T-M-12}]
    Fix $T=\{u_1,u_2,u_3\} \in \mathcal{T}$ and $M=\{v_1,v_2\} \in \mathcal{M}$. Assume that $Q_{u_i} = \{p_i, q_i, s_i, t_i\}$ for $i \in [3]$ and $Q_{v_i} = \{p_i', q_i', s_i', t_i'\}$ for $i \in [2]$. Suppose to the contrary that this claim is not true. By symmetry, we may assume that $e_{G}(Q_{u_3}, Q_{v_1})\ge 13$.

    By the definition of $\mathcal{A}_{3}$ (see Figure~\ref{Fig:def-A1A2A3A4}), there exist three pairwise disjoint edges (which belong to members of $\mathcal{B} \cup \mathcal{C}$) $\{x_1, y_1\}, \{x_2, y_2\}, \{x_2', y_2'\} \in G-V(\mathcal{A})$ such that
    \begin{itemize}
        \item both vertices in $\{x_1, y_1\}$ are adjacent to a certain pair of vertices in $Q_{u_1}$,
        \item both vertices in $\{x_2, y_2\}$ are adjacent to a certain pair of vertices in $Q_{u_2}$,
        \item and both vertices in $\{x_2', y_2'\}$ are adjacent to a certain pair of vertices in $Q_{v_2}$.
    \end{itemize}   
    By symmetry, we may assume that $\{p_1, q_1\} \subseteq  Q_{u_1}$, $\{p_2, q_2\} \subseteq  Q_{u_2}$, and $\{p_2', q_2'\} \subseteq  Q_{v_2}$ are such vertices. 

    Since $e_{G}(Q_{u_3}, Q_{v_1})\ge 13 = 16 - 3$, there exists a vertex in $Q_{v_1}$ that is adjacent to all four vertices in $Q_{v_3}$, and by symmetry, we may assume that $p_1'$ is a such vertex in $Q_{v_1}$. Notice that 
    \begin{align*}
        |G[\{s_2', t_2'\}, \{q_1', s_1', t_1'\}]|
        \ge 15 - (4+4+2)
        = 5.
    \end{align*}
    By Fact~\ref{FACT:3-by-4-bipartite-graph}~\ref{FACT:3-by-4-bipartite-graph-1}, there exists a pair of vertices in $\{q_1', s_1', t_1'\}$ that are adjacent to both vertices in $\{s_2', t_2'\}$. 
    
    Note that 
    \begin{align*}
        |G[\{s_1, t_1\}, \{s_2, t_2\}]|
        \ge 15 - (4+4+2+2)
        = 3. 
    \end{align*}
    So there exists a vertex in $\{s_1, t_1\}$ that is adjacent to both vertices in $\{s_2, t_2\}$, and by symmetry, we may assume that $s_1$ is such a vertex. 
    
    Since $e(Q_{u_1}, Q_{u_3}) \ge 15 = 16 - 1$ and $e(Q_{u_2}, Q_{u_3}) \ge 15 = 16 - 1$, we have 
    \begin{align*}
        |G[\{s_1, s_2, t_2\}, Q_{u_3}]|
        \ge 3 \cdot 4- (1+1)
        = 10. 
    \end{align*}
    It follows that there exists a vertex in $Q_{u_3}$ that is adjacent to all vertices in $\{s_1, s_2, t_2\}$. Recall that the vertex  $p_1' \in Q_{v_1}$ is adjacent to all four vertices in $Q_{u_3}$. 
    However, the rotation shown in Figure~\ref{Fig:A3QQ-T-M} would increase the value of $|\mathcal{A}|$, contradicting the maximality of $(|\mathcal{A}|, |\mathcal{B}|, |\mathcal{C}|, |\mathcal{D}|)$. 
\end{proof}

\begin{claim}\label{CLAIM:A3QQ-T-T-12}
    Let $T=\{u_1,u_2,u_3\}, T' =\{v_1, v_2, v_3\} \in \mathcal{T}$ be two distinct members. For every $(i,j) \in [3] \times [3]$, we have 
    \begin{align*}
        e_{G}(Q_{u_i}, Q_{v_j}) 
        \le 12.
    \end{align*}
    Consequently, 
    \begin{align*}
        e_{G}(U_1)
        \le 12 \cdot 3 \cdot 3 \cdot \binom{|\mathcal{T}|}{2} + \binom{4}{2} \cdot 3|\mathcal{T}| + 3 \cdot 15 \cdot |\mathcal{T}|
        = 54 |\mathcal{T}|^2 + 9|\mathcal{T}|. 
    \end{align*}
\end{claim}
\begin{proof}[Proof of Claim~\ref{CLAIM:A3QQ-T-T-12}]
    This claim follows in a similar manner to the proof of Claim~\ref{CLAIM:A3QQ-T-M-12}, so we omit it here. 
\end{proof}

\begin{figure}[H]
\centering
\tikzset{every picture/.style={line width=1pt}} 


\caption{A rotation that increases the number of vertex-disjoint copies of $K_4$.}
\label{Fig:A3QQ-T-I}
\end{figure}

\begin{claim}\label{CLAIM:A3QQ-T-I-14}
    Let $T=\{u_1,u_2,u_3\} \in \mathcal{T}$ and $I=\{v\} \in \mathcal{I}$. For every $i \in [3]$, we have 
    \begin{align*}
        e_{G}(Q_{u_i}, Q_{v}) 
        \le 14. 
    \end{align*}
    Consequently, 
    \begin{align*}
        e_{G}(U_1,U_3)
        \le 14 \cdot 3|\mathcal{T}| \cdot |\mathcal{I}| 
        = 42 |\mathcal{T}| |\mathcal{I}|. 
    \end{align*}
\end{claim}
\begin{proof}[Proof of Claim~\ref{CLAIM:A3QQ-T-I-14}]
    This claim also follows in a similar manner to the proof of Claim~\ref{CLAIM:A3QQ-T-M-12} (see Figure~\ref{Fig:A3QQ-T-I}), so we omit the details here. 
\end{proof}

\begin{figure}[H]
\centering

\tikzset{every picture/.style={line width=1pt}} 


\caption{A rotation that increases the number of vertex-disjoint copies of $K_4$.} 
\label{Fig:A3QQ-MM}
\end{figure}

\begin{claim}\label{CLAIM:A3QQ-MM-56}
    Let $M=\{u_1, u_2\}, M'=\{v_1, v_2\} \in \mathcal{M}$ be two distinct members. We have 
    \begin{align}\label{equ:CLAIM:A3QQ-MM-56}
        e_{G}\left(Q_{u_1}\cup Q_{u_2}, Q_{v_1}\cup Q_{v_2}\right)\le 56.
    \end{align}
    Consequently, if $|\mathcal{M}| \ge 2$, then 
    \begin{align*}
        e_{G}(U_2)
        \le 56 \cdot \binom{|\mathcal{M}|}{2} + \binom{4}{2} \cdot 2|\mathcal{M}| + 15 \cdot |\mathcal{M}|
        = 28 |\mathcal{M}|^2 - |\mathcal{M}|.
    \end{align*}
\end{claim}
\begin{proof}[Proof of Claim~\ref{CLAIM:A3QQ-MM-56}]
    Fix $M=\{u_1, u_2\}, M'=\{v_1, v_2\} \in \mathcal{M}$. Assume that $Q_{u_i} = \{p_i, q_i, s_i, t_i\}$ and $Q_{v_i} = \{p_i', q_i', s_i', t_i'\}$ for $i \in [2]$. Suppose to the contrary that~\eqref{equ:CLAIM:A3QQ-MM-56} fails. 
    Then by the Pigeonhole Principle, there exists a pair $(i,j) \in [2] \times [2]$ such that $e(Q_{u_i}, Q_{v_j}) \ge  \lceil 57/4 \rceil = 15$. By symmetry, we may assume that $(i,j) = (2,1)$. 
    
    It follows from the maximality of $(|\mathcal{T}|, |\mathcal{M}|, |\mathcal{I}|)$ (in particular, the maximality of $|\mathcal{T}|$) that
    \begin{itemize}
        \item $e(Q_{u_1}, Q_{v_1}) \le 14$, since otherwise, $\{u_1, u_2, v_1\}$ would be in $\mathcal{T}$, 
        \item and $e(Q_{u_2}, Q_{v_2}) \le 14$, since otherwise, $\{v_1, v_2, u_2\}$ would be in $\mathcal{T}$.
    \end{itemize}
    Combining this with the assumption~\eqref{equ:CLAIM:A3QQ-MM-56}, we obtain  
    \begin{align*}
        e_{G}(Q_{u_1}, Q_{v_2})
        & = e_{G}\left(Q_{u_1}\cup Q_{u_2}, Q_{v_1}\cup Q_{v_2}\right)
        - e_{G}(Q_{u_1}, Q_{v_1}) - e_{G}(Q_{u_2}, Q_{v_1}) - e_{G}(Q_{u_2}, Q_{v_2}) \\[0.5em]
        & \ge 57 - 14 - 15- 14
        = 14. 
    \end{align*}
    Similar to the proof of Claim~\ref{CLAIM:A3QQ-T-M-12},  by the definition of $\mathcal{A}_{3}$, there exist three pairwise disjoint edges (which belong to members in $\mathcal{B} \cup \mathcal{C}$) $\{x_1, y_1\}, \{x_2, y_2\}, \{x_2', y_2'\} \in G-V(\mathcal{A})$ such that 
    \begin{itemize}
        \item both vertices in $\{x_1, y_1\}$ are adjacent to a certain pair of vertices in $Q_{u_1}$, 
        \item both vertices in $\{x_2, y_2\}$ are adjacent to a certain pair of vertices in $Q_{u_2}$, 
        \item and both vertices in $\{x_2', y_2'\}$ are adjacent to a certain pair of vertices in $Q_{v_2}$.
    \end{itemize}
     By symmetry, we may assume that $\{p_1, q_1\} \subseteq  Q_{u_1}$, $\{p_2, q_2\} \subseteq  Q_{u_2}$, and $\{p_2', q_2'\} \subseteq  Q_{v_2}$ are such vertices. Also, similar to the proof of Claim~\ref{CLAIM:A3QQ-T-M-12}, we may assume that $s_1$ is adjacent to both vertices in $\{s_2, t_2\}$. 

    Since $|G[Q_{v_1}, Q_{u_1}]| \ge 14 = 16 -2$, there exist two vertices in $Q_{v_1}$ that are adjacent to $s_{1}$, and by symmetry, we may assume that $\{p_1', q_1'\}$ are such two vertices. Since $|G[Q_{v_2}, Q_{u_1}]| = 15$, one of the vertices in $\{p_1', q_1'\}$ must be adjacent to both $\{s_2, t_2\}$, and by symmetry, we may assume that $p_1'$ is adjacent to both $\{s_2, t_2\}$. Note that 
    \begin{align*}
        |G[\{q_1', s_1', t_1'\}, \{s_2', t_2'\}]| 
        \ge 15 - (4+2\cdot 3) 
        = 5.
    \end{align*}
    So it follows from Fact~\ref{FACT:3-by-4-bipartite-graph}~\ref{FACT:3-by-4-bipartite-graph-1} that both vertices in $\{s_2', t_2'\}$ are adjacent a certain pair of vertices in $\{q_1', s_1', t_1'\}$. 
    However, the rotation shown in Figure~\ref{Fig:A3QQ-MM} would increase the size of $\mathcal{A}$, contradicting the maximality of $(|\mathcal{A}|, |\mathcal{B}|, |\mathcal{C}|, |\mathcal{D}|)$. 
\end{proof}

\begin{figure}[H]
\centering

\tikzset{every picture/.style={line width=1pt}} 


\caption{A rotation that increases the number of vertex-disjoint copies of $K_4$.} 
\label{Fig:A3QQ-MM-b}
\end{figure}

\begin{claim}\label{CLAIM:A3QQ-M-I}
    For every $\{u_1, u_2\} \in \mathcal{M}$ and $w \in \mathcal{I}$, we have 
    \begin{align}\label{equ:CLAIM:A3QQ-M-I-a}
        e(Q_{u_1} \cup Q_{u_2}, Q_{w}) \le 29. 
    \end{align}
    Moreover, if equality holds and $|\mathcal{M}| \ge 2$, then for every $\{v_1, v_2\} \in \mathcal{M} 
    \setminus \{\{u_1, u_2\}\}$, we have 
    \begin{align}\label{equ:CLAIM:A3QQ-M-I-b}
        e(Q_{v_1} \cup Q_{v_2}, Q_{w}) \le 24. 
    \end{align}
    In particular, for every pair $\{u_1, u_2\}, \{v_1, v_2\} \in \mathcal{M}$ and for every $w \in \mathcal{I}$,
    \begin{align*}
        \frac{1}{2} \left(e(Q_{u_1} \cup Q_{u_2}, Q_{w}) + e(Q_{v_1} \cup Q_{v_2}, Q_{w}) \right) 
        \le 28. 
    \end{align*}
    Consequently, 
    \begin{align*}
        e(U_2, U_3)
        \le 
        \begin{cases}
            29|\mathcal{I}|, &\quad\text{if}\quad |\mathcal{M}| = 1, \\[0.5em]
            28 |\mathcal{M}||\mathcal{I}|, &\quad\text{if}\quad |\mathcal{M}| \ge 2. 
        \end{cases}
    \end{align*}
\end{claim}
\begin{proof}[Proof of Claim~\ref{CLAIM:A3QQ-M-I}]
    Fix $\{u_1, u_2\} \in \mathcal{M}$ and $w \in \mathcal{I}$. Assume that $Q_{u_i} = \{p_i, q_i, s_i, t_i\}$ for $i \in [2]$ and $Q_{w} = \{p,q,s,t\}$. 
    Inequality~\eqref{equ:CLAIM:A3QQ-M-I-a} follows easily from Lemma~\ref{LEMMA:Q3Qi}~\ref{LEMMA:Q3Qi-1} and the maximality of $(|\mathcal{T}|, |\mathcal{M}|, |\mathcal{I}|)$, since otherwise, $\{u_1, u_2, w\}$ would be in $\mathcal{T}$, a contradiction. 
    So it remains to prove~\eqref{equ:CLAIM:A3QQ-M-I-b}. 
    
    Assume that~\eqref{equ:CLAIM:A3QQ-M-I-a} holds with equality and $|\mathcal{M}| \ge 2$. Fix $\{v_1, v_2\} \in \mathcal{M} 
    \setminus \{\{u_1, u_2\}\}$. Assume that $Q_{v_i} = \{p_i', q_i', s_i', t_i'\}$ for $i \in [2]$. Suppose to the contrary that $e(Q_{v_1} \cup Q_{v_2}, Q_{w}) \ge 25$. 
    By symmetry and the Pigeonhole Principle, we may assume that $e(Q_{v_1}, Q_{w}) \ge 13$. 
    
    Similar to the proof of Claim~\ref{CLAIM:A3QQ-T-M-12},  by the definition of $\mathcal{A}_{3}$, there exist three pairwise disjoint edges $\{x_1, y_1\}, \{x_2, y_2\}, \{x_2', y_2'\} \in G-V(\mathcal{A})$ such that 
    \begin{itemize}
        \item both vertices in $\{x_1, y_1\}$ are adjacent to a certain pair of vertices in $Q_{u_1}$,
        \item  both vertices in $\{x_2, y_2\}$ are adjacent to a certain pair of vertices in $Q_{u_2}$, 
        \item and both vertices in $\{x_2', y_2'\}$ are adjacent to a certain pair of vertices in $Q_{v_2}$.
    \end{itemize}
    By symmetry, we may assume that $\{p_1, q_1\} \subseteq  Q_{u_1}$, $\{p_2, q_2\} \subseteq  Q_{u_2}$, and $\{p_2', q_2'\} \subseteq  Q_{v_2}$ are such vertices. Also, similar to the proof of Claim~\ref{CLAIM:A3QQ-T-M-12}, we may assume that $s_1$ is adjacent to both vertices in $\{s_2, t_2\}$. 
    
     Since $e(Q_{u_1} \cup Q_{u_2}, Q_{w}) = 29 = 32-3$, there exists a vertex in $Q_{w}$ that is adjacent to all three vertices in $\{s_1, s_2, t_2\}$, and by symmetry, we may assume that $p$ is such a vertex. 
    Note that 
    \begin{align*}
        |G[\{q,s,t\}, Q_{v_1}]|
        \ge 13 - 4
        = 9. 
    \end{align*}
    So it follows from Fact~\ref{FACT:3-by-4-bipartite-graph}~\ref{FACT:3-by-4-bipartite-graph-3} that there exists a vertex in $Q_{v_1}$ that is adjacent to all three vertices in $\{q,s,t\}$, and by symmetry, we may assume that $q_{1}'$ is such a vertex. 
    Additionally, since 
    \begin{align*}
        |G[\{q_1', s_1', t_1'\}, \{s_2', t_2'\}]|
        \ge 15 - (4+4+2)
        = 5,
    \end{align*}
    by Fact~\ref{FACT:3-by-4-bipartite-graph}~\ref{FACT:3-by-4-bipartite-graph-1}, 
    there exist two vertices in $\{q_1', s_1', t_1'\}$ that are adjacent to both vertices in $\{s_2', t_2'\}$. 
    However, the rotation shown in Figure~\ref{Fig:A3QQ-MM-b} would increase the value of $|\mathcal{A}|$, contradicting the maximality of $(|\mathcal{A}|, |\mathcal{B}|, |\mathcal{C}|, |\mathcal{D}|)$. 
\end{proof}
Suppose that $|\mathcal{M}| \ge 2$. 
Then it follows from~\eqref{equ:A3-aux-gp-partition},~\eqref{equ:A3-U1U2U3},~\eqref{equ:A3-U3}, Claims~\ref{CLAIM:A3QQ-T-M-12},~\ref{CLAIM:A3QQ-T-T-12},~\ref{CLAIM:A3QQ-T-I-14},~\ref{CLAIM:A3QQ-MM-56}, and~\ref{CLAIM:A3QQ-M-I} that 
\begin{align*}
    e(\mathcal{A}_{3})
    & \le 54 |\mathcal{T}|^2 + 9|\mathcal{T}| + 28 |\mathcal{M}|^2 - |\mathcal{M}| +  72|\mathcal{T}||\mathcal{M}| + 42|\mathcal{T}||\mathcal{I}| + 28 |\mathcal{M}||\mathcal{I}| + 7|\mathcal{I}|^2 - |\mathcal{I}| \\[0.5em]
    & \le 63 |\mathcal{T}|^2 + 28 |\mathcal{M}|^2  +84|\mathcal{T}||\mathcal{M}| +42 |\mathcal{T}||\mathcal{I}|+28|\mathcal{M}||\mathcal{I}| +7|\mathcal{I}|^2 \\[0.5em]
    & = 7(3|\mathcal{T}| + 2|\mathcal{M}| + |\mathcal{I}|)^2
    = 7 a_3^2,
\end{align*}
as desired. 

Suppose that $|\mathcal{M}| = 1$. Then similar to the argument above, and using the trivial equality $29 |\mathcal{M}||\mathcal{I}| - |\mathcal{I}| = 28 |\mathcal{M}||\mathcal{I}|$,
we have 
\begin{align*}
    e(\mathcal{A}_{3})
    & \le 54 |\mathcal{T}|^2 + 9|\mathcal{T}| + 27 |\mathcal{M}| +  72|\mathcal{T}||\mathcal{M}| + 42|\mathcal{T}||\mathcal{I}| + 29 |\mathcal{M}||\mathcal{I}| + 7|\mathcal{I}|^2 - |\mathcal{I}| \\[0.5em]
    & \le 63 |\mathcal{T}|^2 + 28 |\mathcal{M}|^2  +84|\mathcal{T}||\mathcal{M}| +42 |\mathcal{T}||\mathcal{I}|+28|\mathcal{M}||\mathcal{I}| +7|\mathcal{I}|^2 \\[0.5em]
    & = 7(3|\mathcal{T}| + 2|\mathcal{M}| + |\mathcal{I}|)^2
    = 7 a_3^2,
\end{align*}
also as desired. 
This completes the proof of Lemma~\ref{LEMMA:A3-upper-bound}. 
\end{proof}

\section{Local estimation \RomanNumeralCaps{3}$\colon$ $e(\mathcal{A}_4) + e(\mathcal{A}_4, \mathcal{B} \cup \mathcal{C})$}\label{SEC:A4BC}
In this section, we establish an improved upper for $e(\mathcal{A}_4) + e(\mathcal{A}_4, \mathcal{B} \cup \mathcal{C})$, refining the following bounds given in Lemma~\ref{LEMMA:Q4Qi}~\ref{LEMMA:Q4Qi-1} and Lemma~\ref{LEMMA:Q4BCD}~\ref{LEMMA:Q4BCD-1}~\ref{LEMMA:Q4BCD-2}: 
\begin{align*}
    e(\mathcal{A}_{4}) 
    \le \frac{15 a_4^2}{2}
    \quad\text{and}\quad 
    e(\mathcal{A}_{4}, \mathcal{B} \cup \mathcal{C})
    \le (7b + 5c)a_4 + 18 a_4. 
\end{align*}
%

Let $\alpha \in (0,1)$ and $x,y \ge 0$ be real numbers. Define  
\begin{align*}
    g_{\alpha}(x,y)
    & \coloneqq 
    \begin{cases}
        7 \alpha x^2 + 7 xy, &\quad\text{if}\quad x \le \frac{y}{2\alpha}, \\[0.5em]
        \frac{15 \alpha x^2}{2} + \frac{13 xy}{2} + \frac{y^2}{8\alpha}, &\quad\text{if}\quad x > \frac{y}{2\alpha}, 
    \end{cases}  \\[0.5em]
    \quad\text{and}\quad
    h_{\alpha}(x, y)
    & \coloneqq 
    \begin{cases}
        7 \alpha x^2 + 5 xy, &\quad\text{if}\quad x \le \frac{y}{2\alpha}, \\[0.5em]
        \frac{15 \alpha x^2}{2} + \frac{9 xy}{2} + \frac{y^2}{8\alpha}, &\quad\text{if}\quad x > \frac{y}{2\alpha}. 
    \end{cases}
\end{align*}
Additionally, define 
\begin{align*}
    \hat{g}(x,y)
    \coloneqq 
    \begin{cases}
        \frac{21 x^2}{4} + 7xy, &\quad\text{if}\quad x \le \frac{6 y}{5}, \\[0.5em]
        \frac{45 x^2}{8}+\frac{61 x y}{10}+\frac{27 y^2}{50}, &\quad\text{if}\quad x \in \left[\frac{6y}{5}, \frac{(86 - 4\sqrt{210}) y}{15}\right], \\[0.5em]
        \frac{28 x^2}{5}+\frac{479 x y}{75}+\frac{103 y^2}{1125}, &\quad\text{if}\quad x \in \left[\frac{(86 - 4\sqrt{210}) y}{15}, \frac{56y}{15}\right], \\[0.5em]
        \frac{651 x^2}{116}+\frac{913 x y}{145}+\frac{113 y^2}{435}, &\quad\text{if}\quad x\in \left[\frac{56y}{15}, \frac{38 y}{5}\right] \\[0.5em]
        \frac{45 x^2}{8}+\frac{61 x y}{10}+\frac{151 y^2}{150}, &\quad\text{if}\quad x\ge \frac{38 y}{5}. 
    \end{cases}
\end{align*}
\begin{lemma}\label{LEMMA:A4BC-together}
    There exists an absolute constant $C_{\ref{LEMMA:A4BC-together}} > 0$ such that 
    \begin{align*}
        e(\mathcal{A}_{4}) + e(\mathcal{A}_{4}, \mathcal{B} \cup \mathcal{C})
        \le C_{\ref{LEMMA:A4BC-together}} n +
        \begin{cases}
            g_{1/2}(a_4, b) + h_{1/2}(a_4, c), 
            &\quad\text{if}\quad d < \frac{b+c}{2}, \\[0.5em]
            \hat{g}(a_4, b) + h_{1/4}(a_4, c), &\quad\text{if}\quad d \ge \frac{b+c}{2}.
        \end{cases}
    \end{align*}
\end{lemma}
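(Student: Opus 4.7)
The plan is to package the basic edge-counts from Lemma~\ref{LEMMA:Q4Qi}~\ref{LEMMA:Q4Qi-1} and Lemma~\ref{LEMMA:Q4BCD} into refined bounds by (i) constructing an auxiliary graph on $\mathcal{A}_4$, (ii) using Tur\'an-type bounds to control ``dense'' interactions, and (iii) exploiting the fact that every $Q\in \mathcal{A}_4$ is certified by at least one vertex of $\mathcal{D}$ (via $\mathcal{A}_{4,1}$ or $\mathcal{A}_{4,2}$). The overall structure mirrors the proof of Lemma~\ref{LEMMA:A3-upper-bound}: one defines an auxiliary graph $H$ on the set $\mathcal{A}_4$ whose edges record when $e(Q, Q')$ is at its extremal value $15$, and shows via rotation arguments (as in Figures~\ref{Fig:Q4Qi-a}--\ref{Fig:Q4Qi-b}) that $H$ avoids certain configurations; Fact~\ref{FACT:Lt-4-vtx} together with Theorem~\ref{THM:Simonovits74} then improves the trivial bound $e(\mathcal{A}_4)\le 15a_4^2/2$ down to $\approx 7a_4^2$ when $a_4$ is small.

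The next step is to refine Lemma~\ref{LEMMA:Q4BCD}~\ref{LEMMA:Q4BCD-1}~\ref{LEMMA:Q4BCD-2} in the bipartite regime between $\mathcal{A}_4$ and $\mathcal{B}\cup \mathcal{C}$. For a fixed $B\in \mathcal{B}$ (resp.\ $C\in \mathcal{C}$), the members $Q\in \mathcal{A}_4$ that attain $e(Q,B)=7$ (resp.\ $e(Q,C)=5$) have a restricted adjacency pattern on $B$, so one counts how often a given pair of vertices of $B$ can be covered simultaneously, using Theorem~\ref{THM:GRS84-bipartite-path} or Theorem~\ref{THM:Zarankiewicz-number}. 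Combining this with the $H$-bound yields the piecewise forms of $g_{\alpha}$ and $h_{\alpha}$: the transition point $x=y/(2\alpha)$ comes from comparing the ``one edge per pair, at most $7\alpha x^2+7xy$'' sparse regime to the dense regime where one falls back to the basic bound $\tfrac{15}{2}\alpha x^2$ plus the degree sum $\tfrac{13}{2}xy$ (resp.\ $\tfrac{9}{2}xy$) and an extra $y^2/(8\alpha)$ term coming from double-counting inside $\mathcal{B}$ (resp.\ $\mathcal{C}$). Summing the two gives the claim in the first case $d<(b+c)/2$, with $\alpha=1/2$ splitting the $\mathcal{A}_4$-internal budget evenly between $\mathcal{B}$ and $\mathcal{C}$.

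The case split on $d$ arises because the partition $\mathcal{A}_4=\mathcal{A}_{4,1}\cup\mathcal{A}_{4,2}$ is controlled by $d$: every $Q\in \mathcal{A}_{4,2}$ consumes at least two distinct vertices of $\mathcal{D}$, so $|\mathcal{A}_{4,2}|$ is bounded in terms of $d$. When $d<(b+c)/2$, a substantial fraction of $\mathcal{A}_4$ lies in $\mathcal{A}_{4,1}$, so every $Q$ is $3$-seen by a member of $\mathcal{B}$, which justifies the symmetric choice $\alpha=1/2$ above. When $d\ge (b+c)/2$, members of $\mathcal{A}_{4,2}$ are the bottleneck and the $\mathcal{B}$-contribution tightens: one further subdivides $\mathcal{A}_4$ according to the number of edges $Q$ sends to $V(\mathcal{B})$ versus $V(\mathcal{A}_4)$, and solves the resulting LP separately on each slab. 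The five pieces of $\hat{g}$ correspond precisely to the five active-constraint regimes of this LP, which is why $\alpha$ drops from $1/2$ to $1/4$ in $h_{1/4}(a_4,c)$ (the $\mathcal{C}$-contribution is ``charged'' less because more of the $\mathcal{A}_4$-budget is spent on $\mathcal{B}$).

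The main obstacle is the last step: deriving the five-piece function $\hat{g}$ cleanly. This will require setting up a modest QP with the variables being the number of members of $\mathcal{A}_4$ sending a given range of edges to $V(\mathcal{B})$ and to $V(\mathcal{A}_4)$, using the refined $H$-bound and Lemma~\ref{LEMMA:Q4BCD} as linear constraints, and then performing the breakpoint analysis as in Sections~\ref{SUBSEC:inequality-Phi1}--\ref{SUBSEC:inequality-Phi3}. The breakpoints $6y/5$, $(86-4\sqrt{210})y/15$, $56y/15$, $38y/5$ should emerge as the values of $x/y$ at which the active set of constraints changes; verifying continuity and the correct ordering of these breakpoints is the most delicate calculation, and the additive $O(n)$ slack in $C_{\ref{LEMMA:A4BC-together}}\cdot n$ will absorb the lower-order terms from Lemma~\ref{LEMMA:Q4BCD} (the $18a_4$ and similar).
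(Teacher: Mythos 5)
Your high-level architecture is broadly right: you decompose $e(\mathcal{A}_4)+e(\mathcal{A}_4,\mathcal{B}\cup\mathcal{C})$ into weighted combinations of $e(\mathcal{A}_4)$ with $e(\mathcal{A}_4,\mathcal{B})$ and with $e(\mathcal{A}_4,\mathcal{C})$, bound each combination via an auxiliary graph on $\mathcal{A}_4$ plus Tur\'an-type arguments, and charge the slack to the $C n$ term. This is exactly how the paper proceeds: it proves $\alpha\cdot e(\mathcal{A}_4)+e(\mathcal{A}_4,\mathcal{B})\le g_\alpha(a_4,b)+O_\alpha(n)$ for any $\alpha$, separately proves $\tfrac34 e(\mathcal{A}_4)+e(\mathcal{A}_4,\mathcal{B})\le\hat g(a_4,b)+O(n)$, and $\alpha\cdot e(\mathcal{A}_4)+e(\mathcal{A}_4,\mathcal{C})\le h_\alpha(a_4,c)+O_\alpha(n)$, then adds.

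However, there is a genuine misconception in your explanation of the case split on $d$, and this is worth flagging because it could waste a lot of your effort. You assert that when $d\ge(b+c)/2$ the set $\mathcal{A}_{4,2}$ becomes the bottleneck, via the claim that ``every $Q\in\mathcal{A}_{4,2}$ consumes at least two distinct vertices of $\mathcal{D}$, so $|\mathcal{A}_{4,2}|$ is bounded in terms of $d$.'' This is false: ``seen by'' is a non-exclusive incidence, and a single $\mathcal{D}$-vertex can see arbitrarily many members of $\mathcal{A}_{4,2}$, so there is no inequality of the form $|\mathcal{A}_{4,2}|\lesssim d$. In fact none of the three underlying estimates (the $g_\alpha$, $\hat g$, or $h_\alpha$ bounds) refer to $d$ at all, and both alternatives in the lemma statement are valid upper bounds unconditionally. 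The threshold $d\ge(b+c)/2$ is simply a deterministic rule for which of the two always-valid bounds to record, so that the later function $\hat\varphi(a_4,b,c,d)$ is well-defined; you should not try to derive it from structural properties of $\mathcal{A}_{4,1}$ vs.\ $\mathcal{A}_{4,2}$, because there is nothing there to derive.

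A second gap, more technical, is that you never identify the structural rotation lemma that makes the Tur\'an argument close. The crucial fact is not that the auxiliary graph $H$ on $\mathcal{A}_4$ is by itself sparse, but that $H$ and the auxiliary incidence bipartite graph $F$ (with $Q\sim T$ when $e(Q,T)\ge 7$) cannot \emph{simultaneously} be dense: a $P_4$ $Q_1Q_2Q_3Q_4$ in $H$ all of whose consecutive pairs share a common $F$-neighbour that $1$-sees both endpoints would allow a switching that contradicts the maximality of $(|\mathcal{A}|,|\mathcal{B}|,|\mathcal{C}|,|\mathcal{D}|)$. One then restricts to the set $Y$ of members with large $F$-degree, concludes $H[Y]$ is $P_4$-free, and trades $|Y|$ off against $|H|$; for $\hat g$ the same coupling is done across ten degree-slabs $Z_1,\dots,Z_{10}$, with $L_t$-freeness and Simonovits' theorem supplying the per-slab bounds. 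Your sketch mentions rotations ``as in Figures~\ref{Fig:Q4Qi-a}--\ref{Fig:Q4Qi-b},'' but those figures establish the cruder pairwise bounds of Lemma~\ref{LEMMA:Q4Qi}; the argument you actually need is the longer switching configuration, and without stating it explicitly the proposal does not close the gap between the trivial $\tfrac{15}{2}a_4^2$ and the piecewise functions $g_\alpha,\hat g$.
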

Lemma~\ref{LEMMA:A4BC-together} will follow from Lemmas~\ref{LEMMA:A4-B-bipartite-bound},~\ref{LEMMA:A4-B-10-partite-bound}, and~\ref{LEMMA:A4-C-bipartite-bound}, which will be proved in the following two subsections.  

\subsection{$\mathcal{A}_{4}$ and $\mathcal{B}$}\label{SUBSEC:A4B}
In this subsection, we prove the following two upper bounds for $\alpha \cdot e(\mathcal{A}_4) + e(\mathcal{A}_4, \mathcal{B})$.
\begin{lemma}\label{LEMMA:A4-B-bipartite-bound}
    For every $\alpha \in (0,1)$, there exists a constant $C_{\ref{LEMMA:A4-B-bipartite-bound}} = C_{\ref{LEMMA:A4-B-bipartite-bound}}(\alpha) > 0$ such that 
    \begin{align*}
        \alpha \cdot e(\mathcal{A}_4) + e(\mathcal{A}_4, \mathcal{B})
        \le g_{\alpha}(a_4, b) + C_{\ref{LEMMA:A4-B-bipartite-bound}} n. 
    \end{align*}
    %
\end{lemma}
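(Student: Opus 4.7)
My strategy is to split the weighted sum $\alpha \cdot e(\mathcal{A}_{4}) + e(\mathcal{A}_{4}, \mathcal{B})$ according to whether $a_4 \le b/(2\alpha)$ or not, and in each regime combine an improved intra-$\mathcal{A}_{4}$ bound with a refined bipartite analysis. The two trivial starting points are $e(\mathcal{A}_4) \le \tfrac{15}{2}a_4^2 + O(a_4)$ from Lemma~\ref{LEMMA:Q4Qi}~\ref{LEMMA:Q4Qi-1} and $e(\mathcal{A}_4,\mathcal{B}) \le 7 a_4 b + O(a_4)$ from Lemma~\ref{LEMMA:Q4BCD}~\ref{LEMMA:Q4BCD-1}; the gap to $g_\alpha(a_4,b)$ is $\tfrac{\alpha a_4^2}{2}$ in the first regime and $\tfrac{a_4 b}{2} - \tfrac{b^2}{8\alpha}$ in the second.

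First, following the proof template of Lemma~\ref{LEMMA:A3-upper-bound}, I will sharpen the intra-$\mathcal{A}_4$ bound to $e(\mathcal{A}_4) \le 7 a_4^2 + O(a_4)$. Specifically, define the auxiliary graph $H$ on vertex set $\mathcal{A}_4$ whose edges are the \emph{tight} pairs $\{Q,Q'\}$ with $e_G(Q,Q') = 15$, take a lex-maximum rank-$3$-packing $(\mathcal{T}, \mathcal{M}, \mathcal{I})$ of $H$, and establish cross-bounds ($12$ between $\mathcal{T}$-members, $12$ between $\mathcal{T}$ and $\mathcal{M}$, $14$ between $\mathcal{T}$ and $\mathcal{I}$, $56$ per $\mathcal{M}$-$\mathcal{M}$ pair, $28$ per $\mathcal{M}$-$\mathcal{I}$ pair) via rotations mirroring Claims~\ref{CLAIM:A3QQ-T-M-12}--\ref{CLAIM:A3QQ-M-I}. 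The rotations are available because every $Q \in \mathcal{A}_4$ is seen by at least one member of $\mathcal{D}$ (by definition of $\mathcal{A}_{4,1}, \mathcal{A}_{4,2}$), which plays the role that the auxiliary edges in $\mathcal{C}$ played for $\mathcal{A}_3$. Summing via $7(3|\mathcal{T}|+2|\mathcal{M}|+|\mathcal{I}|)^2 = 7a_4^2$ then yields the claimed bound.

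Second, I will introduce an auxiliary bipartite graph $F \subseteq \mathcal{A}_4 \times \mathcal{B}$ with $(Q,B) \in F$ iff $e_G(Q,B) \ge 8$. Lemma~\ref{LEMMA:Q4BCD}~\ref{LEMMA:Q4BCD-1} gives $\deg_F(Q) \le 2$, so
\[
e(\mathcal{A}_4, \mathcal{B}) \;\le\; 7 a_4 b + 5|F| + O(a_4) \;\le\; 7 a_4 b + 10 a_4.
\]
To handle the second regime, I will bound $|F|$ from the $\mathcal{B}$-side: a sequence of rotations analogous to those of Lemma~\ref{LEMMA:Q4BCD} will show that $\sum_{B \in \mathcal{B}} \binom{d_F(B)}{2}$ admits a nontrivial upper bound (any $B$ that is exceptional for too many $Q$'s supplies the vertices needed to build an extra $K_4$, contradicting the lex-maximality of $(|\mathcal{A}|, |\mathcal{B}|, |\mathcal{C}|, |\mathcal{D}|)$). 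Converting this structural saving via AM-GM --- optimizing the inequality $5\cdot\mathrm{something} \le \tfrac{a_4 b}{2} - \tfrac{b^2}{8\alpha}$ with the free parameter $\alpha$ --- produces the refined bound
\[
e(\mathcal{A}_4, \mathcal{B}) \;\le\; \tfrac{13}{2} a_4 b + \tfrac{b^2}{8\alpha} + O(n).
\]

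Finally, the case analysis is immediate: when $a_4 \le b/(2\alpha)$, the improved intra-bound $e(\mathcal{A}_4)\le 7a_4^2$ combined with $e(\mathcal{A}_4,\mathcal{B})\le 7 a_4 b$ gives $7\alpha a_4^2 + 7 a_4 b + O(n)$; when $a_4 > b/(2\alpha)$, the trivial intra-bound combined with the refined bipartite bound gives $\tfrac{15\alpha}{2} a_4^2 + \tfrac{13}{2} a_4 b + \tfrac{b^2}{8\alpha} + O(n)$. The hard part, I expect, will be the refined bipartite estimate: carrying out the correct rotations for $Q \in \mathcal{A}_{4,1}$ versus $Q \in \mathcal{A}_{4,2}$, and tracking the ``two exceptional $B$'s'' permitted by Lemma~\ref{LEMMA:Q4BCD}~\ref{LEMMA:Q4BCD-1}, require a case analysis reminiscent of Section~\ref{SEC:A3} but with an added level of complexity because the saving must be expressed in the weighted form $\alpha \cdot (\text{intra}) + (\text{bipartite})$ rather than as a bound on a single quantity.
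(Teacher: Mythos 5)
Your plan has two gaps, and the first is fatal. The claimed unconditional bound $e(\mathcal{A}_4) \le 7a_4^2 + O(a_4)$ is false: in the extremal construction $E_4(n,k)$ every member of $\mathcal{A}$ lands in $\mathcal{A}_{4}$ (three vertices in the clique $X$, one in $Y_1$), any two members span exactly $15$ edges, and so $e(\mathcal{A}_4) = 15\binom{a_4}{2} + 6a_4 \approx \tfrac{15}{2}a_4^2$. The reason the Section~\ref{SEC:A3} rotations cannot be transplanted is structural: Claims~\ref{CLAIM:A3QQ-T-M-12}--\ref{CLAIM:A3QQ-M-I} all begin by extracting three pairwise disjoint \emph{edges} outside $V(\mathcal{A})$ whose endpoints attach to pairs inside the relevant $Q$'s, which is exactly what membership in $\mathcal{A}_3$ guarantees; a member of $\mathcal{A}_4$ is only guaranteed to be seen by single vertices of $\mathcal{D}$ (or $3$-seen by one triangle), and a single vertex cannot close the configurations in Figures~\ref{Fig:A3QQ-T-M}--\ref{Fig:A3QQ-MM-b}. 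Symmetrically, your refined bipartite bound $e(\mathcal{A}_4,\mathcal{B}) \le \tfrac{13}{2}a_4 b + \tfrac{b^2}{8\alpha} + O(n)$ also cannot hold unconditionally (for $a_4 > b/(4\alpha)$ it is strictly below $7a_4b$, yet nothing prevents every pair $(Q,B)$ from carrying $7$ edges unless $e(\mathcal{A}_4)$ is simultaneously forced down), and your auxiliary graph $F$ with threshold $e(Q,B)\ge 8$ is the wrong object: since $d_F(Q)\le 2$ always, it can only certify $e(\mathcal{A}_4,\mathcal{B})\le 7a_4b+O(a_4)$ and never detects savings below the coefficient $7$.

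The point you are missing is that the two estimates cannot be decoupled into separate regimes; they must be traded off against each other through a single parameter. The paper's proof defines $F$ with threshold $e(Q,B)\ge 7$ and sets $Y := \{Q : d_F(Q)\ge \tfrac{b+7}{2}\}$. Members outside $Y$ have many $B$'s with $e(Q,B)\le 6$, giving $e(\mathcal{A}_4,\mathcal{B}) \le \tfrac{13a_4b}{2} + \tfrac{b|Y|}{2} + O(n)$; members inside $Y$ pairwise share at least $7$ common $F$-neighbours, and the key structural input (Lemma~\ref{LEMMA:A4-B-P4-free}, proved by a rotation using triangles that $1$-see consecutive tight pairs) shows the tight graph $H[Y]$ is $P_4$-free, whence $|H[Y]|\le |Y|$ and $e(\mathcal{A}_4) \le \tfrac{15a_4^2}{2} - \tfrac{|Y|^2}{2}$. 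Adding $\alpha$ times the second bound to the first yields a concave quadratic in $|Y|$, namely $-\tfrac{\alpha}{2}\bigl(|Y|-\tfrac{b}{2\alpha}\bigr)^2 + \tfrac{b^2}{8\alpha} + \tfrac{15\alpha a_4^2}{2} + \tfrac{13a_4b}{2} + O(n)$, and maximizing over $|Y|\in[0,a_4]$ is precisely what produces the two branches of $g_\alpha$. To repair your argument you would need to replace both of your unconditional claims with this coupled estimate, which amounts to reproducing the paper's proof.
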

\begin{lemma}\label{LEMMA:A4-B-10-partite-bound}
    There exists an absolute constant $C_{\ref{LEMMA:A4-B-10-partite-bound}} > 0$ such that 
    \begin{align*}
        \frac{3}{4}\cdot e(\mathcal{A}_4)+e(\mathcal{A}_4,\mathcal{B}) 
        \le \hat{g}(a_4,b) + C_{\ref{LEMMA:A4-B-10-partite-bound}} n.
    \end{align*}
\end{lemma}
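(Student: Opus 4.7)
The plan is to establish several quadratic upper bounds for $\tfrac{3}{4}\, e(\mathcal{A}_4) + e(\mathcal{A}_4, \mathcal{B})$, each valid in a particular regime of the ratio $a_4/b$, and then to observe that $\hat{g}$ is simply the upper envelope of these bounds across the five regimes delineated by the breakpoints $6b/5$, $(86-4\sqrt{210})b/15$, $56b/15$, and $38b/5$.

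The first ingredient is the ``large $a_4$'' bound, producing piece~5 of $\hat{g}$. Lemma~\ref{LEMMA:Q4Qi}~\ref{LEMMA:Q4Qi-1} and Lemma~\ref{LEMMA:Q4BCD}~\ref{LEMMA:Q4BCD-1} together already give $\tfrac{45}{8}\, a_4^2 + 7\, a_4 b + O(n)$, which has the correct $a_4^2$-coefficient but the wrong $a_4 b$-coefficient. To reduce $7$ to $\tfrac{61}{10}$, I would examine the bipartite graph on $\mathcal{A}_4 \times \mathcal{B}$ whose edges are the pairs $(Q,B)$ achieving $e_G(Q,B) \in \{7,8\}$, and use a rotation argument driven by the maximality of $(|\mathcal{A}|, |\mathcal{B}|, |\mathcal{C}|, |\mathcal{D}|)$ together with the Zarankiewicz bound of Theorem~\ref{THM:Zarankiewicz-number} to show that this ``heavy'' bipartite graph is sparse; the deficit then transfers into the additive $\tfrac{151}{150}\, b^2$ correction.

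The second, deeper, ingredient is the ``small $a_4$'' bound, producing piece~1 of $\hat{g}$. Define the auxiliary graph $H$ on $\mathcal{A}_4$ in which $\{Q,Q'\}$ is an edge iff $e_G(Q,Q')$ attains a near-maximal value (e.g.\ $\ge 13$); the crucial claim is that $H$ is $L_{10}$-free. For each $Q \in \mathcal{A}_4$ fix a witness $d(Q) \in \mathcal{D}$ seeing $Q$, which exists by the definition of $\mathcal{A}_4 = \mathcal{A}_{4,1} \cup \mathcal{A}_{4,2}$. If ten cliques $Q_1, \ldots, Q_{10}$ spanned a copy of $L_{10}$ in $H$, then Fact~\ref{FACT:Lt-4-vtx} would locate a $P_4$ inside any four of them, Fact~\ref{FACT:3-by-4-bipartite-graph} would produce the requisite bipartite edges between adjacent cliques, and chaining these with the witnesses $d(Q_i)$ would construct a rotation augmenting $|\mathcal{A}|$, a contradiction. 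Theorem~\ref{THM:Simonovits74} then gives $|H| \le (3/4)\binom{a_4}{2} + O(a_4)$; combined with the fallback $e_G(Q,Q') \le 12$ outside $H$ and the trivial $\le 15$ inside, this yields $e(\mathcal{A}_4) \le 7\, a_4^2 + O(a_4)$, and hence $\tfrac{21}{4}\, a_4^2 + 7\, a_4 b + O(n)$ for piece~1.

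The intermediate pieces~2, 3, and 4 arise as convex combinations of the piece~1 and piece~5 bounds, with mixing weights chosen so that $\hat{g}$ is continuous across the four breakpoints; solving the resulting small quadratic program fixes the printed coefficients. The main obstacle will be establishing the $L_{10}$-freeness of $H$: each of the ten abstract vertices is itself a $K_4$ carrying its own $\mathcal{D}$-witness, and translating an $L_{10}$ pattern into an explicit augmenting rotation requires tracking how the missing Hamiltonian cycle interacts with those witnesses, a lengthy case analysis leaning on the bipartite-density facts of Section~\ref{SEC:Prelim}.
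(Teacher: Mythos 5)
Your broad strategy — an auxiliary graph $H$ on $\mathcal{A}_4$ recording heavy pairs, forbidden-subgraph Tur\'{a}n bounds, and a piecewise quadratic envelope — does match the paper, but the central technical claim and the rotation mechanism you propose for it are both off, in ways that sink the argument.

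First, the claim ``$H$ is $L_{10}$-free'' cannot hold globally. The actual forbidden structure (Lemma~\ref{LEMMA:A4-B-P4-free}) is: no $P_4$ in $H$ (with threshold $e(Q_i,Q_{i+1})=15$, not $\ge 13$) whose three consecutive edges are each supplemented by a \emph{distinct triangle $T_i \in \mathcal{B}$ that $1$-sees both endpoints}. The triangles $T_i$ are the raw material for the augmenting rotation; they come from common $F$-neighborhoods in the auxiliary bipartite graph $F$ on $\mathcal{A}_4\cup\mathcal{B}$ with threshold $e(Q,T)\ge 7$. Whether such $T_i$ exist depends entirely on how many $\mathcal{B}$-neighbors the $Q_i$ share, which is why the paper partitions $\mathcal{A}_4$ into ten classes $Z_1,\ldots,Z_{10}$ by $d_F(Q)$ and proves a class-dependent family of forbidden-subgraph claims ($P_4$-free for $H[Z_i]$ with $i\ge 6$ and for $H[Z_i,Z_j]$ with $i+j\ge 12$; $L_{30}$-, $L_{15}$-, $L_{10}$-, $L_8$-free for $H[Z_2],\ldots,H[Z_5]$). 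A $Q$ in a sparse class ($Z_1$ or $Z_2$) is under essentially no constraint from this mechanism, so global $L_{10}$-freeness of $H$ is simply false; the compensation is that sparse-class cliques contribute fewer edges to $e(\mathcal{A}_4,\mathcal{B})$, and the trade-off is resolved by the nine-variable optimization in the appendix (Proposition~\ref{PROP:inequality-psi-upper-bound}). Your ``convex combination / continuity'' description of pieces 2--4 does not substitute for this optimization.

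Second, your proposed proof mechanism for $L_{10}$-freeness — fixing a $\mathcal{D}$-witness $d(Q)$ for each $Q$ and ``chaining these'' — does not produce an augmenting rotation. Only members of $\mathcal{A}_{4,1}\cup\mathcal{A}_{4,2}$ are seen by $\mathcal{D}$-vertices, and a single $\mathcal{D}$-vertex seeing $Q$ gives you a triangle inside $Q\cup\{d(Q)\}$, but it contributes no connection to any neighboring $Q'$. The paper's rotation (Figure~\ref{Fig:Q4BCD-a}) works because each $T_i$ is a triangle with \emph{two} vertices forming a $K_4$ with one vertex of $Q_i$ and one of $Q_{i+1}$ — the $1$-see structure — and the high edge count $e(Q_i,Q_{i+1})=15$ lets you shunt the remaining three vertices of $Q_i$ into $Q_{i+1}$. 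Nothing in your $\mathcal{D}$-witness chain provides the cross-links needed to liberate a vertex and thereby gain a $K_4$. Finally, even granting global $L_{10}$-freeness, your arithmetic for piece~1 is off: with threshold $\ge 13$ you would get $e(\mathcal{A}_4)\le 12\binom{a_4}{2}+3|H|+O(a_4)\le \tfrac{57}{8}a_4^2+O(a_4)$, not $7a_4^2$, and $\tfrac{3}{4}\cdot\tfrac{57}{8}a_4^2 = \tfrac{171}{32}a_4^2 > \tfrac{21}{4}a_4^2$, so even this best-case scenario overshoots $\hat g$.
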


Given $Q \in \mathcal{A}$ and $T = \{u,v,w\} \in \mathcal{B}$, we say $T$ \textbf{$1$-sees} $Q$ (see Figure~\ref{Fig:Q4BCD-a}) if there exist three distinct vertices $\{p,q,s\} \subseteq  Q$ such that $p$ is adjacent to both $u$ and $v$, $q$ is adjacent to both $v$ and $w$, and $s$ is adjacent to both $u$ and $w$. 

The following fact follows easily from the definitions (refer to the definitions of $3$-see and $2$-see in Section~\ref{SUBSEC:setup}). 
\begin{fact}\label{FACT:e-Q-T-7-edges}
    Let $Q \in \mathcal{A}$ and $T \in \mathcal{B}$. 
    Suppose that $e(Q, T) \ge 7$. 
    Then $T$ either $3$-sees, $2$-sees, or $1$-sees $Q$.
\end{fact}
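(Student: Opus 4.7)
The plan is to prove the contrapositive: assume that $T=\{u,v,w\}$ does not $3$-see, $2$-see, or $1$-see $Q$, and deduce that $e(Q,T) \le 6$. Throughout, think of the $4$-by-$3$ bipartite graph $H \coloneqq G[Q,T]$. The three structural conditions translate cleanly into statements about $H$: the non-existence of a $3$-see says that no vertex of $Q$ has degree $3$ in $H$; the non-existence of a $2$-see says that no pair of vertices in $T$ is the common neighborhood of two distinct vertices of $Q$; and the non-existence of a $1$-see says that no three distinct vertices of $Q$ cover the three pairs $\{u,v\}, \{v,w\}, \{u,w\}$ of $T$ respectively.

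The first condition immediately forces $\deg_H(p) \le 2$ for every $p \in Q$, so $e(Q,T) \le 8$. To sharpen this, I will associate to each vertex $p \in Q$ with $\deg_H(p) = 2$ the unique pair in $\binom{T}{2}$ covered by $p$. Let $k$ denote the number of vertices in $Q$ of degree exactly $2$ in $H$. If $k \ge 3$, then either two of these $k$ vertices are associated to the same pair of $T$ (yielding a $2$-see), or all three pairs in $\binom{T}{2}$ appear, yielding a $1$-see; both are forbidden. Hence $k \le 2$, and consequently
\[
e(Q,T) \;\le\; 2k + (4-k) \;=\; k+4 \;\le\; 6,
\]
contradicting the hypothesis $e(Q,T) \ge 7$.

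The argument is essentially a two-line pigeonhole on degree-$2$ vertices of $Q$, so there is no real obstacle; the only point requiring care is checking that $k=3$ forces either a repeated pair or a complete transversal of the three pairs of $T$, and $k=4$ forces a repeated pair by the pigeonhole principle on $4$ pairs into $3$ bins. With these two subcases handled, the fact follows directly.
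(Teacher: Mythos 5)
Your proof is correct; the contrapositive plus a pigeonhole on the at most three pairs of $T$ is exactly the argument the paper has in mind when it says the fact ``follows easily from the definitions'' without supplying details. The only (implicit) ingredient you should be aware you are using is that $Q$ and $T$ are cliques, which is what upgrades a degree-$3$ vertex to a $3$-see and two vertices sharing a pair to a $2$-see.
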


\begin{figure}[H]
\centering
\tikzset{every picture/.style={line width=1pt}} 

\begin{tikzpicture}[x=0.75pt,y=0.75pt,yscale=-1,xscale=1]

\draw    (271.51,82.95) -- (220.25,115.73) ;
\draw    (271.51,82.95) -- (326.89,116.28) ;
\draw    (326.89,116.28) -- (220.25,115.73) ;
\draw    (220.25,115.73) -- (272.15,26.61) ;
\draw    (326.89,116.28) -- (272.15,26.61) ;
\draw    (271.51,82.95) -- (272.15,26.61) ;
\draw [fill=uuuuuu] (271.51,82.95) circle (1.5pt);
\draw [fill=uuuuuu] (220.25,115.73) circle (1.5pt);
\draw [fill=uuuuuu] (326.89,116.28) circle (1.5pt);
\draw [fill=uuuuuu] (272.15,26.61) circle (1.5pt);
\draw    (371,74.16) -- (419.19,105.97) ;
\draw    (371,74.16) -- (419.09,42.67) ;
\draw    (419.09,42.67) -- (419.19,105.97) ;
\draw [fill=uuuuuu] (371,74.16) circle (1.5pt);
\draw [fill=uuuuuu] (419.19,105.97) circle (1.5pt);
\draw [fill=uuuuuu] (419.09,42.67) circle (1.5pt);
\draw  [dash pattern=on 1pt off 1.2pt]  (272.15,26.61) -- (419.09,42.67) ;
\draw  [dash pattern=on 1pt off 1.2pt]  (272.15,26.61) -- (371,74.16) ;
\draw  [dash pattern=on 1pt off 1.2pt]  (271.51,82.95) -- (371,74.16) ;
\draw  [dash pattern=on 1pt off 1.2pt]  (271.51,82.95) -- (419.19,105.97) ;
\draw  [dash pattern=on 1pt off 1.2pt]  (326.89,116.28) -- (419.09,42.67) ;
\draw  [dash pattern=on 1pt off 1.2pt]  (326.89,116.28) -- (419.19,105.97) ;
\end{tikzpicture}
\caption{Auxiliary figure illustrating the definition of $1$-see.} 
\label{Fig:Def-1-see}
\end{figure}

The key ingredient in proofs of Lemmas~\ref{LEMMA:A4-B-bipartite-bound} and~\ref{LEMMA:A4-B-10-partite-bound} is as follows. 
\begin{lemma}\label{LEMMA:A4-B-P4-free}
    There do not exist four distinct members $Q_1,Q_2,Q_3,Q_4 \in \mathcal{A}_4$ and three distinct members $T_1,T_2,T_3 \in \mathcal{B}$ such that, for every $i \in [3]$, 
    \begin{enumerate}[label=(\roman*)]
        \item\label{LEMMA:A4-B-P4-free-1} $e(Q_i,Q_{i+1})=15$, and  
        \item\label{LEMMA:A4-B-P4-free-2} $T_i$ $1$-sees both $Q_i$ and $Q_{i+1}$. 
    \end{enumerate}
\end{lemma}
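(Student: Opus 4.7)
The plan is to argue by contradiction. Suppose $Q_1,\ldots,Q_4\in \mathcal{A}_4$ and $T_1,T_2,T_3\in \mathcal{B}$ satisfying (i) and (ii) exist. I will exhibit five vertex-disjoint copies of $K_4$ inside the $25$-vertex set $W \coloneqq V(Q_1)\cup\cdots\cup V(Q_4)\cup V(T_1)\cup V(T_2)\cup V(T_3)$, strictly more than the four $Q_i$'s present. Replacing $Q_1,\ldots,Q_4$ and $T_1,T_2,T_3$ by this new packing (and repacking the five leftover vertices arbitrarily) yields a $K_4$-tiling of $G$ of size $|\mathcal{A}|+1 \ge k+1$, contradicting $\nu(K_4,G)=k$ and the maximality of $(|\mathcal{A}|,|\mathcal{B}|,|\mathcal{C}|,|\mathcal{D}|)$ in lex order.

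Write $T_j=\{u_j,v_j,w_j\}$ and, for each $k\in[3]$, let $S_j^k\subset T_j$ be the pair of vertices of $T_j$ distinguished by the $k$-th dedicated vertex, say $S_j^1=\{u_j,v_j\}$, $S_j^2=\{v_j,w_j\}$, $S_j^3=\{u_j,w_j\}$. The $1$-see hypothesis provides dedicated vertices $a_j^k\in Q_j$ and $b_j^k\in Q_{j+1}$, each adjacent to both vertices of $S_j^k$. The key observation is that whenever $a_j^k\sim b_j^k$, the set $S_j^k\cup\{a_j^k,b_j^k\}$ induces a copy of $K_4$. Since $e(Q_j,Q_{j+1})=15$ means only one bipartite edge is missing, at least two of the three indices $k$ yield such a $K_4$, giving at least two valid \emph{extractions} per $T_j$, each using two vertices of $T_j$, one vertex of $Q_j$, and one vertex of $Q_{j+1}$.

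The central combinatorial step is to choose $k_1,k_2,k_3$ so that the three extracted $K_4$'s are pairwise vertex-disjoint, which reduces to the two constraints $b_1^{k_1}\ne a_2^{k_2}$ and $b_2^{k_2}\ne a_3^{k_3}$. Since each valid choice set has size at least $2$ and sits inside a $4$-vertex $Q_i$, a short pigeonhole argument shows that a disjoint triple $(k_1,k_2,k_3)$ always exists. After these extractions, $Q_1$ retains a triangle on three vertices and $Q_2$ retains two vertices; since at most one of the six possible bipartite edges between them is missing, at least one remaining vertex of $Q_2$ is adjacent to all three remaining vertices of $Q_1$, producing a fourth new $K_4$. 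The symmetric construction on the $Q_3\cup Q_4$ side yields a fifth, completing the required configuration.

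I expect the main difficulty to be the case analysis in the most degenerate settings: when the unique missing edge $e_j$ between $Q_j$ and $Q_{j+1}$ is incident to a dedicated vertex (so there are exactly two valid indices $k$), and when the dedicated sets for consecutive $T_j$'s inside a common $Q_i$ overlap maximally. In such boundary configurations the disjointness argument becomes tight and one may need to argue directly by rotation in $G$; some subcases may yield only four new $K_4$'s plus an extra triangle, in which case the contradiction is obtained by instead increasing $|\mathcal{B}|$, still violating the lexicographic maximality of $(|\mathcal{A}|,|\mathcal{B}|,|\mathcal{C}|,|\mathcal{D}|)$.
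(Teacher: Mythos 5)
Your proposal is correct and takes essentially the same approach as the paper's proof: extract three crossing copies of $K_4$ of the form (two vertices of $T_j$)$\,\cup\,$(one dedicated vertex in $Q_j$)$\,\cup\,$(one dedicated vertex in $Q_{j+1}$), using the fact that the three candidate "diagonal'' edges are distinct and at most one can be absent when $e(Q_j,Q_{j+1})=15$; choose the diagonal indices so the two vertices removed from $Q_2$ (resp.\ $Q_3$) are distinct; then harvest a fourth $K_4$ from $Q_1\setminus\{a_1^{k_1}\}$ together with one of the two surviving vertices of $Q_2$ (at most one of the six bipartite edges missing), and symmetrically a fifth $K_4$ on the $Q_3,Q_4$ side. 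The paper selects $k_2$ first and then $k_1,k_3$ subject to the same two disjointness constraints; this is exactly your pigeonhole step, only phrased sequentially. The concern in your last paragraph is unnecessary: the pigeonhole is never tight in a way that loses a $K_4$ --- from any valid $k_2$ at least one of the two valid $k_1$'s has $b_1^{k_1}\ne a_2^{k_2}$ since the three dedicated vertices in $Q_2$ for $T_1$ are distinct --- so five disjoint copies of $K_4$ are always obtained and no fallback to increasing $|\mathcal{B}|$ is needed.
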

\textbf{Remark.} The constraint $e(Q_2, Q_3) = 15$ can be replaced by $e(Q_2, Q_3) \ge 14$. 
\begin{proof}[Proof of Lemma~\ref{LEMMA:A4-B-P4-free}]
    Suppose to the contrary that this lemma fails. Fix distinct members $Q_1,Q_2,Q_3,Q_4 \in \mathcal{A}_4$ and distinct members $T_1,T_2,T_3 \in \mathcal{B}$ such that, for every $i \in [3]$, $e(Q_i,Q_{i+1})=15$ and $T_i$ $1$-sees both $Q_i$ and $Q_{i+1}$.  

\begin{figure}[H]
\centering

\tikzset{every picture/.style={line width=1pt}} 


\caption{A rotation that increases the number of vertex-disjoint copies of $K_4$.} 
\label{Fig:Q4BCD-a}
\end{figure}

First, let us consider $T_2$, $Q_2$, and $Q_3$. 
Assume that $T_2 = \{u_2, v_2, w_2\}$. 
\begin{itemize}
    \item Let $p_2$ and $p_3$ denote the common neighbors of $u_2, v_2$ in $Q_2$ and $Q_3$, respectively. 
    \item Let $q_2$ and $q_3$ denote the common neighbors of $v_2, w_2$ in $Q_2$ and $Q_3$, respectively. 
    \item Let $s_2$ and $s_3$ denote the common neighbors of $u_2, w_2$ in $Q_2$ and $Q_3$, respectively. 
\end{itemize}
Note that the existence of $\{p_2, p_3, q_2, q_3, s_2, s_3\}$ follows from Assumption~\ref{LEMMA:A4-B-P4-free-2} and the definition of $1$-see. 
Since $e(Q_2, Q_3) = 15 \ge 16 - 2$, at least one pair in $\{p_2p_3, q_2q_3, s_2s_3\}$ is an edge in $G$, and by symmetry, we may assume that $p_2p_3 \in G$. Note that $\{p_2, p_3, u_2, v_2\}$ induces a copy of $K_4$ in $G$. 

For a similar reason, we know that 
\begin{itemize}
    \item there exist $\{u_1, v_1\} \subseteq  T_1$, $p_1 \in Q_1$, and $q_2 \in Q_2 \setminus \{p_2\}$ such that $\{p_1, q_2, u_1, v_1\}$ induces a copy of $K_4$ in $G$, and 
    \item there exist $\{u_3, v_3\} \subseteq  T_3$, $q_3 \in Q_3 \setminus \{p_3\}$, and $q_4 \in Q_4$ such that $\{q_3, p_4, u_3, v_3\}$ induces a copy of $K_4$ in $G$. 
\end{itemize}

Since $e(Q_1 \setminus \{p_1\}, Q_2 \setminus \{p_2, q_2\}) \ge 15 - (2\cdot 4 + 2) = 5 = 6-1$, it follows from Fact~\ref{FACT:3-by-4-bipartite-graph}~\ref{FACT:3-by-4-bipartite-graph-1} that there exists a vertex $s_2 \in Q_2 \setminus \{p_2, q_2\}$ that is adjacent to all three vertices in $Q_1 \setminus \{p_1\}$. 
Similarly, there exists a vertex $s_3 \in Q_3 \setminus \{p_3, q_3\}$ that is adjacent to all three vertices in $Q_4 \setminus \{p_4\}$. 
However, the rotation shown in Figure~\ref{Fig:Q4BCD-a} would increase the size of $\mathcal{A}$, contradicting the maximality of $(|\mathcal{A}|, |\mathcal{B}|, |\mathcal{C}|, |\mathcal{D}|)$. 
\end{proof}

Let us first present the proof of Lemma~\ref{LEMMA:A4-B-bipartite-bound}. 
\begin{proof}[Proof of Lemma~\ref{LEMMA:A4-B-bipartite-bound}]
    Fix $\alpha \in (0,1)$. 
    Recall that $b = |\mathcal{B}|$. 
    Suppose that $b < 7$. Then it follows from  Lemma~\ref{LEMMA:Q4Qi}~\ref{LEMMA:Q4Qi-1} that 
    \begin{align*}
        \alpha \cdot e(\mathcal{A}_4) + e(\mathcal{A}_4, \mathcal{B})
        \le \alpha \cdot \frac{15 a_4^2}{2} + 3\cdot 4\cdot a_4 b
        \le g_{\alpha}(a_4, b) + O_{\alpha}(n),
    \end{align*}
    which proves Lemma~\ref{LEMMA:A4-B-bipartite-bound}. 
    So it suffices to consider the case $b \ge 7$. 

    Define an auxiliary graph $H$ whose vertex set is $\mathcal{A}_4$ and two members $Q, Q' \in \mathcal{A}_4$ are adjacent in $H$ iff $e(Q, Q') = 15$. 
    It follows from Lemma~\ref{LEMMA:Q4Qi}~\ref{LEMMA:Q4Qi-1} that 
    \begin{align}\label{equ:A4-B-H}
        e(\mathcal{A}_4)
        \le 15 |H| + 14 \left(\binom{a_4}{2} - |H|\right) + 6 a_4 
        = 7 a_4^2  + |H| - a_4.
    \end{align}
    Define an auxiliary bipartite graph $F$ whose vertex set is $\mathcal{A}_4 \cup \mathcal{B}$ and a pair of members $(Q,T) \in \mathcal{A}_4 \times \mathcal{B}$ are adjacent in $F$ iff $e(Q, T) \ge 7$ (recall from Lemma~\ref{LEMMA:Q4BCD}~\ref{LEMMA:Q4BCD-1} that $e(Q, B) \le 7$ for all but at most three members $B \in \mathcal{B}$). 
    Define 
    \begin{align*}
        Y 
        \coloneqq 
        \left\{ Q \in \mathcal{A}_{4} \colon d_{F}(Q) \ge \frac{b+7}{2} \right\}. 
    \end{align*}
    It follows from the definition of $Y$ and Lemma~\ref{LEMMA:Q4BCD}~\ref{LEMMA:Q4BCD-1} that
    \begin{align}\label{equ:A4-B-Y1}
        e(\mathcal{A}_4, \mathcal{B})
        & \le |Y| \cdot \left(7 (b - 2) + 2 \cdot 3\cdot 4\right) \notag  \\[0.5em]
        & \quad + \left(|\mathcal{A}_4| - |Y|\right) \cdot \left(7\cdot \frac{b+7}{2} + 6 \cdot \left(b-2-\frac{b+7}{2}\right) + 2 \cdot 3\cdot 4\right) \notag \\[0.5em]
        & = \frac{13 a_4 b}{2} + \frac{b |Y|}{2} + \frac{31 a_4}{2} - \frac{11 |Y|}{2}
        \le \frac{13 a_4 b}{2} + \frac{b |Y|}{2} + 16 n.
    \end{align}
    
    \begin{claim}\label{CLAIM:A4-B-bipartite-bound-Y1}
        The induced subgraph $H[Y]$ is $P_4$-free. Thus, by Theorem~\ref{THM:Erdos-Gallai-path-Turan}, $|H[Y]| \le |Y|$. 
    \end{claim}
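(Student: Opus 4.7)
The proof is by contradiction: assume $Q_1 - Q_2 - Q_3 - Q_4$ is a $P_4$ in $H[Y]$, so $e(Q_i, Q_{i+1}) = 15$ for $i \in [3]$. The aim is to produce three distinct triangles $T_1, T_2, T_3 \in \mathcal{B}$ such that $T_i$ $1$-sees both $Q_i$ and $Q_{i+1}$, which directly contradicts Lemma~\ref{LEMMA:A4-B-P4-free}.

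For each $i \in [4]$, let $S_i \subseteq \mathcal{B}$ denote the set of triangles that $1$-see $Q_i$. Because $Q_i \in \mathcal{A}_4$, the exclusion $Q_i \notin \mathcal{A}_1$ caps at one the number of triangles that $3$-see $Q_i$, and the exclusion $Q_i \notin \mathcal{A}_{3,1}$ caps at two the number that $2$-see it. By Fact~\ref{FACT:e-Q-T-7-edges}, every $T \in \mathcal{B}$ with $e(Q_i, T) \ge 7$ must $1$-, $2$-, or $3$-see $Q_i$, and consequently at most three of the $d_F(Q_i) \ge (b+7)/2$ members of $N_F(Q_i)$ can fail to be $1$-seers of $Q_i$. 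Hence
\[
|S_i| \;\ge\; d_F(Q_i) - 3 \;\ge\; \frac{b+7}{2} - 3 \;=\; \frac{b+1}{2}.
\]
Setting $P_i \coloneqq S_i \cap S_{i+1}$ for $i \in [3]$, inclusion--exclusion inside $\mathcal{B}$ yields $|P_i| \ge |S_i| + |S_{i+1}| - b \ge 1$.

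The remaining step is to select three distinct $T_i \in P_i$. Hall's marriage theorem reduces this to verifying $|P_i \cup P_j| \ge 2$ for $i \ne j$ and $|P_1 \cup P_2 \cup P_3| \ge 3$. Whenever these hold, an SDR exists and Lemma~\ref{LEMMA:A4-B-P4-free} closes the argument. The main obstacle is the degenerate case in which Hall's condition fails, forcing $P_1 \cup P_2 \cup P_3$ into a set $\{T, T'\}$ of size at most two. Here the tightness of $|S_i| + |S_{i+1}| - |S_i \cap S_{i+1}| \le b$ together with $|S_i| \ge (b+1)/2$ pins down $|S_i|$ within an additive constant of $(b+1)/2$, and both $T$ and $T'$ (when distinct) must $1$-see every one of $Q_1, \ldots, Q_4$. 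I would resolve this case by a direct rotation in the spirit of the proof of Lemma~\ref{LEMMA:A4-B-P4-free}: using the $1$-see structures of $T$ on $Q_i$ and $Q_{i+1}$ together with $e(Q_i, Q_{i+1}) = 15$, one selects for each edge of the $P_4$ a copy of $K_4$ of the form $\{u, v, p_i^T, p_{i+1}^T\}$ with $\{u, v\} \subseteq T$ and $p_j^T \in Q_j$ a $1$-see vertex; combining these copies with the (possibly relabeled) intact $K_4$'s from $\{Q_1, \ldots, Q_4\}$ and the vertices of $T'$ produces five vertex-disjoint copies of $K_4$ inside $Q_1 \cup \cdots \cup Q_4 \cup T \cup T'$, contradicting the maximality of $|\mathcal{A}|$ in the rank-$4$-packing $(\mathcal{A}, \mathcal{B}, \mathcal{C}, \mathcal{D})$. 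This degenerate rotation is the technical heart of the proof and requires care in matching the three disjoint pairs of $T$ with compatible pairs of $1$-see vertices across consecutive $Q_i$'s, but it follows the same template as Figure~\ref{Fig:Q4BCD-a}.
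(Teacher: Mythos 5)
Your proposal follows the paper's strategy at a high level — extract a $P_4$ in $H[Y]$, build sets of $1$-seeing triangles, pick distinct $T_1,T_2,T_3$ for the three edges, contradict Lemma~\ref{LEMMA:A4-B-P4-free} — but there is a genuine gap that forces you into the degenerate-case analysis, and that case is only sketched, not proved.

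The gap is the count of triangles in $\mathcal{B}$ that $3$-see or $2$-see a given $Q_i\in\mathcal{A}_4$. You bound this by $1+2=3$ (one $3$-seer by the $\mathcal{A}_1$ exclusion, two $2$-seers by the $\mathcal{A}_{3,1}$ exclusion), giving $|S_i\cap N_F(Q_i)|\ge \frac{b+7}{2}-3 = \frac{b+1}{2}$ and hence only $|P_i|\ge 1$. But you have not used the exclusion from $\mathcal{A}_{2,1}$: if $Q_i$ were $3$-seen by one member of $\mathcal{B}$ and $2$-seen by a \emph{different} member, $Q_i$ would lie in $\mathcal{A}_{2,1}\subseteq\mathcal{A}_2$. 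Together with the $\mathcal{A}_1$ and $\mathcal{A}_{3,1}$ exclusions this shows the set of members that $3$-see or $2$-see $Q_i$ has size at most $2$, not $3$ (this is exactly the observation made in the proof of Lemma~\ref{LEMMA:Q4BCD}). With the correct bound, $|N_i|\ge 7$ and at most $2+2=4$ members of $N_i$ fail to $1$-see both $Q_i$ and $Q_{i+1}$, leaving at least $3$ good choices for each $i$; then $T_1,T_2,T_3$ can be picked greedily and no SDR or degenerate-case rotation is needed. Your sketched rotation for the degenerate case $|P_1\cup P_2\cup P_3|\le 2$ is plausible in spirit but is not actually carried out — you would need to construct five disjoint $K_4$'s inside $Q_1\cup\cdots\cup Q_4\cup T\cup T'$ and verify disjointness of the chosen vertices across the three edges of the $P_4$, which is a nontrivial case analysis. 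The fix is simply to tighten $3$ to $2$, after which the greedy argument closes the proof cleanly.
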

    \begin{proof}[Proof of Claim~\ref{CLAIM:A4-B-bipartite-bound-Y1}]
        Suppose to the contrary that there exist a copy of $P_4$, say $Q_1Q_2Q_3Q_4$, in $H[Y]$. 
        For each $i \in [3]$, let $N_i \coloneqq N_{B}(Q_i) \cap N_{B}(Q_{i+1})$. 
        It follows from the definition of $Y$ that $|N_i| \ge 2 \cdot \frac{b+7}{2} - b = 7$. 
        By the definition of $\mathcal{A}_{4}$, every member in $\mathcal{A}_4$ is $3$-seen or $2$-seen by at most two members in $\mathcal{B}$. Therefore, by Fact~\ref{FACT:e-Q-T-7-edges}, for each $i \in [3]$, there are at least three members in $N_i$ that $1$-see both $Q_i$ and $Q_{i+1}$. 
        So we can greedily select three distinct members $(T_1, T_2, T_3) \in N_1 \times N_2 \times N_3$ such that $T_i$ $1$-sees both $Q_i$ and $Q_{i+1}$ for $i \in [3]$. However, this contradicts Lemma~\ref{LEMMA:A4-B-P4-free}. 
    \end{proof}
    It follows from Claim~\ref{CLAIM:A4-B-bipartite-bound-Y1} that 
    \begin{align*}
        |H|
        \le |Y| + |Y|(a_4-|Y|) + \binom{a_4 - |Y|}{2}
        = \frac{a_{4}^2}{2} - \frac{|Y|^2}{2} + \frac{3|Y|}{2} - \frac{a_4}{2}.
    \end{align*}
    Combining this with~\eqref{equ:A4-B-H}, we obtain
    \begin{align*}
        e(\mathcal{A}_{4})
        \le 7 a_4^2  + \frac{a_{4}^2}{2} - \frac{|Y|^2}{2} + \frac{3|Y|}{2} - \frac{a_4}{2} - a_4
        \le \frac{15 a_{4}^2}{2} - \frac{|Y|^2}{2}. 
    \end{align*}
    Combining this with~\eqref{equ:A4-B-Y1}, we obtain 
    \begin{align*}
        \alpha \cdot e(\mathcal{A}_{4}) + e(\mathcal{A}_{4}, \mathcal{B})
        & \le \alpha\left(\frac{15 a_{4}^2}{2} - \frac{|Y|^2}{2}\right)  + \frac{13 a_4 b}{2} + \frac{b |Y|}{2} + 16 n \\[0.5em]
        & = -\frac{\alpha}{2} \left(|Y| - \frac{b}{2\alpha}\right)^2 + \frac{b^2}{8\alpha} + \frac{15 \alpha a_{4}^2}{2}  + \frac{13 a_4 b}{2} + 16n.
    \end{align*}
    Viewing $-\frac{\alpha}{2} \left(|Y| - \frac{b}{2\alpha}\right)^2 + \frac{b^2}{8\alpha} + \frac{15 \alpha \cdot a_{4}^2}{2}  + \frac{13 a_4 b}{2} + 16n$ as a quadratic polynomial in $|Y|$, and noting that $|Y| \in [0, a_4]$, we obtain 
    \begin{align*}
        \alpha \cdot e(\mathcal{A}_{4}) + e(\mathcal{A}_{4}, \mathcal{B})
        & \le 
        \begin{cases}
            7 \alpha a_4^2 + 7a_4 b +16n, &\quad\text{if}\quad a_4 \le \frac{b}{2\alpha}, \\[0.5em]
            \frac{15 \alpha a_{4}^2}{2}  + \frac{13 a_4 b}{2} + \frac{b^2}{8\alpha} + 16n, &\quad\text{if}\quad a_4 > \frac{b}{2\alpha}.
        \end{cases}
    \end{align*}
    %
    %
    This completes the proof of Lemma~\ref{LEMMA:A4-B-bipartite-bound}. 
\end{proof}

Next, we present the proof of  Lemma~\ref{LEMMA:A4-B-10-partite-bound}. 
\begin{proof}[Proof of Lemma~\ref{LEMMA:A4-B-10-partite-bound}]
    Similar to the proof of Lemma~\ref{LEMMA:A4-B-bipartite-bound}, we may assume that $b \ge 10^6$, since otherwise, by Lemma~\ref{LEMMA:Q4Qi}~\ref{LEMMA:Q4Qi-1}, we are done. 

    Let $H$ and $F$ be the same auxiliary graphs as defined in the proof of Lemma~\ref{LEMMA:A4-B-bipartite-bound}.
    For $i \in [2,10]$, define 
    \begin{align*}
        Z_{i}
        \coloneqq 
        \left\{ Q\in \mathcal{A}_{4} \colon d_{F}(Q) \in \left[\frac{(i-1)b}{10}+10^5,~\frac{i \cdot b}{10}+10^5\right]\right\}. 
    \end{align*}
    Additionally, let $Z_1 \coloneqq \mathcal{A}_{4} \setminus (Z_2 \cup \cdots \cup Z_{10})$. 
    Let $z_i \coloneqq |Z_i|$ for $i \in [10]$.

    \begin{claim}\label{CLAIM:A4-B-Zi-Zj-P4-free}
        The following statements hold. 
        \begin{enumerate}[label=(\roman*)]
            \item\label{CLAIM:A4-B-Zi-Zj-P4-free-1} The induced graph $H[Z_i]$ is $P_4$-free for every $i \in [6,10]$. 
            \item\label{CLAIM:A4-B-Zi-Zj-P4-free-2} The induced bipartite graph $H[Z_i,Z_j]$ is $P_4$-free for every pair $\{i,j\} \subseteq  [10]$ satisfying $i + j \ge 12$.
        \end{enumerate}
    \end{claim}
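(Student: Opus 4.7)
The plan is to reduce both parts to Lemma~\ref{LEMMA:A4-B-P4-free}, following the template of Claim~\ref{CLAIM:A4-B-bipartite-bound-Y1}. The key point is that membership in the buckets $Z_i$ forces a uniform lower bound on $d_F(\cdot)$, which in turn forces any edge of $H$ inside the relevant subgraph to join two members with a large common neighborhood in $F$.

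First I would record the following degree-sum estimates. In part~\ref{CLAIM:A4-B-Zi-Zj-P4-free-1}, every $Q\in Z_i$ with $i\in[6,10]$ satisfies $d_F(Q)\ge (i-1)b/10+10^5\ge b/2+10^5$, so any $Q,Q'\in Z_i$ yield
\[
|N_F(Q)\cap N_F(Q')| \;\ge\; d_F(Q)+d_F(Q')-b \;\ge\; 2\cdot 10^5.
\]
In part~\ref{CLAIM:A4-B-Zi-Zj-P4-free-2}, for $i\le j$ with $i+j\ge 12$, any $Q\in Z_i$ and $Q'\in Z_j$ satisfy
\[
d_F(Q)+d_F(Q') \;\ge\; \frac{(i+j-2)b}{10}+2\cdot 10^5 \;\ge\; b+2\cdot 10^5,
\]
hence again $|N_F(Q)\cap N_F(Q')|\ge 2\cdot 10^5$; the case $i=j$ is covered by~\ref{CLAIM:A4-B-Zi-Zj-P4-free-1} since $i+j\ge 12$ forces $i\ge 6$.

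Next, suppose for contradiction that a copy of $P_4$, say $Q_1Q_2Q_3Q_4$, exists in $H[Z_i]$ (for~\ref{CLAIM:A4-B-Zi-Zj-P4-free-1}) or in the bipartite graph $H[Z_i,Z_j]$ (for~\ref{CLAIM:A4-B-Zi-Zj-P4-free-2}; in the latter case the four vertices alternate between $Z_i$ and $Z_j$, but each consecutive pair still consists of one vertex from each, so the degree-sum estimate above still applies). Setting $N_\ell\coloneqq N_F(Q_\ell)\cap N_F(Q_{\ell+1})$ for $\ell\in[3]$, we have $|N_\ell|\ge 2\cdot 10^5$. By the definition of $\mathcal{A}_4$, each $Q_\ell$ is $3$-seen or $2$-seen by at most two members of $\mathcal{B}$, so by Fact~\ref{FACT:e-Q-T-7-edges} at most four members of $N_\ell$ fail to $1$-see both $Q_\ell$ and $Q_{\ell+1}$. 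Since $2\cdot 10^5-4-2>0$, we may greedily pick three pairwise distinct $T_1\in N_1$, $T_2\in N_2$, $T_3\in N_3$ such that each $T_\ell$ $1$-sees both $Q_\ell$ and $Q_{\ell+1}$. This supplies the forbidden configuration in Lemma~\ref{LEMMA:A4-B-P4-free}, contradicting $P_4$-freeness.

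There is essentially no conceptual obstacle: the argument is a direct adaptation of Claim~\ref{CLAIM:A4-B-bipartite-bound-Y1}, and the buffer constant $10^5$ built into the definition of the $Z_i$ is precisely what is needed to guarantee that the common $F$-neighborhoods are large enough to absorb the at-most-four exceptional triangles per pair and still permit the greedy distinct choice of $T_1,T_2,T_3$. The only mild point of care is the bipartite version of~\ref{CLAIM:A4-B-Zi-Zj-P4-free-2}, where one should verify that the alternating structure of a bipartite $P_4$ is irrelevant to the degree-sum bound, as noted above.
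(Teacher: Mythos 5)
Your proof is correct and fills in exactly the details the paper omits with "The proof is similar to that of Claim~\ref{CLAIM:A4-B-bipartite-bound-Y1}, so we omit the details." The degree-sum bookkeeping is right: in part~\ref{CLAIM:A4-B-Zi-Zj-P4-free-2} both indices are automatically $\ge 2$ since $i+j\ge 12$ with $i,j\le 10$, so the lower bound $d_F(Q)\ge\frac{(i-1)b}{10}+10^5$ applies to both endpoints, and the buffer $2\cdot 10^5$ comfortably absorbs the at-most-four exceptional members per pair plus the $\le 2$ already used in the greedy choice, yielding the forbidden configuration of Lemma~\ref{LEMMA:A4-B-P4-free}.
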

    \begin{proof}[Proof of Claim~\ref{CLAIM:A4-B-Zi-Zj-P4-free}]
        The proof is similar to that of Claim~\ref{CLAIM:A4-B-bipartite-bound-Y1}, so we omit the details. 
    \end{proof}
    By Claim~\ref{CLAIM:A4-B-Zi-Zj-P4-free}~\ref{CLAIM:A4-B-Zi-Zj-P4-free-1} and Theorem~\ref{THM:Erdos-Gallai-path-Turan}, for every $i \in [6,10]$, we have 
    \begin{align}\label{equ:A4-B-H-Zi}
        |H[Z_i]|
        \le \mathrm{ex}(z_i, P_4)
        \le z_i.
    \end{align}
    By Claim~\ref{CLAIM:A4-B-Zi-Zj-P4-free}~\ref{CLAIM:A4-B-Zi-Zj-P4-free-2} and Theorem~\ref{THM:GRS84-bipartite-path}, for every pair $\{i,j\} \subseteq  [10]$ satisfying $i + j \ge 12$, we have 
    \begin{align}\label{equ:A4-B-H-Zi-Zj}
        |H[Z_i, Z_j]|
        \le \mathrm{ex}(z_i, z_j, P_4)
        \le z_i + z_j.
    \end{align}
    \begin{claim}\label{CLAIM:A4-B-Zi-L-free}
        The following statements hold. 
        \begin{enumerate}[label=(\roman*)]
            \item\label{CLAIM:A4-B-Z2-L30-free} The induced subgraph $H[Z_2]$ is $L_{30}$-free. 
            \item\label{CLAIM:A4-B-Z3-L15-free} The induced subgraph $H[Z_3]$ is $L_{15}$-free. 
            \item\label{CLAIM:A4-B-Z4-L10-free} The induced subgraph $H[Z_4]$ is $L_{10}$-free. 
            \item\label{CLAIM:A4-B-Z5-L8-free} The induced subgraph $H[Z_5]$ is $L_{8}$-free. 
        \end{enumerate}
        Consequently,  by Theorem~\ref{THM:Simonovits74}, there exists a constant $C_{\ref{CLAIM:A4-B-Zi-L-free}} > 0$ such that 
        \begin{align}
            |H[Z_2]|
            & \le \mathrm{ex}(z_2,L_{30})
                \le \frac{13 z_2^2}{28} + C_{\ref{CLAIM:A4-B-Zi-L-free}} z_2, \label{equ:H-Z2-upper-bound}\\[0.5em]
            |H[Z_3]|
            & \le \mathrm{ex}(z_3,L_{15})
                \le \frac{3 z_3^2}{7} + C_{\ref{CLAIM:A4-B-Zi-L-free}} z_3, \label{equ:H-Z3-upper-bound}\\[0.5em]
            |H[Z_4]|
            & \le \mathrm{ex}(z_4,L_{10})
                \le \frac{3 z_4^2}{8} + C_{\ref{CLAIM:A4-B-Zi-L-free}} z_4, \label{equ:H-Z4-upper-bound}\\[0.5em] 
            |H[Z_5]|
            & \le \mathrm{ex}(z_5,L_{8})
                \le \frac{z_5^2}{3} + C_{\ref{CLAIM:A4-B-Zi-L-free}} z_5. \label{equ:H-Z5-upper-bound}
        \end{align} 
    \end{claim}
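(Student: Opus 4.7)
The plan is to prove each of the four parts by contradiction: suppose $L_{t_i}\subseteq H[Z_i]$ on some vertex set $V\subseteq Z_i$ with $|V|=t_i$, where $(t_2,t_3,t_4,t_5)=(30,15,10,8)$. I aim to produce distinct vertices $Q_1,Q_2,Q_3,Q_4\in V$ forming a $P_4$ in $L_{t_i}$ (hence $e(Q_j,Q_{j+1})=15$ in $G$) together with three distinct triangles $T_1,T_2,T_3\in\mathcal{B}$ such that $T_j$ $1$-sees both $Q_j$ and $Q_{j+1}$ for $j\in[3]$; this configuration is forbidden by Lemma~\ref{LEMMA:A4-B-P4-free} and gives the desired contradiction.

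The first step is to count $1$-seers. For $Q\in V$, let $B(Q)=\{T\in\mathcal{B}:T\text{ $1$-sees }Q\}$. By Fact~\ref{FACT:e-Q-T-7-edges}, every $T\in N_F(Q)$ either $3$-sees, $2$-sees, or $1$-sees $Q$, and by the definition of $\mathcal{A}_4$ at most two triangles $3$- or $2$-see $Q$; hence $|B(Q)|\ge d_F(Q)-2\ge (i-1)b/10+10^5-2$. Double counting gives
\[
\sum_{T\in\mathcal{B}}|B^{-1}(T)\cap V| \;=\;\sum_{Q\in V}|B(Q)|\;\ge\;t_i\bigl((i-1)b/10+10^5-2\bigr),
\]
so the average of $|B^{-1}(T)\cap V|$ over $T\in\mathcal{B}$ is at least $t_i(i-1)/10+o_b(1)$, evaluating to $3,3,3,16/5$ respectively in the four cases. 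Consequently some ``heavy'' triangle $T^{\ast}\in\mathcal{B}$ satisfies $|B^{-1}(T^{\ast})\cap V|\ge 3$ (and $\ge 4$ when $i=5$); the sizes $t_i$ are calibrated precisely so that this averaging step fires.

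Next I combine the heavy triangle with the structural input from $L_{t_i}$. Fact~\ref{FACT:Lt-4-vtx} guarantees that every $4$-subset of $V$ induces a $P_4$ in $L_{t_i}$, so starting from a $4$-subset containing at least three vertices $1$-seen by $T^{\ast}$ one extracts a $P_4$ on which $T^{\ast}$ already covers at least one edge. To upgrade to three \emph{distinct} witnesses $T_1,T_2,T_3$ I verify Hall's condition on the sets $S_j=B(Q_j)\cap B(Q_{j+1})$. A convexity application to $\sum_{T}|B^{-1}(T)\cap V|^2\ge b\bigl(t_i(i-1)/10\bigr)^2$ shows that the total pairwise common-witness count $\sum_{\{Q,Q'\}\subseteq V}|B(Q)\cap B(Q')|$ grows quadratically in $b$, forcing all but a bounded number of pairs in $V$ to have $|B(Q)\cap B(Q')|\ge 3$. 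Fact~\ref{FACT:Lt-4-vtx} then provides enough freedom to reroute the $P_4$ around the few ``bad'' pairs, after which Hall's condition (and hence an SDR for the three sets $S_j$) holds automatically and supplies the required three distinct triangles.

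The principal obstacle is exactly this last step. For $Z_i$ with $i\le 5$, the naive inclusion–exclusion bound $|B(Q)\cap B(Q')|\ge |B(Q)|+|B(Q')|-b$ is generally non-positive, so, unlike the $i\ge 6$ regime already handled in Claim~\ref{CLAIM:A4-B-Zi-Zj-P4-free}, one cannot conclude that every edge of $L_{t_i}$ carries even a single common witness, let alone three distinct ones. Exploiting global density—the convexity/averaging argument above together with the multipartite structure implicit in $\chi(L_{t_i})=\lceil t_i/2\rceil$—is essential to route the $P_4$ through vertices whose pairwise common-witness sets are uniformly large enough to supply three distinct representatives, and the concrete choices of $t_i$ are precisely the smallest values for which this rerouting is guaranteed.
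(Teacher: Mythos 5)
Your setup (contradiction via Lemma~\ref{LEMMA:A4-B-P4-free}, the count of $1$-seers via Fact~\ref{FACT:e-Q-T-7-edges} and the ``at most two $2$/$3$-seers'' property of $\mathcal{A}_4$, and the calibration $t_i(i-1)/10\ge 3$) matches the paper's, but the step that actually produces \emph{three distinct} triangles $T_1,T_2,T_3$ with $T_j$ $1$-seeing both $Q_j$ and $Q_{j+1}$ does not go through. Your first-moment argument only yields a single heavy triangle $T^{\ast}$ with $|B^{-1}(T^{\ast})\cap V|\ge 4$, which cannot serve as all three witnesses since Lemma~\ref{LEMMA:A4-B-P4-free} requires them distinct. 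The proposed upgrade is then doubly flawed: the quantity $\sum_{\{Q,Q'\}}|B(Q)\cap B(Q')|=\sum_T\binom{|B^{-1}(T)\cap V|}{2}$ is bounded below by roughly $3b$ via Cauchy--Schwarz, i.e.\ it grows \emph{linearly} in $b$, not quadratically; and even granting that total, it does not force ``all but a bounded number of pairs'' to have $|B(Q)\cap B(Q')|\ge 3$ --- since a single pair can carry weight up to $b$, the entire sum could be concentrated on three pairs while every other pair has empty common-witness set. Consequently Hall's condition for the sets $S_j=B(Q_j)\cap B(Q_{j+1})$ along your chosen $P_4$ is not established, and the ``rerouting'' of the $P_4$ around bad pairs is not justified either.

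The paper closes this gap with the Zarankiewicz number rather than moment estimates: since each $Q\in S$ satisfies $d_F(Q)\ge\frac{(i-1)b}{10}+10^5$, one gets $|F[S,\mathcal{B}]|\ge t_i\bigl(\tfrac{(i-1)b}{10}+10^5\bigr)>3b+10\binom{t_i}{4}=Z(t_i,b,4,11)$ by Theorem~\ref{THM:Zarankiewicz-number}, so $F[S,\mathcal{B}]$ contains a $K_{4,11}$: four members $Q_1,\dots,Q_4\in S$ and eleven triangles each satisfying $e(Q_i,T_j)\ge 7$ for \emph{all four} $Q_i$ simultaneously. Discarding the at most $4\cdot 2=8$ triangles that $3$- or $2$-see some $Q_i$ leaves three triangles that $1$-see all four $Q_i$'s at once, and Fact~\ref{FACT:Lt-4-vtx} supplies the $P_4$ inside $\{Q_1,\dots,Q_4\}$. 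This is exactly the ``complete bipartite'' strengthening your averaging cannot deliver (it corresponds to a fourth-moment bound $\sum_T\binom{x_T}{4}>10\binom{t_i}{4}$, not a second-moment one), and it is also what the thresholds $10^5$ in the definition of the $Z_i$ are there to guarantee. As written, your proof has a genuine gap at this point.
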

    \begin{proof}[Proof of Claim~\ref{CLAIM:A4-B-Zi-L-free}]
        The proofs for all four statements are very similar; therefore, we present only the proof for Claim~\ref{CLAIM:A4-B-Zi-L-free}~\ref{CLAIM:A4-B-Z2-L30-free}. 
        Suppose to the contrary that there exists a copy of $L_{30}$ in $H[Z_1]$, say on the vertex set $S \subseteq  Z_1$ of size $30$. 
        Let us consider the induced bipartite subgraph $F[S, \mathcal{B}]$. 
        It follows from the definition of $Z_2$ that 
        \begin{align*}
            |F[S, \mathcal{B}]|
            \ge |S| \cdot \left(\frac{2-1}{10} b + 10^5 \right)
            = 3b + 30 \cdot 10^5
            > Z(|S|, |\mathcal{B}|, K_{4,11}),
        \end{align*}
        where the last inequality follows from Theorem~\ref{THM:Zarankiewicz-number}. By the definition of the bipartite graph $F$, this means that there exist four distinct members $Q_1, Q_2, Q_3, Q_4 \in Z_2$ and $11$ distinct members $T_1, \ldots, T_{11} \in \mathcal{B}$ such that $e(Q_i, T_j) \ge 7$ for every $(i,j) \in [4] \times [11]$. 
        
        By the definition of $\mathcal{A}_{4}$, each $Q_i$ is $3$-seen or $2$-seen by at most two members in $\mathcal{B}$ (otherwise, $Q_i$ would belong to $\mathcal{A}_1 \cup \mathcal{A}_2 \cup \mathcal{A}_3$). Combining this with Fact~\ref{FACT:e-Q-T-7-edges}, we conclude that there are at least $11 - 4\cdot 2 = 3$ members, say $T_1, T_2, T_3$, such that $T_i$ $1$-sees $Q_j$ for  every $(i,j) \in [3] \times [4]$. 
        Furthermore, it follows from the definition of $L_t$ (see Section~\ref{SEC:Prelim}) and Fact~\ref{FACT:Lt-4-vtx} that $H[\{Q_1, Q_2, Q_3, Q_4\}]$ contains a copy of $P_4$.
        This leads to a contradiction with Lemma~\ref{LEMMA:A4-B-P4-free}. 
    \end{proof}
    Recall that $z_1 + \cdots + z_{10} = a_4$. 
    It follows from~\eqref{equ:A4-B-H-Zi},~\eqref{equ:A4-B-H-Zi-Zj},~\eqref{equ:H-Z2-upper-bound},~\eqref{equ:H-Z3-upper-bound},~\eqref{equ:H-Z4-upper-bound},~\eqref{equ:H-Z5-upper-bound} and the trivial upper bound $|H[Z_i, Z_j]| \le z_i z_j$ for $\{i,j\} \subseteq [10]$ with $i + j \le 11$ that 
    \begin{align*}
        |H|
        & = \sum_{i\in [10]}|H[Z_i]| + \sum_{\{i,j\}\subseteq  [10]} |H[Z_i, Z_j]| \\[0.5em]
        & \le \frac{z_1^2}{2} + \frac{13 z_2^2}{28} + \frac{3 z_3^2}{7} + \frac{3 z_4^2}{8} + \frac{z_5^2}{3} + \sum_{i \in [5]} \left(z_i \cdot \sum_{j \in [i+1, 11-i]} z_j\right) + O(n)\\[0.5em]
        & = \frac{a_4^2}{2} + \hat{\eta}(z_2, \ldots, z_{10}) +O(n), 
    \end{align*}
    where 
    \begin{align*}
        \hat{\eta}(z_2, \ldots, z_{10})
        \coloneqq 
        & - \biggl(\frac{z_2^2}{28} +\frac{z_3^2}{14} + \frac{z_4^2}{8} + \frac{z_5^2}{6} + \frac{(z_6+\cdots+z_{10})^2}{2} \\[0.5em]
        & \quad + z_5z_7 + (z_4+z_5)z_8 + (z_3+z_4+z_5)z_9 + (z_2+z_3+z_4+z_5)z_{10} \bigg). 
    \end{align*}
    Combining this with~\eqref{equ:A4-B-H}, we obtain 
    \begin{align}\label{equ:A4-B-e-A4-psi}
        e(\mathcal{A}_{4})
        \le \frac{15 a_{4}^2}{2} + \hat{\eta}(z_2, \ldots, z_{10}) +O(n). 
    \end{align}
    Next, we consider $e(\mathcal{A}_4, \mathcal{B})$.
    For every $i \in [10]$ and $Q \in Z_i$, it follows from the definition of $Z_i$ and Lemma~\ref{LEMMA:Q4BCD}~\ref{LEMMA:Q4BCD-1} that 
    \begin{align*}
        e(Q, \mathcal{B})
        \le 2\cdot 3 \cdot 4 + 7 \left(\frac{i\cdot b}{10} + 10^5 -2\right) + 6 \left(b -\frac{i\cdot b}{10} - 10^5\right)
        = 6b + \frac{i \cdot b}{10} + 10^5 + 10. 
    \end{align*}
    %
    %
    Therefore, 
    \begin{align*}
        e(\mathcal{A}_{4}, \mathcal{B})
         = \sum_{i\in [10]}\sum_{Q\in Z_i} e(Q, \mathcal{B}) 
        & \le \sum_{i\in [10]} z_i \left(6b + \frac{i \cdot b}{10} + 10^5 + 10\right) \\[0.5em]
        & = \frac{61 a_4 b}{10} + \sum_{i\in [2,10]} \frac{(i-1) b z_i}{10} +(10^5 + 10)a_4. 
    \end{align*}
    Combining this with~\eqref{equ:A4-B-e-A4-psi}, we obtain 
    \begin{align*}
        \frac{3}{4} \cdot e(\mathcal{A}_4) + e(\mathcal{A}_{4}, \mathcal{B})
        & \le \frac{3}{4}\left(\frac{15 a_{4}^2}{2} + \hat{\eta}(z_2, \ldots, z_{10})\right) + \frac{61 a_4 b}{10} + \sum_{i\in [2,10]} \frac{(i-1) b z_i}{10}  + O(n) \\[0.5em]
        & = \frac{45 a_4^2}{8} + \frac{61 a_4 b}{10} + \frac{3}{4} \cdot \hat{\eta}(z_2, \ldots, z_{10}) + \sum_{i\in [2,10]} \frac{(i-1) b z_i}{10}  + O(n)  \\[0.5em]
        & = \frac{45 a_4^2}{8} + \frac{61 a_4 b}{10} + \eta(b,z_2, \ldots, z_{10}) + O(n), 
    \end{align*}
    where 
    \begin{align*}
        \eta(b,z_2, \ldots, z_{10})
        & \coloneqq \frac{3}{4} \cdot \hat{\eta}(z_2, \ldots, z_{10}) + \sum_{i\in [2,10]} \frac{(i-1) b z_i}{10} \\[0.5em]
        & = \sum_{i\in [2,10]} \frac{(i-1) b z_i}{10} - \biggl(\frac{3 z_2^2}{112} +\frac{3 z_3^2}{56} + \frac{3 z_4^2}{32} + \frac{z_5^2}{8} + \frac{3 (z_6+\cdots+z_{10})^2}{8} \\[0.5em]
        & \quad + \frac{3}{4} \bigl(z_5z_7 + (z_4+z_5)z_8 + (z_3+z_4+z_5)z_9 + (z_2+z_3+z_4+z_5)z_{10} \bigl) \bigg).
    \end{align*}
    The lemma now follows from some elementary but tedious calculations to establish the desired upper bound for $\eta(b,z_2, \ldots, z_{10})$, which are presented in the Appendix (see Proposition~\ref{PROP:inequality-psi-upper-bound}).
\end{proof}

\subsection{$\mathcal{A}_{4}$ and $\mathcal{C}$}\label{SUBSEC:A4C}
In this subsection, we prove the following upper bound for $\alpha \cdot e(\mathcal{A}_4) + e(\mathcal{A}_4, \mathcal{C})$. 
\begin{lemma}\label{LEMMA:A4-C-bipartite-bound}
    For every $\alpha \in (0,1)$, there exists a constant $C_{\ref{LEMMA:A4-C-bipartite-bound}} = C_{\ref{LEMMA:A4-C-bipartite-bound}}(\alpha) > 0$ such that 
    \begin{align*}
        \alpha \cdot e(\mathcal{A}_4) + e(\mathcal{A}_4, \mathcal{C})
        \le h_{\alpha}(a_4, c) + C_{\ref{LEMMA:A4-C-bipartite-bound}} n. 
    \end{align*}
    %
\end{lemma}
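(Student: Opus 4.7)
The plan is to mirror the proof of Lemma~\ref{LEMMA:A4-B-bipartite-bound}, replacing the triangles in $\mathcal{B}$ with the edges in $\mathcal{C}$ and choosing the bipartite threshold so that the sharper per-member bound from Lemma~\ref{LEMMA:Q4BCD}~\ref{LEMMA:Q4BCD-2} produces exactly the coefficients $5$ and $9/2$ visible in $h_\alpha$. The trivial case when $c$ is bounded by a constant will be dispatched via Lemma~\ref{LEMMA:Q4Qi}~\ref{LEMMA:Q4Qi-1}, so I will assume $c$ is sufficiently large.

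First, I would introduce the same auxiliary graph $H$ on $\mathcal{A}_4$ with edges $\{Q,Q'\}$ precisely when $e(Q,Q')=15$; by Lemma~\ref{LEMMA:Q4Qi}~\ref{LEMMA:Q4Qi-1}, $e(\mathcal{A}_4) \le 7 a_4^2 + |H| - a_4$. Next I would define the bipartite graph $F'$ on $\mathcal{A}_4 \cup \mathcal{C}$ with edges $(Q,C)$ satisfying $e(Q,C) \ge 5$, together with the high-degree set $Y \coloneqq \{Q \in \mathcal{A}_4 : d_{F'}(Q) \ge (c+7)/2\}$; the shift $7$ is chosen so that each pair in $Y$ has at least $7$ common $F'$-neighbors. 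Lemma~\ref{LEMMA:Q4BCD}~\ref{LEMMA:Q4BCD-2} yields the sharp decomposition
\[
    e(Q,\mathcal{C}) \le 4(c - d_{F'}(Q)) + 5(d_{F'}(Q)-1) + 8 = 4c + d_{F'}(Q) + 3
\]
for each $Q \in \mathcal{A}_4$, which, split by membership in $Y$ and summed, gives $e(\mathcal{A}_4,\mathcal{C}) \le \frac{9}{2}a_4 c + \frac{c|Y|}{2} + O(n)$.

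The structural core is the direct analogue of Claim~\ref{CLAIM:A4-B-bipartite-bound-Y1}: \emph{$H[Y]$ is $P_4$-free}. Combined with Theorem~\ref{THM:Erdos-Gallai-path-Turan} this forces $|H| \le a_4^2/2 - |Y|^2/2 + O(n)$ and hence $e(\mathcal{A}_4) \le 15 a_4^2/2 - |Y|^2/2 + O(n)$. Feeding this into the previous bound gives
\[
    \alpha \cdot e(\mathcal{A}_4) + e(\mathcal{A}_4,\mathcal{C}) \le \frac{15\alpha a_4^2}{2} - \frac{\alpha|Y|^2}{2} + \frac{9 a_4 c}{2} + \frac{c|Y|}{2} + O(n),
\]
and optimizing the concave quadratic in $|Y| \in [0,a_4]$ (unconstrained maximum at $|Y|^* = c/(2\alpha)$) produces the two cases of $h_\alpha(a_4,c) + O(n)$ exactly as in Lemma~\ref{LEMMA:A4-B-bipartite-bound}.

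The main obstacle will be establishing the $\mathcal{C}$-analogue of Lemma~\ref{LEMMA:A4-B-P4-free}: there are no four distinct $Q_1,\ldots,Q_4 \in \mathcal{A}_4$ and three distinct $C_1,C_2,C_3 \in \mathcal{C}$ with $e(Q_i,Q_{i+1})=15$ and $e(Q_i,C_i), e(Q_{i+1},C_i) \ge 5$ for all $i \in [3]$. The rotation I plan to perform takes the $22$ vertices of $Q_1 \cup \cdots \cup Q_4 \cup C_1 \cup C_2 \cup C_3$ and rearranges them into five vertex-disjoint $K_4$'s plus a leftover $K_2$ (or two isolated vertices), contradicting the lexicographic maximality of $(|\mathcal{A}|,|\mathcal{B}|,|\mathcal{C}|,|\mathcal{D}|)$: three ``middle'' $K_4$'s of the form $C_i \cup \{p_i, q_{i+1}\}$ with $p_i \in Q_i$ and $q_{i+1} \in Q_{i+1}$ both adjacent to the two endpoints of $C_i$ and $p_i q_{i+1} \in G$; plus two ``end'' $K_4$'s augmenting the triangles $Q_1 \setminus \{p_1\}$ and $Q_4 \setminus \{p_4\}$ by vertices from $Q_2 \setminus \{q_2,p_2\}$ and $Q_3 \setminus \{q_3,p_3\}$, produced by Fact~\ref{FACT:3-by-4-bipartite-graph}~\ref{FACT:3-by-4-bipartite-graph-1} applied to the relevant $2 \times 3$ bipartite subgraphs, which carry at least $5$ edges since $e(Q_i,Q_{i+1}) = 15$. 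The delicate case is when $e(Q_i,C_i) = e(Q_{i+1},C_i) = 5$ leaves a unique common-neighbor candidate on each side and the single missing edge of $G[Q_i,Q_{i+1}]$ is precisely that candidate pair; this case must be handled by exploiting the multiplicity of choices at the ends of the $P_4$ to shift the middle candidates and absorb the single obstruction, or by slightly tightening the threshold of $Y$ so that at least one endpoint of every edge of a $P_4$ in $H[Y]$ admits some common $C_i$ with $e \ge 6$.
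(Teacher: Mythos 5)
Your counting framework (auxiliary graph $H$, the bipartite graph $F'$ with threshold $e(Q,C)\ge 5$, the high-degree set $Y$, and the concave optimization in $|Y|$) is exactly the paper's, and the degree decomposition giving $e(\mathcal{A}_4,\mathcal{C})\le \tfrac{9}{2}a_4c+\tfrac{c|Y|}{2}+O(n)$ is fine. The gap is in the structural core. Your target statement ($H[Y]$ is $P_4$-free) is in fact true — the paper proves the stronger claim that $H[Y]$ is $P_3$-free, with the lower threshold $(c+4)/2$ — but your proposed proof of it does not go through. The five-$K_4$ rotation needs, for each $i$, a vertex $p_i\in Q_i$ and a vertex $q_{i+1}\in Q_{i+1}$ both adjacent to both endpoints of $C_i$ with $p_iq_{i+1}\in G$, \emph{and} it needs $Q_2$ to simultaneously supply three distinct vertices ($q_2$ for $C_1$, $p_2$ for $C_2$, and the end vertex $v$), likewise for $Q_3$. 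By the definition of $\mathcal{A}_4$, each $Q_i$ is seen by at most one member of $\mathcal{C}$, so for all but at most one of your $C_i$'s the relevant double-neighbor in $Q_i$ is \emph{unique} (Fact~\ref{FACT:AC-not-see-5-edges}); these forced choices can coincide ($q_2=p_2$), and the single missing edge of $G[Q_i,Q_{i+1}]$ can be exactly the forced pair $p_iq_{i+1}$. Your second proposed escape is closed for a structural reason: $e(Q,C)\ge 6$ with $|C|=2$ forces two vertices of $Q$ adjacent to both endpoints of $C$, i.e.\ $C$ \emph{sees} $Q$, and a member of $\mathcal{A}_4$ is seen by at most one member of $\mathcal{C}$ — so no threshold argument can supply common neighbors with $e\ge 6$. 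Your first escape (re-choosing $C_i$ among the $\ge 5$ common neighbors) also fails in general, since every candidate may force the same popular vertex of $Q_2$.

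The paper resolves this differently: it proves the $P_3$-free statement (Lemma~\ref{LEMMA:A4-C-P3-free}), whose hypothesis explicitly includes that $M_i$ does \emph{not} see $Q_i$ or $Q_{i+1}$ (guaranteed for at least $|N_i|-2\ge 2$ common neighbors), and then exploits the rigid edge distribution of Fact~\ref{FACT:AC-not-see-5-edges} to perform a rotation that keeps $|\mathcal{A}|$ fixed and increases $|\mathcal{B}|$ — it never tries to assemble more than one "middle" $K_4$ through a member of $\mathcal{C}$, and it splits into cases according to whether the two forced double-neighbors $p_1,p_2$ are adjacent. I would recommend abandoning the $|\mathcal{A}|$-increasing rotation and proving the $P_3$-free claim along these lines; the numerics downstream are unaffected (both $|H[Y]|\le |Y|$ and $|H[Y]|\le |Y|/2$ give $|H|\le a_4^2/2-|Y|^2/2+O(n)$).
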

Recall from Section~\ref{SUBSEC:setup} that a member $M = \{u_1, u_2\} \in \mathcal{C}$ sees a member $Q \in \mathcal{A}$ if there exist two vertices $v_1, v_2 \in Q$ such that both $u_1$ and $u_2$ are adjacent to $v_1$ and $v_2$. 
\begin{fact}\label{FACT:AC-not-see-5-edges}
    Let $Q \in \mathcal{A}$ and $M \in \mathcal{C}$. Suppose that $e(Q, M) \ge 5$ but $M$ does not see $Q$. Then there exists a unique vertex in $Q$ that is adjacent to both vertices in $M$, while each of the remaining vertices is adjacent to exactly one vertex in $M$. In particular, $e(Q, M) = 5$.
\end{fact}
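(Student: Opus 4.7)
The plan is to argue by a short case analysis based on the codegree sequence of the vertices of $Q$ into $M$. Write $M = \{u_1, u_2\}$ and $Q = \{p,q,s,t\}$, and for each vertex $v \in Q$ let $d_M(v) \in \{0,1,2\}$ denote the number of edges from $v$ to $M$ in $G$. Let $a$, $b$, and $c$ denote the number of vertices of $Q$ with codegree $2$, $1$, and $0$ respectively. Then
\begin{align*}
    a + b + c = 4
    \quad\text{and}\quad
    e(Q,M) = 2a + b.
\end{align*}

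First I would unpack the hypothesis that $M$ does not see $Q$. By the definition of ``sees'' given in Section~\ref{SUBSEC:setup}, $M$ sees $Q$ precisely when there exist two vertices $v_1, v_2 \in Q$ each adjacent to both vertices of $M$, i.e.\ two vertices of $Q$ of codegree $2$. Hence $M$ not seeing $Q$ is equivalent to $a \le 1$.

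With this reformulation, the rest is immediate. If $a = 0$, then $e(Q,M) = b \le 4$, contradicting $e(Q,M) \ge 5$. Therefore $a = 1$, which forces $b \ge 3$ from $2 + b = e(Q,M) \ge 5$, and combined with $a + b + c = 4$ gives $b = 3$ and $c = 0$. Thus $e(Q,M) = 5$, exactly one vertex of $Q$ is adjacent to both vertices of $M$, and the remaining three are each adjacent to exactly one vertex of $M$, yielding the conclusion.

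I do not expect any real obstacle here, as the argument is a one-line double-counting observation once the ``not see'' condition is translated to the codegree bound $a \le 1$.
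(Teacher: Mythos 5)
Your proof is correct, and the codegree double-count ($e(Q,M)=2a+b$ with $a\le 1$ forced by the ``does not see'' condition, using that $G[Q]\cong K_4$ and $G[M]\cong K_2$ so only the four cross edges matter) is exactly the intended argument; the paper states this as a Fact without proof precisely because of this observation. No gaps.
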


The key ingredient in the proof of Lemma~\ref{LEMMA:A4-C-bipartite-bound} is as follows. 
\begin{lemma}\label{LEMMA:A4-C-P3-free}
    There do not exist three distinct members $Q_1,Q_2,Q_3 \in \mathcal{A}_4$ and two distinct members $M_1,M_2 \in \mathcal{C}$ such that, for every $i \in [2]$, 
    \begin{enumerate}[label=(\roman*)]
        \item\label{LEMMA:A4-C-P3-free-1} $e(Q_{i},Q_{i+1})=15$, 
        \item\label{LEMMA:A4-C-P3-free-2} $\min\left\{e(Q_{i},M_{i}),~e(Q_{i+1},M_{i}) \right\} \ge 5$ but $M_{i}$ does not see $Q_{i}$ or $Q_{i+1}$. 
    \end{enumerate}
\end{lemma}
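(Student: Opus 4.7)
The plan is to imitate the rotation strategy of Lemma~\ref{LEMMA:A4-B-P4-free}: assume for contradiction that such $Q_1, Q_2, Q_3, M_1, M_2$ exist, and show that the $16$ vertices of $Q_1 \cup Q_2 \cup Q_3 \cup M_1 \cup M_2$ can be repartitioned into $4$ vertex-disjoint copies of $K_4$, which would contradict the lex-maximality of $(|\mathcal{A}|,|\mathcal{B}|,|\mathcal{C}|,|\mathcal{D}|)$ by raising $|\mathcal{A}|$ by one. Write $M_i = \{x_i, y_i\}$. I will first apply Fact~\ref{FACT:AC-not-see-5-edges} to each of the four pairs $(Q_1,M_1), (Q_2,M_1), (Q_2,M_2), (Q_3,M_2)$ to extract a unique ``peak'' vertex $v_1 \in Q_1$ adjacent to both of $M_1$, analogous peaks $v_2, v_2' \in Q_2$ for $M_1$ and $M_2$ respectively, and $v_3 \in Q_3$ for $M_2$, with each remaining vertex of each $Q$ sending exactly one edge to the relevant $M$.

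Focussing on the generic sub-case $v_2 \neq v_2'$, I would relabel so that $Q_2 \setminus \{v_2, v_2'\} = \{a, b\}$, and aim for the four $K_4$'s
\[
\{v_1, v_2, x_1, y_1\},\quad \{v_2', v_3, x_2, y_2\},\quad (Q_1 \setminus \{v_1\}) \cup \{a\},\quad (Q_3 \setminus \{v_3\}) \cup \{b\}.
\]
All edges inside the first two cliques except possibly $v_1 v_2$ and $v_2' v_3$ are supplied by Fact~\ref{FACT:AC-not-see-5-edges}, and the triangles $Q_i \setminus \{v_i\}$ come for free from $Q_i$; thus the only remaining requirements are $v_1 v_2, v_2' v_3 \in G$, $a$ adjacent to all of $Q_1 \setminus \{v_1\}$, and $b$ adjacent to all of $Q_3 \setminus \{v_3\}$. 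Because $e(Q_1,Q_2) = e(Q_2,Q_3) = 15$, each of $G[Q_1, Q_2]$ and $G[Q_2, Q_3]$ is missing exactly one edge, and a short bipartite case analysis shows that by swapping the roles of $\{a, b\}$ (and, if needed, interchanging $v_2 \leftrightarrow v_2'$ with a corresponding relabeling of the peak $K_4$'s) one can avoid both missing edges, except when the two missing edges ``conspire'' by sharing a common vertex in $Q_2 \setminus \{v_2, v_2'\}$. In that conspiring sub-case, the other vertex of $Q_2 \setminus \{v_2, v_2'\}$ is automatically adjacent to all six vertices in $(Q_1 \setminus \{v_1\}) \cup (Q_3 \setminus \{v_3\})$, and the rotation can be completed by forming a $K_4$ that spans the three $Q$'s together with $M_1 \cup M_2$ in a rearranged way, exploiting this high-degree vertex.

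The degenerate sub-case $v_2 = v_2'$ needs a separate treatment: here the single vertex $p_2 \coloneqq v_2 = v_2'$ in $Q_2$ has $4$ neighbors inside $M_1 \cup M_2$, so one cannot reuse $p_2$ in two peak $K_4$'s simultaneously. The plan is to either produce a lex-improving repartition of the $16$ vertices into $3$ copies of $K_4$ together with a $K_3$ and a $K_1$ (which strictly increases $|\mathcal{B}|$ while preserving $|\mathcal{A}|$, contradicting lex-maximality), using the triangle-rich neighborhood of $p_2$ and the structure provided by Fact~\ref{FACT:AC-not-see-5-edges} applied to the three non-peak vertices of $Q_2$; or, failing that, to argue that $Q_2$ is then seen by at least two members of $\mathcal{C}$ (namely $M_1$ and $M_2$ via vertices other than $p_2$), which would place $Q_2$ into $\mathcal{A}_{2,3} \subseteq \mathcal{A}_2$ rather than $\mathcal{A}_4$, contradicting the hypothesis $Q_2 \in \mathcal{A}_4$.

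The main obstacle I anticipate is the conspiring sub-case: the vertex budget is tight (the inequality $4a' \le 16$ is saturated by the target $a' = 4$), so every non-edge in $G[Q_1,Q_2] \cup G[Q_2,Q_3]$ must be placed precisely, and ruling out all bad placements requires combining the uniqueness consequences of Fact~\ref{FACT:AC-not-see-5-edges} with the bipartite pigeonhole observations in Fact~\ref{FACT:3-by-4-bipartite-graph} in the same spirit as the rotations performed in the proofs of Lemmas~\ref{LEMMA:Q1Qi}--\ref{LEMMA:Q4Qi}.
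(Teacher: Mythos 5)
Your plan is genuinely different from the paper's in its target: you aim to repartition all $16$ vertices of $Q_1 \cup Q_2 \cup Q_3 \cup M_1 \cup M_2$ into four vertex-disjoint copies of $K_4$, which would raise $|\mathcal{A}|$, whereas the paper never tries to raise $|\mathcal{A}|$. The paper splits on whether $p_1 p_2 \in G$ (your $v_1 v_2$) and in each case exhibits three $K_4$'s plus one extra triangle, keeping $|\mathcal{A}|$ fixed while raising $|\mathcal{B}|$ (with a leftover vertex of $M_1$ discarded into $\mathcal{D}$). Aiming only for a lex-improvement at the $|\mathcal{B}|$ coordinate is much easier precisely because the vertex budget is no longer tight, so the one missing edge in each of $G[Q_1,Q_2]$ and $G[Q_2,Q_3]$ has far more room to fall harmlessly.

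Your proposal as written has gaps that your ``short bipartite case analysis'' does not close. The main one: if the unique missing edge of $G[Q_1,Q_2]$ is $v_1 v_2$ itself, then $\{v_1,v_2,x_1,y_1\}$ is not a $K_4$, and the advertised fix of interchanging $v_2 \leftrightarrow v_2'$ is unavailable, because Fact~\ref{FACT:AC-not-see-5-edges} makes $v_2$ the \emph{unique} vertex of $Q_2$ adjacent to both endpoints of $M_1$; $v_2'$ sends only one edge into $M_1$, so $\{v_1,v_2',x_1,y_1\}$ cannot be a $K_4$ either, nor can any $\{v_1,w,x_1,y_1\}$ with $w \in Q_2 \setminus \{v_2\}$. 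Thus the obstruction $v_1 v_2 \notin G$ (and symmetrically $v_2' v_3 \notin G$) is missed by your analysis; this is exactly the case the paper treats separately (Case~2, routing $u_1 \in M_1$ into a $K_4$ with three vertices of $Q_1$ and spinning off a triangle $\{q_2\} \cup M_2$). In addition, your ``conspiring'' sub-case is only gestured at, and in the degenerate sub-case $v_2 = v_2'$ your second fallback cannot work: you want to conclude that $Q_2$ is seen by $M_1$ and $M_2$ and hence lies in $\mathcal{A}_{2,3}$, but the hypothesis of the lemma already assumes that $M_1$ and $M_2$ do \emph{not} see $Q_2$, and the coincidence $v_2 = v_2'$ gives a single common neighbour, not a pair, so no seeing relation is produced.
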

\begin{proof}[Proof of Lemma~\ref{LEMMA:A4-C-P3-free}]
    Suppose to the contrary that this claim fails. Fix three distinct members $Q_1,Q_2,Q_3 \in \mathcal{A}_4$ and two distinct members $M_1,M_2 \in \mathcal{C}$ such that, for $i \in [2]$,~\ref{LEMMA:A4-C-P3-free-1} and~\ref{LEMMA:A4-C-P3-free-2} hold. 
    It follows from~\ref{LEMMA:A4-C-P3-free-2} and Fact~\ref{FACT:AC-not-see-5-edges} that, for each $i \in [2]$, the edges between $M_i$ and $Q_i$ (as well as those between $M_i$ and $Q_{i+1}$) are distributed as follows: there exists a unique vertex in $Q_i$ (resp. $Q_{i+1}$) that is adjacent to both vertices in $M_i$, while each of the remaining three vertices is adjacent to exactly one vertex in $M_i$. 
    For each $i \in [2]$, let $p_i$ denote the vertex in $Q_i$ that is adjacent to both vertices in $M_1$.

\medskip

\begin{figure}[H]
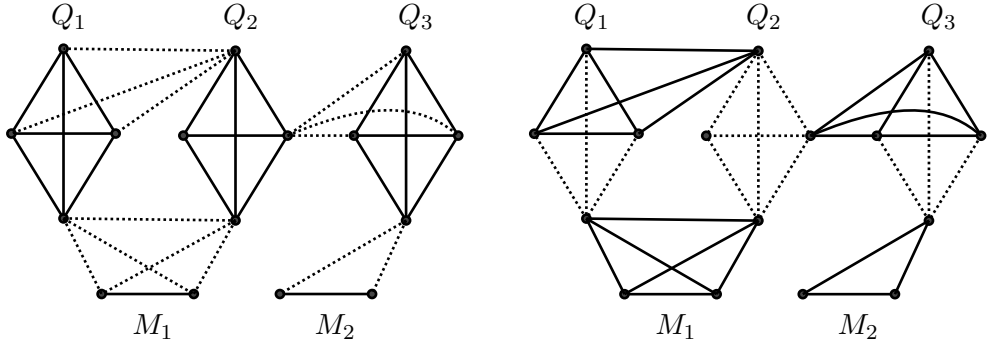

\centering

\tikzset{every picture/.style={line width=1pt}} 


\caption{Auxiliary figure for the proof of Lemma~\ref{LEMMA:A4-C-P3-free} Case 1.} 
\label{Fig:Q4BCD-b1}
\end{figure}

\textbf{Case 1}: $p_1p_2 \in G$. 

    Let $p_3$ denote the vertex in $Q_3$ that is adjacent to both vertices in $M_2$ (the existence of $p_3$ is guaranteed by~\ref{LEMMA:A4-C-P3-free-2} and Fact~\ref{FACT:AC-not-see-5-edges}). Since $e(Q_2, Q_1) = 15$, there exists a vertex $q_2 \in Q_{2} \setminus \{p_2\}$ that is adjacent to all three vertices in $Q_1 \setminus \{p_1\}$. Similarly, there exists a vertex $s_2 \in Q_{2} \setminus \{p_2, q_2\}$ that is adjacent to all three vertices in $Q_{3}\setminus \{p_3\}$. 
    However, the rotation shown in Figure~\ref{Fig:Q4BCD-b1} would increase the size of $\mathcal{B}$ while keeping the size of $\mathcal{A}$ unchanged, contradicting the maximality of $\mathcal{B}$. 

\medskip 

\begin{figure}[H]
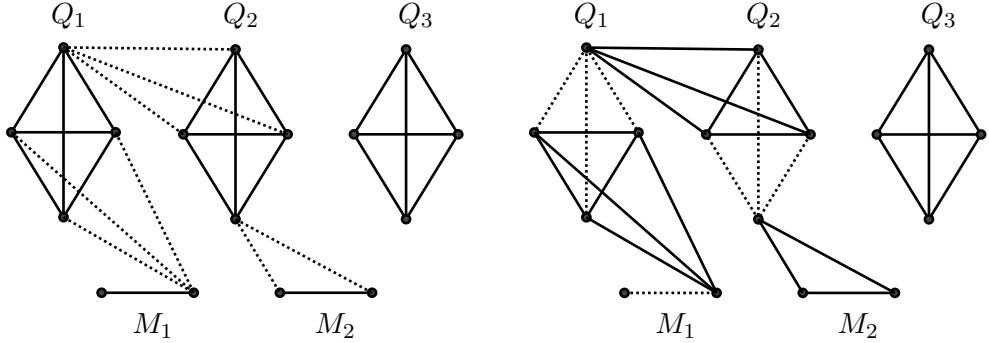

\centering

\tikzset{every picture/.style={line width=1pt}} 


\caption{Auxiliary figure for the proof of Lemma~\ref{LEMMA:A4-C-P3-free} Case 2.} 
\label{Fig:Q4BCD-b2}
\end{figure}

\textbf{Case 2}: $p_1p_2 \not\in G$.

    Similar to the proof of Case 1, let $q_2$ denote the vertex in $Q_2$ that is adjacent to both vertices in $M_2$ (it might hold that $p_2 = q_2$). 
    Note that, by Fact~\ref{FACT:AC-not-see-5-edges}, there exists a vertex $u_1 \in M_1$ that is adjacent to at least three vertices in $Q_1$, including $p_1$. Fix three such vertices (including $p_1$), and let $t_1$ denote the remaining vertex in $Q_1$. Since $p_1p_2 \not\in G$ and $e(Q_1, Q_2) = 15 = 16-1$, the vertex $t_1$ must be adjacent to all three vertices in $Q_{2} \setminus \{q_2\}$. However, the rotation shown in Figure~\ref{Fig:Q4BCD-b2} would increase the size of $\mathcal{B}$ while keeping the size of $\mathcal{A}$ unchanged, contradicting the maximality of $\mathcal{B}$. 

    This completes the proof of Lemma~\ref{LEMMA:A4-C-P3-free}. 
\end{proof}

We are now ready to present the proof of Lemma~\ref{LEMMA:A4-C-bipartite-bound}.
\begin{proof}[Proof of Lemma~\ref{LEMMA:A4-C-bipartite-bound}]
    Fix $\alpha \in (0,1)$. Recall that $c = |\mathcal{C}|$. Similar to the proof of Lemma~\ref{LEMMA:A4-B-bipartite-bound}, we may assume that $c$ is large, say $c \ge 4$. 

Define an auxiliary graph $H$ whose vertex set is $\mathcal{A}_4$ and two members $Q, Q' \in \mathcal{A}_4$ are adjacent in $H$ iff $e(Q,Q') = 15$. 
Recall from Lemma~\ref{LEMMA:Q4Qi}~\ref{LEMMA:Q4Qi-1} that $e(Q,Q') \le 15$ for every pair $Q, Q' \in \mathcal{A}_4$, so we have 
\begin{align}\label{equ:A4C-H-bound}
    e(\mathcal{A}_4) 
    \leq 15 |H| + 14 \left(\binom{a_4}{2} - |H| \right) + 6 a_4 
    = 7a_4^2 +|H| -a_4. 
\end{align}
Define an auxiliary bipartite graph $F$ whose vertex set is $\mathcal{A}_4 \cup  \mathcal{C}$ and a pair of members $(Q, M) \in \mathcal{A}_4 \times \mathcal{C}$ are adjacent in $F$ iff $e(Q,M) \geq 5$ (recall from Lemma~\ref{LEMMA:Q4BCD}~\ref{LEMMA:Q4BCD-2} that $e(Q,C)\leq 5$ for all but at most one member $C \in \mathcal{C}$). 
Define
\begin{align*}
    Y
    \coloneqq  \left\{Q \in \mathcal{A}_4 \colon d_{F}(Q) \geq \frac{c+4}{2} \right\}.
\end{align*}
It follows from the definition of $Y$ and Lemma~\ref{LEMMA:Q4BCD}~\ref{LEMMA:Q4BCD-2} that
\begin{align}\label{EQ:A4-C-eA4-C-bound}
    e(\mathcal{A}_4,\mathcal{C}) 
    & \leq  |Y| \cdot \left(5(c-1)+ 2 \cdot 4\right) \notag \\[0.5em]
    &\quad  + (|\mathcal{A}_4| - |Y|) \cdot \left(4 \cdot \left(c- \frac{c+4}{2} \right) + 5 \cdot \left(\frac{c+4}{2} -1\right)+ 2 \cdot 4  \right) \notag \\[0.5em]
    & = \frac{9 a_4 c}{2} + \frac{c |Y|}{2} + 5a_4 - 2|Y| \leq \frac{9 a_4 c}{2} + \frac{c |Y|}{2} + 5 n.
\end{align}

\begin{claim}\label{CLAIM:HY2-P3-free}
    The induced graph $H[Y]$ is $P_3$-free. Thus, by Theorem~\ref{THM:Erdos-Gallai-path-Turan},
    \begin{align*}
        |H[Y]| 
        \le \frac{|Y|}{2}.
    \end{align*}
\end{claim}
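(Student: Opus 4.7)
The plan is to argue by contradiction: assume $H[Y]$ contains a copy of $P_3$, say on vertices $Q_1,Q_2,Q_3$ with $Q_1Q_2, Q_2Q_3 \in H[Y]$, and produce a configuration that is forbidden by Lemma~\ref{LEMMA:A4-C-P3-free}. Once the $P_3$-freeness is established, the bound $|H[Y]| \le |Y|/2$ follows from Theorem~\ref{THM:Erdos-Gallai-path-Turan} applied with $t=3$ (alternatively, a $P_3$-free graph has maximum degree at most $1$, so its edge set forms a matching).

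By the definition of $H$, the assumption $Q_iQ_{i+1}\in H[Y]$ means $e(Q_i,Q_{i+1}) = 15$ for $i\in[2]$. For $i \in [2]$, define
\begin{align*}
    N_i \coloneqq N_{F}(Q_i)\cap N_{F}(Q_{i+1}),
\end{align*}
i.e., $N_i$ is the set of $M\in\mathcal{C}$ with $e(Q_i,M)\ge 5$ and $e(Q_{i+1},M)\ge 5$. Since $Q_i,Q_{i+1}\in Y$, by inclusion--exclusion applied inside $\mathcal{C}$,
\begin{align*}
    |N_i|
    \ge d_{F}(Q_i) + d_{F}(Q_{i+1}) - |\mathcal{C}|
    \ge 2\cdot\frac{c+4}{2} - c
    = 4.
\end{align*}

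Next, recall that by the definition of $\mathcal{A}_{2,3}$ and since $\mathcal{A}_4\cap\mathcal{A}_{2,3}=\emptyset$, every member of $\mathcal{A}_4$ is seen by at most one member of $\mathcal{C}$. In particular, at most $2$ members of $N_i$ can see either $Q_i$ or $Q_{i+1}$. Hence, at least $|N_i|-2\ge 2$ members of $N_i$ fail to see $Q_i$ and also fail to see $Q_{i+1}$.

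The final step is a greedy selection. Choose $M_1\in N_1$ that sees neither $Q_1$ nor $Q_2$, then choose $M_2\in N_2\setminus\{M_1\}$ that sees neither $Q_2$ nor $Q_3$; this is possible since $N_2$ contains at least $2$ non-seeing members. By Fact~\ref{FACT:AC-not-see-5-edges}, the conditions $e(Q_i,M_i)\ge 5$ and $e(Q_{i+1},M_i)\ge 5$ together with the non-seeing property force $e(Q_i,M_i)=e(Q_{i+1},M_i)=5$. Thus $(Q_1,Q_2,Q_3,M_1,M_2)$ satisfies both hypotheses of Lemma~\ref{LEMMA:A4-C-P3-free}, contradicting that lemma.

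The conceptual obstacle was already handled in Lemma~\ref{LEMMA:A4-C-P3-free} via the rotation arguments; the remaining work here is essentially bookkeeping to guarantee that the $M_i$'s can be chosen distinctly and with the non-see property, which is why the threshold in the definition of $Y$ was chosen to be $(c+4)/2$: it supplies the slack needed to absorb both the $|\mathcal{C}|$--term in inclusion--exclusion and the (at most two) members of $\mathcal{C}$ that might see $Q_i$ or $Q_{i+1}$.
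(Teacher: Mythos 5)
Your proof is correct and follows essentially the same route as the paper: assume a $P_3$ in $H[Y]$, use the threshold $(c+4)/2$ and inclusion--exclusion to get $|N_i|\ge 4$, discard the (at most two) members of $\mathcal{C}$ that see $Q_i$ or $Q_{i+1}$, greedily select distinct $M_1,M_2$, and invoke Lemma~\ref{LEMMA:A4-C-P3-free}. The explicit appeal to Fact~\ref{FACT:AC-not-see-5-edges} to pin down $e(Q_i,M_i)=5$ is harmless but superfluous, since Lemma~\ref{LEMMA:A4-C-P3-free} only needs $\min\{e(Q_i,M_i),e(Q_{i+1},M_i)\}\ge 5$ together with the non-seeing condition.
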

\begin{proof}[Proof of Claim~\ref{CLAIM:HY2-P3-free}]
Suppose to the contrary that there exist a copy of $P_3$, say $Q_1 Q_2 Q_3$, in $H[Y]$. 
For each $i \in [2]$, let $N_i := N_{F}(Q_i) \cap N_{F}(Q_{i+1})$, noting from the definition of $Y$ that $|N_i| \geq 2 \cdot \frac{c+4}{2} -c = 4$. 
By the definition of $\mathcal{A}_4$, every member in $\mathcal{A}_4$ is seen by at most one member in $\mathcal{C}$ (otherwise, it would belong to $\mathcal{A}_{2,3}$). 
Therefore, by the definition of the graph $F$, for each $i \in [2]$, there are at least $|N_i| - 1- 1 \ge 2$ members $M \in \mathcal{C}$ satisfying $\min\left\{e(Q_i,M),~e(Q_{i+1},M)\right\} \ge 5$ and not seeing $Q_i$ or $Q_{i+1}$. 
So we can greedily select two distinct members $(M_1,M_2) \in N_1 \times N_2$ such that, for $i \in [2]$, $\min\left\{e(Q_i,M_i), e(Q_{i+1},M_i)\right\} \ge 5$ and $M_i$ does not see $Q_i$ or $Q_{i+1}$. 
However, this is a contradiction to Lemma~\ref{LEMMA:A4-C-P3-free}.
\end{proof}

It follows from Claim~\ref{CLAIM:HY2-P3-free} that
\begin{align*}
|H| \leq \frac{|Y|}{2}+ |Y|(a_4 - |Y|) + \binom{a_4 - |Y|}{2} = \frac{a_4^2}{2} -\frac{|Y|^2}{2} + |Y| -\frac{a_4}{2}.
\end{align*}
Combining this with~\eqref{equ:A4C-H-bound}, we obtain
\begin{align*}
e(\mathcal{A}_4) \leq 7a_4^2+ \frac{a_4^2}{2} -\frac{|Y|^2}{2} + |Y| -\frac{a_4}{2} -a_4 \leq \frac{15 a_4^2}{2} -\frac{|Y|^2}{2}.
\end{align*}
Combining this with~\eqref{EQ:A4-C-eA4-C-bound}, we obtain 
\begin{align*}
    \alpha \cdot e(\mathcal{A}_4) + e(\mathcal{A}_4,\mathcal{C}) 
    &\leq \alpha \left(\frac{15 a_4^2}{2} -\frac{|Y|^2}{2} \right) + \frac{9 a_4 c}{2} + \frac{c |Y|}{2} + 5 n \\[0.5em]
    &= - \frac{\alpha}{2} \left(|Y|- \frac{c}{2 \alpha} \right)^2 + \frac{c^2}{8 \alpha}+ \frac{15 \alpha  a_4^2}{2} + \frac{9a_4 c}{2} + 5n.
\end{align*}
Viewing $- \frac{\alpha}{2} \left(|Y|- \frac{c}{2 \alpha} \right)^2 + \frac{c^2}{8 \alpha}+ \frac{15 \alpha a_4^2}{2} + \frac{9a_4 c}{2} + 5n$ as a quadratic polynomial in $|Y|$, and noting that $0 \leq |Y| \leq a_4$, we obtain
\begin{align*}
    \alpha \cdot e(\mathcal{A}_4) + e(\mathcal{A}_4,\mathcal{C}) 
    \leq 
    \begin{cases}
    7 \alpha a_4^2 + 5a_4 c + 5n,  &\quad\text{if}\quad a_4 \leq \frac{c}{2 \alpha}, \\[0.5em]
    \frac{15 \alpha a_4^2}{2} + \frac{9a_4 c}{2} + \frac{c^2}{8 \alpha} + 5n, &\quad\text{if}\quad a_4 > \frac{c}{2 \alpha}.
    \end{cases}.
\end{align*}
%
This completes the proof of Lemma~\ref{LEMMA:A4-C-bipartite-bound}. 
\end{proof}

\section{Local estimation \RomanNumeralCaps{4}$\colon$ $e(\mathcal{A}_6 \cup \mathcal{B} \cup \mathcal{C} \cup \mathcal{D})$}\label{SEC:A6BCD}
In this section, we prove improved upper bounds (Lemmas~\ref{LEMMA:A6BCD-upper-bound-b} and~\ref{LEMMA:A6BCD-upper-bound-a}) for $e(\mathcal{A}_6 \cup \mathcal{B} \cup \mathcal{C} \cup \mathcal{D})$, refining the upper bound  
\begin{align}\label{equ:A6BCD-trivial-upper-bound}
    e(\mathcal{A}_6 \cup \mathcal{B} \cup \mathcal{C} \cup \mathcal{D})
    & \le 8 a_6^2 + (7b+5c+2d)a_6 + 20 a_6 + \Psi(b,c,d),
\end{align}
which is derived from from Lemma~\ref{LEMMA:Q5Q6BCD}, Fact~\ref{FACT:edges-ABCD}, and the trivial upper bound $e(\mathcal{A}_6) \le \binom{4a_6}{2} + 6 a_6 \le 8 a_6^2$.

\begin{lemma}\label{LEMMA:A6BCD-upper-bound-b}
    There exists an absolute constant $C_{\ref{LEMMA:A6BCD-upper-bound-b}} > 0$ such that 
    \begin{align*}
        e(\mathcal{A}_6 \cup \mathcal{B} \cup \mathcal{C} \cup \mathcal{D})
        \le C_{\ref{LEMMA:A6BCD-upper-bound-b}} n + \Psi(b,c,d) +
        \begin{cases}
            \frac{15 a_6^2}{2} + (7b+5c+2d)a_6, &\quad\text{if}\quad a_6 \le b+c, \\[0.5em]
            8 a_{6}^{2} + \left(\frac{13 b}{2} + \frac{9c}{2}+2d\right) a_6, &\quad\text{if}\quad a_6 \ge b+c.
        \end{cases}
    \end{align*}
\end{lemma}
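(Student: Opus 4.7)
The starting point is to isolate the interactions between $\mathcal{A}_6$ and the triangle/edge/vertex parts. By Fact~\ref{FACT:edges-ABCD} we already have $e(\mathcal{B}\cup\mathcal{C}\cup\mathcal{D})\le\Psi(b,c,d)$, and by Lemma~\ref{LEMMA:Q5Q6BCD}\ref{LEMMA:Q5Q6BCD-3} we have $e(\mathcal{A}_6,\mathcal{D})\le 2a_6 d+O(n)$. Hence it suffices to bound $e(\mathcal{A}_6)+e(\mathcal{A}_6,\mathcal{B})+e(\mathcal{A}_6,\mathcal{C})$ by the appropriate piecewise expression in $a_6,b,c$. My plan is to prove, for $\alpha=1/2$, two estimates analogous to Lemmas~\ref{LEMMA:A4-B-bipartite-bound} and~\ref{LEMMA:A4-C-bipartite-bound}:
\begin{align*}
    \tfrac{1}{2}e(\mathcal{A}_6)+e(\mathcal{A}_6,\mathcal{B})\le g_{1/2}(a_6,b)+O(n),
    \quad
    \tfrac{1}{2}e(\mathcal{A}_6)+e(\mathcal{A}_6,\mathcal{C})\le h_{1/2}(a_6,c)+O(n),
\end{align*}
and then add them to obtain $e(\mathcal{A}_6)+e(\mathcal{A}_6,\mathcal{B})+e(\mathcal{A}_6,\mathcal{C})\le g_{1/2}(a_6,b)+h_{1/2}(a_6,c)+O(n)$.

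To obtain the two auxiliary inequalities I will mimic Section~\ref{SUBSEC:A4B}: define an auxiliary graph $H$ on $\mathcal{A}_6$ whose edges are the pairs $Q,Q'$ with $e(Q,Q')$ equal to the maximum value (which is $16$ for $\mathcal{A}_6$ rather than $15$ as in $\mathcal{A}_4$) and a bipartite auxiliary graph $F$ on $\mathcal{A}_6\times\mathcal{B}$ whose edges are pairs $(Q,T)$ with $e(Q,T)\ge 7$; then let $Y\subseteq\mathcal{A}_6$ be the set of members whose $F$-degree exceeds the midpoint threshold $(b+\text{const})/2$. The heart of the argument is the $\mathcal{A}_6$-analogue of Lemma~\ref{LEMMA:A4-B-P4-free}: there do not exist four distinct $Q_1,Q_2,Q_3,Q_4\in\mathcal{A}_6$ and three distinct $T_1,T_2,T_3\in\mathcal{B}$ such that $Q_iQ_{i+1}\in E(H)$ and $T_i$ $1$-sees both $Q_i$ and $Q_{i+1}$. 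The exact same rotation as in the proof of Lemma~\ref{LEMMA:A4-B-P4-free} works here, since it produces an extra copy of $K_4$ in $G$ regardless of which sub-family of $\mathcal{A}$ the $Q_i$ come from; the only difference is that the constraint $e(Q_i,Q_{i+1})=15$ may be relaxed to $e(Q_i,Q_{i+1})\in\{15,16\}$ (using the remark after Lemma~\ref{LEMMA:A4-B-P4-free}). This forces $H[Y]$ to be $P_4$-free, so $|H[Y]|\le|Y|$ by Theorem~\ref{THM:Erdos-Gallai-path-Turan}, and the rest of the calculation follows verbatim the proof of Lemma~\ref{LEMMA:A4-B-bipartite-bound}. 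An analogous $P_3$-free argument (using the $\mathcal{A}_6$-analogue of Lemma~\ref{LEMMA:A4-C-P3-free}) proves the second inequality.

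It remains to convert $g_{1/2}(a_6,b)+h_{1/2}(a_6,c)$ into the piecewise bound of the lemma. This is a case analysis on whether $a_6\lessgtr b$ and $a_6\lessgtr c$. In all four sub-cases of $a_6\le b+c$, one checks directly that
\[
    g_{1/2}(a_6,b)+h_{1/2}(a_6,c)\le \tfrac{15}{2}a_6^2+7a_6 b+5a_6 c+O(n),
\]
the only mildly delicate sub-case being $\max(b,c)<a_6\le b+c$, where the quadratic remainders $b^2/4+c^2/4$ are absorbed via $(b+c)^2<2a_6(b+c)$. When $a_6\ge b+c$, both arguments lie in the second branch of $g_{1/2}$ and $h_{1/2}$, giving $\tfrac{15}{2}a_6^2+\tfrac{13}{2}a_6 b+\tfrac{9}{2}a_6 c+(b^2+c^2)/4$; the estimate $(b^2+c^2)/4\le a_6^2/2$ then yields $8a_6^2+(\tfrac{13}{2}b+\tfrac{9}{2}c)a_6+O(n)$, as required. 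Adding $e(\mathcal{A}_6,\mathcal{D})\le 2a_6 d+O(n)$ and $\Psi(b,c,d)$ completes the proof.

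The main obstacle I anticipate is verifying that the rotation proofs of Lemmas~\ref{LEMMA:A4-B-P4-free} and~\ref{LEMMA:A4-C-P3-free} go through without loss when we allow $e(Q_i,Q_{i+1})=16$ and replace $\mathcal{A}_4$ by $\mathcal{A}_6$. Concretely, one must reverify that enough ``generic'' $T_i\in\mathcal{B}$ (respectively $M_i\in\mathcal{C}$) exist by excluding the few $T$'s that $3$-see or $2$-see a given $Q\in\mathcal{A}_6$, using the fact that $\mathcal{A}_6\subseteq\mathcal{A}\setminus(\mathcal{A}_1\cup\cdots\cup\mathcal{A}_4)$ imposes precisely the same numerical restrictions on the number of such $T$ as in the $\mathcal{A}_4$ case. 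Modulo this routine (but tedious) verification, the rotation arguments and the Erdős--Gallai forbidden-path bounds transplant cleanly.
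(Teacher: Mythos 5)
Your reduction to bounding $e(\mathcal{A}_6)+e(\mathcal{A}_6,\mathcal{B})+e(\mathcal{A}_6,\mathcal{C})$ is fine (the $\mathcal{D}$- and $\Psi$-terms are handled exactly as in the paper), but the two intermediate inequalities you propose are false, and the error is not of order $O(n)$. For $\mathcal{A}_4$ the key input is Lemma~\ref{LEMMA:Q4Qi}~\ref{LEMMA:Q4Qi-1}, which caps every pair at $15$ edges, so the baseline count is $e(\mathcal{A}_4)\le 7a_4^2+|H|+O(n)$ and the empty-$H$ case produces the coefficient $7\alpha$ in $g_{\alpha}$. For $\mathcal{A}_6$ there is no such cap: pairs may carry $16$ edges, the auxiliary graph $H$ must be defined by $e(Q,Q')=16$, and the baseline becomes $e(\mathcal{A}_6)\le \frac{15a_6^2}{2}+|H|+O(n)$. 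Running your $Y$/$P_4$-free optimization with this (correct) baseline gives, in the regime $a_6\ge\max\{b,c\}$,
\[
e(\mathcal{A}_6)+e(\mathcal{A}_6,\mathcal{B})+e(\mathcal{A}_6,\mathcal{C})\le 8a_6^2+\tfrac{13b}{2}a_6+\tfrac{9c}{2}a_6+\tfrac{b^2+c^2}{4}+O(n),
\]
which exceeds the second branch of the lemma by $\frac{b^2+c^2}{4}$; a similar excess $\frac{(a_6-\min\{b,c\})^2}{4}$ appears in the part of the first branch where $\min\{b,c\}<a_6\le b+c$. Your computation hides this only because you kept the $\mathcal{A}_4$ constants $\frac{7a_6^2}{2}$ and $\frac{15a_6^2}{4}$ verbatim, i.e.\ you implicitly assumed $e(Q,Q')\le 15$ for $\mathcal{A}_6$-pairs. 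The quadratic excess cannot be absorbed into $\Psi(b,c,d)$ or $C n$, so the stated lemma is not proved by this route.

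The paper avoids the problem by working pair-by-pair rather than through a global heavy set $Y$: Lemma~\ref{LEMMA:A7BCD-atom} shows that for each pair $Q_1,Q_2\in\mathcal{A}_6$ with $e(Q_1,Q_2)=16$ there is at most one $T\in\mathcal{B}$ with $e(Q_i,T)\ge 7$ for both $i$, and at most one such $M\in\mathcal{C}$, whence $e(Q_1\cup Q_2,\mathcal{B}\cup\mathcal{C})\le 13b+9c+O(1)$ for $H$-pairs versus $14b+10c+O(1)$ for non-$H$-pairs. Averaging over all $\binom{a_6}{2}$ pairs couples each unit of $|H|$ (each unit of excess of $e(\mathcal{A}_6)$ above $\frac{15a_6^2}{2}$) with a loss of exactly $\frac{b+c}{a_6-1}$ in $e(\mathcal{A}_6,\mathcal{B}\cup\mathcal{C})$, and the resulting linear trade-off $\frac{a_6-(b+c)}{a_6-1}\,|H|$ yields both branches with no quadratic remainder. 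To salvage your plan you would need a per-$H$-edge statement of this strength; a $P_4$-freeness statement about a heavy subset is structurally too weak here.
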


\begin{lemma}\label{LEMMA:A6BCD-upper-bound-a}
    There exist constants $\mu > 0$ and $C_{\ref{LEMMA:A6BCD-upper-bound-a}} > 0$ such that if 
    \begin{align*}
        e(\mathcal{A}_6) \ge \frac{15 a_6^2}{2} +\mu a_6
        \quad\text{and}\quad 
        a_6
        \ge \frac{3b+2c+d}{2} = \frac{n-4k}{2},
    \end{align*}
    then 
    \begin{align*}
        e(\mathcal{A}_6 \cup \mathcal{B} \cup\mathcal{C} \cup\mathcal{D} )
        \le 8a_6^2 + (3b+2c+d)^2 + C_{\ref{LEMMA:A6BCD-upper-bound-a}} n.
    \end{align*}
\end{lemma}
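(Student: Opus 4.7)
The strategy leverages the density hypothesis $e(\mathcal{A}_6) \ge \tfrac{15}{2} a_6^2 + \mu a_6$ to produce many \emph{tight} pairs $(Q, Q') \in \binom{\mathcal{A}_6}{2}$ with $e(Q, Q') = 16$; each such pair yields a $K_8$ on $V(Q) \cup V(Q')$ in $G$. Writing $h$ for the number of tight pairs, the trivial bound $e(\mathcal{A}_6) \le 15 \binom{a_6}{2} + h + 6 a_6$ combined with the hypothesis gives $h \ge (\mu + \tfrac{3}{2}) a_6$, so we have at least linearly many $K_8$'s available. The plan is to use these $K_8$'s together with the lex-maximality of $(|\mathcal{A}|, |\mathcal{B}|, |\mathcal{C}|, |\mathcal{D}|)$ to sharpen the cross-edge count between $V(\mathcal{A}_6)$ and $T := V(\mathcal{B}) \cup V(\mathcal{C}) \cup V(\mathcal{D})$ beyond the basic bound in Lemma~\ref{LEMMA:Q5Q6BCD}.

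The main technical tool is a family of rotation lemmas. Prototypically: if $v_1, v_2 \in \mathcal{D}$ are distinct and $Q_1, Q_2 \in \mathcal{A}_6$ form a tight pair with $v_i$ seeing $Q_i$, then the two $K_4$'s $\{v_i\} \cup (N_G(v_i) \cap V(Q_i))$ are disjoint (since they lie in different halves of the $K_8$) and the two remaining vertices of the $K_8$ are mutually adjacent and form a new $K_2$ in $\mathcal{C}$, giving lex change $(0, 0, +1, -2)$---contradicting maximality. Analogous rotations with two distinct $\mathcal{C}$-members yield $(+1, 0, -2, 0)$; with one $\mathcal{D}$- and one $\mathcal{C}$-member, $(0, +1, -1, -1)$; and similar (more delicate) arguments handle interactions with triangles in $\mathcal{B}$ via the notions of $2$-seeing and $1$-seeing. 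Defining seer maps $\sigma \colon X \to \mathcal{D}$ and $\tau \colon Y \to \mathcal{C}$, where $X, Y \subseteq \mathcal{A}_6$ are the members seen by some $\mathcal{D}$-vertex or $\mathcal{C}$-edge respectively (each well-defined because $\mathcal{A}_6 \cap (\mathcal{A}_{4,2} \cup \mathcal{A}_{2,3}) = \emptyset$), these rotations force every tight-pair edge inside $X$ (resp.\ $Y$) to lie in a common fiber of $\sigma$ (resp.\ $\tau$), and no tight-pair edge to cross $X \setminus Y$ to $Y \setminus X$.

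Combining these structural constraints with $|H| \ge (\mu + \tfrac{3}{2}) a_6$ and the hypothesis $a_6 \ge N/2$, where $N := 3b+2c+d$, I would bound $|X|, |Y|$ by $O(N + n/a_6)$ and propagate the constraints through Lemma~\ref{LEMMA:Q5Q6BCD} to obtain the improved cross-edge bound $e(\mathcal{A}_6, T) \le N^2 - \Psi(b,c,d) + C'n$. Adding $e(\mathcal{A}_6) \le 8 a_6^2 - 2 a_6$ and $e(\mathcal{B} \cup \mathcal{C} \cup \mathcal{D}) \le \Psi(b,c,d)$ from Fact~\ref{FACT:edges-ABCD} then yields the target $e(\mathcal{A}_6 \cup \mathcal{B} \cup \mathcal{C} \cup \mathcal{D}) \le 8 a_6^2 + (3b+2c+d)^2 + Cn$.

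The main obstacle lies in the rotation analysis itself: ensuring strict rather than merely tied lex improvements across \emph{all} configurations, particularly those involving $\mathcal{B}$-triangles (where the relevant $2$-see/$1$-see notions produce more cases) and those involving combinations of multiple families simultaneously. The choice of $\mu$ must be large enough that the tight-pair count $h$ exceeds the combined effects of $|X \cup Y|$ and the $O(a_6)$ slack in Lemma~\ref{LEMMA:Q5Q6BCD}, so that the partition-like structure imposed by $\sigma, \tau$ translates into the required quantitative saving against the algebraic identity $(3b+2c+d)^2 - \Psi(b,c,d) = 6b^2 + 8bc + 4bd + 3c^2 + 3cd + d^2$ under the constraint $2a_6 \ge 3b+2c+d$.
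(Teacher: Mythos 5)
Your proposal has a fatal gap at its central quantitative step. The intermediate bound you aim for,
\begin{align*}
    e(\mathcal{A}_6, V(\mathcal{B}\cup\mathcal{C}\cup\mathcal{D})) \le N^2 - \Psi(b,c,d) + C'n, \qquad N := 3b+2c+d,
\end{align*}
is false. Test it against the extremal configuration $E_5(n,k)$ in the regime where both hypotheses of the lemma hold (say $k$ close to $n/4$, so $a_6 = k \gg N = n-4k$): there the uncovered part $T$ sits essentially in $Y_4\cup Y_5\cup Y_6$, each of its roughly $N$ vertices has about $\tfrac{4N}{3}$ neighbours in $V(\mathcal{A}_6) \approx X\cup Y_1\cup Y_2\cup Y_3$, giving $e(\mathcal{A}_6, T) \approx \tfrac{4N^2}{3}$, whereas with $b\approx N/3$, $c=d=0$ one has $N^2 - \Psi = N^2 - 3b^2 \approx \tfrac{2N^2}{3}$. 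The total $8a_6^2 + N^2$ is nevertheless attained in $E_5$ because $e(\mathcal{A}_6) \approx 8a_6^2 - \tfrac{2N^2}{3}$ falls short of the trivial bound $8a_6^2$ by exactly the excess in the cross term. So any term-by-term strategy that bounds $e(\mathcal{A}_6)$, $e(\mathcal{A}_6,T)$ and $e(T)$ separately and adds them up cannot close: the three quantities trade off against one another, and the cancellation of the $a_6 N$-order contributions happens only at the level of the sum.

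This is precisely why the paper does not decompose the count. It instead reduces the lemma to the purely structural statement that every $K_4$-tiling of $H := G[V(\mathcal{A}_6)\cup A]$, $A := V(\mathcal{B}\cup\mathcal{C}\cup\mathcal{D})$, covers at most three vertices of $A$ (Lemma~\ref{LEMMA:A6BCD-upper-bound-a-reduced}), and then applies a global Tur\'{a}n-type theorem for this property (Proposition~\ref{PROP:K4-tiling-cover-three-vtx}, built on the exact Allen--B\"{o}ttcher--Hladk\'{y}--Piguet result, Theorem~\ref{THM:ABHP14-exact}) to the whole induced graph $H$ on $h = 4a_6 + N$ vertices with $\alpha = N$; evaluating $P(h,\alpha)$ yields $8a_6^2 + N^2 + O(n)$ with the $a_6N$ terms cancelling internally. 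Your rotation ideas (many tight pairs giving $K_8$'s, lex-improving switches against $\mathcal{B},\mathcal{C},\mathcal{D}$) are in the right spirit --- they are essentially what the paper's Sections~\ref{SUBSEC:A6BCD-connecting}--\ref{SUBSEC:proof-LEMMA:A6BCD-upper-bound-a} do, though at much greater length and with switching paths of up to $18$ copies of $K_4$ rather than single $K_8$ rotations --- but they must be aimed at the tiling statement about $H$, not at an improved cross-edge bound, which does not exist.
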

%
Proof of Lemma~\ref{LEMMA:A6BCD-upper-bound-b} will be presented in Section~\ref{SUBSEC:A6BCD-I}. 
Proof of Lemma~\ref{LEMMA:A6BCD-upper-bound-a} will be presented in Section~\ref{SUBSEC:proof-LEMMA:A6BCD-upper-bound-a} after some necessary preparations.

\subsection{Proof of Lemma~\ref{LEMMA:A6BCD-upper-bound-b}}\label{SUBSEC:A6BCD-I}
In this subsection, we prove Lemma~\ref{LEMMA:A6BCD-upper-bound-b}. 
The key ingredient in the proof is as follows. 
\begin{lemma}\label{LEMMA:A7BCD-atom}
    Suppose that $Q_1, Q_2 \in \mathcal{A}_{6}$ are two distinct members satisfying $e(Q_1, Q_2) = 16$. Then the following statements hold. 
    \begin{enumerate}[label=(\roman*)]
        \item\label{LEMMA:A7BCD-atom-1} There is at most one member $T \in \mathcal{B}$ satisfying 
        \begin{align*}
            e(Q_1, T) \ge 7 \quad\text{and}\quad 
            e(Q_2, T) \ge 7. 
        \end{align*}
        \item\label{LEMMA:A7BCD-atom-2}  There is at most one member $M \in \mathcal{M}$ satisfying 
        \begin{align*}
            e(Q_1, M) \ge 5 \quad\text{and}\quad 
            e(Q_2, M) \ge 5. 
        \end{align*}
    \end{enumerate}
\end{lemma}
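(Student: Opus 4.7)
The plan is to derive a contradiction with the lexicographic maximality of $(|\mathcal{A}|,|\mathcal{B}|,|\mathcal{C}|,|\mathcal{D}|)$ by rearranging the packing so that $|\mathcal{A}|$ strictly increases. The key structural input is that $e(Q_1,Q_2)=16$ forces $Q_1\cup Q_2$ to induce a copy of $K_8$ in $G$: every four-subset already spans a $K_4$ and every five-subset a $K_5$. Combined with the defining inclusions $\mathcal{A}_6\subseteq\mathcal{A}\setminus(\mathcal{A}_1\cup\mathcal{A}_2\cup\mathcal{A}_3)$, which force each $Q_i$ to be $3$-seen by at most one member of $\mathcal{B}$, $2$-seen by at most two members of $\mathcal{B}$, and seen by at most one member of $\mathcal{C}$, this rigidity will drive all the rotations.

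For part (i), I suppose toward contradiction that distinct $T_1,T_2\in\mathcal{B}$ both satisfy $e(Q_i,T_j)\ge 7$ for $i,j\in[2]$. By Fact~\ref{FACT:e-Q-T-7-edges} each $T_j$ $3$-sees, $2$-sees, or $1$-sees each $Q_i$. The target rotation is to find distinct $v_1,v_2\in Q_1\cup Q_2$ with $v_j$ adjacent to all of $T_j$: then $T_1\cup\{v_1\}$, $T_2\cup\{v_2\}$, and any four vertices of $(Q_1\cup Q_2)\setminus\{v_1,v_2\}$ are three vertex-disjoint copies of $K_4$ (the remaining two vertices, adjacent in the $K_8$, absorb into $\mathcal{C}$), giving $|\mathcal{A}|{+}1$. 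Writing $A_j\coloneqq\{v\in Q_1\cup Q_2 : v\text{ is adjacent to all of }T_j\}$, if $T_1$ $3$-sees $Q_1$ and $T_2$ $3$-sees $Q_2$ (up to symmetry), the witnesses $v_1\in Q_1\cap A_1$, $v_2\in Q_2\cap A_2$ are automatically disjoint; the $\mathcal{A}_6$-constraint rules out both $T_j$'s $3$-seeing the same $Q_i$, and also excludes $A_1=A_2=\{v\}$, because that single $v$ would put its $Q_i$ in $\mathcal{A}_1$. In the remaining configurations some $T_j$ only $2$- or $1$-sees both $Q_i$; I then switch to a rotation using the $2$-see quadruple $\{u_1,u_2,v,v'\}\subseteq T_j\cup Q_i$ itself as a $K_4$, paired with $T_{3-j}\cup\{v_{3-j}\}$ (when $T_{3-j}$ does $3$-see) and a third $K_4$ drawn from the $K_5$ in $(Q_1\cup Q_2)\setminus\{v,v',v_{3-j}\}$. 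When neither $T_j$ $3$-sees any $Q_i$ but both $2$-see, two disjoint $2$-see quadruples combined with the residual $K_4$ of $Q_1\cup Q_2$ again give three $K_4$'s. The purely $1$-see-only configuration is ruled out because $e(Q_i,T_j)\ge 8$ already forces a $2$-see via Fact~\ref{FACT:3-by-4-bipartite-graph}~\ref{FACT:3-by-4-bipartite-graph-4}, leaving only the edge-extremal subcase that is handled via the explicit $1$-see structure exactly as in the proof of Lemma~\ref{LEMMA:A4-B-P4-free}.

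For part (ii), I apply Fact~\ref{FACT:AC-not-see-5-edges}: each hypothetical $M_j\in\mathcal{C}$ with $e(Q_i,M_j)\ge 5$ either sees $Q_i$ (producing a $K_4$ $M_j\cup\{v,v'\}$) or has a unique common neighbor $q_i^{(j)}\in Q_i$ of its two endpoints and exactly $5$ edges to $Q_i$. Since each $Q_i$ is seen by at most one $\mathcal{C}$-member, if $M_1$ sees $Q_1$ and $M_2$ sees $Q_2$ then $M_1\cup\{v_1,v_2\}$, $M_2\cup\{w_1,w_2\}$, and the $K_4$ spanned by the remaining four vertices of $Q_1\cup Q_2$ complete three disjoint $K_4$'s. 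When some $M_j$ sees neither $Q_i$, the set $M_j\cup\{q_1^{(j)},q_2^{(j)}\}$ is still a $K_4$, because $q_1^{(j)}q_2^{(j)}$ is an edge of the $K_8$; combining this with the partner's $K_4$ and a $K_4$ from the residue, while resolving potential collisions by choosing alternative rotation vertices inside the abundant $K_5$ or $K_6$ residue of $Q_1\cup Q_2$, again yields $|\mathcal{A}|{+}1$. The main obstacle throughout is the bookkeeping for the non-$3$-see cases in part (i): no single vertex completes a $K_4$ with an entire $T_j$, so collision-free rotation vertices inside $Q_1\cup Q_2$ must be produced by pigeonholing the $\ge 14$ edges of each $T_j$ into $Q_1\cup Q_2$ against Fact~\ref{FACT:3-by-4-bipartite-graph}, in the same spirit as Lemma~\ref{LEMMA:A4-B-P4-free} and Claim~\ref{CLAIM:A3QQ-MM-56}.
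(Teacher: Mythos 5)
Your high-level strategy matches the paper's: contradict the lexicographic maximality of $(|\mathcal{A}|,|\mathcal{B}|,|\mathcal{C}|,|\mathcal{D}|)$ via rotations that exploit the fact that $e(Q_1,Q_2)=16$ makes $Q_1\cup Q_2$ a $K_8$, organizing the rotations by the $3$-see/$2$-see/$1$-see pattern of the $T_j$'s (resp.\ the see/not-see structure of the $M_j$'s). But two concrete gaps remain.

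In part (ii), you insist on producing three vertex-disjoint $K_4$'s from the $12$ vertices $M_1\cup M_2\cup Q_1\cup Q_2$ so as to increase $|\mathcal{A}|$ directly, and when some $M_j$ sees neither $Q_i$ you want $M_j\cup\{q_1^{(j)},q_2^{(j)}\}$ to be one of them. This fails when both $M_1$ and $M_2$ are of the non-seeing type and $q_1^{(1)}=q_1^{(2)}$: by Fact~\ref{FACT:AC-not-see-5-edges} the common neighbour in $Q_1$ is \emph{unique} for each $M_j$, so there is no ``alternative rotation vertex inside the residue'' to pick -- the two candidate $K_4$'s are rigidly determined and collide. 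Note also that a priori there is no reason $q_2^{(1)}\ne q_2^{(2)}$ either. The paper avoids this entirely by using a weaker rotation that is collision-free: one vertex $u\in M_1$ has $\geq 3$ neighbours in $Q_1$, giving a $K_4$ $\{u\}\cup N_{Q_1}(u)$; the common neighbour of $M_2$ in $Q_2$ gives a $K_3$ $M_2\cup\{v\}$; the remaining $4$ vertices of $Q_1\cup Q_2$ span a $K_4$ via the $K_8$; and the other $M_1$-vertex goes to $\mathcal{D}$. This keeps $|\mathcal{A}|$ fixed but increases $|\mathcal{B}|$, which still contradicts maximality -- and it needs no disjointness beyond what is automatic. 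You should weaken your target accordingly.

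In part (i), your description of the ``both $T_j$ $2$-see'' case also has a disjointness issue that you do not address: for $Q_i\in\mathcal{A}_6$, being $2$-seen by up to two members of $\mathcal{B}$ is allowed, so both $T_1$ and $T_2$ may $2$-see the \emph{same} $Q_1$, and then the two $2$-see pairs $\{v_1,v_1'\},\{v_2,v_2'\}\subseteq Q_1$ can overlap, breaking the ``two disjoint $2$-see quadruples'' rotation. The paper sidesteps this by structuring the cases so that whenever only one $Q_i$ is $2$-seen, the opposite $T_j$ is forced (by Fact~\ref{FACT:e-Q-T-7-edges} together with the exclusion of $3$-see and $2$-see) to $1$-see $Q_{3-i}$; it then locates a vertex $s_1\in Q_1$ outside the $2$-see pair that has $\geq 2$ neighbours in $T_2$ (a pigeonhole on $e(\{s_1,t_1\},T_2)\geq 3$) and threads the three $K_4$'s through $s_1$, the $1$-see witness in $Q_2$, and the residual $K_8$-edges. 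Likewise, the final ``purely $1$-see'' configuration does need its own argument: deferring to Lemma~\ref{LEMMA:A4-B-P4-free} is not literal, since that lemma rotates along a $P_4$ in $\mathcal{A}_4$ with $T_i$'s $1$-seeing consecutive pairs, whereas here the two $T_j$'s both $1$-see both $Q_i$'s over a $K_8$; the correct move is a pigeonhole over the three pairs of $T_2$ to find one whose common neighbours in $Q_1$ and $Q_2$ avoid the pair already used by $T_1$.
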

\begin{proof}[Proof of Lemma~\ref{LEMMA:A7BCD-atom}]
    Let us first prove~\ref{LEMMA:A7BCD-atom-1}. 
    Suppose to the contrary that there exist two members $T_1, T_2 \in \mathcal{B}$ such that 
    \begin{align*}
        \min\left\{e(Q_i, T_j) \colon (i,j) \in [2] \times [2]\right\} \ge 7.
    \end{align*}
    Assume that $Q_{i} = \{p_i, q_i, s_i, t_i\}$ and $T_i = \{u_i, v_i, w_i\}$ for $i \in [2]$.

\medskip 

\begin{figure}[H]
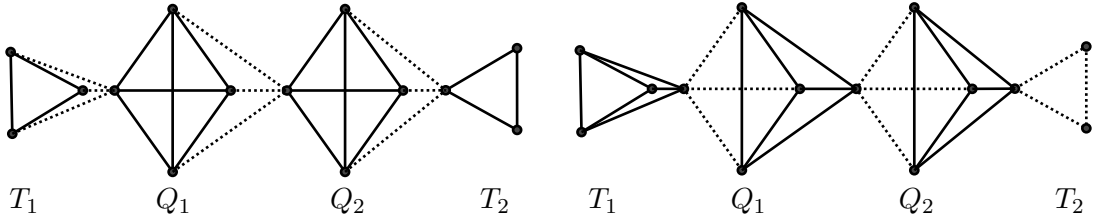

\centering

\tikzset{every picture/.style={line width=1pt}} 


\caption{Left: $Q_1$ is $3$-seen by $T_1 \in \mathcal{B}$. Right: after rotation, the numbers of vertex-disjoint copies of $K_4$ increases by one.} 
\label{Fig:Q6BCD-a}
\end{figure}

\textbf{Case 1}: There exists a pair $(i,j) \in [2] \times [2]$ such that $Q_i$ is $3$-seen by $T_j$. 

By symmetry, we may assume that $Q_1$ is $3$-seen by $T_1$. It follows from the definition that there exists a vertex in $Q_1$ that is adjacent to all vertices in $T_1$, and by symmetry, we may assume that $p_1$ is such a vertex. 
Since $e(Q_2, T_2) \ge 7$, there exists a vertex in $T_2$ that is adjacent to at least $3$ vertices in $Q_2$, and by symmetry, we may assume that $p_2, q_2, s_2$ are these three vertices in $Q_2$.  Since $e(Q_1, Q_2) = 16$, the vertex $p_2$ is adjacent to all three vertices in $\{q_1, s_1, t_1\}$. However, the rotation shown in Figure~\ref{Fig:Q6BCD-a} would increase the size of $\mathcal{A}$, contradicting the maximality of $(|\mathcal{A}|, |\mathcal{B}|, |\mathcal{C}|, |\mathcal{D}|)$. 

\medskip 

\begin{figure}[H]
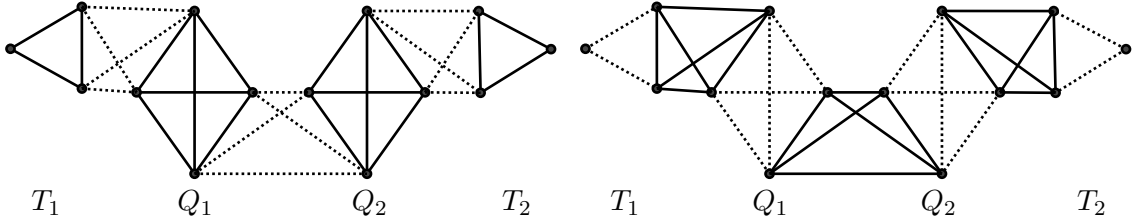

\centering

\tikzset{every picture/.style={line width=1pt}} 


\caption{Left: $Q_1$ is $2$-seen by $T_1 \in \mathcal{B}$ and $Q_2$ is $2$-seen by $T_2 \in \mathcal{B}$. Right: after rotation, the numbers of vertex-disjoint copies of $K_4$ increases by one.} 
\label{Fig:Q6BCD-b}
\end{figure}

\textbf{Case 2}: $Q_1$ is $2$-seen by $T_1$ and $Q_2$ is $2$-seen by $T_2$ (or $Q_1$ is $2$-seen by $T_2$ and $Q_2$ is $2$-seen by $T_1$). 

The proof follows a similar reasoning as in Case 1 (see Figure~\ref{Fig:Q6BCD-b}), so we omit the details. 

\medskip

\begin{figure}[H]
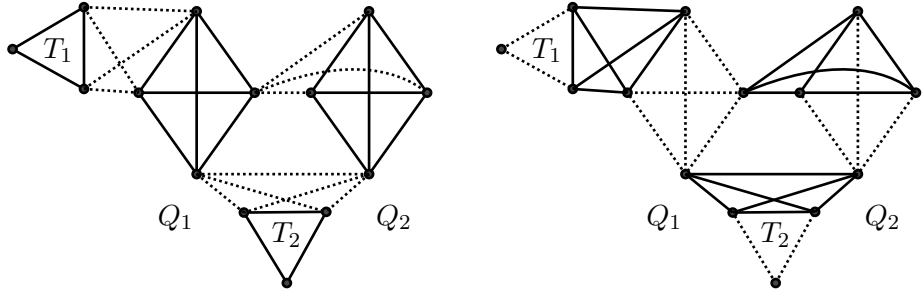

\centering

\tikzset{every picture/.style={line width=1pt}} 


\caption{Left: $Q_1$ is $2$-seen by $T_1 \in \mathcal{B}$. Right: after rotation, the numbers of vertex-disjoint copies of $K_4$ increases by one.} 
\label{Fig:Q6BCD-b2}
\end{figure}

\textbf{Case 3}: There exists a pair $(i,j) \in [2] \times [2]$ such that $Q_i$ is $2$-seen by $T_j$. 

By symmetry, we may assume that $Q_1$ is $2$-seen by $T_1$. By the definition of $2$-see, there are two vertices in $T_1$, say $\{u_1, v_1\}$, that are both adjacent to two vertices in $Q_1$, say $\{p_1, q_1\}$. 
From Cases 1 and 2, we conclude that $Q_2$ neither $3$-sees nor $2$-sees $T_2$. Combining this with  Fact~\ref{FACT:e-Q-T-7-edges} and the assumption $e(Q_2, T_2) \ge 7$, it follows that $Q_2$ is $1$-seen by $T_2$. 

From Case 1, we conclude that no vertex in $Q_1$ is adjacent to all three vertices in $T_2$. This implies that $e(\{s_1, t_1\}, T_2) \ge 7 - 2\cdot 2 = 3$. So there exists a vertex in $\{s_1, t_1\}$, say $s_1$, that is adjacent to at least two vertices in $T_2$, say $u_2$ and $v_2$. 
Recall that $Q_2$ is $1$-seen by $T_2$. By symmetry, we may assume that $p_2$ is the common neighbor of $u_2$ and $v_2$ in $Q_2$. 
Since $e(Q_1, Q_2) = 16$, the vertex $t_1$ is adjacent to all three vertices in $\{q_2, s_2, t_2\}$. 
However, the rotation shown in Figure~\ref{Fig:Q6BCD-b2} would increase the size of $\mathcal{A}$, contradicting the maximality of $(|\mathcal{A}|, |\mathcal{B}|, |\mathcal{C}|, |\mathcal{D}|)$. 

\medskip 

\begin{figure}[H]
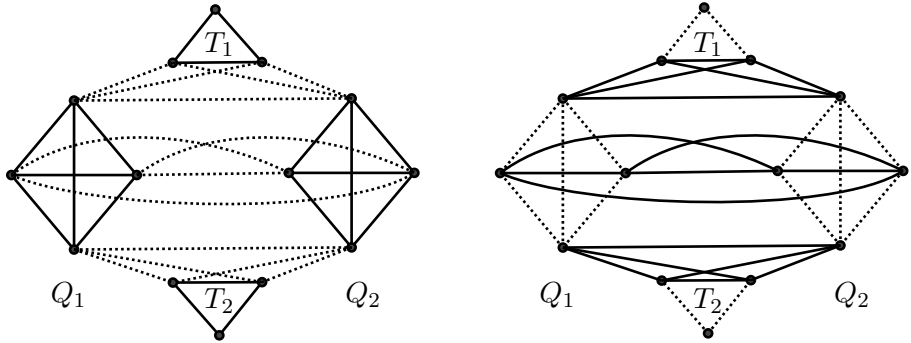

\centering

\tikzset{every picture/.style={line width=1pt}} 


\caption{A rotation that increases the numbers of vertex-disjoint copies of $K_4$.} 
\label{Fig:Q6BCD-c}
\end{figure}

\textbf{Case 4}: For each $i\in [2]$, $Q_{i}$ is not $3$-seen or $2$-seen by any $T_j$. 

It follows from Fact~\ref{FACT:e-Q-T-7-edges} that, for each $(i,j) \in [2] \times [2]$, $Q_i$ is $1$-seen by $T_j$. 
By symmetry, we may assume that $p_1 \in Q_1$ and $p_2 \in Q_2$ are adjacent to both vertices in $\{u_1, v_1\} \subseteq  T_1$. 
Since $T_2$ contains three pairs, at least one pair must have the property that its common neighbor in $Q_1$ is different from $p_1$, and its common neighbor in $Q_2$ is different from $p_2$. 
By symmetry, we may assume $\{u_2, v_2\} \subseteq T_2$ is such a pair in $T_2$, and that $q_1\in Q_1$ and $q_2 \in Q_2$ are the common neighbors of $\{u_2, v_2\}$ in $Q_1$ and $Q_2$, respectively. 
Since $e(Q_1, Q_2) = 16$, both $s_1$ and $t_1$ are adjacent to $s_2$ and $t_2$. 
However, the rotation shown in Figure~\ref{Fig:Q6BCD-c} would increase the size of $\mathcal{A}$, contradicting the maximality of $(|\mathcal{A}|, |\mathcal{B}|, |\mathcal{C}|, |\mathcal{D}|)$. 
This completes the proof of Lemma~\ref{LEMMA:A7BCD-atom}~\ref{LEMMA:A7BCD-atom-1}. 

\medskip 

Next, we prove Lemma~\ref{LEMMA:A7BCD-atom}~\ref{LEMMA:A7BCD-atom-2}. Suppose to the contrary that there exist two members $M_1, M_2 \in \mathcal{C}$ such that 
\begin{align*}
    \min\left\{e(Q_i, M_j) \colon (i,j) \in [2] \times [2]\right\} \ge 5.
\end{align*}

\begin{figure}[H]
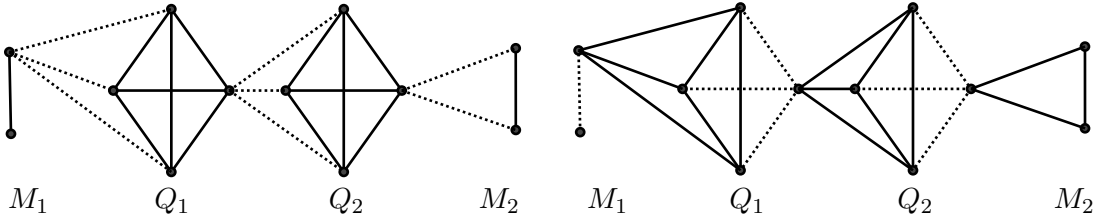

\centering

\tikzset{every picture/.style={line width=1pt}} 


\caption{After rotation, the number of vertex-disjoint copies of $K_4$ remains the same, while the number of vertex-disjoint copies of $K_3$ increases by one.} 
\label{Fig:Q6BCD-d}
\end{figure}

Since $e(Q_1, M_1) \ge 5$, there exists a vertex in $M_1$ that has at least three neighbors in $Q_1$. 
Similarly, since $e(Q_2, M_2) \ge 5$, there exists a vertex in $Q_2$ that is adjacent to both vertices in $M_2$.
Since $e(Q_1, Q_2) = 16$, the remaining four vertices in $Q_1 \cup Q_2$ induce a copy of $K_4$ in $G$. 
However, the rotation shown in Figure~\ref{Fig:Q6BCD-d} would increase the size of $\mathcal{B}$ while keeping the size of $\mathcal{A}$ unchanged, contradicting the maximality of $\mathcal{B}$. 
This completes the proof of Lemma~\ref{LEMMA:A7BCD-atom}~\ref{LEMMA:A7BCD-atom-2}. 
\end{proof}

We are now ready to present the proof of Lemma~\ref{LEMMA:A6BCD-upper-bound-b}. 
\begin{proof}[Proof of Lemma~\ref{LEMMA:A6BCD-upper-bound-b}]
    Due to Lemma~\ref{LEMMA:Q5Q6BCD}~\ref{LEMMA:Q5Q6BCD-3} and Fact~\ref{FACT:edges-ABCD}, it suffices to prove that 
     \begin{align*}
        e(\mathcal{A}_6) + e(\mathcal{A}_6, \mathcal{B} \cup \mathcal{C})
        \le O(n) + 
        \begin{cases}
            \frac{15 a_6^2}{2} + (7b+5c)a_6, &\quad\text{if}\quad a_6 \le b+c, \\[0.5em]
            8 a_{6}^{2} + \left(\frac{13 b}{2} + \frac{9c}{2}\right) a_6, &\quad\text{if}\quad a_6 \ge b+c.
        \end{cases}
    \end{align*}
    Let us define an auxiliary graph $H$ whose vertex set is $\mathcal{A}_{6}$ and two members $Q_{1}, Q_{2} \in \mathcal{A}_{6}$ are adjacent in $H$ iff $e(Q_{1}, Q_{2}) = 16$. 
    Notice from the definition of $H$ that 
    \begin{align}\label{equ:A7BCD-upper-bound-II-A6}
        e(\mathcal{A}_6)
        \le 6 a_6 + 16 |H| + 15 \left(\binom{a_6}{2} - |H|\right)
        =  6 a_6 + 15 \binom{a_6}{2}+ |H|.
    \end{align}
    It follows from Lemma~\ref{LEMMA:Q5Q6BCD} that for every pair $\{Q_1, Q_{2}\} \in \overline{H}$, we have
    \begin{align}\label{equ:A6BCD-Q1Q2-BC-H-bar}
        e(Q_1 \cup Q_{2}, \mathcal{B} \cup \mathcal{C})
        \le 2\cdot 7 \cdot (b-2) + 2 \cdot 3\cdot 8 + 2\cdot 5 \cdot (c-1) + 8
        = 14b + 10c + 18.
    \end{align}
    It follows from Lemma~\ref{LEMMA:A7BCD-atom} that for every pair $\{Q_1, Q_{2}\} \in H$, we have 
    \begin{align}\label{equ:A6BCD-Q1Q2-BC-H}
        e(Q_1 \cup Q_{2}, \mathcal{B} \cup \mathcal{C}) 
        \le 13 (b-1) + 3\cdot 8 + 9(c-1)+2\cdot 8
        = 13 b + 9 c + 18. 
    \end{align}
    Combining~\eqref{equ:A6BCD-Q1Q2-BC-H-bar} and~\eqref{equ:A6BCD-Q1Q2-BC-H} and using the fact $|H| + |\overline{H}| = \binom{a_6}{2}$, we obtain 
    \begin{align*}
        e(\mathcal{A}_6, \mathcal{B} \cup \mathcal{C})
        & = \frac{1}{a_6 - 1}\sum_{\{Q_1,Q_2\} \subseteq   \mathcal{A}_6}  e(Q_1 \cup Q_{2}, \mathcal{B} \cup \mathcal{C}) \\[0.5em]
        & = \frac{1}{a_6-1} \left(\sum_{\{Q_1,Q_2\} \in H}  e(Q_1 \cup Q_{2}, \mathcal{B} \cup \mathcal{C}) + \sum_{\{Q_1,Q_2\} \in \overline{H}}  e(Q_1 \cup Q_{2}, \mathcal{B} \cup \mathcal{C}) \right) \\[0.5em]
        & \le \frac{(13 b + 9 c + 18)|H| + (14b + 10c + 18)|\overline{H}|}{a_6-1} 
        = (7b+5c+9) a_6 - \frac{(b+c)|H|}{a_6-1}. 
    \end{align*}
    Combining this with~\eqref{equ:A7BCD-upper-bound-II-A6}, we obtain 
    \begin{align}\label{equ:A6BCD-A6-BC}
        e(\mathcal{A}_6) + e(\mathcal{A}_6, \mathcal{B} \cup \mathcal{C})
        & \le  6 a_6 + 15 \binom{a_6}{2}+ |H| + (7b+5c+9) a_6 - \frac{(b+c)|H|}{a_6-1} \notag \\[0.5em]
        & = \frac{15 a_6^2}{2} + (7b+5c)a_6 + \frac{15 a_6}{2} + \left(1 - \frac{b+c}{a_6-1}\right) |H| \notag \\[0.5em]
        & = \frac{15 a_6^2}{2} + (7b+5c)a_6 + \frac{a_6 - (b+c)}{a_6-1}  |H| + \frac{15 a_6}{2}.
    \end{align}
    If $a_6 > b+c$, then $\frac{a_6 - (b+c)}{a_6-1} > 0$, and hence,~\eqref{equ:A6BCD-A6-BC} continues as 
    \begin{align*}
        e(\mathcal{A}_6) + e(\mathcal{A}_6, \mathcal{B} \cup \mathcal{C})
        & \le \frac{15 a_6^2}{2} + (7b+5c)a_6 + \frac{a_6 - (b+c)}{a_6-1}  \binom{a_6}{2} + \frac{15 a_6}{2} \\[0.5em]
        & = 8 a_6^2 + \frac{13b+9 c}{2} a_6 + \frac{15 a_6}{2},
    \end{align*}
    as desired. 
    
    If $a_6 \le b+c$, then $\frac{a_6 - (b+c)}{a_6-1} \le 0$, and hence,~\eqref{equ:A6BCD-A6-BC} continues as 
    \begin{align*}
        e(\mathcal{A}_6) + e(\mathcal{A}_6, \mathcal{B} \cup \mathcal{C})
        & \le \frac{15 a_6^2}{2} + (7b+5c)a_6 + \frac{a_6 - (b+c)}{a_6-1}  \cdot 0 + \frac{15 a_6}{2} \\[0.5em]
        & = \frac{15 a_6^2}{2} + (7b+5c)a_6 + \frac{15 a_6}{2}, 
    \end{align*}
    also as desired. 
    This completes the proof of Lemma~\ref{LEMMA:A6BCD-upper-bound-b}. 
\end{proof}

\subsection{Preparations for Lemma~\ref{LEMMA:A6BCD-upper-bound-a} \RomanNumeralCaps{1}: an extension of Tur\'{a}n}\label{SUBSEC:Preparations-A6BCD}
The aim of this subsection is to establish the following result (Proposition~\ref{PROP:K4-tiling-cover-three-vtx}), which will be crucial for the proof of Lemma~\ref{LEMMA:A6BCD-upper-bound-a}.

For integers $h, \alpha \ge 0$ satisfying $h \ge 3\alpha$, let 
\begin{align}\label{equ:def-p-h-a}
    P(h,\alpha) 
    \coloneqq 
       \begin{cases}
          \binom{h-3\alpha}{2}+\alpha(2h-3\alpha)+12h,  &\quad\text{if}\quad 3\alpha \le h < 16\alpha,  \\[0.5em]
          \binom{h-3\alpha+9}{2}+ (\alpha-3)(2h-3\alpha+9),  &\quad\text{if}\quad h \ge 16\alpha.
       \end{cases}
\end{align}
Simple calculations show that 
\begin{align}\label{equ:P-h-a-upper-bound}
    P(h,\alpha)
    \le \frac{(h-\alpha)^2}{2} + \alpha^2 + \frac{23 h}{2} + \frac{3 \alpha}{2} + 9.
\end{align}
\begin{proposition}\label{PROP:K4-tiling-cover-three-vtx}
    There exists a constant $N_{\ref{PROP:K4-tiling-cover-three-vtx}} > 0$ such that the following holds for all $h \ge N_{\ref{PROP:K4-tiling-cover-three-vtx}}$. 
    Suppose that $H$ is a graph on $h$ vertices and there exists a vertex set $A \subseteq  V(H)$ of size $\alpha$ such that every $K_4$-tiling in $H$ contains at most three vertices in $A$. Then 
    \begin{align*}
        |H|
        \le P(h,\alpha). 
    \end{align*}
\end{proposition}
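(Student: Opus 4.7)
The plan is to reduce the proposition to the special case in which no vertex of $A$ lies in any $K_4$ of $H$, and then to prove a Tur\'{a}n-type bound in that special case.

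I would first choose a $K_4$-tiling $\mathcal{T}$ in $H$ that maximizes $|V(\mathcal{T})\cap A|$; by hypothesis $t:=|V(\mathcal{T})\cap A|\le 3$. After discarding members of $\mathcal{T}$ that are disjoint from $A$ (which does not decrease $|V(\mathcal{T})\cap A|$), I may assume $|\mathcal{T}|\le t$, so $U:=V(\mathcal{T})$ satisfies $|U|\le 12$. The maximality of $\mathcal{T}$ forces the following crucial structural property: no $K_4$ in $H-U$ contains any vertex of $A':=A\setminus U$; for otherwise such a $K_4$ could be appended to $\mathcal{T}$, yielding a tiling covering $t+1$ vertices of $A$, a contradiction. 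Equivalently, for every $a\in A'$ the neighborhood of $a$ in $H-U$ is triangle-free. This reduces the proposition to a bound in the special setting where every exceptional vertex has a triangle-free neighborhood.

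The technical heart of the argument would then be the following auxiliary lemma, which is essentially the $t=0$ case of the proposition restated in self-contained form: \emph{if $G$ is a graph on $m$ vertices and $A^\ast\subseteq V(G)$ is a set of $\beta$ vertices each having a triangle-free neighborhood in $G$, then $|G|\le P(m,\beta)$}. I would prove this lemma by processing the vertices of $A^\ast$ in decreasing order of their $G$-degree. A high-degree vertex $a\in A^\ast$ has $N(a)$ triangle-free, so by Mantel's theorem $e(N(a))$ is tightly controlled; combined with a careful double-count of edges incident to $a$, this bounds the contribution from $a$, and removing $a$ and its incident edges reduces the problem so that an induction on $\beta$ closes the argument. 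The two regimes of $P$ correspond to two qualitatively different extremal families: for $m<16\beta$ the extremal graph has many $A^\ast$-vertices with bipartite-like neighborhoods contributing the term $\beta(2m-3\beta)$, whereas for $m\ge 16\beta$ only three $A^\ast$-vertices are embedded in a near-clique of size $m-3\beta+9$, with the remaining $\beta-3$ attached by degree-two pendants; the crossover at $m=16\beta$ marks the transition where the second family overtakes the first.

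With the lemma in hand I would apply it to $H':=H-U$, which has $m=h-4t$ vertices and the set $A'$ of size $\beta=\alpha-t$, obtaining $|H-U|\le P(h-4t,\alpha-t)+O(h)$. Combining this with the trivial bounds $e_H(U)\le \binom{|U|}{2}\le 66$ and $e_H(U,V(H)\setminus U)\le |U|(h-|U|)\le 12h$, and then using $t\le 3$ together with the slack $+12h$ (first case) or $+9$ (second case) built into $P$ to absorb the cumulative $O(h)$ error, we obtain the desired bound $|H|\le P(h,\alpha)$. The main obstacle is the auxiliary lemma itself: tracking the coefficients precisely to match both pieces of the piecewise definition of $P$, and handling the transition at $m=16\beta$ uniformly. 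The deliberate slack $+12h$ and $+9$ built into the definition of $P$ exists precisely to absorb the error terms produced by the degree-ordering reduction inside the lemma and by the removal of the up-to-12 vertices in $U$ in the main argument.
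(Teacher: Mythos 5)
Your first reduction (maximize $|V(\mathcal{T})\cap A|$, conclude $|U|\le 12$ and that no $K_4$ of $H-U$ meets $A\setminus U$) matches the paper's opening step. But there are two genuine problems after that.

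First, your auxiliary lemma is essentially Theorem~\ref{THM:ABHP14-exact} of Allen--B\"ottcher--Hladk\'y--Piguet, an exact Tur\'an-type theorem whose extremal configurations are the graphs $\mathcal{T}_3(m,\beta)$; the paper cites it rather than reproving it. Your sketch (order the vertices of $A^{\ast}$ by degree, apply Mantel to each neighborhood, induct on $\beta$) does not establish it: a single vertex with triangle-free neighborhood may have degree $m-1$, so the per-vertex contribution is not controlled locally, and the bound $\beta(2m-3\beta)$ is a global statement about how many $A^{\ast}$-vertices can simultaneously have large degree into a dense remainder. Your description of the two regimes of $P$ is also off: in the setting of your lemma \emph{all} $\beta$ exceptional vertices are constrained, so the correct bound is $\binom{m-3\beta}{2}+\beta(2m-3\beta)$ for every $m\ge 3\beta$, which is strictly \emph{smaller} than the second-regime formula $\binom{m-3\beta+9}{2}+(\beta-3)(2m-3\beta+9)$ (by $3(m-3\beta+3)$); the latter encodes a situation where only $\beta-3$ vertices are constrained, which never arises inside your lemma.

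Second, and more seriously, the final assembly fails in the regime $h\ge 16\alpha$. There $P(h,\alpha)$ carries no $\Theta(h)$ slack, yet your argument discards up to $12h$ crossing edges at $U$ before invoking the lemma on $H-U$. Writing out the numbers with $t=3$, the bound $\binom{h-12-3(\alpha-3)}{2}+(\alpha-3)\bigl(2(h-12)-3(\alpha-3)\bigr)+12h+66$ exceeds $\binom{h-3\alpha+9}{2}+(\alpha-3)(2h-3\alpha+9)$ by roughly $12\alpha$, so the error is not absorbable. This is exactly why the paper's proof splits into two cases at $h=16\alpha$: for $h\ge 16\alpha$ it uses the stability part of Theorem~\ref{THM:ABHP14-exact}, Proposition~\ref{PROP:T3-h-a-subgraph} and Theorem~\ref{THM:ABHP-density-CH} to find large $K_3$-tilings inside the neighborhoods of the three vertices of $U\cap A$, and then proves (Claim~\ref{CLAIM-3-vtx-A-minus-U}) that no $K_4$ \emph{anywhere in $H$} meets $A\setminus U$, so that Theorem~\ref{THM:ABHP14-exact} can be applied to all of $H$ with exceptional set of size $\alpha-3$ and no edges are thrown away. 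Your proposal contains no substitute for this second phase, so the bound for $h\ge 16\alpha$ is not established.
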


We will need the following result (which is a special case of~{\cite[Theorem~2]{ABHP14}}) by Allen--B{\" o}ttcher--Hladk\'{y}--Piguet~\cite{ABHP14}. 

Let $\mathcal{T}_{3}(h,\alpha)$ denote the family of all $h$-vertex graphs $H$ satisfying the following properties:
\begin{itemize}
    \item The vertex set of $H$ can be partitioned into seven parts: $X$, $Y_1, Y_2, Y_3$, $Z_1, Z_2, Z_3$, such that 
    \begin{align*}
        \alpha \le |Y_1 \cup Z_1| \le |Y_2 \cup Z_2| \le |Y_3 \cup Z_3| \le \alpha+1
        \quad\text{and}\quad 
        |Z_1|+ |Z_2| + |Z_3| = \alpha. 
    \end{align*}
    \item The edge set of $H$ is given by 
    \begin{align*}
        H 
        = \binom{X}{2} \cup K[X, Y_1 \cup Y_2 \cup Y_3] \cup K[Y_1 \cup Z_1, Y_2\cup Z_2, Y_3 \cup Z_3]. 
    \end{align*}
\end{itemize}

\begin{theorem}[Allen--B{\" o}ttcher--Hladk\'{y}--Piguet~\cite{ABHP14}]\label{THM:ABHP14-exact}
    Let $h, \alpha \ge 0$ be integers satisfying $h \ge 3\alpha$. 
    Suppose that $H$ is a graph on $h$ vertices and $A\subseteq  V(H)$ is a vertex set of size $\alpha$ such that not copy of $K_{4}$ in $H$ has nonempty intersection with $A$. 
    Then 
    \begin{align*}
        |H|
        \le \binom{h-3\alpha}{2}+3\alpha^2+(h-3\alpha)\cdot 2\alpha
        =\binom{h-3\alpha}{2}+ \alpha\cdot(2h-3\alpha).
    \end{align*}
    Furthermore, for every $\varepsilon>0$, there exists $\delta >0$ and $N_0$ such that if $h \ge N_0$ and 
    \begin{align*}
        |H|
        \ge \binom{h-3\alpha}{2}+\alpha(2h-3\alpha)-\delta h^2,
    \end{align*}
    then $H$ is a member in $\mathcal{T}_3(h,\alpha)$ after removing and adding at most $\varepsilon h^2$ edges.
\end{theorem}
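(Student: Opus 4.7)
The plan is to reduce Proposition~\ref{PROP:K4-tiling-cover-three-vtx} to Theorem~\ref{THM:ABHP14-exact} via a case analysis based on $\alpha_0 := |A_0|$, where $A_0 \subseteq A$ is the set of vertices of $A$ lying in at least one copy of $K_4$ in $H$.

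First I would handle the case $\alpha_0 \le 3$. Set $A' := A \setminus A_0$, so $|A'| \ge \alpha - 3$ and no $K_4$ of $H$ meets $A'$. Theorem~\ref{THM:ABHP14-exact} applied to $(H, A')$ gives $|H| \le f(|A'|)$, where $f(\alpha') := \binom{h-3\alpha'}{2} + \alpha'(2h-3\alpha')$. A short discrete calculation shows $f(\alpha'+1) - f(\alpha') = -h + 3\alpha' + 3 \le 0$ whenever $\alpha' \le (h-3)/3$, which holds for all $\alpha' \in \{\alpha-3, \alpha-2, \alpha-1\}$ because $h \ge 3\alpha$. Hence $f(|A'|) \le f(\alpha-3) = \binom{h-3\alpha+9}{2} + (\alpha-3)(2h-3\alpha+9)$, which is exactly the second-branch value of $P(h,\alpha)$ and no larger than the first-branch value (the latter carrying the additional $+12h$ term). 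In either range, $|H| \le P(h,\alpha)$.

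Next I would handle $\alpha_0 \ge 4$. Choose a $K_4$-tiling $\mathcal{T}^*$ in $H$ maximizing $s := |V(\mathcal{T}^*) \cap A|$ (primary) and $|\mathcal{T}^*|$ (secondary). The hypothesis forces $s \le 3$, and $\alpha_0 \ge 4$ combined with a greedy extension of $\mathcal{T}^*$ through uncovered $A_0$-vertices yields $s = 3$. Write $V^* := V(\mathcal{T}^*)$ and $t := |\mathcal{T}^*| \in \{1,2,3\}$. By the maximality of $s$, no $K_4$ in $H \setminus V^*$ meets $A \setminus V^*$, so Theorem~\ref{THM:ABHP14-exact} applied to $(H \setminus V^*, A \setminus V^*)$ gives $|H \setminus V^*| \le \binom{(h-4t)-3(\alpha-3)}{2} + (\alpha-3)\bigl(2(h-4t)-3(\alpha-3)\bigr)$, and adding the crude bound $\binom{4t}{2} + 4t(h-4t)$ for edges incident to $V^*$ yields an explicit bound $B_t(h,\alpha)$. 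A direct expansion shows $B_t(h,\alpha) \le P(h,\alpha)$ in the range $3\alpha \le h < 16\alpha$ for each $t \in \{1,2,3\}$, with slack of order $9h$; the $+12h$ correction in the first branch of $P$ is precisely what absorbs the boundary contribution $4t(h-4t) \le 12h$.

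In the remaining range $h \ge 16\alpha$, the crude estimate $B_t(h,\alpha)$ overshoots the target $P(h,\alpha) = \binom{h-3\alpha+9}{2} + (\alpha-3)(2h-3\alpha+9)$ by up to $4t(\alpha-3)$, and the argument must be sharpened. The idea is to improve the degree bound on the three $A$-vertices $a_1, a_2, a_3 \in V^* \cap A$ from the trivial $d_H(a_i) \le h-1$ to $d_H(a_i) \le h - \alpha + O(1)$, by combining (i) the stability part of Theorem~\ref{THM:ABHP14-exact} applied to $H \setminus V^*$, which forces $H \setminus V^*$ to be close in edit distance to a $\mathcal{T}_3(h-4t, \alpha-3)$-structure; and (ii) a tile-swap argument showing that any excess neighbor $u$ of $a_i$ lying in the $Z$-part of this near-extremal structure would, upon replacing the tile of $\mathcal{T}^*$ containing $a_i$ by a $K_4$ through $\{a_i, u, \ldots\}$ with an additional vertex from the common neighborhood, yield a tiling of $A$-coverage $4$, contradicting the choice of $\mathcal{T}^*$. \textbf{The main obstacle} is this sharpening step in the large-$h$ regime: one must carefully couple the stability structure of $H \setminus V^*$ with the tile-swap argument to rule out $Z$-part neighbors of each $a_i$, and possibly reduce to $t = 1$ as the hardest sub-case (since $t = 2, 3$ give more slack in boundary edges but less in the Theorem~\ref{THM:ABHP14-exact} application on $H \setminus V^*$).
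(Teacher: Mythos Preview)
There is a fundamental mismatch: the statement you are asked to address is Theorem~\ref{THM:ABHP14-exact}, which is a result \emph{cited} from Allen--B\"ottcher--Hladk\'y--Piguet and is not proved in the present paper at all---it is invoked as a black box. Your proposal, by contrast, is a sketch of a proof of Proposition~\ref{PROP:K4-tiling-cover-three-vtx}, and it \emph{uses} Theorem~\ref{THM:ABHP14-exact} as an ingredient. As a proof of the stated theorem, your argument is therefore circular.

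If your intent was Proposition~\ref{PROP:K4-tiling-cover-three-vtx}, your outline is close to the paper's. The paper also takes a $K_4$-tiling $\mathcal{K}$ maximizing $|V(\mathcal{K})\cap A|$ (then minimizing $|\mathcal{K}|$), encloses it in a set $U$ of size exactly $12$ with $|U\cap A|=3$, applies Theorem~\ref{THM:ABHP14-exact} to $H-U$, and splits on whether $h<16\alpha$ or $h\ge 16\alpha$. The divergence is in the large-$h$ regime. Rather than your tile-swap scheme bounding each $d_H(a_i)$ by $h-\alpha+O(1)$, the paper works with the degree \emph{sum}: if $\sum_i d_H(v_i)\le 3h-12\alpha-42$ a direct edge count already gives $|H|\le P(h,\alpha)$; otherwise each $|N_H(v_i)|\ge h-12\alpha-42$, and the stability part of Theorem~\ref{THM:ABHP14-exact} combined with Proposition~\ref{PROP:T3-h-a-subgraph} forces $H[N_H(v_i)]$ to be dense enough that Theorem~\ref{THM:ABHP-density-CH} yields a $K_3$-tiling of size $\ge 13$ inside it. Each such triangle forms a $K_4$ with $v_i$, and a greedy argument then shows any $K_4$ meeting $A\setminus U$ can be extended to a $K_4$-tiling covering four vertices of $A$, a contradiction. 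Hence no $K_4$ meets $A\setminus U$, and one further application of Theorem~\ref{THM:ABHP14-exact} (with $\alpha-3$) finishes. Your tile-swap idea is in the right spirit, but---as you yourself flag---the mechanism for producing the extra two vertices of the swapped $K_4$ is not pinned down; the paper's route via large triangle-tilings in each $N_H(v_i)$ sidesteps that difficulty.
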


The following simple but useful property of $\mathcal{T}_3(h,\alpha)$ follows from its definition and elementary calculations (a detailed proof is provided in the Appendix).
\begin{proposition}\label{PROP:T3-h-a-subgraph}
    Let $h, \alpha \ge 0$ be integers satisfying $h \ge 16\alpha $. Suppose that $H \in \mathcal{T}_{3}(h,\alpha)$. Then for every vertex set $S \subseteq  V(H)$ of size at least $h - 12 \alpha - 42$, we have 
    \begin{align*}
        |H[S]|
        \ge \binom{s-\alpha}{2}.  
    \end{align*}
\end{proposition}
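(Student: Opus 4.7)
Let $x' = |S \cap X|$, $y_i' = |S \cap Y_i|$, $z_i' = |S \cap Z_i|$, $w_i' = y_i' + z_i'$, $w' = w_1'+w_2'+w_3'$, and $z' = z_1'+z_2'+z_3'$, so that $s = x' + w'$ and $z' \le \alpha$. I would begin by reading off from the definition of $\mathcal{T}_3(h,\alpha)$ the identity
\[
|H[S]| \;=\; \binom{x'}{2} + x'(y_1'+y_2'+y_3') + \sum_{i<j} w_i' w_j'.
\]
Applying $\binom{a+b}{2}=\binom{a}{2}+ab+\binom{b}{2}$ first to $\binom{s}{2}=\binom{x'+w'}{2}$ and then to $\binom{w'}{2} = \sum_i\binom{w_i'}{2}+\sum_{i<j}w_i'w_j'$ rewrites this cleanly as $|H[S]| = \binom{s}{2} - \sum_{i=1}^3 \binom{w_i'}{2} - x'z'$. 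Since $\binom{s}{2}-\binom{s-\alpha}{2}=\alpha(s-\alpha)+\binom{\alpha}{2}$, the proposition becomes equivalent to the inequality
\[
\sum_{i=1}^{3}\binom{w_i'}{2} + x'z' \;\le\; \alpha(s-\alpha)+\binom{\alpha}{2}, \qquad (\star)
\]
whose left side is precisely $|\overline{H}[S]|$; indeed, by the structure of $\mathcal{T}_3(h,\alpha)$, the complement $\overline{H}$ is the vertex-disjoint union of three cliques on $Y_i\cup Z_i$ ($i\in\{1,2,3\}$) together with the complete bipartite graph between $X$ and $Z_1\cup Z_2\cup Z_3$.

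Next, I would establish $(\star)$ by splitting on $w'$. If $w'\le\alpha$, then $z'\le w'$, hence $x'z'\le x'w'$; combining this with the standard convexity inequality $\sum_i\binom{w_i'}{2}\le\binom{w'}{2}$ gives
\[
\sum_i\binom{w_i'}{2}+x'z' \;\le\; \binom{w'}{2}+x'w' \;=\; \binom{s}{2}-\binom{s-w'}{2} \;\le\; \binom{s}{2}-\binom{s-\alpha}{2},
\]
which is exactly the right side of $(\star)$. If instead $w'>\alpha$, then $x'z'\le x'\alpha=(s-w')\alpha$, and $(\star)$ reduces to $\sum_i\binom{w_i'}{2}\le\alpha(w'-\alpha)+\binom{\alpha}{2}$. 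Exploiting the cap $w_i'\le\alpha+1$ together with convexity, the maximum of $\sum_i\binom{w_i'}{2}$ subject to $\sum w_i'=w'$ is attained by packing mass into blocks of size $\alpha+1$, so I would split further into the three ranges $w'\in(\alpha,\alpha+1]$, $(\alpha+1,2(\alpha+1)]$, and $(2(\alpha+1),3(\alpha+1)]$; in each range a direct substitution reduces the claim to an elementary one-variable quadratic inequality that is straightforward to verify for all $\alpha\ge 0$.

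The main obstacle is that $(\star)$ is \emph{tight}: equality is attained when $S\supseteq Z_1\cup Z_2\cup Z_3$ (so $z'=\alpha$), exactly one $w_i'$ equals $\alpha+1$, and the remaining mass of $S$ lies in $X$. This matches the configuration carved out by the extremal construction $E_5(n,k)$ underlying Lemma~\ref{LEMMA:A6BCD-upper-bound-a}, and consequently no crude bound such as $\sum\binom{w_i'}{2}\le 3\binom{\alpha+1}{2}$ will suffice---the case analysis must simultaneously exploit the cap $w_i'\le\alpha+1$ and the identity $x'+w'=s$. Notably, the hypotheses $h\ge 16\alpha$ and $|S|\ge h-12\alpha-42$ are not used in the proof of $(\star)$ itself (it holds for every $S\subseteq V(H)$, with the convention $\binom{m}{2}=0$ for $m\le 1$); they only serve to place $S$ in the regime where the resulting bound is informative for the downstream application.
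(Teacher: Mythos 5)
Your proof is correct and follows essentially the same route as the paper's: both decompose $|H[S]|$ according to the parts of $\mathcal{T}_3(h,\alpha)$, exploit the caps $|Y_i\cup Z_i|\le\alpha+1$ and $|Z_1|+|Z_2|+|Z_3|=\alpha$ via a convexity/extreme-point argument, and finish with a case analysis on the amount of $S$ lying outside $X$. The only difference is bookkeeping — you count non-edges via the exact identity $|H[S]|=\binom{s}{2}-\sum_i\binom{w_i'}{2}-x'z'$ while the paper lower-bounds the three edge classes directly — and your remark that the hypotheses $h\ge 16\alpha$ and $|S|\ge h-12\alpha-42$ are never used is consistent with the paper's own argument, which likewise does not invoke them.
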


We are now ready to present the proof of Proposition~\ref{PROP:K4-tiling-cover-three-vtx}.
\begin{proof}[Proof of Proposition~\ref{PROP:K4-tiling-cover-three-vtx}]
    Fix $\varepsilon > 0$. We may assume that $\varepsilon$ is sufficiently small. Let $\delta > 0$ be sufficiently small and $h$ be sufficiently large. 
    Let $H$ be a graph on $h$ vertices that satisfies the assumptions of  Proposition~\ref{PROP:K4-tiling-cover-three-vtx}. Let $V \coloneqq V(H)$.  
    Let $\mathcal{K}$ be a $K_4$-tiling in $H$ such that $|V(\mathcal{K}) \cap A|$ is maximized. Additionally, among all such  $K_4$-tilings, assume that $|\mathcal{K}|$ is minimized. 
    It follows from the minimality of $|\mathcal{K}|$ and the assumption $|V(\mathcal{K}) \cap A| \le 3$ that $|\mathcal{K}| \le 3$. Therefore, $|V(\mathcal{K})| \le 12$. Let $U \subseteq  V$ be a set of size $12$ containing $V(\mathcal{K})$ such that $|U \cap A| = 3$. 
    
     Let $H'$ be the graph obtained from $H$ by removing all edges that have nonempty intersection with $U$, noting that we lose at most $12 h$ edges.  It follows from the maximality of $|V(\mathcal{K}) \cap A|$ that no copy of $K_4$ in $H'$ contains a vertex from $A$. 
     Hence, by Theorem~\ref{THM:ABHP14-exact}, we have  
    \begin{align}\label{equ:3-vtx-H-U-upper-bound}
        |H'|
        = |H[V\setminus U]|
        & \le \binom{h-12-3(\alpha-3)}{2} + (\alpha-3) \left(2(h-12) - 3(\alpha-3)\right) \\[0.5em]
        & \le \binom{h-3\alpha}{2} + \alpha(2h-3\alpha). \notag  
    \end{align}
    Suppose that $h \le 16\alpha$. Then it follows from the inequality above that  
    \begin{align*}
        |H| 
        \le |H'| + 12 h 
        \le \binom{h-3\alpha}{2} + \alpha(2h-3\alpha) + 12h 
        \le P(h,\alpha), 
    \end{align*}
    as desired. 
    So we may assume that $h > 16\alpha $. 

    Since $h > 16\alpha $ and $|H| > P(h,\alpha) - 12$ (we are done if $|H| \le P(h,\alpha) - 12$), it follows from the definition of $P(h,\alpha)$ that 
    \begin{align*}
        |H|
        & > \binom{h-3\alpha+6}{2}+(2\alpha-3)(h-3\alpha+6)+3(\alpha-2)^2 - 12h \\[0.5em]
        & = \binom{h-3\alpha}{2} + \alpha(2h-3\alpha) - 9(h+\alpha-1) 
        \ge  \binom{h-3\alpha}{2} + \alpha(2h-3\alpha) - \delta h^2.
    \end{align*}
    By the stability part of Theorem~\ref{THM:ABHP14-exact}, there exists a member $\hat{H} \in \mathcal{T}_{3}(h,\alpha)$ such that $\mathrm{edit}(H,\hat{H}) \le \varepsilon h^2$. Combining this with Proposition~\ref{PROP:T3-h-a-subgraph}, we obtain that for every set $S \subseteq  V$ of size at least $h - 12 \alpha -42 \ge \max\left\{4\alpha - 42, \frac{25 \alpha}{11} + \frac{25}{22},~\frac{3 h}{16}\right\}$,   
    \begin{align*}
        |H[S]|
        & \ge |\hat{H}[S]| - \mathrm{edit}(H,\hat{H}) 
         \ge \binom{|S|-\alpha}{2} - \varepsilon h^2 \\[0.5em]
        & = \frac{28 |S|^2}{100} -\varepsilon h^2 + \frac{11}{50}\left(|S| - \frac{25 \alpha}{11} - \frac{25}{22}\right)^2 - \frac{56 \alpha^2 + 56 \alpha - 25}{88}  \\[0.5em]
        & \ge \frac{28 |S|^2}{100} -\varepsilon h^2 + \frac{11}{50}\left(4 \alpha -42 - \frac{25 \alpha}{11} - \frac{25}{22}\right)^2 - \frac{56 \alpha^2 + 56 \alpha - 25}{88}  \\[0.5em]
        & = \frac{28 |S|^2}{100} -\varepsilon h^2 + \frac{1}{50} \left(\alpha - \frac{1671}{2}\right)^2 - \frac{108417}{8}
        \ge \left(\frac{1}{4} + \frac{1}{100}\right)|S|^2,
    \end{align*}
    where the last inequality follows from the assumption that $h$ is sufficiently large, $\varepsilon > 0$ is sufficiently small, and $|S| \ge h - 12 \alpha -42 \ge \frac{3h}{16}$.

    It follows from Theorem~\ref{THM:ABHP-density-CH} that 
    \begin{align}\label{equ:H-S-nu-K3}
        \nu(K_3, H[S]) \ge 13
        \quad\text{for every subset $S \subseteq  V$ of size at least $h - 12 \alpha -42$.}
    \end{align}
    Let $v_1, v_2, v_{3}$ denote the vertices in $U \cap A$. 
    Suppose that 
    \begin{align*}
        d_{H}(v_1) + d_{H}(v_2) + d_{H}(v_3)
        \le 3h - 12\alpha -42. 
    \end{align*}
    Then it follows from~\eqref{equ:3-vtx-H-U-upper-bound} that 
    \begin{align*}
        |H|
        & \le |H'| + |U\setminus A| \cdot h +  d_{H}(v_1) + d_{H}(v_2) + d_{H}(v_3) \\[0.5em]
        & \le \binom{h-12-3(\alpha-3)}{2} + (\alpha-3) \left(2(h-12) - 3(\alpha-3)\right) + 9h + 3h - 12\alpha -42 \\[0.5em]
        & = \binom{h-3\alpha+6}{2} + (2\alpha-3)(h-3\alpha+6)+3(\alpha-3)^2
        = P(h,\alpha),
    \end{align*}
    as desired. So we may assume that $d_{H}(v_1) + d_{H}(v_2) + d_{H}(v_3) \ge 3h - 12 \alpha -42$. In particular, 
    \begin{align*}
        \min\left\{|N_{H}(v_i)| \colon i \in [3]\right\}
        \ge d_{H}(v_1) + d_{H}(v_2) + d_{H}(v_3) - 2h 
        \ge h - 12 \alpha -42. 
    \end{align*}
    By~\eqref{equ:H-S-nu-K3}, for each $i \in [3]$, there exists a $K_3$-tiling $\mathcal{T}_i$ of size at least $13$ in the induced subgraph $H[N_{H}(v_i)]$. 
    Note that every member in $\mathcal{T}_i$ forms a copy of $K_4$ with $v_i$. 
    
    \begin{claim}\label{CLAIM-3-vtx-A-minus-U}
        There is no copy of $K_4$ in $G$ that contains a vertex from $A\setminus U$.
    \end{claim}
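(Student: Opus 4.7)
The plan is to derive the claim by contradiction from the maximality assumption on $\mathcal{K}$. Suppose there were a copy $K$ of $K_{4}$ in $H$ containing some vertex $u \in A \setminus U$. I will construct a new $K_{4}$-tiling $\mathcal{K}'$ of $H$ with $|V(\mathcal{K}') \cap A| \ge 4$, which directly contradicts the standing assumption of Proposition~\ref{PROP:K4-tiling-cover-three-vtx} that every $K_{4}$-tiling in $H$ covers at most three vertices of $A$.

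The construction is a greedy extension built from $K$ and suitable triangles drawn from the families $\mathcal{T}_{1}, \mathcal{T}_{2}, \mathcal{T}_{3}$ produced by~\eqref{equ:H-S-nu-K3}. Set $I \coloneqq \{i \in [3] \colon v_{i} \in V(K)\}$. Processing the indices in $[3]\setminus I$ one by one, I would pick for each such $i$ a triangle $T_{i} \in \mathcal{T}_{i}$ that is vertex-disjoint from $V(K)$, from $\{v_{j} \colon j \in [3],\ j \ne i\}$, and from every previously selected $T_{j}$. Because the triangles of $\mathcal{T}_{i}$ are pairwise vertex-disjoint, the set of triangles blocked by these constraints has size at most $4 + 2 + 3 \cdot 2 = 12$. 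Since $|\mathcal{T}_{i}| \ge 13$, at least one admissible triangle always remains. Each $\{v_{i}\} \cup T_{i}$ is a copy of $K_{4}$ in $H$ because $T_{i}$ is a triangle inside $N_{H}(v_{i})$.

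Now the family
\[
\mathcal{K}' \coloneqq \{K\} \cup \bigl\{\{v_{i}\} \cup T_{i} \colon i \in [3] \setminus I\bigr\}
\]
is, by construction, a vertex-disjoint $K_{4}$-tiling in $H$. Its vertex set contains $u$ together with $\{v_{i} \colon i \in I\}$ (through $K$) and $\{v_{i} \colon i \in [3] \setminus I\}$ (through the triangles $T_{i}$), so
\[
|V(\mathcal{K}') \cap A| \ge |\{u, v_{1}, v_{2}, v_{3}\}| = 4,
\]
where we use that $u \in A \setminus U$ while $v_{1}, v_{2}, v_{3} \in U$, so all four of these vertices of $A$ are distinct. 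This yields the required contradiction.

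There is no serious obstacle to this argument: it is a direct pigeonhole estimate, and the threshold $13$ in~\eqref{equ:H-S-nu-K3} was chosen precisely so that after subtracting the at most $4 + 2 + 6 = 12$ forbidden vertices, at least one triangle of $\mathcal{T}_{i}$ survives. The only minor point to verify is that $v_{i} \notin T_{i}$, which follows automatically from $T_{i} \subseteq N_{H}(v_{i})$.
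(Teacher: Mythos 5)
Your proposal is correct and takes essentially the same route as the paper: greedily select vertex-disjoint triangles $T_i \in \mathcal{T}_i$, form $\{v_i\}\cup T_i$, and combine with the assumed $K_4$ to contradict the hypothesis of Proposition~\ref{PROP:K4-tiling-cover-three-vtx}. The one thing you do more carefully than the paper's brief sketch is introduce $I = \{i : v_i \in V(K)\}$ and only build $T_i$ for $i \notin I$; the paper's phrasing glosses over the possibility that some $v_i$ already lies in $V(Q)$ (in which case naively adding $Q_i = T_i \cup \{v_i\}$ would make the tiling non-disjoint), and your bookkeeping handles that edge case cleanly. Your pigeonhole count $4 + 2 + 6 = 12 < 13$ matches the threshold chosen in~\eqref{equ:H-S-nu-K3}, and your observation that $u \in A\setminus U$ while $v_1,v_2,v_3 \in U$ ensures all four vertices of $A$ are distinct. (The ``$G$'' in the claim statement is a typo for ``$H$'', which you correctly read through.)
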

    \begin{proof}[Proof of Claim~\ref{CLAIM-3-vtx-A-minus-U}]
        Suppose to the contrary that there exists a copy of $K_4$ in $G$, denoted by $Q$, that contains at least one vertex from $A\setminus U$. Then a simple greedy argument shows that there is a section of $T_i \in \mathcal{T}_i$ for each $i \in [3]$ such that $T_1, T_2, T_3, Q$ are pairwise disjoint. 
        Let $Q_i \coloneqq T_i \cup \{v_i\}$ for $i \in [3]$. Note the $\{Q, Q_1, Q_2, Q_3\}$ is a $K_4$-tiling in $G$ that contains at least four vertices in $A$, contradicting the assumption of Proposition~\ref{PROP:K4-tiling-cover-three-vtx}. 
    \end{proof}
    It follows from Claim~\ref{CLAIM-3-vtx-A-minus-U} and Theorem~\ref{THM:ABHP14-exact} that 
    \begin{align*}
        |H|
        \le \binom{h-3(\alpha-3)}{2}+ (\alpha-3)\cdot(2h-3(\alpha-3))
        = P(h,\alpha), 
    \end{align*}
    as desired. This completes the proof of Proposition~\ref{PROP:K4-tiling-cover-three-vtx}. 
\end{proof}

\subsection{Preparations for Lemma~\ref{LEMMA:A6BCD-upper-bound-a} \RomanNumeralCaps{2}: connecting}\label{SUBSEC:A6BCD-connecting}
In this subsection, we establish several local properties of $\mathcal{A}_6$, which will be useful for the proofs in the next subsection.

The following definition of \textbf{connection} is inspired by that in~\cite{ABHP15}.

\begin{definition}\label{DEF:connects-Q1-Q2-Q3}
    Let $Q_1, Q_2, Q_3\subseteq  V(G)$ be three pairwise disjoint sets of size $4$, with each inducing a copy of $K_4$ in $G$. We say that $Q_2$ \textbf{connects} $Q_1$ and $Q_3$ (or that there exists a connection from $Q_1$ to $Q_3$ via $Q_2$), denoted by $Q_1\rightsquigarrow Q_2 \rightsquigarrow  Q_3$, if one of the following conditions holds$\colon$
    \begin{enumerate}[label=(\roman*)]
        \item\label{DEF:connects-Q1-Q2-Q3-1} $\min\left\{|G[Q_1, Q_2]|,~|G[Q_2, Q_3]|\right\} \ge 15$. 
        \item\label{DEF:connects-Q1-Q2-Q3-2} $|G[Q_1, Q_2]| = 16$ and $|G[Q_2, Q_3]| \ge 14$.
    \end{enumerate}
\end{definition}

\medskip 

\begin{figure}[H]
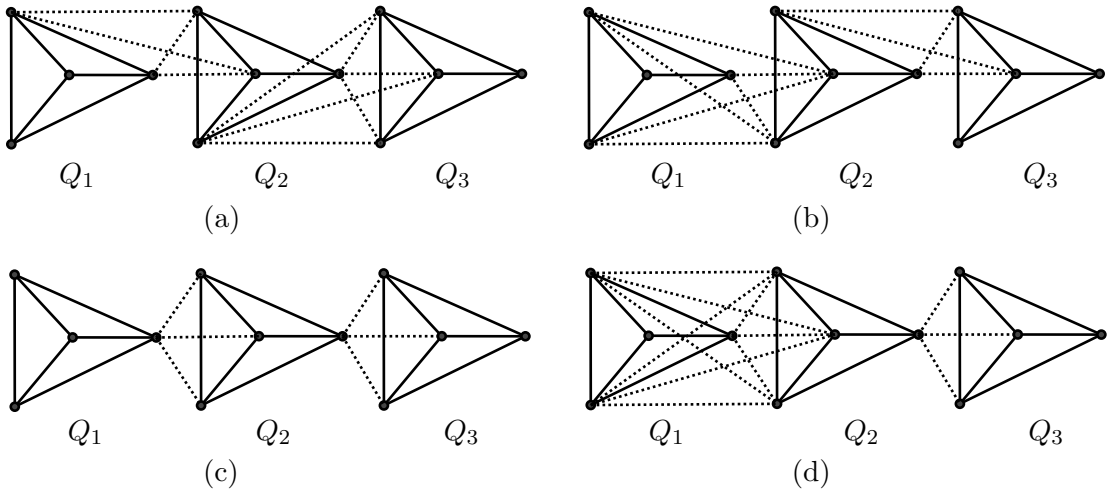

\centering
\tikzset{every picture/.style={line width=1pt}} 

\caption{Auxiliary figure for Proposition~\ref{PROP:connection-A6-b}.} 
\label{Fig:connection-property}
\end{figure}

The following proposition follows easily from the definition and Fact~\ref{FACT:3-by-4-bipartite-graph}; its proof is omitted here.
\begin{proposition}\label{PROP:connection-A6-b}
    Suppose that $Q_1\rightsquigarrow Q_2 \rightsquigarrow  Q_3$. Then the following statements hold. 
    \begin{enumerate}[label=(\roman*)]
        \item\label{PROP:connection-A6-b-1} For every pair of vertices $\{p_1, q_1\} \subseteq  Q_1$, there exists a pair of vertices $\{p_2, q_2\} \subseteq  Q_2$ that are adjacent to both $p_1$ and $q_1$, such that the remaining two vertices in $Q_2 \setminus\{p_2, q_2\}$ have at least three common neighbors in $Q_{3}$ (see Figure~\ref{Fig:connection-property}~(a)). 
        \item\label{PROP:connection-A6-b-2} For every pair of vertices $\{p_3, q_3\} \subseteq  Q_3$, there exists a pair of vertices $\{p_2, q_2\} \subseteq  Q_2$ that are adjacent to both $p_3$ and $q_3$,  such that the remaining  two vertices in $Q_2 \setminus\{p_2, q_2\}$ have at least three common neighbors in $Q_{1}$ (see Figure~\ref{Fig:connection-property}~(b)).
        \item\label{PROP:connection-A6-b-3} For every vertex $p_1 \in Q_1$, there exists three vertices $\{p_2, q_2, s_2\} \subseteq  Q_2$ that are all adjacent to $p_1$, such that the remaining vertex in $Q_2 \setminus \{p_2, q_2, s_2\}$ has at least three neighbors in $Q_{3}$ (see Figure~\ref{Fig:connection-property}~(c)).
        \item\label{PROP:connection-A6-b-4} For every triple of vertices $\{p_3, q_3,s_3\} \subseteq  Q_3$, there exists a vertex $p_2 \in Q_2$ that is adjacent to all three vertices in $\{p_3, q_3, s_3\}$, such that the remaining  three vertices in $Q_2 \setminus\{p_2\}$ have at least three common neighbors in $Q_{1}$ (see Figure~\ref{Fig:connection-property}~(d)). 
    \end{enumerate} 
\end{proposition}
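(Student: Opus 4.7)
The plan is to verify each of the four statements by short edge-counting arguments combining the hypothesis $Q_1\rightsquigarrow Q_2 \rightsquigarrow Q_3$ (that is, either $|G[Q_1,Q_2]|, |G[Q_2,Q_3]|\ge 15$, or $|G[Q_1,Q_2]|=16$ and $|G[Q_2,Q_3]|\ge 14$) with Fact~\ref{FACT:3-by-4-bipartite-graph}. The single recurring principle is the elementary inclusion--exclusion identity that in a $(k,4)$-bipartite graph with $m$ edges, the number of vertices on the $4$-side adjacent to all $k$ vertices of the $k$-side is at least $m-4(k-1)$, together with the complementary observation that the number of edges ``missed'' by any $j$-element subset of one side is at most $4(k-j)$.

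For part~\ref{PROP:connection-A6-b-1}, the starting bound $|G[\{p_1,q_1\}, Q_2]|\ge |G[Q_1,Q_2]| - 8 \ge 7$ forces at least three vertices of $Q_2$ to be adjacent to both $p_1$ and $q_1$; any two of them may serve as $\{p_2,q_2\}$, and it remains to pick them so that $|G[\{s_2, t_2\}, Q_3]|\ge 7$, which then yields at least three common neighbours of $s_2,t_2$ in $Q_3$. In case~\ref{DEF:connects-Q1-Q2-Q3-1} this is automatic, since $|G[\{s_2,t_2\},Q_3]|\ge 15-8=7$ for every admissible pair. In case~\ref{DEF:connects-Q1-Q2-Q3-2}, all $\binom{4}{2}=6$ pairs of $Q_2$ are admissible because $|G[Q_1,Q_2]|=16$, and an averaging argument (the six pairs together account for $3\cdot|G[Q_2,Q_3]|\ge 42$ edges to $Q_3$) identifies one pair $\{p_2,q_2\}$ sending at most $7$ edges to $Q_3$, leaving at least $7$ for its complement. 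Part~\ref{PROP:connection-A6-b-2} runs symmetrically, starting from $|G[\{p_3,q_3\},Q_2]|\ge 7$ or $\ge 6$ to locate $\{p_2,q_2\}$ and then transferring the analysis to $Q_1$; case~\ref{DEF:connects-Q1-Q2-Q3-2} becomes trivial there because $|G[Q_1,Q_2]|=16$ makes every vertex of $Q_1$ a common neighbour.

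For part~\ref{PROP:connection-A6-b-3}, $|G[\{p_1\},Q_2]|\ge 15-12=3$ (respectively $=4$ in case~\ref{DEF:connects-Q1-Q2-Q3-2}) shows $p_1$ has at least three neighbours in $Q_2$. In case~\ref{DEF:connects-Q1-Q2-Q3-1} every vertex of $Q_2$ itself has at least $15-3\cdot 4=3$ neighbours in $Q_3$, so any $t_2\in Q_2\setminus N(p_1)$ works; in case~\ref{DEF:connects-Q1-Q2-Q3-2}, $p_1$ is adjacent to all of $Q_2$ and the maximum $Q_2$-degree into $Q_3$ is at least $\lceil 14/4\rceil=4$, so any maximum-degree vertex serves as $t_2$. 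Part~\ref{PROP:connection-A6-b-4} is dual: $|G[\{p_3,q_3,s_3\},Q_2]|\ge 15-4=11$ (resp.\ $\ge 10$) forces some $p_2\in Q_2$ with three neighbours in $\{p_3,q_3,s_3\}$, and then $|G[Q_2\setminus\{p_2\},Q_1]|\ge 15-4=11$ in case~\ref{DEF:connects-Q1-Q2-Q3-1} produces at least three common $Q_1$-neighbours by the $m-4(k-1)$ bound, while case~\ref{DEF:connects-Q1-Q2-Q3-2} is immediate because $|G[Q_1,Q_2]|=16$ turns all of $Q_1$ into common neighbours.

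The whole argument is essentially mechanical; the only marginally delicate step is the averaging in case~\ref{DEF:connects-Q1-Q2-Q3-2} of part~\ref{PROP:connection-A6-b-1}, where the asymmetry between the bounds $16$ and $14$ in the definition of connection forces one to select $\{p_2,q_2\}$ from among all six pairs of $Q_2$ rather than only from vertices adjacent to $\{p_1,q_1\}$. All other sub-cases reduce to the uniform pigeonhole that a sum of four nonnegative integers, each at most $4$ and totalling at least $m$, has minimum at least $m-12$ and maximum at least $\lceil m/4\rceil$.
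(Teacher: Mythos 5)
Your proof takes exactly the approach the paper intends (the paper omits the proof, remarking only that it ``follows easily from the definition and Fact~\ref{FACT:3-by-4-bipartite-graph}''), namely a systematic edge-count in the relevant bipartite graphs $G[Q_1,Q_2]$ and $G[Q_2,Q_3]$ using the two alternatives in Definition~\ref{DEF:connects-Q1-Q2-Q3}, and the verification of each of the four statements checks out.

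One small imprecision is worth flagging in the averaging step for part~\ref{PROP:connection-A6-b-1}, case~\ref{DEF:connects-Q1-Q2-Q3-2}. You claim the averaging ``identifies one pair $\{p_2,q_2\}$ sending at most $7$ edges to $Q_3$.'' This need not exist: if $|G[Q_2,Q_3]|=16$ and every vertex of $Q_2$ has degree $4$ into $Q_3$, then every pair of $Q_2$ sends exactly $8$ edges to $Q_3$. What you actually want, and what does follow, is a pair $\{s_2,t_2\}$ with \emph{at least} $7$ edges to $Q_3$: in any of the three partitions of $Q_2$ into two pairs, the two halves sum to $|G[Q_2,Q_3]|\ge 14$, so one half has $\ge 7$ edges to $Q_3$ and hence $\ge 3$ common neighbours there, and its complement $\{p_2,q_2\}$ is adjacent to both $p_1,q_1$ automatically because $|G[Q_1,Q_2]|=16$. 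The conclusion is unchanged; only the directed inequality in your averaging sentence needs to be flipped. Everything else, including parts~\ref{PROP:connection-A6-b-2}--\ref{PROP:connection-A6-b-4}, is correct as written.
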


For convenience, for every pair $Q_1, Q_3$ of copies of $K_4$ in $\mathcal{A}_6$, let 
\begin{align*}
    N_{\mathcal{A}_6}(Q_1, Q_3)
    \coloneqq 
    \left\{Q\in \mathcal{A}_6 \setminus \{Q_1, Q_3\} \colon Q_1\rightsquigarrow Q \rightsquigarrow  Q_3\right\}. 
\end{align*}
For every $Q\in \mathcal{A}_6$, let 
    \begin{align*}
        N_{\mathcal{A}_6}^{s}(Q)
        \coloneqq \left\{Q' \in \mathcal{A}_6 \colon e(Q, Q') \le 14\right\}
        \quad\text{and}\quad 
        N_{\mathcal{A}_6}^{\ell}(Q)
        \coloneqq \left\{Q' \in \mathcal{A}_6 \colon e(Q, Q') \ge 15\right\}. 
    \end{align*}
Note that $\mathcal{A}_6\setminus \{Q\} = N_{\mathcal{A}_6}^{s}(Q) \cup N_{\mathcal{A}_6}^{\ell}(Q)$ is a partition. 

\begin{lemma}\label{LEMMA:A6-min-degree}
    For every member $Q\in \mathcal{A}_6$, we have 
    \begin{align*}
        |N_{\mathcal{A}_6}^{s}(Q)| 
        \le \frac{a_6 -1}{2}. 
    \end{align*}
\end{lemma}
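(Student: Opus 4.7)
The plan is a short double-counting argument that follows directly from how $\mathcal{A}_6$ is constructed in Section~\ref{SUBSEC:setup}. Recall that $\mathcal{A}_5$ and $\mathcal{A}_6$ are obtained by iteratively moving any member $Q \in \mathcal{A}_6$ satisfying $e(Q, \mathcal{A}_6 \setminus \{Q\}) \le 15(|\mathcal{A}_6|-1)$ into $\mathcal{A}_5$, until every remaining member $Q \in \mathcal{A}_6$ satisfies
\[
e(Q,\, \mathcal{A}_6 \setminus \{Q\}) \;>\; 15\,(a_6 - 1).
\]
I will use this lower bound together with a trivial upper bound coming from the partition $\mathcal{A}_6 \setminus \{Q\} = N_{\mathcal{A}_6}^{s}(Q) \cup N_{\mathcal{A}_6}^{\ell}(Q)$.

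By the definitions of $N_{\mathcal{A}_6}^{s}(Q)$ and $N_{\mathcal{A}_6}^{\ell}(Q)$, together with the trivial bound $e(Q,Q') \le 16$ for all $Q,Q' \in \mathcal{A}$, I would bound
\[
e(Q,\, \mathcal{A}_6 \setminus \{Q\})
\;=\; \sum_{Q' \in N_{\mathcal{A}_6}^{s}(Q)} e(Q,Q') + \sum_{Q' \in N_{\mathcal{A}_6}^{\ell}(Q)} e(Q,Q')
\;\le\; 14\,|N_{\mathcal{A}_6}^{s}(Q)| + 16\,|N_{\mathcal{A}_6}^{\ell}(Q)|.
\]
Since $|N_{\mathcal{A}_6}^{s}(Q)| + |N_{\mathcal{A}_6}^{\ell}(Q)| = a_6 - 1$, this simplifies to
\[
e(Q,\, \mathcal{A}_6 \setminus \{Q\}) \;\le\; 16(a_6-1) - 2\,|N_{\mathcal{A}_6}^{s}(Q)|.
\]

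Combining the two inequalities yields $15(a_6 - 1) < 16(a_6 - 1) - 2\,|N_{\mathcal{A}_6}^{s}(Q)|$, i.e., $|N_{\mathcal{A}_6}^{s}(Q)| < \tfrac{a_6 - 1}{2}$, which in particular gives the desired bound $|N_{\mathcal{A}_6}^{s}(Q)| \le \tfrac{a_6 - 1}{2}$. There is no serious obstacle here: the lemma is essentially a restatement of the stopping condition of the $\mathcal{A}_5$--$\mathcal{A}_6$ procedure, coupled with the observation that each ``short'' edge count contributes at most $14$ whereas each ``long'' one contributes at most $16$, producing the factor of $2$ that converts the threshold $15(a_6-1)$ into the $(a_6-1)/2$ bound.
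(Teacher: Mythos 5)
Your proof is correct and takes essentially the same approach as the paper: both use the stopping condition $e(Q,\mathcal{A}_6\setminus\{Q\}) > 15(a_6-1)$ from the construction of $\mathcal{A}_6$, together with the bound $e(Q,\mathcal{A}_6\setminus\{Q\}) \le 14|N_{\mathcal{A}_6}^{s}(Q)| + 16(a_6-1-|N_{\mathcal{A}_6}^{s}(Q)|)$. The only difference is cosmetic — the paper phrases it as a proof by contradiction, while you argue directly.
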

\begin{proof}[Proof of Lemma~\ref{LEMMA:A6-min-degree}]
    Suppose to the contrary that $|N_{\mathcal{A}_6}^{s}(Q)| > \frac{a_6 -1}{2}$. 
    Then 
    \begin{align*}
        e(Q, \mathcal{A}_6 \setminus \{Q\})
         \le 14 \cdot |N_{\mathcal{A}_6}^{s}(Q)| + 16 \left(|\mathcal{A}_6| - 1 - |N_{\mathcal{A}_6}^{s}(Q)|\right) 
        & = 16(|\mathcal{A}_6| - 1) - 2 |N_{\mathcal{A}_6}^{s}(Q)| \\[0.5em]
        & < 15 (|\mathcal{A}_6| - 1), 
    \end{align*}
    contradicting the definition of $\mathcal{A}_6$ (see Section~\ref{SUBSEC:setup}). 
\end{proof}
\begin{lemma}\label{LEMMA:many-connection-A6}
    For every pair $Q_1, Q_3$ of members in $\mathcal{A}_6$, we have
    \begin{align*}
        |N_{\mathcal{A}_6}(Q_1, Q_3)|
        \ge \frac{a_6 - 2}{12}. 
    \end{align*}
    In particular, for every subset $B \subseteq V(\mathcal{A}_6)$ of size less than $\frac{a_6 - 2}{12}$, there exists a member $Q \in \mathcal{A}_6 - B$ such that $Q_1\rightsquigarrow Q \rightsquigarrow  Q_3$. 
\end{lemma}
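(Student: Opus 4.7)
\textbf{Proof plan for Lemma~\ref{LEMMA:many-connection-A6}.}

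\emph{Setup via edge deficits.} The plan is to run a weighted double-counting argument indexed by $Q' \in \mathcal{A}_6\setminus\{Q_1,Q_3\}$. First I record the defining property of $\mathcal{A}_6$ in a convenient form. For each $Q\in \mathcal{A}_6$, by construction $e(Q,\mathcal{A}_6\setminus\{Q\})>15(a_6-1)$, and since these edge counts are integers this gives $e(Q,\mathcal{A}_6\setminus\{Q\})\ge 15(a_6-1)+1$. Writing the \emph{edge deficit} $\epsilon_Q(Q'):=16-e(Q,Q')\ge 0$ for $Q'\in \mathcal{A}_6\setminus\{Q\}$, this translates into the uniform bound
\[
\sum_{Q'\in \mathcal{A}_6\setminus\{Q\}} \epsilon_Q(Q')\;\le\; a_6-2.
\]
Now fix $Q_1,Q_3\in \mathcal{A}_6$ and set $\epsilon_1(Q'):=\epsilon_{Q_1}(Q')$ and $\epsilon_3(Q'):=\epsilon_{Q_3}(Q')$ for $Q'\in \mathcal{A}_6\setminus\{Q_1,Q_3\}$.

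\emph{Reformulation of connections.} Translating Definition~\ref{DEF:connects-Q1-Q2-Q3} into these deficit variables, $Q_1\rightsquigarrow Q'\rightsquigarrow Q_3$ iff the pair $(\epsilon_1(Q'),\epsilon_3(Q'))$ belongs to the five-element set
\[
\mathcal{G}\;:=\;\{(0,0),\,(0,1),\,(0,2),\,(1,0),\,(1,1)\},
\]
where $(0,0),(0,1),(1,0),(1,1)$ come from Definition~\ref{DEF:connects-Q1-Q2-Q3}~\ref{DEF:connects-Q1-Q2-Q3-1} and $(0,2)$ is the additional pair contributed by Definition~\ref{DEF:connects-Q1-Q2-Q3}~\ref{DEF:connects-Q1-Q2-Q3-2}. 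Call $Q'$ \emph{bad} otherwise.

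\emph{The key weighted inequality.} A direct inspection of integer pairs shows that every pair $(\epsilon_1,\epsilon_3)\in \mathbb{Z}_{\ge 0}^2\setminus\mathcal{G}$ satisfies $3\epsilon_1+2\epsilon_3\ge 6$, while every pair in $\mathcal{G}$ satisfies $3\epsilon_1+2\epsilon_3\le 5$ (the weights $(3,2)$ are exactly the linear functional that separates $\mathcal{G}$ from its complement on the integer lattice). Summing $3\epsilon_1(Q')+2\epsilon_3(Q')$ over $Q'\in \mathcal{A}_6\setminus\{Q_1,Q_3\}$ and applying the deficit bound from the first paragraph to both $Q_1$ and $Q_3$ gives
\[
6\,\#\{Q'\text{ bad}\}\;\le\;\sum_{Q'}\bigl(3\epsilon_1(Q')+2\epsilon_3(Q')\bigr)\;\le\;3(a_6-2)+2(a_6-2)\;=\;5(a_6-2).
\]
Therefore $|N_{\mathcal{A}_6}(Q_1,Q_3)|\ge (a_6-2)-\tfrac{5}{6}(a_6-2)=\tfrac{a_6-2}{6}\ge \tfrac{a_6-2}{12}$, as claimed. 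The ``in particular'' statement then follows immediately: since the members of $\mathcal{A}_6$ are pairwise vertex-disjoint, each vertex in $B$ belongs to at most one $Q\in \mathcal{A}_6$, so deleting all members meeting $B$ removes at most $|B|<(a_6-2)/12$ connections, leaving at least one survivor in $\mathcal{A}_6-B$.

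\emph{Main obstacle.} There is no serious obstacle; the entire proof is a one-line weighted averaging once the ``good'' set $\mathcal{G}$ is identified. The only mild subtlety is choosing the weights $(3,2)$ rather than the naive $(1,1)$ or $(1,2)$, since the troublesome bad point $(2,0)$ forces $3\epsilon_1$ over $2\epsilon_1$ in order to reach the threshold $6$. In fact the argument yields the stronger bound $(a_6-2)/6$, so the stated $(a_6-2)/12$ leaves ample room for the applications in Section~\ref{SUBSEC:proof-LEMMA:A6BCD-upper-bound-a}.
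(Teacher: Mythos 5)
Your proof is correct, and it takes a genuinely different route from the paper's. The paper splits into cases according to whether $|N^{\ell}_{\mathcal{A}_6}(Q_1)|$ is large: in one case it uses only condition~\ref{DEF:connects-Q1-Q2-Q3-1} of Definition~\ref{DEF:connects-Q1-Q2-Q3} together with Lemma~\ref{LEMMA:A6-min-degree} (via $N^{\ell}(Q_1)\cap N^{\ell}(Q_3)\subseteq N_{\mathcal{A}_6}(Q_1,Q_3)$), and in the complementary case it counts $x=\#\{Q'\colon e(Q_1,Q')=16\}$ and $y=\#\{Q'\colon e(Q_3,Q')\ge 14\}$ and applies inclusion--exclusion to exploit condition~\ref{DEF:connects-Q1-Q2-Q3-2}. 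Your argument instead encodes both clauses of the definition at once as the good set $\mathcal{G}=\{(0,0),(0,1),(0,2),(1,0),(1,1)\}$ of deficit pairs, observes that the functional $3\epsilon_1+2\epsilon_3$ separates $\mathcal{G}$ (values $\le 5$) from its complement in $\mathbb{Z}_{\ge 0}^2$ (values $\ge 6$; the minimal bad points $(2,0)$, $(1,2)$, $(0,3)$ evaluate to $6,7,6$), and closes with a single averaging step using the deficit bound $\sum_{Q'}\epsilon_Q(Q')\le a_6-2$, which follows from the defining property $e(Q,\mathcal{A}_6\setminus\{Q\})>15(a_6-1)$ by integrality. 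This is cleaner, avoids the case split and Lemma~\ref{LEMMA:A6-min-degree} entirely, and yields the stronger bound $\tfrac{a_6-2}{6}$ in place of $\tfrac{a_6-2}{12}$; the only cost is that it is tailored to this specific good set, whereas the paper's two cases track more directly which clause of the connection definition is being used. The deduction of the ``in particular'' statement from vertex-disjointness of the members of $\mathcal{A}_6$ is also fine.
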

\begin{proof}[Proof of Lemma~\ref{LEMMA:many-connection-A6}]
    Fix two distinct members $Q_1, Q_3 \in \mathcal{A}_6$. Observe that, by definition, we have  
    \begin{align*}
        N_{\mathcal{A}_6}(Q_1, Q_3) 
        \subseteq  N_{\mathcal{A}_6}^{\ell}(Q_1) \cap N_{\mathcal{A}_6}^{\ell}(Q_3). 
    \end{align*}
    If $|N_{\mathcal{A}_6}^{\ell}(Q_1)| \ge \frac{7(a_6 - 2)}{12}$, then it follows from Lemma~\ref{LEMMA:A6-min-degree} that 
    \begin{align*}
        |N_{\mathcal{A}_6}(Q_1, Q_3)|
        \ge |N_{\mathcal{A}_6}^{\ell}(Q_1)| - |N_{\mathcal{A}_6}^{s}(Q_3)|
        \ge \frac{a_6-1}{12}, 
    \end{align*}
    as desired. 
    So we may assume that $|N_{\mathcal{A}_6}^{\ell}(Q_1)| <  \frac{7(a_6 - 2)}{12}$. This implies that 
    \begin{align}\label{equ:LEMMA:many-connection-A6-Q1-s}
        |N_{\mathcal{A}_6}^{s}(Q_1)| 
        = (a_6 - 1) - |N_{\mathcal{A}_6}^{\ell}(Q_1)| 
        >  \frac{5(a_6 - 1)}{12}.
    \end{align}
    By symmetry, we may also assume that 
    \begin{align}\label{equ:LEMMA:many-connection-A6-Q3-s}
        |N_{\mathcal{A}_6}^{s}(Q_3)| 
        >  \frac{5(a_6 - 1)}{12}.
    \end{align}
    Let $x \coloneqq |\left\{Q' \in \mathcal{A}_6 \colon e(Q_{1}, Q') = 16\right\}|$. 
    It follows from the definition of $\mathcal{A}_6$ that 
    \begin{align*}
        15 (a_6-1)
        < e(Q_1, \mathcal{A}_6\setminus \{Q_1\})
        & \le 16 x + 15 \left((a_6-1)-x-|N_{\mathcal{A}_6}^{s}(Q_1)|\right) + 14 \cdot |N_{\mathcal{A}_6}^{s}(Q_1)| \\[0.5em]
        & = x + 15 (a_6-1) - |N_{\mathcal{A}_6}^{s}(Q_1)|, 
    \end{align*}
    which, by~\eqref{equ:LEMMA:many-connection-A6-Q1-s}, implies that 
    \begin{align*}
        x 
        > |N_{\mathcal{A}_6}^{s}(Q_1)|
        > \frac{5(a_6 - 1)}{12}. 
    \end{align*}
    %
    %
    Let $y \coloneqq |\left\{Q' \in \mathcal{A}_6 \colon e(Q_{3}, Q') \ge 14 \right\}|$. 
    Similarly, it follows from the definition of $\mathcal{A}_6$ that 
    \begin{align*}
        15 (a_6-1)
        < e(Q_3, \mathcal{A}_6\setminus \{Q_3\})
         < 16 y + 13\left((a_6-1) - y\right)  
         = 15 (a_6-1) + 3y - 2(a_6-1), 
    \end{align*}
    which implies that 
    \begin{align*}
        y > \frac{2(a_6-1)}{3}. 
    \end{align*}
    Therefore, by the Inclusion-Exclusion Principle, the set 
    \begin{align*}
        \left\{Q' \in \mathcal{A}_6 \colon e(Q_1, Q') =16~\text{and}~e(Q_3, Q')\ge 14\right\}, 
    \end{align*}
    which contains $N_{\mathcal{A}_3}(Q_1, Q_3)$,  has size at least 
    \begin{align*}
        x + y  - (a_6-1)
        > \frac{5(a_6-1)}{12} + \frac{2(a_6-1)}{3} - (a_6-1)
        = \frac{a_6-1}{12}, 
    \end{align*}
    as desired. 
    This completes the proof of Lemma~\ref{LEMMA:many-connection-A6}. 
\end{proof}

Given a vertex $Z \subseteq V(\mathcal{A}_6)$, we denote by $\mathcal{A}_6 - Z$ the collection of members in $\mathcal{A}_6$ that have an empty intersection with $Z$.
\begin{lemma}\label{LEMMA:find-K8}
    Let $\mu \ge 0$ be an integer. 
    Suppose that 
    \begin{align*}
        e(\mathcal{A}_{6})
        \ge \frac{15 a_6^2}{2} +  \mu a_6. 
    \end{align*}
    Then for every subset $Z\subseteq  V(\mathcal{A}_{6})$ of size at most $\mu -3$, there exist six distinct members 
    \begin{align*}
        Q_1, Q_1',Q_2, Q_2',Q_3, Q_3'  \in \mathcal{A}_6 - Z 
    \end{align*}
    such that, for each $i \in [3]$,  $Q_{i} \cup Q_{i}'$ induces a copy of $K_8$ in $G$.
\end{lemma}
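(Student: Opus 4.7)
\textbf{Proof plan for Lemma~\ref{LEMMA:find-K8}.}

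The plan is to encode the ``$K_8$-pairs'' as edges of an auxiliary graph on $\mathcal{A}_6$ and reduce the statement to an Erd\H{o}s--Gallai matching bound. Define the auxiliary graph $H$ with vertex set $\mathcal{A}_6$ in which two members $Q,Q'\in\mathcal{A}_6$ are adjacent iff $e_{G}(Q,Q')=16$, equivalently $Q\cup Q'$ spans a $K_8$ in $G$. Since every pair of members has at most $16$ edges between them, with equality exactly on the edges of $H$, while non-edges of $H$ contribute at most $15$,
\begin{align*}
    e(\mathcal{A}_6)
    \le 6 a_6 + 16|H| + 15\Bigl(\binom{a_6}{2}-|H|\Bigr)
    = 6 a_6 + 15\binom{a_6}{2}+|H|.
\end{align*}
Combining this with the hypothesis $e(\mathcal{A}_6)\ge \tfrac{15 a_6^2}{2}+\mu a_6$ yields
\begin{align*}
    |H|\ \ge\ \Bigl(\mu+\tfrac{3}{2}\Bigr)\,a_6.
\end{align*}

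Next I will account for the removal of $Z$. Since the members of $\mathcal{A}_6$ are vertex-disjoint, each vertex of $Z$ lies in at most one member, so at most $|Z|\le\mu-3$ members are removed; write $H'\coloneqq H[\mathcal{A}_6-Z]$. Each vertex of $H$ has degree at most $a_6-1$, hence
\begin{align*}
    |H'|\ \ge\ |H|-(\mu-3)(a_6-1)\ \ge\ \Bigl(\mu+\tfrac{3}{2}\Bigr)a_6-(\mu-3)(a_6-1)\ =\ \tfrac{9 a_6}{2}+\mu-3.
\end{align*}
Observe also that the hypothesis forces $a_6$ to be at least linear in $\mu$: the trivial bound $e(\mathcal{A}_6)\le 8 a_6^2$ combined with $e(\mathcal{A}_6)\ge \tfrac{15 a_6^2}{2}+\mu a_6$ gives $a_6\ge 2\mu$, which is harmless since $n$ is taken to be sufficiently large.

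Finally I will apply the Erd\H{o}s--Gallai matching theorem (Theorem~\ref{THM:Erdos-Gallai-matching}) to $H'$. If $\nu(H')\le 2$, then $|H'|\le 2\cdot v(H')\le 2 a_6$, contradicting the lower bound $|H'|\ge \tfrac{9 a_6}{2}+\mu-3>2 a_6$ obtained above (for $a_6$ large enough, which is automatic). Hence $\nu(H')\ge 3$, and any matching $\{Q_1,Q_1'\},\{Q_2,Q_2'\},\{Q_3,Q_3'\}$ of size $3$ in $H'$ provides six distinct members of $\mathcal{A}_6-Z$ with $Q_i\cup Q_i'$ spanning a $K_8$ in $G$ for each $i\in[3]$.

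There is no real obstacle here beyond making the bookkeeping tight; the only point worth checking is that the constant $3$ in the hypothesis $|Z|\le\mu-3$ exactly matches the constants arising when one compares the quadratic terms $\tfrac{15 a_6^2}{2}$ and $15\binom{a_6}{2}$, which is what produces the crucial $\tfrac{3}{2}a_6$ cushion in $|H|\ge(\mu+\tfrac{3}{2})a_6$ and, in turn, the Erd\H{o}s--Gallai excess $\tfrac{9 a_6}{2}$ after deletion.
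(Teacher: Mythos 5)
Your proof is correct, and the core strategy is the same as the paper's: encode the $K_8$-pairs as edges of the auxiliary graph $H$ on $\mathcal{A}_6$, compare $e(\mathcal{A}_6)$ against $16|H|+15\bigl(\binom{a_6}{2}-|H|\bigr)+6a_6$ to extract a lower bound on $|H|$, and invoke Erd\H{o}s--Gallai to produce a matching. The only difference is in the bookkeeping around $Z$: the paper shows $|H|>\mu a_6$, concludes $\nu(H)\ge\mu$ directly from Erd\H{o}s--Gallai, and then observes that since $Z$ meets at most $|Z|\le\mu-3$ of the disjoint $4$-sets, at least $3$ matching edges survive deletion; you instead delete the at most $\mu-3$ affected vertices of $H$ up front (losing at most $a_6-1$ edges each), then run Erd\H{o}s--Gallai on the induced subgraph $H'$ to get $\nu(H')\ge 3$. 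Both handle the disjointness of members in the same essential way, and your $|H'|\ge \frac{9a_6}{2}+\mu-3>2a_6$ cushion is computed correctly.
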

\begin{proof}[Proof of Lemma~\ref{LEMMA:find-K8}]
    Let us define an auxiliary graph $H$ whose vertex set is $\mathcal{A}_{6}$ and two elements $Q,Q' \in \mathcal{A}_{6}$ are adjacent in $H$ iff $e(Q,Q') = 16$. 
    Observe that it suffices to show that $\nu(H) \ge \mu \ge |Z| + 3$. 
    By Theorem~\ref{THM:Erdos-Gallai-matching}, this reduces to showing that 
    \begin{align*}
        |H|
        > \mu a_6. 
    \end{align*}
    Suppose to the contrary that $|H| \le \mu a_6$. Then 
    \begin{align*}
        e(\mathcal{A}_6)
         \le 16 \cdot |H| + 15 \cdot \left(\binom{a_6}{2} - |H|\right) + 6a_6
        <  \frac{15 a_6^2}{2} + |H| \le \frac{15 a_6^2}{2} + \mu a_6, 
    \end{align*}
    contradicting the assumption that $|H| \ge \frac{15 a_6^2}{2} + \mu a_6$. 
\end{proof}

\subsection{Preparations for Lemma~\ref{LEMMA:A6BCD-upper-bound-a} \RomanNumeralCaps{3}: six operations}\label{SUBSEC:6-Operations}
In the proof of Lemma~\ref{LEMMA:A6BCD-upper-bound-a}, we will employ six operations (rotations).  
In this subsection, we define these six operations, establish their existence in $\mathcal{A}_6$, and state the consequences of each operation.  

Throughout this subsection, we assume the following condition holds:
\begin{align}\label{equ:assump-A6-edges-six-rotations}
    e(\mathcal{A}_6)
    \ge \frac{15 a_6^2}{2} + \mu  a_6, 
\end{align}
where $\mu > 0$ is a fixed (large) integer. 
Since $e(\mathcal{A}_6) \le \binom{4a_6}{2} \le 8 a_6^2$ holds trivially, it follows from~\eqref{equ:assump-A6-edges-six-rotations} that $a_6 \ge 2\mu$.

The following lemma establishes the existence of Operations \RomanNumeralCaps{1} and \RomanNumeralCaps{2} (as shown in Figures~\ref{Fig:Rotation-1} and~\ref{Fig:Rotation-2}).
\begin{lemma}\label{LEMMA:operation-1-2-exists}
    Let $Q_1, Q_{3} \in \mathcal{A}_{6}$ be two distinct members, with $\{p_1, q_1\} \subseteq  Q_1$ as two fixed vertices in $Q_{1}$ and $p_3 \in Q_{3}$ as a fixed vertex in $Q_{3}$.
    For every subset $B \subseteq  V(\mathcal{A}_{6}) \setminus (Q_1 \cup Q_3)$ of size less than $\frac{a_{6}-2}{12}$, there exists a member $Q_{2} \in \mathcal{A}_6 - (Q_1 \cup Q_3 \cup B)$ such that$\colon$
    \begin{enumerate}[label=(\roman*)]
        \item there exists a vertex $p_2 \in Q_2$ that is adjacent to all three vertices in $Q_{3}\setminus \{p_3\}$, and
        \item the remaining  three vertices in $Q_2 \setminus\{p_2\}$ have a common neighbor in $Q_{1}\setminus \{p_1, q_1\}$. 
    \end{enumerate}
\end{lemma}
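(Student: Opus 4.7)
The plan is to combine Lemma~\ref{LEMMA:many-connection-A6}, which furnishes many members that connect $Q_1$ and $Q_3$, with Proposition~\ref{PROP:connection-A6-b}~\ref{PROP:connection-A6-b-4}, which extracts exactly the kind of local structure that conclusions (i)--(ii) demand.

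First, since $|B| < \frac{a_6-2}{12}$, the ``in particular'' part of Lemma~\ref{LEMMA:many-connection-A6} (applied to the pair $Q_1, Q_3 \in \mathcal{A}_6$) provides a member $Q_2 \in \mathcal{A}_6 - B$ satisfying $Q_1 \rightsquigarrow Q_2 \rightsquigarrow Q_3$. Because distinct members of $\mathcal{A}$ are pairwise vertex-disjoint and the connection $Q_1 \rightsquigarrow Q_2 \rightsquigarrow Q_3$ forces $Q_2 \notin \{Q_1, Q_3\}$, we automatically get $Q_2 \in \mathcal{A}_6 - (Q_1 \cup Q_3 \cup B)$, which is the required form.

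Next, apply Proposition~\ref{PROP:connection-A6-b}~\ref{PROP:connection-A6-b-4} to this connection with the triple $Q_3 \setminus \{p_3\}$: this yields a vertex $p_2 \in Q_2$ adjacent to every vertex of $Q_3 \setminus \{p_3\}$, which is exactly conclusion~(i), together with a set $S \subseteq Q_1$ of at least three common neighbors of the three remaining vertices $Q_2 \setminus \{p_2\}$. Since $|S| \ge 3$ while $|\{p_1, q_1\}| = 2$, pigeonhole gives $S \cap (Q_1 \setminus \{p_1, q_1\}) \ne \emptyset$; any vertex in this intersection is a common neighbor of $Q_2 \setminus \{p_2\}$ lying inside $Q_1 \setminus \{p_1, q_1\}$, verifying conclusion~(ii).

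There is no substantive obstacle: the entire argument is a short chaining of the two preceding results plus a one-line pigeonhole step. The only point requiring a little care is choosing the correct variant of Proposition~\ref{PROP:connection-A6-b}—namely~\ref{PROP:connection-A6-b-4}, which starts from a triple in $Q_3$ and ends at common neighbors in $Q_1$—so that its output matches the asymmetric roles that $\{p_1, q_1\}$ and $p_3$ play in the lemma's statement.
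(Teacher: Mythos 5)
Your proposal is correct and follows essentially the same route as the paper's own proof, which likewise derives the lemma directly from Lemma~\ref{LEMMA:many-connection-A6} together with Proposition~\ref{PROP:connection-A6-b}~\ref{PROP:connection-A6-b-4}. The only detail you make explicit that the paper leaves implicit is the pigeonhole step ensuring one of the at least three common neighbors in $Q_1$ avoids $\{p_1,q_1\}$, which is a correct and welcome clarification.
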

\textbf{Remark.} We call $Q_1 Q_2 Q_3$ a \textbf{switching} path (\textbf{avoiding} $B$ and \textbf{connecting} $Q_1$ and $Q_3$) and refer to $Q_1$ and $Q_3$ as the \textbf{ends} of this switching path. 
\begin{proof}[Proof of Lemma~\ref{LEMMA:operation-1-2-exists}]
    This lemma is a direct consequence of Lemma~\ref{LEMMA:many-connection-A6} and Propositions~\ref{PROP:connection-A6-b}~\ref{PROP:connection-A6-b-4}$\colon$ 
    Given that $\frac{a_6 - 2}{12} > |B|$,  Lemma~\ref{LEMMA:many-connection-A6} guarantees the existence of a member $Q_{2} \in \mathcal{A}_{6} - B$ such that $Q_1\rightsquigarrow Q_2 \rightsquigarrow  Q_3$. Consequently, the assertions in Lemma~\ref{LEMMA:operation-1-2-exists} follows from Propositions~\ref{PROP:connection-A6-b}~\ref{PROP:connection-A6-b-4}. 
\end{proof}

\medskip 

%
\begin{figure}[H]
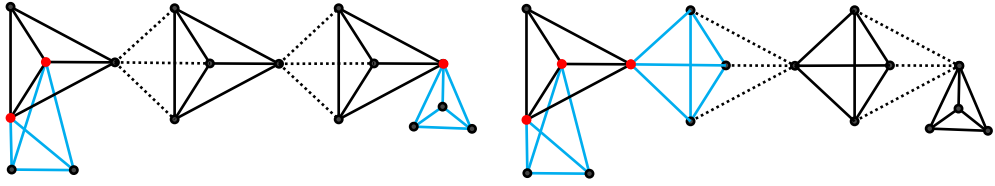

\centering
\tikzset{every picture/.style={line width=1pt}} 


\caption{Operation \RomanNumeralCaps{2} moves one  marked vertex (intersection of the cyan $K_4$'s and the black $K_4$'s) into a copy of $K_4$ that already contains two marked vertices. This process is represented in shorthand as $2+1 \to 3$.}
\label{Fig:Rotation-2}
\end{figure}

The following lemma establishes the existence of Operation \RomanNumeralCaps{3} (as shown in  Figure~\ref{Fig:Rotation-3}).
\begin{lemma}\label{LEMMA:operation-3-exists}
    Let $Q_1, Q_{6} \in \mathcal{A}_{6}$ be two distinct members, with $\{p_1, q_1\} \subseteq  Q_1$ as two fixed vertices in $Q_{1}$ and $\{p_6,q_6 \}\subseteq  Q_{6}$ as two fixed vertices in $Q_{6}$.
    For every subset $B \subseteq  V(\mathcal{A}_{6}) \setminus (Q_1 \cup Q_6)$ of size at most $\min\left\{\mu - 11, \frac{a_{6}-2}{12} - 6\right\}$, there exist four distinct members $Q_{2},Q_{3},Q_{4},Q_{5} \in \mathcal{A}_6 - (Q_1 \cup Q_2 \cup B)$ such that$\colon$
    \begin{enumerate}[label=(\roman*)]
        \item\label{LEMMA:operation-3-exists-1} there exists a pair of vertices $\{p_2, q_2\} \subseteq  Q_{2}$ such that both are adjacent to vertices in $Q_{1}\setminus \{p_1, q_1\}$, and 
        \item\label{LEMMA:operation-3-exists-2} the remaining two vertices in $Q_{2}\setminus \{p_2, q_2\}$ have at least two common neighbors, say $\{p_3, q_3\}$, in $Q_{3}$; 
        \item\label{LEMMA:operation-3-exists-3} there exists a pair of vertices $\{p_5, q_5\} \subseteq  Q_{5}$ such that both are adjacent to vertices in $Q_{6}\setminus \{p_6, q_6\}$, and 
        \item\label{LEMMA:operation-3-exists-4} the remaining two vertices in $Q_{5}\setminus \{p_5, q_5\}$ have two common neighbors, say $\{p_4, q_4\}$, in $Q_{4}$;
        \item\label{LEMMA:operation-3-exists-5} the remaining two vertices in $Q_{3}\setminus \{p_3, q_3\}$ are both adjacent to vertices in $Q_{4}\setminus \{p_4, q_4\}$.
    \end{enumerate}
\end{lemma}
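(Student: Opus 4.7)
The plan is to realize property (v) ``for free'' by arranging that $Q_3\cup Q_4$ induces a copy of $K_8$ in $G$ (so every two vertices of $Q_3\setminus\{p_3,q_3\}$ are adjacent to every vertex of $Q_4\setminus\{p_4,q_4\}$), and then to obtain $Q_2$ and $Q_5$ as ``switching middles'' of two connections $Q_1\rightsquigarrow Q_2\rightsquigarrow Q_3$ and $Q_6\rightsquigarrow Q_5\rightsquigarrow Q_4$, invoking Proposition~\ref{PROP:connection-A6-b}\ref{PROP:connection-A6-b-1} to extract the adjacency structure required by (i)--(ii) and (iii)--(iv).

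First, I would apply Lemma~\ref{LEMMA:find-K8} with the avoidance set $Z := V(Q_1)\cup V(Q_6)\cup B$; the hypothesis $|B|\le \mu-11$ makes $|Z|\le 8+|B|\le \mu-3$, so the lemma returns six distinct members of $\mathcal{A}_6$, disjoint from $Q_1$, $Q_6$, and $B$, that pair up into three vertex-disjoint copies of $K_8$ in $G$. I would designate any one such pair as $(Q_3,Q_4)$, so that $e(Q_3,Q_4)=16$ automatically. (Having three pairs available is useful as a reservoir in case later choices need to be shifted, but a single pair suffices for the construction below.)

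Second, to produce $Q_2$, I would apply Lemma~\ref{LEMMA:many-connection-A6} to $Q_1,Q_3$ with avoidance set $B_2:=B\cup V(Q_4)\cup V(Q_6)$; since the constant in the hypothesis $|B|\le \frac{a_6-2}{12}-6$ is chosen large enough to absorb the eight additional vertices (and, strictly, one would require the constant to absorb twelve vertices in the next step, which is a routine adjustment of the constant), the lemma yields $Q_2\in\mathcal{A}_6-B_2$ with $Q_1\rightsquigarrow Q_2\rightsquigarrow Q_3$, and in particular $Q_2\notin\{Q_1,Q_3,Q_4,Q_6\}$ and $V(Q_2)\cap B=\emptyset$. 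Invoking Proposition~\ref{PROP:connection-A6-b}\ref{PROP:connection-A6-b-1} with the fixed pair $\{p_1,q_1\}\subseteq Q_1$ produces the desired pair $\{p_2,q_2\}\subseteq Q_2$ adjacent to both $p_1,q_1$, together with three common neighbors of the remaining two vertices of $Q_2$ inside $Q_3$; I would let $\{p_3,q_3\}$ be any two of those three. The symmetric application with $Q_6,Q_4$ and avoidance set $B_5:=B\cup V(Q_1)\cup V(Q_2)\cup V(Q_3)$ yields $Q_5$ distinct from all previously chosen members and supplies $\{p_5,q_5\}\subseteq Q_5$ together with $\{p_4,q_4\}\subseteq Q_4$ analogously.

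Finally, property (v) is immediate from the choice in the first step: the two vertices of $Q_3\setminus\{p_3,q_3\}$ lie in $Q_3$, and any vertex of $Q_3$ is adjacent to every vertex of $Q_4$ since $G[Q_3\cup Q_4]\cong K_8$, so in particular they are adjacent to the two vertices of $Q_4\setminus\{p_4,q_4\}$. The only real obstacle is the bookkeeping of vertex sets to be avoided at each application of Lemma~\ref{LEMMA:many-connection-A6}, which forces the hypothesis on $|B|$ to be of the form $\min\{\mu - O(1),\,\frac{a_6-2}{12}-O(1)\}$; once the $K_8$-pair is fixed, everything else follows from standard applications of the already-established connection machinery.
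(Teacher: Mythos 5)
Your proposal matches the paper's proof essentially verbatim: take $Z = V(Q_1\cup Q_6)\cup B$, invoke Lemma~\ref{LEMMA:find-K8} to locate a $K_8$-pair $(Q_3,Q_4)$, then apply Lemma~\ref{LEMMA:many-connection-A6} twice to find connecting members $Q_2$ and $Q_5$, extract the adjacency structure via Proposition~\ref{PROP:connection-A6-b}, and read off property (v) from the $K_8$. One small slip: to obtain item (i), Proposition~\ref{PROP:connection-A6-b}\ref{PROP:connection-A6-b-1} should be applied to $(Q_1,Q_2,Q_3)$ with the \emph{complementary} pair $Q_1\setminus\{p_1,q_1\}$, not with $\{p_1,q_1\}$ itself, since the lemma requires $\{p_2,q_2\}$ to be adjacent to the two vertices of $Q_1$ \emph{not} designated (and symmetrically for $Q_6$); also, the constant $6$ in the hypothesis already suffices, as the avoidance set touches only $4+|B|$ members (whole $K_4$'s cost one member each, not four) -- no adjustment is needed.
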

\textbf{Remark.} We call $Q_1 \cdots Q_6$ a \textbf{switching} path (\textbf{avoiding} $B$ and \textbf{connecting} $Q_1$ and $Q_6$) and refer to $Q_1$ and $Q_6$ as the \textbf{ends} of this switching path. 
\medskip 

%
\begin{figure}[H]
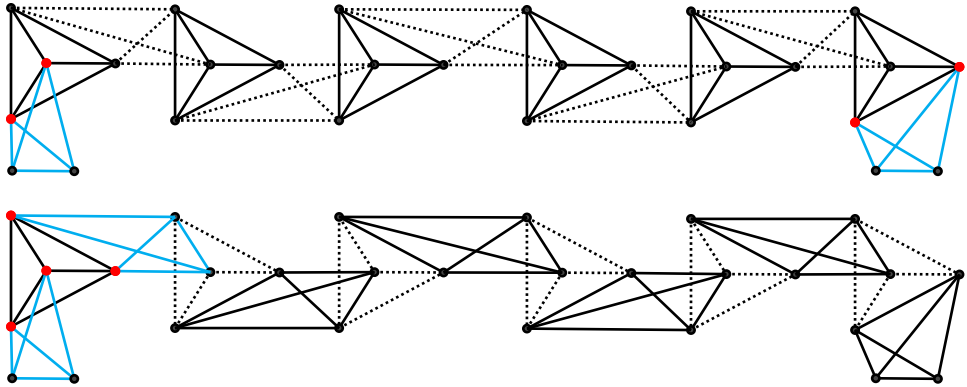

\centering
\begin{minipage}{\textwidth}
\centering
\tikzset{every picture/.style={line width=1pt}} 


\end{minipage}
\caption{Operation \RomanNumeralCaps{3} moves two  marked vertices (intersection of the cyan $K_4$'s and the black $K_4$'s) into a copy of $K_4$ that already contains two marked vertices. This process is represented in shorthand as $2+2 \to 4$.}
\label{Fig:Rotation-3}
\end{figure}

\begin{proof}[Proof of Lemma~\ref{LEMMA:operation-3-exists}]
    Let $Z \coloneqq V(Q_{1}\cup Q_{6})\cup B$, noting that $|Z| \le \mu$. 
    It follows from Lemma~\ref{LEMMA:find-K8} that there exists a pair of distinct members $Q_{3},Q_{4} \in \mathcal{A}_{6} - Z$ such that $V(Q_3 \cup Q_4)$ induces a copy of $K_8$ in $G$.

    Applying Lemma~\ref{LEMMA:many-connection-A6} to $(Q_{1},Q_{3})$, we find a member $Q_2 \in \mathcal{A}_{6} - (Q_{1}\cup Q_{3} \cup Q_{4} \cup Q_{6}\cup B)$ (this is possible since $b \le \frac{a_6 -2}{12} - 6 < \frac{a_6 -2}{12} - 4$) such that $Q_1 \rightsquigarrow Q_{2} \rightsquigarrow Q_{3}$. 
    Similarly, applying Lemma~\ref{LEMMA:many-connection-A6} to $(Q_{4},Q_{6})$, we find a member $Q_5 \in \mathcal{A}_{6} - (Q_{1}\cup Q_{3} \cup Q_{4} \cup Q_{5} \cup Q_{6} \cup B)$ such that $Q_4 \rightsquigarrow Q_{5} \rightsquigarrow Q_{6}$. 

    Applying Proposition~\ref{PROP:connection-A6-b}~\ref{PROP:connection-A6-b-1} to $(Q_1, Q_2, Q_3)$, we obtain Lemma~\ref{LEMMA:operation-3-exists}~\ref{LEMMA:operation-3-exists-1} and~\ref{LEMMA:operation-3-exists-2}. 
    Similarly, applying Proposition~\ref{PROP:connection-A6-b}~\ref{PROP:connection-A6-b-2} to $(Q_4, Q_5, Q_6)$, we obtain Lemma~\ref{LEMMA:operation-3-exists}~\ref{LEMMA:operation-3-exists-3} and~\ref{LEMMA:operation-3-exists-4}. 
    Finally, Lemma~\ref{LEMMA:operation-3-exists}~\ref{LEMMA:operation-3-exists-5} follows from the fact that $V(Q_3 \cup Q_4)$ induces a copy of $K_8$ in $G$. 
\end{proof}

The following lemma establishes the existence of Operation \RomanNumeralCaps{4} (as shown in Figure~\ref{Fig:Rotation-4}).
\begin{lemma}\label{LEMMA:operation-4-exists}
    Let $Q_1, Q_{6} \in \mathcal{A}_{6}$ be two distinct members, with $\{p_1, q_1, s_1\} \subseteq  Q_1$ as three fixed vertices in $Q_{1}$ and $p_6 \in Q_{6}$ as a fixed vertex in $Q_{3}$.
    For every subset $B \subseteq  V(\mathcal{A}_{6}) \setminus (Q_1 \cup Q_6)$ of size at most $\min\left\{\mu - 11, \frac{a_{6}-2}{12} - 6\right\}$, there exist four distinct members $Q_{2},Q_{3},Q_{4},Q_{5} \in \mathcal{A}_6 - (Q_1 \cup Q_6 \cup B)$ such that$\colon$
    \begin{enumerate}[label=(\roman*)]
        \item\label{LEMMA:operation-4-exists-1} there exist three vertices $\{p_2, q_2,s_2\} \subseteq  Q_{2}$ such that all are adjacent to the vertex in $Q_{1}\setminus \{p_1, q_1,s_1\}$, and 
        \item\label{LEMMA:operation-4-exists-2} the remaining vertex in $Q_{2}\setminus \{p_2, q_2,s_2\}$ is adjacent to at least three vertices, say $\{p_3, q_3, s_3\}$, in $Q_{3}$; 
        \item\label{LEMMA:operation-4-exists-3} there exist a vertex $p_5 \in Q_{5}$ that is adjacent to the remaining three vertices in $Q_{6}\setminus \{p_6\}$, and 
        \item\label{LEMMA:operation-4-exists-4} the remaining three vertices in $Q_{5}\setminus \{p_5\}$ have a common neighbor, say $p_4$, in $Q_{4}$; 
        \item\label{LEMMA:operation-4-exists-5} the remaining vertex in $Q_{3}\setminus \{p_3, q_3, s_3\}$ is adjacent to all three vertices in $Q_{4}\setminus \{p_4\}$.
    \end{enumerate}
\end{lemma}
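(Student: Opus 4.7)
The plan is to mirror the proof of Lemma~\ref{LEMMA:operation-3-exists} almost verbatim, replacing the applications of Proposition~\ref{PROP:connection-A6-b}\ref{PROP:connection-A6-b-1}\ref{PROP:connection-A6-b-2} (the ``pair'' variants) with Proposition~\ref{PROP:connection-A6-b}\ref{PROP:connection-A6-b-3}\ref{PROP:connection-A6-b-4} (the ``single vertex / triple'' variants), since the fixed sets on the two ends now have sizes $3$ and $1$ rather than $2$ and $2$. The underlying skeleton of the switching path $Q_1Q_2Q_3Q_4Q_5Q_6$ is identical: condition~\ref{LEMMA:operation-4-exists-5} forces $Q_3\cup Q_4$ to span a $K_8$, the pair $(Q_1,Q_2,Q_3)$ must realize a connection, and likewise for $(Q_4,Q_5,Q_6)$.

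Concretely, set $Z \coloneqq V(Q_1\cup Q_6)\cup B$, so that $|Z|\le 8+|B|\le \mu-3$. By Lemma~\ref{LEMMA:find-K8} applied with this $Z$, I extract two distinct members $Q_3,Q_4\in \mathcal{A}_6-Z$ with $V(Q_3\cup Q_4)$ inducing a copy of $K_8$ in $G$; this will automatically provide condition~\ref{LEMMA:operation-4-exists-5} once $\{p_3,q_3,s_3\}\subseteq Q_3$ and $p_4\in Q_4$ are fixed, because the remaining vertex of $Q_3$ is adjacent in $G$ to all three vertices of $Q_4\setminus\{p_4\}$. Next, I apply Lemma~\ref{LEMMA:many-connection-A6} to the pair $(Q_1,Q_3)$, excluding the (at most $|B|+4\le \frac{a_6-2}{12}-2$) already-used members, to obtain $Q_2\in \mathcal{A}_6-(Q_1\cup Q_3\cup Q_4\cup Q_6\cup B)$ with $Q_1\rightsquigarrow Q_2\rightsquigarrow Q_3$; similarly, one more application of Lemma~\ref{LEMMA:many-connection-A6} to $(Q_4,Q_6)$, now avoiding one extra member, yields $Q_5\in \mathcal{A}_6-(Q_1\cup Q_2\cup Q_3\cup Q_4\cup Q_6\cup B)$ with $Q_4\rightsquigarrow Q_5\rightsquigarrow Q_6$. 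The numerical condition $|B|\le \frac{a_6-2}{12}-6$ in the hypothesis is exactly what makes these greedy choices legal.

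Finally, I upgrade the two connections to the desired adjacency structures. Applying Proposition~\ref{PROP:connection-A6-b}\ref{PROP:connection-A6-b-3} to the connection $Q_1\rightsquigarrow Q_2\rightsquigarrow Q_3$, with the distinguished vertex taken to be the unique vertex of $Q_1\setminus\{p_1,q_1,s_1\}$, produces three vertices $\{p_2,q_2,s_2\}\subseteq Q_2$ all adjacent to this vertex, while the fourth vertex of $Q_2$ has three neighbors in $Q_3$; naming these three neighbors $\{p_3,q_3,s_3\}$ gives conditions~\ref{LEMMA:operation-4-exists-1} and~\ref{LEMMA:operation-4-exists-2}. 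Dually, applying Proposition~\ref{PROP:connection-A6-b}\ref{PROP:connection-A6-b-4} to the connection $Q_4\rightsquigarrow Q_5\rightsquigarrow Q_6$, with distinguished triple the three vertices of $Q_6\setminus\{p_6\}$, yields a vertex $p_5\in Q_5$ adjacent to all of $Q_6\setminus\{p_6\}$, while the three remaining vertices of $Q_5$ share a common neighbor $p_4\in Q_4$, delivering conditions~\ref{LEMMA:operation-4-exists-3} and~\ref{LEMMA:operation-4-exists-4}. Condition~\ref{LEMMA:operation-4-exists-5} is then automatic from the $K_8$ structure on $Q_3\cup Q_4$.

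I don't anticipate a serious obstacle here; the only minor bookkeeping point is to check that the excluded set at each application of Lemma~\ref{LEMMA:many-connection-A6} has size strictly less than $\frac{a_6-2}{12}$, which is what the quantitative bound on $|B|$ in the hypothesis is tailored for. If anything subtle arises, it is verifying that the distinguished end-vertex sets chosen in the two applications of Proposition~\ref{PROP:connection-A6-b} are consistent with the given fixed vertices $\{p_1,q_1,s_1\}$ and $p_6$, but since those fixed sets coincide with the ``avoided'' sets in the statements of parts~\ref{PROP:connection-A6-b-3} and~\ref{PROP:connection-A6-b-4}, this is immediate.
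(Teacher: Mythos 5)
Your proposal is correct and is exactly the argument the paper intends: the paper's own "proof" of this lemma is a one-line remark that it is similar to Lemma~\ref{LEMMA:operation-3-exists}, and your write-up carries out that adaptation faithfully — same $K_8$ extraction via Lemma~\ref{LEMMA:find-K8}, same two applications of Lemma~\ref{LEMMA:many-connection-A6}, with Proposition~\ref{PROP:connection-A6-b}~\ref{PROP:connection-A6-b-3} and~\ref{PROP:connection-A6-b-4} substituted for the pair variants, and the bookkeeping on $|B|$ checks out.
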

\textbf{Remark.} We call $Q_1 \cdots Q_6$ a \textbf{switching} path (\textbf{avoiding} $B$ and \textbf{connecting} $Q_1$ and $Q_6$) and refer to $Q_1$ and $Q_6$ as the \textbf{ends} of this switching path. 
\begin{proof}[Proof of Lemma~\ref{LEMMA:operation-4-exists}]
    The proof is similar to that of Lemma~\ref{LEMMA:operation-3-exists}, so we omit it here. 
\end{proof}

\medskip 

%
\begin{figure}[H]
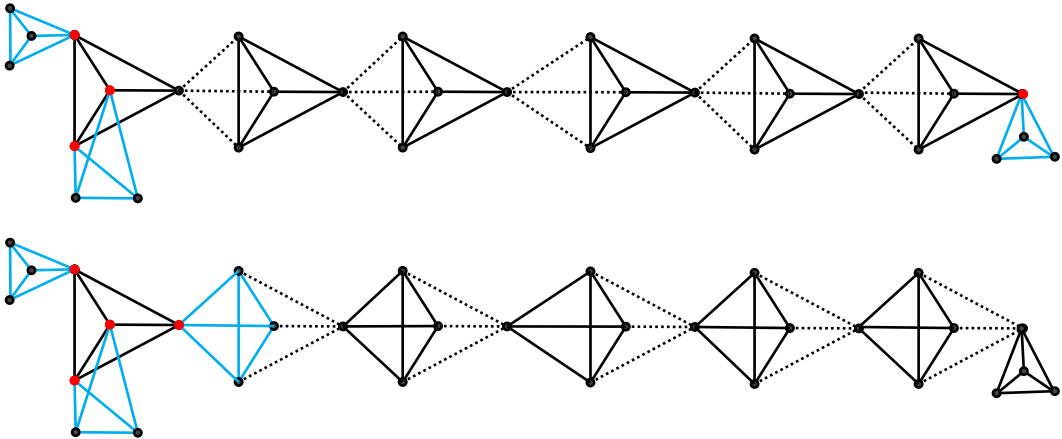

\centering
\begin{minipage}{\textwidth}
\centering
\tikzset{every picture/.style={line width=1pt}} 


\end{minipage}
\caption{Operation \RomanNumeralCaps{4} moves one  marked vertex (intersection of the cyan $K_4$'s and the black $K_4$'s) into a copy of $K_4$ that already contains three marked vertices. This process is represented in shorthand as $3+1 \to 4$.}
\label{Fig:Rotation-4}
\end{figure}
%

%
The following lemma establishes the existence of Operation \RomanNumeralCaps{5} (as shown in Figure~\ref{Fig:Rotation-5}).
\begin{lemma}\label{LEMMA:operation-5-exists}
    Let $Q_1, Q_{2},Q_{3} \in \mathcal{A}_{6}$ be three distinct members, with $\{p_1, q_1, s_1\} \subseteq  Q_1$ as three fixed vertices in $Q_{1}$, $\{p_2, q_2, s_2\} \subseteq  Q_{2}$ as three fixed vertices in $Q_{2}$, and $\{p_3, q_3\} \in Q_{3}$ as two fixed vertices in $Q_{3}$. 
    For every subset $B \subseteq  V(\mathcal{A}_{6}) \setminus (Q_1 \cup Q_2 \cup Q_3)$ of size at most $\min\left\{\mu - 15, \frac{a_{6}-2}{12} - 12\right\}$, there exist nine distinct members $Q_{4}, \ldots, Q_{12} \in \mathcal{A}_6 - (Q_1 \cup Q_2 \cup Q_3 \cup B)$ such that$\colon$
    \begin{enumerate}[label=(\roman*)]
        \item\label{LEMMA:operation-5-exists-1} there exist three vertices $\{p_4, q_4,s_4\} \subseteq  Q_{4}$ such that all are adjacent to the vertex in $Q_{1}\setminus \{p_1, q_1,s_1\}$, and the remaining vertex in $Q_{4}\setminus \{p_4, q_4,s_4\}$ is adjacent to at least three vertices, say $\{p_5, q_5, s_5\}$, in $Q_{5}$; 
        \item\label{LEMMA:operation-5-exists-2} there exist three vertices $\{p_6, q_6, s_6\} \subseteq  Q_{6}$ such that all are adjacent to the vertex in $Q_{2}\setminus \{p_2, q_2,s_2\}$, and  the remaining vertex in $Q_{6}\setminus \{p_6, q_6,s_6\}$ is adjacent to at least three vertices, say $\{p_7, q_7, s_7\}$, in $Q_{7}$; 
        \item\label{LEMMA:operation-5-exists-3} there exist two vertices $\{p_8, q_8\} \subseteq  Q_{8}$ such that both are adjacent to vertices in $Q_{3}\setminus \{p_3, q_3\}$, and the remaining two vertices in $Q_{8}\setminus \{p_8, q_8\}$ are both adjacent to at least two vertices, say $\{p_9, q_9\}$, in $Q_{9}$;
        \item\label{LEMMA:operation-5-exists-4} there exist two vertices $\{p_8, q_8\} \subseteq  Q_{8}$ such that both are adjacent to vertices in $Q_{3}\setminus \{p_3, q_3\}$, and the remaining two vertices in $Q_{8}\setminus \{p_8, q_8\}$ are both adjacent to at least two vertices, say $\{p_9, q_9\}$, in $Q_{9}$;
        \item\label{LEMMA:operation-5-exists-5} there exist two vertices $\{p_{12}, q_{12}\} \subseteq  Q_{12}$ such that both are adjacent to the vertex in  $Q_{5}\setminus \{p_5, q_5, s_5\}$ and the vertex in $Q_{7}\setminus \{p_7, q_7, s_7\}$; 
        \item\label{LEMMA:operation-5-exists-5b} the vertex in $Q_{5}\setminus \{p_5, q_5, s_5\}$ is adjacent to the vertex in $Q_{7}\setminus \{p_7, q_7, s_7\}$; 
        \item\label{LEMMA:operation-5-exists-6} there exist two vertices $\{p_{11}, q_{11}\} \subseteq  Q_{11}$ such that both are adjacent to vertices in $Q_{12}\setminus \{p_{12}, q_{12}\}$, and the remaining two vertices in $Q_{11}\setminus \{p_{11}, q_{11}\}$ are adjacent to at least two vertices, say $\{p_{10}, q_{10}\}$, in $Q_{10}$;
        \item\label{LEMMA:operation-5-exists-7} the remaining two vertices in $Q_{10}\setminus \{p_{10}, q_{10}\}$ are adjacent to the remaining two vertices in $Q_{9}\setminus \{p_{9}, q_{9}\}$.
    \end{enumerate}
\end{lemma}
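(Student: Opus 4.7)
The plan is to follow the template of Lemmas~\ref{LEMMA:operation-3-exists} and~\ref{LEMMA:operation-4-exists}, first locating a rich ``terminal'' subconfiguration on $Q_5, Q_7, Q_{12}$ in which one vertex each from $Q_5$ and $Q_7$ together with two vertices of $Q_{12}$ induce a $K_4$ in $G$, and then greedily building the four intermediate members $Q_4, Q_6, Q_8, Q_{11}$ together with the auxiliary pair $(Q_9, Q_{10})$. Setting $Z_0 \coloneqq V(Q_1 \cup Q_2 \cup Q_3) \cup B$ (noting $|Z_0| \le \mu$), I would apply Lemma~\ref{LEMMA:find-K8} twice in succession (each time enlarging the forbidden set by at most $8$ vertices, so its size stays at most $\mu$) to obtain vertex-disjoint pairs $(Q_5, Q_{12})$ and $(Q_9, Q_{10})$ in $\mathcal{A}_6 - Z_0$ with $Q_5 \cup Q_{12}$ and $Q_9 \cup Q_{10}$ each inducing a copy of $K_8$. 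Next, I would select $Q_7 \in \mathcal{A}_6$, disjoint from everything already chosen, satisfying $e(Q_5, Q_7) \ge 15$ and $e(Q_7, Q_{12}) \ge 15$ simultaneously; this is possible because Lemma~\ref{LEMMA:A6-min-degree} gives $|N_{\mathcal{A}_6}^{\ell}(Q_5)|, |N_{\mathcal{A}_6}^{\ell}(Q_{12})| \ge \frac{a_6 - 1}{2}$, so their intersection, after removing the $O(\mu)$ already-used members, remains nonempty.

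The key technical step is to realize items~\ref{LEMMA:operation-5-exists-5} and~\ref{LEMMA:operation-5-exists-5b}. Because $Q_5 \cup Q_{12}$ induces $K_8$, \emph{every} vertex of $Q_5$ is adjacent to every vertex of $Q_{12}$, so the ``remaining'' vertex $t_5$ of $Q_5$ (once $\{p_5, q_5, s_5\}$ is declared) will automatically be adjacent to any pair $\{p_{12}, q_{12}\} \subseteq Q_{12}$ we choose. Since $e(Q_7, Q_{12}) \ge 15$, at most one vertex of $Q_7$ has fewer than three neighbors in $Q_{12}$; I would pick $\{p_7, q_7, s_7\}$ so that $t_7$ is the vertex of $Q_7$ with the maximum number of neighbors in $Q_{12}$ (hence at least three), set $\{p_{12}, q_{12}\}$ to be any two of those neighbors, and verify $t_5 t_7 \in G$ using $e(Q_5, Q_7) \ge 15$. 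The only potential obstruction is a single missing $Q_5$--$Q_7$ edge: if it happens to be $t_5 t_7$, I would swap $t_5$ with a different vertex of $Q_5$ (admissible because $Q_5 \cup Q_{12}$ is $K_8$, so all choices of $t_5$ are equivalent from the $Q_{12}$-side) and adjust $\{p_5, q_5, s_5\}$ accordingly.

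With the terminal triple $(Q_5, Q_7, Q_{12})$ and the marked triples/pairs fixed, I would invoke Lemma~\ref{LEMMA:many-connection-A6} together with the four cases of Proposition~\ref{PROP:connection-A6-b} to produce $Q_4, Q_6, Q_8, Q_{11}$ one at a time, each greedily avoiding all previously selected members. Explicitly: Proposition~\ref{PROP:connection-A6-b}~\ref{PROP:connection-A6-b-3} applied to $Q_1 \rightsquigarrow Q_4 \rightsquigarrow Q_5$, starting from the unique vertex of $Q_1 \setminus \{p_1, q_1, s_1\}$, yields item~\ref{LEMMA:operation-5-exists-1}; the symmetric argument yields item~\ref{LEMMA:operation-5-exists-2}; Proposition~\ref{PROP:connection-A6-b}~\ref{PROP:connection-A6-b-1} applied to $Q_3 \rightsquigarrow Q_8 \rightsquigarrow Q_9$ with the prescribed pair $\{p_3, q_3\}$ yields items~\ref{LEMMA:operation-5-exists-3}--\ref{LEMMA:operation-5-exists-4}; Proposition~\ref{PROP:connection-A6-b}~\ref{PROP:connection-A6-b-2} applied to $Q_{10} \rightsquigarrow Q_{11} \rightsquigarrow Q_{12}$ with the prescribed pair $\{p_{12}, q_{12}\}$ yields item~\ref{LEMMA:operation-5-exists-6}; and item~\ref{LEMMA:operation-5-exists-7} is automatic because $Q_9 \cup Q_{10}$ is a $K_8$. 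Throughout, the cumulative set of used members stays below the bounds $\min\{\mu - 15, \frac{a_6 - 2}{12} - 12\}$ imposed on $B$, so every invocation of Lemma~\ref{LEMMA:many-connection-A6} succeeds. \textbf{The main obstacle} will be the coordination highlighted in the second paragraph: ensuring that the vertex $t_5$ dictated \emph{after the fact} by the connection $Q_1 \rightsquigarrow Q_4 \rightsquigarrow Q_5$ coincides with the vertex pre-designated for the terminal $K_4$. The $K_8$-structure of $Q_5 \cup Q_{12}$, combined with the freedom to choose which three vertices of $Q_4$ form $\{p_4, q_4, s_4\}$ in Proposition~\ref{PROP:connection-A6-b}~\ref{PROP:connection-A6-b-3}, should render any designation of $t_5$ realizable; making this compatibility argument rigorous (and similarly for $t_7$) is the crux of the proof.
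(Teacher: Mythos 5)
Your overall architecture is right — two $K_8$-pairs from Lemma~\ref{LEMMA:find-K8}, then greedy connectors via Lemma~\ref{LEMMA:many-connection-A6} and Proposition~\ref{PROP:connection-A6-b} — but you have placed one of the $K_8$'s on the wrong pair, and this creates exactly the gap you flag in your last paragraph without closing it. You put the $K_8$ on $Q_5\cup Q_{12}$ and only require $e(Q_5,Q_7)\ge 15$. Then item~\ref{LEMMA:operation-5-exists-5b} ($t_5t_7\in G$) can fail on the single missing $Q_5$--$Q_7$ edge, and your patch — ``swap $t_5$ with a different vertex of $Q_5$'' — is not available: $t_5$ is \emph{dictated} by the connection $Q_1\rightsquigarrow Q_4\rightsquigarrow Q_5$, since Proposition~\ref{PROP:connection-A6-b}~\ref{PROP:connection-A6-b-3} only guarantees that the remaining vertex of $Q_4$ has \emph{some} three neighbours in $Q_5$; if it has exactly three, the fourth vertex $t_5$ is forced, and nothing in that proposition lets you re-designate it. The same objection applies to your claim that you can choose $\{p_7,q_7,s_7\}$ so that $t_7$ maximizes neighbours in $Q_{12}$: the triple $\{p_7,q_7,s_7\}$ is likewise dictated by the connection $Q_2\rightsquigarrow Q_6\rightsquigarrow Q_7$. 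Upgrading $e(Q_5,Q_7)\ge 15$ to $16$ is not something Lemma~\ref{LEMMA:A6-min-degree} can deliver, so the coordination problem is genuine on your route.

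The paper resolves it by the opposite pairing: take the $K_8$'s on $Q_5\cup Q_7$ and on $Q_9\cup Q_{10}$ (a single application of Lemma~\ref{LEMMA:find-K8} already yields both disjoint pairs, which also sidesteps your forbidden-set bookkeeping for a second application), and obtain $Q_{12}$ afterwards as a connector $Q_5\rightsquigarrow Q_{12}\rightsquigarrow Q_7$ via Lemma~\ref{LEMMA:many-connection-A6}. Then \ref{LEMMA:operation-5-exists-5b} is automatic for \emph{whatever} $t_5,t_7$ the earlier connections force, and \ref{LEMMA:operation-5-exists-5} follows from Definition~\ref{DEF:connects-Q1-Q2-Q3}: in either case of the connection, $t_5$ has at least $3$ neighbours in $Q_{12}$ and $t_7$ at least $2$ (indeed at least $3$ when both counts are $\ge 15$), so they share at least two common neighbours $\{p_{12},q_{12}\}$ in $Q_{12}$. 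With that one change your argument goes through; as written, item~\ref{LEMMA:operation-5-exists-5b} is unproved.
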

\textbf{Remark.} We call $Q_1 \cdots Q_{12}$ a \textbf{switching} path\footnote{Strictly speaking, it is a tree.} (\textbf{avoiding} $B$ and \textbf{connecting} $Q_1$, $Q_2$, and $Q_3$) and refer to $Q_1$, $Q_2$, and $Q_3$ as the \textbf{ends} of this switching path. 
\begin{proof}[Proof of Lemma~\ref{LEMMA:operation-5-exists}]
    Let $Z \coloneqq Q_1 \cup Q_2 \cup Q_3 \cup B$, noting that $|Z| \le \mu$.  
    It follows from Lemma~\ref{LEMMA:find-K8} that there exist four distinct members $Q_{5},Q_{7}, Q_{9}, Q_{10} \in \mathcal{A}_{6} - Z$ such that $V(Q_5 \cup Q_7)$ and $V(Q_9 \cup Q_{10})$ both induce a copy of $K_8$ in $G$. 
    
    Applying Lemma~\ref{LEMMA:many-connection-A6} to $(Q_{1},Q_{5})$, we find a member 
    \begin{align*}
        Q_4 \in \mathcal{A}_{6} - (Q_1 \cup Q_2 \cup Q_3 \cup Q_5 \cup Q_7 \cup Q_9 \cup Q_{10})
    \end{align*}
    such that $Q_1 \rightsquigarrow Q_{4} \rightsquigarrow Q_{5}$. 
    
    Applying Lemma~\ref{LEMMA:many-connection-A6} to $(Q_{2},Q_{7})$, we find a member $Q_6 \in \mathcal{A}_{6}$
    \begin{align*}
        Q_6 
        \in \mathcal{A}_6 - (Q_1 \cup Q_2 \cup Q_3 \cup Q_4 \cup Q_5 \cup Q_7 \cup Q_9 \cup Q_{10})
    \end{align*}
    such that $Q_2 \rightsquigarrow Q_{6} \rightsquigarrow Q_{7}$. 
    
    Applying Lemma~\ref{LEMMA:many-connection-A6} to $(Q_{3},Q_{9})$, we find a member 
    \begin{align*}
        Q_8 
        \in \mathcal{A}_6 - (Q_1 \cup Q_2 \cup Q_3 \cup Q_4 \cup Q_5 \cup Q_6 \cup Q_7 \cup Q_9 \cup Q_{10})
    \end{align*}
    such that $Q_3 \rightsquigarrow Q_{8} \rightsquigarrow Q_{9}$. 
    
    Applying Lemma~\ref{LEMMA:many-connection-A6} to $(Q_{5},Q_{7})$, we find a member
    \begin{align*}
        Q_{12}
        \in \mathcal{A}_6 - (Q_1 \cup Q_2 \cup Q_3 \cup Q_4 \cup Q_5 \cup Q_6 \cup Q_7 \cup Q_8 \cup Q_9 \cup Q_{10})
    \end{align*}
    such that $Q_5 \rightsquigarrow Q_{12} \rightsquigarrow Q_{7}$.
    
    Finally, applying Lemma~\ref{LEMMA:many-connection-A6} to $(Q_{10},Q_{12})$, we find a member $Q_{11} \in \mathcal{A}_{6}$
    \begin{align*}
        Q_{11}
        \in \mathcal{A}_6 - (Q_1 \cup Q_2 \cup Q_3 \cup Q_4 \cup Q_5 \cup Q_6 \cup Q_7 \cup Q_8 \cup Q_9 \cup Q_{10} \cup Q_{12})
    \end{align*}
    such that $Q_{10} \rightsquigarrow Q_{11} \rightsquigarrow Q_{12}$. 
    
    Similar to Lemma~\ref{LEMMA:operation-3-exists}, the assertions in Lemma~\ref{LEMMA:operation-5-exists} can be verified directly using Proposition~\ref{PROP:connection-A6-b} and the fact that $V(Q_5 \cup Q_7)$ and $V(Q_9 \cup Q_{10})$ both induce a copy of $K_8$ in $G$, so we omit the details here.
\end{proof}

\medskip 

%
\begin{figure}[H]
\centering 
\begin{minipage}{\textwidth}
\centering
\tikzset{every picture/.style={line width=1pt}} 



\end{minipage}
\caption{Operation \RomanNumeralCaps{5} moves two  marked vertices into two copies of $K_4$, each of which already contains three marked vertices. This process is represented in shorthand as $3+3+2 \to 4+4$. Due to space limitations, some cyan $K_4$'s have been omitted.}
\label{Fig:Rotation-5}
\end{figure}

\medskip

The following lemma establishes the existence of Operation \RomanNumeralCaps{6} (see Figure~\ref{Fig:Rotation-6}).
\begin{lemma}\label{LEMMA:operation-6-exists}
    Let $Q_1, Q_{2},Q_{3}, Q_{4} \in \mathcal{A}_{6}$ be four distinct members, with $\{p_i, q_i, s_i\} \subseteq  Q_i$ as three fixed vertices in $Q_{i}$ for $i \in [4]$. 
    For every subset $B \subseteq  V(\mathcal{A}_{6}) \setminus (Q_1 \cup Q_2 \cup Q_3 \cup Q_4)$ of size at most $\min\left\{\mu - 19, \frac{a_{6}-2}{12} - 18\right\}$, there exist $14$ distinct members $\tilde{Q}_{1}, \hat{Q}_{1}, \ldots, \tilde{Q}_{4}, \hat{Q}_{4}, Q_{5}, \ldots, Q_{10} \in \mathcal{A}_6 - (Q_{1}\cup Q_{2} \cup Q_{3} \cup Q_4 \cup B)$ such that$\colon$
    \begin{enumerate}[label=(\roman*)]
        \item\label{LEMMA:operation-6-exists-1} for each $i \in [4]$, there exist three vertices $\{\tilde{p}_i, \tilde{q}_i, \tilde{s}_i\} \subseteq  Q_{i}$ such that all are adjacent to the vertex in $Q_{i}\setminus \{p_i, q_i, s_i\}$, and the remaining vertex in $Q_{i}\setminus \{\tilde{p}_i, \tilde{q}_i, \tilde{s}_i\}$ is adjacent to at least three vertices, say $\{\hat{p}_i, \hat{q}_i, \hat{s}_i\}$, in $\hat{Q}_{i}$; 
        \item\label{LEMMA:operation-6-exists-2} there exist two vertices $\{p_5, q_5\} \subseteq  Q_{5}$ such that both are adjacent to the vertex in $\hat{Q}_{1}\setminus \{\hat{p}_1, \hat{q}_1, \hat{s}_1\}$ and the vertex in $\hat{Q}_{2}\setminus \{\hat{p}_2, \hat{q}_2, \hat{s}_2\}$;
        \item\label{LEMMA:operation-6-exists-2b} the vertex in $\hat{Q}_{1}\setminus \{\hat{p}_1, \hat{q}_1, \hat{s}_1\}$ is adjacent to the vertex in $\hat{Q}_{2}\setminus \{\hat{p}_2, \hat{q}_2, \hat{s}_2\}$;
        \item\label{LEMMA:operation-6-exists-3}  there exist two vertices $\{p_{10}, q_{10}\} \subseteq  Q_{10}$ such that both are adjacent to the vertex in $\hat{Q}_{3}\setminus \{\hat{p}_3, \hat{q}_3, \hat{s}_3\}$ and the vertex in $\hat{Q}_{4}\setminus \{\hat{p}_4, \hat{q}_4, \hat{s}_4\}$;
        \item\label{LEMMA:operation-6-exists-3b} the vertex in $\hat{Q}_{3}\setminus \{\hat{p}_3, \hat{q}_3, \hat{s}_3\}$ is adjacent to the vertex in $\hat{Q}_{4}\setminus \{\hat{p}_4, \hat{q}_4, \hat{s}_4\}$;
        \item\label{LEMMA:operation-6-exists-4} there exists two vertices $\{p_6, q_6\} \subseteq  Q_6$ such that both are adjacent to vertices in $Q_5 \setminus \{p_5, q_5\}$, and the remaining two vertices in $Q_6\setminus \{p_6, q_6\}$ have at least two common neighbors, say $\{p_7, q_7\}$, in $Q_7$; 
        \item\label{LEMMA:operation-6-exists-5} there exists two vertices $\{p_9, q_9\} \subseteq  Q_9$ such that both are adjacent to vertices in $Q_{10} \setminus \{p_{10}, q_{10}\}$, and the remaining two vertices in $Q_9\setminus \{p_9, q_9\}$ have at least two common neighbors, say $\{p_8, q_8\}$, in $Q_8$; 
        \item\label{LEMMA:operation-6-exists-6} the remaining two vertices in $Q_{7}\setminus \{p_{7}, q_{7}\}$ are both adjacent to the remaining two vertices in $Q_{8}\setminus \{p_{8}, q_{8}\}$.
    \end{enumerate}
\end{lemma}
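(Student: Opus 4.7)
The proof will mirror the proofs of Lemmas~\ref{LEMMA:operation-3-exists},~\ref{LEMMA:operation-4-exists}, and~\ref{LEMMA:operation-5-exists}: assemble the $14$ new members by invoking Lemma~\ref{LEMMA:find-K8} once to obtain the ``$K_{8}$-inducing'' pairs, then repeatedly apply Lemma~\ref{LEMMA:many-connection-A6} to supply the intermediate connecting copies, and finally read off the required adjacencies from Proposition~\ref{PROP:connection-A6-b}.

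First I would set $Z \coloneqq V(Q_{1}\cup Q_{2}\cup Q_{3}\cup Q_{4})\cup B$, so that $|Z|\le 16+|B|\le \mu-3$. Applying Lemma~\ref{LEMMA:find-K8} with this $Z$ yields six distinct members $\hat{Q}_{1},\hat{Q}_{2},\hat{Q}_{3},\hat{Q}_{4},Q_{7},Q_{8}\in \mathcal{A}_{6}-Z$ for which each of $V(\hat{Q}_{1}\cup\hat{Q}_{2})$, $V(\hat{Q}_{3}\cup\hat{Q}_{4})$, and $V(Q_{7}\cup Q_{8})$ induces a copy of $K_{8}$ in $G$. This single application already furnishes assertions~\ref{LEMMA:operation-6-exists-2b},~\ref{LEMMA:operation-6-exists-3b}, and~\ref{LEMMA:operation-6-exists-6} for free (once $p_{7},q_{7},p_{8},q_{8}$ are specified in the next step).

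Next I would apply Lemma~\ref{LEMMA:many-connection-A6} eight times in succession to find $\tilde{Q}_{1},\ldots,\tilde{Q}_{4}$ connecting $Q_{i}\rightsquigarrow \tilde{Q}_{i}\rightsquigarrow \hat{Q}_{i}$ for $i\in[4]$, then $Q_{5}$ connecting $\hat{Q}_{1}\rightsquigarrow Q_{5}\rightsquigarrow \hat{Q}_{2}$, $Q_{10}$ connecting $\hat{Q}_{3}\rightsquigarrow Q_{10}\rightsquigarrow \hat{Q}_{4}$, $Q_{6}$ connecting $Q_{5}\rightsquigarrow Q_{6}\rightsquigarrow Q_{7}$, and finally $Q_{9}$ connecting $Q_{8}\rightsquigarrow Q_{9}\rightsquigarrow Q_{10}$. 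At the last of these steps the set of copies being avoided has size at most $|B|+4+6+7\le (\tfrac{a_{6}-2}{12}-18)+17<\tfrac{a_{6}-2}{12}$, so Lemma~\ref{LEMMA:many-connection-A6} guarantees that each choice is possible; this bookkeeping (i.e.\ verifying that the avoidance sets fit under the two thresholds $\mu-19$ and $\tfrac{a_{6}-2}{12}-18$ throughout) is the one detail requiring care, though it is essentially the same counting as in Lemma~\ref{LEMMA:operation-5-exists}.

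With all $14$ members in hand, the assertions follow directly from Proposition~\ref{PROP:connection-A6-b}. Specifically,~\ref{LEMMA:operation-6-exists-1} comes from applying part~\ref{PROP:connection-A6-b-4} of Proposition~\ref{PROP:connection-A6-b} to each connection $Q_{i}\rightsquigarrow \tilde{Q}_{i}\rightsquigarrow \hat{Q}_{i}$;~\ref{LEMMA:operation-6-exists-2} and~\ref{LEMMA:operation-6-exists-3} follow from part~\ref{PROP:connection-A6-b-1} applied to the two middle connections (with the pair specified in~\ref{LEMMA:operation-6-exists-1} playing the role of $\{p_{1},q_{1}\}$); and the parallel assertions for the path $Q_{5}Q_{6}Q_{7}Q_{8}Q_{9}Q_{10}$ are obtained by applying parts~\ref{PROP:connection-A6-b-1} and~\ref{PROP:connection-A6-b-2} to the two end connections, while the edges between the ``remaining'' pairs of $Q_{7},Q_{8}$ are supplied by the $K_{8}$ structure on $V(Q_{7}\cup Q_{8})$. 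The main obstacle is purely notational rather than conceptual: keeping track of which vertices are excluded at each stage so that the pairs/triples produced by Proposition~\ref{PROP:connection-A6-b} are compatible across successive links of the switching tree; once this bookkeeping is set up correctly, the verification of~\ref{LEMMA:operation-6-exists-1}--\ref{LEMMA:operation-6-exists-6} is mechanical.
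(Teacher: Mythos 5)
Your proposal matches the paper's own proof: both set $Z = V(Q_1 \cup Q_2 \cup Q_3 \cup Q_4) \cup B$, invoke Lemma~\ref{LEMMA:find-K8} once to obtain $\hat{Q}_1, \ldots, \hat{Q}_4, Q_7, Q_8$ (with the three $K_8$-inducing pairs), then apply Lemma~\ref{LEMMA:many-connection-A6} eight more times to produce $\tilde{Q}_1, \ldots, \tilde{Q}_4, Q_5, Q_{10}, Q_6, Q_9$, and finally read off the adjacency assertions from Proposition~\ref{PROP:connection-A6-b} together with the three $K_8$'s. The bookkeeping you sketch (counting $|B|$ vertices plus a bounded number of additional copies, and noting it stays below $\tfrac{a_6-2}{12}$) is the same counting the paper performs.
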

\textbf{Remark.} We call $Q_1 \cdots Q_{18}$ a \textbf{switching} path\footnote{Strictly speaking, it is a tree.} (\textbf{avoiding} $B$ and \textbf{connecting} $Q_1$, $Q_2$, $Q_3$, and $Q_4$) and refer to $Q_1$, $Q_2$, $Q_3$, and $Q_4$ as the \textbf{ends} of this switching path. 

\medskip 

%
\begin{figure}[H]
\centering
\begin{minipage}{\textwidth}
\centering
\tikzset{every picture/.style={line width=0.9pt}} 

\end{minipage}
\caption{Operation \RomanNumeralCaps{6} moves three  marked vertices into three copies of $K_4$, each of which already contains three marked vertices. This process is represented in shorthand as $3+3+3+3 \to 4+4+4$. Due to space limitations, some cyan $K_4$'s have been omitted.}
\label{Fig:Rotation-6}
\end{figure}

\begin{proof}[Proof of Lemma~\ref{LEMMA:operation-6-exists}]
    Let $Z \coloneqq Q_1 \cup Q_2 \cup Q_3 \cup Q_4 \cup B$, noting that $|Z| \le \mu$. 
    It follows from Lemma~\ref{LEMMA:find-K8} that there exist six distinct members $\hat{Q}_{1},\hat{Q}_{2},\hat{Q}_{3}, \hat{Q}_{4},  Q_{7}, Q_{8} \in \mathcal{A}_{6}$ such that $V(\hat{Q}_{1} \cup \hat{Q}_{2})$, $V(\hat{Q}_{3} \cup \hat{Q}_{4})$, and $V(Q_7 \cup Q_{8})$ all induce a copy of $K_8$ in $G$. 
    
    For each $i \in [4]$, applying Lemma~\ref{LEMMA:many-connection-A6} to $(Q_{i},\hat{Q}_{i})$, we find a member $\tilde{Q}_i \in \mathcal{A}_{6}$
    \begin{align*}
        \tilde{Q}_i 
        \in \mathcal{A}_{6} - (\hat{Q}_{1} \cup \hat{Q}_{2} \cup \hat{Q}_{3} \cup \hat{Q}_{4} \cup Q_7 \cup Q_{8} \cup Q_{1}\cup Q_{2} \cup Q_{3} \cup Q_4 \cup B)
    \end{align*}
    such that $Q_{i} \rightsquigarrow \tilde{Q}_i \rightsquigarrow \hat{Q}_{i}$. 
    Moreover, we can ensure that $\tilde{Q}_1,\ldots, \tilde{Q}_4$ are pairwise disjoint. 
    
    Applying Lemma~\ref{LEMMA:many-connection-A6} to $(\hat{Q}_{1}, \hat{Q}_{2})$ and $(\hat{Q}_{3}, \hat{Q}_{4})$ respectively, we find distinct members 
    \begin{align*}
        Q_5, Q_{10} 
        \in \mathcal{A}_{6} - (Q_{1}\cup \cdots \cup Q_4 \cup \tilde{Q}_{1} \cup \cdots \cup \tilde{Q}_{4} \cup \tilde{Q}_{1} \cup \cdots \cup \hat{Q}_{4} \cup Q_7 \cup Q_{8} \cup  B)
    \end{align*} 
    such that $\hat{Q}_{1} \rightsquigarrow Q_5 \rightsquigarrow \hat{Q}_{2}$ and $\hat{Q}_{3} \rightsquigarrow Q_{10} \rightsquigarrow \hat{Q}_{4}$. 
    
    Finally, applying Lemma~\ref{LEMMA:many-connection-A6} to $(Q_5, Q_7)$ and $(Q_{10}, Q_8)$ respectively, we find distinct members 
    \begin{align*}
        Q_6, Q_{9} 
        \in \mathcal{A}_{6} - (Q_{1}\cup \cdots \cup Q_4 \cup \tilde{Q}_{1} \cup \cdots \cup \tilde{Q}_{4} \cup \tilde{Q}_{1} \cup \cdots \cup \hat{Q}_{4} \cup Q_5 \cup Q_7 \cup Q_{8} \cup Q_{10} \cup  B) 
    \end{align*}
    such that $Q_5 \rightsquigarrow Q_6 \rightsquigarrow Q_7$ and $Q_{10} \rightsquigarrow Q_{9} \rightsquigarrow Q_8$. 
    
    Similar to Lemma~\ref{LEMMA:operation-3-exists}, the assertions in Lemma~\ref{LEMMA:operation-6-exists} can be verified directly using Proposition~\ref{PROP:connection-A6-b} and the fact that $V(\hat{Q}_{1} \cup \hat{Q}_{2})$, $V(\hat{Q}_{3} \cup \hat{Q}_{4})$, and $V(Q_7 \cup Q_{8})$ all induce a copy of $K_8$ in $G$, so we omit the details here. 
\end{proof}

\subsection{Proof of Lemma~\ref{LEMMA:A6BCD-upper-bound-a}}\label{SUBSEC:proof-LEMMA:A6BCD-upper-bound-a}
In this subsection, we present the proof of Lemma~\ref{LEMMA:A6BCD-upper-bound-a}. 
Note that by Proposition~\ref{PROP:K4-tiling-cover-three-vtx} and~\eqref{equ:P-h-a-upper-bound}, it suffices to prove the following lemma. 
\begin{lemma}\label{LEMMA:A6BCD-upper-bound-a-reduced}
    Let $A \coloneqq V(\mathcal{B} \cup \mathcal{C} \cup \mathcal{D})$, $U \coloneqq V(\mathcal{A}_{6}) \cup A$, and $H \coloneqq G[U]$. 
    There exists an absolute constant $\mu > 0$ such that the following holds. 
    Suppose that 
    \begin{align*}
        e(\mathcal{A}_{6})
        \ge \frac{15 a_6^2}{2} + \mu a_6. 
    \end{align*}
    Then every $K_4$-tiling in $H$ contains at most three vertices in $A$. 
\end{lemma}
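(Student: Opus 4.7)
The plan is to argue by contradiction. Suppose $\mathcal{K}$ is a $K_4$-tiling in $H$ with $|V(\mathcal{K}) \cap A| \ge 4$; we aim to construct a rank-$4$-packing of $G$ whose first coordinate exceeds $|\mathcal{A}| = k$, contradicting the lexicographic maximality of $(|\mathcal{A}|, |\mathcal{B}|, |\mathcal{C}|, |\mathcal{D}|)$ recorded in Section~\ref{SUBSEC:setup}. For each $K \in \mathcal{K}$ set $m(K) := |V(K) \cap A|$, so $m(K) \in \{0,1,2,3,4\}$ and $\sum_{K} m(K) \ge 4$. If some $K$ has $m(K) = 4$ then $V(K) \subseteq A$ is disjoint from $V(\mathcal{A})$, so adjoining $K$ to $\mathcal{A}$ immediately yields a $K_4$-tiling of size $k + 1$, a contradiction. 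We may therefore assume $m(K) \le 3$ for every $K \in \mathcal{K}$.

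We now case-split on the multiset $\mathcal{M} := \{m(K) : K \in \mathcal{K},\ m(K) \ge 1\}$, whose entries sum to at least $4$. In each case we apply one or more of Operations~I--VI from Section~\ref{SUBSEC:6-Operations} to perform a sequence of rotations along switching paths in $\mathcal{A}_6$. Each rotation reshuffles vertices among members of $\mathcal{A}_6$ and the copies of $K_4$ in $\mathcal{K}$, and the goal is that after the rotations a new copy of $K_4$ whose four vertices all lie in $A$ becomes realized, yielding the desired contradiction. Concretely:
\begin{itemize}
\item If $\mathcal{M}$ contains a $3$ and any further positive entry, apply Operation~IV ($3+1 \to 4$, Lemma~\ref{LEMMA:operation-4-exists}).
\item If $\mathcal{M}$ contains two $2$'s, apply Operation~III ($2+2 \to 4$, Lemma~\ref{LEMMA:operation-3-exists}).
\item If $\mathcal{M}$ has shape $\{2,1,1\}$, first reduce to $\{2,2\}$ by Operation~I ($1+1 \to 2$, Lemma~\ref{LEMMA:operation-1-2-exists}), then apply Operation~III.
\item If $\mathcal{M}$ has shape $\{1,1,1,1\}$, apply Operation~I twice to reduce to $\{2,2\}$, then apply Operation~III.
\end{itemize}
Operations~II, V, and~VI are used analogously for configurations with additional marked vertices that can arise when the total exceeds $4$ (e.g.\ $(3,3,2)$ or $(3,3,3,3)$).

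The principal quantitative check is that every invocation of an operation finds its required intermediate members of $\mathcal{A}_6$ disjoint from those already consumed. By Lemmas~\ref{LEMMA:operation-1-2-exists}--\ref{LEMMA:operation-6-exists}, it suffices that the forbidden set $B$ at each step satisfies $|B| \le \min\{\mu - O(1),\ (a_6-2)/12 - O(1)\}$. The hypothesis $e(\mathcal{A}_6) \ge \frac{15 a_6^2}{2} + \mu a_6$ together with the trivial bound $e(\mathcal{A}_6) \le 8 a_6^2$ forces $a_6 \ge 2\mu$, while the longest chain of operations in our case analysis consumes only an absolute constant number of intermediate members in total; taking $\mu$ larger than this constant ensures every application succeeds. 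The chief technical obstacle is that the $V(\mathcal{A}_6)$-part of a single $K \in \mathcal{K}$ need not lie in a single member of $\mathcal{A}_6$: it may be split across several, and the structural restrictions encoded in the definition of $\mathcal{A}_6$ (no member is $3$-seen by more than one triangle of $\mathcal{B}$, seen by more than one edge of $\mathcal{C}$, or seen by more than one vertex of $\mathcal{D}$) combined with the possibility of such splittings force a finer subcasing in which one must select the correct tuple of ``end'' members for each operation and verify its adjacency preconditions locally.
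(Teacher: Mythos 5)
Your high-level strategy (contradiction with lexicographic maximality of $(|\mathcal{A}|, |\mathcal{B}|, |\mathcal{C}|, |\mathcal{D}|)$, case analysis, repeated application of Operations I--VI, and the use of $e(\mathcal{A}_6) \ge \tfrac{15 a_6^2}{2} + \mu a_6$ to force $a_6 \ge 2\mu$) matches the paper, but the mechanism by which the contradiction is supposed to arise is wrong. You write that the goal of the rotations is that ``a new copy of $K_4$ whose four vertices all lie in $A$ becomes realized.'' This can never happen: the induced graph $H[A]$ is $K_4$-free for all time (a direct consequence of the maximality of the rank-$4$-packing), and the operations do not add edges inside $A$. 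Indeed you yourself had already ruled out $m(K) = 4$ for exactly this reason, so the stated target is unreachable.

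The correct mechanism concerns the \emph{other} side of the $K_4$'s: set $Z := V(\mathcal{S}) \cap V(\mathcal{A}_6)$ and let $\mathcal{Z}$ be the members of $\mathcal{A}_6$ meeting $Z$. The crucial (and missing) observation is $|\mathcal{Z}| \ge |\mathcal{S}|$: otherwise one could replace $\mathcal{Z}$ by $\mathcal{S}$ inside $\mathcal{A}$ and increase $|\mathcal{A}|$ directly. The six operations do not merge $K_4$'s of $\mathcal{S}$ or change the $m(K)$ values; rather, each operation performs a rotation along a switching path inside $\mathcal{A}_6$ that \emph{concentrates} the marked vertices of $Z$ into fewer members of $\mathcal{A}_6$, decreasing $|\mathcal{Z}_i|$ (equivalently, the length of the ``type'' of $Z_i$) by exactly one per step. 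Iterating until the type is $444$ (Case 1) or the analogous terminal type for the other cases gives $|\mathcal{Z}_t| < |\mathcal{S}|$; since the modified family $\mathcal{K}_t$ has $|\mathcal{K}_t| = |\mathcal{A}_6| + |\mathcal{S}|$ with $\mathcal{K}_t \setminus \mathcal{Z}_t$ still a $K_4$-tiling, appending $\mathcal{A}_1 \cup \cdots \cup \mathcal{A}_5$ yields a tiling of size $|\mathcal{A}| + 1$, the contradiction. Your case split on the multiset $\{m(K)\}$ roughly tracks the paper's six cases (Types I/II/III), but without the $|\mathcal{Z}| \ge |\mathcal{S}|$ inequality and without specifying that the operations act on the $V(\mathcal{A}_6)$-side distribution (the type of $Z_i$ as it lives inside members of $\mathcal{A}_6$), the argument does not close.

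A secondary issue: matching operations to $m(K)$-multisets as you propose is not the right pairing. The operation at each step is dictated by the current type of $Z_i$ (a partition of $|Z_i| \le 12$ into parts $\le 4$), not by the original $m(K)$ values; the same starting multiset of $m(K)$-values can produce many different types depending on how the marked vertices scatter across members of $\mathcal{A}_6$, which is exactly the ``finer subcasing'' you allude to at the end but treat as an unresolved complication rather than the organizing principle of the proof.
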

\textbf{Remark.} Applying Proposition~\ref{PROP:K4-tiling-cover-three-vtx} to $H$ with $h = 4a_6 + 3b+2c+d$ and $\alpha = 3b+2c+d$, we obtain Lemma~\ref{LEMMA:A6BCD-upper-bound-a}. 
\begin{proof}[Proof of Lemma~\ref{LEMMA:A6BCD-upper-bound-a-reduced}]
Let $\mu > 0$ be a sufficiently large integer. Recall from Section~\ref{SUBSEC:6-Operations} (see the argument after~\eqref{equ:assump-A6-edges-six-rotations}) that we have $a_6 \ge 2 \mu$ is also sufficiently large. 
Suppose to the contrary that there exists a $K_4$-tiling $\mathcal{S}$ in $H$ such that $|V(\mathcal{S}) \cap A| \ge 4$. Among all such $K_4$-tilings, select one with the smallest size. 
Recall from the maximality of $(|\mathcal{A}|, |\mathcal{B}|, |\mathcal{C}|, |\mathcal{D}|)$ that $H[A]$ is $K_4$-free. 

Observe that every member in $\mathcal{S}$ must contain at least one vertex from $V(\mathcal{A}_6)$, and therefore must be of Type \RomanNumeralCaps{1}, \RomanNumeralCaps{2}, or \RomanNumeralCaps{3}, as defined in Figure~\ref{Fig:three-types-K4} below. 

\medskip 

\begin{figure}[H]
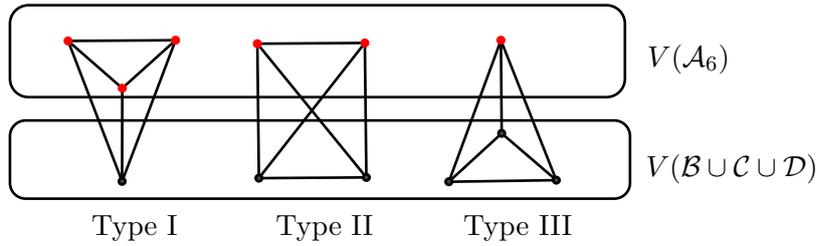

\centering

\tikzset{every picture/.style={line width=1pt}} 



\caption{Three types of $K_4$ crossing $V(\mathcal{A}_6)$ and $V(\mathcal{B}\cup \mathcal{C} \cup \mathcal{D})$. Red vertices are currently covered by two $K_4$'s (crossing $K_4$'s and $K_4$'s within $\mathcal{A}_6$). } 
\label{Fig:three-types-K4}
\end{figure}

By the minimality of $|\mathcal{S}|$, the number of different types of $K_4$ in $\mathcal{S}$ falls into one of the following six cases$\colon$
\begin{enumerate}[label=(\roman*)]
    \item $4 \times \text{Type \RomanNumeralCaps{1}}$. 
    \item $2 \times \text{Type \RomanNumeralCaps{1}} + 1 \times \text{Type \RomanNumeralCaps{2}}$. 
    \item $1 \times \text{Type \RomanNumeralCaps{1}} + 1 \times \text{Type \RomanNumeralCaps{3}}$. 
    \item $2 \times \text{Type \RomanNumeralCaps{2}}$.
    \item $1 \times \text{Type \RomanNumeralCaps{2}} + 1 \times \text{Type \RomanNumeralCaps{3}}$.
    \item $2 \times \text{Type \RomanNumeralCaps{3}}$. 
\end{enumerate}

\medskip 

\begin{figure}[H]
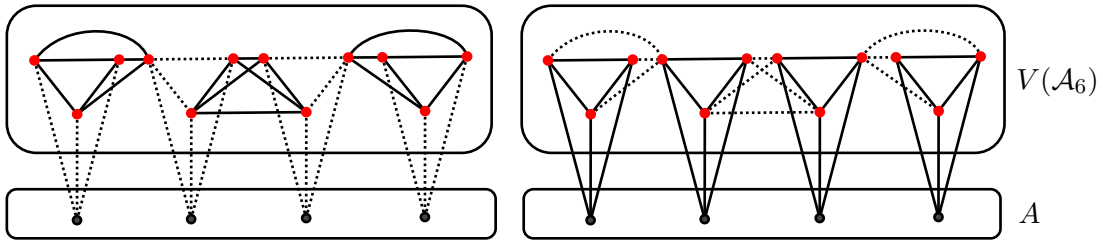

\centering
\tikzset{every picture/.style={line width=1pt}} 


\caption{Left: the set of marked vertices (red) are of type $4+4+4$. Right: after rotation, the number of vertex-disjoint copies of $K_4$ increases by one.} 
\label{Fig:Rotation-Final-case1}
\end{figure}

Let $Z \coloneqq V(\mathcal{S}) \cap V(\mathcal{A}_6)$. 
Let $\mathcal{Z}$ be the collection of members in $\mathcal{A}_6$ that have nonempty intersection with $Z$. 
Observe that we must have 
\begin{align}\label{equ:3-vtx-Z-S}
    |\mathcal{Z}| 
    \ge |\mathcal{S}|,
\end{align}
since otherwise, we could increased the size of $\mathcal{A}$ by updating it to $(\mathcal{A} \setminus \mathcal{Z}) \cup \mathcal{S}$ (see e.g. Figure~\ref{Fig:Rotation-Final-case1}), contradicting the maximality of $(|\mathcal{A}|, |\mathcal{B}|, |\mathcal{C}|, |\mathcal{D}|)$ (in particular, the maximality of $|\mathcal{A}|$). 

Let 
\begin{align*}
    Z_0 
    \coloneqq Z,
    \quad 
    \mathcal{Z}_{0}
    \coloneqq \mathcal{Z}, 
    \quad 
    \mathcal{S}_{0}
    \coloneqq \mathcal{S},
    \quad
    B_0 
    \coloneqq 
    V(\mathcal{Z}_0 \cup \mathcal{S}_0),
    \quad\text{and}\quad
    \mathcal{K}_0 
    \coloneqq \mathcal{A}_6 \cup \mathcal{S}.
\end{align*}
Suppose that we have defined $Z_{i}$, $\mathcal{Z}_i$, $\mathcal{S}_{i}$, $B_i$, $\mathcal{K}_i$ for some $i \ge 0$. 
For a sequence of descending positive integers $(\alpha_1, \ldots, \alpha_{m}) \in [4]^{m}$ that satisfies $\alpha_1 + \cdots + \alpha_m = 12$, we say that $Z_i$ is of \textbf{type $\alpha_1 \ldots \alpha_{m}$} if there exist $m$ distinct members $Q_1, \ldots, Q_m \in \mathcal{K}_i \setminus \mathcal{S}_i$ such that 
\begin{align*}
    \text{$|Q_j \cap Z_i| = \alpha_j$ for each $j \in [m]$.}
\end{align*}
The length $m$ of the tuple $(a_1, \ldots, a_m)$ is denoted by $\mathrm{length}(Z_i)$. 
For instance, the set $Z$ (of red vertices) in Figure~\ref{Fig:Rotation-Final-case1} is of type $444$ and $\mathrm{length}(Z) = 3$. 

\medskip 

\textbf{Case 1}$\colon$ $4 \times \text{Type \RomanNumeralCaps{1}}$. 

\bigskip 

\begin{figure}[h]
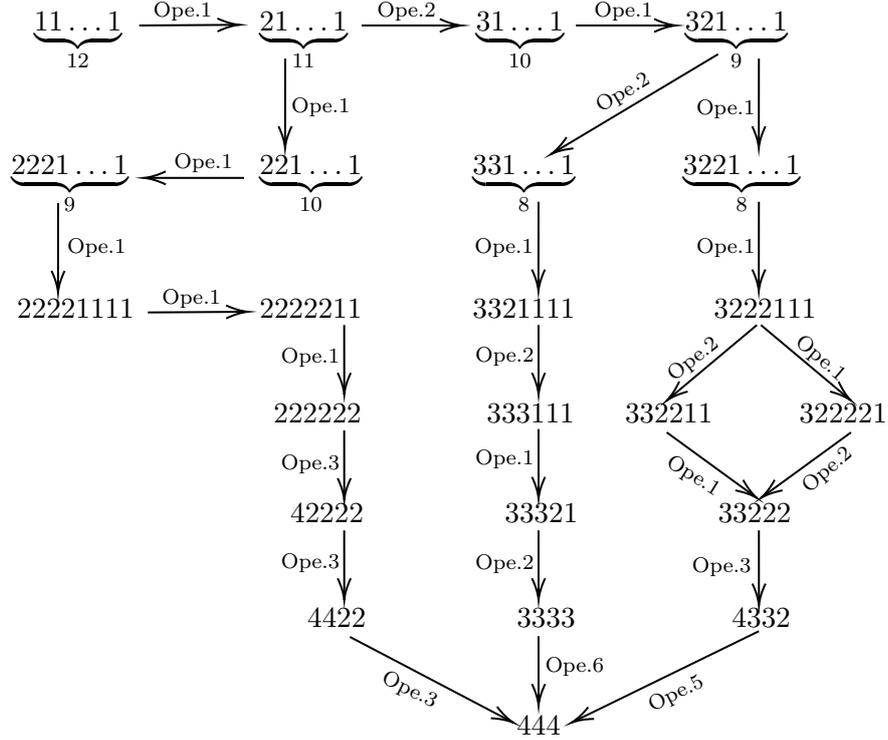

\centering

\tikzset{every picture/.style={line width=0.75pt}} 



\caption{Operations for all possible types of $Z_i$ in Case 1.}  
\label{Fig:Operations-Case1}
\end{figure}

\medskip 

Define $Z_{i+1}, \mathcal{Z}_{i+1}$, $\mathcal{S}_{i+1}$, $B_{i+1}$, $\mathcal{K}_{i+1}$ using the following process:
\begin{enumerate}[label=(\roman*)]
    \item\label{process-1} Note that for every possible type of $Z_i$, there is a corresponding operation (with some cases having more than one operation) listed in Figure~\ref{Fig:Operations-Case1}. For example, Operation 1 and Operation 2 are the corresponding operations for type $21111111111$. 
    
    Choose an operation according to the type of $Z_i$ and Figure~\ref{Fig:Operations-Case1} (if there are at least two options, select one arbitrarily). Denote the chosen operation by $\mathrm{Ope}_{Z_i}$. 
    \item\label{process-2} Given the operation $\mathrm{Ope}_{Z_i}$,  we need to construct a switching path. We begin by fixing the ends (which consist of at most four distinct members $Q_j \in \mathcal{Z}_i$) for this switching path, which are chosen  according $\mathrm{Ope}_{Z_i}$. 
    For example, if $\mathrm{Ope}_{Z_i}$ is Operation 2 (recall that the function of Operation 2 is $2+1 \to 3$), then we fix ends $Q_1, Q_2 \in \mathcal{Z}_i$ such that $|Q_1 \cap Z_i| = 2$ and $|Q_2 \cap Z_i| = 1$. 
    %
    \item\label{process-3} After fixing the ends, we apply the corresponding lemma from Section~\ref{SUBSEC:6-Operations} (i.e. Lemma~\ref{LEMMA:operation-1-2-exists},~\ref{LEMMA:operation-3-exists},~\ref{LEMMA:operation-4-exists},~\ref{LEMMA:operation-5-exists}, and~\ref{LEMMA:operation-6-exists}) to construct the desired switching path, denoted by $P_{Z_i}$, which connects these endpoints while avoiding the set $B_i$. 
    \item\label{process-4} After constructing the switching path, we perform the operation $\mathrm{Ope}_{Z_i}$ on $P_{Z_i}$, resulting in a new path, denoted by $\hat{P}_{Z_i}$ (refer to the configurations shown on the right in Figures~\ref{Fig:Rotation-1},~\ref{Fig:Rotation-2},~\ref{Fig:Rotation-3},~\ref{Fig:Rotation-4},~\ref{Fig:Rotation-5}, and~\ref{Fig:Rotation-6}). 
    \item\label{process-5} The set $\mathcal{K}_{i+1}$ is obtained from $\mathcal{K}_{i}$ by removing all $K_4$'s on $P_{Z_i}$ and adding all $K_4$'s on $\hat{P}_{Z_i}$. 
    For example, if $\mathrm{Ope}_{Z_i}$ is Operation 2, then we remove $\{Q_1, Q_2, Q_3\}$ and add $Q_1$ along with the two new $K_4$'s (see Figure~\ref{Fig:Rotation-2}).
    %
    \item\label{process-6} Note that for all six possible operations, there is exact one end $Q_j$ that is no longer contained in $\hat{P}_{Z_i}$ (and also $\mathcal{K}_{i+1}$). The set $\mathcal{Z}_{i+1}$ is obtained from $\mathcal{Z}_i$ by removing this $Q_j$. 
    The set $\mathcal{S}_{i+1}$ is obtained from $\mathcal{S}_i$ by removing all members that have nonempty intersection with $Q_j$ and adding all members in $\mathcal{K}_{i+1}$ that have nonempty intersection with the other ends. For example, if $\mathrm{Ope}_{Z_i}$ is Operation 2, then we remove the cyan $K_4$ on the right from $\mathcal{S}_i$ and add the new cyan $K_4$ to $\mathcal{S}_{i+1}$ (see Figure~\ref{Fig:Rotation-2}).
    %
    \item\label{process-7} Let $B_{i+1} \coloneqq V(\mathcal{A}_6) \setminus V(\mathcal{A}_6 \cap \mathcal{K}_{i+1})$ and $Z_{i+1} \coloneqq V(\mathcal{Z}_{i+1}) \cap V(\mathcal{S}_{i+1})$.
\end{enumerate}
We repeat this process until $Z_i$ is of type $444$ and assume  that the process terminate after $t$ steps (so $t$ is the index at which the process terminates).
From the definition of the process above, it is clear that $\mathrm{length}(Z_{i+1}) = \mathrm{length}(Z_i) - 1$. Therefore, the process terminates after at most $12 - 3 = 9$ steps.

The following claim follows from the definition of the process. 
\begin{claim}\label{CLAIM:property-rotation-case-one}
    The following statements hold. 
    \begin{enumerate}[label=(\roman*)]
        \item\label{CLAIM:property-rotation-case-one-1} $|\mathcal{A}_6| + |\mathcal{S}| = |\mathcal{K}_{0}| = \cdots = |\mathcal{K}_t|$, 
        \item\label{CLAIM:property-rotation-case-one-2} $V(\mathcal{A}_6 \cup \mathcal{S}) = V(\mathcal{K}_{0}) = \cdots = V(\mathcal{K}_t)$,
        \item\label{CLAIM:property-rotation-case-one-3} $\mathcal{K}_{i}\setminus \mathcal{Z}_i$ is a $K_4$-tiling in $\mathcal{G}$ for every $i \in [0,t]$,
        \item\label{CLAIM:property-rotation-case-one-4} $|\mathcal{Z}_i| = \mathrm{length}(Z_i)$ for every $i \in [0,t]$,
        \item\label{CLAIM:property-rotation-case-one-5} $|B_0| \le 12 + 12\cdot 4 = 60$ and, for every $i \in [t-1]$, since each switching path uses at most $18$ copies of $K_4$, we have
        \begin{align*}
            |B_{i+1}| 
            \le |B_{i}| + 18\cdot 4 = |B_{i}| + 52.
        \end{align*}
        Consequently, 
        \begin{align*}
            |B_{i}| 
            \le |B_0| + 52 t
            \le 60 + 52\cdot 12 = 684
            \quad\text{for every}\quad i \in [0,t]. 
        \end{align*}
    \end{enumerate}
\end{claim}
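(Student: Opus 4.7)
The plan is to establish all five items simultaneously by induction on $i \in [0, t]$.

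The base case $i = 0$ is immediate from the definitions: $\mathcal{K}_0 = \mathcal{A}_6 \cup \mathcal{S}$ handles (i) and (ii); the construction of $\mathcal{Z}_0$ as precisely those members of $\mathcal{A}_6$ meeting $V(\mathcal{S})$ gives (iii), since $(\mathcal{A}_6 \setminus \mathcal{Z}_0) \cup \mathcal{S}_0$ is then manifestly a $K_4$-tiling; item (iv) is a direct translation of how $\mathrm{length}(Z_0)$ is defined (each $Q \in \mathcal{Z}_0$ contributes one block $|Q \cap Z_0|$ to the type sequence); and (v) reduces to counting $|V(\mathcal{Z}_0)| \le 4|\mathcal{Z}_0| \le 48$ together with the at most $4$ vertices of $V(\mathcal{S}_0)$ lying outside $V(\mathcal{A}_6)$, since each Type I member of $\mathcal{S}$ has exactly one vertex in $A$.

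For the inductive step, the key observation is that each of the six operations (Lemmas~\ref{LEMMA:operation-1-2-exists},~\ref{LEMMA:operation-3-exists},~\ref{LEMMA:operation-4-exists},~\ref{LEMMA:operation-5-exists},~\ref{LEMMA:operation-6-exists}) is a local rotation that replaces a constant-size set of $K_4$'s in $\mathcal{K}_i$ by the same number of new $K_4$'s covering the same union of vertices. This immediately preserves (i) and (ii). The interior members of the switching path $P_{Z_i}$ all lie in $\mathcal{A}_6 \setminus \mathcal{Z}_i$ (they are chosen by the respective lemma to avoid $B_i$), and after the rotation the new $K_4$'s are vertex-disjoint from the untouched part of $\mathcal{K}_i \setminus \mathcal{Z}_i$; combined with the explicit designation of the surviving end in step~\ref{process-6} of the process, this gives (iii). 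For (iv), a case-by-case glance at Figure~\ref{Fig:Operations-Case1} confirms that each arrow merges two blocks of the type sequence into one, lowering $\mathrm{length}(Z_i)$ by exactly one, while step~\ref{process-6} simultaneously removes one end from $\mathcal{Z}_i$, so $|\mathcal{Z}_{i+1}| = |\mathcal{Z}_i| - 1$. For (v), the bound is a straightforward linear recurrence: the largest switching path (Operation VI, from Lemma~\ref{LEMMA:operation-6-exists}) introduces at most $14$ new members of $\mathcal{A}_6$ into $B_{i+1}$, so $|B_{i+1}| \le |B_i| + 4 \cdot 13 = |B_i| + 52$; iterating over at most $t \le 12 - 3 = 9$ steps yields the stated uniform bound.

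The main technical point is not the inductive bookkeeping itself but ensuring that each of the six lemmas in Section~\ref{SUBSEC:6-Operations} is actually applicable at every step: each one requires $|B_i|$ to stay strictly below $\min\{\mu - O(1),\ (a_6-2)/12 - O(1)\}$. This is precisely why item (v) is included in the inductive statement. Since $t \le 9$ and each step inflates $|B_i|$ by only a bounded amount, the uniform bound $|B_i| \le 684$ sits comfortably below both thresholds once $\mu$ is taken sufficiently large and $a_6 \ge 2\mu$ is correspondingly large, which is exactly the hypothesis of Lemma~\ref{LEMMA:A6BCD-upper-bound-a-reduced}. Thus the induction goes through and all five items hold for every $i \in [0,t]$.
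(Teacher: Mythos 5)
Your proof is correct and is essentially the argument the paper intends: the paper simply asserts that the claim ``follows from the definition of the process,'' and your induction on $i$ fills in exactly the routine bookkeeping (each operation replaces path members by the same number of vertex-disjoint $K_4$'s on the same vertex set, drops one end from $\mathcal{Z}_i$, and reduces $\mathrm{length}(Z_i)$ by one). One immaterial slip: after saying Operation VI adds at most $14$ new members you compute $4\cdot 13=52$ (it should be $4\cdot 14=56$, or $4\cdot 18=72$ if one counts all path members, matching the paper's own miswritten ``$18\cdot 4=52$''), and for Operations V and VI more than two blocks are merged at once; neither affects the conclusion, since $|B_i|$ remains bounded by an absolute constant far below $\mu$ and $(a_6-2)/12$.
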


By Claim~\ref{CLAIM:property-rotation-case-one}, we can choose $\mu$ to be sufficiently large at the beginning to ensure that in Step~\ref{process-3}, we can always apply Lemma~\ref{LEMMA:operation-1-2-exists},~\ref{LEMMA:operation-3-exists},~\ref{LEMMA:operation-4-exists},~\ref{LEMMA:operation-5-exists}, and~\ref{LEMMA:operation-6-exists}. 

By Claim~\ref{CLAIM:property-rotation-case-one}~\ref{CLAIM:property-rotation-case-one-1},~\ref{CLAIM:property-rotation-case-one-2},~\ref{CLAIM:property-rotation-case-one-3} and~\ref{CLAIM:property-rotation-case-one-4}, $\mathcal{A}_1 \cup \cdots \cup \mathcal{A}_5 \cup (\mathcal{K}_{t}\setminus \mathcal{Z}_t)$ is a $K_4$-tiling in $G$ of size 
\begin{align*}
    \sum_{i\in [5]}|\mathcal{A}_i| 
    + |\mathcal{A}_6| + |\mathcal{S}| -|\mathcal{Z}_t|
    = |\mathcal{A}| + 4-3
    = |\mathcal{A}| + 1,
\end{align*}
contradicting the maximality of $(|\mathcal{A}|, |\mathcal{B}|, |\mathcal{C}|, |\mathcal{D}|)$ (in particular, the maximality of $|\mathcal{A}|$). Therefore, Case 1 cannot happen. 

\medskip

The proofs for Cases 2--6 follow a similar structure to that of Case 1.
The corresponding operations for each type of $Z_i$ are shown in Figure~\ref{Fig:Operation-Case2} and Figure~\ref{Fig:Rotations-Case3-6}, so we omit the details here. 

\begin{figure}[H]
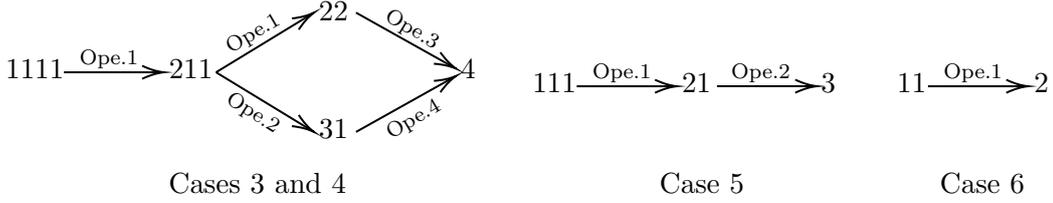


\centering

\tikzset{every picture/.style={line width=0.75pt}} 



\caption{Operations for all possible types of $Z_i$ in Cases 3 to 6.} 
\label{Fig:Rotations-Case3-6}
\end{figure}

In conclusion, we have shown that none of the six cases listed at the start of the proof can occur, thereby completing the proof of Lemma~\ref{LEMMA:A6BCD-upper-bound-a-reduced}. 
\end{proof}

\section{Global estimation}\label{SEC:Global-estimate}
In Sections~\ref{SUBSEC:global-definitions},~\ref{SUBSEC:inequality-Phi1},~\ref{SUBSEC:inequality-Phi2}, and~\ref{SUBSEC:inequality-Phi3}, we define and solve several quadratic optimization problems.  
In Section~\ref{SUBSEC:proof-main}, we present the proof of Theorem~\ref{THM:Mian-HS-density-K4}. 

For the moment, let us set aside the combinatorial meaning of $(a_1, a_2, a_3, a_4, a_5, a_6, b, c, d)$ and $(n,k)$ in Sections~\ref{SUBSEC:global-definitions},~\ref{SUBSEC:inequality-Phi1},~\ref{SUBSEC:inequality-Phi2}, and~\ref{SUBSEC:inequality-Phi3}, and consider them simply as undetermined variables (or real numbers). 
\subsection{Definitions and preliminaries}\label{SUBSEC:global-definitions}
Given real numbers $n, k$ satisfying $n \ge 4 k \ge 0$, define the region
\begin{align*}
    \Omega_{n,k}
    \coloneqq 
    & \Bigl\{(a_1, \ldots, a_6, b,c,d)\in \mathbb{R}_{\ge 0}^9 \colon  \\[0.5em]
    & \qquad a_1+a_2+a_3+a_4+a_5+a_6 = k \quad\text{and}\quad 4k+ 3b+2c+d = n \Bigr\}.
\end{align*}

Let $(a_1, \ldots, a_6, b,c,d) \in \mathbb{R}_{\ge 0}^{9}$.
Recall that $\Psi(b,c,d) = 3 b^2  +4 b c + 2 b d + c^2 + c d$ was defined in Fact~\ref{FACT:BCD-upper-bound}. 
Define 
\begin{align*}
    \Phi(a_1, \ldots, a_6, b,c,d)
    & \coloneqq \frac{13 a_1^2}{2} + 13 a_1 (a_2 + \cdots + a_6) + 7 a_2^2 + 14 a_2(a_3 + \cdots + a_6) \\[0.5em]
    & \qquad   + 7 a_3^2 + 15a_3 (a_4+ a_5+a_6)  + \frac{15 a_4^2}{2} + 15a_4 (a_5 + a_6)  \\[0.5em]
    & \qquad + \frac{15 a_5^2}{2} + 15 a_5 a_6 + 8 a_6^2 \\[0.5em]
    & \qquad  + (9b + 6c + 3d)a_1 + (8b + 6c+ 3d)a_2 + (8b + 5c + 3d)a_3  \\[0.5em]
    & \qquad  + (7b+5c+3d)a_4 + (7b+5c+2d)a_5 + (7b+5c+2d)a_6, \\[0.5em]
    \Phi_{1}(a_1, \ldots, a_6, b,c,d)
    & \coloneqq \Phi(a_1, \ldots, a_6, b,c,d) + \Psi(b,c,d), \\[0.5em]
    \Phi_{2,1}(a_1, \ldots, a_6, b,c,d)
    & \coloneqq \Phi_{1}(a_1, \ldots, a_6, b,c,d) - \frac{a_6^2}{2}, \\[0.5em]
    \Phi_{2,2}(a_1, \ldots, a_6, b,c,d)
    & \coloneqq \Phi_{1}(a_1, \ldots, a_6, b,c,d) - \frac{(b+c)a_6}{2}, \\[0.5em]
    \Phi_{2,3}(a_1, \ldots, a_6, b,c,d)
    & \coloneqq \Phi(a_1, \ldots, a_6, b,c,d) 
            - (7 b+5 c+2 d)a_6 + (3b+2c+d)^2. 
\end{align*}
Define 
\begin{align*}
        \hat{\varphi}(a_4, b,c,d)
        & \coloneqq 
        \begin{cases}
            g_{1/2}(a_4, b) + h_{1/2}(a_4, c), &\quad\text{if}\quad d \le \frac{b+c}{2}, \\[0.5em]
            \hat{g}(a_4, b) + h_{1/4}(a_4, c), &\quad\text{if}\quad d \ge \frac{b+c}{2},
        \end{cases} \quad\text{and} \\[0.5em]
        \varphi(a_4, b, c, d)
        & \coloneqq \hat{\varphi}(a_4, b,c,d) - \left(\frac{15 a_4^2}{2} + (7b+5c)a_4\right). 
\end{align*}
Define, for each $i \in [3]$, 
\begin{align*}
    \Phi_{3,i}(a_1,\ldots,a_6,b,c,d) 
    & \coloneqq \Phi_{2,i}(a_1,\ldots,a_6,b,c,d) + \varphi(a_4, b, c, d)\\[0.5em]
    \Phi_{4,i}(a_1,\ldots,a_6,b,c,d)
    & \coloneqq  \Phi_{2,i}(a_1,\ldots,a_6,b,c,d) - \frac{(b+c)a_4}{2} + \frac{b^2 + c^2}{4}, \\[0.5em]
    \Phi_{5,i}(a_1,\ldots,a_6,b,c,d) 
    & \coloneqq  \Phi_{2,i}(a_1,\ldots,a_6,b,c,d) -\frac{9 b a_4}{10} - \frac{c a_4}{2} + \frac{151 b^2}{150} + \frac{c^2}{2}.
\end{align*}  
Define 
\begin{align*}
    \Phi_{6}(a_1,\ldots,a_6,b,c,d)
    & \coloneqq \Phi_{1}(a_1,\ldots,a_6,b,c,d) + \varphi(a_4, b, c, d)\\[0.5em]
    \Phi_{6,1}(a_1,\ldots,a_6,b,c,d)
    & \coloneqq \Phi_{1}(a_1,\ldots,a_6,b,c,d) - \frac{(b+c)a_4}{2} + \frac{b^2 + c^2}{4}, \\[0.5em]
    \Phi_{6,2}(a_1,\ldots,a_6,b,c,d)
    & \coloneqq \Phi_{1}(a_1,\ldots,a_6,b,c,d) -\frac{9 b a_4}{10} - \frac{c a_4}{2} + \frac{151 b^2}{150} + \frac{c^2}{2}.
\end{align*}
%
%

The following facts follow directly from the definition (recall the definition of $\Xi(n,k)$ from Theorem~\ref{THM:Mian-HS-density-K4}). 
\begin{fact}\label{FACT:Extremal-function}
    For every $(n,k) \in \mathbb{R}_{\ge 0}^{2}$ satisfying $n \ge 4k$, we have 
    \begin{align*}
        \Xi(n,k)
        \ge \max\left\{\frac{n^2}{3} + \frac{k n}{3} - \frac{k^2}{6},~\frac{n^2}{4} + k n - k^2,~3kn - \frac{9k^2}{2}\right\}. 
    \end{align*}
\end{fact}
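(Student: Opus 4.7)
The plan is a direct case analysis. Set
\[
    \xi_1 \coloneqq \tfrac{n^2}{3}+\tfrac{kn}{3}-\tfrac{k^2}{6}, \qquad
    \xi_3 \coloneqq \tfrac{n^2}{4}+kn-k^2, \qquad
    \xi_4 \coloneqq 3kn-\tfrac{9k^2}{2}
\]
for the three functions appearing in the claimed maximum, and also $\xi_2 \coloneqq \tfrac{n^2}{3}+2k^2$, $\xi_5 \coloneqq n^2-8kn+24k^2$ for the remaining two pieces of $\Xi$. On each interval $I_j$ the function $\Xi$ coincides with $\xi_j$, so the required inequality becomes a set of fifteen pairwise comparisons $\xi_j \ge \xi_i$ for $j \in [5]$ and $i \in \{1,3,4\}$. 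Three of these are trivial identities ($\Xi = \xi_1$ on $I_1$, $\Xi = \xi_3$ on $I_3$, $\Xi = \xi_4$ on $I_4$), and each of the remaining twelve reduces to the nonnegativity of an explicit quadratic in $k/n$.

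The observation that makes the entire case analysis close is that every internal breakpoint in the definition of $\Xi$ is precisely a root of one of the adjacent comparison quadratics. Direct computation yields
\begin{align*}
    \xi_2 - \xi_1 &= \tfrac{k(13k-2n)}{6}, &
    \xi_2 - \xi_3 &= \tfrac{(n-6k)^2}{12}, \\[0.25em]
    \xi_3 - \xi_4 &= \tfrac{n^2-8kn+14k^2}{4}, &
    \xi_5 - \xi_4 &= \tfrac{2n^2-22kn+57k^2}{2},
\end{align*}
whose roots in $k$ are exactly $2n/13$, $n/6$, $(4\pm\sqrt{2})n/14$, and $(11\pm\sqrt{7})n/57$. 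These four identities directly settle five of the twelve nontrivial comparisons, namely the ones between adjacent $\xi_j$'s on the relevant intervals; note also that $\xi_2 \ge \xi_3$ actually holds on all of $\mathbb{R}$.

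The remaining seven ``non-adjacent'' comparisons each reduce similarly. For instance, $\xi_1 - \xi_3 = \tfrac{n^2-8kn+10k^2}{12}$ with roots $(4\pm\sqrt{6})n/10$ handles $\xi_1 \ge \xi_3$ on $I_1$ (since $2/13 < (4-\sqrt{6})/10$, equivalently $\sqrt{6} < 32/13$), and $\xi_5 - \xi_1 = \tfrac{4n^2-50kn+145k^2}{6}$ with larger root $(25+3\sqrt{5})n/145$ handles $\xi_5 \ge \xi_1$ on $I_5$ (since $(11+\sqrt{7})/57 > (25+3\sqrt{5})/145$). Analogous elementary checks, using the roots $(4\pm\sqrt{3})n/13$, $(9\pm\sqrt{3})n/39$, and $(9\pm\sqrt{6})n/50$, dispose of the remaining cases.

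There is no conceptual obstacle; the proof is a mechanical quadratic verification. The only mild nuisance is tracking which root of which quadratic controls each comparison, and this is considerably eased by the fact that the breakpoints of $\Xi$ are engineered to coincide with the roots of the dominant pairwise comparison quadratics.
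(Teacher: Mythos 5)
Your proof is correct, and the arithmetic checks out: the five pairwise differences you compute ($\xi_2-\xi_1$, $\xi_2-\xi_3$, $\xi_3-\xi_4$, $\xi_5-\xi_4$, $\xi_1-\xi_3$, $\xi_5-\xi_1$) and the three additional root pairs you list ($(4\pm\sqrt3)n/13$ for $\xi_1-\xi_4$, $(9\pm\sqrt3)n/39$ for $\xi_2-\xi_4$, $(9\pm\sqrt6)n/50$ for $\xi_5-\xi_3$) do dispose of all twelve nontrivial comparisons, with the quadratic for $\xi_1-\xi_3$ doing double duty on both $I_1$ and $I_3$. The paper states this as a Fact without proof, treating it as following directly from the definition of $\Xi$, so there is no alternative argument in the text to compare against; your mechanical case check is precisely the verification being implicitly invoked.
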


\begin{fact}\label{FACT:BCD-upper-bound}
    For every $(b,c,d) \in \mathbb{R}_{\ge 0}^{3}$, we have 
    \begin{align*}
        \Psi(b,c,d)
        = \frac{(3b+2c+d)^2}{3} - \frac{c^2+c d+d^2}{3}
        \le \frac{(3b+2c+d)^2}{3}.
    \end{align*} 
\end{fact}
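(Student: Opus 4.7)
The statement is essentially an algebraic identity followed by a trivial nonnegativity observation, so the plan is brief. First I would recall from equation~\eqref{equ:def-Psi-bcd} that $\Psi(b,c,d) = 3b^2 + 4bc + 2bd + c^2 + cd$, and verify the claimed identity by direct expansion. Expanding $(3b+2c+d)^2 = 9b^2 + 12bc + 6bd + 4c^2 + 4cd + d^2$, dividing by $3$, and then subtracting $(c^2 + cd + d^2)/3$, I would collect terms to obtain
\[
\frac{(3b+2c+d)^2}{3} - \frac{c^2+cd+d^2}{3} = 3b^2 + 4bc + 2bd + \frac{4c^2 - c^2}{3} + \frac{4cd - cd}{3} + \frac{d^2 - d^2}{3},
\]
which simplifies precisely to $3b^2 + 4bc + 2bd + c^2 + cd = \Psi(b,c,d)$.

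Then the inequality $\Psi(b,c,d) \le (3b+2c+d)^2/3$ is immediate once I note that $c^2 + cd + d^2 \ge 0$ on $\mathbb{R}_{\ge 0}^2$ (in fact on all of $\mathbb{R}^2$, via the identity $c^2 + cd + d^2 = (c + d/2)^2 + 3d^2/4$). There is no real obstacle here; the only thing to be careful about is bookkeeping when matching coefficients in the expansion, but since every cross term on both sides of the identity can be read off directly, a half-line computation suffices. No auxiliary lemma from earlier in the paper is needed.
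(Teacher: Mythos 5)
Your proof is correct and is the same direct-expansion verification the paper leaves implicit (the paper states Fact~\ref{FACT:BCD-upper-bound} without a written proof, as it follows by elementary algebra from the definition of $\Psi$ in~\eqref{equ:def-Psi-bcd}). The coefficient bookkeeping checks out, and noting $c^2+cd+d^2 \ge 0$ suffices for the inequality.
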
 

\begin{fact}\label{FACT:Phi31-Phi32-vs-Phi6}
    For every $(a_1,\ldots,a_6,b,c,d) \in \mathbb{R}_{\ge 0}^{9}$, we have 
    \begin{align*}
        \max\left\{\Phi_{2,1}(a_1,\ldots,a_6,b,c,d),~\Phi_{2,2}(a_1,\ldots,a_6,b,c,d)\right\}
        \le \Phi_{1}(a_1,\ldots,a_6,b,c,d). 
    \end{align*}
    Consequently, 
    \begin{align*}
        \max\left\{\Phi_{3,1}(a_1,\ldots,a_6,b,c,d),~\Phi_{3,2}(a_1,\ldots,a_6,b,c,d)\right\}
        \le \Phi_{6}(a_1,\ldots,a_6,b,c,d). 
    \end{align*}
\end{fact}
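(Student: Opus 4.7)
The plan is to observe that this fact is an immediate consequence of the definitions and the non-negativity of the variables, so the proof will essentially consist of unwinding the definitions and applying a sign check.

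First, I would directly compare $\Phi_{2,1}$ and $\Phi_{2,2}$ with $\Phi_1$ using their defining formulas. By definition,
\begin{align*}
    \Phi_{2,1}(a_1,\ldots,a_6,b,c,d)
    &= \Phi_{1}(a_1,\ldots,a_6,b,c,d) - \frac{a_6^2}{2}, \\
    \Phi_{2,2}(a_1,\ldots,a_6,b,c,d)
    &= \Phi_{1}(a_1,\ldots,a_6,b,c,d) - \frac{(b+c)a_6}{2}.
\end{align*}
Since $(a_1,\ldots,a_6,b,c,d) \in \mathbb{R}_{\ge 0}^{9}$, both $\tfrac{a_6^2}{2}$ and $\tfrac{(b+c)a_6}{2}$ are non-negative, which immediately gives the first displayed inequality of the fact.

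For the consequence, I would add $\varphi(a_4,b,c,d)$ to both sides of each of the two inequalities above. Using the definitions $\Phi_{3,i} = \Phi_{2,i} + \varphi(a_4,b,c,d)$ for $i \in \{1,2\}$ and $\Phi_{6} = \Phi_{1} + \varphi(a_4,b,c,d)$, the result follows for each $i$, and then by taking the maximum over $i \in \{1,2\}$ we obtain the stated bound. No case analysis of $\varphi$ (hence no case on whether $d \le \tfrac{b+c}{2}$ or $d \ge \tfrac{b+c}{2}$) is required, since $\varphi$ appears as an additive term on both sides.

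There is no substantive obstacle here: the statement is formal and relies only on non-negativity, not on any quadratic programming argument like those needed for $\Phi_{1}$, $\Phi_{2}$, $\Phi_{3}$ in the subsequent subsections. The proof will therefore be just a few lines long, and I would present it as a short direct computation with no separate case distinctions.
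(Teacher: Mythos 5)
Your proof is correct and matches the paper's intent exactly: the paper states this Fact without proof, asserting it "follows directly from the definition," and your unwinding of $\Phi_{2,1}=\Phi_1-\tfrac{a_6^2}{2}$, $\Phi_{2,2}=\Phi_1-\tfrac{(b+c)a_6}{2}$, $\Phi_{3,i}=\Phi_{2,i}+\varphi$, and $\Phi_6=\Phi_1+\varphi$ together with non-negativity of the subtracted terms is precisely that argument. No gap; nothing further is needed.
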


\begin{proposition}\label{PROP:g-alpha-h-alpha-upper-bounds}
    For every $\alpha \in (0,1)$ and for every $(a_4, b, c) \in \mathbb{R}_{\ge 0}^{3}$, we have 
    \begin{align}
        \hat{g}(a_4, b)
        & \le \min\left\{ \frac{45 a_4^2}{8} + \frac{61 b a_4}{10} + \frac{151 b^2}{150},~\frac{45 a_4^2}{8} + 7 b a_4  \right\}, \label{equ:hat-g-upper-bounds} \\[0.5em]
        g_{\alpha}(a_4, b)
        & \le \min\left\{ \frac{15 \alpha a_4^2}{2} + \frac{13 b a_4}{2} + \frac{b^2}{8\alpha},~\frac{15 \alpha a_4^2}{2} + 7 b a_4\right\},  \label{equ:g-upper-bounds} \\[0.5em]
        \quad\text{and}\quad 
        h_{\alpha}(a_4, c)
        & \le \min\left\{ \frac{15 \alpha a_4^2}{2} + \frac{9 c a_4}{2} + \frac{c^2}{8\alpha},~\frac{15 \alpha a_4^2}{2} + 5 c a_4\right\}. \label{equ:h-upper-bounds} 
    \end{align}
    Consequently, for every $i \in [3]$,
    \begin{align}
        \Phi_{3,i}(a_1,\ldots,a_6,b,c,d)
        & \le \Phi_{2,i}(a_1,\ldots,a_6,b,c,d), \quad\text{and}\quad \label{equ:Phi3i-vs-Phi4i-Phi5i-2} \\[0.5em]
        \Phi_{3,i}(a_1,\ldots,a_6,b,c,d)
        & \le 
        \begin{cases}
            \Phi_{4,i}(a_1,\ldots,a_6,b,c,d), &\quad\text{if}\quad d \le \frac{b+c}{2}, \\[0.5em]
            \Phi_{5,i}(a_1,\ldots,a_6,b,c,d), &\quad\text{if}\quad d \ge \frac{b+c}{2}, 
        \end{cases} \label{equ:Phi3i-vs-Phi4i-Phi5i} 
    \end{align}
    and 
    \begin{align}\label{equ:Phi6-vs-Phi61-Phi62}
        \Phi_{6}(a_1,\ldots,a_6,b,c,d)
        \le 
        \begin{cases}
            \Phi_{6,1}(a_1,\ldots,a_6,b,c,d), &\quad\text{if}\quad d \le \frac{b+c}{2}, \\[0.5em]
            \Phi_{6,2}(a_1,\ldots,a_6,b,c,d), &\quad\text{if}\quad d \ge \frac{b+c}{2}. 
        \end{cases} 
    \end{align}
\end{proposition}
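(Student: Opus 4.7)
The plan is to establish the three basic inequalities \eqref{equ:hat-g-upper-bounds}, \eqref{equ:g-upper-bounds}, \eqref{equ:h-upper-bounds} by direct case analysis on the piecewise definitions of $\hat{g}$, $g_\alpha$, $h_\alpha$, and then deduce the consequences for the $\Phi$-functions by straightforward substitution.

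For $g_\alpha(a_4,b)$ I will handle the two defining regimes. When $a_4 \le b/(2\alpha)$, $g_\alpha(a_4,b)=7\alpha a_4^2+7a_4 b$, so subtracting from the first candidate upper bound gives $\tfrac{\alpha a_4^2}{2}-\tfrac{a_4 b}{2}+\tfrac{b^2}{8\alpha}=\tfrac{1}{2}\bigl(\sqrt{\alpha}\,a_4-\tfrac{b}{2\sqrt{\alpha}}\bigr)^2\ge 0$, while subtracting from the second gives $\tfrac{\alpha a_4^2}{2}\ge 0$. When $a_4>b/(2\alpha)$, $g_\alpha(a_4,b)$ equals the first candidate bound exactly, and the difference with the second is $\tfrac{b}{2}\bigl(a_4-\tfrac{b}{4\alpha}\bigr)\ge 0$ since $a_4>b/(2\alpha)>b/(4\alpha)$. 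The argument for $h_\alpha$ is identical verbatim (with $5$ and $9/2$ replacing $7$ and $13/2$), so I will merely note it. For $\hat{g}(a_4,b)$, I will go through the five pieces one by one. Each case reduces to a single quadratic inequality in $a_4,b$ on a specified half-line of ratios $a_4/b$; each of these inequalities factors or completes the square, e.g., in the first regime $a_4\le 6b/5$, the inequality $\tfrac{21a_4^2}{4}+7a_4b\le \tfrac{45a_4^2}{8}+7a_4b$ reduces to $a_4^2\ge 0$, and similar routine checks handle the remaining four cases together with the second candidate bound.

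Once \eqref{equ:hat-g-upper-bounds}--\eqref{equ:h-upper-bounds} are in place, the remaining assertions are pure bookkeeping. For \eqref{equ:Phi3i-vs-Phi4i-Phi5i-2}, recall that $\Phi_{3,i}-\Phi_{2,i}=\varphi(a_4,b,c,d)=\hat{\varphi}(a_4,b,c,d)-\tfrac{15a_4^2}{2}-(7b+5c)a_4$. When $d\le (b+c)/2$, the second bounds give $g_{1/2}(a_4,b)\le \tfrac{15a_4^2}{4}+7a_4b$ and $h_{1/2}(a_4,c)\le \tfrac{15a_4^2}{4}+5a_4c$, summing to exactly $\tfrac{15a_4^2}{2}+(7b+5c)a_4$, so $\varphi\le 0$; when $d\ge (b+c)/2$, the corresponding second bounds for $\hat{g}$ and $h_{1/4}$ sum to the same value. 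For \eqref{equ:Phi3i-vs-Phi4i-Phi5i} in the case $d\le (b+c)/2$, I use the \emph{first} bounds on $g_{1/2}$ and $h_{1/2}$, which give $\hat{\varphi}\le \tfrac{15a_4^2}{2}+\tfrac{13a_4 b+9 a_4 c}{2}+\tfrac{b^2+c^2}{4}$, so $\varphi\le -\tfrac{(b+c)a_4}{2}+\tfrac{b^2+c^2}{4}$, which is exactly $\Phi_{4,i}-\Phi_{2,i}$. For $d\ge (b+c)/2$, the first bound on $\hat{g}$ and the first bound on $h_{1/4}$ combine to give $\varphi\le -\tfrac{9ba_4}{10}-\tfrac{ca_4}{2}+\tfrac{151b^2}{150}+\tfrac{c^2}{2}=\Phi_{5,i}-\Phi_{2,i}$. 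Inequality \eqref{equ:Phi6-vs-Phi61-Phi62} follows by the identical two computations, using $\Phi_6=\Phi_1+\varphi$ in place of $\Phi_{3,i}=\Phi_{2,i}+\varphi$.

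There is no real obstacle: the argument is a sequence of elementary quadratic inequalities, and the consequences reduce to a direct arithmetic matching of coefficients once the basic bounds are in hand. The only mildly tedious part is the five-piece case analysis for $\hat{g}$, and for each piece I simply verify a one-variable quadratic inequality on the relevant ratio interval; in particular, for the fifth piece $a_4\ge 38b/5$, the bound $\tfrac{45a_4^2}{8}+\tfrac{61 a_4 b}{10}+\tfrac{151 b^2}{150}\le \tfrac{45 a_4^2}{8}+7a_4 b$ reduces to $\tfrac{151b^2}{150}\le \tfrac{9 a_4 b}{10}$, i.e., $a_4\ge \tfrac{151 b}{135}$, which is immediate from $a_4\ge \tfrac{38b}{5}=\tfrac{1026b}{135}$. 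I will present the computations for $g_\alpha$, $h_\alpha$ in compact form and organize the five cases of $\hat{g}$ into a short table, then conclude the consequences with the algebraic manipulations above.
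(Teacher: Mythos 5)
Your proposal is correct and follows essentially the same route as the paper: the paper likewise omits the "elementary yet tedious" verification of \eqref{equ:hat-g-upper-bounds}--\eqref{equ:h-upper-bounds} and then derives \eqref{equ:Phi3i-vs-Phi4i-Phi5i-2}, \eqref{equ:Phi3i-vs-Phi4i-Phi5i}, and \eqref{equ:Phi6-vs-Phi61-Phi62} by exactly the coefficient matching you describe (first bounds for the $\Phi_{4,i},\Phi_{5,i},\Phi_{6,j}$ comparisons, second bounds to get $\varphi\le 0$). Your completion-of-the-square arguments for $g_\alpha,h_\alpha$ and the piecewise checks for $\hat{g}$ are the intended, and correct, way to fill in the omitted details.
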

\begin{proof}[Proof of Proposition~\ref{PROP:g-alpha-h-alpha-upper-bounds}]
    Inequalities~\eqref{equ:hat-g-upper-bounds},~\eqref{equ:g-upper-bounds}, and~\eqref{equ:h-upper-bounds} follow from elementary (yet somewhat tedious) calculations, so we omit the details here. 

    It follows from~\eqref{equ:g-upper-bounds} and~\eqref{equ:h-upper-bounds} that 
    \begin{align}\label{equ:g-and-h-1}
        g_{1/2}(a_4, b) + h_{1/2}(a_4, c)
        & \le \frac{15 a_4^2}{4} + \frac{13 b a_4}{2} + \frac{b^2}{4} + \frac{15 a_4^2}{4} + \frac{9 c a_4}{2} + \frac{c^2}{4} \notag \\[0.5em]
        & = \frac{15 a_4^2}{2} + (7b + 5c) a_4 - \frac{(b+c)a_4}{2} + \frac{b^2 + c^2}{4},  
    \end{align}
    and 
    \begin{align}\label{equ:g-and-h-2}
        g_{1/2}(a_4, b) + h_{1/2}(a_4, c)
         \le \frac{15 a_4^2}{4} + 7 b a_4 + \frac{15 a_4^2}{4} + 5 c a_4 
        = \frac{15 a_4^2}{2} + (7b + 5c) a_4.  
    \end{align}
    It follows from~\eqref{equ:hat-g-upper-bounds} and~\eqref{equ:g-upper-bounds} that 
    \begin{align}\label{equ:hat-g-h-1}
        \hat{g}(a_4, b) + h_{1/4}(a_4, c)
        & \le \frac{45 a_4^2}{8} + \frac{61 b a_4}{10} + \frac{151 b^2}{150} + \frac{15 a_4^2}{8} + \frac{9 c a_4}{2} + \frac{c^2}{2} \notag \\[0.5em]
        & = \frac{15 a_4^2}{2} + (7b + 5c) a_4 - \frac{9 b a_4}{10} - \frac{c a_4}{2} + \frac{151 b^2}{150} + \frac{c^2}{2}, 
    \end{align}
    and 
    \begin{align}\label{equ:hat-g-h-2}
        \hat{g}(a_4, b) + h_{1/4}(a_4, c)
        & \le \frac{45 a_4^2}{8} + 7 b a_4 +  \frac{15 a_4^2}{8} + 5 c a_4 
        = \frac{15 a_4^2}{2} + (7b + 5c) a_4.
    \end{align}
    Now, Inequalities~\eqref{equ:Phi3i-vs-Phi4i-Phi5i} and~\eqref{equ:Phi6-vs-Phi61-Phi62} follow from~\eqref{equ:g-and-h-1},~\eqref{equ:hat-g-h-1}, and the definition.
    Inequality~\eqref{equ:Phi3i-vs-Phi4i-Phi5i-2} follows from~\eqref{equ:g-and-h-2},~\eqref{equ:hat-g-h-2}, and the definition. 
\end{proof}
\textbf{Remark.} 
Notice from Inequalities~\eqref{equ:g-and-h-2} and~\eqref{equ:hat-g-h-2} that 
\begin{align}\label{equ:varphi-less-zero}
    \varphi(a_4, b,c,d) 
    \le 0. 
\end{align}

Let $I \subseteq \mathbb{R}$ be an interval. 
Recall that a function $f(x) \colon I \to \mathbb{R}$ is \textbf{convex} if for every $\alpha \in [0,1]$ and for all $x, y \in I$, 
\begin{align*}
    f\left(\alpha x + (1-\alpha) y\right)
    \le \alpha \cdot f(x) + (1-\alpha) \cdot f(y).
\end{align*}
Note that, in particular, a quadratic polynomial $q(x)$ with a non-negative coefficient for $x^2$ is convex. 

\begin{proposition}\label{PROP:a4-convex}
    Let $(b,c) \in \mathbb{R}_{\ge 0}^{2}$. 
    Suppose that $\gamma_1 \ge - 7, \gamma_2 \in \mathbb{R}, \gamma_3 \in \mathbb{R}$ are real numbers. 
    Then both 
    \begin{align*}
        \rho_1(x)
        & \coloneqq 
        \gamma_1 x^2  + \gamma_2 x + \gamma_3 
        + g_{1/2}(x,b) + h_{1/2}(x,c) \quad\text{and} \\[0.5em]
        \rho_2(x)
        & \coloneqq 
        \gamma_1 x^2  + \gamma_2 x + \gamma_3 
        + \hat{g}(x,b) + h_{1/4}(x,c) 
    \end{align*}
    are convex functions on $\mathbb{R}_{\ge 0}$. 
\end{proposition}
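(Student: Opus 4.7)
The plan is to verify convexity directly using the standard criterion that a $C^1$ function on $\mathbb{R}_{\ge 0}$ which is convex on each piece of a partition into intervals is globally convex. Since $\gamma_2 x + \gamma_3$ is affine and hence does not affect convexity, the task reduces to showing that the piecewise-quadratic functions $\gamma_1 x^2 + g_{1/2}(x,b) + h_{1/2}(x,c)$ and $\gamma_1 x^2 + \hat{g}(x,b) + h_{1/4}(x,c)$ are convex whenever $\gamma_1 \ge -7$.

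First, I would verify that each of $g_{1/2}(\cdot,b)$, $h_{1/2}(\cdot,c)$, $\hat{g}(\cdot,b)$, and $h_{1/4}(\cdot,c)$ is a $C^1$ function of $x$, i.e. the two quadratic pieces (or the five pieces for $\hat{g}$) agree in both value and first derivative at each breakpoint. For $g_{1/2}(x,b)$ the breakpoint is $x=b$, and one computes the left derivative $7x+7b$ and right derivative $\tfrac{15x}{2}+\tfrac{13b}{2}$ both evaluate to $14b$ at $x=b$; similarly $h_{1/2}(\cdot,c)$ is $C^1$ at $x=c$, $h_{1/4}(\cdot,c)$ is $C^1$ at $x=2c$, and each of the four breakpoints of $\hat{g}$ is a $C^1$ join (these matches are essentially forced by the construction of $\hat{g}$ as the upper envelope of the five quadratics in Section~\ref{SUBSEC:A4B}). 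Adding the smooth polynomial $\gamma_1 x^2 + \gamma_2 x + \gamma_3$ preserves $C^1$-ness.

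Next, I would compute the second derivatives on each piece. For $g_{1/2}(\cdot,b)$ they are $7$ and $\tfrac{15}{2}$; for $h_{1/2}(\cdot,c)$ they are $7$ and $\tfrac{15}{2}$; for $\hat{g}(\cdot,b)$ they are $\tfrac{21}{2}$, $\tfrac{45}{4}$, $\tfrac{56}{5}$, $\tfrac{651}{58}$, $\tfrac{45}{4}$ (all at least $\tfrac{21}{2}$); and for $h_{1/4}(\cdot,c)$ they are $\tfrac{7}{2}$ and $\tfrac{15}{4}$. Consequently the second derivative of $g_{1/2}(x,b)+h_{1/2}(x,c)$ on each piece is at least $7+7=14$, and the second derivative of $\hat{g}(x,b)+h_{1/4}(x,c)$ on each piece is at least $\tfrac{21}{2}+\tfrac{7}{2}=14$. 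Adding $\gamma_1 x^2$ contributes $2\gamma_1 \ge -14$ to the second derivative on each piece, so each piece of $\rho_1$ and $\rho_2$ has second derivative at least $14 + 2\gamma_1 \ge 0$, i.e. each piece is convex.

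Finally, a piecewise-defined function that is $C^1$ on its domain and convex on each piece of the partition is convex on the whole domain: on any interval within a single piece this is immediate, and at a breakpoint the one-sided derivatives agree (by $C^1$), so the derivative is globally non-decreasing, which characterizes convexity for a $C^1$ function. Applying this to $\rho_1$ and $\rho_2$ on $\mathbb{R}_{\ge 0}$ completes the proof. The main obstacle is purely computational: checking the $C^1$ matching at the four internal breakpoints of $\hat{g}$ and verifying that the minimum second derivative of $\hat{g}$ is exactly $\tfrac{21}{2}$ so that the threshold $\gamma_1 \ge -7$ is tight; everything else is routine.
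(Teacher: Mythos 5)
Your overall strategy — piecewise second derivatives at least $14$, plus matching of one‑sided derivatives at the breakpoints, hence a non‑decreasing derivative and convexity for any $\gamma_1\ge -7$ — is essentially the paper's argument (the paper packages the same computation by peeling off $7x^2$, resp.\ $\tfrac{21x^2}{4}$ and $\tfrac{7x^2}{4}$, and observing that what remains has non‑decreasing derivative). However, one intermediate claim you make is false: $\hat g(\cdot,b)$ is \emph{not} $C^1$ at all four of its breakpoints. At $x_0=\tfrac{(86-4\sqrt{210})b}{15}$ the second and third quadratics of $\hat g$ agree in value but not in slope: that breakpoint is by construction a root of the difference of the two quadratics (the other root being $\tfrac{(86+4\sqrt{210})b}{15}$), so the two parabolas cross transversally there rather than touching tangentially. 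Concretely, the left derivative of $\hat g$ at $x_0$ is $\tfrac{45x_0}{4}+\tfrac{61b}{10}$ and the right derivative is $\tfrac{56x_0}{5}+\tfrac{479b}{75}$, and their difference is $-\tfrac{x_0}{20}+\tfrac{43b}{150}\approx 0.193\,b>0$. So the step ``at a breakpoint the one-sided derivatives agree (by $C^1$)'' fails at this point. (The other three joins of $\hat g$, and the joins of $g_{1/2}$, $h_{1/2}$, $h_{1/4}$, are genuine $C^1$ tangencies, as you say.)

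The gap is easily repaired and does not affect the conclusion: for a continuous piecewise-$C^1$ function, convexity requires only that at each breakpoint the left derivative be at most the right derivative, and here the jump is upward, precisely because $x_0<\tfrac{86b}{15}$, which is equivalent to $-\tfrac{x_0}{20}+\tfrac{43b}{150}\ge 0$. So you should replace the blanket $C^1$ assertion by a check, at each breakpoint, that the left slope does not exceed the right slope — verifying equality where the pieces are tangent and the strict inequality above at $x_0$. With that correction, together with your (correct) observation that every piece of $\rho_1$ and $\rho_2$ has second derivative at least $14+2\gamma_1\ge 0$, the proof is complete and matches the threshold $\gamma_1\ge -7$ exactly as in the paper.
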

\begin{proof}[Proof of Proposition~\ref{PROP:a4-convex}]
    First, let us consider $\rho_1(x)$.   
    We write $\rho_1(x)$ as the summation of three functions $f_1(x)$, $f_2(x)$, $f_{3}(x)$, where 
    \begin{align*}
        f_1(x)
        & \coloneqq \gamma_1 x^2  + \gamma_2 x + \gamma_3 + 7x^2,\\[0.5em]
        f_2(x)
        & \coloneqq g_{1/2}(x,b) - \frac{7 x^2}{2}, \\[0.5em]
        f_3(x)
        & \coloneqq h_{1/2}(x,c) - \frac{7 x^2}{2}. 
    \end{align*}
    Note that $f_1(x)$ is a quadratic polynomial in $x$ and the coefficient of $x^2$ is $\gamma_1 + 7  \ge 0$. So $f_{1}(x)$ is convex. 
    Observe that both $f_{2}(x)$ and $f_{3}(x)$ are continuous and straightforward calculations show that 
    \begin{align*}
        \frac{\mathrm{d} f_{2}(x)}{\mathrm{d} x}
        = 
        \begin{cases}
            7 b, &\quad\text{if}\quad x < b, \\[0.5em]
            \frac{x+13b}{2}, &\quad\text{if}\quad x \ge b,
        \end{cases}
        \quad\text{and}\quad 
        \frac{\mathrm{d} f_{3}(x)}{\mathrm{d} x}
        = 
        \begin{cases}
            5 c, &\quad\text{if}\quad x < c, \\[0.5em]
            \frac{x+9c}{2}, &\quad\text{if}\quad x \ge c.
        \end{cases}
    \end{align*}
    Observe that both $\frac{\mathrm{d} f_{2}(x)}{\mathrm{d} x}$ and $\frac{\mathrm{d} f_{3}(x)}{\mathrm{d} x}$ are non-decreasing. So both $f_{2}(x)$ and $f_{3}(x)$ are convex. 
    Since the summation of convex functions is also convex, $\rho_1(x)$ is convex. 

    Next, we consider $\rho_2(x)$. 
    We write $\rho_2(x)$ as the summation of three functions $f_1(x)$, $f_2(x)$, $f_{3}(x)$, where 
    \begin{align*}
        f_1(x)
        & \coloneqq \gamma_1 x^2  + \gamma_2 x + \gamma_3  + 7 x^2,\\[0.5em]
        f_2(x)
        & \coloneqq \hat{g}(x,b) - \frac{21 x^2}{4}, \\[0.5em]
        f_3(x)
        & \coloneqq h_{1/4}(x,c) - \frac{7 x^2}{4}. 
    \end{align*}
    The proof of Case 1 shows that $f_1(x)$ is convex. 
    Observe that both $f_{2}(x)$ and $f_{3}(x)$ are continuous and straightforward calculations show that 
    \begin{align*}
        \frac{\mathrm{d} f_{2}(x)}{\mathrm{d} x}
        & = 
        \begin{cases}
            7 b, &\quad\text{if}\quad x < \frac{6 b}{5}, \\[0.5em]
            \frac{3 x}{4} + \frac{61 b}{10}, &\quad\text{if}\quad x \in \left[\frac{6b}{5}, \frac{(86 - 4\sqrt{210})b}{15}\right], \\[0.5em]
            \frac{7 x}{10} + \frac{479 b}{75}, &\quad\text{if}\quad x \in \left[\frac{(86 - 4\sqrt{210})b}{15}, \frac{56 b}{15}\right], \\[0.5em]
            \frac{21x}{29} + \frac{913b}{145}, &\quad\text{if}\quad x \in \left[\frac{56b}{15}, \frac{38b}{5}\right], \\[0.5em]
            \frac{3 x}{4} + \frac{61 b}{10}, &\quad\text{if}\quad x \ge \frac{38b}{5}, 
        \end{cases}
    \end{align*}
    and 
    \begin{align*}
          \frac{\mathrm{d} f_{3}(x)}{\mathrm{d} x}
        & = 
        \begin{cases}
            5 c, &\quad\text{if}\quad x < 2c, \\[0.5em]
            \frac{x+18c}{4}, &\quad\text{if}\quad x \ge 2c.
        \end{cases}
    \end{align*}
    Observe that both $\frac{\mathrm{d} f_{2}(x)}{\mathrm{d} x}$ and $\frac{\mathrm{d} f_{3}(x)}{\mathrm{d} x}$ are non-decreasing. So both $f_{2}(x)$ and $f_{3}(x)$ are convex. 
    It follows that $\rho_2(x)$ is convex as well. 
    
    This completes the proof of Proposition~\ref{PROP:a4-convex}.
\end{proof}

The following simple but useful fact will be used extensively in the remainder of the proof. 
\begin{fact}\label{FACT:convex-optimization}
    Let $[x_1, x_2] \subseteq \mathbb{R}$ be an interval. Suppose that $q(x) \colon [x_1, x_2] \to \mathbb{R}$ is convex. Then, for every $x \in [x_1, x_2]$, 
    \begin{align*}
        q(x) 
        \le \max\left\{q(x_1),~q(x_2)\right\}.
    \end{align*}
\end{fact}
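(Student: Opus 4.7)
The plan is to apply the definition of convexity directly: any point in the interval is a convex combination of the endpoints, so its value under $q$ is bounded by the corresponding convex combination of the endpoint values, which in turn is bounded by their maximum.

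First, I would handle the trivial case $x_1 = x_2$ (where the statement is immediate) and then assume $x_1 < x_2$. Given $x \in [x_1, x_2]$, I would define
\begin{equation*}
    \alpha \coloneqq \frac{x_2 - x}{x_2 - x_1} \in [0,1],
\end{equation*}
so that $x = \alpha x_1 + (1-\alpha) x_2$. Applying the convexity hypothesis to $q$ at this convex combination gives
\begin{equation*}
    q(x) = q\bigl(\alpha x_1 + (1-\alpha) x_2\bigr) \le \alpha \, q(x_1) + (1-\alpha)\, q(x_2).
\end{equation*}

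Finally, I would bound each term on the right by $\max\{q(x_1), q(x_2)\}$ and use $\alpha + (1-\alpha) = 1$ to conclude
\begin{equation*}
    q(x) \le \alpha \max\{q(x_1), q(x_2)\} + (1-\alpha) \max\{q(x_1), q(x_2)\} = \max\{q(x_1), q(x_2)\},
\end{equation*}
which yields the claim. There is no genuine obstacle here: the statement is a textbook consequence of the definition of convexity recalled just above Fact~\ref{FACT:convex-optimization}, and no structural properties of the particular functions $g_{\alpha}$, $h_{\alpha}$, $\hat{g}$, or the $\Phi$-family are needed. The role of this fact in the paper is purely to serve as a convenient lemma for reducing the quadratic/convex optimization problems over intervals (arising in Sections~\ref{SUBSEC:inequality-Phi1}--\ref{SUBSEC:inequality-Phi3}) to comparisons at endpoints.
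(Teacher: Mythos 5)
Your proof is correct and is the standard one-line argument from the definition of convexity; the paper states this as a ``Fact'' without any written proof, so your argument is precisely the elementary justification the authors implicitly rely on.
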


In the remainder of this section, we assume that $n \ge 4k \ge 0$.
\subsection{Inequalities for $\Phi$ and $\Phi_1$}\label{SUBSEC:inequality-Phi1}
In this subsection, we consider the optimization problem of maximizing $\Phi_1$ over the region $\Omega_{n,k}$. 
%
\begin{proposition}\label{PROP:Phi-a5-a6-0-E1-a}
    For every $(a_1, \ldots, a_6, b,c,d) \in \Omega_{n,k}$, we have 
    \begin{align*}
        \Phi(a_1, \ldots, a_6, b,c,d)
        & \le \max\Big\{ \Phi(a_1+a_2+a_3+ a_4,0,0,0, a_5,a_6,b,c,d),  \\[0.5em]
        &  \qquad\qquad \Phi(0,a_1+a_2+a_3+ a_4,0,0, a_5, a_6, b,c,d), \\[0.5em]
        &\qquad\qquad  \Phi\left(0,0,\hat{a}_3, \hat{a}_4,a_5, a_6, b,c,d \right)  \Big\},  
    \end{align*}
    where 
    \begin{align*}
        (\hat{a}_3, \hat{a}_4)
        \coloneqq \left(\left(1+\frac{a_1+a_2}{a_3+a_4}\right)a_3, \left(1+\frac{a_1+a_2}{a_3+a_4}\right)a_4\right).
    \end{align*}
\end{proposition}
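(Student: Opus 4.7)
The plan is to freeze the ``suffix'' variables $a_5, a_6, b, c, d$ and view $\Phi$ as a function on the $3$-simplex $\{(a_1, a_2, a_3, a_4) \in \mathbb{R}_{\ge 0}^{4} \colon \sum_{i=1}^{4} a_i = s\}$, where $s \coloneqq k - a_5 - a_6$. I will exhibit two one-parameter families inside this simplex along which $\Phi$ is convex in the parameter; pushing the maximum to the boundary along each family in turn funnels the point to one of the three configurations in the statement.

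The first step is to fix $a_3, a_4$ and $a_1+a_2 = r$ and substitute $a_2 = r - a_1$ into $\Phi$. The only degree-two contributions in $a_1$ come from $\tfrac{13}{2}a_1^2$, $13 a_1 a_2$ (inside $13 a_1(a_2+\cdots+a_6)$), and $7 a_2^2$; every cross term with $a_3, a_4, a_5, a_6, b, c, d$ is linear in $a_1$. A direct computation yields
\[
    \text{coefficient of } a_1^2 \;=\; \tfrac{13}{2} - 13 + 7 \;=\; \tfrac{1}{2} \;>\; 0,
\]
so $\Phi$ is convex in $a_1$ on $[0,r]$. By Fact~\ref{FACT:convex-optimization}, the maximum is attained at $a_1 = 0$ or $a_2 = 0$, reducing the problem to two symmetric sub-cases.

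The second step treats, say, the sub-case $a_2 = 0$ (the sub-case $a_1 = 0$ is completely analogous). If $a_3 + a_4 = 0$, the point is already $(s,0,0,0)$ and the inequality is trivial. Otherwise, freeze the ratio $\lambda \coloneqq a_3/(a_3+a_4) \in [0,1]$ and parameterize $a_1 = s - t$, $a_3 = \lambda t$, $a_4 = (1-\lambda)t$ for $t \in [0,s]$. Expanding $\Phi$ in $t$, the contributions $\tfrac{13}{2}(s-t)^2$ and $13(s-t)t$ give coefficients $\tfrac{13}{2}$ and $-13$ to $t^2$, while
\[
    7\lambda^2 + 15\lambda(1-\lambda) + \tfrac{15}{2}(1-\lambda)^2 \;=\; \tfrac{15}{2} - \tfrac{\lambda^2}{2}
\]
comes from the quadratic block in $a_3, a_4$; the cross terms with $a_5, a_6, b, c, d$ are linear in $t$. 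Summing,
\[
    \text{coefficient of } t^2 \;=\; 1 - \tfrac{\lambda^2}{2} \;>\; 0,
\]
so $\Phi$ is convex in $t$ on $[0,s]$. Applying Fact~\ref{FACT:convex-optimization} again, the maximum is at $t=0$, giving $(s,0,0,0)$, or at $t=s$, giving $(0,0,\lambda s, (1-\lambda)s) = (0,0,\hat{a}_3,\hat{a}_4)$ since $\hat{a}_3 = \frac{a_1+a_2+a_3+a_4}{a_3+a_4}a_3 = s\lambda$ and similarly $\hat{a}_4 = s(1-\lambda)$. The parallel calculation for the sub-case $a_1 = 0$ replaces $\tfrac{13}{2} a_1^2$ and $13 a_1 \cdot$ terms with $7 a_2^2$ and $14 a_2 \cdot$ terms, producing the coefficient $7 - 14 + \tfrac{15}{2} - \tfrac{\lambda^2}{2} = \tfrac{1-\lambda^2}{2} \ge 0$, which still yields convexity and the alternative extremum $(0, s, 0, 0)$.

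I do not foresee a real obstacle. The main delicacy is bookkeeping: one must check that after Step~1 the inherited values of $a_3$ and $a_4$ keep the ratio $\lambda$ that drives Step~2, so that the terminal vertex $t = s$ is precisely the $(\hat{a}_3, \hat{a}_4)$ written in the statement; this is exactly the content of the identity $\hat{a}_3/\hat{a}_4 = a_3/a_4$. Degenerate corners (e.g.\ $\lambda \in \{0,1\}$ or $r = 0$) need only a brief remark, since in each such case the relevant coefficient is still non-negative and one of the three target configurations is reached directly.
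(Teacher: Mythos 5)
Your proof is correct and takes essentially the same approach as the paper: a two-stage convexity argument along line segments inside the simplex $\{a_1+a_2+a_3+a_4 = k-a_5-a_6\}$, invoking Fact~\ref{FACT:convex-optimization} to push the maximum to the endpoints, which are exactly the three configurations in the statement. The only difference is the choice of segments (you first shift mass between $a_1$ and $a_2$ with $a_3,a_4$ frozen, then between $a_1$ or $a_2$ and the proportionally scaled pair $(a_3,a_4)$, whereas the paper shifts $a_1$ against the scaled triple $(a_2,a_3,a_4)$ and then $a_2$ against $(a_3,a_4)$), and your quadratic coefficients $\tfrac12$, $1-\tfrac{\lambda^2}{2}$, and $\tfrac{1-\lambda^2}{2}$ all check out.
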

\textbf{Remark.} Proposition~\ref{PROP:Phi-a5-a6-0-E1-a} shows that when maximizing $\Phi$ (and hence, also $\Phi_1$) over the region $\Omega_{n,k}$, it suffices to consider the cases where $a_2 = a_3 = a_4 = 0$, $a_1 = a_3 = a_4 = 0$, and $a_1 = a_2 = 0$. 
\begin{proof}[Proof of Proposition~\ref{PROP:Phi-a5-a6-0-E1-a}]
    We may assume that $\Phi(a_1, \ldots, a_6, b,c,d)$ is maximized over $\Omega_{n,k}$. 
    \begin{claim}\label{CLAIM:Phi-a5-a6-0-E1-a1-a2a3a4}
        We may assume that $a_1 = 0$ or $a_2 + a_3 + a_4 = 0$. 
    \end{claim}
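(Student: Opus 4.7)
The plan is to deduce the claim by showing that, along each of the three one-parameter families obtained by fixing the sum $a_1 + a_i$ (for $i \in \{2,3,4\}$) and freezing all other coordinates, the function $\Phi$ is a convex quadratic in $a_1$. Once this is established, Fact~\ref{FACT:convex-optimization} lets us push the mass in $(a_1, a_i)$ entirely to one endpoint of the admissible interval without decreasing $\Phi$, and iterating over $i=2,3,4$ forces the conclusion.

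Concretely, suppose for contradiction that $a_1 > 0$ and $a_i > 0$ for some $i \in \{2,3,4\}$. Set $s := a_1 + a_i$ and view $\Phi$ as a function of $t = a_1$ via the substitution $a_i \leftarrow s - t$, with all other variables held fixed. The only terms in $\Phi$ that contribute to the coefficient of $t^2$ after this substitution are $\tfrac{13}{2}a_1^2$ (contributing $\tfrac{13}{2}$), the cross term $13 a_1 a_i$ (contributing $-13$), and $\tfrac{c_i}{2} a_i^2$ (contributing $\tfrac{c_i}{2}$), where $c_2 = c_3 = 14$ (from $7 a_2^2$ and $7 a_3^2$) and $c_4 = 15$ (from $\tfrac{15}{2} a_4^2$). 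The remaining contributions, namely $13 a_1 a_j$ with $j \ne i$, $14 a_2 a_j$ or $15 a_3 a_j$ or $15 a_4 a_j$ with $j \notin \{1,i\}$, and the $(b,c,d)$-linear pieces, all become linear in $t$ under the substitution and contribute nothing to the $t^2$ coefficient. The net coefficient of $t^2$ is therefore $\tfrac{13 + c_i}{2} - 13$, which evaluates to $\tfrac{1}{2}$ when $i \in \{2,3\}$ and to $1$ when $i = 4$. In all three cases the coefficient is strictly positive, so $\Phi$ is a strictly convex quadratic in $t$ on $[0,s]$.

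By Fact~\ref{FACT:convex-optimization}, the maximum of $\Phi$ on $[0,s]$ is attained at one of the endpoints; hence we may replace $(a_1, a_i)$ by either $(0, s)$ or $(s, 0)$ (whichever gives the larger value of $\Phi$) to obtain a new point of $\Omega_{n,k}$ on which $\Phi$ is no smaller. Apply this swap successively for $i = 2$, then $i = 3$, then $i = 4$. Since each individual swap touches only the two coordinates $a_1$ and $a_i$, any coordinate $a_j$ ($j \in \{2,3,4\}$) that has been driven to zero by an earlier swap remains zero in all later steps. Consequently, after these three swaps either $a_1 = 0$ at some stage (and we stop), or $a_2 = a_3 = a_4 = 0$ at the end. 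In either case we have arrived at a point of $\Omega_{n,k}$ with $\Phi$-value at least the original, satisfying the claimed dichotomy.

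I do not expect a real obstacle here: the whole argument reduces to a direct coefficient computation plus one-dimensional convex programming. The only mild subtlety is verifying that the iterations do not interfere, which is handled by the observation that each swap perturbs only two coordinates.
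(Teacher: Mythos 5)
Your proof is correct. The coefficient computations check out: for the pairwise swap between $a_1$ and $a_i$ the $t^2$-coefficient is $\tfrac{13}{2}-13+7=\tfrac12$ for $i\in\{2,3\}$ and $\tfrac{13}{2}-13+\tfrac{15}{2}=1$ for $i=4$, all positive; each swap stays inside $\Omega_{n,k}$, so Fact~\ref{FACT:convex-optimization} applies at each step, and your non-interference observation makes the iteration legitimate. The paper reaches the same dichotomy by a slightly different decomposition: instead of three successive coordinate-pair swaps it performs a single shift along the direction that scales $a_2,a_3,a_4$ simultaneously by a common factor $(1-x)$ while adding $(a_2+a_3+a_4)x$ to $a_1$; the relevant $x^2$-coefficient is then $\tfrac12\bigl(a_2^2+a_3^2+2a_4^2+2a_2(a_3+a_4)+4a_3a_4\bigr)\ge 0$, and one application of the endpoint principle finishes. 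Your version trades that single variable-coefficient computation for three constant-coefficient ones plus the bookkeeping that earlier zeros persist; both arguments rest on the same one-dimensional convexity principle and land on equivalent endpoint configurations, so the downstream reductions (Claim~\ref{CLAIM:Phi-a5-a6-0-E1-a2-a3a4} and the rest of Proposition~\ref{PROP:Phi-a5-a6-0-E1-a}) go through unchanged.
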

    \begin{proof}[Proof of Claim~\ref{CLAIM:Phi-a5-a6-0-E1-a1-a2a3a4}]
        Suppose that $a_1 > 0$ and $a_2 + a_3 + a_4 > 0$. Let 
        \begin{align*}
            q_{1}(x)
            \coloneqq \Phi(a_1+(a_2+a_3+a_4)x, (1-x)a_2, (1-x)a_3, (1-x)a_4, a_5, a_6, b,c,d).
        \end{align*}
        Note that $q_{1}(x)$ as a quadratic polynomial in $x$ over the interval $\left[\frac{-a_1}{a_2+a_3+a4}, 1\right]$, and straightforward calculations show that the coefficient of $x^2$ in $q_{1}(x)$ is 
        \begin{align*}
            \frac{a_2^2+a_3^2+2 a_4^2+2 a_2 \left(a_3+a_4\right) +4 a_3 a_4}{2} 
            \ge 0.
        \end{align*}
        So it follows from Fact~\ref{FACT:convex-optimization} that 
        \begin{align*}
            \Phi(a_1, \ldots, a_6, b,c,d) 
            = q_1(0)
            & \le \max\left\{q_1\left(\frac{-a_1}{a_2+a_3+a4}\right),~q_{1}(1)\right\}. 
        \end{align*}
        This implies that we can assume either $a_1 = 0$ or $a_2 + a_3 + a_4 = 0$. 
    \end{proof}
    %
    %
    \begin{claim}\label{CLAIM:Phi-a5-a6-0-E1-a2-a3a4}
        Suppose that $a_1 = 0$. Then we may assume that $a_2 = 0$ or $a_3 + a_4 = 0$. 
    \end{claim}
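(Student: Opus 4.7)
The plan is to mirror, almost verbatim, the argument used for Claim~\ref{CLAIM:Phi-a5-a6-0-E1-a1-a2a3a4}: with $a_1=0$ fixed, interpolate linearly between the two target boundary configurations (``$a_2=0$'' versus ``$a_3+a_4=0$'') through the third variable block. We may assume $a_2>0$ and $a_3+a_4>0$, since otherwise the conclusion is immediate. Define, for $x$ in the interval $\bigl[-\tfrac{a_2}{a_3+a_4},\,1\bigr]$,
\[
q_{2}(x)\;\coloneqq\;\Phi\bigl(0,\;a_2+(a_3+a_4)x,\;(1-x)a_3,\;(1-x)a_4,\;a_5,\;a_6,\;b,\;c,\;d\bigr).
\]
Note that $q_2(0)=\Phi(0,a_2,a_3,a_4,a_5,a_6,b,c,d)$, while $q_2\bigl(-\tfrac{a_2}{a_3+a_4}\bigr)$ corresponds to $a_2=0$ and $q_2(1)$ corresponds to $a_3+a_4=0$; in both endpoint cases the new tuple lies in $\Omega_{n,k}$ because the substitution preserves the sum $a_2+a_3+a_4$ (and hence the global constraint $\sum a_i=k$), and it keeps all coordinates nonnegative throughout the stated interval.

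The next step is to verify that $q_2(x)$ is a convex quadratic in $x$. Since only the $a_2$-, $a_3$- and $a_4$-coordinates depend on $x$, and the linear $(b,c,d)$-contributions in the definition of $\Phi$ are linear in $(a_2,a_3,a_4)$ and hence linear in $x$, only the six quadratic $\mathcal{A}$-terms of $\Phi$ contribute to the coefficient of $x^2$ in $q_2(x)$. A direct expansion gives the coefficient of $x^2$ as
\[
7(a_3+a_4)^2-14(a_3+a_4)^2+7a_3^2+15a_3a_4+\tfrac{15}{2}a_4^2 \;=\; a_3a_4+\tfrac{1}{2}a_4^2\;\ge\;0,
\]
so $q_2$ is convex on its domain.

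By Fact~\ref{FACT:convex-optimization}, we conclude
\[
\Phi(0,a_2,a_3,a_4,a_5,a_6,b,c,d)\;=\;q_2(0)\;\le\;\max\Bigl\{q_2\bigl(-\tfrac{a_2}{a_3+a_4}\bigr),\;q_2(1)\Bigr\},
\]
and the two maxima on the right are exactly the values of $\Phi$ at tuples with $a_2=0$ (and the mass redistributed proportionally into $a_3,a_4$) and with $a_3=a_4=0$ (and the mass absorbed by $a_2$) respectively. Hence we may assume $a_2=0$ or $a_3+a_4=0$, which proves the claim. The only genuinely non-routine step is the quadratic-coefficient calculation, and even there the potentially worrying cancellations collapse to the manifestly nonnegative expression $a_3a_4+\tfrac12 a_4^2$, so I do not anticipate a real obstacle.
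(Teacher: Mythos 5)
Your proof is correct and follows the same route as the paper's: define the same interpolant $q_2(x)$, show it is convex by computing the coefficient of $x^2$ (your $a_3a_4 + \tfrac12 a_4^2$ agrees with the paper's $\tfrac{a_4^2 + 2a_3a_4}{2}$), and conclude by Fact~\ref{FACT:convex-optimization}. No gap.
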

    \begin{proof}[Proof of Claim~\ref{CLAIM:Phi-a5-a6-0-E1-a2-a3a4}]
        Suppose that $a_2 > 0$ and $a_3 + a_4 > 0$. Let 
        \begin{align*}
            q_{2}(x)
            \coloneqq \Phi(0, a_2 + (a_3+a_4)x, (1-x)a_3, (1-x)a_4, a_5, a_6, b,c,d).
        \end{align*}
        Note that $q_{2}(x)$ as a quadratic polynomial in $x$ over the interval $\left[\frac{-a_2}{a_3+a4}, 1\right]$, and straightforward calculations show that the coefficient of $x^2$ in $q_{2}(x)$ is 
        \begin{align*}
            \frac{a_4^2 + 2 a_3 a_4}{2} 
            \ge 0.
        \end{align*}
        So it follows from Fact~\ref{FACT:convex-optimization} that 
        \begin{align*}
            \Phi(0, a_2, a_3, a_4, a_5, a_6, b,c,d) 
            = q_2(0)
            & \le \max\left\{q_{2}(1),~q_2\left(\frac{-a_2}{a_3+a4}\right)\right\}, 
        \end{align*}
        This implies that we can assume either $a_2 = 0$ or $a_3 + a_4 = 0$. 
    \end{proof}
    Proposition~\ref{PROP:Phi-a5-a6-0-E1-a} now follows directly from Claims~\ref{CLAIM:Phi-a5-a6-0-E1-a1-a2a3a4} and~\ref{CLAIM:Phi-a5-a6-0-E1-a2-a3a4}.
\end{proof}

\begin{proposition}\label{PROP:Phi-5-to-6-to-1}
    The following statements hold for $(a_1, \ldots, a_6, b,c,d) \in \Omega_{n,k}$. 
    \begin{enumerate}[label=(\roman*)]
        \item\label{PROP:Phi-5-to-6-to-1-1}
            $\Phi(a_1, \ldots, a_6, b,c,d) \le \Phi(a_1, a_2, a_3, a_4, 0, a_5+a_6, b,c,d)$. 
        \item\label{PROP:Phi-5-to-6-to-1-2}
            If $k \in \left[0, \frac{n}{8}\right]$, then $\Phi(a_1, \ldots, a_6, b,c,d) \le \Phi(a_1+a_6, a_2, a_3, a_4, a_5, 0, b,c,d)$.
    \end{enumerate}
    In particular, if $k \in \left[0, \frac{n}{8}\right]$, then 
    \begin{align}\label{equ:Phi-5-to-6-to-1}
        \Phi(a_1, \ldots, a_6, b,c,d)
        \le \Phi(a_1+a_5+a_6, a_2, a_3, a_4, 0, 0, b,c,d).
    \end{align}
\end{proposition}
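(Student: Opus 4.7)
The plan is to verify both inequalities by direct computation of $\Phi(\mathrm{new}) - \Phi(\mathrm{old})$ after the stated substitutions, and then combine them to deduce~\eqref{equ:Phi-5-to-6-to-1}.

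\textbf{Part (i): moving mass from $a_5$ to $a_6$.} The substitution is $a_5 \mapsto 0$, $a_6 \mapsto a_5+a_6$, while fixing $a_1, a_2, a_3, a_4, b, c, d$. The key observation is that in the expression for $\Phi$ the coefficients of $a_5$ and $a_6$ in every ``cross'' term (that is, in the terms of the form $\alpha_i a_i(a_5+a_6)$ for $i \in [4]$, and in the linear terms $(7b+5c+2d)a_5$ and $(7b+5c+2d)a_6$) are identical, namely $13, 14, 15, 15$ and $7b+5c+2d$. Hence every such contribution to $\Phi$ depends only on the sum $a_5+a_6$, and the change reduces to the purely quadratic block
\[
    8(a_5+a_6)^2 - \Bigl(\tfrac{15 a_5^2}{2} + 15 a_5 a_6 + 8 a_6^2\Bigr) = \tfrac{a_5^2}{2} + a_5 a_6 \ge 0,
\]
which gives (i).

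\textbf{Part (ii): moving mass from $a_6$ to $a_1$ under $k \le n/8$.} The substitution is $a_1 \mapsto a_1+a_6$, $a_6 \mapsto 0$. Expanding $\Phi(a_1+a_6,a_2,a_3,a_4,a_5,0,b,c,d) - \Phi(a_1,\ldots,a_6,b,c,d)$ and collecting, the $a_1 a_6$ cross term arising from $\tfrac{13}{2}(a_1+a_6)^2$ cancels against $-13 a_1 a_6$, and one obtains the clean expression
\[
    a_6\Bigl(-\tfrac{3 a_6}{2} - a_2 - 2a_3 - 2a_4 - 2a_5 + 2b+c+d\Bigr).
\]
It therefore suffices to show $2b+c+d \ge \tfrac{3 a_6}{2} + a_2 + 2a_3 + 2a_4 + 2a_5$. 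For the right-hand side, since every coefficient is at most $2$, we have $\tfrac{3 a_6}{2} + a_2 + 2a_3 + 2a_4 + 2a_5 \le 2(a_2+\cdots+a_6) \le 2k$. For the left-hand side, substituting $d = n-4k-3b-2c$ into $2b+c+d$ gives $2b+c+d = (n-4k) - (b+c)$, and maximizing $b+c$ subject to $3b+2c \le n-4k$ and $b,c\ge 0$ yields $b+c \le (n-4k)/2$, so $2b+c+d \ge (n-4k)/2$. The assumption $k \le n/8$ is precisely $(n-4k)/2 \ge 2k$, and (ii) follows.

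\textbf{Combining.} Applying (i) first and then (ii) with $a_6$ replaced by the new value $a_5+a_6$ yields~\eqref{equ:Phi-5-to-6-to-1}. The only step that uses the range of $k$ is the lower bound $(n-4k)/2 \ge 2k$ in Part (ii); Part (i) is unconditional. I do not expect any obstacle beyond the bookkeeping in Part~(ii); the main point is the coincidence that the ``worst case'' $2k$ of the combinatorial RHS matches exactly the ``worst case'' $(n-4k)/2$ of the linear LHS at the threshold $k=n/8$.
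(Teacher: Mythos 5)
Your proof is correct and follows essentially the same route as the paper: compute the difference $\Phi(\mathrm{new}) - \Phi(\mathrm{old})$ directly, observe it simplifies to $\frac{a_5^2}{2}+a_5a_6$ in part (i) and to $a_6\bigl(2b+c+d - \frac{3a_6}{2}-a_2-2a_3-2a_4-2a_5\bigr)$ in part (ii), then bound the latter using the constraints $3b+2c+d=n-4k$ and $\sum a_i = k$. Your intermediate inequalities in part (ii) ($2b+c+d\ge\frac{n-4k}{2}$ via $b+c\le\frac{n-4k}{2}$, and bounding the $a_i$-terms by $2k$) are equivalent to the paper's ($4b+2c+2d\ge 3b+2c+d$ and bounding each coefficient by $4$), both yielding $\ge\frac{a_6}{2}(n-8k)\ge 0$ at the threshold $k\le n/8$.
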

\begin{proof}[Proof of Proposition~\ref{PROP:Phi-5-to-6-to-1}]
    Proposition~\ref{PROP:Phi-5-to-6-to-1}~\ref{PROP:Phi-5-to-6-to-1-1} follows directly from the fact that 
    \begin{align*}
        \Phi(a_1, a_2, a_3, a_4, 0, a_5+a_6, b,c,d) - \Phi(a_1, \ldots, a_6, b,c,d)
        = \frac{a_5^2}{2} + a_5 a_6 
        \ge 0.
    \end{align*}
    Proposition~\ref{PROP:Phi-5-to-6-to-1}~\ref{PROP:Phi-5-to-6-to-1-2} follows from the fact that 
    \begin{align*}
        & \Phi(a_1+a_6, a_2, a_3, a_4, a_5, 0, b,c,d) - \Phi(a_1, \ldots, a_6, b,c,d)  \\[0.5em]
        & =\frac{a_6}{2} \left(4 b+2 c+2 d -2 a_2-4 a_3-4 a_4-4 a_5-3 a_6\right) \\[0.5em]
        & \ge \frac{a_6}{2} \left(3 b+2 c+ d -4 (a_2+ a_3+ a_4+ a_5+a_6) \right) 
         \ge \frac{a_6}{2} \left(n-4k - 4k \right)
        \ge 0.
    \end{align*}
    This completes the proof of Proposition~\ref{PROP:Phi-5-to-6-to-1}. 
\end{proof}

\begin{proposition}\label{PROP:Phi1-inequality-A}
    The following inequalities hold for $(a_1, \ldots, a_6, b,c,d) \in \Omega_{n,k}$. 
    \begin{enumerate}[label=(\roman*)]
        \item\label{PROP:Phi1-inequality-A-1} 
        Suppose that $a_2 = \cdots = a_6 = 0$. Then $\Phi_{1}(a_1, \ldots, a_6, b,c,d) \le \frac{n^2}{3}+\frac{k n}{3}-\frac{k^2}{6}$. 
        \item\label{PROP:Phi1-inequality-A-2} 
        Suppose that $a_1 = a_3 = \cdots = a_6 = 0$. Then 
        \begin{align*}
            \Phi_{1}(a_1, \ldots, a_6, b,c,d)
            & \le 
            \begin{cases}
                \frac{n^2}{3}+2k^2, &\quad\text{if}\quad k \in\left[0,  \frac{n}{6}\right], \\[0.5em]
                \frac{n^2}{4}+k n-k^2, &\quad\text{if}\quad k \in\left[ \frac{n}{6}, \frac{n}{4}\right].
            \end{cases}
        \end{align*}
    \end{enumerate}
    In particular, 
    \begin{align*}
        \max\left\{\Phi_{1}(k,0,\ldots,0, b,c,d),~\Phi_{1}(0,k,0,\ldots,0, b,c,d)\right\}
        \le \Xi(n,k). 
    \end{align*}
\end{proposition}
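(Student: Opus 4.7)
The plan is to substitute $d = n - 4k - 3b - 2c$ using the constraint $4k + 3b + 2c + d = n$, reducing $\Phi_1$ in each case to a function of two variables $(b,c)$, and then solve a standard two-variable negative-definite quadratic maximization.

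For Part~\ref{PROP:Phi1-inequality-A-1}, after setting $a_1 = k$ and $a_2=\cdots=a_6=0$, a direct expansion gives $\Phi(k,0,\ldots,0,b,c,d) = \tfrac{13k^2}{2} + k(9b+6c+3d)$, and substituting for $d$ collapses the $(b,c)$-linear terms in $k$ to $3kn - 12k^2$. Combining with $\Psi(b,c,d)$ and substituting $d$ again, I would arrive at
\begin{align*}
\Phi_1(k,0,\ldots,0,b,c,d) = 3kn - \tfrac{11k^2}{2} + M(2b+c) - 3b^2 - 3bc - c^2,
\end{align*}
where $M := n-4k$. The Hessian of the $(b,c)$-quadratic part is $\left(\begin{smallmatrix}-6 & -3\\-3 & -2\end{smallmatrix}\right)$, which is negative definite, so the global maximum over $\mathbb{R}^2$ occurs at the unique critical point $(b,c) = (M/3, 0)$ (which automatically lies in the feasible region $b,c \ge 0$, $3b+2c \le M$) with value $M^2/3$. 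Plugging back gives $\Phi_1 \le 3kn - \tfrac{11k^2}{2} + \tfrac{(n-4k)^2}{3} = \tfrac{n^2}{3}+\tfrac{kn}{3}-\tfrac{k^2}{6}$, as required.

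For Part~\ref{PROP:Phi1-inequality-A-2}, the same substitution with $a_2 = k$ yields
\begin{align*}
\Phi_1(0,k,0,\ldots,0,b,c,d) = 3kn - 5k^2 + (2n-9k)b + (n-4k)c - 3b^2 - 3bc - c^2.
\end{align*}
Setting partial derivatives to zero gives the unique unconstrained critical point $(b,c) = \bigl(\tfrac{n-6k}{3}, k\bigr)$, with the corresponding $d = 0$. When $k \le n/6$ this point is feasible; evaluation (using $(n-6k)(n-3k) = n^2-9kn+18k^2$) gives the claimed value $\tfrac{n^2}{3}+2k^2$. When $k > n/6$ the critical point has $b<0$, so the constrained maximum lies on the face $b = 0$; one-dimensional optimization in $c$ then yields the optimum at $c = (n-4k)/2$ (with $d=0$) and value $\tfrac{n^2}{4}+kn-k^2$. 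A quick check at $k=n/6$ confirms both formulas equal $\tfrac{7n^2}{18}$, matching continuity of the piecewise bound.

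The final ``in particular'' assertion then follows immediately from the definition of $\Xi(n,k)$ in Theorem~\ref{THM:Mian-HS-density-K4}, since $\tfrac{n^2}{3}+\tfrac{kn}{3}-\tfrac{k^2}{6}$ is the formula for $\Xi(n,k)$ on $[0,\tfrac{2n}{13}]$ and for larger $k$ it is bounded by $\tfrac{n^2}{3}+2k^2$ or $\tfrac{n^2}{4}+kn-k^2$ (see Fact~\ref{FACT:Extremal-function}). There is no real obstacle here beyond the bookkeeping of the substitution and checking that the unconstrained critical points either lie in, or get replaced by the appropriate face of, the feasible region $\{b,c,d\ge 0,\ 3b+2c+d = n-4k\}$; the mild case split at $k = n/6$ is the only place one must be careful.
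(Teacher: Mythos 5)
Your proof is correct and follows essentially the same route as the paper: eliminate $d$ via the constraint and maximize the resulting concave quadratic in $(b,c)$ over the feasible polytope. The paper merely packages the same optimization differently — part (i) via the bound $\Psi(b,c,d)\le\frac{(3b+2c+d)^2}{3}$ of Fact~\ref{FACT:BCD-upper-bound}, and part (ii) by completing squares in $(b,\tfrac{3b}{2}+c)$ instead of locating the critical point and restricting to the face $b=0$ — so the two arguments coincide in substance.
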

\begin{proof}[Proof of Proposition~\ref{PROP:Phi1-inequality-A}]
    Recall from the definition of $\Omega_{n,k}$ that $d = n - 4k - 3b - 2c$. 
    
    Suppose that $a_2 = \cdots = a_6 = 0$. Then $a_1 = k$, and 
    \begin{align*}
        \Phi_{1}(a_1, 0,0,0, 0, 0, b,c,d)
        & = \frac{13 a_1^2}{2}+ 3  (3 b+2 c+d) a_1+ \Psi(b,c,d) \\[0.5em]
        & \le \frac{13 a_1^2}{2}+ 3  (3 b+2 c+d) a_1 + \frac{(3 b+2 c+d)^2}{3} \\[0.5em]
        & = \frac{13 a_1^2}{2}+ 3  (n - 4a_1) a_1 + \frac{(n-4a_1)^2}{3} 
        = \frac{n^2}{3}+\frac{k n}{3}-\frac{k^2}{6}.
    \end{align*}
    Suppose that $a_1 = a_3 = \cdots = a_6 = 0$. Then $a_2 = k$, and 
    \begin{align}\label{equ:Phi1-0a20000bcd}
        \Phi_{1}(0, a_2,0,0, 0, 0, b,c,d) 
        & = 7 a_2^2 +  (8 b+6 c+3 d)a_2+ \Psi(b,c,d) \notag \\[0.5em]
        & = -5 k^2 + 3 k n -3 b^2 -c^2 - 3bc + (n - 4k)c +(2n-9k)b  \notag \\[0.5em]
        & = \frac{n^2}{3} +2k^2 - \frac{3}{4}\left(x - \frac{n-6k}{3}\right)^2 -  \left(y - \frac{n-4k}{2}\right)^2, 
    \end{align}
    where $x \coloneqq b$ and $y \coloneqq \frac{3b}{2}  + c$. 

    Suppose that $k \in \left[0, \frac{n}{6}\right]$. Then it follows trivially from~\eqref{equ:Phi1-0a20000bcd} that 
    \begin{align*}
        \Phi_{1}(0, a_2,0,0, 0, 0, b,c,d) 
        \le \frac{n^2}{3} +2k^2. 
    \end{align*}
    Suppose that $k \in \left[\frac{n}{6}, \frac{n}{4}\right]$. Then $\frac{n-6k}{3} \le 0$ and it follows from~\eqref{equ:Phi1-0a20000bcd} that 
    \begin{align*}
        \Phi_{1}(0, a_2,0,0, 0, 0, b,c,d) 
        & \le \frac{n^2}{3} +2k^2 - \frac{3}{4}\left(0 - \frac{n-6k}{3}\right)^2 -  \left(y - \frac{n-4k}{2}\right)^2 \\[0.5em]
        & = \frac{n^2}{4}+k n-k^2-\left(y - \frac{n-4k}{2}\right)^2
        \le \frac{n^2}{4}+k n-k^2. 
    \end{align*}
    The ``In particular'' part follows from Fact~\ref{FACT:Extremal-function} and the fact that 
    \begin{align*}
        \Xi(n,k)
        \ge \begin{cases}
                \frac{n^2}{3}+2k^2, &\quad\text{if}\quad k \in\left[0,  \frac{n}{6}\right], \\[0.5em]
                \frac{n^2}{4}+k n-k^2, &\quad\text{if}\quad k \in\left[ \frac{n}{6}, \frac{n}{4}\right].
            \end{cases}
    \end{align*}
    This completes the proof of Proposition~\ref{PROP:Phi1-inequality-A}.
\end{proof}

\begin{proposition}\label{PROP:Phi-1-upper-bound}
    Suppose that $k \in \left[0, \frac{(20+\sqrt{10}) n}{130}\right] \cup \left[\frac{n}{5},\frac{n}{4}\right]$. Then  
    \begin{align*}
        \max\left\{\Phi_{1}(a_1, a_2, a_3, a_4, 0, 0, b,c,d) \colon (a_1, a_2, a_3, a_4, 0, 0, b,c,d) \in \Omega_{n,k} \right\}
        \le \Xi(n,k).
    \end{align*}
    In particular, by~\eqref{equ:Phi-5-to-6-to-1}, for $k \in \left[0, \frac{n}{8}\right]$, 
    \begin{align}\label{equ:Phi1-k-less-125n}
        \max\left\{\Phi_{1}(a_1, \ldots , a_6, b,c,d) \colon (a_1, \ldots , a_6, b,c,d) \in \Omega_{n,k} \right\}
        \le \Xi(n,k).
    \end{align}
\end{proposition}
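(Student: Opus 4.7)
The plan is to first invoke Proposition~\ref{PROP:Phi-a5-a6-0-E1-a} with $a_5 = a_6 = 0$ imposed, splitting the maximization into three cases: (i) $a_1 = k$, $a_2 = a_3 = a_4 = 0$; (ii) $a_2 = k$, $a_1 = a_3 = a_4 = 0$; and (iii) $a_1 = a_2 = 0$ with $a_3 + a_4 = k$. Cases (i) and (ii) are immediately controlled by Proposition~\ref{PROP:Phi1-inequality-A}, so the substantive work concentrates on case (iii). Parameterize it by $t = a_4 \in [0,k]$ (so $a_3 = k - t$), set $m = n - 4k$, and eliminate $d = m - 3b - 2c$; a direct expansion yields
\begin{align*}
\Phi_1 = 7k^2 + (k-b)\,t - \tfrac{t^2}{2} + 3mk + b(2m-k-t) + c(m-k) - 3b^2 - 3bc - c^2.
\end{align*}

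The next task is to maximize this expression over $b, c \ge 0$ and $t \in [0,k]$. A short computation shows that the unconstrained stationary point in $(b,c)$ yields $c^* = t - k$, which is negative whenever $t < k$; hence for $k \le n/5$ the true maximizer forces $c = 0$, after which the optimum in $b$ becomes $b^* = (2m-k-t)/6$. A final optimization in $t$ then produces
\begin{align*}
\Phi_1 \le
\begin{cases}
\dfrac{n^2}{3} + \dfrac{7 k^2}{4}, &\text{if } k \le \tfrac{2n}{15}, \\[0.5em]
\dfrac{2n^2}{5} - kn + \dfrac{11 k^2}{2}, &\text{if } \tfrac{2n}{15} \le k \le \tfrac{n}{5},
\end{cases}
\end{align*}
coming respectively from the boundary $t = 0$ and the interior $t^* = (7k-2m)/5$. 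For $k \ge n/5$, the signs of $m - k$ and $2m - k$ force $b = c = 0$ and $t = k$, yielding $\Phi_1 \le 3kn - \tfrac{9k^2}{2}$.

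It remains to compare these bounds against $\Xi(n,k)$ piecewise. The bound $n^2/3 + 7 k^2/4$ is dominated by each piece of $\Xi$ on $[0, 2n/15]$ by elementary calculations. The bound $\tfrac{2n^2}{5} - kn + \tfrac{11 k^2}{2}$, when compared against $\Xi = n^2/4 + kn - k^2$ on $[n/6, (4-\sqrt{2})n/14]$, reduces to $3n^2 - 40 kn + 130 k^2 \le 0$; the discriminant of this quadratic in $k$ gives roots at exactly $(20 \pm \sqrt{10})\,n/130$, so the inequality holds precisely on $[n/6, (20+\sqrt{10})n/130]$. This is the origin of the critical constant in the hypothesis, and the analogous comparisons against the two preceding pieces of $\Xi$ on $[2n/15, n/6]$ are routine. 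For $k \in [n/5, n/4]$, the bound $3kn - 9k^2/2$ equals $\Xi$ throughout the $E_4$-range $[n/5, (11+\sqrt{7})n/57]$ and satisfies $3kn - 9k^2/2 \le n^2 - 8kn + 24k^2$ on $[(11+\sqrt{7})n/57, n/4]$ via the factorization of $2n^2 - 22 kn + 57 k^2$, whose roots are $(11 \pm \sqrt{7})n/57$. The ``in particular'' statement then follows by applying Proposition~\ref{PROP:Phi-5-to-6-to-1} to reduce general $(a_1,\dots,a_6)$ to the case $a_5 = a_6 = 0$, noting that $n/8 < (20 + \sqrt{10})n/130$.

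The main obstacle is the layered case analysis in the constrained quadratic optimization: the effective maximizer $(b^*, c^*, t^*)$ transitions across the thresholds $2n/15, n/6, n/5$, and in each regime one must verify which non-negativity or upper-bound constraint is active. A secondary subtlety is that the bound $\Phi_1 \le \Xi$ actually fails on the gap $((20+\sqrt{10})n/130,\, n/5)$, so the hypothesis is sharp for this method; there, configurations with $0 < a_4 < k$ and $a_3 = k - a_4 > 0$ genuinely overshoot $\Xi(n,k)$, which is precisely why the alternative upper bounds $\Phi_2$ and $\Phi_3$ introduced later are required to cover that missing interval.
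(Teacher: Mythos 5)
Your proposal is correct and follows essentially the same route as the paper: the same reduction via Propositions~\ref{PROP:Phi-a5-a6-0-E1-a} and~\ref{PROP:Phi1-inequality-A}, followed by an explicit maximization of the same constrained quadratic in the remaining case $a_1=a_2=0$; you merely eliminate the variables in a different order ($c$, then $b$, then $t=a_4$, via joint concavity) where the paper completes the square in $a_3$ and $c$ and case-splits on $b$, and both routes produce the same extremal values $\tfrac{n^2}{3}+\tfrac{7k^2}{4}$, $\tfrac{2n^2}{5}-kn+\tfrac{11k^2}{2}$, $3kn-\tfrac{9k^2}{2}$ and the same threshold $\tfrac{(20+\sqrt{10})n}{130}$. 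Two small points to fix in a full write-up: your displayed expansion of $\Phi_1$ double-counts the cross term $-bt$ (the linear $b$-term should be $b(2m-k)$, not $b(2m-k-t)$, alongside $(k-b)t$), although your subsequent stationary-point computations $c^*=t-k$ and $b^*=(2m-k-t)/6$ are consistent with the correct formula; and for $k\ge n/5$ the one-line ``signs of $m-k$ and $2m-k$'' justification is too quick on $\left[\tfrac{n}{5},\tfrac{2n}{9}\right)$ where $2m-k>0$, so the reduction to $b=c=0$, $t=k$ there needs the concavity/boundary argument spelled out (the conclusion $3kn-\tfrac{9k^2}{2}$ is nonetheless correct).
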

\begin{proof}[Proof of Proposition~\ref{PROP:Phi-1-upper-bound}]
    We may assume that $\Phi_{1}(a_1, a_2, a_3, a_4, 0, 0, b,c,d)$ is maximized over the region $\left\{(w_1, \ldots, w_6, x,y,z) \in \Omega_{n,k} \colon w_5 = w_6 = 0\right\}$. 
    %
    By Proposition~\ref{PROP:Phi-a5-a6-0-E1-a}, it suffices to consider the cases where $a_2 = a_3 = a_4 = 0$, $a_1 = a_3 = a_4 = 0$, and $a_1 = a_2 = 0$. 
    The first two cases follow from Proposition~\ref{PROP:Phi1-inequality-A}~\ref{PROP:Phi1-inequality-A-1} and~\ref{PROP:Phi1-inequality-A-2}, respectively. So it suffices to consider the case where  $a_1 = a_2 = 0$. 

\medskip 

    \textbf{Case 1}: $b \ge k$ (Since $b \le \frac{n - 4k}{3}$ holds trivially, this case is possible only when $k \in \left[0, \frac{n}{7}\right]$).
 
    Note that 
    \begin{align}\label{equ:Phi-a3-a4-b-large}
        & \Phi_{1}(0, 0,a_3,a_4, 0, 0, b,c,d) \notag \\[0.5em]
        & = \Phi_{1}(0, 0,a_3,k-a_{3}, 0, 0, b,c,n - 4k -3b-2c) \notag \\[0.5em]
        & =  \frac{n^2+2 k n + 7 k^2}{2} + \frac{-b^2+2 b n-10 b k}{4} -\frac{\left(a_3-b\right)^2}{2} -\left(c - \frac{n - 5k -3b}{2}\right)^2  \notag \\[0.5em]
        & \le \frac{n^2+2 k n + 7 k^2}{2} + \frac{-b^2+2 b n-10 b k}{4} -\frac{\left(k-b\right)^2}{2}  -\left(c - \frac{n - 5k -3b}{2}\right)^2  \notag \\[0.5em]
        & = \frac{n^2 + 2kn + 5k^2 -3 b^2-6 b k+2 b n}{4} -\left(c - \frac{n - 5k -3b}{2}\right)^2.  
    \end{align}

    \medskip 

    \textbf{Case 1.1}: $b \in \left[\max\left\{k,\frac{n-5k}{3}\right\}, \frac{n-4k}{3}\right]$. 

    Since $b \ge \frac{n-5k}{3}$, we have $\frac{n - 5k -3b}{2} \le 0$. 
    It follows from~\eqref{equ:Phi-a3-a4-b-large} that 
    \begin{align*}
        \Phi_{1}(0, 0,a_3,a_4, 0, 0, b,c,d) 
        & \le \frac{n^2 + 2kn + 5k^2 -3 b^2-6 b k+2 b n}{4} -\left(0 - \frac{n - 5k -3b}{2}\right)^2 \\[0.5em]
        & = \frac{n^2}{3} +\frac{7 k^2}{4} -3 \left(b- \frac{2n-9k}{6}\right)^2.
    \end{align*}
    Suppose that $k \in \left[0, \frac{2n}{15}\right]$. Then $\frac{2n-9k}{3} \in \left[\max\left\{k,\frac{n-5k}{3}\right\}, \frac{n-4k}{3}\right]$, and hence, 
    \begin{align*}
        \Phi_{1}(0, 0,a_3,a_4, 0, 0, b,c,d) 
        \le \frac{n^2}{3} +\frac{7 k^2}{4}
        \le \Xi(n,k), 
    \end{align*}
    as desired. 

    Suppose that $k \in \left[\frac{2n}{15}, \frac{n}{7}\right]$. Then $\frac{2n-9k}{3} \le k$, and hence, 
    \begin{align*}
        \Phi_{1}(0, 0,a_3,a_4, 0, 0, b,c,d) 
        \le \frac{n^2}{3} +\frac{7 k^2}{4} -3 \left(k- \frac{2n-9k}{6}\right)^2
        = 5kn- 17k^2
        \le \Xi(n,k), 
    \end{align*}
    as desired. 

    \medskip 

    \textbf{Case 1.2}: $b \in \left[k,~\max\left\{k,\frac{n-5k}{3}\right\}\right]$.
    
    It follows from~\eqref{equ:Phi-a3-a4-b-large} that 
    \begin{align*}
        \Phi_{1}(0, 0,a_3,a_4, 0, 0, b,c,d) 
         & \le \frac{n^2+2 k n+5 k^2 -3 b^2-6 b k+2 b n}{4} \\[0.5em]
        & = \frac{n^2}{3} + 2k^2 - \frac{3}{4} \left(b - \frac{n-3k}{3}\right)^2 \\[0.5em]
        & = \frac{n^2}{3} + 2k^2 - \frac{3}{4} \left(\frac{n - 5k}{3} - \frac{n-3k}{3}\right)^2 
        = \frac{n^2}{3} + \frac{5k^2}{3} 
        \le \Xi(n,k),
    \end{align*}
    as desired. 

    \medskip 

    \textbf{Case 2}: $b \le k$.
    
    Similar to the proof of~\eqref{equ:Phi-a3-a4-b-large}, we have  
    \begin{align}\label{equ:Phi-a3-a4-b-small}
        & \Phi_{1}(0, 0,a_3,a_4, 0, 0, b,c,d) \notag \\[0.5em]
       & =  \frac{n^2+2 k n + 7 k^2}{2} + \frac{-b^2+2 b n-10 b k}{4} -\frac{\left(a_3-b\right)^2}{2} -\left(c - \frac{n - 5k -3b}{2}\right)^2  \notag \\[0.5em]
        & \le \frac{n^2+2 k n + 7 k^2}{2} + \frac{-b^2+2 b n-10 b k}{4} -\left(c - \frac{n - 5k -3b}{2}\right)^2.
    \end{align}

    \medskip 

    \textbf{Case 2.1}: $b \in \left[\frac{n-5k}{3},~\min\left\{k,\frac{n-4k}{3}\right\}\right]$ (this case is possible only when $k \in\left[\frac{n}{8}, \frac{n}{4}\right]$, since we need $\frac{n-5k}{3} \le k$). 

    Since $b \ge \frac{n-5k}{3}$, we have $\frac{n - 5k -3b}{2} \le 0$. 
    It follows from~\eqref{equ:Phi-a3-a4-b-small} that  
    \begin{align*}
         \Phi_{1}(0, 0,a_3,a_4, 0, 0, b,c,d) 
        & \le \frac{n^2+2 k n + 7 k^2}{2} + \frac{-b^2+2 b n-10 b k}{4} -\left(0 - \frac{n - 5k -3b}{2}\right)^2 \\[0.5em]
        & = \frac{2 n^2}{5}-k n+\frac{11 k^2}{2} -\frac{5}{2} \left(b-\frac{2(n-5 k)}{5}\right)^2. 
    \end{align*}

    Suppose that $k \in \left[\frac{n}{8},~\frac{(20+\sqrt{10}) n}{130}\right]$. Then 
    \begin{align*}
        \Phi_{1}(0, 0,a_3,a_4, 0, 0, b,c,d)
        \le \frac{2 n^2}{5}-k n+\frac{11 k^2}{2} 
        \le \Xi(n,k),
    \end{align*}
    as desired. 
    
    Suppose that $k \in \left[\frac{n}{5}, \frac{n}{4}\right]$. 
    Then $\frac{2(n-5 k)}{5} \le 0$, and hence, 
    \begin{align*}
        \Phi_{1}(0, 0,a_3,a_4, 0, 0, b,c,d)
        & \le \frac{2 n^2}{5}-k n+\frac{11 k^2}{2}
        -\frac{5}{2} \left(0-\frac{2(n-5 k)}{5}\right)^2 
        = 3 k n-\frac{9 k^2}{2}
        \le \Xi(n,k),
    \end{align*}
    as desired.


    \medskip 

    \textbf{Case 2.2}: $b \in \left[0,~\min\left\{k,\frac{n-5k}{3}\right\}\right]$ (this case is possible only when $k \in \left[0, \frac{n}{5}\right]$, since we need $0 \le \frac{n-5k}{3}$).

    By assumption, $k$ lies in the interval $\left[0,~\frac{(20+\sqrt{10}) n}{130}\right]$. 
    It follows from~\eqref{equ:Phi-a3-a4-b-small} that
    \begin{align*}
        \Phi_{1}(0, 0,a_3,a_4, 0, 0, b,c,d) 
        & \le \frac{n^2+2 k n + 7 k^2}{2} + \frac{-b^2+2 b n-10 b k}{4} \\[0.5em]
        & = \frac{n^2}{2} -2kn +8k^2 -\frac{1}{4} \left(b - (n-5k)\right)^2. 
    \end{align*}
    Suppose that $k \in \left[0, \frac{n}{8}\right]$. Then $k \le \frac{n-5k}{3} \le n-5k$, and hence, 
    \begin{align*}
        \Phi_{1}(0, 0,a_3,a_4, 0, 0, b,c,d) 
        & \le \frac{n^2}{2} -2kn +8k^2 -\frac{1}{4} \left(k - (n-5k)\right)^2  \\[0.5em]
        & = \frac{n^2}{4} + k n - k^2
        \le \Xi(n,k),
    \end{align*}
    as desired. 

    Suppose that $k \in \left[\frac{n}{8}, ~\frac{(20+\sqrt{10}) n}{130}\right]$. Then $\frac{n-5k}{3} \le k$, and hence, 
    \begin{align*}
        \Phi_{1}(0, 0,a_3,a_4, 0, 0, b,c,d) 
        & \le \frac{n^2}{2} -2kn +8k^2 -\frac{1}{4} \left(\frac{n-5k}{3} - (n-5k)\right)^2  \\[0.5em]
        & = \frac{1}{18} \left(7 n^2 -16 k n +94 k^2\right) 
        \le \Xi(n,k),
    \end{align*}
    as desired. 
    This completes the proof of Proposition~\ref{PROP:Phi-1-upper-bound}.
\end{proof}

\subsection{Inequalities for $\Phi_{2,i}$}\label{SUBSEC:inequality-Phi2}
In this subsection, we consider the optimization problem of maximizing $\Phi_{2,i}$ for $i \in [3]$ over the region $\Omega_{n,k}$. 

%
\begin{proposition}\label{PROP:Phi21-a6-sparse}
    Suppose that $k \in \left[0, \frac{(20+\sqrt{10}) n}{130}\right] \cup \left[\frac{n}{5},\frac{n}{4}\right]$. Then
    \begin{align*}
        \max\left\{ \Phi_{2,1}(a_1, \ldots, a_6, b,c,d) \colon (a_1, \ldots, a_6, b,c,d) \in \Omega_{n,k} \right\}
        \le \Xi(n,k). 
    \end{align*}
\end{proposition}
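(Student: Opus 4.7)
The plan is to reduce an arbitrary point of $\Omega_{n,k}$ to one of three explicit boundary configurations and then match each against $\Xi(n,k)$. A direct computation shows that under the substitution $(a_5,a_6)\mapsto(0,a_5+a_6)$ the value of $\Phi_{2,1}$ is unchanged: the gain $\tfrac{a_5^2}{2}+a_5a_6$ in $\Phi$ produced by the coefficient $8$ on $a_6^2$ (after $a_5$ is absorbed into $a_6$) is exactly cancelled by the loss in the $-\tfrac{a_6^2}{2}$ correction. Hence I may assume $a_5=0$. Since the quantity $\Phi_{2,1}-\Phi$ depends only on $a_6$ and $(b,c,d)$, the convexity-in-$(a_1,\ldots,a_4)$ reductions used in Proposition~\ref{PROP:Phi-a5-a6-0-E1-a} apply verbatim to $\Phi_{2,1}$, so it suffices to bound $\Phi_{2,1}$ in the three cases (i) $a_2=a_3=a_4=0$, $a_1+a_6=k$; (ii) $a_1=a_3=a_4=0$, $a_2+a_6=k$; (iii) $a_1=a_2=0$, $a_3+a_4+a_6=k$.

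For case (iii) I would hold $a_3,b,c,d$ fixed and view $\Phi_{2,1}$ as a function of $a_6$ with $a_4=(k-a_3)-a_6$. A short calculation gives coefficient $0$ on $a_6^2$ and linear coefficient $-d\le 0$, so the expression is non-increasing in $a_6$; taking $a_6=0$ reduces us to $\Phi_1(0,0,a_3,a_4,0,0,b,c,d)$, which is at most $\Xi(n,k)$ by Proposition~\ref{PROP:Phi-1-upper-bound} in the stated range of $k$ (this is precisely why the range of $k$ in the present proposition matches the range there).

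For cases (i) and (ii), substituting $a_1=k-a_6$ (respectively $a_2=k-a_6$) makes $\Phi_{2,1}$ a strictly convex function of $a_6$, with coefficient of $a_6^2$ equal to $1$ and $\tfrac12$ respectively, so the maximum is attained at $a_6\in\{0,k\}$. The value at $a_6=0$ is $\Phi_1(k,0,\dots,0,b,c,d)$ or $\Phi_1(0,k,0,\dots,b,c,d)$, bounded by Proposition~\ref{PROP:Phi1-inequality-A}. At $a_6=k$ both cases collapse to the same expression
$$
\Phi_{2,1}(0,\dots,0,k,b,c,d)=\tfrac{15k^2}{2}+(7b+5c+2d)k+\Psi(b,c,d).
$$
Substituting $d=n-4k-3b-2c$ yields a concave quadratic in $(b,c)$; the unconstrained critical point sits at $d=-k<0$ and is thus infeasible, so the maximum lies on the $d=0$ face. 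Optimising there produces the bound $n^2/3-kn/3+43k^2/12$ at $(b,c)=\bigl((n-5k)/3,\,k/2\bigr)$ when $k\le n/5$, and $n^2/4+kn/2+3k^2/2$ at $(b,c)=\bigl(0,\,(n-4k)/2\bigr)$ when $k\ge n/5$.

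The main obstacle is the final verification that both of these expressions are $\le \Xi(n,k)$ throughout $[0,n/4]$. This splits according to the five subintervals defining $\Xi(n,k)$, and in each one the required comparison reduces to a quadratic inequality in $k$ whose roots I can check lie outside (or bracket) the subinterval in question; in particular, for the subinterval $[n/6,(4-\sqrt{2})n/14]$ one needs $n^2-16kn+34k^2\le 0$ and for $[(11+\sqrt{7})n/57,n/4]$ one needs $3n^2-34kn+90k^2\ge 0$, both of which are elementary. Combining the three cases then gives $\Phi_{2,1}\le \Xi(n,k)$ throughout the stated range.
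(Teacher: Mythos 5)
Your argument is correct in substance, but it takes a genuinely different and far longer route than the paper. The paper's entire proof is the identity
\begin{align*}
\Phi_{2,1}(a_1,\ldots,a_6,b,c,d)=\Phi_{2,1}(a_1,a_2,a_3,a_4+a_5+a_6,0,0,b,c,d)-(a_5+a_6)d,
\end{align*}
i.e.\ merging $a_5$ and $a_6$ into $a_4$ (rather than into $a_6$, as you do) costs nothing and lands exactly on $\Phi_1$ with $a_5=a_6=0$, so Proposition~\ref{PROP:Phi-1-upper-bound} finishes immediately; the cross term $15a_4a_6$ versus $8a_6^2-\tfrac{a_6^2}{2}$ is what makes this work. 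By instead absorbing $a_5$ into $a_6$ you are forced to treat $a_6$ as a genuine variable, and convexity pushes you to the new endpoint $a_6=k$, which requires a fresh constrained optimisation of $\tfrac{15k^2}{2}+(7b+5c+2d)k+\Psi(b,c,d)$ and five comparisons against $\Xi(n,k)$ — work the paper's substitution avoids entirely. Two points in your write-up need repair. First, for $k>n/5$ the unconstrained critical point $(b^*,c^*)=\bigl(\tfrac{n-5k}{3},k\bigr)$ violates $b\ge 0$ as well as $d\ge 0$, so "the maximum lies on the $d=0$ face" does not follow from concavity alone; you must also check the $b=0$ face (where the maximum turns out to sit at the same vertex $b=d=0$), or argue via the gradient at that vertex. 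Second, the quadratic you quote for the subinterval $\bigl[\tfrac{n}{6},\tfrac{(4-\sqrt2)n}{14}\bigr]$ is wrong: comparing $\tfrac{n^2}{3}-\tfrac{kn}{3}+\tfrac{43k^2}{12}$ with $\tfrac{n^2}{4}+kn-k^2$ gives $n^2-16kn+55k^2\le 0$, not $n^2-16kn+34k^2\le 0$; the correct inequality does hold there (its roots are $n/11$ and $n/5$), so the conclusion survives, but as stated the check is not the one you would actually perform. With those repairs the argument goes through on the stated range of $k$.
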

\begin{proof}[Proof of Proposition~\ref{PROP:Phi21-a6-sparse}]
    Straightforward calculations show that  
    \begin{align*}
        \Phi_{2,1}(a_1, \ldots, a_6, b,c,d)
        & = \Phi_{2,1}(a_1, a_2, a_3, a_4+a_5+a_6, 0, 0, b,c,d) - (a_5+a_6)d \\[0.5em]
        & \le \Phi_{2,1}(a_1, a_2, a_3, a_4+a_5+a_6, 0, 0, b,c,d) \\[0.5em]
        & = \Phi_{1}(a_1, a_2, a_3, a_4+a_5+a_6, 0, 0, b,c,d) 
        \le \Xi(n,k),
    \end{align*}
    where the last inequality follows from Proposition~\ref{PROP:Phi-1-upper-bound}. 
\end{proof}

\begin{proposition}\label{PROP:Phi22-a6-small}
    Let $(a_1, \ldots, a_6, b,c,d) \in \Omega_{n,k}$. The following statements hold. 
    \begin{enumerate}[label=(\roman*)]
        \item\label{PROP:Phi22-a6-small-1} Suppose that $k \in \left[0, \frac{n}{6} \right]$ and $a_6 = k$. Then 
            \begin{align*}
                \Phi_{2,2}(a_1, \ldots, a_6, b,c,d)
                \le \Xi(n,k).
            \end{align*}
        \item\label{PROP:Phi22-a6-small-2} Suppose that $k \in \left[\frac{n}{6}, \frac{n}{4}\right]$ and $a_6 = \frac{n-4k}{2}$. Then 
            \begin{align*}
                \Phi_{2,2}\left(a_1, \ldots, a_6, b,c,d\right)
                \le \Xi(n,k).
            \end{align*}
    \end{enumerate}
\end{proposition}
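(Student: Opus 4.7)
The plan is to handle the two cases separately, each reducing to a constrained quadratic optimization in $(b,c)$ after eliminating $d = n-4k-3b-2c$ via the defining constraint of $\Omega_{n,k}$.

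For case (i), the constraint $a_6 = k = a_1 + \cdots + a_6$ forces $a_1 = \cdots = a_5 = 0$. Expanding $\Phi_{2,2} = \Phi + \Psi - (b+c)a_6/2$ and substituting $d = n-4k-3b-2c$, direct computation yields the concave quadratic
\begin{align*}
    \Phi_{2,2}(0,\ldots,0,k,b,c,d)
    = 2kn + \left(2n - \frac{15k}{2}\right)b + \left(n - \frac{7k}{2}\right)c - 3b^2 - 3bc - c^2.
\end{align*}
The unconstrained maximizer in $(b,c)$ forces $d < 0$ whenever $k > 0$, so the constrained maximum on the triangle $\{3b+2c \le n-4k,\ b,c \ge 0\}$ must lie on the boundary. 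On each of the three edges $b = 0$, $c = 0$, and $d = 0$, the problem reduces to a single-variable concave optimization which I would solve by completing the square. The resulting candidate maxima are then compared to the two pieces of $\Xi(n,k)$ relevant on $[0, n/6]$, namely $\frac{n^2}{3}+\frac{kn}{3}-\frac{k^2}{6}$ on $[0, 2n/13]$ and $\frac{n^2}{3}+2k^2$ on $[2n/13, n/6]$; the differences $\Xi(n,k) - \Phi_{2,2}$ are non-negative quadratics in $k$ on each subinterval, verifiable by direct substitution.

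For case (ii) with $a_6 = (n-4k)/2$, we have $a_1 + \cdots + a_5 = (6k-n)/2 \ge 0$ since $k \ge n/6$. I would first show that transferring mass from $a_5$ into $a_4$ (keeping $a_6$ fixed) changes $\Phi_{2,2}$ by exactly $dx \ge 0$ per unit mass $x$: the quadratic and cross-term contributions to $\Phi$ cancel because $a_6$ is fixed, leaving only the linear difference $(7b+5c+3d)x - (7b+5c+2d)x = dx$, and $\Psi - (b+c)a_6/2$ is independent of $a_1,\ldots,a_5$. Hence we may assume $a_5 = 0$. Applying Proposition~\ref{PROP:Phi-a5-a6-0-E1-a} with $(a_5,a_6)$ fixed then reduces the remaining problem in $(a_1, a_2, a_3, a_4)$ to three subcases: all mass in $a_1$, all mass in $a_2$, or mass split between $a_3$ and $a_4$. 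In each subcase, substituting the explicit values of the $a_i$'s and $d = n-4k-3b-2c$ yields a concave quadratic in $(b,c)$; its maximum on the feasible triangle is then compared to the appropriate piece of $\Xi(n,k)$ on $[n/6, n/4]$ from Table~\ref{Tab:extremal-range}.

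The main obstacle I anticipate is the third subcase of (ii), where the mass $(6k-n)/2$ is distributed between $a_3$ and $a_4$ (with two free parameters after scaling), combined with the intermediate regime $k \in \left[\frac{(4-\sqrt{2})n}{14}, \frac{(11+\sqrt{7})n}{57}\right]$ where $E_4$ is itself extremal and the slack between $\Phi_1$ and $\Xi(n,k)$ vanishes to leading order. The correction term $-(b+c)(n-4k)/4$ must precisely compensate in this tight regime; a further convexity reduction of the $(a_3, a_4)$ ratio (pushing all remaining mass to $a_3$ or to $a_4$), combined with careful boundary analysis of the $(b,c)$-simplex at the piecewise thresholds $k/n = \frac{1}{6}$, $\frac{4-\sqrt{2}}{14}$, $\frac{11+\sqrt{7}}{57}$, and $\frac{1}{4}$, will be the key technical step and the main source of case analysis.
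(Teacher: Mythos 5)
Your overall strategy (convexity reductions on the $a_i$'s followed by quadratic optimization in $(b,c)$) is the same as the paper's, and several pieces are sound: in part (i) the identity $a_1=\cdots=a_5=0$ and your explicit quadratic are correct (though the boundary analysis you plan is unnecessary — since the quadratic is concave, one can simply complete the square and discard the negative terms, obtaining the unconstrained maximum $\frac{4n^2-6kn+57k^2}{12}$ as an upper bound regardless of whether the maximizer is feasible, and this is already $\le \Xi(n,k)$ on $\left[0,\frac{n}{6}\right]$); in part (ii) your computation that moving mass from $a_5$ to $a_4$ changes $\Phi_{2,2}$ by exactly $+dx\ge 0$ is correct, and applying Proposition~\ref{PROP:Phi-a5-a6-0-E1-a} to $\Phi_{2,2}$ with $a_6$ fixed is legitimate because the correction term $-\frac{(b+c)a_6}{2}$ does not involve $a_1,\ldots,a_4$.

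The gap is in your final step for the third subcase of (ii). With $a_1=a_2=a_5=0$, $a_6=\frac{n-4k}{2}$ and $a_3+a_4=M:=3k-\frac{n}{2}$ fixed, the coefficient of $a_3^2$ after substituting $a_4=M-a_3$ is $7-15+\frac{15}{2}=-\frac{1}{2}$; indeed one computes
\begin{align*}
    \Phi_{2,2}\left(0,0,a_3,M-a_3,0,\tfrac{n-4k}{2},b,c,d\right)
    = -\frac{(a_3-b)^2}{2} + C(b,c,k,n),
\end{align*}
so the restriction to this segment is strictly \emph{concave} in $a_3$, not convex. Your proposed "further convexity reduction of the $(a_3,a_4)$ ratio (pushing all remaining mass to $a_3$ or to $a_4$)" therefore goes in the wrong direction: evaluating at the endpoints $a_3\in\{0,M\}$ gives a \emph{lower} bound on the maximum, and bounding those endpoint values by $\Xi(n,k)$ would not control the true maximum, which is attained at the interior point $a_3=\min\{b,M\}$. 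This interior case $a_3=b$ (when $b\le 3k-\frac{n}{2}$) is precisely where the paper's proof does its hardest work — it produces the quantity $\Phi_{2,2}^{\blacktriangle}$, whose comparison with $\Xi(n,k)$ is tight against $3kn-\frac{9k^2}{2}$ and $n^2-8kn+24k^2$ in the upper ranges of $k$ — so it cannot be skipped. To repair the argument, replace the endpoint reduction by maximizing over $a_3$ explicitly (i.e., set $a_3=\min\{b,3k-\frac{n}{2}\}$) and then carry out the $(b,c)$-analysis in the two resulting regimes $b\gtrless 3k-\frac{n}{2}$.
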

\begin{proof}[Proof of Proposition~\ref{PROP:Phi22-a6-small}]
    Recall that $\frac{n - 4k}{2} = \frac{3b+2c+d}{2}$, and 
    \begin{align*}
        \Phi_{2,2}(a_1, \ldots, a_6, b,c,d)
        = \Phi_{1}(a_1, \ldots, a_6, b,c,d) - \frac{(b+c)a_6}{2}.
    \end{align*}
    Suppose that $k \in \left[0, \frac{n}{6} \right]$ and $a_6 = k$. Then 
    \begin{align*}
        \Phi_{2,2}(a_1, \ldots, a_6, b,c,d)
        & = \Phi_{2,2}(0,\ldots,0,k, b,c,n-4k-3b-2c) \\[0.5em]
        & = \frac{4 n^2-6 k n + 57 k^2}{12} -\frac{3}{4}\left(b - \frac{2n-9k}{6}\right)^2 - \left(c- \frac{2n-7k-6b}{2}\right)^2 \\[0.5em]
        & \le \frac{4 n^2-6 k n + 57 k^2}{12}
        \le \Xi(n,k),
    \end{align*}
    which proves Proposition~\ref{PROP:Phi22-a6-small}~\ref{PROP:Phi22-a6-small-1}. 

    Next, we prove Proposition~\ref{PROP:Phi22-a6-small}~\ref{PROP:Phi22-a6-small-2}. 
    Suppose that $k \in \left[\frac{n}{6}, \frac{n}{4}\right]$ and $a_6 = \frac{n-4k}{2}$. 
    
\begin{claim}\label{CLAIM:Phi22-shift-b}
    We may assume that $a_1 = 0$ or $a_1 = k- a_6 = 3k - \frac{n}{2}$. 
\end{claim}
\begin{proof}[Proof of Claim~\ref{CLAIM:Phi22-shift-b}]
    Suppose that $0 < a_1 < k- a_6$. Then $a_2 + a_3 + a_4 + a_5 > 0$. 
    Let 
    \begin{align*}
        q_1(x)
        \coloneqq 
        \Phi_{2,2}\left(a_1+(a_2 + \cdots+a_5)x,(1-x)a_2,  \ldots, (1-x)a_5,a_6, b,c,d\right).
    \end{align*}
    Note that $q_{1}(x)$ is a quadratic polynomial in $x$, and straightforward calculations show that the coefficient of $x^2$ in $q_{1}(x)$ is 
    \begin{align*}
        \frac{a_2^2+a_3^2+2 \left(a_4+a_5\right)^2 + 2\left(a_3+a_4+a_5\right) a_2+4 a_3 \left(a_4+a_5\right)}{2}
        \ge 0. 
    \end{align*}
    Therefore, 
    \begin{align*}
        \Phi_{2,2}(a_1, \ldots, a_6, b,c,d)
        = q_{1}(0)
        \le \max\left\{q_{1}\left(\frac{- a_1}{a_2 + \cdots+a_5}\right),~q_{1}\left(1\right)\right\},
    \end{align*}
    which means that we may assume either $a_1 = 0$ or $a_1 = k- a_6 = 3k - \frac{n}{2}$.
\end{proof}

Suppose that $a_1 = k - a_6 = 3k-\frac{n}{2}$. 
Then $a_2 + \cdots+a_5 = k-(a_1+a_6) = 0$ and 
\begin{align*}
    \Phi_{2,2}(a_1, \ldots, a_6, b,c,d)
    & = \Phi_{2,2}\left(3k-\frac{n}{2},0, \ldots,0, \frac{n-4k}{2}, b,c,n-4k-3b-2c\right) \\[0.5em]
    & = \Phi_{2,2}\left(k,0, \ldots,0, b,c,n-4k-3b-2c\right) - \frac{(n-4k)(b+d)}{4} \\[0.5em]
    & \le \Phi_{2,2}\left(k,0, \ldots,0, b,c,n-4k-3b-2c\right) \\[0.5em]
    & = \Phi_{1}\left(k,0, \ldots,0, b,c,n-4k-3b-2c\right) 
    \le \Xi(n,k), 
\end{align*}
where the last inequality follows from Proposition~\ref{PROP:Phi1-inequality-A}~\ref{PROP:Phi1-inequality-A-1}. 

Therefore, it suffices to consider the case $a_1 =0$. 
\begin{claim}\label{CLAIM:Phi22-shift-c}
    We may assume that $a_2 = 0$ or $a_2 = k- a_6 = 3k - \frac{n}{2}$. 
\end{claim}
\begin{proof}[Proof of Claim~\ref{CLAIM:Phi22-shift-c}]
    The proof is similar to that of Claim~\ref{CLAIM:Phi22-shift-b}.
    Let 
    \begin{align*}
        q_2(x)
        \coloneqq 
        \Phi_{2,2}(a_1, a_2 + (a_3 + a_4 +a_5)x,(1-x)a_3,  (1-x)a_4, (1-x)a_5,a_6, b,c,d). 
    \end{align*}
    Note that $q_{2}(x)$ is a quadratic polynomial in $x$, and straightforward calculations show that the coefficient of $x^2$ in $q_{2}(x)$ is
    \begin{align*}
        \frac{\left(a_4+a_5\right) \left(2 a_3+a_4+a_5\right)}{2} \ge 0. 
    \end{align*}
    Therefore, 
    \begin{align*}
        \Phi_{2}(a_1, \ldots, a_6, b,c,d)
        = q_{2}(0)
        \le \max\left\{q_{2}\left(\frac{- a_2}{a_3 + a_4 +a_5}\right),~q_{2}\left(1\right)\right\},
    \end{align*}
    which means that we may assume either $a_2 = 0$ or $a_2 = k- a_6 = 3k - \frac{n}{2}$. 
    %
\end{proof}

Suppose that $a_2 = k-a_6 = 3k-\frac{n}{2}$. 
Then $a_1 + a_3 + a_4 + a_5 = k-(a_2+a_6) = 0$, and 
\begin{align*}
    \Phi_{2,2}(a_1, \ldots, a_6, b,c,d)
    & = \Phi_{2,2}\left(0, 3k-\frac{n}{2},0, 0,0, \frac{n-4k}{2}, b,c,n-4k-3b-2c\right) \\[0.5em]
    & = \Phi_{2,2}\left(0,k,0 \ldots,0, b,c,n-4k-3b-2c\right) - \frac{(n-4k)(c+d)}{4} \\[0.5em]
    & \le \Phi_{2,2}\left(0,k,0 \ldots,0, b,c,n-4k-3b-2c\right) \\[0.5em]
    & = \Phi_{1}\left(0,k,0 \ldots,0, b,c,n-4k-3b-2c\right)
    \le \Xi(n,k),
\end{align*}
where the last inequality follows from  Proposition~\ref{PROP:Phi1-inequality-A}~\ref{PROP:Phi1-inequality-A-2}. 

So it suffices to consider the case $a_2 = 0$ (recall that we are also assuming $a_1 = 0$). 
Note that, in this case, we have  
\begin{align}\label{equ:Phi2-a1-a2-0}
    \Phi_{2,2}(a_1, \ldots, a_6, b,c,d)
    & = \Phi_{2,2}\left(0,0,a_3, a_4, a_5, a_6, b,c,d\right) \notag \\[0.5em]
    & = \Phi_{2,2}\left(0,0,a_3, a_4+ a_5,0, a_6, b,c,d\right) -d a_5 \notag \\[0.5em]
    & \le  \Phi_{2,2}\left(0,0,a_3, a_4+ a_5,0, a_6, b,c,d\right) \notag \\[0.5em]
    & =  \Phi_{2,2}\left(0,0,a_3, 3k-\frac{n}{2}-a_3,0, \frac{n-4k}{2}, b,c,d\right) \notag \\[0.5em]
    & \le 
    \begin{cases}
        \Phi_{2,2}\left(0,0, 3k-\frac{n}{2}, 0,0, \frac{n-4k}{2}, b,c,d\right), &\quad\text{if}\quad b \ge 3k-\frac{n}{2}, \\[0.5em]
        \Phi_{2,2}\left(0,0,b, 3k-\frac{n}{2}-b,0, \frac{n-4k}{2}, b,c,d\right), &\quad\text{if}\quad b \le 3k-\frac{n}{2},
    \end{cases} 
\end{align}
where the last inequality follows from the fact that $a_3 \in \left[0, 3k -\frac{n}{2}\right]$, and 
\begin{align*}
    & \Phi_{2,2}\left(0,0,a_3, 3k-\frac{n}{2}-a_3,0, \frac{n-4k}{2}, b,c,d\right) \\[0.5em]
    & = -\frac{\left(a_3-b\right)^2}{2} + \frac{-20 b^2-24 b c-120 b k+26 b n-8 c^2-64 c k+14 c n-84 k^2+48 k n-3 n^2}{8}.
\end{align*}
By~\eqref{equ:Phi2-a1-a2-0}, it suffices to show that 
\begin{align*}
    \Xi(n,k)
    \ge 
    \begin{cases}
        \Phi_{2,2}\left(0,0, 3k-\frac{n}{2}, 0,0, \frac{n-4k}{2}, b,c,n-4k-3b-2c\right) =: \Phi_{2,2}^{\ast}, &\quad\text{if}\quad b \ge 3k-\frac{n}{2}, \\[0.5em]
        \Phi_{2,2}\left(0,0,b, 3k-\frac{n}{2}-b,0, \frac{n-4k}{2}, b,c,n-4k-3b-2c\right) =: \Phi_{2,2}^{\blacktriangle}, &\quad\text{if}\quad b \le 3k-\frac{n}{2}.
    \end{cases} 
\end{align*}

\medskip

\textbf{Case 1}: $b \ge 3k-\frac{n}{2}$ (this case is possible only when $k \in \left[0, \frac{5n}{26}\right]$, since $b \le \frac{n-4k}{3}$ holds trivially).

It follows from the assumption that $k$ lies in the interval $\left[\frac{n}{6}, \frac{5n}{26}\right]$. 
Straightforward calculations show that 
\begin{align}\label{equ:Phi22-a}
    \Phi_{2,2}^{\ast} 
    & = \frac{-48 b^2+8 b n+64 k^2+32 k n+17 n^2}{64} - \left(c- \frac{7n -32 k -12b}{8}\right)^2. 
\end{align}  
Recall that $c$ lies in the interval $\left[0, \frac{n - 4k - 3b}{2}\right]$. 
Since $\frac{n - 4k - 3b}{2} \le \frac{7n -32 k -12b}{8}$ iff $k \le \frac{3 n}{16}$, it follows from~\eqref{equ:Phi22-a} that   
\begin{align*}
    \Phi_{2,2}^{\ast} 
    &\le 
    \begin{cases}
        \Phi_{2,2}\left(0,0, 3k-\frac{n}{2}, 0,0, \frac{n-4k}{2}, b,\frac{n-4k-3b}{2},d\right) =: \Phi_{2,2}^{\ast\ast}, &\quad\text{if}\quad k \in \left[\frac{n}{6}, \frac{3 n}{16}\right], \\[0.5em]
        \Phi_{2,2}\left(0,0, 3k-\frac{n}{2}, 0,0, \frac{n-4k}{2}, b,\frac{7n -32 k -12b}{8},d\right) =: \Phi_{2,2}^{\ast\blacktriangle}, &\quad\text{if}\quad k \in \left[\frac{3 n}{16}, \frac{5n}{26} \right].
    \end{cases}
\end{align*}
Suppose that $k \in \left[\frac{n}{6}, \frac{3 n}{16}\right]$, then 
\begin{align*}
    \Phi_{2,2}^{\ast}
    \le \Phi_{2,2}^{\ast\ast}
    & = \frac{25 n^2}{192} +2 k n -3 k^2 -\frac{3}{4} \left(b-\frac{n}{12}\right)^2  
    \le \frac{25 n^2}{192} +2 k n -3 k^2
    \le \Xi(n,k),
\end{align*}
as desired.

Suppose that $k \in \left[\frac{3 n}{16}, \frac{5n}{26} \right]$. Then 
\begin{align*}
    \Phi_{2,2}^{\ast}
    \le \Phi_{2,2}^{\ast\blacktriangle}
     = \frac{13 n^2}{48} +\frac{k n}{2} +k^2 -\frac{3}{4} \left(b-\frac{n}{12}\right)^2 
    \le  \frac{13 n^2}{48} +\frac{k n}{2} +k^2 
    \le \Xi(n,k),
\end{align*}
as desired.

\medskip 

\textbf{Case 2}: $b \le 3k-\frac{n}{2}$.

Straightforward calculations show that 
\begin{align*}
    \Phi_{2,2}^{\blacktriangle}
    & = \frac{-16 b^2-192 b k+40 b n+352 k^2-64 k n+25 n^2}{64} - \left(c- \frac{7n-32k-12b}{8}\right)^2.
\end{align*}
Note that $\frac{7n-32k-12b}{8} \le 0$ if $k \ge \frac{7 n}{32}$, and $c  \le \frac{n-4k-3b}{2} \le \frac{7n-32k-12b}{8}$ if $k \ge \frac{3n}{16}$, so we have 
\begin{align*}
    \Phi_{2,2}^{\blacktriangle} 
    & \le 
    \begin{cases}
        \Phi_{2,2}\left(0,0,b, 3k-\frac{n}{2}-b,0, \frac{n-4k}{2}, b,c,d\right) =: \Phi_{2,2}^{\blacktriangle \ast}, &\quad\text{if}\quad k \in \left[\frac{7n}{32}, \frac{n}{4}\right], \\[0.5em]
        \Phi_{2,2}\left(0,0,b, 3k-\frac{n}{2}-b,0, \frac{n-4k}{2}, b,\frac{7n-32k-12b}{8},d\right) =: \Phi_{2,2}^{\blacktriangle \blacktriangle}, &\quad\text{if}\quad k \in \left[\frac{3n}{16},\frac{7n}{32}\right], \\[0.5em]
        \Phi_{2,2}\left(0,0,b, 3k-\frac{n}{2}-b,0, \frac{n-4k}{2}, b,\frac{n-4k-3b}{2},d\right) =: \Phi_{2,2}^{\blacktriangle \blacktriangledown}, &\quad\text{if}\quad k \in \left[\frac{n}{6}, \frac{3n}{16}\right].
    \end{cases}
\end{align*}
Suppose that $k \in \left[\frac{7n}{32}, \frac{n}{4}\right]$. 
Then $\frac{13n-60k}{20} \le 0$, and hence, 
\begin{align*}
    \Phi_{2,2}^{\blacktriangle \ast} 
    & = \frac{109 n^2}{160} - \frac{15 k n}{4} + 12 k^2 - \frac{5}{2}\left(b - \frac{13n-60k}{20}\right)^2 \\[0.5em]
    & \le \frac{109 n^2}{160} - \frac{15 k n}{4} + 12 k^2 - \frac{5}{2}\left(0 - \frac{13n-60k}{20}\right)^2 
    =  \frac{-3(n^2 - 16 kn + 28k^2)}{8}
    \le \Xi(n,k),
\end{align*}
as desired. 

Suppose that $k \in \left[\frac{3n}{16}, \frac{7n}{32}\right]$. 
Then 
\begin{align*}
    \Phi_{2,2}^{\blacktriangle \blacktriangle}  
    & = \frac{25 n^2-152 k n+464 k^2}{32}  - \frac{1}{4}\left(b - \frac{5n-24k}{4}\right)^2 \\[0.5em]
    & \le 
    \begin{cases}
        \frac{25 n^2}{64}-k n+\frac{11 k^2}{2}, &\quad\text{if}\quad k \in \left[\frac{5n}{24},\frac{7n}{32}\right], \\[0.5em]
        \frac{25 n^2-152 k n+464 k^2}{32}, &\quad\text{if}\quad k \in \left[\frac{3n}{16}, \frac{5n}{24}\right],
    \end{cases}
\end{align*}
which is smaller than $\Xi(n,k)$ for $k \in \left[\frac{3n}{16},\frac{7n}{32}\right]$.

Suppose that $k \in \left[\frac{n}{6}, \frac{3n}{16}\right]$. 
Then $b \le 3k - \frac{n}{2} \le \frac{5n - 24k}{4}$, and hence, 
\begin{align*}
    \Phi_{2,2}^{\blacktriangle \blacktriangledown} 
    & = \frac{41 n^2 - 208 kn + 672 k^2}{64}  - \frac{1}{4}\left(b - \frac{5n-24k}{4}\right)^2 \\[0.5em]
    & \le \frac{41 n^2 - 208 kn + 672 k^2}{64}  - \frac{1}{4}\left(3k-\frac{n}{2} - \frac{5n-24k}{4}\right)^2  \\[0.5em]
    & = \frac{-n^2+37 k n-78 k^2}{8}
    \le \Xi(n,k),  
\end{align*}
as desired. 
This completes the proof of Proposition~\ref{PROP:Phi22-a6-small}. 
\end{proof}
\begin{proposition}\label{PROP:Phi22-upper-bound}
    Let $(a_1, \ldots, a_6, b,c,d) \in \Omega_{n,k}$. 
    Suppose that $k \in \left[0, \frac{(20+\sqrt{30})n}{130}\right] \cup \left[\frac{n}{5}, \frac{n}{4}\right]$ and $a_6 \le \frac{n-4k}{2}$. Then 
    \begin{align*}
        \Phi_{2,2}(a_1, \ldots, a_6, b,c,d)
        \le \Xi(n,k). 
    \end{align*}
\end{proposition}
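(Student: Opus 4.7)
The plan is to reduce the maximization of $\Phi_{2,2}$ over the prescribed subregion of $\Omega_{n,k}$ to a small number of extreme configurations, each already handled by earlier results: the case $a_6 = 0$ (where $\Phi_{2,2} = \Phi_1$) is covered by Proposition~\ref{PROP:Phi-1-upper-bound} under the assumed range of $k$, while the cases $a_6 = k$ (feasible only when $k \le n/6$) and $a_6 = (n-4k)/2$ (feasible only when $k \ge n/6$) are covered by Proposition~\ref{PROP:Phi22-a6-small}. The reduction proceeds by two successive pair-convexity arguments.

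First, I would observe that the extra term $\Psi(b,c,d) - (b+c)a_6/2$ appearing in $\Phi_{2,2}$ (relative to $\Phi$) is independent of $a_1, a_2, a_3, a_4$. Consequently, the pair-convexity argument used in the proof of Proposition~\ref{PROP:Phi-a5-a6-0-E1-a} (which slides mass only among $\{a_1, a_2, a_3, a_4\}$ while keeping $a_5, a_6$ fixed) applies verbatim to $\Phi_{2,2}$. This reduces the problem to three subcases: $(a_2, a_3, a_4) = (0,0,0)$; or $(a_1, a_3, a_4) = (0,0,0)$; or $(a_1, a_2) = (0,0)$ with $(a_3, a_4)$ in a prescribed ratio.

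Second, for each $i \in [5]$, I would verify by direct computation that $\Phi_{2,2}$, viewed as a quadratic polynomial in $a_6$ with $a_i + a_6$ held fixed and all other variables unchanged, has nonnegative leading coefficient, namely $3/2, 1, 0, 1/2, 1/2$ for $i = 1, 2, 3, 4, 5$ respectively. These values arise as $c_i + c_6 - d_{i6}$, where $c_i$ is the coefficient of $a_i^2$ and $d_{i6}$ the coefficient of $a_i a_6$ in $\Phi$; the term $-(b+c)a_6/2$ in $\Phi_{2,2}$ contributes nothing to the leading coefficient. By Fact~\ref{FACT:convex-optimization}, the maximum on the interval $a_6 \in [0, \min\{a_i + a_6, (n-4k)/2\}]$ is attained at one of the two endpoints, so each such push either zeros out $a_6$, zeros out $a_i$, or saturates the upper constraint $a_6 = (n-4k)/2$. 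Iterating over $i \in [5]$, within each of the three subcases produced in the first step, eventually drives the configuration to a terminal state in which either $a_6 = 0$, $a_6 = (n-4k)/2$, or $a_1 = \cdots = a_5 = 0$ (forcing $a_6 = k$, which is feasible only when $k \le n/6$). Each terminal state is directly handled by the cited propositions.

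The main obstacle will be the third subcase from Proposition~\ref{PROP:Phi-a5-a6-0-E1-a}, in which $a_3$ and $a_4$ are coupled in a fixed ratio; one needs to check that subsequent $(a_3, a_6)$ and $(a_4, a_6)$ pushes, which may destroy that ratio, still yield valid upper bounds. This is not a genuine issue because Proposition~\ref{PROP:Phi-a5-a6-0-E1-a} only provides an inequality, so after its application we are free to re-optimize $(a_3, a_4)$ without respecting the original ratio. A secondary verification is that the iteration terminates in at most five rounds: each push either eliminates one of the five variables $a_1, \ldots, a_5$ or exits the loop by placing $a_6$ at a boundary value, so the process must stop at one of the three terminal configurations within a bounded number of steps.
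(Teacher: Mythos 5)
Your decomposition is genuinely different from the paper's. The paper does a single global slide (Claim~\ref{CLAIM:Phi2-shift-a}): it considers $q_1(x) = \Phi_{2,2}((1-x)a_1, \ldots, (1-x)a_5, a_6 + (a_1+\cdots+a_5)x, b,c,d)$, checks the coefficient of $x^2$ is nonnegative, and deduces that $a_6 = 0$ or $a_6 = a_6^\ast := \min\{(n-4k)/2, k\}$ — after which the three terminal cases ($a_6 = 0$, $a_6 = k$ when $k \le n/6$, $a_6 = (n-4k)/2$ when $k \ge n/6$) are dispatched exactly as you say. Your pre-processing via Proposition~\ref{PROP:Phi-a5-a6-0-E1-a} followed by per-variable $(a_i, a_6)$-slides is a more elaborate route; the per-pair leading coefficients you list ($3/2,\ 1,\ 0,\ 1/2,\ 1/2$) are indeed $c_i + c_6 - d_{i6}$ for the $\Phi$-quadratic-form coefficients and are nonnegative, so each pair-slide is a legitimate convexity step.

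However, there is a concrete gap in the handling of the terminal case $a_6 = 0$. Proposition~\ref{PROP:Phi-1-upper-bound} bounds only $\Phi_1(a_1, a_2, a_3, a_4, 0, 0, b, c, d)$, i.e. it requires $a_5 = a_6 = 0$, but your iteration can terminate at $a_6 = 0$ with $a_5 > 0$ (for instance, if the very first $(a_1, a_6)$-slide already sends $a_6$ to its lower endpoint). At that point $\Phi_{2,2} = \Phi_1$ but the cited proposition does not directly apply. The paper closes exactly this step via the identity
\begin{align*}
    \Phi_1(a_1, \ldots, a_4, a_5, 0, b, c, d)
    = \Phi_1(a_1, \ldots, a_4 + a_5, 0, 0, b, c, d) - d\, a_5
    \le \Phi_1(a_1, \ldots, a_4 + a_5, 0, 0, b, c, d),
\end{align*}
which merges $a_5$ into $a_4$ at no loss; equivalently, the $(a_4, a_5)$-slide with $a_4 + a_5$ fixed has zero quadratic term and linear slope $-d \le 0$, so $a_5$ can always be driven to $0$ first. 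You would need to add one of these observations (either the merge identity or a preliminary $(a_4, a_5)$-slide) before citing Proposition~\ref{PROP:Phi-1-upper-bound}; with that addition your plan is sound, though it remains noticeably heavier than the paper's single-slide argument.
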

\begin{proof}[Proof of Proposition~\ref{PROP:Phi22-upper-bound}]
    Let $a_6^{\ast} \coloneqq \min\left\{\frac{n-4k}{2}, k\right\}$. Note the $a_6$ lies in the interval $[0, a_{6}^{\ast}]$. 
    
    \begin{claim}\label{CLAIM:Phi2-shift-a}
        We may assume that $a_6 = 0$ or $a_6 = a_6^{\ast}$. 
    \end{claim}
    \begin{proof}[Proof of Claim~\ref{CLAIM:Phi2-shift-a}]
        Note that 
        \begin{align*}
            q_1(x) 
            \coloneqq \Phi_{2,2}((1-x)a_1, \ldots, (1-x)a_5, a_6 + (a_1 + \cdots + a_5)x, b,c,d).
        \end{align*}
        is quadratic in $x$, and straightforward calculations show that the coefficient of $x^2$ in $q_1(x)$ is 
        \begin{align*}
            \frac{3 a_1^2}{2}+\left(2 a_2+a_3+a_4+a_5\right) a_1+a_2^2+a_2 \left(a_3+a_4+a_5\right)+\frac{\left(a_4+a_5\right) \left(2 a_3+a_4+a_5\right)}{2} 
            \ge 0.
        \end{align*}
        So we have 
        \begin{align*}
            \Phi_{2,2}(a_1, \ldots, a_6, b,c,d)
            = q_1(0)
            \le \max\left\{q_1\left(\frac{- a_6}{a_1 + \cdots + a_5}\right),~q_1\left(\frac{a_6^{\ast} - a_6}{a_1 + \cdots + a_5}\right)\right\}, 
        \end{align*}
        meaning that we can assume either $a_6 = 0$ or $a_6 = a_{6}^{\ast}$. 
    \end{proof}

    Suppose that $a_6 = 0$. 
    Then straightforward calculations show that 
    \begin{align*}
        \Phi_{2,2}(a_1, \ldots, a_5,0, b,c,d)
        & = \Phi_{2,2}(a_1, \ldots,a_4+ a_5,0,0, b,c,d) - d a_5 \\[0.5em]
        & \le \Phi_{2,2}(a_1, \ldots,a_4+ a_5,0,0, b,c,d) \\[0.5em]
        & = \Phi_{1}(a_1, \ldots,a_4+ a_5,0,0, b,c,d)
        \le \Xi(n,k), 
    \end{align*}
    where the last inequality follows from Proposition~\ref{PROP:Phi-1-upper-bound}. 

    Suppose that $k \le \frac{n-4k}{2}$ (equivalently, $k \in \left[0, \frac{n}{6} \right]$) and $a_6 = k$.
    Then it follows from Proposition~\ref{PROP:Phi22-a6-small}~\ref{PROP:Phi22-a6-small-1} that 
    \begin{align*}
        \Phi_{2,2}(a_1, \ldots, a_6, b,c,d) 
        \le \Xi(n,k). 
    \end{align*}
    
    Suppose that $\frac{n-4k}{2} \le k$ (equivalently, $k \in\left[\frac{n}{6}, \frac{n}{4}\right]$) and $a_6 = \frac{n-4k}{2}$. 
    Then it follows from Proposition~\ref{PROP:Phi22-a6-small}~\ref{PROP:Phi22-a6-small-2} that 
    \begin{align*}
        \Phi_{2,2}(a_1, \ldots, a_6, b,c,d) 
        \le \Xi(n,k). 
    \end{align*}
    This completes the proof of Proposition~\ref{PROP:Phi22-upper-bound}.
\end{proof}

\begin{proposition}\label{PROP:Phi23-a6-large}
    Let $(a_1, \ldots, a_6, b,c,d) \in \Omega_{n,k}$. 
    Suppose that $k \in \left[\frac{n}{6}, \frac{n}{4} \right]$ and $a_6 \in \left[\frac{n-4k}{2}, k\right]$. 
    Then 
    \begin{align*}
        \Phi_{2,3}(a_1, \ldots, a_6, b,c,d) 
        \le \Xi(n,k). 
    \end{align*}
\end{proposition}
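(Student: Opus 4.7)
\textbf{Proof proposal for Proposition~\ref{PROP:Phi23-a6-large}.}

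The plan is to mimic the reduction strategy used in Propositions~\ref{PROP:Phi22-a6-small} and~\ref{PROP:Phi22-upper-bound}: first push $a_6$ to the boundary of the permitted interval via convex shifts, then carry out a case analysis. First I would observe that $\Phi_{2,3}$ differs from $\Phi$ only in a term linear in $a_6$ plus a term depending only on $(b,c,d)$, so its Hessian with respect to $(a_1,\ldots,a_6)$ equals that of $\Phi$. For each $i\in[5]$ the one-parameter family $q_i(t)\coloneqq \Phi_{2,3}(a_1,\ldots,a_i-t,\ldots,a_6+t,b,c,d)$ is a quadratic whose leading coefficient is $\partial^2_{a_ia_i}\Phi_{2,3}-2\partial^2_{a_ia_6}\Phi_{2,3}+\partial^2_{a_6a_6}\Phi_{2,3}$, a direct computation giving $3,2,0,1,1$ for $i=1,\ldots,5$; all non-negative, so by Fact~\ref{FACT:convex-optimization} the maximum is attained at $a_6\in\{(n-4k)/2,k\}$.

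In Case A ($a_6=k$), the constraint forces $a_1=\cdots=a_5=0$, whence $\Phi_{2,3}(0,\ldots,0,k,b,c,d)=8k^2+(3b+2c+d)^2=n^2-8kn+24k^2$. I would then verify $n^2-8kn+24k^2\le \Xi(n,k)$ separately on each of the three sub-intervals of $[n/6,n/4]$ defined by $\Xi$; the differences factor cleanly (e.g.\ as $\tfrac34(n^2/4+kn-k^2)-(n^2-8kn+24k^2)=\tfrac{3n^2-24kn+52k^2}{8}$, whose discriminant $576-624<0$ makes it automatically non-negative, and the analogous calculation against $3kn-9k^2/2$ yields the trivially non-negative quadratic $\tfrac{n^2-8kn+24k^2}{8}$).

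In Case B ($a_6=(n-4k)/2$) we have $\sum_{i=1}^5 a_i=(6k-n)/2\eqqcolon k'$. Next I would apply pairwise convex shifts within $(a_1,\ldots,a_5)$. Direct Hessian computation shows shifts $a_1\leftrightarrow a_j$ are convex for every $j\in\{2,\ldots,5\}$ (leading coefficient $1$ or $2$), $a_2\leftrightarrow a_j$ are convex for $j\in\{4,5\}$ and linear for $j=3$, $a_4\leftrightarrow a_5$ is linear, but (the subtle point) $a_3\leftrightarrow a_4$ and $a_3\leftrightarrow a_5$ are \emph{concave} with leading coefficient $-1$. I would bypass the concavity by routing mass through $a_1$: whenever both $a_3$ and $a_4$ are positive, the convex shift $a_3\to a_1$ (or $a_4\to a_1$) reduces the support; iterating these convex/linear shifts, every configuration can be reduced to one in which only one of $a_1,\ldots,a_5$ is nonzero (and equals $k'$). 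For each of the five resulting sub-cases $i\in[5]$ I substitute into $\Phi_{2,3}$, treat the result as a quadratic in $(b,c,d)$ subject to $3b+2c+d=n-4k$, and maximize by completing squares; for sub-case $i=1$ one finds $\Phi_{2,3}=-\tfrac{60k^2-32kn+n^2}{8}$ (independent of $(b,c,d)$), while sub-case $i=4$ peaks at $(b,c,d)=(0,0,n-4k)$ giving $\Phi_{2,3}=-\tfrac{21k^2}{2}+6kn-\tfrac{3n^2}{8}$. Each sub-case bound is then compared with the appropriate piece of $\Xi(n,k)$ by discriminant/factoring arguments; strikingly, for sub-case $i=4$ against $\Xi=3kn-9k^2/2$ the difference factors as $6(k-n/4)^2$, explaining the exact tightness at $k=n/4$.

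The main obstacle will be the concavity in the $a_3\leftrightarrow a_4$ (and $a_3\leftrightarrow a_5$) directions. Computing $\partial_{a_3}\Phi-\partial_{a_4}\Phi=-a_3+b$ shows that on the line $a_3+a_4=\mathrm{const}$ the interior maximum sits at $a_3=b$, so a naive ``reduce to a vertex'' argument does not go through directly; the routing through $a_1$ described above is essential to avoid this. A second delicate point is that the bound $\Xi(n,k)-\Phi_{2,3}$ becomes tight (in fact zero) at $k=n/4$ in the $E_5$ regime, so the discriminant computations leave no slack and require the exact coefficients above; in particular the zero discriminant in the comparison for sub-case $i=4$ against $3kn-9k^2/2$ is what forces the precise interval $[(4-\sqrt{2})n/14,(11+\sqrt{7})n/57]$ appearing in Table~\ref{Tab:extremal-range}.
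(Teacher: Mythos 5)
Your overall strategy matches the paper's: push $a_6$ to an endpoint of $\left[\frac{n-4k}{2},k\right]$ by convex shifts, dispose of $a_6=k$ by the identity $\Phi_{2,3}=8k^2+(3b+2c+d)^2=n^2-8kn+24k^2$, and then analyse $a_6=\frac{n-4k}{2}$. The first step and Case A are fine in substance (although your displayed verifications in Case A are garbled: for instance $(3kn-\tfrac92k^2)-(n^2-8kn+24k^2)=-n^2+11kn-\tfrac{57}{2}k^2$ is not ``trivially non-negative'' --- it has positive discriminant and is non-negative only on $\left[\tfrac{11-\sqrt7}{57}n,\tfrac{11+\sqrt7}{57}n\right]$, which happens to contain the relevant sub-interval; the inequality $n^2-8kn+24k^2\le\Xi(n,k)$ on $[n/6,n/4]$ is true but needs the interval-by-interval check done correctly).

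The genuine gap is in Case B. Your Hessian bookkeeping is correct --- the shifts $a_3\leftrightarrow a_4$ and $a_3\leftrightarrow a_5$ are concave --- but the proposed fix of ``routing mass through $a_1$'' does not repair this, and the conclusion you want (``every configuration can be reduced to one in which only one of $a_1,\ldots,a_5$ is nonzero'') is simply false. Once $a_1=a_2=a_5=0$ and $a_3+a_4=3k-\tfrac{n}{2}$, the restriction of $\Phi_{2,3}$ to that edge equals $-\tfrac12(a_3-b)^2+\mathrm{const}$, a strictly concave quadratic whose maximum sits at the \emph{interior} point $a_3=b$ whenever $0<b<3k-\tfrac{n}{2}$; at such a point the convex segment in the $a_1\leftrightarrow a_3$ direction has the current point as one of its endpoints, so the shift yields no further reduction. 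Consequently a vertex-only evaluation (your five sub-cases $a_i=k'$) underestimates the maximum by up to $\tfrac12\min\{b,k'-b\}^2$ and does not establish the bound. The paper's proof confronts this head-on: after reducing to $a_1=a_2=a_5=0$ it substitutes $a_3=\min\{b,3k-\tfrac n2\}$, which produces two sub-cases --- $\Phi_{2,3}^{\ast}$ for $b\ge 3k-\tfrac n2$ (where $a_4=0$) and $\Phi_{2,3}^{\blacktriangle}$ for $b\le 3k-\tfrac n2$ (where $a_3=b$ and $a_4=3k-\tfrac n2-b$ are \emph{both} positive) --- and then optimises each over $b,c$. Your write-up omits the $\Phi_{2,3}^{\blacktriangle}$ case entirely, so the argument as proposed does not close. (A side remark: the interval $\left[\tfrac{(4-\sqrt2)n}{14},\tfrac{(11+\sqrt7)n}{57}\right]$ is determined by where the values of $E_3,E_4,E_5$ cross, not by a zero discriminant in this proposition.)
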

\begin{proof}[Proof of Proposition~\ref{PROP:Phi23-a6-large}]
Recall that 
\begin{align*}
    \Phi_{2,3}(a_1, \ldots, a_6, b,c,d)
    = \Phi(a_1, \ldots, a_6, b,c,d) 
        - (7 b+5 c+2 d)a_6 + (3b+2c+d)^2. 
\end{align*}

%
\begin{claim}\label{CLAIM:Phi23-a6}
    We may assume that $a_6 = \frac{n-4k}{2}$ or $a_6 = k$.
\end{claim}
\begin{proof}[Proof of Claim~\ref{CLAIM:Phi23-a6}]
    Suppose that $\frac{n-4k}{2} < a_6 < k$. 
    Note that  
    \begin{align*}
        q_1(x)
        \coloneqq \Phi_{2,3}((1-x)a_1, \ldots, (1-x)a_5, a_6+(a_1 + \cdots+ a_5)x, b,c,d)
    \end{align*}
    is quadratic in $x$ and straightforward calculations show that the coefficient of $x^2$ is 
    \begin{align*}
        \frac{3 a_1^2}{2}+\left(2 a_2+a_3+a_4+a_5\right) a_1+a_2^2+a_2 \left(a_3+a_4+a_5\right)+\frac{\left(a_4+a_5\right) \left(2 a_3+a_4+a_5\right)}{2} 
        \ge 0.
    \end{align*}
    So 
    \begin{align*}
        \Phi_{2,3}(a_1, \ldots, a_6, b,c,d)
        = q_1(0)
        \le \max\left\{q_1\left(\frac{(n-4k)/2 - a_6}{a_1 + \cdots + a_5}\right),~q_1\left(\frac{k-a_6}{a_1 + \cdots + a_5}\right)\right\}, 
    \end{align*}
    meaning that we can assume either $a_6 = \frac{n-4k}{2}$ and $a_6 = k$.
\end{proof}

\medskip

\textbf{Case 1}: $a_6 = k$. 

We have $a_1 = \cdots = a_5 = k-a_6 = 0$ and 
\begin{align*}
    \Phi_{2,3}(a_1, \ldots, a_6, b,c,d)
     = \Phi_{2,2}(0, \ldots, 0, k, b,c,n-4k-3b-2c) 
     = n^2 - 8kn +24k^2,
\end{align*}
which is less or equal to $\Xi(n,k)$ for all $k \in \left[\frac{n}{6}, \frac{n}{4}\right]$. 

\medskip

\textbf{Case 2}:  $a_6 = \frac{n-4k}{2}$. 

Similar to Claim~\ref{CLAIM:Phi22-shift-b}, since the coefficient of $x^2$ in 
\begin{align*}
    \Phi_{2,3}\left(a_1 + (a_2 + a_3 + a_4 + a_5)x, (1-x)a_2, \ldots , (1-x)a_5, a_6, b,c,d\right)
\end{align*}
is 
\begin{align*}
    \frac{1}{2}\left(a_2^2+2 \left(a_3+a_4+a_5\right) a_2+a_3^2+2 \left(a_4+a_5\right){}^2+4 a_3 \left(a_4+a_5\right)\right)
    \ge 0, 
\end{align*}
we may assume either $a_1 = 0$ or $a_1 = k - a_6 = 3k-\frac{n}{2}$. 

Suppose that $a_1 = 3k-\frac{n}{2}$. Then $a_2 = \cdots = a_5 = k-(a_1+a_6) = 0$, and 
\begin{align*}
    \Phi_{2,3}(a_1, \ldots, a_6, b,c,d)
    & = \Phi_{2,3}\left(3k-\frac{n}{2},0, \ldots, 0, \frac{n-4k}{2}, b,c,n-4k-3b-2c\right) \\[0.5em]
    & = - \frac{n^2}{8} +4kn -\frac{15 k^2}{2},
\end{align*}
which is smaller than $\Xi(n,k)$ for $k \in \left[\frac{n}{6}, \frac{n}{4}\right]$. 

So we may assume that $a_1 = 0$. Similar to Claim~\ref{CLAIM:Phi22-shift-c},
since the coefficient of $x^2$ in 
\begin{align*}
    \Phi_{2,3}\left(a_1, a_2 + (a_3 + a_4 + a_5)x, (1-x)a_3, \ldots , (1-x)a_5, a_6, b,c,d\right)
\end{align*}
is 
\begin{align*}
    \frac{1}{2} \left(a_4+a_5\right) \left(2 a_3+a_4+a_5\right) 
    \ge 0, 
\end{align*}
we may assume that $a_2 = 0$ or $a_2 = k - a_6 = 3k-\frac{n}{2}$.

Suppose that $a_2 = 3k-\frac{n}{2}$. Then $a_1 = a_3 = a_4 = a_5 = k-(a_2 + a_6) = 0$, and 
\begin{align*}
    \Phi_{2,3}(a_1, \ldots, a_6, b,c,d)
    & = \Phi_{2,3}\left(0,3k-\frac{n}{2},0, 0, 0, \frac{n-4k}{2}, b,c,n-4k-3b-2c\right) \\[0.5em]
    & =  -\frac{n^2}{4}+5 k n -9 k^2 - \frac{b (6 k - n)}{2} 
    \le -\frac{n^2}{4}+5 k n -9 k^2, 
\end{align*}
which is smaller than $\Xi(n,k)$ for $k \in \left[\frac{n}{6}, \frac{n}{4}\right]$. 

So we may assume that $a_2 = 0$ (recall that we are also assuming $a_1 = 0$). Straightforward calculations show that 
\begin{align*}
    & \Phi_{2,3}\left(0,0,a_3,3k-\frac{n}{2} - a_3,0, \frac{n-4k}{2}, b,c,n-4k-3b-2c\right) \\[0.5em]
    & = -\frac{\left(a_3-b\right)^2}{2} + \frac{4 b^2-48 b k+8 b n-24 c k+4 c n-84 k^2+48 k n-3 n^2}{8}. 
\end{align*}
Since $a_3 \le k-a_6 = 3k-\frac{n}{2}$, it follows from the inequality above that  
\begin{align*}
    & \Phi_{2,3}\left(0,0,a_3, a_4, a_5, a_6, b,c,d\right) \\[0.5em]
    & = \Phi_{2,3}\left(0,0,a_3, a_4+a_5,0, a_6, b,c,d\right) -d a_5 \\[0.5em]
    & \le \Phi_{2,3}\left(0,0,a_3, a_4+a_5,0, a_6, b,c,d\right) \\[0.5em]
    & = \Phi_{2,3}\left(0,0,a_3,3k-\frac{n}{2} - a_3,0, \frac{n-4k}{2}, b,c,n-4k-3b-2c\right) \\[0.5em]
    & \le 
    \begin{cases}
        \Phi_{2,3}\left(0,0,3k-\frac{n}{2},0,0, \frac{n-4k}{2}, b,c,n-4k-3b-2c\right) =: \Phi_{2,3}^{\ast}, &\quad\text{if}\quad b \ge 3k- \frac{n}{2}, \\[0.5em]
        \Phi_{2,3}\left(0,0,b,3k-\frac{n}{2}-b,0, \frac{n-4k}{2}, b,c,n-4k-3b-2c\right) =: \Phi_{2,3}^{\blacktriangle}, &\quad\text{if}\quad b \le 3k- \frac{n}{2}. 
    \end{cases}
\end{align*}

Suppose that $b \ge 3k- \frac{n}{2}$. 
Then 
\begin{align*}
    \Phi_{2,3}^{\ast} 
    & = \frac{1}{2} \left(-n^2+15 k n -30 k^2- (b+c)(6k-n)\right) \\[0.5em]
    & \le \frac{1}{2} \left(-n^2+15 k n -30 k^2- \left(3k- \frac{n}{2}\right)(6k-n)\right) 
    = \frac{3}{4} \left(-n^2 + 14 k n- 32 k^2\right),
\end{align*}
which is smaller than $\Xi(n,k)$ for $k \in \left[\frac{n}{6}, \frac{n}{4}\right]$. 

Suppose that $b  \le 3k- \frac{n}{2} = \frac{6k-n}{2} \le 6k-n$. 
Then 
\begin{align*}
    \Phi_{2,3}^{\blacktriangle}
    & = \frac{-7n^2+96kn-228k^2}{8} +\frac{\left(b-(6k-n)\right)^2}{2} - \frac{c(6k-n)}{2} \\[0.5em]
    & \le \frac{-7n^2+96kn-228k^2}{8} +\frac{\left(0-(6k-n)\right)^2}{2}  
     = \frac{3}{8} \left(-n^2+16 k n-28 k^2\right),
\end{align*}
which is smaller than $\Xi(n,k)$ for $k \in \left[\frac{n}{6}, \frac{n}{4}\right]$. 

This completes the proof of Proposition~\ref{PROP:Phi23-a6-large}.
\end{proof}
%

\subsection{Inequalities for $\Phi_{3,i}$ and others}\label{SUBSEC:inequality-Phi3}
In this subsection, we consider the optimization problem for $\Phi_{3,i}$ and other functions defined in Section~\ref{SUBSEC:global-definitions}. 
\begin{proposition}\label{PROP:Phi61-a1a2a4a6-zero-upper-bound}
    Let $(a_1, \ldots, a_6, b,c,d) \in \Omega_{n,k}$. 
    Suppose that $k \in \left[\frac{n}{6}, \frac{n}{4}\right]$ and $a_1 = a_2 = a_4 = a_6 = 0$. 
    Then 
    \begin{align*}
        \Phi_{6,1}(a_1, \ldots, a_6, b,c,d)
        \le \Xi(n,k). 
    \end{align*}
\end{proposition}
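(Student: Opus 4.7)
The plan is to exploit the very restrictive hypotheses $a_1=a_2=a_4=a_6=0$ to reduce $\Phi_{6,1}$ to a function of three variables $(a_3,b,c)$ via the constraints $a_3+a_5=k$ and $d=n-4k-3b-2c$. First, because $a_4=0$ the term $-\tfrac{(b+c)a_4}{2}$ in $\Phi_{6,1}$ vanishes, and hence
\begin{align*}
\Phi_{6,1}(0,0,a_3,0,a_5,0,b,c,d)
= \Phi_1(0,0,a_3,0,a_5,0,b,c,d)+\frac{b^2+c^2}{4}.
\end{align*}
After substituting $a_5=k-a_3$, a direct expansion shows that this quantity is a concave quadratic in $a_3$ with leading coefficient $-\tfrac12$, so completing the square in $a_3$ gives the $a_3$-free upper bound
\begin{align*}
\Phi_{6,1}\le \frac{(b+d)^2}{2} + 7k^2 + (8b+5c+3d)k + \Psi(b,c,d) + \frac{b^2+c^2}{4},
\end{align*}
valid for every $a_3\in[0,k]$.

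Next, I would substitute $d=n-4k-3b-2c$ to rewrite the right-hand side as an explicit quadratic $F(b,c)$. The coefficient of $c^2$ turns out to be negative, so completing the square in $c$ reduces the problem to a univariate quadratic $G(b)$ on the interval $b\in\bigl[0,\tfrac{n-4k}{3}\bigr]$, modulo a nonpositive square in $c$. Completing the square in $b$ in $G$ then leaves a quantity depending only on $(n,k)$, which I would compare directly with $\Xi(n,k)$. Because $\Xi(n,k)$ is piecewise on $k\in[\tfrac{n}{6},\tfrac{n}{4}]$, this final comparison splits into the three subranges $[\tfrac{n}{6},\tfrac{(4-\sqrt{2})n}{14}]$, $[\tfrac{(4-\sqrt{2})n}{14},\tfrac{(11+\sqrt{7})n}{57}]$, and $[\tfrac{(11+\sqrt{7})n}{57},\tfrac{n}{4}]$, with the verification in the first and third subranges being routine (enough slack is provided by the $E_3$ and $E_5$ formulas, respectively).

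I expect the main obstacle to arise in the middle subrange $k\in\bigl[\tfrac{(20+\sqrt{10})n}{130},\tfrac{n}{5}\bigr]$ (contained inside the $E_4$ interval), which is precisely the range \emph{not} covered by the earlier Proposition~\ref{PROP:Phi-1-upper-bound}, and where $\Xi(n,k)=3kn-\tfrac{9k^2}{2}$. The additional $\tfrac{b^2+c^2}{4}$ term in $\Phi_{6,1}$ could in principle push the bound above $\Xi(n,k)$, so one must show that the concavity penalty $-\tfrac12(a_3-(b+d))^2$ together with the negative $c^2$-coefficient from the $(b+d)^2$ expansion absorb this extra contribution everywhere in the feasible region. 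In practice I would perform this verification by splitting further on whether $b+d\ge k$ or $b+d\le k$ (equivalently, whether the unconstrained $a_3$-optimum $b+d$ lies inside $[0,k]$), mirroring the boundary-vs-interior case analysis already used in the proofs of Propositions~\ref{PROP:Phi22-a6-small} and~\ref{PROP:Phi23-a6-large}.
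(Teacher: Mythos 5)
Your first reduction agrees with the paper: with $a_4=0$ the correction term $-\tfrac{(b+c)a_4}{2}$ vanishes, and after setting $a_5=k-a_3$ the expression is concave in $a_3$ with leading coefficient $-\tfrac12$, so one may drop the square $-\tfrac12\bigl(a_3-(b+d)\bigr)^2$. (Your completed-square constant is slightly off: the $a_3$-free part should be $\tfrac{(b+d)^2}{2}+\tfrac{15k^2}{2}+(7b+5c+2d)k+\Psi(b,c,d)+\tfrac{b^2+c^2}{4}$, not $\tfrac{(b+d)^2}{2}+7k^2+(8b+5c+3d)k+\cdots$; the two differ by $(b+d)k-\tfrac{k^2}{2}$, which is not of fixed sign. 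This is only an arithmetic slip, but your stated bound is not actually an upper bound when $b+d<k/2$.)

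The genuine gap is in the two-variable step. After substituting $d=n-4k-3b-2c$, the resulting quadratic is
\begin{align*}
q(b,c)=-\frac{3b^2}{4}+\frac{5c^2}{4}+bc+bk+5ck-cn+\frac{n^2}{2}-2kn+\frac{15k^2}{2},
\end{align*}
so the coefficient of $c^2$ is $+\tfrac54$, not negative; the form $-\tfrac34 b^2+bc+\tfrac54 c^2$ is indefinite. Consequently you cannot "complete the square in $c$ ... modulo a nonpositive square in $c$": dropping a \emph{nonnegative} square only gives a lower bound, and the maximum in $c$ sits on the boundary of the feasible interval, not at the vertex. The paper resolves this by moving along the direction $(b,c)\mapsto(b+\tfrac{x}{3},\,c-\tfrac{x}{2})$, which keeps $3b+2c$ (hence $d$) fixed; along this line $q$ is convex with leading coefficient $\tfrac{1}{16}$, so the maximum is attained at $b=0$ or $c=0$. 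On the face $b=0$ the function is convex in $c$ and one evaluates at the endpoint $c=\tfrac{n-4k}{2}$ (using $k\ge\tfrac n6$), getting $\tfrac{5n^2}{16}+\tfrac{5k^2}{2}$; on the face $c=0$ it is concave in $b$ but the vertex $\tfrac{2k}{3}$ lies outside $[0,\tfrac{n-4k}{3}]$, so one evaluates at $b=\tfrac{n-4k}{3}$, getting $\tfrac{5n^2}{12}-kn+\tfrac{29k^2}{6}$; both are then checked against $\Xi(n,k)$ uniformly on $[\tfrac n6,\tfrac n4]$. Your plan as written would stall at the saddle point, and the final comparisons (including the $E_4$ range you flag) are not carried out, so the argument needs to be repaired along these lines before it closes.
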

\begin{proof}[Proof of Proposition~\ref{PROP:Phi61-a1a2a4a6-zero-upper-bound}]
    Let $\Phi_{6,1}^{\ast} \coloneqq \Phi_{6,1}(0,0,a_3,0,k-a_3,0,b,c,n-4k-3b-2c)$. 
    Straightforward calculations show that 
    \begin{align*}
        \Phi_{6,1}^{\ast} 
        & =  -\frac{3 b^2}{4}+\frac{5 c^2}{4}+b c+b k+5 c k-c n + \frac{n^2}{2}-2 k n +\frac{15 k^2}{2}- \frac{\left(a_3+2 b+2 c+4 k-n\right)^2}{2} \\[0.5em]
        & \le -\frac{3 b^2}{4}+\frac{5 c^2}{4}+b c+b k+5 c k-c n + \frac{n^2}{2}-2 k n +\frac{15 k^2}{2} 
        =: q(b,c). 
    \end{align*}
    Let us consider $q_{1}(x) \coloneqq q\left(b+\frac{x}{3},c-\frac{x}{2}\right)$, where we use $\frac{x}{3}$ and $-\frac{x}{2}$ because we need $3\left(b+\frac{x}{3}\right) + 2 \left(c-\frac{x}{2}\right)$ to be a constant. Note that $q_1(x)$ is quadratic in $x$ and  simple calculations show that the coefficient of $x^2$ is $\frac{1}{16} > 0$. Thus we have 
    \begin{align*}
        q(b,c)
        = q_1(0)
        \le \max\left\{q_1\left(-3b\right),~q_1\left(2c\right)\right\},
    \end{align*}
    meaning that we can assume either $b = 0$ or $c=0$. 
    
    Suppose that $b = 0$. Then 
    \begin{align*}
        \Phi_{6,1}^{\ast}
        & = \frac{5}{4} \left(c-\frac{2 (n-5 k)}{5} \right)^2+\frac{5 k^2}{2}+\frac{3 n^2}{10} - \frac{\left(a_3+2 c+4 k-n\right)^2}{2}  \\[0.5em]
        & \le \frac{5}{4} \left(c-\frac{2 (n-5 k)}{5} \right)^2+\frac{5 k^2}{2}+\frac{3 n^2}{10}. 
    \end{align*}
    Notice that $c$ lies in the interval $\left[0, \frac{n-4k}{2}\right]$. 
    Since $k \ge \frac{n}{6}$, we have $\frac{n-4k}{2} \ge 2 \cdot \frac{2 (n-5 k)}{5}$. Thus, the inequality above continues as 
    \begin{align*}
        \Phi_{6,1}^{\ast}
        & \le \frac{5}{4} \left(\frac{n-4k}{2}-\frac{2 (n-5 k)}{5} \right)^2+\frac{5 k^2}{2}+\frac{3 n^2}{10}  
        = \frac{5 n^2}{16} + \frac{5 k^2}{2}, 
    \end{align*}
    which is smaller that $\Xi(n,k)$ for $k \in \left[\frac{n}{6}, \frac{n}{4}\right]$. 

    Suppose that $c = 0$. Then 
    \begin{align*}
        \Phi_{6,1}^{\ast}
        & = - \frac{3}{4}\left(b-\frac{2 k}{3}\right)^2+\frac{47 k^2}{6}-2 k n+\frac{n^2}{2} -\frac{\left(a_3+2 b+4 k-n\right)^2}{2} \\[0.5em]
        & \le - \frac{3}{4}\left(b-\frac{2 k}{3}\right)^2+\frac{47 k^2}{6}-2 k n+\frac{n^2}{2}. 
    \end{align*}
    Notice that $b$ lies in the interval $\left[0, \frac{n-4k}{3}\right]$. Since $k \ge \frac{n}{6}$, we have $\frac{n-4k}{3} \le \frac{2 k}{3}$. Thus, the inequality above continues as 
    \begin{align*}
        \Phi_{6,1}^{\ast}
        & \le - \frac{3}{4}\left(\frac{n-4k}{3}-\frac{2 k}{3}\right)^2+\frac{47 k^2}{6}-2 k n+\frac{n^2}{2} 
        = \frac{29 k^2}{6}-k n+\frac{5 n^2}{12}, 
    \end{align*}
    which is smaller that $\Xi(n,k)$ for $k \in \left[\frac{n}{6}, \frac{n}{4}\right]$. 

    This completes the proof of Proposition~\ref{PROP:Phi61-a1a2a4a6-zero-upper-bound}.
\end{proof}

\begin{proposition}\label{PROP:Phi62-a1a2a4a6-upper-bound}
    Let $(a_1, \ldots, a_6, b,c,d) \in \Omega_{n,k}$. 
    Suppose that $k \in \left[\frac{(19982-35 \sqrt{402})n}{108278}, \frac{n}{4}\right]$, $d \ge \frac{b+c}{2}$, and $a_1 = a_2 = a_4 = a_6 = 0$. 
    Then 
    \begin{align*}
        \Phi_{6,2}(a_1, \ldots , a_6, b,c,d)
        \le \Xi(n,k). 
    \end{align*}
\end{proposition}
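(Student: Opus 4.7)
The plan is to exploit the sparsity assumption $a_1 = a_2 = a_4 = a_6 = 0$ to reduce $\Phi_{6,2}$ to a simple polynomial in $a_3$, $a_5$, $b$, $c$, $d$, then successively eliminate variables. With $a_1 = a_2 = a_4 = a_6 = 0$ and $a_5 = k - a_3$, the definition of $\Phi_{6,2}$ (together with $\Psi$ and the simplification $-\tfrac{9ba_4}{10}-\tfrac{ca_4}{2}=0$) collapses to
\begin{align*}
\Phi_{6,2}
\;=\;
-\tfrac{1}{2}a_3^{2}+(b+d)a_3+\tfrac{15k^{2}}{2}+(7b+5c+2d)k+\tfrac{601b^{2}}{150}+4bc+2bd+\tfrac{3c^{2}}{2}+cd,
\end{align*}
after carrying out the standard expansion of the quadratic form $7a_3^{2}+15a_3 a_5+\tfrac{15}{2}a_5^{2}$ along the line $a_3+a_5=k$, which happens to equal $-\tfrac{1}{2}a_3^{2}+\tfrac{15k^{2}}{2}$. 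The coefficient of $a_3^{2}$ is negative, so the maximum in $a_3\in[0,k]$ is attained at $a_3=\min\{b+d,k\}$. This splits the proof into two regimes.

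In \textbf{Case A} ($b+d\le k$, so $a_3=b+d$), substituting $d=n-4k-3b-2c$ produces, after straightforward but tedious collection of terms,
\begin{align*}
\Phi_{6,2}\;\le\;\tfrac{n^{2}}{2}-2kn+\tfrac{15k^{2}}{2}+kb+(5k-n)c+\tfrac{b^{2}}{150}+\tfrac{3c^{2}}{2}+bc\,.
\end{align*}
The relevant feasibility region is $\{(b,c)\in\mathbb{R}_{\ge 0}^{2}\colon 7b+5c\le 2(n-4k),\ b+c\ge\tfrac{n-5k}{2}\}$; here the first inequality encodes $d\ge\tfrac{b+c}{2}$ (which is strictly tighter than $d\ge 0$) and the second is the Case~A branch condition $b+d\le k$. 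In \textbf{Case B} ($b+d\ge k$, so $a_3=k$, $a_5=0$), the same substitution yields
\begin{align*}
\Phi_{6,2}\;\le\; -5k^{2}+3kn-\tfrac{299b^{2}}{150}-\tfrac{c^{2}}{2}-3bc+(2n-9k)b+(n-5k)c,
\end{align*}
over the region $\{(b,c)\in\mathbb{R}_{\ge 0}^{2}\colon 7b+5c\le 2(n-4k),\ b+c\le\tfrac{n-5k}{2}\}$.

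In both cases the Hessian in $(b,c)$ is indefinite, which is the central analytic difficulty: one cannot simply invoke convexity in $(b,c)$ and reduce to a vertex of the polytope. Instead, the plan is to perform a second convexity reduction one variable at a time --- first shifting mass between $b$ and $c$ along the active linear equality (mirroring the $q_1(x)$ argument used in Proposition~\ref{PROP:Phi22-upper-bound} and Proposition~\ref{PROP:Phi61-a1a2a4a6-zero-upper-bound}), which forces either a boundary of $\{b=0\}$, $\{c=0\}$, $\{7b+5c=2(n-4k)\}$, or the Case A/B interface $b+c=\tfrac{n-5k}{2}$. This produces a finite list of candidate extremal points: the six vertices of the two polytopes together with the two branch interface points. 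At each such candidate I will substitute and verify, by routine algebra, the target bound
\begin{align*}
\Phi_{6,2}\;\le\;\Xi(n,k)
\;=\;\max\Bigl\{3kn-\tfrac{9k^{2}}{2},\ n^{2}-8kn+24k^{2}\Bigr\}
\end{align*}
which is all that is relevant for $k\in\bigl[\tfrac{(4-\sqrt{2})n}{14},\tfrac{n}{4}\bigr]$.

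The \textbf{main obstacle} is the vertex originating from the intersection of $7b+5c=2(n-4k)$ with $b+c=\tfrac{n-5k}{2}$ (namely $b=\tfrac{9k-n}{4}$, $c=\tfrac{3n-11k}{4}$), which sits simultaneously on both branch interfaces and produces the worst residual inequality. I expect this single vertex to yield, after expanding $\Phi_{6,2}-\Xi(n,k)$ as a quadratic in $k/n$, a discriminant condition whose positive root is exactly $\tfrac{(19982-35\sqrt{402})n}{108278}$; this explains and forces the precise numerical lower bound on $k$ in the statement. All remaining vertices should give looser bounds and be handled by straightforward quadratic-in-$k$ estimates analogous to those in the proofs of Propositions~\ref{PROP:Phi22-upper-bound} and~\ref{PROP:Phi23-a6-large}, so the critical computational step is the verification at this distinguished vertex.
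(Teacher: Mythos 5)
Your reduction to the two‐variable polynomial (your ``Case A'' expression, which I have checked equals the paper's $q(b,c)$) is correct, and the overall strategy of a directional convexity reduction followed by vertex checking is sound. But the proposal contains several concrete errors, and the paper's proof is shorter for a reason: by writing $\Phi_{6,2}^{\ast}=q(b,c)-\tfrac{1}{2}(a_3+2b+2c+4k-n)^2$, the square is dropped, giving $q(b,c)$ as an upper bound \emph{valid regardless of whether $b+d\le k$ or $b+d>k$}. Your ``Case A'' polynomial \emph{is} this unconstrained maximum over $a_3$, so it already dominates your ``Case~B'' value $a_3=k$; the Case~B analysis and the branch interface $b+c=\tfrac{n-5k}{2}$ are wasted effort, and the relevant domain is just the triangle $\{b,c\ge 0,\ 7b+5c\le 2(n-4k)\}$, whose vertices are $(0,0)$, $(0,\tfrac{2n-8k}{5})$, $(\tfrac{2n-8k}{7},0)$.

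The specific claims about the ``distinguished vertex'' are wrong on two counts. Arithmetically, the intersection of $7b+5c=2(n-4k)$ with $b+c=\tfrac{n-5k}{2}$ is $(b,c)=\left(\tfrac{9k-n}{4},\tfrac{3n-19k}{4}\right)$, not $\left(\tfrac{9k-n}{4},\tfrac{3n-11k}{4}\right)$ — your claimed point actually satisfies neither defining equation. More importantly, the threshold $\tfrac{(19982-35\sqrt{402})n}{108278}$ does not originate at any interface point; it comes from the ordinary triangle vertex $(b,c)=\left(\tfrac{2n-8k}{7},0\right)$, i.e.\ the $c=0$ case. Indeed the paper's bound at that vertex is $\tfrac{46789 k^2-12632kn+3679 n^2}{7350}$, and comparing this with $\Xi(n,k)=\tfrac{n^2}{4}+kn-k^2$ (the correct form for $k$ just below $\tfrac{(4-\sqrt{2})n}{14}$) gives precisely $108278k^2-39964kn+3683n^2\le 0$, whose roots are $\tfrac{19982\pm 35\sqrt{402}}{108278}n$. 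Your display $\Xi(n,k)=\max\bigl\{3kn-\tfrac{9k^2}{2},\,n^2-8kn+24k^2\bigr\}$ omits the $\tfrac{n^2}{4}+kn-k^2$ branch and therefore cannot recover the threshold — the proposition's range starts strictly below $\tfrac{(4-\sqrt{2})n}{14}$, where your formula for $\Xi$ is simply the wrong expression. Finally, the worry that the indefinite Hessian blocks a convexity argument is overstated: $q$ is convex along the $(1/3,-1/2)$ direction (coefficient $\tfrac{1129}{5400}>0$) and also convex when restricted to each of the three edges of the triangle (quadratic coefficients $\tfrac{3}{2}$, $\tfrac{1}{150}$, $\tfrac{232}{150}$ respectively), so the constrained maximum is at a vertex, which is exactly what both the paper and, implicitly, your plan rely on.
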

\begin{proof}[Proof of Proposition~\ref{PROP:Phi62-a1a2a4a6-upper-bound}]
    Combining $d = n-4k - 3b-2c$ with $d \ge \frac{b+c}{2}$, we obtain $7b+5c \le 2n-8k$, which implies that $b \le \frac{2n-8k}{7}$ and $c \le \frac{2n-8k}{5}$. 

    Let $\Phi_{6,2}^{\ast} \coloneqq \Phi_{6,2}(0,0,a_3,0,k-a_3,0,b,c,n-4k-3b-2c)$. 
    Straightforward calculations show that  
    \begin{align*}
        \Phi_{6,2}^{\ast} 
        & =  \frac{b^2}{150}+b c+b k+\frac{3 c^2}{2}+5 c k-c n+\frac{15 k^2}{2}-2 k n+\frac{n^2}{2} -\frac{\left(a_3+2 b+2 c+4 k-n\right)^2}{2}  \\[0.5em]
        & \le \frac{b^2}{150}+b c+b k+\frac{3 c^2}{2}+5 c k-c n+\frac{15 k^2}{2}-2 k n+\frac{n^2}{2}
        =: q(b,c). 
    \end{align*}
    Let us consider $q_1(x) \coloneqq q\left(b+\frac{x}{3},c-\frac{x}{2}\right)$. Note that $q_1(x)$ is quadratic in $x$ and  simple calculations show that the coefficient of $x^2$ is $\frac{1129}{5400} > 0$. Thus we have 
    \begin{align*}
        q(b,c)
        = q_1(0)
        \le \max\left\{q_1\left(-3b\right), q_1\left(2c\right)\right\},
    \end{align*}
    meaning that we can assume either $b = 0$ or $c=0$. 
    
    Suppose that $b = 0$. Then 
    \begin{align*}
        \Phi_{6,2}^{\ast}
        & = \frac{3}{2} \left(c-\frac{n-5 k}{3} \right)^2+\frac{\left(n^2-k n+10 k^2\right)}{3} - \frac{(a_3+2c+4k-n)^2}{2} \\[0.5em]
        & \le \frac{3}{2} \left(c-\frac{n-5 k}{3} \right)^2+\frac{\left(n^2-k n+10 k^2\right)}{3}.
    \end{align*}
    Recall that $c$ is contained in the interval $\left[0, \frac{2n-8k}{5}\right]$. 
    Since $k \ge \frac{(19982-35 \sqrt{402})n}{108278} \ge \frac{2n}{13}$, we have $\frac{2n-8k}{5}  \ge 2 \cdot \frac{n-5 k}{3}$. Thus, the inequality above continues as 
    \begin{align*}
        \Phi_{6,2}^{\ast}
        & \le \frac{3}{2} \left(\frac{2n-8k}{5}-\frac{n-5 k}{3} \right)^2+\frac{1}{3} \left(10 k^2-k n+n^2\right)   
        = \frac{1}{50} \left(167 k^2-16 k n+17 n^2\right), 
    \end{align*}
    which is smaller that $\Xi(n,k)$ for $k \in \left[\frac{(19982-35 \sqrt{402})n}{108278}, \frac{n}{4}\right]$. 

    Suppose that $c = 0$. Then 
    \begin{align*}
        \Phi_{6,2}^{\ast}
        & = \frac{n^2}{2} -2 k n -30 k^2 + \frac{(b+75 k)^2}{150} - \frac{(a_3+2b+4k-n)^2}{2} \\[0.5em]
         & \le  \frac{n^2}{2} -2 k n -30 k^2 + \frac{(b+75 k)^2}{150}.  
    \end{align*}
    Recall that $b$ lies in the interval $\left[0, \frac{2n-8k}{7}\right]$.  Thus, the inequality above continues as 
    \begin{align*}
        \Phi_{6,2}^{\ast}
        & \le \frac{1}{150} \left(\frac{2n-8k}{7}+75 k\right)^2-30 k^2-2 k n+\frac{n^2}{2}  
        = \frac{46789 k^2-12632 k n+3679 n^2}{7350}, 
    \end{align*}
    which is smaller that $\Xi(n,k)$ for $k \in \left[\frac{(19982-35 \sqrt{402})n}{108278}, \frac{n}{4}\right]$. 
\end{proof}

\begin{proposition}\label{PROP:Phi62-a1a2a5a6-zero-upper-bound}
    Let $(a_1, \ldots, a_6, b,c,d) \in \Omega_{n,k}$. 
    Suppose that $k \in \left[\frac{(24086-35 \sqrt{3282})n}{128054}, \frac{n}{4}\right]$,  $d \ge \frac{b+c}{2}$, and $a_1 = a_2 = a_5 = a_6 = 0$. 
    Then 
    \begin{align*}
        \Phi_{6,2}(a_1, \ldots , a_6, b,c,d)
        \le \Xi(n,k). 
    \end{align*}
\end{proposition}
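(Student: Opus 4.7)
The plan is to follow the template of the proof of Proposition~\ref{PROP:Phi62-a1a2a4a6-upper-bound}, adapted to the substitution $a_1=a_2=a_5=a_6=0$ (so now $a_3+a_4=k$ instead of $a_3+a_5=k$). First I would substitute $a_4=k-a_3$ and $d=n-4k-3b-2c$ into $\Phi_{6,2}$. A direct expansion shows that the $a_3$-quadratic part of $\Phi_{6,2}^{\ast}\coloneqq \Phi_{6,2}(0,0,a_3,k-a_3,0,0,b,c,n-4k-3b-2c)$ is
\begin{align*}
    -\frac{a_3^2}{2}+\left(\frac{19b}{10}+\frac{c}{2}\right)a_3
\end{align*}
plus an $a_3$-free remainder. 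Because the leading coefficient is negative, completing the square in $a_3$ yields the upper bound
\begin{align*}
    \Phi_{6,2}^{\ast} \le q(b,c) \coloneqq -\frac{113 b^2}{600}-\frac{41 bc}{20}-\frac{3 c^2}{8}+\left(2n-\frac{109k}{10}\right)b+\left(n-\frac{11k}{2}\right)c+3nk-\frac{9k^2}{2},
\end{align*}
valid over $\{(b,c)\colon b,c\ge 0,\ 7b+5c\le 2n-8k\}$, where the linear constraint encodes $d\ge(b+c)/2$.

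I would then reduce to the two cases $b=0$ or $c=0$ by considering $q_1(x)\coloneqq q(b+x/3,c-x/2)$, the direction that preserves $3b+2c$ (and hence $d$). A short computation gives the coefficient of $x^2$ in $q_1$ as $-\tfrac{113}{5400}+\tfrac{41}{120}-\tfrac{3}{32}=\tfrac{4903}{21600}>0$, so $q_1$ is convex and Fact~\ref{FACT:convex-optimization} yields $q(b,c)\le \max\{q_1(-3b),\,q_1(2c)\}$.

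The case $c=0$ will produce the critical constraint. Here $q(b,0)$ is a concave quadratic in $b$ with vertex $b^{\ast}=\tfrac{30(20n-109k)}{113}$; for $k\le \tfrac{1987n}{10993}$ this vertex lies strictly to the right of the feasibility endpoint $\tfrac{2n-8k}{7}$, so the maximum is attained at that endpoint, and equals $\tfrac{4087n^2-16736nk+56677k^2}{7350}$. Requiring this to be at most $\Xi(n,k)=\tfrac{n^2}{4}+kn-k^2$ (the active piece on $k\in[n/6,(4-\sqrt{2})n/14]$) reduces, after clearing denominators, to the inequality $128054\,t^2-48172\,t+4499\le 0$ with $t=k/n$. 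The discriminant factors as $16081800=4900\cdot 3282=(70\sqrt{3282})^2$, whose roots are $\tfrac{24086\pm 35\sqrt{3282}}{128054}$; the smaller root is exactly the lower endpoint of the $k$-range in the statement. The case $b=0$ will in turn be handled by analyzing the concave quadratic $q(0,c)$ with vertex $c^{\ast}=(4n-22k)/3$; depending on whether $c^{\ast}$ falls inside $[0,(2n-8k)/5]$, the maximum is either $\tfrac{2n^2-13nk+47k^2}{3}$ or $q(0,0)=3nk-9k^2/2$, each of which can then be compared against the appropriate piece of $\Xi(n,k)$ (the transition values $(16\pm\sqrt{6})/100$, $(4\pm\sqrt{2})/14$, $(11\pm\sqrt{7})/57$ that emerge from these comparisons all already appear in $\Xi$).

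The hard part will be the bookkeeping, not any single ingenious step: one must split the range $\left[\tfrac{(24086-35\sqrt{3282})n}{128054},\tfrac{n}{4}\right]$ into subintervals according to (i) whether the vertex of each univariate quadratic lies inside or outside the feasibility interval, and (ii) which piece of the piecewise function $\Xi(n,k)$ is active on that subinterval, and then verify $q\le\Xi(n,k)$ in every resulting subcase, being careful that the shifted endpoints from the convexity reduction remain within the ranges over which these one-variable bounds have been established.
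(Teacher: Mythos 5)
Your proposal follows essentially the same route as the paper's proof: complete the square in $a_3$, drop the negative square, use convexity of $q(b+x/3,c-x/2)$ (coefficient $\tfrac{4903}{21600}$) to reduce to $b=0$ or $c=0$, and then compare the resulting univariate concave quadratics (evaluated at their vertex, at $0$, or at the feasibility endpoint $\tfrac{2n-8k}{7}$ or $\tfrac{2n-8k}{5}$) against $\Xi(n,k)$; your identification of the $c=0$ endpoint case as producing the binding constraint $128054t^2-48172t+4499\le 0$, with smaller root $\tfrac{24086-35\sqrt{3282}}{128054}$, is exactly the paper's source for the lower endpoint of the $k$-range. One minor imprecision: the transition values $\tfrac{16\pm\sqrt{6}}{100}$ you quote come from comparing the $b=0$ vertex bound $\tfrac{2n^2-13kn+47k^2}{3}$ against $\Xi$ and do not themselves appear as breakpoints of $\Xi$, and the paper's $b=0$ split is actually at $k=\tfrac{2n}{11}$ (sign of the vertex) rather than at membership of $c^\ast$ in $[0,(2n-8k)/5]$ — but neither affects correctness.
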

\begin{proof}[Proof of Proposition~\ref{PROP:Phi62-a1a2a5a6-zero-upper-bound}]
    Combining $d = n-4k - 3b-2c$ with $d \ge \frac{b+c}{2}$, we obtain $7b+5c \le 2n-8k$, which implies that $b \le \frac{2n-8k}{7}$ and $c \le \frac{2n-8k}{5}$. 
    
    Let $\Phi_{6,2}^{\ast} \coloneqq \Phi_{6,2}(0,0,a_3,k-a_3,0,0,b,c,n-4k-3b-2c)$. Straightforward calculations show that 
    \begin{align*}
        \Phi_{6,2}^{\ast} 
        & =  3 k n-\frac{9 k^2}{2}-\frac{113 b^2}{600}-\frac{41 b c}{20}-\frac{109 b k}{10}+2 b n-\frac{3 c^2}{8}-\frac{11 c k}{2}+c n -\frac{1}{2} \left(a_3-\frac{19 b+5 c}{10}\right)^2 \\[0.5em]
        & \le 3 k n-\frac{9 k^2}{2} -\frac{113 b^2}{600}-\frac{41 b c}{20}-\frac{109 b k}{10}+2 b n-\frac{3 c^2}{8}-\frac{11 c k}{2}+c n
        =: q(b,c).
    \end{align*}
    Let us consider $q_1(x) \coloneqq q\left(b+\frac{x}{3},c-\frac{x}{2}\right)$. Note that $q_1(x)$ is quadratic in $x$ and  simple calculations show that the coefficient of $x^2$ is $\frac{4903}{21600} > 0$. Thus we have 
    \begin{align*}
        q(b,c)
        = q_1(0)
        \le \max\left\{q_1(-3b),~q_1(2c)\right\},
    \end{align*}
    meaning that we can assume either $b=0$ or $c=0$.

    Suppose that $b = 0$. Then 
    \begin{align*}
        \Phi_{6,2}^{\ast} 
        & = \frac{\left(2 n^2-13 k n+47 k^2\right)}{3} -\frac{3}{8} \left(c - \frac{2(2n - 11 k)}{3} \right)^2 - \frac{1}{2}\left(a_3-\frac{c}{2}\right)^2 \\[0.5em]
        & \le \frac{\left(47 k^2-13 k n+2 n^2\right)}{3} -\frac{3}{8} \left(c - \frac{2(2n - 11 k)}{3} \right)^2.
    \end{align*}

    If $k \in \left[\frac{(24086-35 \sqrt{3282})n}{128054}, \frac{2n}{11}\right]$, then we have 
    \begin{align*}
        \Phi_{6,2}^{\ast} 
        \le \frac{\left(47 k^2-13 k n+2 n^2\right)}{3} 
        \le \Xi(n,k), 
    \end{align*}
    as desired. 

    If $k \in  \left[\frac{2n}{11}, \frac{n}{4}\right]$, then $\frac{2(2n - 11 k)}{3} \le 0$, and hence,  
    \begin{align*}
        \Phi_{6,2}^{\ast} 
        \le \frac{\left(47 k^2-13 k n+2 n^2\right)}{3} -\frac{3}{8} \left(0 - \frac{2(2n - 11 k)}{3} \right)^2 
        =3 k n-\frac{9 k^2}{2} 
        \le \Xi(n,k), 
    \end{align*}
    as desired. 

    Suppose that $c = 0$. Then 
    \begin{align*}
        \Phi_{6,2}^{\ast} 
        & = \frac{3\left(200 n^2-2067 k n+5771 k^2\right)}{113}-\frac{113}{600} \left(b - \frac{30(20n-109k)}{113}\right)^2 - \frac{1}{2} \left(a_3-\frac{19 b}{10}\right)^2 \\[0.5em]
         & \le  \frac{3\left(200 n^2-2067 k n+5771 k^2\right)}{113}-\frac{113}{600} \left(b - \frac{30(20n-109k)}{113}\right)^2.
    \end{align*}
    Recall that $b$ lies in the interval $\left[0, \frac{2n-8k}{7}\right]$. 

    If $k \in \left[\frac{20n}{109}, \frac{n}{4}\right]$, then $\frac{30(20n-109k)}{113} \le 0$, and hence, 
    \begin{align*}
        \Phi_{6,2}^{\ast}
        & \le  \frac{3\left(200 n^2-2067 k n+5771 k^2\right)}{113}-\frac{113}{600} \left(0 - \frac{30(20n-109k)}{113}\right)^2 \\[0.5em]
        & = 3kn- \frac{9k^2}{2} 
        \le \Xi(n,k),
    \end{align*}
    as desired. 

    If $k \in \left[\frac{1987 n}{10993}, \frac{20n}{109}\right]$, then trivially, 
    \begin{align*}
        \Phi_{6,2}^{\ast}
        & \le  \frac{3\left(200 n^2-2067 k n+5771 k^2\right)}{113} 
        \le \Xi(n,k),
    \end{align*}
    as desired. 

    If $k \in \left[\frac{(24086-35 \sqrt{3282})n}{128054}, \frac{1987 n}{10993}\right]$, then $\frac{2n-8k}{7} \le \frac{30(20n-109k)}{113}$, and hence, 
    \begin{align*}
        \Phi_{6,2}^{\ast}
        & \le  \frac{3\left(200 n^2-2067 k n+5771 k^2\right)}{113}-\frac{113}{600} \left(\frac{2n-8k}{7} - \frac{30(20n-109k)}{113}\right)^2 \\[0.5em]
        & = \frac{4087 n^2 -16736 k n+ 56677 k^2}{7350}
        \le \Xi(n,k),
    \end{align*}
    as desired. 
    This completes the proof of Proposition~\ref{PROP:Phi62-a1a2a5a6-zero-upper-bound}.
\end{proof}

\begin{proposition}\label{PROP:Phi3i-a6-zero}
    Let $(a_1,\ldots,a_6,b,c,d) \in \Omega_{n,k}$. 
    Suppose that $k \in \left[\frac{(19982-35 \sqrt{402})n}{108278}, \frac{n}{4}\right]$   and $a_6 = 0$. Then 
    \begin{align*}
        \max\left\{\Phi_{3,1}(a_1,\ldots,a_6,b,c,d),~\Phi_{3,2}(a_1,\ldots,a_6,b,c,d)\right\}
        \le \Xi(n,k). 
    \end{align*}
\end{proposition}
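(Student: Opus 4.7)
When $a_6 = 0$, the definitions collapse $\Phi_{2,1} = \Phi_{2,2} = \Phi_1$, hence $\Phi_{3,1} = \Phi_{3,2} = \Phi_6 = \Phi_1 + \varphi(a_4,b,c,d)$, and Inequality~\eqref{equ:Phi6-vs-Phi61-Phi62} reduces the problem to showing $\Phi_{6,1}(a_1,\ldots,a_5,0,b,c,d) \le \Xi(n,k)$ when $d \le (b+c)/2$ and $\Phi_{6,2}(a_1,\ldots,a_5,0,b,c,d) \le \Xi(n,k)$ when $d \ge (b+c)/2$. Both $\Phi_{6,1}$ and $\Phi_{6,2}$ are quadratic polynomials in the nine variables, so the task becomes constrained quadratic optimization subject to $a_1+\cdots+a_5 = k$ and $4k + 3b + 2c + d = n$.

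The plan is a three-step shift cascade that preserves $a_6 = 0$ throughout. First, perform the scaled shift $(a_1, a_2, \ldots, a_5) \mapsto (a_1 + Sx, (1-x)a_2, \ldots, (1-x)a_5)$ with $S = a_2+a_3+a_4+a_5$; the resulting quadratic in $x$ has $x^2$-coefficient
\[
\tfrac{1}{2}(a_2^2 + a_3^2) + a_4^2 + a_5^2 + a_2(a_3+a_4+a_5) + 2a_3(a_4+a_5) + 2a_4 a_5 \;\ge\; 0,
\]
so by convexity we can push either to $a_1 = 0$ or to $a_2 = \cdots = a_5 = 0$; the latter reduces to $\Phi_6(k,0,\ldots,0,b,c,d) = \Phi_1(k,0,\ldots,0,b,c,d) \le \Xi(n,k)$ via Proposition~\ref{PROP:Phi1-inequality-A}\,\ref{PROP:Phi1-inequality-A-1}, using $\varphi(0,b,c,d) = 0$. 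Second, with $a_1 = 0$ in hand, the analogous shift absorbing $a_2$ into $(a_3, a_4, a_5)$ has a non-negative $x^2$-coefficient by the same bookkeeping, reducing to $a_2 = 0$ or the trivial case handled by Proposition~\ref{PROP:Phi1-inequality-A}\,\ref{PROP:Phi1-inequality-A-2}. Third, with $a_1 = a_2 = 0$, carry out the pairwise shift $(a_4, a_5) \mapsto (a_4+x, a_5-x)$ on $x \in [-a_4, a_5]$; the $x^2$-contribution from $\Phi_1$ is exactly $\tfrac{15}{2} + \tfrac{15}{2} - 15 = 0$, and the extra terms of $\Phi_{6,1}$ and $\Phi_{6,2}$ are linear in $a_4$, so this shift is affine in $x$ and its maximum is attained at an endpoint.

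The direction of the third shift is decided by its linear coefficient in $x$, which is $\partial_{a_4}\Phi - \partial_{a_5}\Phi = d$ plus the relevant modifier. For $\Phi_{6,1}$ this coefficient is $d - (b+c)/2$, which is $\le 0$ in the regime $d \le (b+c)/2$, so we push to $a_4 = 0$ and invoke Proposition~\ref{PROP:Phi61-a1a2a4a6-zero-upper-bound} (whose hypothesis $k \in [n/6, n/4]$ is implied by the stronger lower bound $k \ge \frac{(19982-35\sqrt{402})n}{108278}$). For $\Phi_{6,2}$ the coefficient is $d - 9b/10 - c/2$; depending on its sign we push to $a_4 = 0$ (applying Proposition~\ref{PROP:Phi62-a1a2a4a6-upper-bound}, whose $k$-hypothesis matches ours exactly) or to $a_5 = 0$ (applying Proposition~\ref{PROP:Phi62-a1a2a5a6-zero-upper-bound}, whose weaker threshold $\frac{(24086-35\sqrt{3282})n}{128054}$ is smaller than $\frac{(19982-35\sqrt{402})n}{108278}$ and hence covers our range).

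The main technical obstacle is the $x^2$-coefficient bookkeeping in the first two shifts. Naive pairwise shifts fail: for example, $(a_3, a_4) \mapsto (a_3+t, a_4-t)$ gives $t^2$-coefficient $7 + \tfrac{15}{2} - 15 = -\tfrac{1}{2} < 0$ in $\Phi_1$, which is concave in the wrong direction. The multi-variable shifts (S1)--(S2) absorb this negativity by scaling all non-target variables simultaneously, turning the $x^2$-coefficient into a manifest sum of non-negative monomials. The crucial balance $\tfrac{15}{2} + \tfrac{15}{2} - 15 = 0$ in (S3) is specific to the $(a_4, a_5)$ pair (both carry the leading coefficient $\tfrac{15}{2}$ in $\Phi$); this exact cancellation, together with the fact that the extra $\Phi_{6,i}$-terms are purely linear in $a_4$, is what permits the clean endpoint selection among the three boundary propositions.
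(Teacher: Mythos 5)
Your observation that $a_6 = 0$ collapses $\Phi_{2,1} = \Phi_{2,2} = \Phi_1$ and hence $\Phi_{3,1} = \Phi_{3,2} = \Phi_6$ is correct and a slightly cleaner entry point than the paper's appeal to Fact~\ref{FACT:Phi31-Phi32-vs-Phi6}. The third shift and the selection among the three boundary propositions also track the paper's Cases~1--3. The problem is in shifts~(S1) and~(S2), where you scale $a_4$ by $(1-x)$ along with $a_2, a_3, a_5$ and describe the result as ``a quadratic in $x$.'' This is only literally true for $\Phi_{6,1}$ and $\Phi_{6,2}$, which you committed to at the outset. But then at the $x=1$ endpoint of~(S1) --- $a_2 = \cdots = a_5 = 0$, $a_1 = k$, and in particular $a_4 = 0$ --- the value is $\Phi_{6,i}(k,0,\ldots,0,b,c,d) = \Phi_1(k,0,\ldots,0,b,c,d) + \tfrac{b^2+c^2}{4}$ (resp.\ $+\tfrac{151 b^2}{150}+\tfrac{c^2}{2}$), a strictly positive offset that Proposition~\ref{PROP:Phi1-inequality-A} says nothing about. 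Your closing remark ``$\Phi_6(k,0,\ldots) = \Phi_1(k,0,\ldots) \le \Xi$ via $\varphi(0,b,c,d)=0$'' silently switches back from $\Phi_{6,i}$ to $\Phi_6$, which is inconsistent with the opening reduction. One can check numerically that near the lower end of the stated $k$-interval the offset nearly exhausts the slack in Proposition~\ref{PROP:Phi1-inequality-A}, so this cannot be waved away --- and for $k$ just below the stated threshold the inequality $\Phi_{6,2}(k,0,\ldots) \le \Xi$ actually fails, confirming that the two sides of the swap are genuinely different.

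If instead you insist~(S1) and~(S2) act on $\Phi_6$ (so that $\varphi(0) = 0$ legitimately closes the endpoint), then the function of $x$ is not a single quadratic but $(\text{quadratic}) + \hat{\varphi}((1-x)a_4, b, c, d)$, a piecewise quadratic, and the ``$x^2$-coefficient $\ge 0$'' you compute is only the contribution of the $\Phi_1$-part. Convexity then requires a Proposition~\ref{PROP:a4-convex}-type argument: after substituting $y=(1-x)a_4$, the quadratic coefficient in $y$ becomes $c_1/a_4^2 - 15/2$ where $c_1$ is the coefficient you wrote, and $c_1 \ge a_4^2/2$ is exactly what Proposition~\ref{PROP:a4-convex} needs. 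Your $c_1$ does contain the monomial $a_4^2$, so this is true, but the proposal neither states the criterion nor verifies it; and for~(S3), the analogous quadratic coefficient in $\Phi_6$ is $0 - 15/2 < -7$, so Proposition~\ref{PROP:a4-convex} does not apply and the replacement $\Phi_6 \le \Phi_{6,i}$ \emph{must} happen before~(S3). The paper avoids all of this by keeping $a_4$ fixed in its two compression shifts (Claims~\ref{CLAIM:Phi3i-a6-zero-a1-a2a3a5} and~\ref{CLAIM:Phi6-a1-zero-a2-a3a5}, so $\varphi(a_4)$ is a constant along the shift direction and the restriction of $\Phi_6$ is a genuine quadratic), handling the $a_4$-shift separately via Proposition~\ref{PROP:a4-convex} in Claims~\ref{CLAIM:Phi3i-a6-zero-a1-a4} and~\ref{CLAIM:Phi6-a3a5-zero-a2-a4}, and only passing to $\Phi_{6,i}$ after reaching $a_1 = a_2 = 0$. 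You should adopt that structure, or else give a self-contained convexity argument for the piecewise-quadratic functions you actually encounter.
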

\begin{proof}[Proof of Proposition~\ref{PROP:Phi3i-a6-zero}]
    By Fact~\ref{FACT:Phi31-Phi32-vs-Phi6}, it suffices to show that 
    \begin{align*}
        \Phi_{6}(a_1,\ldots,a_6,b,c,d) 
        \le \Xi(n,k). 
    \end{align*}
    Since $a_6 = 0$, all terms involving $a_6$ in $\Phi_{6}(a_1,\ldots,a_6,b,c,d)$ vanish. 

    \begin{claim}\label{CLAIM:Phi3i-a6-zero-a1-a2a3a5}
        We may assume that $a_1 = 0$ or $a_2 = a_3 = a_5 = 0$.
    \end{claim}
    \begin{proof}[Proof of Claim~\ref{CLAIM:Phi3i-a6-zero-a1-a2a3a5}]
    Suppose that $a_1 > 0$ and $a_2 + a_3 + a_5 > 0$. 
    Notice that 
    \begin{align*}
        q_1(x)
        \coloneqq \Phi_{6}\left(a_1+(a_2 + a_3 + a_5)x, (1-x)a_2, (1-x)a_3, a_4, (1-x)a_5,a_6,b,c,d \right)
    \end{align*}
    is a quadratic polynomial in $x$, and straightforward calculations show that the coefficient of $x^2$ is 
    \begin{align*}
        \frac{1}{2}\left(a_2^2+a_3^2+2 a_5^2+2 \left(a_3+a_5\right) a_2+4 a_3 a_5\right) > 0.
    \end{align*}
    So it follows from Fact~\ref{FACT:convex-optimization} that 
    \begin{align*}
        \Phi_{6}(a_1,\ldots,a_6,b,c,d)
        = q_1(0)
        \le \max\left\{q_1\left(\frac{- a_1}{a_2+a_3+a_5}\right),~q_1(1)\right\},  
    \end{align*}
    meaning that we can assume either $a_1 = 0$ or $a_2 = a_3 = a_5 = 0$. 
    \end{proof}

\begin{claim}\label{CLAIM:Phi3i-a6-zero-a1-a4}
    Suppose that $a_2 = a_3 = a_5 = 0$.  Then we may assume that $a_1 = 0$ or $a_4 = 0$.
\end{claim}
\begin{proof}[Proof of Claim~\ref{CLAIM:Phi3i-a6-zero-a1-a4}]
    Let $\Sigma_{1,4} \coloneqq a_1 + a_4$ and consider 
    \begin{align*}
        q_{2}(x)
        & \coloneqq  \Phi_{6}(\Sigma_{1,4} - x,0,0,x,0,a_6,b,c,d) \\[0.5em]
        & = -\frac{13 x^2}{2} + (2a_6 - 9 b - 6 c)x + \hat{\varphi}(x, b,c,d) + A, 
    \end{align*}
    where $A$ is independent of $x$. 
    It follows from Proposition~\ref{PROP:a4-convex} that $q_2(x)$ is a convex function in $x$. So, by Fact~\ref{FACT:convex-optimization}, we have 
    \begin{align*}
        \Phi_{6}(a_1,0,0,a_4,0,a_6,b,c,d)
         = q_2(a_4)
        \le \max\left\{q_2(0),~q_{2}(\Sigma_{1,4})\right\}, 
    \end{align*}
    meaning that we can assume either $a_1 = 0$ or $a_4 = 0$. 
    \end{proof}
    
    Suppose that $a_2 = a_3 = a_5 = 0$ and $a_4 = 0$ (recall that we are also assuming $a_6 = 0$). Then 
    \begin{align*}
        \Phi_{6}(a_1,\ldots,a_6,b,c,d)
        & = \Phi_{6}(k,0,\ldots ,0,b,c,d) = \Phi_{1}(k,0,\ldots ,0,b,c,d)
        \le \Xi(n,k), 
    \end{align*}
    where the last inequality follows from Proposition~\ref{PROP:Phi1-inequality-A}.

    So it suffices to consider the case $a_1 = 0$. 

    \begin{claim}\label{CLAIM:Phi6-a1-zero-a2-a3a5}
        Suppose that $a_1 = 0$. Then we may assume that $a_2 = 0$ or $a_3 = a_5= 0$.
    \end{claim}
    \begin{proof}[Proof of Claim~\ref{CLAIM:Phi6-a1-zero-a2-a3a5}]
    Suppose that $a_2 > 0$ and $a_3 + a_5 > 0$. 
    Note that 
    \begin{align*}
        q_3(x)
        \coloneqq \Phi_{6}\left(0, a_2+(a_3 + a_5)x, (1-x)a_3, a_4, (1-x)a_5,a_6,b,c,d \right)
    \end{align*}
    is a quadratic polynomial in $x$, and straightforward calculations show that the coefficient of $x^2$ is 
    \begin{align*}
        \frac{1}{2}a_5 \left(2 a_3+a_5\right) \ge 0.
    \end{align*}
    So, it follows from Fact~\ref{FACT:convex-optimization} that 
    \begin{align*}
        \Phi_{6}(0,a_2,\ldots,a_6,b,c,d)
        = q_{3}(x)
        \le \max\left\{q_3\left(\frac{-a_2}{a_3+a_5}\right),~q_{3}(1)\right\},
    \end{align*}
    meaning that we can assume either $a_2 = 0$ or $a_3 = a_5 = 0$. 
    \end{proof}

    \begin{claim}\label{CLAIM:Phi6-a3a5-zero-a2-a4}
        Suppose that $a_3 = a_5 = 0$. Then we may assume that $a_2 = 0$ or $a_4 = 0$. 
    \end{claim}
    \begin{proof}[Proof of Claim~\ref{CLAIM:Phi6-a3a5-zero-a2-a4}]
    Suppose that $a_3 = a_5 = 0$ (recall that we are also assuming $a_1 = 0$). Then let $\Sigma_{2,4} \coloneqq a_2 + a_4$ and consider  
    \begin{align*}
        q_{4}(x)
        & \coloneqq  \Phi_{6}(0,\Sigma_{2,4} - x,0,x,0,a_6,b,c,d) \\[0.5em]
        & = - 7 x^2 + (a_6-8 b-6 c)x + \hat{\varphi}(a_4, b,c,d) + B, 
    \end{align*}
    where $B$ is independent of $x$. 
    It follows from Proposition~\ref{PROP:a4-convex} that $q_2(x)$ is a convex function in $x$. So, 
    \begin{align*}
        \Phi_{6}(0,a_2,0,a_4,0,a_6,b,c,d)
        = q_4(a_4)
        \le \max\left\{q_4(0),~q_4(\Sigma_{2,4})\right\}, 
    \end{align*}
     meaning that we can assume either $a_2 = 0$ or $a_4 = 0$.    
    \end{proof}
    
    Suppose that $a_1 = a_3 = a_4 = a_5 = 0$ (recall that we are also assuming $a_6 = 0$). Then 
    \begin{align*}
        \Phi_{6}(a_1,\ldots,a_6,b,c,d)
        & = \Phi_{6}(0,k,0,0,0,0,b,c,d) 
        = \Phi_{1}(0,k,0,0,0,0,b,c,d)
        \le \Xi(n,k), 
    \end{align*}
    where the last inequality follows from Proposition~\ref{PROP:Phi1-inequality-A}.

    So it remains to consider the case $a_1 = a_2 = 0$ (recall that we are also assuming $a_6 = 0$). 

    \medskip 

    \textbf{Case 1}: $d \le \frac{b+c}{2}$. 
    
    It follows from~\eqref{equ:Phi6-vs-Phi61-Phi62} and the assumption $d  \le \frac{b+c}{2}$ that  
    \begin{align*}
        \Phi_{6}(0,0,a_3,a_4,a_5,0,b,c,d) 
        & \le \Phi_{6,1}(0,0,a_3,a_4,a_5,0,b,c,d)\\[0.5em]
        & = \Phi_{6,1}(0,0,a_3,0,a_4+a_5,0,b,c,d) - \frac{a_4 (b+c-2 d)}{2} \\[0.5em]
        & \le \Phi_{6,1}(0,0,a_3,0,a_4+a_5,0,b,c,d)
        \le \Xi(n,k)
    \end{align*}
    where the last inequality follows from Proposition~\ref{PROP:Phi61-a1a2a4a6-zero-upper-bound}. 

    \medskip 

    \textbf{Case 2}: $d \in \left[\frac{b+c}{2}, \frac{9b+5c}{10}\right]$.

    It follows from~\eqref{equ:Phi6-vs-Phi61-Phi62} and the assumption $d \in \left[\frac{b+c}{2}, \frac{9b+5c}{10}\right]$ that 
    \begin{align*}
        Phi_{6}(0,0,a_3,a_4,a_5,0,b,c,d) 
        & \le \Phi_{6,2}(0,0,a_3,a_4,a_5,0,b,c,d)\\[0.5em]
        & = \Phi_{6,2}(0,0,a_3,0,a_4+a_5,0,b,c,d) - \frac{a_4 (9b+5c-10d)}{10} \\[0.5em]
        & \le \Phi_{6,2}(0,0,a_3,0,a_4+a_5,0,b,c,d)
        \le \Xi(n,k)
    \end{align*}
    where the last inequality follows from Proposition~\ref{PROP:Phi62-a1a2a4a6-upper-bound}.

    \medskip 

    \textbf{Case 3}: $d \ge  \frac{9b+5c}{10}$.

    It follows from~\eqref{equ:Phi6-vs-Phi61-Phi62} and the assumption $d \ge \frac{9b+5c}{10} \ge \frac{b+c}{2}$ that 
    \begin{align*}
        \Phi_{6}(0,0,a_3,a_4,a_5,0,b,c,d)
        & \le \Phi_{6,2}(0,0,a_3,a_4,a_5,0,b,c,d)\\[0.5em]
        & = \Phi_{6,2}(0,0,a_3,a_4+a_5,0,0,b,c,d) - \frac{a_4 (10d - 9b-5c)}{10} \\[0.5em]
        & \le \Phi_{6,2}(0,0,a_3,a_4+a_5,0,0,b,c,d)
        \le \Xi(n,k)
    \end{align*}
    where the last inequality follows from Proposition~\ref{PROP:Phi62-a1a2a5a6-zero-upper-bound}.

    This completes the proof of Proposition~\ref{PROP:Phi3i-a6-zero}. 
\end{proof}

\begin{proposition}\label{PROP:Phi31-upper-bound}
    Suppose that $k \in \left[\frac{(19982-35 \sqrt{402})n}{108278}, \frac{n}{4}\right]$. 
    Then 
    \begin{align*}
        \max\left\{\Phi_{3,1}(a_1,\ldots,a_6,b,c,d) \colon (a_1,\ldots,a_6,b,c,d) \in \Omega_{n,k} \right\} 
        \le \Xi(n,k). 
    \end{align*}
\end{proposition}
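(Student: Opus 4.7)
The plan is to reduce the statement to Proposition~\ref{PROP:Phi3i-a6-zero} via a single invariance observation: the shift $(a_5,a_6) \mapsto (a_5+a_6,\,0)$ preserves $\Phi_{3,1}$. This shift clearly preserves membership in $\Omega_{n,k}$ since it preserves the sum $a_1+\cdots+a_6=k$, and it makes the sixth coordinate vanish, which is exactly the hypothesis of Proposition~\ref{PROP:Phi3i-a6-zero}. So essentially the whole proof consists of checking one small algebraic identity.

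More precisely, first I would observe that $\Psi(b,c,d)$ and $\varphi(a_4,b,c,d)$ do not involve $a_5$ or $a_6$ and are therefore invariant under the shift. Consequently it suffices to check that $\Phi_1 - \tfrac{a_6^2}{2}$ is invariant, since $\Phi_{3,1} = \Phi + \Psi - \tfrac{a_6^2}{2} + \varphi$. The only terms of $\Phi$ that involve $a_5$ or $a_6$ are the quadratic block $\tfrac{15a_5^2}{2}+15 a_5 a_6 + 8 a_6^2$ together with the linear pieces $13a_1(a_5+a_6)$, $14a_2(a_5+a_6)$, $15a_3(a_5+a_6)$, $15a_4(a_5+a_6)$, and $(7b+5c+2d)(a_5+a_6)$; since each of the latter depends only on the sum $a_5+a_6$, they are automatically unchanged. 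A routine expansion then gives
\[
\frac{15(a_5+a_6)^2}{2} \;-\; \Bigl(\frac{15a_5^2}{2}+15a_5a_6+8a_6^2\Bigr)
\;=\; \frac{15a_6^2}{2}-8a_6^2 \;=\; -\frac{a_6^2}{2},
\]
so $\Phi$ decreases by exactly $\tfrac{a_6^2}{2}$ under the shift. This loss is precisely compensated by the fact that the correction term $-\tfrac{a_6^2}{2}$ itself becomes zero after the shift, so $\Phi_1 - \tfrac{a_6^2}{2}$, and hence $\Phi_{3,1}$, is unchanged.

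Having established invariance, the proof concludes as follows: given any $(a_1,\ldots,a_6,b,c,d) \in \Omega_{n,k}$ with $k$ in the stated interval, define $a_5^\ast := a_5 + a_6$ and $a_6^\ast := 0$. Then $(a_1,a_2,a_3,a_4,a_5^\ast,a_6^\ast,b,c,d) \in \Omega_{n,k}$, and by the invariance above
\[
\Phi_{3,1}(a_1,\ldots,a_6,b,c,d)
\;=\; \Phi_{3,1}(a_1,a_2,a_3,a_4,a_5^\ast,0,b,c,d).
\]
Proposition~\ref{PROP:Phi3i-a6-zero} applies to the right-hand side and bounds it by $\Xi(n,k)$, which finishes the proof.

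There is no genuine obstacle here; the work is entirely in the earlier propositions. The only point that requires care is verifying that the shift preserves $\Phi_{3,1}$ exactly (rather than merely not decreasing it), which follows from the specific coefficients $\tfrac{15}{2}$, $15$, $8$ in the $(a_5,a_6)$-block of $\Phi$ together with the $-\tfrac{a_6^2}{2}$ correction in the definition of $\Phi_{2,1}$. In this sense, the definition of $\Phi_{2,1}$ has been calibrated so that the $a_5$-$a_6$ boundary is exactly where the two bounds in Lemma~\ref{LEMMA:A6BCD-upper-bound-b} meet, which is why no further case analysis is needed.
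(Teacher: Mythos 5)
Your proof is correct and is essentially identical to the paper's: the paper likewise observes that $\Phi_{3,1}(a_1,\ldots,a_6,b,c,d)=\Phi_{3,1}(a_1,a_2,a_3,a_4,a_5+a_6,0,b,c,d)$ (stated there as a "straightforward calculation," which you have verified explicitly) and then invokes Proposition~\ref{PROP:Phi3i-a6-zero}. Your algebraic check of the $(a_5,a_6)$-block and of the $-\tfrac{a_6^2}{2}$ cancellation is accurate.
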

\begin{proof}[Proof of Proposition~\ref{PROP:Phi31-upper-bound}]
    Fix $(a_1,\ldots,a_6,b,c,d) \in \Omega_{n,k}$. 
    Recall that 
    \begin{align*}
         \Phi_{3,1}(a_1,\ldots,a_6,b,c,d) 
        & = \Phi_{2,1}(a_1,\ldots,a_6,b,c,d)  + \varphi(a_4, b, c, d) \\[0.5em]
        & = \Phi_{1}(a_1,\ldots,a_6,b,c,d) - \frac{a_6^2}{2} +\varphi(a_4, b, c, d).
    \end{align*}
    Straightforward calculations show that 
    \begin{align*}
        \Phi_{3,1}(a_1,\ldots,a_6,b,c,d)
        = \Phi_{3,1}(a_1,a_2,a_3,a_4,a_5+a_6,0,b,c,d).
    \end{align*}
    So it follows from Proposition~\ref{PROP:Phi3i-a6-zero} that 
    \begin{align*}
        \Phi_{3,1}(a_1,\ldots,a_6,b,c,d) 
        \le \Xi(n,k), 
    \end{align*}
    completing the proof of Proposition~\ref{PROP:Phi31-upper-bound}. 
\end{proof}
\begin{proposition}\label{PROP:Phi32-upper-bound}
    Let $(a_1, \ldots, a_6, b,c,d) \in \Omega_{n,k}$. 
    Suppose $k \in \left[\frac{(19982-35 \sqrt{402})n}{108278}, \frac{n}{4}\right]$ and $a_6 \le \frac{n-4k}{2}$. 
    Then 
    \begin{align*}
        \Phi_{3,2}(a_1, \ldots, a_6, b,c,d) 
        \le \Xi(n,k). 
    \end{align*}
\end{proposition}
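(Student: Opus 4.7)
The plan is to mirror the proof of Proposition~\ref{PROP:Phi22-upper-bound}: use convexity in $a_6$ to reduce to the boundary cases $a_6 \in \{0,\, a_6^{\ast}\}$ with $a_6^{\ast} := \min\{(n-4k)/2,\, k\}$, and then dispatch each boundary case using results already established. For the reduction, I would form
\[
q(x) \coloneqq \Phi_{3,2}\bigl((1-x)a_1, \ldots, (1-x)a_5,\; a_6 + (a_1 + \cdots + a_5)x,\; b, c, d\bigr),
\]
whose domain includes values of $x$ forcing $a_6$ to $0$ and forcing $a_6$ to $a_6^{\ast}$. The key is to verify that $q$ is convex in $x$, which would allow me to conclude $\Phi_{3,2}(a_1,\ldots,a_6,b,c,d) = q(0)$ is bounded above by the maximum of $q$ at the two endpoints.

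To see convexity, write $\Phi_{3,2} = \Phi_{2,2} + \varphi$ with $\varphi(a_4,b,c,d) = \hat\varphi(a_4,b,c,d) - \tfrac{15 a_4^2}{2} - (7b+5c)a_4$ and set $s = (1-x)a_4$. I would split $q(x)$ as
\[
q(x) = \Bigl[\Phi_{2,2}(\ldots) - \tfrac{s^2}{2}\Bigr] + \Bigl[{-7 s^2} + \hat\varphi(s,b,c,d)\Bigr] - (7b+5c)s.
\]
The first bracket is a quadratic in $x$ whose $x^2$-coefficient equals $C_{2,2} - \tfrac{a_4^2}{2}$, where $C_{2,2}$ is the non-negative quantity computed in Claim~\ref{CLAIM:Phi2-shift-a}; direct expansion shows
\[
C_{2,2} - \tfrac{a_4^2}{2} = \tfrac{3a_1^2}{2} + (2a_2+a_3+a_4+a_5)a_1 + a_2^2 + a_2(a_3+a_4+a_5) + a_3 a_4 + a_3 a_5 + a_4 a_5 + \tfrac{a_5^2}{2} \ge 0,
\]
since every summand is a product of non-negative variables. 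The second bracket is convex in $s$ by Proposition~\ref{PROP:a4-convex} (applied with $\gamma_1 = -7$, $\gamma_2 = \gamma_3 = 0$), hence convex in $x$ via the linear substitution. The third term is linear. Altogether $q(x)$ is convex.

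Once the reduction is in hand, the boundary case $a_6 = 0$ is handled directly by Proposition~\ref{PROP:Phi3i-a6-zero}, whose range of validity exactly matches the hypothesis here. For the boundary case $a_6 = a_6^{\ast}$, the remark after Proposition~\ref{PROP:g-alpha-h-alpha-upper-bounds} gives $\varphi \le 0$, so $\Phi_{3,2} \le \Phi_{2,2}$, and Proposition~\ref{PROP:Phi22-a6-small} finishes: its part~\ref{PROP:Phi22-a6-small-1} handles $k \in \bigl[\tfrac{(19982-35\sqrt{402})n}{108278},\, \tfrac{n}{6}\bigr]$ (where $a_6^{\ast} = k$), and its part~\ref{PROP:Phi22-a6-small-2} handles $k \in \bigl[\tfrac{n}{6},\, \tfrac{n}{4}\bigr]$ (where $a_6^{\ast} = (n-4k)/2$). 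The main obstacle is the convexity step: the concave $-\tfrac{15 s^2}{2}$ piece introduced by $\varphi$ would naively destroy convexity, and the delicate point is recognizing that the growth rate of $\hat\varphi$ (at least $14s^2$ to second order by Proposition~\ref{PROP:a4-convex}) cancels it up to a residue that is absorbed into the already-non-negative $C_{2,2}$; the fact that $\hat\varphi$ is only piecewise quadratic (with a case split at $d = (b+c)/2$) poses no difficulty since both pieces are separately convex.
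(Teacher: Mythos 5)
Your proof is correct, and it follows the same overall architecture as the paper's (convexity reduction in $a_6$ to the two boundary values, then Proposition~\ref{PROP:Phi3i-a6-zero} for $a_6=0$ and $\varphi\le 0$ plus Proposition~\ref{PROP:Phi22-a6-small} for $a_6$ maximal), but the execution of the reduction step is genuinely different. The paper freezes $a_4$ in the first shift, scaling only $a_1,a_2,a_3,a_5$ into $a_6$; this keeps the $\varphi(a_4,b,c,d)$ term constant, so the convexity check in Claim~\ref{CLAIM:Phi32-a6} is a pure quadratic computation, but the shift can only push $a_6$ up to $k-a_4$, forcing a second convexity argument (Claim~\ref{CLAIM:Phi32-a6-2}, via Proposition~\ref{PROP:a4-convex} applied to $a_4$) to finish the reduction to $a_6=\frac{n-4k}{2}$. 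You instead scale all of $a_1,\dots,a_5$ at once and reach $a_6^{\ast}=\min\{\frac{n-4k}{2},k\}$ in one step, at the cost of a more delicate convexity verification; your decomposition --- absorbing $-\frac{s^2}{2}$ into the $\Phi_{2,2}$ quadratic (whose $x^2$-coefficient $C_{2,2}-\frac{a_4^2}{2}$ you correctly expand as a sum of non-negative terms) and packaging $-7s^2+\hat\varphi(s,b,c,d)$ as the convex function of Proposition~\ref{PROP:a4-convex} composed with an affine map --- is valid, and it buys you a single-pass reduction that makes the paper's Claim~\ref{CLAIM:Phi32-a6-2} unnecessary. One harmless inaccuracy: since $\frac{19982-35\sqrt{402}}{108278}\approx 0.178 > \frac{1}{6}$, the hypothesis forces $k>\frac{n}{6}$, so $a_6^{\ast}=\frac{n-4k}{2}$ always and the interval $\bigl[\frac{(19982-35\sqrt{402})n}{108278},\frac{n}{6}\bigr]$ on which you invoke Proposition~\ref{PROP:Phi22-a6-small}~\ref{PROP:Phi22-a6-small-1} is empty; only part~\ref{PROP:Phi22-a6-small-2} is ever used (the paper makes this explicit).
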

\begin{proof}[Proof of Proposition~\ref{PROP:Phi32-upper-bound}]
    Recall that 
    \begin{align*}
        \Phi_{3,2}(a_1,\ldots,a_6,b,c,d)
        & = \Phi_{2,2}(a_1,\ldots,a_6,b,c,d)  + \varphi(a_4, b, c, d) \\[0.5em]
        & = \Phi_{1}(a_1,\ldots,a_6,b,c,d) - \frac{(b+c)a_6}{2} + \varphi(a_4, b, c, d).
    \end{align*}
    Let $a_6^{\ast} \coloneqq \min\left\{\frac{n-4k}{2}, k-a_4\right\}$, noting that $a_6$ lies in the interval $[0, a_{6}^{\ast}]$.

    \begin{claim}\label{CLAIM:Phi32-a6}
        We may assume that $a_6 = 0$ or $a_6 = a_6^{\ast}$.
    \end{claim}
    \begin{proof}[Proof of Claim~\ref{CLAIM:Phi32-a6}]
        Suppose that $0 < a_6 < a_6^{\ast}$. 
        Then $a_1+a_2+a_3+a_5 >0$. 
        Notice that 
        \begin{align*}
            q_1(x)
            \coloneqq 
            \Phi_{3,2}((1-x)a_1,(1-x)a_2,(1-x)a_3,a_4,(1-x)a_5,a_6+(a_1 + a_2 + a_3 + a_5)x,b,c,d)
        \end{align*}
        is a quadratic polynomial in $x$, and straightforward calculations show that the coefficient of $x^2$ is 
        \begin{align*}
            \frac{3 a_1^2}{2}+\left(2 a_2+a_3+a_5\right) a_1+a_2^2+a_2 \left(a_3+a_5\right)+\frac{1}{2} a_5 \left(2 a_3+a_5\right) \ge 0. 
        \end{align*}
        Thus,
        \begin{align*}
            \Phi_{3,2}(a_1,\ldots,a_6,b,c,d)
            = q_1(0)
            \le \max\left\{q_1\left(\frac{-a_6}{a_1+a_2+a_3+a_5}\right),~q_1\left(\frac{a_6^{\ast}-a_6}{a_1+a_2+a_3+a_5}\right)\right\}, 
        \end{align*}
        meaning that we can assume either $a_6 = 0$ or $a_6 = a_6^{\ast}$. 
    \end{proof}
    
    By Proposition~\ref{PROP:Phi3i-a6-zero}, we are done if $a_6 = 0$. So it suffices to consider the case $a_6 = a_6^{\ast}$.

    \begin{claim}\label{CLAIM:Phi32-a6-2}
        We may assume that $a_6 = \frac{n-4k}{2}$. 
    \end{claim}
    \begin{proof}[Proof of Claim~\ref{CLAIM:Phi32-a6-2}]
        Suppose that $k-a_4 < \frac{n-4k}{2}$ (equivalently, $a_4 > k - \frac{n-4k}{2}$). Then $a_6 = a_6^{\ast} = \min\left\{\frac{n-4k}{2}, k-a_4\right\} = a_6^{\ast} = k-a_4$, meaning that $a_1 + a_2 + a_3 + a_5 = k-(a_4+a_6) = 0$. 
        Since 
        \begin{align*}
            q_2(x)
            \coloneqq 
            \Phi_{3,2}(0,0,0,x,0,k-x,b,c,d)
            = -7x^2 - \left(k+\frac{13b+9c}{2} - d\right)x + \hat{\varphi}(x,b,c,d) + A,  
        \end{align*}
        where $A$ is independent of $x$, it follows from Proposition~\ref{PROP:a4-convex} that $q_2(x)$ is a convex function in $x$. 
        Thus, by Fact~\ref{FACT:convex-optimization}, we have  
        \begin{align*}
            \Phi_{3,2}(0,0,0,a_4,0,a_6,b,c,d)
            = q_2(a_4)
            \le \max\left\{q_2(k),~q_2\left(k-\frac{n-4k}{2}\right)\right\},
        \end{align*}
        meaning that we can assume either $a_6 = 0$ or $a_6 = \frac{n-4k}{2}$. 
        
        By Proposition~\ref{PROP:Phi3i-a6-zero}, we are done if $a_6 = 0$. So we may assume that $a_6 = \frac{n-4k}{2}$. 
    \end{proof}
    Now assume that $a_6 = \frac{n-4k}{2}$. 
    It follows from~\eqref{equ:g-and-h-2} and~\eqref{equ:hat-g-h-2} that $\varphi \le 0$. Therefore, 
    \begin{align*}
        \Phi_{3,2}\left(a_1, \ldots, a_5, \frac{n-4k}{2},b,c,d\right)
        & \le \Phi_{2,2}\left(a_1, \ldots, a_5, \frac{n-4k}{2},b,c,d\right) 
        \le \Xi(n,k). 
    \end{align*}
    where the last inequality follows from Proposition~\ref{PROP:Phi22-a6-small}~\ref{PROP:Phi22-a6-small-2} and the assumption $k \ge \frac{(19982-35 \sqrt{402})n}{108278} > \frac{n}{6}$.  
    This completes the proof of Proposition~\ref{PROP:Phi32-upper-bound}.
\end{proof}

\begin{proposition}\label{PROP:Phi33-upper-bound}
    Let $(a_1, \ldots, a_6, b,c,d) \in \Omega_{n,k}$. 
    Suppose $k \in \left[\frac{n}{6}, \frac{n}{4}\right]$ and $a_6 \ge \frac{n-4k}{2}$. 
    Then 
    \begin{align*}
        \Phi_{3,3}(a_1, \ldots, a_6, b,c,d) 
        \le \Xi(n,k). 
    \end{align*}
\end{proposition}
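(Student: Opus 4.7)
The plan is to observe that Proposition~\ref{PROP:Phi33-upper-bound} reduces almost immediately to the already-established Proposition~\ref{PROP:Phi23-a6-large}. By definition,
\[
\Phi_{3,3}(a_1,\ldots,a_6,b,c,d) = \Phi_{2,3}(a_1,\ldots,a_6,b,c,d) + \varphi(a_4,b,c,d).
\]
The key observation is that the ``correction'' term $\varphi(a_4,b,c,d)$ is nonpositive: this is exactly Inequality~\eqref{equ:varphi-less-zero}, which was derived in the remark after Proposition~\ref{PROP:g-alpha-h-alpha-upper-bounds} from the elementary bounds~\eqref{equ:g-and-h-2} and~\eqref{equ:hat-g-h-2}. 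Thus
\[
\Phi_{3,3}(a_1,\ldots,a_6,b,c,d) \le \Phi_{2,3}(a_1,\ldots,a_6,b,c,d).
\]

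Next, I would verify that the hypotheses of Proposition~\ref{PROP:Phi23-a6-large} are satisfied. The assumed range $k \in [\tfrac{n}{6}, \tfrac{n}{4}]$ matches exactly, and the condition $a_6 \ge \tfrac{n-4k}{2}$ is part of the current hypothesis. The upper bound $a_6 \le k$ holds automatically from the membership $(a_1,\ldots,a_6,b,c,d) \in \Omega_{n,k}$, since $a_6 \le a_1 + \cdots + a_6 = k$. Therefore $a_6 \in \bigl[\tfrac{n-4k}{2},k\bigr]$, and Proposition~\ref{PROP:Phi23-a6-large} yields $\Phi_{2,3}(a_1,\ldots,a_6,b,c,d) \le \Xi(n,k)$.

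Combining the two steps gives the desired inequality $\Phi_{3,3}(a_1,\ldots,a_6,b,c,d) \le \Xi(n,k)$. There is no real obstacle here: the hard work has already been done in Proposition~\ref{PROP:Phi23-a6-large} (where we carefully split into cases on $b$ versus $3k-\tfrac{n}{2}$, successively normalized $a_1,a_2$ by convexity, and maximized over $c$ on each subregion) and in the derivation of $\varphi \le 0$ via Proposition~\ref{PROP:g-alpha-h-alpha-upper-bounds}. The present statement is essentially a packaging step that feeds the $a_6$-large regime of $\Phi_{2,3}$ into $\Phi_{3,3}$ using the fact that incorporating the improved $\mathcal{A}_4\cup\mathcal{B}\cup\mathcal{C}$ bound only decreases the relevant upper bound.
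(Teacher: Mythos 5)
Your proposal is correct and follows exactly the same route as the paper: expand $\Phi_{3,3} = \Phi_{2,3} + \varphi$, use $\varphi \le 0$ (Inequality~\eqref{equ:varphi-less-zero}), and invoke Proposition~\ref{PROP:Phi23-a6-large}. The only addition you make is explicitly noting $a_6 \le k$ from the constraint $a_1+\cdots+a_6 = k$, which the paper leaves implicit.
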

\begin{proof}[Proof of Proposition~\ref{PROP:Phi33-upper-bound}]
    Since 
    \begin{align*}
        \Phi_{3,3}(a_1,\ldots,a_6,b,c,d)  
        & = \Phi_{2,3}(a_1,\ldots,a_6,b,c,d) + \varphi(a_4, b, c, d) \\[0.5em]
        & \le \Phi_{2,3}(a_1,\ldots,a_6,b,c,d), 
    \end{align*}
    Proposition~\ref{PROP:Phi33-upper-bound} is a direct consequence of Proposition~\ref{PROP:Phi23-a6-large}.
\end{proof}

\subsection{Proof for the main result}\label{SUBSEC:proof-main}
We present the proof of Theorem~\ref{THM:Mian-HS-density-K4} in this section. 
Recall that 
\begin{align*}
    (a_1, \ldots , a_6, b,c,d) 
    = (|\mathcal{A}_1|, \ldots , |\mathcal{A}_6|,|\mathcal{B}|,|\mathcal{C}|,|\mathcal{D}|). 
\end{align*}

Fix a sufficiently large integer $\mu > 0$ such that, in particular, Lemma~\ref{LEMMA:A6BCD-upper-bound-a} holds .  
Let 
\begin{align*}
    & \Phi_{2}(a_1, \ldots, a_6, b,c,d)  \\[0.5em]
    & \quad \coloneqq 
    \begin{cases}
        \Phi_{2,1}(a_1, \ldots, a_6, b,c,d), &\quad\text{if}\quad e(\mathcal{A}_6) \le \frac{15 a_6^2}{2} + \mu a_6 \quad\text{or}\quad a_6 \le b+c, \\[0.5em]
        \Phi_{2,2}(a_1, \ldots, a_6, b,c,d), &\quad\text{if}\quad e(\mathcal{A}_6) \ge \frac{15 a_6^2}{2}+ \mu a_6 \quad\text{and}\quad a_6 \in \left[b+c,\frac{n-4k}{2}\right], \\[0.5em] 
        \Phi_{2,3}(a_1, \ldots, a_6, b,c,d),
         &\quad\text{if}\quad e(\mathcal{A}_6) \ge \frac{15 a_6^2}{2}+ \mu a_6 \quad\text{and}\quad a_6 \ge \frac{n-4k}{2}, 
    \end{cases}
\end{align*}
and 
\begin{align*}
        \Phi_{3}(a_1, \ldots, a_6, b,c,d)
    & \coloneqq \Phi_{2}(a_1, \ldots, a_6, b,c,d) + \varphi(a_4, b, c, d) \\[0.5em]
    & =  \Phi_{2}(a_1, \ldots, a_6, b,c,d)  - \left(\frac{15 a_4^2}{2} + (7b+5c)a_4\right) \\[0.5em]
    & \quad + 
    \begin{cases}
        g_{1/2}(a_4, b) + h_{1/2}(a_4, b), &\quad\text{if}\quad d \le \frac{b+c}{2}, \\[0.5em]
        \hat{g}(a_4, b) + h_{1/4}(a_4, b), &\quad\text{if}\quad d \ge \frac{b+c}{2}.
    \end{cases}
\end{align*}
The following upper bounds for $|G|$ follow from the inequalities established in Sections~\ref{SEC:Local-estimate},~\ref{SEC:A3},~\ref{SEC:A4BC}, and~\ref{SEC:A6BCD}.
\begin{proposition}\label{PROP:G-upper-bound-Phi1-Phi2-Phi3}
    There exists a constant $C> 0$ such that the following inequalities hold. 
    \begin{enumerate}[label=(\roman*)]
        \item\label{PROP:G-upper-bound-Phi1-Phi2-Phi3-1} $|G| \le \Phi_{1}(a_1, \ldots, a_6, b,c,d) + Cn$.
        \item\label{PROP:G-upper-bound-Phi1-Phi2-Phi3-2} $|G| \le \Phi_{2}(a_1, \ldots, a_6, b,c,d) + Cn$.
        \item\label{PROP:G-upper-bound-Phi1-Phi2-Phi3-3} $|G| \le \Phi_{3}(a_1, \ldots, a_6, b,c,d) + Cn$.
    \end{enumerate}
\end{proposition}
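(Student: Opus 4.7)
The proof is a careful bookkeeping argument: one partitions the edge set of $G$ according to the refined partition $\mathcal{A}_1, \ldots, \mathcal{A}_6, \mathcal{B}, \mathcal{C}, \mathcal{D}$ so that
\[
|G| = \sum_{i \in [6]} e(\mathcal{A}_i) + \sum_{1 \le i<j\le 6} e(\mathcal{A}_i, \mathcal{A}_j) + \sum_{i \in [6]} e(\mathcal{A}_i, \mathcal{B} \cup \mathcal{C} \cup \mathcal{D}) + e(\mathcal{B} \cup \mathcal{C} \cup \mathcal{D}),
\]
and bounds each summand by the relevant local estimate established in Sections~\ref{SEC:Local-estimate}--\ref{SEC:A6BCD}. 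Every local estimate has only a linear-in-$n$ error, and since we sum a bounded number of such estimates, the cumulative error is $O(n)$ for some absolute constant $C$.

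For part~\ref{PROP:G-upper-bound-Phi1-Phi2-Phi3-1}, the plan is to invoke Lemmas~\ref{LEMMA:Q1Qi}, \ref{LEMMA:Q2Qi}, \ref{LEMMA:Q3Qi}, \ref{LEMMA:Q4Qi}, and \ref{LEMMA:A5A6-edges} for the edges within and between the $\mathcal{A}_i$'s; Lemmas~\ref{LEMMA:Q1BCD} through~\ref{LEMMA:Q5Q6BCD} for the mixed edges $e(\mathcal{A}_i, \mathcal{B} \cup \mathcal{C} \cup \mathcal{D})$; Fact~\ref{FACT:edges-ABCD} for $e(\mathcal{B} \cup \mathcal{C} \cup \mathcal{D}) \le \Psi(b,c,d)$; and, crucially, Lemma~\ref{LEMMA:A3-upper-bound} to upgrade the crude bound $\tfrac{15}{2}a_3^2$ from Lemma~\ref{LEMMA:Q3Qi} to the sharper $7 a_3^2$. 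Reading off the quadratic coefficients shows that the resulting sum is exactly $\Phi(a_1,\ldots,a_6,b,c,d) + \Psi(b,c,d) = \Phi_1(a_1,\ldots,a_6,b,c,d)$, plus an $O(n)$ error.

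For part~\ref{PROP:G-upper-bound-Phi1-Phi2-Phi3-2}, the only change relative to (i) is in the block $e(\mathcal{A}_6) + e(\mathcal{A}_6, \mathcal{B} \cup \mathcal{C} \cup \mathcal{D}) + e(\mathcal{B} \cup \mathcal{C} \cup \mathcal{D})$, which in (i) is controlled crudely by $8a_6^2 + (7b+5c+2d)a_6 + \Psi(b,c,d)$. Splitting into the three regimes used in the definition of $\Phi_2$, I plan to apply: the first case of Lemma~\ref{LEMMA:A6BCD-upper-bound-b}, which replaces $8a_6^2$ by $\tfrac{15}{2}a_6^2$ and gives $\Phi_{2,1}$; the second case of Lemma~\ref{LEMMA:A6BCD-upper-bound-b}, which saves $\tfrac{(b+c)a_6}{2}$ and gives $\Phi_{2,2}$; or Lemma~\ref{LEMMA:A6BCD-upper-bound-a}, which discards the trivial block and replaces it by $8a_6^2 + (3b+2c+d)^2$, yielding $\Phi_{2,3}$ after cancelling the trivial $(7b+5c+2d)a_6 + \Psi(b,c,d)$. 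For part~\ref{PROP:G-upper-bound-Phi1-Phi2-Phi3-3}, I would start from the bound in (ii) and further replace the contribution $e(\mathcal{A}_4) + e(\mathcal{A}_4, \mathcal{B} \cup \mathcal{C})$, bounded coarsely by $\tfrac{15}{2}a_4^2 + (7b+5c)a_4$ in (ii), with the sharper $\hat{\varphi}(a_4, b, c, d)$ from Lemma~\ref{LEMMA:A4BC-together}; since $\varphi = \hat{\varphi} - \bigl(\tfrac{15}{2}a_4^2 + (7b+5c)a_4\bigr)$, this produces $\Phi_2 + \varphi + O(n) = \Phi_3 + O(n)$.

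The main obstacle is purely bookkeeping: verifying that every quadratic coefficient in $\Phi_1, \Phi_2, \Phi_3$ has been engineered to align with exactly the quadratic part of a specific local bound, and that the linear-in-$n$ error terms from the various lemmas, together with the $O(n)$ loss from stripping linear corrections in each invocation, can all be absorbed into a single constant $C = O(1)$. No further graph-theoretic input is required beyond the lemmas already proved.
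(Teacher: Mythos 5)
Your proposal is correct and follows essentially the same route as the paper's proof: decompose $|G|$ over the parts, bound each block by the corresponding local estimate (including the upgrade of $e(\mathcal{A}_3)$ to $7a_3^2$ via Lemma~\ref{LEMMA:A3-upper-bound}), and for parts~\ref{PROP:G-upper-bound-Phi1-Phi2-Phi3-2} and~\ref{PROP:G-upper-bound-Phi1-Phi2-Phi3-3} swap in Lemmas~\ref{LEMMA:A6BCD-upper-bound-b}/\ref{LEMMA:A6BCD-upper-bound-a} for the $\mathcal{A}_6\cup\mathcal{B}\cup\mathcal{C}\cup\mathcal{D}$ block and Lemma~\ref{LEMMA:A4BC-together} for the $\mathcal{A}_4$ block, absorbing all linear errors into $Cn$. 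No gaps.
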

\begin{proof}[Proof of Proposition~\ref{PROP:G-upper-bound-Phi1-Phi2-Phi3}]
It follows from Fact~\ref{FACT:edges-ABCD} that 
\begin{align}\label{equ:Psi-bcd-final}
    e(\mathcal{B} \cup \mathcal{C} \cup \mathcal{D})
    & \le 3b^2 + c^2 + 4bc + 2bd + cd
    = \Psi(b,c,d). 
\end{align}
It follows from Lemmas~\ref{LEMMA:Q1Qi},~\ref{LEMMA:Q2Qi},~\ref{LEMMA:Q3Qi}, and~\ref{LEMMA:A3-upper-bound} that 
\begin{align*}
    \sum_{i\in [3]}\sum_{j \in [i,6]}e(\mathcal{A}_i, \mathcal{A}_j)
    & \le \frac{13 a_1^2}{2} + 13 a_1 (a_2 + \cdots + a_6) \\[0.5em]
    & \quad + 7 a_2^2 + 14 a_2(a_3 + \cdots + a_6)  + 7 a_3^2 + 15a_3 (a_4+ a_5+a_6).
\end{align*}
It follows from Lemmas~\ref{LEMMA:Q1BCD},~\ref{LEMMA:Q2BCD},~\ref{LEMMA:Q3BCD}, and~\ref{LEMMA:Q5Q6BCD} that 
\begin{align*}
    e(\mathcal{A}_1 \cup \mathcal{A}_2 \cup \mathcal{A}_3 \cup \mathcal{A}_5, \mathcal{B} \cup \mathcal{C} \cup \mathcal{D}) 
    & \le (9b + 6c + 3d)a_1 + (8b + 6c+ 3d)a_2 + 4a_2 \\[0.5em]
    & \quad + (8b + 5c + 3d)a_3 + 4a_3 + (7b+5c+2d)a_3 + 20 a_3.
\end{align*}
It follows from Lemmas~\ref{LEMMA:Q4Qi}~\ref{LEMMA:Q4Qi-2},~\ref{LEMMA:A5A6-edges}, and~\ref{LEMMA:Q4BCD}~\ref{LEMMA:Q4BCD-3}
\begin{align*}
    e(\mathcal{A}_4, \mathcal{A}_5 \cup \mathcal{A}_6) + e(\mathcal{A}_5) + e(\mathcal{A}_5, \mathcal{A}_6)  + e(\mathcal{A}_4, \mathcal{D})
    \le 15 a_4 (a_5+a_6) + \frac{15 a_5^2}{2} + 15a_5a_6 + 3a_4 d.
\end{align*}
It follows from Lemmas~\ref{LEMMA:Q4Qi}~\ref{LEMMA:Q4Qi-1} and~\ref{LEMMA:Q4BCD}~\ref{LEMMA:Q4BCD-1}~\ref{LEMMA:Q4BCD-2} that 
\begin{align}\label{equ:A4BC-triviall-upper-bound}
    e(\mathcal{A}_4) + e(\mathcal{A}_4, \mathcal{B} \cup \mathcal{C}) 
    \le \frac{15 a_4^2}{2} + (7b+5c)a_4 + 18 a_4.
\end{align}
It follows from the trivial upper bound $e(\mathcal{A}_6) \le \binom{4a_6}{2} \le 8 a_6^2$ and Lemma~\ref{LEMMA:Q5Q6BCD} that 
\begin{align}\label{equ:A6BCD-trivial-upper-bound}
    e(\mathcal{A}_6) + e(\mathcal{A}_6, \mathcal{B} \cup \mathcal{C} \cup \mathcal{D})
    \le 8 a_6^2 + (7b+5c+2d)a_6 + 20 a_6.
\end{align}
Summing these inequalities, we obtain 
\begin{align*}
    |G|
    & \le \Phi_{1}(a_1, \ldots, a_6, b,c,d) + 4a_3 + 4a_2 + 20 a_3 + 18 a_4 + 20 a_6 
    \le \Phi_{1}(a_1, \ldots, a_6, b,c,d) + 20n,
\end{align*}
which proves  Proposition~\ref{PROP:G-upper-bound-Phi1-Phi2-Phi3}~\ref{PROP:G-upper-bound-Phi1-Phi2-Phi3-1}. 

Proposition~\ref{PROP:G-upper-bound-Phi1-Phi2-Phi3}~\ref{PROP:G-upper-bound-Phi1-Phi2-Phi3-2} follows by substituting Inequalities~\eqref{equ:Psi-bcd-final} and~\eqref{equ:A6BCD-trivial-upper-bound} with the corresponding upper bounds from Lemmas~\ref{LEMMA:A6BCD-upper-bound-b} and~\ref{LEMMA:A6BCD-upper-bound-a}. 

Proposition~\ref{PROP:G-upper-bound-Phi1-Phi2-Phi3}~\ref{PROP:G-upper-bound-Phi1-Phi2-Phi3-3} follows by substituting Inequalities~\eqref{equ:Psi-bcd-final} and~\eqref{equ:A6BCD-trivial-upper-bound} with the corresponding upper bounds from Lemmas~\ref{LEMMA:A6BCD-upper-bound-b} and~\ref{LEMMA:A6BCD-upper-bound-a}, and by substituting Inequality~\eqref{equ:A4BC-triviall-upper-bound} with the corresponding upper bounds from Lemma~\ref{LEMMA:A4BC-together}. 
\end{proof}

We are now ready to present the proof of Theorem~\ref{THM:Mian-HS-density-K4}. 

\begin{proof}[Proof of Theorem~\ref{THM:Mian-HS-density-K4}]
    Let us consider the following three cases. 

    \medskip

    \textbf{Case 1}: $k \in \left[0, \frac{n}{8}\right]$.

    This case follows directly from Proposition~\ref{PROP:G-upper-bound-Phi1-Phi2-Phi3}~\ref{PROP:G-upper-bound-Phi1-Phi2-Phi3-1} and~\eqref{equ:Phi1-k-less-125n}.

    \medskip

    \textbf{Case 2}: $k \in \left[\frac{n}{8}, \frac{(20+\sqrt{10}) n}{130}\right] \cup \left[\frac{n}{5}, \frac{n}{4}\right]$. 

    By Proposition~\ref{PROP:G-upper-bound-Phi1-Phi2-Phi3}~\ref{PROP:G-upper-bound-Phi1-Phi2-Phi3-2}, it suffices to show that 
    \begin{align*}
        \Phi_{2}(a_1, \ldots, a_6, b,c,d)
        \le \Xi(n,k).
    \end{align*}

    Suppose that $e(\mathcal{A}_6) \le \frac{15 a_6^2}{2} + \mu a_6$ or $a_6 \le b+c$.
    Then, it follows from the definition of $\Phi_{2}$ and Proposition~\ref{PROP:Phi21-a6-sparse} that 
    \begin{align*}
        \Phi_{2}(a_1, \ldots, a_6, b,c,d)
        & = \Phi_{2,1}(a_1, \ldots, a_6, b,c,d) 
        \le \Xi(n,k). 
    \end{align*}

    Suppose that $e(\mathcal{A}_6) \ge \frac{15 a_6^2}{2} + \mu a_6$ and $a_6 \in \left[b+c,\frac{3b+2c+d}{2}\right] = \left[b+c,\frac{n-4k}{2}\right]$.
    Then, it follows from the definition of $\Phi_{2}$ and Proposition~\ref{PROP:Phi22-upper-bound} that 
    \begin{align*}
        \Phi_{2}(a_1, \ldots, a_6, b,c,d)
        & = \Phi_{2,2}(a_1, \ldots, a_6, b,c,d) 
        \le \Xi(n,k). 
    \end{align*}

    Suppose that $e(\mathcal{A}_6) \ge \frac{15 a_6^2}{2} + \mu a_6$ and $a_6 \ge \frac{3b+2c+d}{2} = \frac{n-4k}{2}$.
    Then, it follows from the definition of $\Phi_{2}$ and Proposition~\ref{PROP:Phi23-a6-large} that 
    \begin{align*}
        \Phi_{2}(a_1, \ldots, a_6, b,c,d)
        & = \Phi_{2,3}(a_1, \ldots, a_6, b,c,d) 
        \le \Xi(n,k). 
    \end{align*}

    \medskip

    \textbf{Case 3}: $k \in \left[\frac{(20+\sqrt{10}) n}{130}, \frac{n}{5}\right]$. 

    By Proposition~\ref{PROP:G-upper-bound-Phi1-Phi2-Phi3}~\ref{PROP:G-upper-bound-Phi1-Phi2-Phi3-3}, it suffices to show that 
    \begin{align*}
        \Phi_{3}(a_1, \ldots, a_6, b,c,d)
        \le \Xi(n,k).
    \end{align*}
    
    Suppose that $e(\mathcal{A}_6) \le \frac{15 a_6^2}{2} + \mu a_6$ or $a_6 \le b+c$.
    Then, it follows from the definition of $\Phi_{3}$ and Proposition~\ref{PROP:Phi31-upper-bound} (note that $\frac{(19982-35 \sqrt{402})n}{108278} \le \frac{(20+\sqrt{10}) n}{130}$) that
    \begin{align*}
        \Phi_{3}(a_1, \ldots, a_6, b,c,d)
        & = \Phi_{3,1}(a_1, \ldots, a_6, b,c,d) 
        \le \Xi(n,k). 
    \end{align*}

    Suppose that $e(\mathcal{A}_6) \ge \frac{15 a_6^2}{2} + \mu a_6$ and $a_6 \in \left[b+c,\frac{3b+2c+d}{2}\right] = \left[b+c,\frac{n-4k}{2}\right]$.
    Then, it follows from the definition of $\Phi_{3}$ and Proposition~\ref{PROP:Phi32-upper-bound} that 
    \begin{align*}
        \Phi_{3}(a_1, \ldots, a_6, b,c,d)
        & = \Phi_{3,2}(a_1, \ldots, a_6, b,c,d) 
        \le \Xi(n,k). 
    \end{align*}

    Suppose that $e(\mathcal{A}_6) \ge \frac{15 a_6^2}{2} + \mu a_6$ and $a_6 \ge \frac{3b+2c+d}{2} = \frac{n-4k}{2}$.
    Then, it follows from the definition of $\Phi_{3}$ and Proposition~\ref{PROP:Phi33-upper-bound} that 
    \begin{align*}
        \Phi_{3}(a_1, \ldots, a_6, b,c,d)
        & = \Phi_{3,3}(a_1, \ldots, a_6, b,c,d) 
        \le \Xi(n,k). 
    \end{align*}

    This completes the proof of Theorem~\ref{THM:Mian-HS-density-K4}. 
\end{proof}

\section{Concluding remarks}\label{SEC:Remarks}
$\bullet$ Below, we describe the candidate set for the extremal constructions when $r \ge 5$. For convenience, we slightly abuse notation by referring to a family, such as $E_{i}(n,k,r)$, as a typical member in this class.
All constructions defined below are assumed, by default, to have exactly $n$ vertices.

\begin{enumerate}[label=(\roman*)]
    \item The vertex set of $E_{1}(n,k,r)$ consists of $r$ parts $X,Y_1,\ldots,Y_{r-1}$, with sizes satisfying 
    \begin{align*}
        |X| = k 
        \quad\text{and}\quad 
        |Y_1| \le \cdots \le |Y_{r-1}| \le |Y_1| + 1.
    \end{align*}
    The edge set of $E_{1}(n,k,r)$ is defined as 
    \begin{align*}
        E_1(n,k) 
        \coloneqq \binom{X}{2} \cup K[X,Y_1,\ldots,Y_{r-1}].
    \end{align*}
    \item Suppose that $i \in [2,r-1]$. 
    The vertex set of $E_{i}^{a}(n,k,r)$ consists of $r+2-i$ parts $X,Y_1,\ldots,Y_{r+1-i}$ with sizes satisfying 
    \begin{align*}
        |X| = ik +i -1
        \quad\text{and}\quad 
        |Y_1| \le \cdots \le |Y_{r-i}| \le |X\cup Y_{r-i+1}| \le |Y_1|+1.
    \end{align*}
    The edge set of $E_{i}^{a}(n,k,r)$ is defined as 
    \begin{align*}
        E_{i}^{a}(n,k,r) 
        = \binom{X}{2} \cup K[Y_1, \ldots, Y_{r-i}, X \cup Y_{r-i+1}].
    \end{align*}
    Note that $E_{i}^{a}(n,k,r)$ is defined only for $k \leq \frac{1}{i}\left(\left\lceil \frac{n}{r+1-i} \right\rceil -i +1\right)$.
    
    The vertex set of $E_{i}^{b}(n,k,r)$ consists of $r+1-i$ parts $X, Y_1,\ldots,Y_{r-i}$ with sizes satisfying
    \begin{align*}
        |X| = ik+i-1 
        \quad\text{and}\quad
        |Y_1| \le \cdots \le |Y_{r-i}| \le |Y_1| + 1.
    \end{align*}  
    The edge set of $E_{i}^{b}(n,k,r)$ is defined as 
    \begin{align*}
        E_{i}^{b}(n,k,r) 
        = \binom{X}{2} \cup K[X,Y_1,\ldots,Y_{r-i}].
    \end{align*}
    \item The vertex set of $E_{r}(n,k,r)$ consists of $2r-1$ parts $X,Y_1,\ldots,Y_{r-1},Z_1,\ldots,Z_{r-1}$ with sizes satisfying
    \begin{align*}
        & |X| = rk+r-1-(r-2)(n-rk-r+1)-j, \\[0.5em]
        & \sum_{i=1}^{r-1}|Y_i| = (r-2)(n-rk-r+1)+j, \quad\text{and}\quad \\[0.5em]
        & |Y_1 \cup Z_1| \le \cdots \le |Y_{r-1} \cup Z_{r-1}| \le |Y_1 \cup Z_1| + 1, 
    \end{align*}
    where $j \in [0, r-1]$ is an arbitrary integer. 
    The edge set of $E_{r}(n,k,r)$ is defined as 
    \begin{align*}
        E_{r}(n,k,r) 
        = \binom{X}{2} \cup K[X,Y_1\cup \cdots \cup Y_{r-1}] \cup K[Y_1 \cup Z_1, \ldots ,Y_{r-1} \cup Z_{r-1}].
    \end{align*}
    Note that $E_{r}(n,k,r)$ is defined only for $k \ge \frac{(r-2)n}{r(r-1)} + \frac{1-r}{r}$.
\end{enumerate}
It is easy to verify that all the graphs defined above have a $K_r$-matching number of $k$. We conjecture that $\mathrm{ex}(n,(k+1)K_{r})$ is asymptotically attained by these constructions. 
\begin{conjecture}\label{CONJ:HS-Kr}
    Let $r \ge 5$ and $n \ge r k \ge 0$ be integers. 
    Then 
    \begin{align*}
        \mathrm{ex}(n,(k+1)K_{r})
        & = \max\big\{|E_{1}(n,k,r)|,~|E_{2}^{a}(n,k,r)|,~|E_{2}^{b}(n,k,r)|, \cdots, \\[0.5em]
        & \qquad\qquad |E_{r-1}^{a}(n,k,r)|,~|E_{r-1}^{b}(n,k,r)|,~|E_{r}(n,k,r)| \big\} + O(n).
    \end{align*}
\end{conjecture}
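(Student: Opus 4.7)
The plan is to extend the rank-$4$-packing framework developed for $r=4$ to general $r \ge 5$. First, for an $n$-vertex graph $G$ with $\nu(K_r,G) = k$, I would take a rank-$r$-packing $(\mathcal{F}_r, \mathcal{F}_{r-1}, \ldots, \mathcal{F}_1)$ where $\mathcal{F}_i$ is a $K_i$-tiling and the tuple $(|\mathcal{F}_r|, \ldots, |\mathcal{F}_1|)$ is lexicographically maximized. This forces $|\mathcal{F}_r| = k$ and gives the basic constraint $\sum_{i=1}^{r} i|\mathcal{F}_i| = n$. The goal is to bound $|G|$ by the maximum edge count among the $r+1$ candidate constructions $E_1, E_2^a, E_2^b, \ldots, E_{r-1}^a, E_{r-1}^b, E_r$, each corresponding to concentrating $\mathcal{F}_r$ into a ``clique part'' and distributing the remaining vertices into complete multipartite structures whose part sizes are coupled with $\mathcal{F}_{r-1}, \ldots, \mathcal{F}_1$.

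The central step, as the paper's concluding remark emphasizes, is to design a partition $\mathcal{F}_r = \mathcal{A}_1 \cup \cdots \cup \mathcal{A}_N$ based on how each member $Q \in \mathcal{F}_r$ is ``seen'' by members of $\mathcal{F}_{r-1}, \ldots, \mathcal{F}_1$ and by other members of $\mathcal{F}_r$. Mirroring the six-part partition used for $r=4$, for general $r$ I expect $N$ to grow roughly polynomially in $r$: for each pair $(i,j)$ with $i + j \le r$, one records whether $Q$ is $j$-seen by at least a prescribed number of members of $\mathcal{F}_i$, and the hierarchy is defined by peeling off the ``richer'' types first (those closer to $E_1$-type constructions) down to the ``sparsest'' type (analogous to $\mathcal{A}_6$), with a final iterative pruning of members whose external edge contribution is below a linear-in-$|\mathcal{A}_N|$ threshold. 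Crucially, as in the $r=4$ case, some intermediate classes (like $\mathcal{A}_3$ here) will not appear in any extremal graph yet will be needed to close the gap between naive and tight bounds.

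Once the partition is fixed, I would establish analogues of the local estimates in Sections~\ref{SEC:Local-estimate}--\ref{SEC:A6BCD}. The rotation/switching arguments generalize in a natural way: whenever two $K_r$'s share too many edges, or a $K_r$ is doubly seen by smaller cliques, one can perform a local exchange to enlarge $|\mathcal{F}_r|$ or lex-increase the packing, contradicting maximality. The refined bounds for the ``sparsest'' class (analogue of $\mathcal{A}_6$) will again require a structural result along the lines of Lemma~\ref{LEMMA:A6BCD-upper-bound-a}: one shows that the induced subgraph on $V(\mathcal{A}_N) \cup V(\mathcal{F}_{r-1} \cup \cdots \cup \mathcal{F}_1)$ admits no $K_r$-tiling covering too many vertices of the smaller-clique part, and then applies a generalization of Proposition~\ref{PROP:K4-tiling-cover-three-vtx} (an extension of the Allen--B\"ottcher--Hladk\'y--Piguet $\mathcal{T}_3$-structure theorem to $K_r$-tilings with covering constraints) to bound the edge count by a function matching $|E_r(n,k,r)|$ up to $O(n)$.

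The main obstacle, in my view, is twofold. First, proving the covering lemma for general $r$ requires a stability version of Theorem~\ref{THM:ABHP14-exact} for $K_r$ together with a delicate iterative chain of ``switching paths'' (the six operations used for $r=4$ will multiply in number, since the type tuples of marked vertices live in a much larger poset). Second, the final global optimization becomes a piecewise quadratic program in $2r$ variables $(a_1, \ldots, a_N, |\mathcal{F}_{r-1}|, \ldots, |\mathcal{F}_1|)$ constrained by $\sum i|\mathcal{F}_i| = n$, and one must verify that its maximum over $\Omega_{n,k}$ is attained at a vertex corresponding to one of the $r+1$ constructions --- this will likely demand a conceptual reduction (perhaps via convexity arguments analogous to Proposition~\ref{PROP:a4-convex}, applied coordinate by coordinate) rather than the case-by-case analysis feasible for $r=4$. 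Identifying the correct ``bridging'' classes (the analogues of $\mathcal{A}_3$) and the regimes of $k$ where each $E_i$ dominates is where I expect the bulk of the difficulty to lie.
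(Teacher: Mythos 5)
The statement you are addressing is Conjecture~\ref{CONJ:HS-Kr}, which the paper explicitly leaves \emph{open}: the authors prove the $r=4$ case (Theorem~\ref{THM:Mian-HS-density-K4}) and then propose the candidate family $E_1,\, E_2^a,\, E_2^b,\, \ldots,\, E_r$ for general $r$ without a proof. Your write-up is therefore not being measured against an existing argument --- there is none --- and on its own terms it is a research plan rather than a proof. Every load-bearing step is deferred: the partition of $\mathcal{F}_r$ into subfamilies is described only schematically (``for each pair $(i,j)$ with $i+j\le r$, record whether $Q$ is $j$-seen\ldots''), the rotation lemmas are asserted by analogy, the covering lemma (the analogue of Proposition~\ref{PROP:K4-tiling-cover-three-vtx}, which in turn rests on a $K_r$-tiling stability theorem generalizing Theorem~\ref{THM:ABHP14-exact}) is named but not established, and the final optimization over $\Omega_{n,k}$ is left as a ``piecewise quadratic program in $2r$ variables'' with no argument that its maximum is achieved at one of the $r+1$ constructions. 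You also acknowledge that the analogues of the bridging class $\mathcal{A}_3$ --- classes that do not appear in any extremal graph but are indispensable for closing the gap between the naive upper bound and $\Xi(n,k)$ --- have not been identified for $r\ge 5$; without them the local estimates will not be tight, and for $r=4$ this identification was a genuinely non-obvious discovery, not a routine extension.

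To be clear about what is correct: your plan does match the framework the authors signal in their concluding remark (``a carefully designed partition of the $K_r$-tiling is essential''), and the difficulties you name are the right ones. The honest assessment is that you have reconstructed the shape of the expected argument, not the argument itself. As it stands, this does not prove the conjecture, and cannot be scored as a proof; it should be read as a credible but incomplete programme whose two hardest pieces --- the correct sub-partition of $\mathcal{F}_r$ with its hierarchy of thresholds, and the generalized covering/stability lemma --- remain entirely open.
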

\textbf{Remark}. 
Elementary calculations show that $|E_{i}^{a}(n,k,r)| \le |E_{2}^{a}(n,k,r)|$ holds for all $i \in [3,r-1]$ and for all $(n,k,r)$ for which $E_{i}^{a}(n,k,r)$ is defined. Consequently, there are only $r+1$ classes of extremal constructions in the conjecture above. Nevertheless, we include $E_{i}^{a}(n,k,r)$ in case they may assist in the proof.

\medskip 

$\bullet$ Recently Lo--Williams~\cite{LW24} considered the following extension of the Corr\'{a}di--Hajnal Theorem in edge-colored graphs:  
Given an edge-colored graph $G$ on $n$ vertices, let $\delta^{c}(G)$ denote the largest integer $m$ such that, for every vertex $v \in V (G)$, there are at least $m$ distinct colors on edges incident to $v$. 
Lo--Williams~\cite{LW24} obtained several bounds on  $\delta^{c}(G)$ to ensure that $G$ contains a perfect rainbow-$K_r$-tiling (we refer the reader to~\cite{LW24} for the definition). In particular, they proved that for $r \ge 5$, 
\begin{align*}
    \delta^{c}(G) 
    \geq \left(1-\frac{1}{(2r-3)r} +o(1)\right)n 
\end{align*}
ensures the existence of a perfect rainbow-$K_r$-tiling in $G$. 
Unfortunately, this bound is somewhat distant from their conjectured bound $\left(1-\frac{r-1}{r^2-2} \right)n$, and they remarked:
\begin{displayquote}
    For $r \geq 5$, we are unable to obtain any ``reasonable'' upper bound for the existence of a rainbow-$K_r$-tiling. One reason is that we are unaware of a density version of the Hajnal--Szemer\'{e}di theorem, that is, the Tur\'{a}n number for $t$ vertex-disjoint $K_{r-1}$.
\end{displayquote}
Using the density version of the Corr\'{a}di--Hajnal Theorem (i.e. Theorem~\ref{THM:ABHP-density-CH}), Lo–Williams improved\footnote{Their proof seems to give the better bound $\delta^{c}(G) \geq \left(\frac{3 + \sqrt{6}}{6} + o(1)\right)n \approx (0.908248+o(1)) n$.} $\delta^{c}(G)$ to $\left(\frac{4+\sqrt{26}}{10} +o(1)\right)n \approx (0.909902 + o(1))n$ for $r =4$. 
Similarly, using Theorem~\ref{THM:Mian-HS-density-K4}, we can improve the bound $\delta^{c}(G) \geq \left(\frac{34}{35} + o(1)\right)n \approx (0.971428 + o(1))n$ of Lo–Williams to $\delta^{c}(G) \geq \left(\frac{8+\sqrt{37}}{15} +o(1)\right) n \approx \left(0.93885 + o(1)\right)n$ for $r = 5$.

\bibliographystyle{alpha}
\bibliography{CorradiHajnal}

\newcommand{\etalchar}[1]{$^{#1}$}
\begin{thebibliography}{HHL{\etalchar{+}}23b}

\bibitem[ABH11]{ABH11}
Peter Allen, Julia B{\"o}ttcher, and Jan Hladk{\'y}.
\newblock Filling the gap between {T}ur{\'a}n's theorem and {P}{\'o}sa's
  conjecture.
\newblock {\em J. Lond. Math. Soc. (2)}, 84(2):269--302, 2011.

\bibitem[ABHP14]{ABHP14}
Peter Allen, Julia B\"ottcher, Jan Hladk\'y, and Diana Piguet.
\newblock An extension of {T}ur\'an's theorem, uniqueness and stability.
\newblock {\em Electron. J. Combin.}, 21(4):Paper 4.5, 11, 2014.

\bibitem[ABHP15]{ABHP15}
Peter Allen, Julia B\"{o}ttcher, Jan Hladk\'{y}, and Diana Piguet.
\newblock A density {C}orr\'{a}di-{H}ajnal theorem.
\newblock {\em Canad. J. Math.}, 67(4):721--758, 2015.

\bibitem[AF85]{AF85}
Jin Akiyama and Peter Frankl.
\newblock On the size of graphs with complete factors.
\newblock {\em J. Graph Theory}, 9(1):197--201, 1985.

\bibitem[AY96]{AY96}
Noga Alon and Raphael Yuster.
\newblock {$H$}-factors in dense graphs.
\newblock {\em J. Combin. Theory Ser. B}, 66(2):269--282, 1996.

\bibitem[BC23]{BC23}
J\'{o}zsef Balogh and Felix~Christian Clemen.
\newblock On stability of the {E}rd{\H o}s-{R}ademacher problem.
\newblock {\em Illinois J. Math.}, 67(1):1--11, 2023.

\bibitem[BE78]{BE78}
B.~Bollob{\'a}s and S.~E. Eldridge.
\newblock Packings of graphs and applications to computational complexity.
\newblock {\em J. Combin. Theory Ser. B}, 25(2):105--124, 1978.

\bibitem[BKT13]{BKT13}
J{\'o}zsef Balogh, Alexandr~V. Kostochka, and Andrew Treglown.
\newblock On perfect packings in dense graphs.
\newblock {\em Electron. J. Combin.}, 20(1):Paper 57, 17, 2013.

\bibitem[Bol76]{Bol76}
B\'{e}la Bollob\'{a}s.
\newblock On complete subgraphs of different orders.
\newblock {\em Math. Proc. Cambridge Philos. Soc.}, 79(1):19--24, 1976.

\bibitem[BST09]{BST09}
Julia B{\"o}ttcher, Mathias Schacht, and Anusch Taraz.
\newblock Proof of the bandwidth conjecture of {B}ollob{\'a}s and {K}oml{\'o}s.
\newblock {\em Math. Ann.}, 343(1):175--205, 2009.

\bibitem[CH63]{CH63}
K.~Corradi and A.~Hajnal.
\newblock On the maximal number of independent circuits in a graph.
\newblock {\em Acta Math. Acad. Sci. Hungar.}, 14:423--439, 1963.

\bibitem[CKO07]{CKO07}
Oliver Cooley, Daniela K{\"u}hn, and Deryk Osthus.
\newblock Perfect packings with complete graphs minus an edge.
\newblock {\em European J. Combin.}, 28(8):2143--2155, 2007.

\bibitem[EG59]{EG59}
P.~Erd\H{o}s and T.~Gallai.
\newblock On maximal paths and circuits of graphs.
\newblock {\em Acta Math. Acad. Sci. Hungar.}, 10:337--356 (unbound insert),
  1959.

\bibitem[EGP66]{EGP66}
Paul Erd{\H o}s, A.~W. Goodman, and Lajos P{\'o}sa.
\newblock The representation of a graph by set intersections.
\newblock {\em Canadian J. Math.}, 18:106--112, 1966.

\bibitem[{E}rd55]{Erdos55}
Paul {E}rd{\H o}s.
\newblock Some theorems on graphs.
\newblock {\em Riveon Lematematika}, 9:13--17, 1955.

\bibitem[Erd62a]{Erdos62}
P.~Erd\H{o}s.
\newblock \"{U}ber ein {E}xtremalproblem in der {G}raphentheorie.
\newblock {\em Arch. Math. (Basel)}, 13:222--227, 1962.

\bibitem[{E}rd62b]{Erd62}
P.~{E}rd{\H o}s.
\newblock On a theorem of {R}ademacher-{T}ur{\' a}n.
\newblock {\em Illinois J. Math.}, 6:122--127, 1962.

\bibitem[Erd71]{Erd69}
P.~Erd{\H o}s.
\newblock Some unsolved problems in graph theory and combinatorial analysis.
\newblock In {\em Combinatorial {M}athematics and its {A}pplications ({P}roc.
  {C}onf., {O}xford, 1969)}, pages 97--109. Academic Press, London-New York,
  1971.

\bibitem[ES46]{ES46}
P.~Erd\"{o}s and A.~H. Stone.
\newblock On the structure of linear graphs.
\newblock {\em Bull. Amer. Math. Soc.}, 52:1087--1091, 1946.

\bibitem[FH94]{FH94}
Genghua Fan and Roland H{\"a}ggkvist.
\newblock The square of a {H}amiltonian cycle.
\newblock {\em SIAM J. Discrete Math.}, 7(2):203--212, 1994.

\bibitem[Fis89]{Fis89}
David~C. Fisher.
\newblock Lower bounds on the number of triangles in a graph.
\newblock {\em J. Graph Theory}, 13(4):505--512, 1989.

\bibitem[FK95]{FK95}
Genghua Fan and H.~A. Kierstead.
\newblock The square of paths and cycles.
\newblock {\em J. Combin. Theory Ser. B}, 63(1):55--64, 1995.

\bibitem[FK96a]{FK96}
Genghua Fan and H.~A. Kierstead.
\newblock Hamiltonian square-paths.
\newblock {\em J. Combin. Theory Ser. B}, 67(2):167--182, 1996.

\bibitem[FK96b]{FK96b}
Genghua Fan and H.~A. Kierstead.
\newblock Partitioning a graph into two square-cycles.
\newblock {\em J. Graph Theory}, 23(3):241--256, 1996.

\bibitem[FS13]{FS13}
Zolt{\'a}n F{\"u}redi and Mikl{\'o}s Simonovits.
\newblock The history of degenerate (bipartite) extremal graph problems.
\newblock In {\em Erd\"os centennial}, volume~25 of {\em Bolyai Soc. Math.
  Stud.}, pages 169--264. J\'anos Bolyai Math. Soc., Budapest, 2013.

\bibitem[FT16]{FT16}
Peter Frankl and Norihide Tokushige.
\newblock Invitation to intersection problems for finite sets.
\newblock {\em J. Combin. Theory Ser. A}, 144:157--211, 2016.

\bibitem[GH12]{GH12}
Codru{\c t} Grosu and Jan Hladk{\'y}.
\newblock The extremal function for partial bipartite tilings.
\newblock {\em European J. Combin.}, 33(5):807--815, 2012.

\bibitem[Goo59]{Goo59}
A.~W. Goodman.
\newblock On sets of acquaintances and strangers at any party.
\newblock {\em Amer. Math. Monthly}, 66:778--783, 1959.

\bibitem[GRS84]{GRS84}
A.~Gy{\'a}rf{\'a}s, C.~C. Rousseau, and R.~H. Schelp.
\newblock An extremal problem for paths in bipartite graphs.
\newblock {\em J. Graph Theory}, 8(1):83--95, 1984.

\bibitem[Gy{\H o}88]{Gyo88}
E.~Gy{\H o}ri.
\newblock On the number of edge-disjoint triangles in graphs of given size.
\newblock In {\em Combinatorics ({E}ger, 1987)}, volume~52 of {\em Colloq.
  Math. Soc. J\'anos Bolyai}, pages 267--276. North-Holland, Amsterdam, 1988.

\bibitem[Gy{\H o}91]{Gyo91}
Ervin Gy{\H o}ri.
\newblock On the number of edge disjoint cliques in graphs of given size.
\newblock {\em Combinatorica}, 11(3):231--243, 1991.

\bibitem[HHL{\etalchar{+}}23a]{HHLLYZ23b}
Jianfeng Hou, Caiyun Hu, Heng Li, Xizhi Liu, Caihong Yang, and Yixiao Zhang.
\newblock Many vertex-disjoint even cycles of fixed length in a graph.
\newblock {\em arXiv preprint arXiv:2311.16189}, 2023.

\bibitem[HHL{\etalchar{+}}23b]{HHLLYZ23a}
Jianfeng Hou, Caiyun Hu, Heng Li, Xizhi Liu, Caihong Yang, and Yixiao Zhang.
\newblock Toward a density {C}orr{\' a}di--{H}ajnal theorem for degenerate
  hypergraphs.
\newblock {\em arXiv preprint arXiv:2311.15172}, 2023.

\bibitem[HHL{\etalchar{+}}24]{HHLLYZ23c}
Jianfeng Hou, Caiyun Hu, Heng Li, Xizhi Liu, Caihong Yang, and Yixiao Zhang.
\newblock On the boundedness of degenerate hypergraphs.
\newblock {\em arXiv preprint arXiv:2407.00427}, 2024.

\bibitem[HLL{\etalchar{+}}23]{HLLYZ23}
Jianfeng Hou, Heng Li, Xizhi Liu, Long-Tu Yuan, and Yixiao Zhang.
\newblock A step towards a general density {C}orr{\' a}di--{H}ajnal theorem.
\newblock {\em arXiv preprint arXiv:2302.09849}, 2023.

\bibitem[HS70]{HS70}
A.~Hajnal and E.~Szemer{\' e}di.
\newblock Proof of a conjecture of {P}. {E}rd{\H o}s.
\newblock In {\em Combinatorial theory and its applications, {I}-{III} ({P}roc.
  {C}olloq., {B}alatonf{\" u}red, 1969)}, pages 601--623. North-Holland,
  Amsterdam, 1970.

\bibitem[HS10]{HS10}
Jan Hladk{\'y} and Mathias Schacht.
\newblock Note on bipartite graph tilings.
\newblock {\em SIAM J. Discrete Math.}, 24(2):357--362, 2010.

\bibitem[HT20]{HT20}
Joseph Hyde and Andrew Treglown.
\newblock A degree sequence version of the {K}\"uhn-{O}sthus tiling theorem.
\newblock {\em Electron. J. Combin.}, 27(3):Paper No. 3.48, 30, 2020.

\bibitem[Kaw02]{Kaw02}
Ken-ichi Kawarabayashi.
\newblock {$K^-_4$}-factor in a graph.
\newblock {\em J. Graph Theory}, 39(2):111--128, 2002.

\bibitem[KK08]{KK08}
H.~A. Kierstead and A.~V. Kostochka.
\newblock A short proof of the {H}ajnal-{S}zemer\'edi theorem on equitable
  colouring.
\newblock {\em Combin. Probab. Comput.}, 17(2):265--270, 2008.

\bibitem[KLPS20]{KLPS20}
Jaehoon Kim, Hong Liu, Oleg Pikhurko, and Maryam Sharifzadeh.
\newblock Asymptotic structure for the clique density theorem.
\newblock {\em Discrete Anal.}, pages Paper No. 19, 26, 2020.

\bibitem[KM15]{KM15}
Peter Keevash and Richard Mycroft.
\newblock A multipartite {H}ajnal-{S}zemer{\'e}di theorem.
\newblock {\em J. Combin. Theory Ser. B}, 114:187--236, 2015.

\bibitem[KO06]{KO06}
Daniela K{\"u}hn and Deryk Osthus.
\newblock Critical chromatic number and the complexity of perfect packings in
  graphs.
\newblock In {\em Proceedings of the {S}eventeenth {A}nnual {ACM}-{SIAM}
  {S}ymposium on {D}iscrete {A}lgorithms}, pages 851--859. ACM, New York, 2006.

\bibitem[KO09a]{KO09Survery}
Daniela K\"{u}hn and Deryk Osthus.
\newblock Embedding large subgraphs into dense graphs.
\newblock In {\em Surveys in combinatorics 2009}, volume 365 of {\em London
  Math. Soc. Lecture Note Ser.}, pages 137--167. Cambridge Univ. Press,
  Cambridge, 2009.

\bibitem[KO09b]{KO09}
Daniela K{\"u}hn and Deryk Osthus.
\newblock The minimum degree threshold for perfect graph packings.
\newblock {\em Combinatorica}, 29(1):65--107, 2009.

\bibitem[Kom00]{Kom00}
J{\'a}nos Koml{\'o}s.
\newblock Tiling {T}ur{\'a}n theorems.
\newblock {\em Combinatorica}, 20(2):203--218, 2000.

\bibitem[KS96]{KS96}
J.~Koml{\'o}s and M.~Simonovits.
\newblock Szemer\'edi's regularity lemma and its applications in graph theory.
\newblock In {\em Combinatorics, {P}aul {E}rd\H os is eighty, {V}ol.\ 2
  ({K}eszthely, 1993)}, volume~2 of {\em Bolyai Soc. Math. Stud.}, pages
  295--352. J\'anos Bolyai Math. Soc., Budapest, 1996.

\bibitem[KSS96]{KSS95}
J{\'a}nos Koml{\'o}s, G{\'a}bor~N. S{\'a}rk{\"o}zy, and Endre Szemer{\'e}di.
\newblock On the square of a {H}amiltonian cycle in dense graphs.
\newblock In {\em Proceedings of the {S}eventh {I}nternational {C}onference on
  {R}andom {S}tructures and {A}lgorithms ({A}tlanta, {GA}, 1995)}, volume~9,
  pages 193--211, 1996.

\bibitem[KSS98a]{KSS98a}
J{\'a}nos Koml{\'o}s, G{\'a}bor~N. S{\'a}rk{\"o}zy, and Endre Szemer{\'e}di.
\newblock On the {P}\'osa-{S}eymour conjecture.
\newblock {\em J. Graph Theory}, 29(3):167--176, 1998.

\bibitem[KSS98b]{KSS98b}
J{\'a}nos Koml{\'o}s, G\'abor~N. S{\'a}rk{\"o}zy, and Endre Szemer{\'e}di.
\newblock Proof of the {S}eymour conjecture for large graphs.
\newblock {\em Ann. Comb.}, 2(1):43--60, 1998.

\bibitem[KSS01]{KSS01}
J{\'a}nos Koml{\'o}s, G{\'a}bor~N. S{\'a}rk{\"o}zy, and Endre Szemer{\'e}di.
\newblock Proof of the {A}lon-{Y}uster conjecture.
\newblock volume 235, pages 255--269. 2001.
\newblock Combinatorics (Prague, 1998).

\bibitem[KSSS02]{KSSS02}
J{\'a}nos Koml{\'o}s, Ali Shokoufandeh, Mikl{\'o}s Simonovits, and Endre
  Szemer{\'e}di.
\newblock The regularity lemma and its applications in graph theory.
\newblock In {\em Theoretical aspects of computer science ({T}ehran, 2000)},
  volume 2292 of {\em Lecture Notes in Comput. Sci.}, pages 84--112. Springer,
  Berlin, 2002.

\bibitem[LM22]{LM22}
Xizhi Liu and Dhruv Mubayi.
\newblock On a generalized {E}rd{\H o}s-{R}ademacher problem.
\newblock {\em J. Graph Theory}, 100(1):101--126, 2022.

\bibitem[LP25]{LP25}
Xizhi Liu and Oleg Pikhurko.
\newblock A note on extremal constructions for the {E}rd{\H o}s--{R}ademacher
  problem.
\newblock {\em Combin. Probab. Comput.}, 34(1):52--62, 2025.

\bibitem[LPS20]{LPS20}
Hong Liu, Oleg Pikhurko, and Katherine Staden.
\newblock The exact minimum number of triangles in graphs with given order and
  size.
\newblock {\em Forum Math. Pi}, 8:e8, 144, 2020.

\bibitem[LS76]{LS76}
L.~Lov\'{a}sz and Mikl\'{o}s Simonovits.
\newblock On the number of complete subgraphs of a graph.
\newblock In {\em Proceedings of the {F}ifth {B}ritish {C}ombinatorial
  {C}onference ({U}niv. {A}berdeen, {A}berdeen, 1975)}, volume No. XV of {\em
  Congress. Numer.}, pages 431--441. Utilitas Math., Winnipeg, MB, 1976.

\bibitem[LS83]{LS83}
L.~Lov{\'a}sz and M.~Simonovits.
\newblock On the number of complete subgraphs of a graph. {II}.
\newblock In {\em Studies in pure mathematics}, pages 459--495. Birkh\"auser,
  Basel, 1983.

\bibitem[LSM23]{LS23}
Richard Lang and Nicol{\'a}s Sanhueza-Matamala.
\newblock On sufficient conditions for spanning structures in dense graphs.
\newblock {\em Proc. Lond. Math. Soc. (3)}, 127(3):709--791, 2023.

\bibitem[LW24]{LW24}
Allan Lo and Ella Williams.
\newblock Towards an edge-coloured {C}orr{\'{a}}di--{H}ajnal theorem.
\newblock {\em arXiv preprint arXiv:2408.10651}, 2024.

\bibitem[Man07]{Mantel07}
Willem Mantel.
\newblock Vraagstuk {XXVIII}.
\newblock {\em Wiskundige Opgaven}, 10(2):60--61, 1907.

\bibitem[MM62]{MM62}
J.~W. Moon and L.~Moser.
\newblock On a problem of {T}ur\'{a}n.
\newblock {\em Magyar Tud. Akad. Mat. Kutat\'{o} Int. K\"{o}zl.}, 7:283--286,
  1962.

\bibitem[Moo68]{Moon68}
J.~W. Moon.
\newblock On independent complete subgraphs in a graph.
\newblock {\em Canadian J. Math.}, 20:95--102, 1968.

\bibitem[MQ20]{MQ20}
Jie Ma and Yu~Qiu.
\newblock Some sharp results on the generalized {T}ur{\'a}n numbers.
\newblock {\em European J. Combin.}, 84:103026, 16, 2020.

\bibitem[MS17]{MS17}
Ryan~R. Martin and Jozef Skokan.
\newblock Asymptotic multipartite version of the {A}lon-{Y}uster theorem.
\newblock {\em J. Combin. Theory Ser. B}, 127:32--52, 2017.

\bibitem[Mub10]{Mub10}
Dhruv Mubayi.
\newblock Counting substructures {I}: color critical graphs.
\newblock {\em Adv. Math.}, 225(5):2731--2740, 2010.

\bibitem[Mub13]{Mub13}
Dhruv Mubayi.
\newblock Counting substructures {II}: {H}ypergraphs.
\newblock {\em Combinatorica}, 33(5):591--612, 2013.

\bibitem[MY23]{MY23}
Jie Ma and Long-Tu Yuan.
\newblock Supersaturation beyond color-critical graphs.
\newblock {\em arXiv preprint arXiv:2310.08081}, 2023.

\bibitem[Nik11]{Nik11}
V.~Nikiforov.
\newblock The number of cliques in graphs of given order and size.
\newblock {\em Trans. Amer. Math. Soc.}, 363(3):1599--1618, 2011.

\bibitem[Nik77]{Nik76}
Vladimir~S. Nikiforov.
\newblock On a problem of {P}. {E}rd{\H o}s.
\newblock {\em Annuaire Univ. Sofia Fac. Math. M\'{e}c.}, 71(2):157--160,
  1976/77.

\bibitem[NK81]{NK81}
V.~S. Nikiforov and N.~G. Khadzhiivanov.
\newblock Solution of the problem of {P}. {E}rd{\H o}s on the number of
  triangles in graphs with {$n$} vertices and {$[n\sp{2}/4]+l$} edges.
\newblock {\em C. R. Acad. Bulgare Sci.}, 34(7):969--970, 1981.

\bibitem[NS63]{NS63}
E.~A. Nordhaus and B.~M. Stewart.
\newblock Triangles in an ordinary graph.
\newblock {\em Canadian J. Math.}, 15:33--41, 1963.

\bibitem[PR17]{PR17}
Oleg Pikhurko and Alexander Razborov.
\newblock Asymptotic structure of graphs with the minimum number of triangles.
\newblock {\em Combin. Probab. Comput.}, 26(1):138--160, 2017.

\bibitem[Raz08]{Raz08}
Alexander~A. Razborov.
\newblock On the minimal density of triangles in graphs.
\newblock {\em Combin. Probab. Comput.}, 17(4):603--618, 2008.

\bibitem[Rei16]{Rei16}
Christian Reiher.
\newblock The clique density theorem.
\newblock {\em Ann. of Math. (2)}, 184(3):683--707, 2016.

\bibitem[Sey73]{Sey74}
Paul Seymour.
\newblock Problem section.
\newblock In {\em Combinatorics: Proceedings of the British Combinatorial
  Conference}, volume 1974, pages 201--202, 1973.

\bibitem[Sim68]{SI68}
M.~Simonovits.
\newblock A method for solving extremal problems in graph theory, stability
  problems.
\newblock In {\em Theory of {G}raphs ({P}roc. {C}olloq., {T}ihany, 1966)},
  pages 279--319. Academic Press, New York, 1968.

\bibitem[Sim74]{Sim74}
M.~Simonovits.
\newblock Extermal graph problems with symmetrical extremal graphs.
  {A}dditional chromatic conditions.
\newblock {\em Discrete Math.}, 7:349--376, 1974.

\bibitem[SZ03]{SZ03}
Ali Shokoufandeh and Yi~Zhao.
\newblock Proof of a tiling conjecture of {K}oml\'os.
\newblock {\em Random Structures Algorithms}, 23(2):180--205, 2003.

\bibitem[SZ04]{SZ04}
Ali Shokoufandeh and Yi~Zhao.
\newblock On a tiling conjecture of {K}oml\'os for 3-chromatic graphs.
\newblock {\em Discrete Math.}, 277(1-3):171--191, 2004.

\bibitem[Tre16]{Tre16}
Andrew Treglown.
\newblock A degree sequence {H}ajnal-{S}zemer{\'e}di theorem.
\newblock {\em J. Combin. Theory Ser. B}, 118:13--43, 2016.

\bibitem[Tur41]{TU41}
Paul Tur{\'a}n.
\newblock On an extermal problem in graph theory.
\newblock {\em Mat. Fiz. Lapok}, 48:436--452, 1941.

\bibitem[\v{C}56]{Cu56}
K.~\v{C}ul\'{i}k.
\newblock Teilweise {L}\"osung eines verallgemeinerten {P}roblems von {K}.
  {Z}arankiewicz.
\newblock {\em Ann. Polon. Math.}, 3:165--168, 1956.

\bibitem[Zha09]{Zhao09}
Yi~Zhao.
\newblock Bipartite graph tiling.
\newblock {\em SIAM J. Discrete Math.}, 23(2):888--900, 2009.

\end{thebibliography}
\begin{appendix}
\section{Calculations}
For every real number $\gamma \ge 0$ and integer $d \ge 0$, let 
\begin{align*}
    \mathbb{S}^{d}(\gamma)
    \coloneqq \left\{(x_1, \ldots, x_{d+1})\in \mathbb{R}^{d+1} \colon \sum_{i\in [d+1]}x_i \le \gamma~\mathrm{and}~x_i \ge 0~\mathrm{for}~i \in [d+1]\right\}. 
\end{align*}
For real numbers $b,x_2, \ldots, x_{10}$, let 
\begin{align*}
    \eta(b,x_2,\ldots,x_{10})
    \coloneqq 
    & - \frac{3 x_2^2}{112} - \frac{3 x_3^2}{56} - \frac{3 x_4^2}{32} - \frac{x_5^2}{8} - \frac{3}{8} \left(x_6 + \cdots + x_{10}\right)^2  \\[0.5em]
    & - \frac{3}{4} \left((x_2 + \cdots + x_5)x_{10} + (x_3 + x_4 + x_5)x_{9} + (x_4 + x_5)x_8 + x_5 x_7\right) \\[0.5em]
    & + \sum_{i=2}^{10} \frac{(i-1) b x_i}{10}.
\end{align*}
We prove the following upper bound for $\eta(b,x_2,\ldots,x_{10})$. All Mathematica calculations for this proposition are available at \url{https://xliu2022.github.io/DensityHSK4.nb}.
\begin{proposition}\label{PROP:inequality-psi-upper-bound}
    Let $\gamma, b \ge 0$ be real numbers. 
    We have 
    \begin{align*}
        \max_{(x_2,\ldots,x_{10}) \in \mathbb{S}^{8}(\gamma)} \eta(b,x_2,\ldots,x_{10})
        \le 
        \begin{cases}
          -\frac{3 \gamma^2}{8}+\frac{9 b \gamma}{10}, &\quad\text{if}\quad \gamma \in \left[0, \frac{6 b}{5}\right],  \\[0.5em]
          \frac{27 b^2}{50}, &\quad\text{if}\quad \gamma \in \left[\frac{6 b}{5}, \frac{(86-4 \sqrt{210})b}{15} \right], \\[0.5em]
          -\frac{\gamma^2}{40}+\frac{43 b \gamma}{150}+\frac{103 b^2}{1125}, &\quad\text{if}\quad \gamma \in \left[ \frac{(86-4 \sqrt{210})b}{15}, \frac{56 b}{15} \right], \\[0.5em]
          -\frac{3 \gamma^2}{232}+ \frac{57 b \gamma}{290}+\frac{113 b^2}{435}, &\quad\text{if}\quad \gamma \in \left[ \frac{56 b}{15}, \frac{38 b}{5} \right], \\[0.5em]
            \frac{151 b^2}{150}, &\quad\text{if}\quad \gamma \geq \frac{38 b}{5}.
        \end{cases}
    \end{align*}
\end{proposition}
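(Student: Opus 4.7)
The function $\eta$ is a concave quadratic in $(x_2,\ldots,x_{10})$. This can be verified by direct inspection: the diagonal squared terms $-c_i x_i^2$ are concave with negative coefficients, the aggregated term $-\frac{3}{8}(x_6+\cdots+x_{10})^2$ is concave, and the bilinear cross terms $-\frac{3}{4}\sum (x_2+\cdots+x_5)x_j$ fit into a block structure whose symmetrized $9\times 9$ matrix is negative semidefinite (by a direct eigenvalue or sum-of-squares computation). Hence the maximum over the compact convex polytope $\mathbb{S}^{8}(\gamma)$ is attained, and I can apply KKT conditions.

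The key structural reduction is that for any fixed $(x_2,x_3,x_4,x_5)$ and fixed total $y=x_6+\cdots+x_{10}$, the dependence of $\eta$ on $(x_6,\ldots,x_{10})$ is \emph{linear}, since the only quadratic contribution from these variables is the single term $-\frac{3}{8}y^2$. It follows that at a maximum at most one of $x_6,\ldots,x_{10}$ is nonzero. Writing the net coefficient of $x_j$ (for $j\in\{6,\ldots,10\}$) as
\[
\lambda_j = \frac{(j-1)b}{10}-\frac{3}{4}\sum_{i\in R_j}x_i,\qquad R_6=\emptyset,\ R_7=\{5\},\ R_8=\{4,5\},\ R_9=\{3,4,5\},\ R_{10}=\{2,3,4,5\},
\]
one computes $\lambda_j-\lambda_{j-1}=\frac{b}{10}-\frac{3}{4}x_{j-5}$, so the preferred index $j^{*}$ is determined by which of $x_2,x_3,x_4,x_5$ exceed the threshold $\frac{2b}{15}$: the more of them are large, the smaller the optimal $j^{*}$, eventually collapsing to $j^{*}=6$ where no cross penalty is paid. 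This delineates six regimes (five choices of $j^{*}$, plus the case $y=0$).

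With the index $j^{*}$ fixed, the problem becomes a strictly concave quadratic program in the five variables $(x_2,x_3,x_4,x_5,x_{j^{*}})$ on the sub-simplex $\sum x_i\le\gamma$. The KKT conditions produce a small linear system whose solution yields the optimum in closed form whenever the candidate critical point is feasible; otherwise I drop to a face of the sub-simplex by setting additional $x_i=0$ and recurse on the reduced problem. The four breakpoints $\gamma\in\{\tfrac{6b}{5},\tfrac{(86-4\sqrt{210})b}{15},\tfrac{56b}{15},\tfrac{38b}{5}\}$ appearing in the statement mark exactly the values at which the configuration of active constraints changes; the irrational threshold $\tfrac{(86-4\sqrt{210})b}{15}$ is the root of a quadratic obtained by equating the values of two adjacent regimes.

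The main obstacle is the sheer volume of case analysis, but each case reduces to mechanical linear algebra. To keep the write-up concise, I would organize the argument as follows: in each of the five regimes, exhibit the explicit maximizer $(x_2,\ldots,x_{10})$ predicted by the reduction above, substitute to obtain the claimed upper bound, and verify optimality by writing $\eta_{\max}-\eta$ as an explicit sum of squares plus nonnegative multiples of the slack variables. The Mathematica notebook linked in the proposition discharges the algebraic identities in each regime.
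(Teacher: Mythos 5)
The central claim of your proposal---that $\eta$ is a concave quadratic in $(x_2,\ldots,x_{10})$---is false. Evaluating the underlying quadratic form $Q$ of $\eta$ at the direction with $x_2=1$, $x_{10}=-1$, and all other coordinates zero gives
\[
Q = -\tfrac{3}{112}\cdot 1 \;-\; \tfrac{3}{8}\cdot(-1)^2 \;-\; \tfrac{3}{4}\cdot 1\cdot(-1) \;=\; -\tfrac{3}{112} - \tfrac{42}{112} + \tfrac{84}{112} \;=\; \tfrac{39}{112} \;>\; 0,
\]
so $Q$ is indefinite, the $9\times 9$ Hessian is not negative semidefinite, and $\eta$ is not concave on any full-dimensional convex set (in particular not on $\mathbb{S}^8(\gamma)$). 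The same direction lives inside the five-variable subspace $(x_2,\ldots,x_5,x_{j^*})$ whenever $j^*\in\{7,8,9,10\}$, so the reduced problems you call ``strictly concave quadratic programs'' are in fact indefinite for those $j^*$. Consequently KKT stationarity is necessary but not sufficient, and your plan of ``exhibit the critical point, substitute, certify by a sum-of-squares decomposition'' does not go through: there is no unconditional sum-of-squares certificate for an indefinite quadratic, and the proposal never explains how the simplex constraints are to enter the certificate.

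Your intermediate observation---that for fixed $(x_2,\ldots,x_5)$ and fixed total $y=x_6+\cdots+x_{10}$ the objective is affine in $(x_6,\ldots,x_{10})$, hence at most one of those variables is positive at an optimum---is correct, but it is not strong enough on its own because it leaves $(x_2,\ldots,x_5)$ unconstrained, which keeps the indefinite cross terms alive. The paper's proof relies on the opposite phenomenon: along one-parameter deformations that move mass between coordinate blocks, e.g.\ replacing $(x_2,\ldots,x_9,x_{10})$ by $\bigl((1-y)x_2,\ldots,(1-y)x_9,\,x_{10}+(x_2+\cdots+x_9)y\bigr)$, the restricted objective is \emph{convex} in $y$, so by Fact~\ref{FACT:convex-optimization} the maximum is at an endpoint, which forces either the donating block or the receiving coordinate to vanish. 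Iterating this in Claims~\ref{CLAIM:psi-shift-10}--\ref{CLAIM:psi-shift-2-3-4-7} lands on exactly five sparse support patterns, and in each one the surviving coordinate from $\{x_6,\ldots,x_{10}\}$ pairs only with variables $x_i$, $i\le 5$, that have already been forced to zero, so every cross term vanishes, the residual quadratic is diagonal and genuinely concave, and one maximizes by completing the square. Without this additional coupling between the choice of $j^*$ and which of $x_2,\ldots,x_5$ survive, your reduction does not produce a concave subproblem, and that is where the argument breaks.
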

\begin{proof}[Proof of Proposition~\ref{PROP:inequality-psi-upper-bound}]
    Fix $\gamma, b \ge 0$.
    Choose $(x_2,\ldots,x_{10}) \in \mathbb{S}^{8}(\gamma)$ such that 
    \begin{align*}
        \eta(b,x_2,\ldots,x_{10})
        = \eta_{\ast}
        \coloneqq \max_{(z_2,\ldots,z_{10}) \in \mathbb{S}^{8}(\gamma)} \eta(b,z_2,\ldots,z_{10}).
    \end{align*}
    \begin{claim}\label{CLAIM:psi-shift-10}
        We may assume that $x_2+ \cdots + x_9 = 0$ or $x_{10} = 0$. 
    \end{claim}
    \begin{proof}[Proof of Claim~\ref{CLAIM:psi-shift-10}]
        Suppose that $x_2+ \cdots + x_9 \neq 0$ and $x_{10} \neq 0$. 
        Let 
        \begin{align*}
            q_{1}(y) 
            \coloneqq \eta\left(b, (1-y)x_2,\ldots, (1-y)x_{9},x_{10}+(x_2+ \cdots + x_{9})y \right).
        \end{align*}
        Notice that $q_{1}(y)$ is a quadratic polynomial in $y$, and straightforward calculations show that the coefficient of  $y^2$ in $q_{1}(y)$ is 
        \begin{align*}
            & \frac{39 x_2^2}{112}+\frac{9 x_3^2}{28}+\frac{9 x_4^2}{32}+\frac{x_5^2}{4} \\[0.5em]
            & + \frac{3}{4} \left(x_2(x_3 + \cdots + x_9) 
            + x_3(x_4+ \cdots + x_8)
            + x_4(x_5 + x_6 + x_7)
            + x_5 x_6 \right) \ge 0.
        \end{align*}
        So it follows from Fact~\ref{FACT:convex-optimization} that 
        \begin{align*}
            \eta(b,x_2,\ldots,x_{10})
            = q_{1}(0)
            \le \max\left\{q_{1}\left(-\frac{x_{10}}{x_2 + \cdots + x_9}\right),q_{1}(1)\right\}, 
        \end{align*}
        which means that we may assume that $x_2+ \cdots + x_9 = 0$ or $x_{10} = 0$. 
    \end{proof}
    \begin{claim}\label{CLAIM:psi-shift-2-9}
        Suppose that $x_{10} = 0$. Then we may assume that $x_3+ \cdots + x_8 = 0$ or $x_{9} = 0$. 
    \end{claim}
    \begin{proof}[Proof of Claim~\ref{CLAIM:psi-shift-2-9}]
        Suppose that $x_3+ \cdots + x_8 \neq 0$ and $x_{9} \neq 0$. 
        Similar to the proof of Claim~\ref{CLAIM:psi-shift-10}, let 
        \begin{align*}
            q_{2}(y)
            \coloneqq \eta\left(b, x_2,(1-y)x_{3},  \ldots, (1-y)x_{8}, x_9+ (x_3+ \cdots + x_{8})y, 0\right).
        \end{align*}
        Notice that $q_{2}(y)$ is a quadratic polynomial in $y$, and straightforward calculations show that the coefficient of $y^2$ in $q_{2}(y)$ is
        \begin{align*}
            \frac{9 x_3^2}{28} + \frac{9 x_4^2}{32} + \frac{x_5^2}{4} + \frac{3}{4}\left(x_3 (x_4 + \cdots + x_8) + x_4(x_5+x_6+x_7) + x_5 x_6\right)
            \ge 0. 
        \end{align*}
        Therefore, we have 
        \begin{align*}
            \eta(b,x_2,\ldots,x_{10})
            = q_{2}(0)
            \le \max\left\{q_{2}\left(-\frac{x_{9}}{x_3 + \cdots + x_8}\right),q_{2}(1)\right\}, 
        \end{align*}
        meaning that we can assume that $x_3+ \cdots + x_8 = 0$ or $x_{9} = 0$. 
    \end{proof}
    Similar proofs to those in Claims~\ref{CLAIM:psi-shift-10} and~\ref{CLAIM:psi-shift-2-9} yield the following results. 
    \begin{claim}\label{CLAIM:psi-shift-2-3-8}
        Suppose that $x_9 = x_{10} = 0$. Then we may assume that $x_4+ x_5 + x_6 + x_7 = 0$ or $x_{8} = 0$. 
    \end{claim}
    \begin{claim}\label{CLAIM:psi-shift-2-3-4-7}
        Suppose that $x_8 = x_9 = x_{10} = 0$. Then we may assume that $x_5 + x_6 = 0$ or $x_{7} = 0$. 
    \end{claim}
    It follows from Claims~\ref{CLAIM:psi-shift-10},~\ref{CLAIM:psi-shift-2-9},~\ref{CLAIM:psi-shift-2-3-8}, and~\ref{CLAIM:psi-shift-2-3-4-7} that 
    \begin{align}\label{equ:psi-max-five-vectors}
        \eta_{\ast}
        & =\max\left\{\eta(b,0,\ldots, 0, x_{10}),~\eta(b,x_2, 0,\ldots, 0, x_{9},0),~\eta(b,x_2,x_3, 0,\ldots, 0, x_{8},0,0), \right. \notag \\[0.5em]
        & \qquad\qquad \left. \eta(b,x_2,x_3,x_4, 0, 0, x_{7},0,0,0),~\eta(b,x_2,x_3,x_4,x_5,x_6, 0,\ldots, 0)\right\}. 
    \end{align}
    Next, we consider respectively the upper bounds for $\eta(b,0,\ldots, 0, x_{10})$, $\eta(b,x_2, 0,\ldots, 0, x_{9},0)$, $\eta(b,x_2,x_3, 0,\ldots, 0, x_{8},0,0)$, $\eta(b,x_2,x_3,x_4, 0, 0, x_{7},0,0,0)$, and $\eta(b,x_2,x_3,x_4,x_5,x_6, 0,\ldots, 0)$.
    
    For $\eta(b,0,\ldots, 0, x_{10})$, we have 
    \begin{align}\label{equ:psi-max-five-vectors-x10}
        \eta(b,0,\ldots, 0, x_{10})
        = -\frac{3}{8}\left(x_{10} - \frac{6b}{5}\right)^2 + \frac{27 b^2}{50}
        \le 
        \begin{cases}
            - \frac{3 \gamma^2}{8} + \frac{9 b \gamma}{10}, &\quad\text{if}\quad \gamma \in \left[0, \frac{6b}{5}\right], \\[0.5em]
            \frac{27 b^2}{50}, &\quad\text{if}\quad \gamma > \frac{6b}{5}. 
        \end{cases}
    \end{align}
    For $\eta(b,x_2, 0,\ldots, 0, x_{9},0)$, we have 
    \begin{align}\label{equ:psi-max-five-vectors-x2-x9}
        \eta(b,x_2, 0,\ldots, 0, x_{9},0)
        &= -\frac{3}{112} \left(x_2 - \frac{28 b}{15} \right)^2 -\frac{3}{8}\left(x_9-\frac{16 b}{15}\right)^2 +\frac{13 b^2}{25} \notag \\[0.5em]
        &\leq 
        \begin{cases}
            -\frac{3 \gamma^2}{8} + \frac{4 b \gamma}{5}, &\quad\text{if}\quad \gamma \in \left[0, \frac{6b}{5}\right], \\[0.5em]
            -\frac{\gamma^2}{40}+\frac{11 b \gamma}{75}+ \frac{343 b^2}{1125}, &\quad\text{if}\quad \gamma \in \left[\frac{14 b}{15}, \frac{44 b}{15} \right], \\[0.5em]
            \frac{13 b^2}{25}, &\quad\text{if}\quad \gamma \geq \frac{44 b}{15}.
        \end{cases}
    \end{align}
    Here, the last inequality is derived using Mathematica but can also be proven without a computer, as demonstrated in Proposition~\ref{PROP:max-two-var-poiynomial}. 

    For $\eta(b,x_2,x_3, 0,\ldots, 0, x_{8},0,0)$, we have 
    \begin{align}\label{equ:psi-max-five-vectors-x2-x3-x8}
        & \eta(b,x_2,x_3, 0,\ldots, 0, x_{8},0,0) \notag \\[0.5em]
        & \quad = -\frac{3}{112} \left(x_2 - \frac{28}{15}b \right)^2 - \frac{3}{56}\left(x_3 - \frac{28}{15}b \right)^2 -\frac{3}{8}\left(x_8-\frac{14}{15}b \right)^2   + \frac{91 b^2}{150} \notag \\[0.5em]
        & \quad \le 
        \begin{cases}
            -\frac{3 \gamma^2}{8}+\frac{7 b \gamma}{10}, &\quad\text{if}\quad \gamma \in \left[0, \frac{6b}{5}\right], \\[0.5em]
            -\frac{3 \gamma^2}{64}+\frac{21 b \gamma}{80} + \frac{7 b^2}{48}, &\quad\text{if}\quad \gamma \in \left[\frac{2 b}{3}, \frac{26 b}{15} \right], \\[0.5em]
            -\frac{3 \gamma^2}{176}+\frac{7 b \gamma}{44} +\frac{259 b^2}{1100}, &\quad\text{if}\quad \gamma \in \left[\frac{26 b}{15}, \frac{14 b}{3} \right], \\[0.5em]
            \frac{91 b^2}{150}, &\quad\text{if}\quad \gamma \geq \frac{14 b}{3}.
        \end{cases}
    \end{align}
    Similarly, the inequality above is derived using Mathematica; but can also be proven using a similar argument as in Proposition~\ref{PROP:max-two-var-poiynomial}. We omit the tedious calculations here. 

    For $\eta(b,x_2,x_3,x_4, 0, 0, x_{7},0,0,0)$, we have 
    \begin{align}\label{equ:psi-max-five-vectors-x2-x3--x4-x7}
        & \eta(b,x_2,x_3,x_4, 0, 0, x_{7},0,0,0) \notag \\[0.5em]
        &\quad = -\frac{3}{112}\left(x_2 - \frac{28 b}{15}\right)^2 - \frac{3}{56}\left(x_3 - \frac{28 b}{15}\right)^2 - \frac{3}{32}\left(x_4 - \frac{8 b}{5}\right)^2 - \frac{3}{8}\left(x_7 - \frac{4 b}{5}\right)^2 + \frac{19 b^2}{25} \notag \\[0.5em]
        &\quad \leq 
        \begin{cases}
           -\frac{3 \gamma^2}{8} + \frac{3 b \gamma}{5}, &\quad\text{if}\quad \gamma \in \left[0, \frac{6b}{5}\right], \\[0.5em]
           -\frac{3 \gamma^2}{40} +\frac{9 b \gamma}{25}+ \frac{6 b^2}{125}, &\quad\text{if}\quad  \gamma \in \left[\frac{2 b}{5}, \frac{16 b}{15} \right], \\[0.5em]
           -\frac{\gamma^2}{32} + \frac{4 b \gamma}{15} + \frac{22 b^2}{225}, &\quad\text{if}\quad \gamma \in \left[\frac{16 b}{15}, \frac{8 b}{3}b \right],  \\[0.5em]
           -\frac{3 \gamma^2}{208} + \frac{23 b \gamma}{130} +\frac{212 b^2}{975}, &\quad\text{if}\quad \gamma \in \left[\frac{8 b}{3}, \frac{92 b}{15} \right], \\[0.5em]
           \frac{19 b^2}{25}, &\quad\text{if}\quad \gamma \geq \frac{92 b}{15}. 
        \end{cases}
    \end{align}
    Similarly, the inequality above is derived using Mathematica; but can also be proven using a similar argument as in Proposition~\ref{PROP:max-two-var-poiynomial}. We omit the tedious calculations here.

    For $\eta(b,x_2,x_3,x_4,x_5,x_6, 0,\ldots, 0)$, we have 
    \begin{align}\label{equ:psi-max-five-vectors-x2-x3--x4-x5-x6}
         \eta(b,x_2,x_3,x_4,x_5,x_6, 0,\ldots, 0)
        &  = -\frac{3}{112}\left(x_2 - \frac{28 b}{15}\right)^2 - \frac{3}{56}\left(x_3 - \frac{28 b}{15}\right)^2 \notag \\[0.5em]
        & \qquad - \frac{3}{32}\left(x_4 - \frac{8 b}{5}\right)^2  - \frac{1}{8}\left(x_5 - \frac{8 b}{5}\right)^2
        - \frac{3}{8}\left(x_6 - \frac{2 b}{3}\right)^2 + \frac{151 b^2}{150} \notag \\[0.5em]
        &\quad \le 
        \begin{cases}
            -\frac{3 \gamma^2}{8} + \frac{b \gamma}{2}, &\quad\text{if}\quad \gamma \in \left[0, \frac{6b}{5}\right], \\[0.5em]
            -\frac{3 \gamma^2}{32}+\frac{17 b \gamma}{40}+\frac{b^2}{200}, &\quad\text{if}\quad \gamma \in \left[\frac{2b}{15}, \frac{2 b}{3}\right], \\[0.5em]
            -\frac{3 \gamma^2}{64}+\frac{29 b \gamma}{80}+\frac{31 b^2}{1200}, &\quad\text{if}\quad \gamma \in \left[\frac{2 b}{3}, \frac{26 b}{15}\right], \\[0.5em]
            -\frac{\gamma^2}{40}+\frac{43 b \gamma}{150}+\frac{103 b^2}{1125}, &\quad\text{if}\quad \gamma \in \left[\frac{26 b}{15}, \frac{56 b}{15}\right], \\[0.5em]
            -\frac{3 \gamma^2}{232}+ \frac{57 b \gamma}{290}+\frac{113 b^2}{435}, &\quad\text{if}\quad \gamma \in \left[\frac{56 b}{15}, \frac{38 b}{5}\right], \\[0.5em]
            \frac{151 b^2}{150}, &\quad\text{if}\quad \gamma \geq \frac{38 b}{5}.
        \end{cases}
    \end{align}
    Similarly, the inequality above is derived using Mathematica; but can also be proven using a similar argument as in Proposition~\ref{PROP:max-two-var-poiynomial}. We omit the tedious calculations here.

    Merging (this is done using Mathematica) the piecewise functions given by~\eqref{equ:psi-max-five-vectors-x10},~\eqref{equ:psi-max-five-vectors-x2-x9},~\eqref{equ:psi-max-five-vectors-x2-x3-x8},~\eqref{equ:psi-max-five-vectors-x2-x3--x4-x7}, and~\eqref{equ:psi-max-five-vectors-x2-x3--x4-x5-x6}, and combining them with~\eqref{equ:psi-max-five-vectors}, we obtain  
    \begin{align*}
        \eta_{\ast}
        \le 
        \begin{cases}
          -\frac{3 \gamma^2}{8}+\frac{9 b \gamma}{10}, &\quad\text{if}\quad \gamma \in \left[0, \frac{6b}{5}\right], \\[0.5em]
          \frac{27 b^2}{50}, &\quad\text{if}\quad \gamma \in \left[\frac{6 b}{5}, \frac{(86-4 \sqrt{210})b}{15} \right], \\[0.5em]
          -\frac{\gamma^2}{40}+\frac{43 b \gamma}{150}+\frac{103 b^2}{1125}, &\quad\text{if}\quad \gamma \in \left[ \frac{(86-4 \sqrt{210})b}{15}, \frac{56 b}{15} \right], \\[0.5em]
          -\frac{3 \gamma^2}{232}+ \frac{57 b \gamma}{290}+\frac{113 b^2}{435}, &\quad\text{if}\quad \gamma \in \left[ \frac{56 b}{15}, \frac{38 b}{5} \right], \\[0.5em]
            \frac{151 b^2}{150}, &\quad\text{if}\quad \gamma \geq \frac{38 b}{5}.
        \end{cases}
    \end{align*}
    This completes the proof of Proposition~\ref{PROP:inequality-psi-upper-bound}.
\end{proof}
We present the calculations for~\eqref{equ:psi-max-five-vectors-x2-x9} in the proposition below. 
\begin{proposition}\label{PROP:max-two-var-poiynomial}
    Let $\gamma, b \ge 0$ be real numbers and $\Phi(x,y) \coloneqq -\frac{3}{112} \left(x - \frac{28 b}{15} \right)^2 -\frac{3}{8}\left(y-\frac{16 b}{15}\right)^2$. 
    We have 
    \begin{align*}
        \max_{(x,y) \in \mathbb{S}^{1}(\gamma)}\Phi(x,y)
        =
        \begin{cases}
            -\frac{3 \gamma^2}{8} + \frac{4 b \gamma}{5} - \frac{13b^2}{25}, &\quad\text{if}\quad \gamma \in \left[0, \frac{14 b}{15} \right], \\[0.5em]
            -\frac{\gamma^2}{40}+\frac{11 b \gamma}{75} -\frac{242 b^2}{1125}, &\quad\text{if}\quad \gamma \in \left[\frac{14 b}{15}, \frac{44 b}{15} \right], \\[0.5em]
            0, &\quad\text{if}\quad \gamma \geq \frac{44 b}{15}.
        \end{cases}
    \end{align*}
\end{proposition}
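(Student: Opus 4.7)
The plan is to exploit the strict concavity of $\Phi$ together with the Karush--Kuhn--Tucker conditions on the triangular region $\mathbb{S}^{1}(\gamma)$. Since $\Phi$ is a sum of two negative squares, it is strictly concave with unique unconstrained maximizer $(x^{\ast}, y^{\ast}) = (28 b/15,\, 16 b/15)$, where $\Phi = 0$ and $x^{\ast} + y^{\ast} = 44 b/15$. The third case $\gamma \ge 44 b/15$ is immediate: $(x^{\ast}, y^{\ast}) \in \mathbb{S}^{1}(\gamma)$ so the maximum equals $0$.

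For $\gamma < 44 b/15$, the unconstrained maximizer is infeasible and concavity forces the maximum onto $\partial\mathbb{S}^{1}(\gamma)$. The key step is to optimize over the hypotenuse $x+y=\gamma$: parameterizing $y=\gamma-x$ and setting the derivative to zero yields a single critical point
\[
(x_0, y_0) \;=\; \left(\tfrac{14\gamma}{15} - \tfrac{196 b}{225},\; \tfrac{\gamma}{15} + \tfrac{196 b}{225}\right),
\]
and a short substitution gives the tidy identity
\[
\Phi(x_0, y_0) \;=\; -\tfrac{1}{40}\!\left(\gamma - \tfrac{44 b}{15}\right)^{2},
\]
which after expansion matches the middle case of the proposition. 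Feasibility $x_0 \ge 0$ amounts exactly to $\gamma \ge 14 b/15$, cutting out the intermediate regime precisely.

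To upgrade this from ``maximum on the hypotenuse'' to ``maximum on $\mathbb{S}^{1}(\gamma)$'' in the middle regime, I would compute $\nabla \Phi(x_0, y_0) = -\tfrac{1}{20}(\gamma - \tfrac{44 b}{15})(1,1)$, which is a \emph{positive} scalar multiple of the outward normal $(1,1)$ to the hypotenuse. Hence the KKT conditions for maximizing a concave function hold at $(x_0, y_0)$, confirming it is the global maximum on $\mathbb{S}^{1}(\gamma)$.

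In the first regime $\gamma \in [0, 14 b/15]$ the Lagrange critical point $x_0$ is negative, so strict concavity of $\Phi(x, \gamma - x)$ in $x$ forces the hypotenuse maximum to an endpoint; a one-line computation gives $\Phi(0, \gamma) - \Phi(\gamma, 0) = \gamma\bigl(\tfrac{7 b}{10} - \tfrac{39\gamma}{112}\bigr) > 0$ throughout this range. Since $\gamma < 14 b/15 < 16 b/15$ and $\gamma < 28 b/15$, the concave one-variable restrictions $\Phi(0,\cdot)$ and $\Phi(\cdot, 0)$ on the other two edges are monotonically increasing on $[0, \gamma]$, pushing their maxima to the shared corners $(0, \gamma)$ and $(\gamma, 0)$. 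Comparing yields the global maximum $\Phi(0, \gamma) = -\tfrac{3\gamma^{2}}{8} + \tfrac{4 b\gamma}{5} - \tfrac{13 b^{2}}{25}$, matching the first case. The main (modest) bookkeeping issue is the piecewise joining of the three cases, which the KKT framework handles uniformly; no step requires more than a one-variable concave optimization.
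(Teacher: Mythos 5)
Your proof is correct, and for the middle regime it takes a genuinely cleaner route than the paper. The paper explicitly optimizes the restriction of $\Phi$ on each of the three edges $\{x=0\}$, $\{y=0\}$, $\{x+y=\gamma\}$, obtains piecewise quadratic bounds for each, rules out interior critical points by feasibility, and then compares the three piecewise bounds against one another to extract the overall maximum. Your KKT argument replaces the pairwise comparison step entirely in the regime $\gamma \in \left[\frac{14b}{15}, \frac{44b}{15}\right]$: once the hypotenuse critical point $(x_0,y_0)$ is feasible (which is exactly $x_0 \ge 0$, i.e., $\gamma \ge \frac{14b}{15}$), the single check that $\nabla\Phi(x_0,y_0) = -\frac{1}{20}\left(\gamma - \frac{44b}{15}\right)(1,1)$ is a nonnegative multiple of the outward normal, combined with strict concavity, certifies global optimality without ever touching the other two edges. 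Your identity $\Phi(x_0,y_0) = -\frac{1}{40}\left(\gamma - \frac{44b}{15}\right)^2$ also makes the piecewise form transparent in a way the paper's expanded expressions do not. In the first regime your argument essentially coincides with the paper's: both reduce to comparing corner values, and your computation $\Phi(0,\gamma) - \Phi(\gamma,0) = \gamma\left(\frac{7b}{10} - \frac{39\gamma}{112}\right) \ge 0$ matches the paper's pairwise comparison. Both approaches are valid; yours trades a bit of convex-analysis machinery for fewer quadratic comparisons.
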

\begin{proof}[Proof of Proposition~\ref{PROP:max-two-var-poiynomial}]
Fix $\gamma, b \ge 0$. 
Fix $(x,y) \in \mathbb{S}^{1}(\gamma)$ such that $\Phi(x,y)$ is maximized over $\mathbb{S}^{1}(\gamma)$.  
The case $\gamma \ge \frac{44b}{15} = \frac{28b}{15} + \frac{16 b}{15}$ is clear, so we may assume that $0 \le \gamma < \frac{44b}{15}$.

\textbf{Case 1}: $x = 0$. 

We now have a one-variable optimization problem:
\begin{align*}
    \text{maximize}\quad & \Phi\left(0,y\right) = -\frac{7 b^2}{75}  -\frac{3}{8}\left(y-\frac{16 b}{15}\right)^2, \\[0.5em]
    \text{subject to}\quad & y \in [0, \gamma].
\end{align*}
It follows from Fact~\ref{FACT:convex-optimization} that 
\begin{align}\label{equ:quad-poly-1}
    \max_{y \in [0, \gamma]}\Phi\left(0,y\right)
    & = 
    \begin{cases}
        -\frac{7 b^2}{75}, &\quad\text{if}\quad \gamma \ge \frac{16b}{15}, \\[0.5em]
        -\frac{3 \gamma^2}{8} +\frac{4 b \gamma}{5} -\frac{13 b^2}{25}, &\quad\text{if}\quad \gamma < \frac{16b}{15}. 
    \end{cases}
\end{align}

\medskip

\textbf{Case 2}: $y = 0$.

Note that 
\begin{align*}
    \Phi(x,0)
    = -\frac{3}{112} \left(x - \frac{28 b}{15} \right)^2 -\frac{32 b^2}{75}. 
\end{align*}
Similar to Case 1, we have 
\begin{align}\label{equ:quad-poly-2}
    \max_{x \in [0, \gamma]}\Phi(x,0)
    & = 
    \begin{cases}
        -\frac{32 b^2}{75}, &\quad\text{if}\quad \gamma \ge \frac{18 b}{15}, \\[0.5em]
        -\frac{3 \gamma^2}{112} +\frac{b \gamma}{10} -\frac{13 b^2}{25}, &\quad\text{if}\quad \gamma < \frac{18 b}{15}. 
    \end{cases}
\end{align}

\medskip

\textbf{Case 3}: $x + y = \gamma$.

Note that 
\begin{align*}
    \Phi(x,\gamma - x)
    = -\frac{45}{112}\left(x - \frac{14(15 \gamma - 14 b)}{225}\right)^2 -\frac{\gamma^2}{40} +\frac{11 b \gamma}{75} -\frac{242 b^2}{1125}.
\end{align*}
Since $\gamma \ge \frac{114(15 \gamma - 14 b)}{225}$ for all $\gamma \ge 0$, we have 
\begin{align}\label{equ:quad-poly-3}
    \max_{x\in [0, \gamma]}\Phi(x,\gamma - x)
    & = 
    \begin{cases}
        -\frac{\gamma^2}{40} +\frac{11 b \gamma}{75} -\frac{242 b^2}{1125}, &\quad\text{if}\quad \gamma \ge \frac{14 b}{15}, \\[0.5em]
        -\frac{3 \gamma^2}{8} +\frac{4 b \gamma}{5} -\frac{13 b^2}{25}, &\quad\text{if}\quad \gamma < \frac{14 b}{15}. 
    \end{cases}
\end{align}

\medskip

\textbf{Case 4}: $x > 0$, $y > 0$, and $x+y < \gamma$. 

It follows from the maximality of $\Phi(x,y)$ that 
\begin{align*}
    \frac{\partial \Phi(x,y)}{\partial x} 
     = -\frac{6}{112}\left(x - \frac{28 b}{15}\right) = 0, \quad\text{and}\quad 
    \frac{\partial \Phi(x,y)}{\partial y} 
     = -\frac{3}{4}\left(y - \frac{16 b}{15}\right) = 0.
\end{align*}
This implies that $(x, y) = \left(\frac{28 b}{15},\frac{16 b}{15}\right)$ and thus $\gamma > \frac{28 b}{15} + \frac{16 b}{15} = \frac{44b}{15}$, contradicting the assumption that $\gamma < \frac{44b}{15}$.

\bigskip

Next, we compare the lower bounds~\eqref{equ:quad-poly-1},~\eqref{equ:quad-poly-2}, and~\eqref{equ:quad-poly-3} obtained above. 

Since 
\begin{align*}
    \left(-\frac{3 \gamma^2}{8} +\frac{4 b \gamma}{5} -\frac{13 b^2}{25}\right)
    - \left(-\frac{3 \gamma^2}{112} +\frac{b \gamma}{10} -\frac{13 b^2}{25}\right)
    = \frac{\gamma (392 b-195 \gamma)}{560} 
    \ge 0
\end{align*}
for $\gamma \in \left[0, \frac{14 b}{15}\right]$, 
we have 
\begin{align*}
    \max_{(x,y) \in \mathbb{S}^{1}(\gamma)}\Phi(x,y)
    = -\frac{3 \gamma^2}{8} +\frac{4 b \gamma}{5} -\frac{13 b^2}{25} 
    \quad\text{for}\quad \gamma \in \left[0, \frac{14 b}{15}\right]. 
\end{align*}

Since 
\begin{align*}
    \left(-\frac{\gamma^2}{40} +\frac{11 b \gamma}{75} -\frac{242 b^2}{1125}\right)
    - \left(-\frac{7b^2}{75}\right)
    = \frac{238 b^2}{1125}+\frac{11 b \gamma}{75}-\frac{\gamma^2}{40}
    \ge 0, \quad\text{for}\quad k \in \left[\frac{16b}{15}, \frac{44b}{15}\right],
\end{align*}
\begin{align*}
    \left(-\frac{\gamma^2}{40} +\frac{11 b \gamma}{75} -\frac{242 b^2}{1125}\right)
    - \left(-\frac{3 \gamma^2}{8} +\frac{4 b \gamma}{5} -\frac{13 b^2}{25}\right)
    = \frac{7 (14 b-15 \gamma)^2}{4500}
    \ge 0, 
\end{align*}
\begin{align*}
    \left(-\frac{\gamma^2}{40} +\frac{11 b \gamma}{75} -\frac{242 b^2}{1125}\right)
    - \left(-\frac{32b^2}{75}\right)
    = \frac{238 b^2}{1125}+\frac{11 b \gamma}{75}-\frac{\gamma^2}{40}
    \ge 0, \quad\text{for}\quad k \in \left[\frac{18b}{15}, \frac{44b}{15}\right],
\end{align*}

and
\begin{align*}
    \left(-\frac{\gamma^2}{40} +\frac{11 b \gamma}{75} -\frac{242 b^2}{1125}\right)
    - \left(-\frac{3 \gamma^2}{112} +\frac{b \gamma}{10} -\frac{13 b^2}{25}\right)
    = \frac{(196 b+15 \gamma)^2}{126000}
    \ge 0, 
\end{align*}
we have 
\begin{align*}
    \max_{(x,y) \in \mathbb{S}^{1}(\gamma)}\Phi(x,y)
    = -\frac{\gamma^2}{40} +\frac{11 b \gamma}{75} -\frac{242 b^2}{1125}
    \quad\text{for}\quad \gamma \in \left[\frac{14 b}{15}, \frac{44b}{15}\right]. 
\end{align*}
This completes the proof of Proposition~\ref{PROP:max-two-var-poiynomial}.
\end{proof}

\section{Proof of Proposition~\ref{PROP:T3-h-a-subgraph}}\label{APPEN:PROP:T3-h-a-subgraph}
\begin{proof}[Proof of Proposition~\ref{PROP:T3-h-a-subgraph}]
    Let $S \subseteq V(H)$ be a set of size $s \ge h - 12 \alpha - 42$. 
    Let $S_i \coloneqq S \setminus (Y_i \cup Z_i)$ and $x_i \coloneqq  |S_i|$ for $i \in [3]$, noting that $x_i \le \alpha+1$ for every $i \in [3]$. Let $x \coloneqq x_1 + x_2 + x_3$. 
    
    Since $G - X$ is $3$-partite, simple calculations show that  
    \begin{align*}
        |G[S_1 \cup S_2 \cup S_3]|
        & = x_1 x_2 + x_2 x_3 + x_3 x_1 \\[0.5em]
        & \ge 
        \begin{cases}
            0, &\quad\text{if}\quad x \le \alpha + 1, \\[0.5em]
            (\alpha + 1) (x- \alpha -1), &\quad\text{if}\quad \alpha + 1 \le x \le 2 (\alpha + 1), \\[0.5em]
            (\alpha+1)^2 + (2\alpha+2)(x-2\alpha - 2), & \quad\text{if}\quad x \ge 2 (\alpha + 1).
        \end{cases}
    \end{align*}

    \medskip 

    \textbf{Case 1}: $x \le \alpha$

    We have 
    \begin{align*}
        |G[S]|
        & = |G[S \cap X]| + |G[S \cap X, S\cap (Y_1 \cup Y_2 \cup Y_3)]| + |G[S_1 \cup S_2 \cup S_3]| \\[0.5em] 
        & \ge \binom{s-x}{2} 
        \ge \binom{s - \alpha}{2}. 
    \end{align*}

    \medskip 

    \textbf{Case 2}: $\alpha + 1 \le x \le 2 (\alpha + 1)$. 

    We have 
    \begin{align*}
        |G[S]|
        & = |G[S \cap X]| + |G[S \cap X, S\cap (Y_1 \cup Y_2 \cup Y_3)]| + |G[S_1 \cup S_2 \cup S_3]| \\[0.5em]
        & \ge \binom{s-x}{2} + (s-x)(x-\alpha) + (\alpha + 1) (x- \alpha -1) \\[0.5em] 
        & = - \frac{1}{2} \left(x - 2\alpha - \frac{3}{2}\right)^2 + \frac{s^2}{2} - \alpha s + \alpha^2 - \frac{s}{2} + \alpha + \frac{1}{8} \\[0.5em]
        & \ge - \frac{1}{2} \left(\alpha + 1 - 2\alpha - \frac{3}{2}\right)^2 + \frac{s^2}{2} - \alpha s + \alpha^2 - \frac{s}{2} + \alpha + \frac{1}{8} \\[0.5em]
        & = \frac{s^2}{2} - \alpha s + \frac{\alpha^2}{2} - \frac{s-\alpha}{2}
        = \binom{s-\alpha}{2}. 
    \end{align*}

    \medskip

    \textbf{Case 3}: $x \ge 2(\alpha+1)$. 

    We have 
    \begin{align*}
        |G[S]|
        & = |G[S \cap X]| + |G[S \cap X, S\cap (Y_1 \cup Y_2 \cup Y_3)]| + |G[S_1 \cup S_2 \cup S_3]| \\[0.5em]
        & \ge \binom{s-x}{2} + (s-x)(x-\alpha) + (\alpha+1)^2 + (2\alpha+2)(x-2\alpha - 2) \\[0.5em] 
        & = -\frac{1}{2}\left(x - 3\alpha - \frac{5}{2}\right)^2 + \frac{s^2}{2} - \alpha s + \frac{3\alpha^2}{2} - \frac{s}{2} + \frac{3\alpha}{2} + \frac{1}{8} \\[0.5em]
        & \ge -\frac{1}{2}\left(2\alpha+2 - 3\alpha - \frac{5}{2}\right)^2 + \frac{s^2}{2} - \alpha s + \frac{3\alpha^2}{2} - \frac{s}{2} + \frac{3\alpha}{2} + \frac{1}{8} \\[0.5em]
        & = \frac{s^2}{2} - \alpha s + \alpha^2 - \frac{s}{2} + \alpha
        = \binom{s-\alpha}{2} + \frac{\alpha^2 + \alpha}{2}
        \ge \binom{s-\alpha}{2}. 
    \end{align*}
    This completes the proof of Proposition~\ref{PROP:T3-h-a-subgraph}. 
\end{proof}
\end{appendix}
\end{document}